\newtheorem{theorem}{Theorem}[chapter]
\newtheorem{claim}{Claim}
\numberwithin{claim}{theorem}
\newtheorem{corollary}[theorem]{Corollary}
\newtheorem{definition}[theorem]{Definition}
\newtheorem{exercise}[theorem]{Exercise}
\newtheorem{lemma}[theorem]{Lemma}
\newtheorem{proposition}[theorem]{Proposition}
\newtheorem{remark}[theorem]{Remark}
\newenvironment{proof}[1][Proof]{\textbf{#1.} }{\ \rule{0.5em}{0.5em}}
\newcommand{\brs}[1]{
\left| #1 \right|
}
\newcommand{\SC}[1]{
\mathscr{#1}
}
\newcommand{\wt}[1]{
\widetriangle{#1}
}
\newcommand{\ov}[1]{
\overline{#1}
}
\newcommand{\VRT}[1]{
\left\Vert #1 \right\Vert
}
\newcommand{\TT}[0]{
\mathscr{T}
}
\newcommand{\QQ}[0]{
\mathscr{Q}
}
\newcommand{\MM}[0]{
\mathscr{M}
}
\newcommand{\PP}[0]{
\mathbb{P}
}
\newcommand{\CC}[0]{
\mathbf{C}
}
\renewcommand{\SS}[0]{
\mathbf{S}
}
\newcommand{\TAN}[0]{
\mathbf{T}
}
\newcommand{\cc}[0]{
\mathbf{c}
}
\newcommand{\tia}[0]{
\tilde{a}
}
\newcommand{\tib}[0]{
\tilde{b}
}
\newcommand{\tic}[0]{
\tilde{c}
}
\newcommand{\tid}[0]{
\tilde{d}
}
\newcommand{\tiA}[0]{
\tilde{A}
}
\newcommand{\tiB}[0]{
\tilde{B}
}
\newcommand{\tiC}[0]{
\tilde{C}
}
\newcommand{\tiD}[0]{
\tilde{D}
}
\newcommand{\tiE}[0]{
\tilde{E}
}
\newcommand{\tiF}[0]{
\tilde{F}
}
\newcommand{\tiT}[0]{
\widetilde{\mathscr{T}}
}
\renewcommand{\ss}[0]{
\mathbf{s}
}
\begin{document}

\title{Non-euclidean shadows of classical projective theorems}

\titlepic{\includegraphics[width=0.9\textwidth]{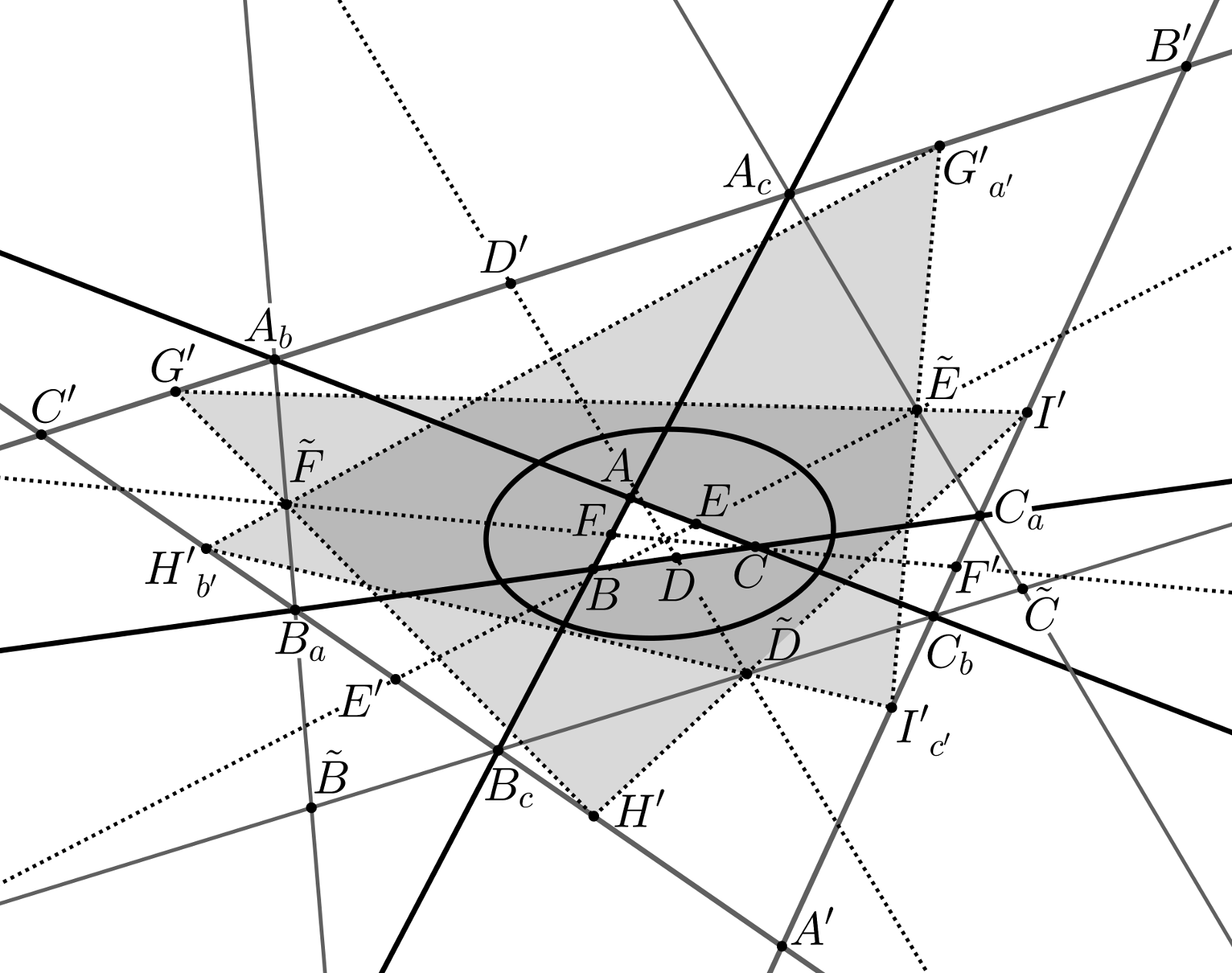}}

\author{Ruben Vigara \\
Centro Universitario de la Defensa - Zaragoza\\
I.U.M.A. - Universidad de Zaragoza\\}
\maketitle

\tableofcontents

% \begin{abstract}
% Some translations into non-euclidean geometry of classical theorems of planar projective geometry are explored. Desargues' Theorem allows to construct a non-euclidean version of the Euler line and the nine-point circle of a triangle. The whole non-euclidean trigonometry (for triangles and generalizaed triangles) can be deduced from Menelaus' Theorem. A theorem of Carnot about affine triangles implies the non-euclidean versions of another theorem of Carnot about euclidean triangles.
% 
% Keywords: Non-euclidean geometry, Cayley-Klein models, Euler line, non-euclidean trigonometry, non-euclidean triangles, Carnot's Theorem.
% 
% Math. Subject Classification: 51M09.
% \end{abstract}

\chapter{Introduction}\label{sec:Introduction.}

Cayley-Klein projective models for hyperbolic and elliptic (spherical)
geometries have the disadvantage of being non-conformal. However,
they have some interesting virtues: geodesics are represented
by straight lines, allowing to easily visualize incidence properties,
and they give an unified treatment of the three classic planar geometries:
euclidean, spherical and hyperbolic. They can be introduced
into any course on projective geometry, and so they can give to undergraduate students an
early contact with non-euclidean geometry before learning more advanced
topics such as riemannian geometry or calculus in a complex variable.
Within a course on projective geometry, the (extremely beautiful in itself)
projective theory of conics can be enhanced by introducing Cayley-Klein
models.

Almost any projective theorem about conics might have multiple interpretations
as different theorems of non-euclidean geometry.
For example, 
\hyperref[thm:Chasles-Polar-triangle]{Chasles' polar triangle Theorem}
(Theorem~\ref{thm:Chasles-Polar-triangle}) asserts
that a projective triangle and its polar triangle with respect to
a conic are perspective: the lines joining the corresponding vertices
are concurrent. This theorem has, at least, the following corollaries
in non-euclidean geometry:
\begin{itemize}
\item the three altitudes of a spherical triangle are concurrent
\item the three altitudes of a hyperbolic triangle are: (i) concurrent;
or (ii) parallel (they are asymptotic through the same side); or (iii)
ultraparallel (they have a common perpendicular).
\item the common perpendiculars to opposite sides of a hyperbolic right-angled
hexagon are concurrent.
\end{itemize}
Following \cite{Richter-Gebert} (and \cite{Plato}, indeed!), we say
that these three results are different non-euclidean \emph{shadows}
of \hyperref[thm:Chasles-Polar-triangle]{Chasles' Theorem}.
\begin{figure}[H]
\centering
\subfigure[elliptic triangle]
{\includegraphics[width=0.6\textwidth]{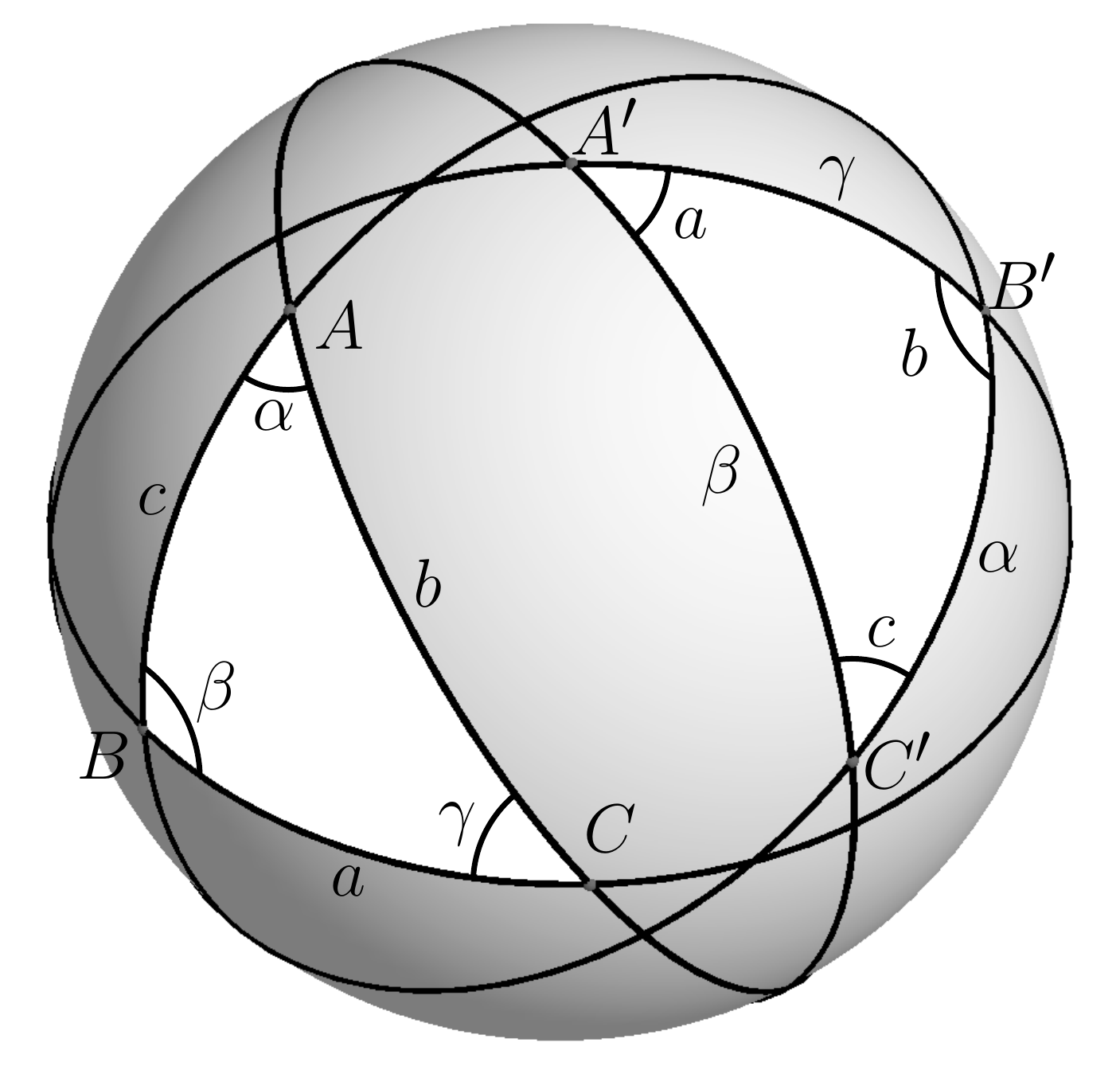}
\label{Fig:general_spherical_triangle}}\\
\subfigure[hyperbolic triangle]{
\includegraphics[width=0.8\textwidth]{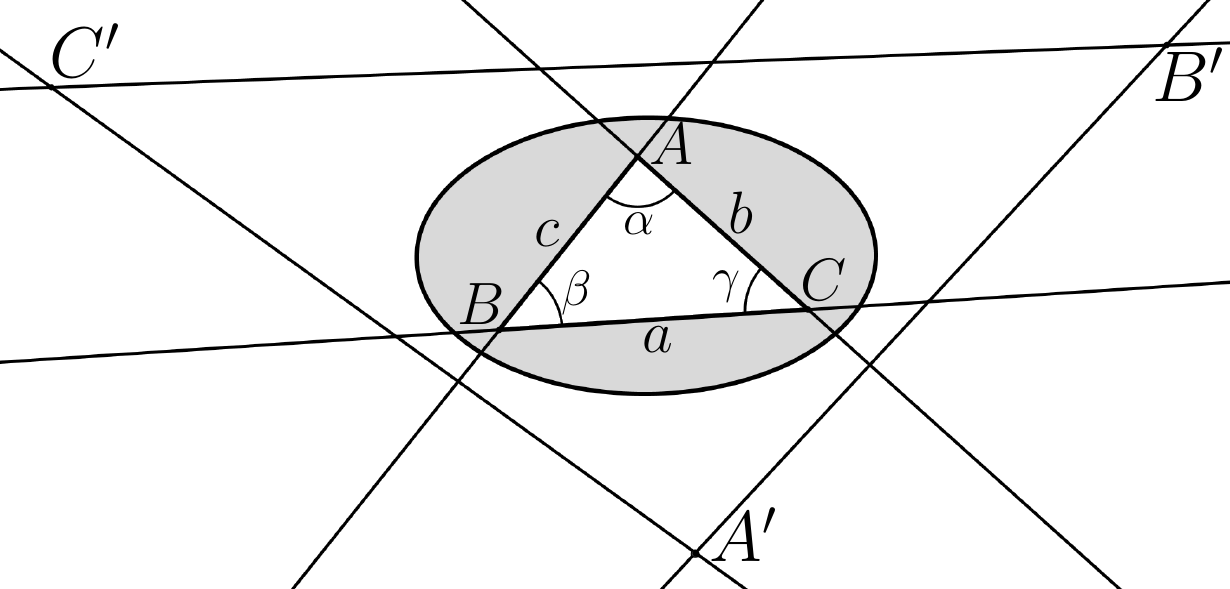}
\label{Fig:general_hyperbolic_triangle}
}\\
\subfigure[right-angled hexagon]{
\includegraphics[width=0.8\textwidth]{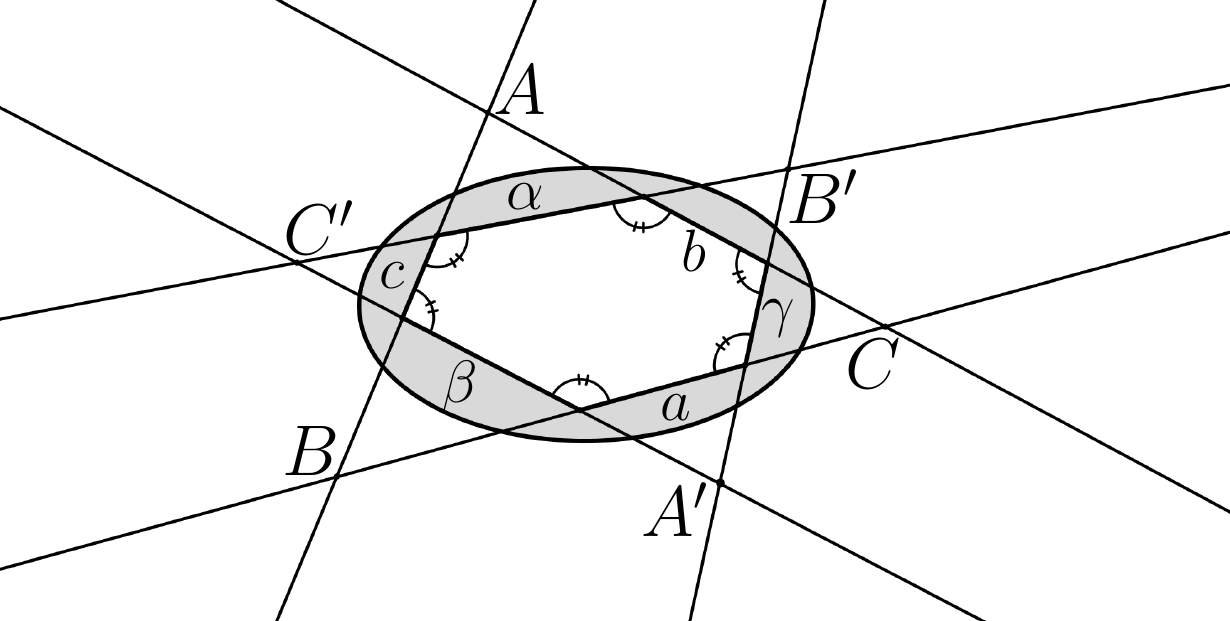}
\label{Fig:general_hyperbolic_hexagon}}
\caption{Generalized triangles}
\label{Fig:hyperbolic_generalized_triangles_I}
\end{figure}

\begin{figure}[H]
\centering
\subfigure[quadrangle I]
{\includegraphics[width=0.8\textwidth]
{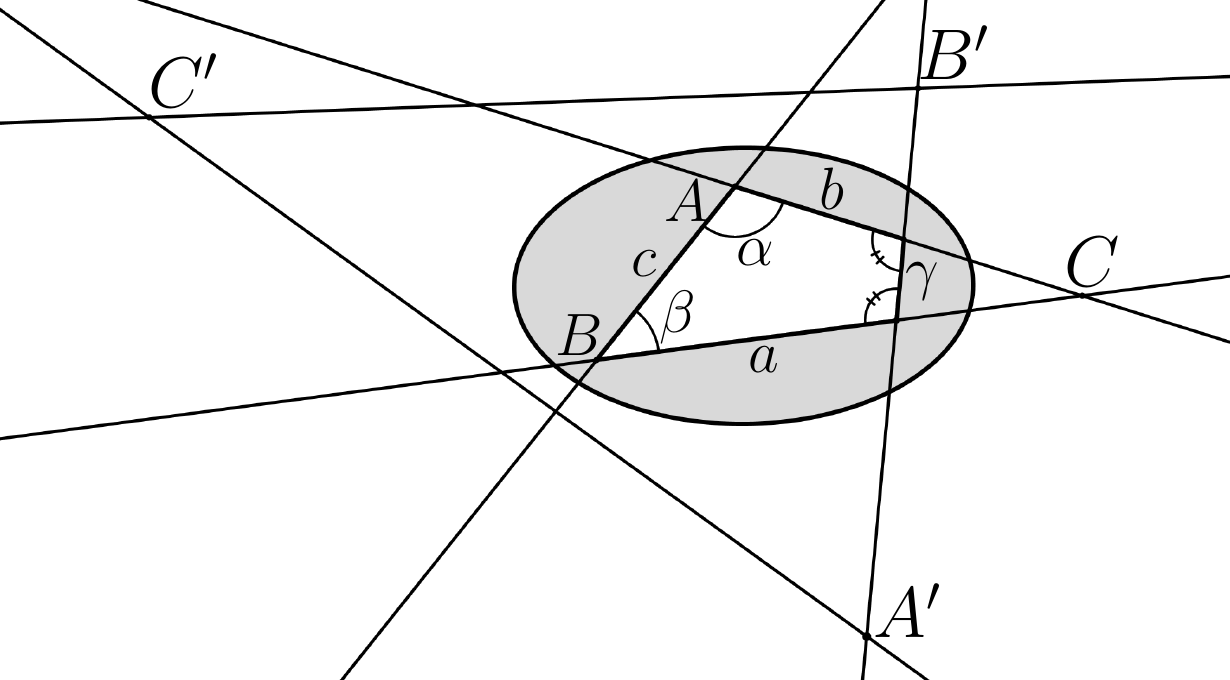}
\label{Fig:general_hyperbolic_quadrangle_01}}\\
\subfigure[quadrangle II]
{\includegraphics[width=0.8\textwidth]
{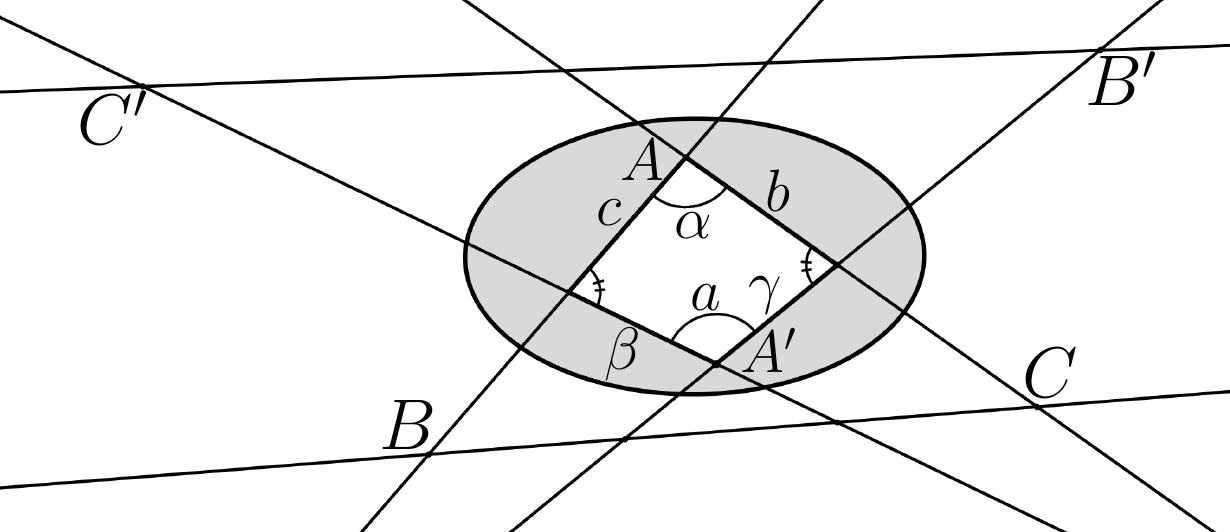}
\label{Fig:general_hyperbolic_quadrangle_02}
}\\
\subfigure[pentagon with four right angles]
{\includegraphics[width=0.8\textwidth]
{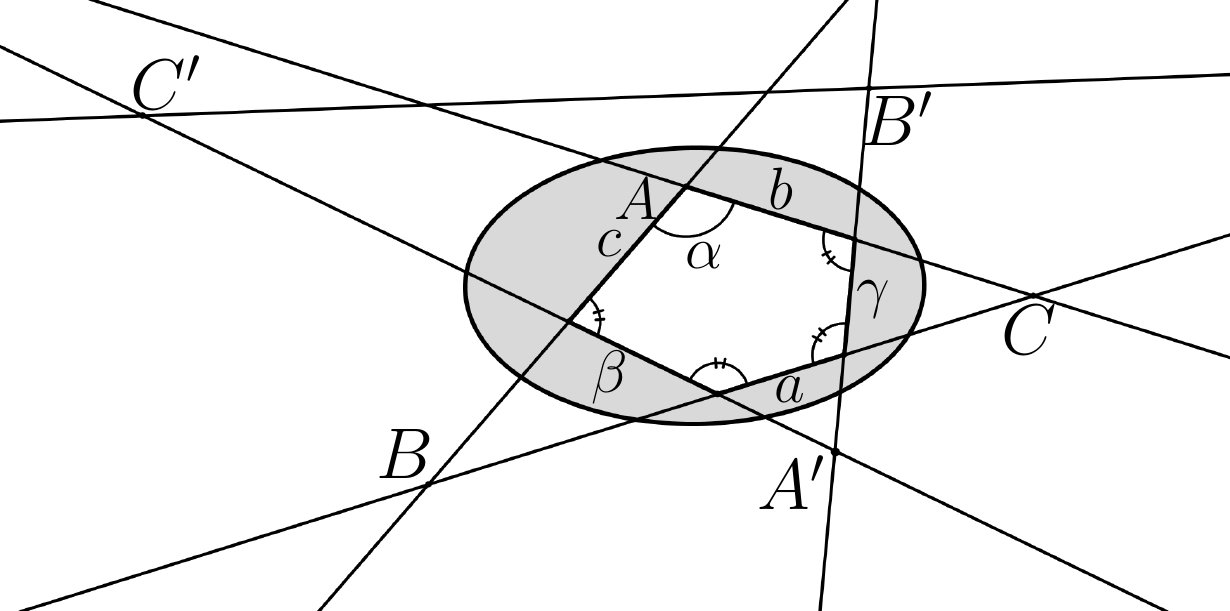}
\label{Fig:general_hyperbolic_pentagon}}
\caption{Generalized triangles II}
\label{Fig:hyperbolic_generalized_triangles_II}
\end{figure}

Aside from spherical and hyperbolic triangles, in the
hyperbolic plane there are many other figures which verify similar
trigonometric relations as the triangles do, such as Lambert and Saccheri
quadrilaterals, right-angled pentagons and hexagons, etc.. Following
\cite{Buser}, we will refer as \emph{generalized triangles} to all
these elliptic and hyperbolic figures which have trigonometry.

All generalized triangles are exactly the same figure when we
look at them wearing ``projective glasses'': in Cayley-Klein models they are
the result of intersecting
a projective triangle $\TT=\wt{ABC}$ with its polar triangle $\TT'=\wt{A'B'C'}$
with respect to the
absolute
conic $\Phi$ of the model (see Figures
\ref{Fig:hyperbolic_generalized_triangles_I}--\ref{Fig:hyperbolic_generalized_triangles_star_II}, 
where right angles are denoted with the symbol
\includegraphics[height=1em]{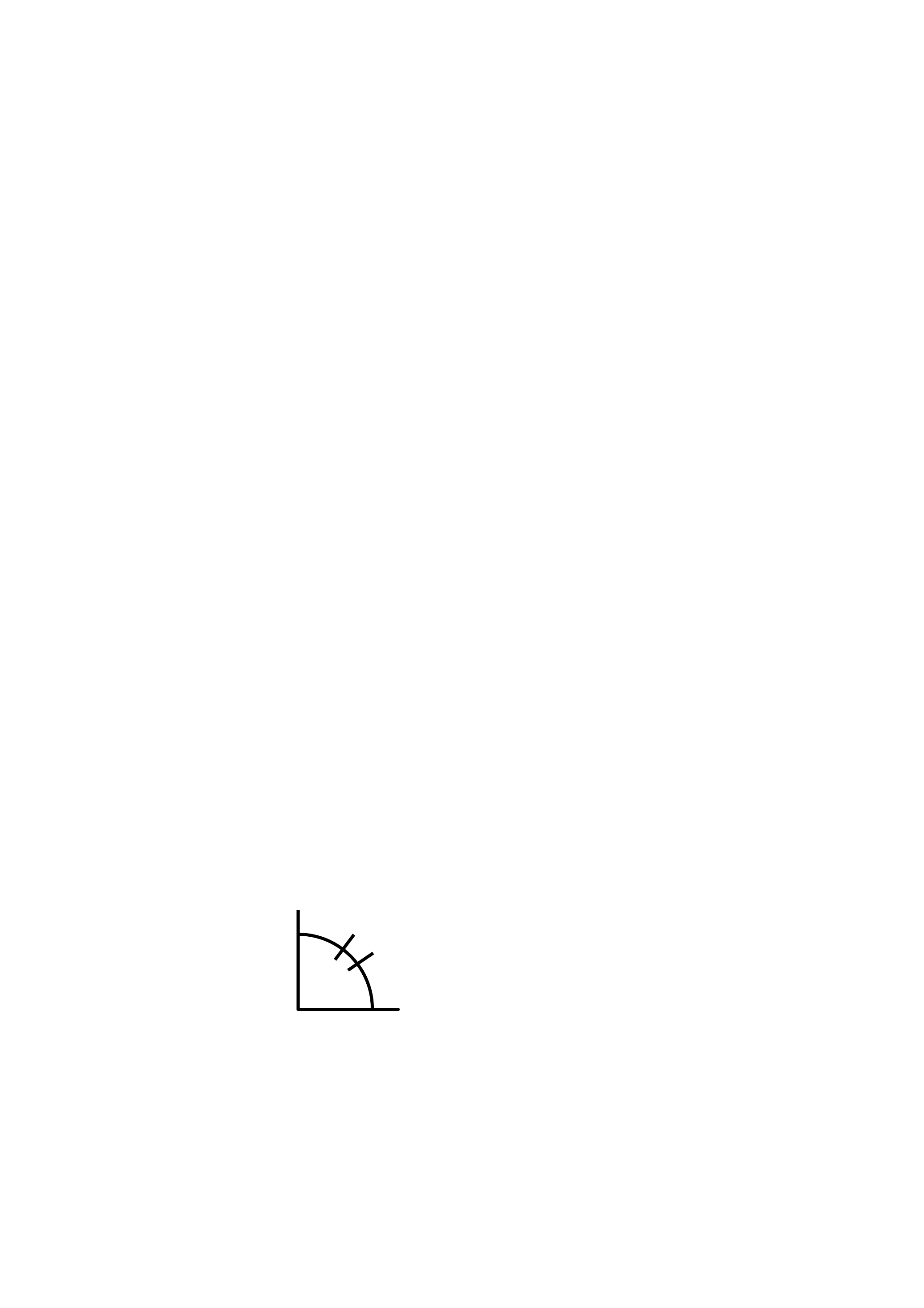}). With some general
position assumptions, projective theorems
involving a triangle and its polar triangle with respect to a conic will not
depend on the relative position of both triangles with respect to a conic,
and so they will have some different shadows as non-euclidean theorems
about different generalized triangles. In this work we will explore the
non-euclidean shadows of
some classical theorems from planar geometry. We will adopt a classic
(old-fashioned?) point of view. For more modern approaches to this subject  see
\cite{Richter-Gebert} or \cite{Wildberger, Wildberger2}, for example.

\paragraph{Pascal's and Chasles' theorems and classical triangle centers}

In the previous example about 
\hyperref[thm:Chasles-Polar-triangle]{Chasles' Theorem} and the
concurrency of altitudes, we can see that the relation of a triangle with its
polar one with respect to a conic is of key importance in the non-euclidean 
treatment of triangles. The set of midpoints of the sides of a 
triangle has a particular structure that is deduced essentially from 
\hyperref[thm:Pascal]{Pascal's Theorem}, 
and this structure allows to prove
the concurrency of medians, of side bisectors and
of angle bisectors of a triangle.

\paragraph{Desargues' Theorem and the non-euclidean Euler line}

For an euclidean triagle, the orthocenter, the
circumcenter and the barycenter are collinear. The line passing through
these three points is the \emph{Euler line} of the triangle, and it
contains also many other interesting points such as the center of the
\emph{nine-point circle}. The nine-point circle
is the circle passing through the midpoints of the sides of the triangle,
and it contains also the feet of the altitudes of the triangle and the midpoints
of the segments joining the orthocenter with the vertices of the triangle.

The Euler line and the nine-point circle have no immediate analogues
in non-euclidean gometry because, in general, for a hyperbolic or
elliptic triangle, the orthocenter, the circumcenter and the barycenter
are non-collinear, and the midpoints of the sides and the feet
of the altitudes are not concyclic. Thus, it is usually said that
in non-euclidean geometry the Euler line and the nine-point circle 
do not exist. Nevertheless,
we claim that they exist and that they are not unique. 
We will propose a non-euclidean
version of these two objects, and a different version of them is proposed
in \cite{Akopyan-other-proposal}. The line that we propose as Euler 
line is the line which is called \emph{orthoaxis} in \cite{Wildberger2}. 
We claim that there are enough reasons for this line to deserve 
the name \emph{Euler line}. 
Giving credit to \cite{Wildberger2}, we will call it \emph{Euler-Wildberger
line}.

Some of the classical centers of an euclidean triangle can be defined
in multiple ways, all of them equivalent. However, two such definitions,
equivalent under the euclidean point of view, could be non-equivalent
in the non-euclidean world. We will illustrate this fact for the barycenter
and the circumcenter. Using alternative definitions of these points,
we will show how every non-euclidean triangle has an \emph{alternative
barycenter} and an \emph{alternative circumcenter}, different from
the standard ones. These alternative centers are collinear with the orthocenter
of the triangle, and we will say that the line passing through them is the
\emph{Euler-Wildberger line} of the triangle. All these constructions have been
introduced before in \cite{Wildberger, Wildberger2} with a different notation.
We give new proofs of them based on a reiterated
application of \hyperref[thm:Desargues]{Desargues' Theorem}.

Beyond the Euler line, we construct a 
\emph{nine-point conic} which is a non-euclidean version of the nine-point
circle.

\paragraph{Menelaus' Theorem and non-euclidean trigonometry}

Hyperbolic trigonometry is a recurrent topic in most treatments 
on non-euclidean geometry since the early works of N. I. Lobachevsky and J.
Bolyai (see \cite{Coolidge, Cox Non-euc}). Due to its connection
with different topics as Riemann sufaces, low-dimensional topology or special
relativity, it has been treated also from different viewpoints in more recent
works such as \cite{Beardon, Buser, Fenchel, Thurston}. There exists a close
connection between elliptic and hyperbolic
trigonometry. Trigonometric relations between sides and angles of
spherical and hyperbolic triangles look much more the same, with some
replacements between sines and cosines into hyperbolic sines and cosines
and vice versa.

Generalized triangles are characterized by having exactly six defining
magnitudes among sides and non-right angles. The value (segment length
or angular measurement) of each of these magnitudes is related with a
side of $\TT$ or $\TT'$: the cross ratio of four
points on this side provides the square power of a (circular or hyperbolic)
trigonometric function of the corresponding magnitude.

There are four kinds of generalized triangles whose trigonometric
relations are simpler than in the general case: elliptic and hyperbolic
right-angled triangles (Figures~\ref{sph_triangle} and~\ref{hyp_triangle}
respectively), Lambert quadrilaterals (Figure~\ref{hyp_Lambert})
and right-angled pentagons (Figure~\ref{hyp_pentagon}). They appear
when we force one of the non-right angles of a generalized triangle
to be a right angle. For this reason, we will refer to them as \emph{generalized
right-angled triangles}. 
% A Lambert quadrilateral is a quadrilateral
% in hyperbolic plane with three right angles and an acute angle. A
% right-angled pentagon is a pentagon in hyperbolic plane whose angles
% are all right angles. 
In \S\ref{sec:Menelaus-Theorem}
we show that all the trigonometric relations for generalized right-angled
triangles are non-euclidean shadows of 
\hyperref[thm:Menelaus-affine]{Menelaus' Theorem}.

A generalized triangle can be constructed by pasting together two
generalized right-angled triangles. This decomposition allows us to
deduce the trigonometric relations of generalized triangles from
the trigonometric relations of the right-angled ones.
Thus, the whole non-euclidean trigonometry can be deduced from 
\hyperref[thm:Menelaus-affine]{Menelaus' Theorem}.
\begin{figure}[t]
\centering
\subfigure[stellate quadrangle I]
{\includegraphics[width=0.7\textwidth]
{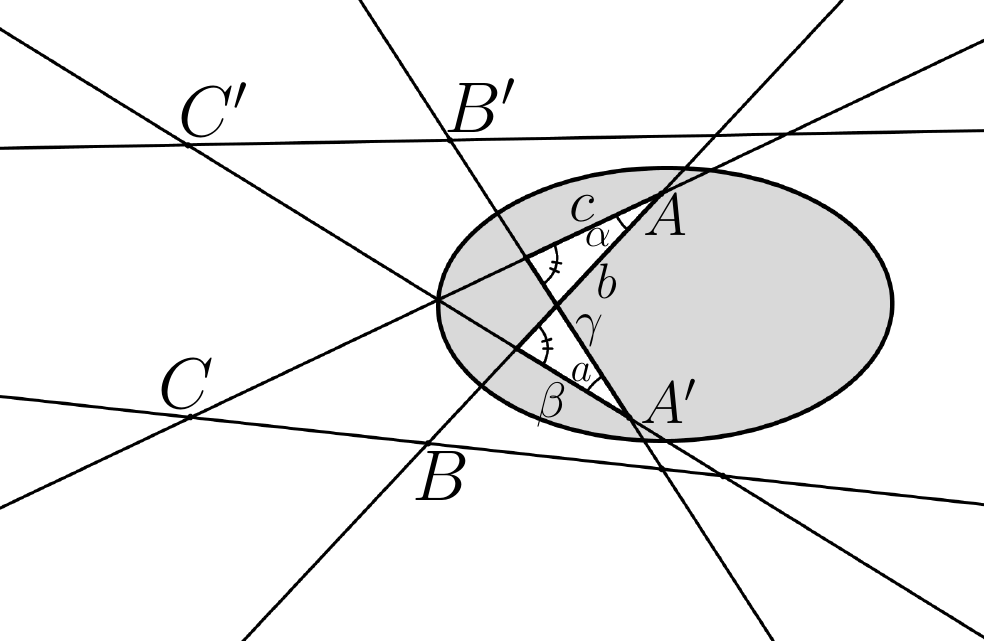}
\label{Fig:general_hyperbolic_quadrangle_star-01}}
\\
\subfigure[stellate quadrangle II]
{\centering\includegraphics[width=0.7\textwidth]
{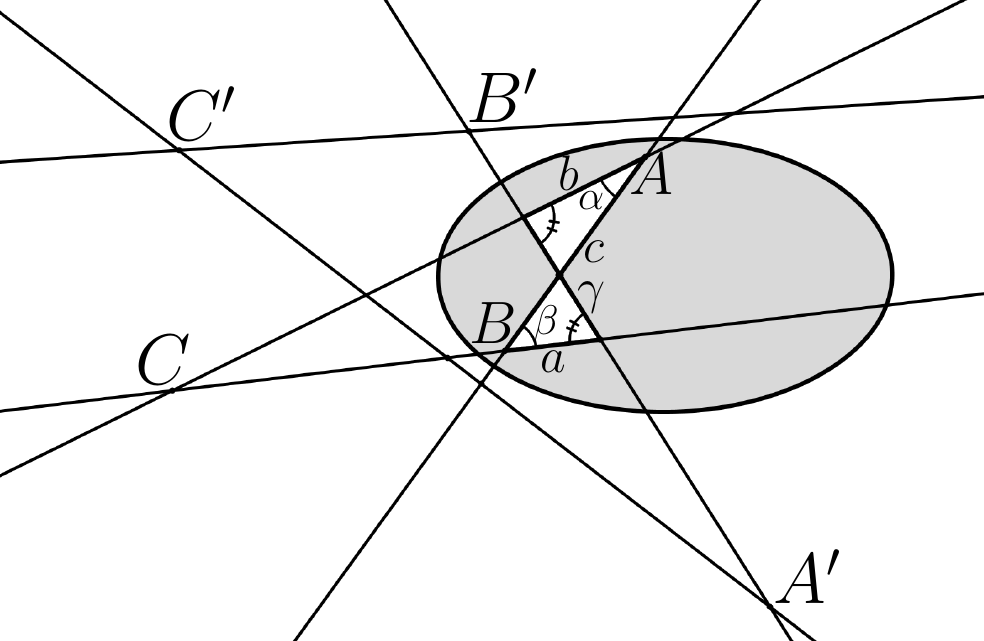}
\label{Fig:general_hyperbolic_quadrangle_star-02}}
\caption{hyperbolic stellate generalized
triangles}
\label{Fig:hyperbolic_generalized_triangles_star_I}
\end{figure}

\begin{figure}[t]
\centering
\subfigure[stellate pentagon with four right angles]
{\includegraphics[width=0.7\textwidth]
{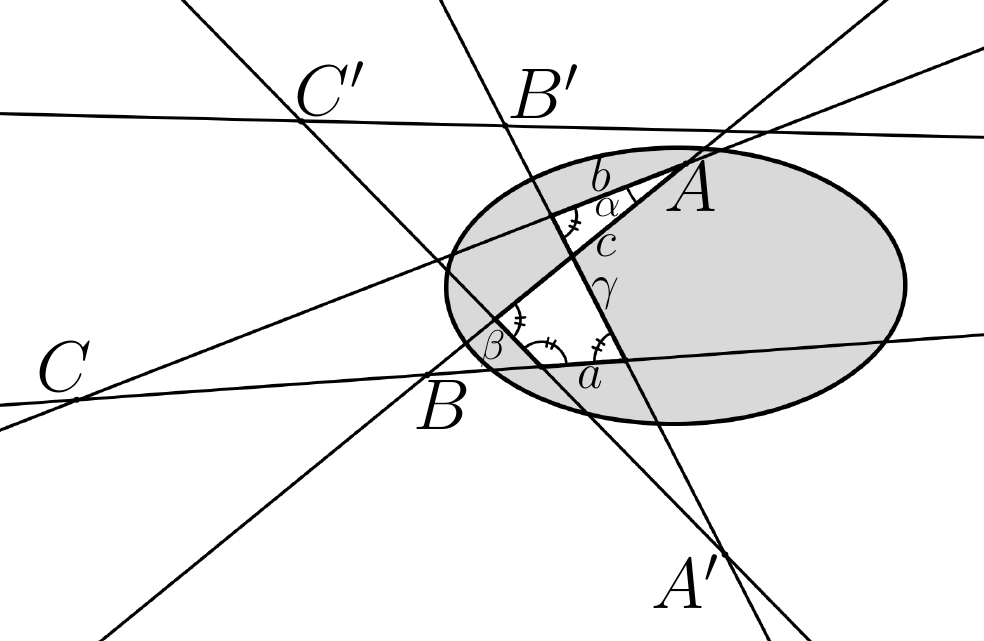}
\label{Fig:general_hyperbolic_pentagon_star}}
\\
\subfigure[stellate right-angled hexagon]
{\includegraphics[width=0.7\textwidth]
{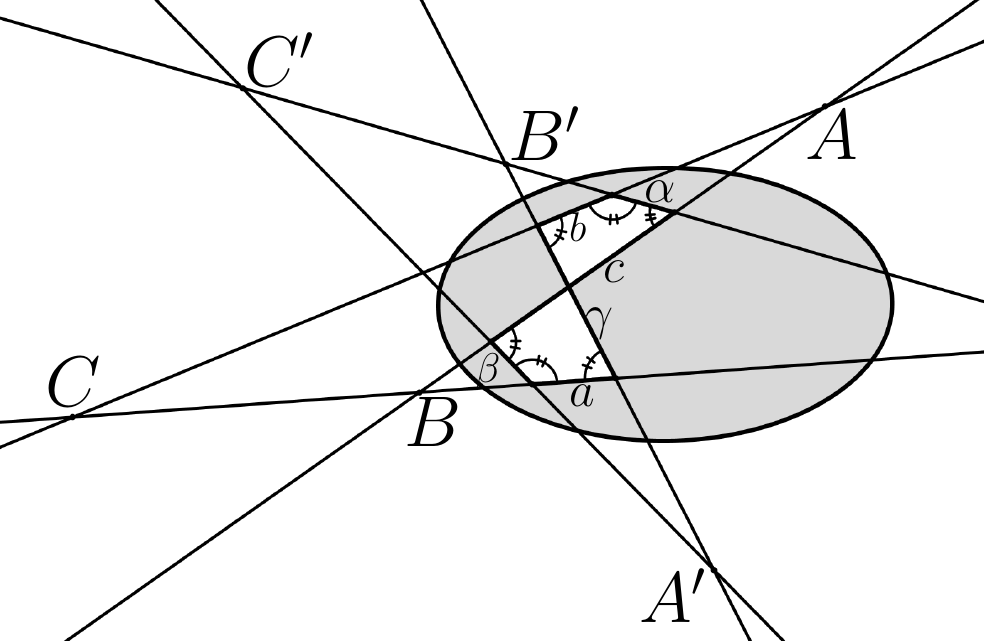}
\label{Fig:general_hyperbolic_hexagon_star}}
\caption{hyperbolic stellate generalized
triangles II}
\label{Fig:hyperbolic_generalized_triangles_star_II}
\end{figure}

\paragraph{Carnot's Theorem and... Carnot's
Theorem?}
In \S\ref{sec:Carnot's-theorem-and} we extend the arguments applied
in \S\ref{sec:Menelaus-Theorem} to 
\hyperref[thm:Menelaus-affine]{Menelaus' Theorem}
to a theorem of Carnot on affine triangles (Theorem~\ref{thm:Carnot-affine}).
We obtain that its non-euclidean shadows are the non-euclidean versions (for
generalized triangles)
of another classical theorem of Carnot (Theorem
\ref{thm:Carnot_theorem_euclidean}) for euclidean triangles!

\paragraph{Where do laws of cosines come from?}
If the projective figure behind every generalized triangle is the
same (triangle and polar triangle) and the measurements of its sides and angles
are given in projective
terms (cross ratios), it is natural to expect that the trigonometric relations
of generalized triangles have
indeed a projective basis (this viewpoint has been proposed also in
\cite{Richter-Gebert}). In \S\ref{sec:Menelaus-Theorem}, 
\hyperref[thm:Menelaus-affine]{Menelaus' Theorem}
succeeds in providing all the trigonometric relations of right-angled figures
and the law of sines for any, not necessarily right-angled, generalized
triangle in a straightforward way. However, 
\hyperref[thm:Menelaus-affine]{Menelaus' Theorem} fails with the
law of cosines.

In \S
\ref{sec:Menelaus-Theorem}, we introduce projective versions of the law of sines
and the law of cosines of generalized
triangles, but the non-euclidean trigonometric formulae that we obtain as
translations of projective formulae are \emph{squared}:
the trigonometric functions appearing in each formula are always raised
to the square power. In order to obtain the standard (\emph{unsquared})
trigonometric formulae, we must proceed with an \emph{unsquaring} process
involving some choices of $\pm$ signs. The unsquaring process is
straightforward for all the projective trigonometric formulae with
the only exception of the law of cosines. In this sense,
the projective law of cosines given in \S\ref{sec:Menelaus-Theorem} is
unsatisfactory, and it is natural to ask if
there exists 
a \emph{better projective law of cosines}: a formula relating
cross ratios of points of a triangle and its polar triangle with respect
to $\Phi$ such that it translates in a straightforward way into the
different laws of cosines for any generalized triangle.

Looking for the answer to this question, in \S\ref{sec:Cosine-Rule} we reverse
the
strategy of earlier chapters. While in the rest of the book we
have tried to find interesting non-euclidean theorems that can be deduced
from classical projective ones, in \S\ref{sec:Cosine-Rule} we started
with some different non-euclidean results with the conviction that there must be
a unique projective theorem (perhaps a non-classical one) hidden behind them.
The projective formula behind all the laws of cosines is found after a
detailed study of the set of midpoints of a triangle and its polar one. The midpoints allow
to define a concept of ``orientation'' of a triangle based in purely projective techniques,
and to obtain the actual, unsquared, trigonometric functions required.
Although we don't recognize any classical theorem as the essence of
this projective 
law of cosines, in its proof we will need another classical theorem of affine geometry:
\hyperref[thm:Van-Aubel]{Van Aubel's Theorem on cevians}\footnote{The existence of
this theorem was pointed out to the author by M. Avendano.}.

\paragraph{Appendix: Laguerres's formula for rays}

One of the starting points of this subject, prior to the work of Cayley~\cite{Cayley},
is Laguerre's formula \eqref{eq:Laguerre-formula} expressing an euclidean angle
in terms of the cross-ratio of four concurrent lines. As this formula uses a cross-ratio between \emph{lines}, it cannot distinguish an angle from its supplement. In the appendix
we propose a slight modification of Laguerre's formula in such a way that it is valid
for computing angles between \emph{rays}.

In order to make the paper more self-contained, we will introduce in 
\S\ref{sec:Basics-of-projective}
some basics from projective geometry,
and in \S\ref{sec:Cayley-Klein-models-for} we will present 
Cayley-Klein models for planar hyperbolic and
elliptic geometries. Apart from the notation that we introduce in them, a reader
familar with this topic can skip \S\ref{sec:Basics-of-projective} and
\S\ref{sub:Cayley-Klein-models-distances-angles}.

\section*{Acknowledgements}

I am very grateful to professors J. M. Montesinos, M. Avendano and A. M. Oller-Marc\'{e}n for their interesting suggestions and comments during the writing of this manuscript.

This research has been strongly influenced by \cite{Santalo}. It has been partially supported by the European Social Fund and Diputaci\'on General de Arag\'on (Grant E15 Geometr{\'\i}a). 

\chapter{Basics of planar projective geometry.}\label{sec:Basics-of-projective}

We will asume that the reader is familiar with the basic concepts
of real and complex planar projective geometry: the projective plane
and its fundamental subsets (points, lines, pencils of lines, conics),
and their projectivities (collineations, correlations). Nevertheless,
we will review some concepts and results needed for understanding
the rest of the paper. For rigurous definitions and proofs we
refer to \cite{Cox Proj,V - Y}, for example.

We consider the real projective plane $\mathbb{RP}^{2}$ standardly
embedded in the complex projective plane $\mathbb{CP}^{2}$. We consider
the objects lying in $\mathbb{CP}^{2}$, and they can be real or imaginary
depending on how they intersect with $\mathbb{RP}^{2}$. A point in
$\mathbb{CP}^{2}$ is real if it lies in $\mathbb{RP}^{2}$, and it
is imaginary otherwise. A line $l$ in $\mathbb{CP}^{2}$ is real
if $l\cap\mathbb{RP}^{2}$ is a real projective line, and it is imaginary
otherwise. A nondegenerate conic $\Phi$ in $\mathbb{CP}^{2}$ is
real if $\Phi\cap\mathbb{RP}^{2}$ is a nondegenerate real projective
conic, and it is imaginary otherwise.

For two points $A,B$ in $\mathbb{CP}^{2}$, let $AB$ denote the
line joining them. For two lines $a,b$ in $\mathbb{CP}^{2}$, let
$a\cdot b$ denote the intersection point between them. For a line
$a$ and a conic $\Phi$, let $a\cdot\Phi$ denote de set of intersection
points between them. In $\mathbb{CP}^{2}$, $a\cdot\Phi$ has one
or two (perhaps imaginary) points, while in $\mathbb{RP}^{2}$ the
intersection set $a\cdot\Phi$ can consist of $0$, $1$ or $2$ real points.
For a real line $a$ and a real conic $\Phi$, we say that $a$ is
\emph{exterior}, \emph{tangent} or \emph{secant} to $\Phi$ if the
intersection set $a\cdot\Phi$ has $0$, $1$ or $2$ real points, respectively.

\section{Cross ratios}\label{sub:Cross-ratios.}

Given a line $r\subset\mathbb{CP}^{2}$ and four different collinear
points $A,B,C,D\in r$, their \emph{cross ratio} is given by
\begin{equation}
\left(ABCD\right)=
\dfrac{\left| AC \right|}{\brs{BC}}:\dfrac{\brs{AD}}{\brs{BD}},
\label{eq:definition-of-cross-ratio}
\end{equation}
where $\brs{XY}=Y-X$ once we have chosen a set of nonhomogeneous
coordinates in $r$ such that none of the points $A,B,C,D$ is at
infinity. This formula does not depend on the chosen set of coordinates, 
and so the cross ratio is well-defined.

The cross ratio $\left(ABCD\right)$ depends on the ordering of the
four points $A,B,C,D$. By simple computations it can be checked that
it fulfills the following relations:
\begin{subequations}\label{eq:cross_ratio_identities}
\begin{align}
\left(ABDC\right) & =\left(BACD\right)=\left(ABCD\right)^{-1},\label{eq:CR1}\\
\left(ACBD\right) & =\left(DBCA\right)=1-\left(ABCD\right),\label{eq:CR2}
\end{align}
\end{subequations}
and for any point $E$ collinear with $A,B,C,D$ it is 
\begin{equation}
\left(ABCD\right)=\left(ABED\right)\left(ABCE\right)=\left(EBCD\right)\left(AECD
\right).\label{eq:CR3}\tag{2.2c}
\end{equation}

\begin{proposition}
\label{prop:cross-ratio-four-different-points}The cross ratio of
four different points is a number different from $0$ and $1$.\end{proposition}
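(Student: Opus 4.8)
The plan is to argue directly from the defining formula \eqref{eq:definition-of-cross-ratio}. First I would fix a set of nonhomogeneous coordinates on the line $r$ in which none of the four points $A,B,C,D$ lies at infinity; this is always possible, since a projective line has infinitely many points, and by the coordinate-independence noted right after \eqref{eq:definition-of-cross-ratio} the value of $(ABCD)$ is unchanged. Writing $a,b,c,d$ for the coordinates of $A,B,C,D$ in this chart and using $\brs{XY}=Y-X$, the formula becomes
\[
(ABCD)=\frac{(c-a)(d-b)}{(c-b)(d-a)}.
\]

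Next I would use that the four points are pairwise distinct, so each of the factors $c-a$, $d-b$, $c-b$, $d-a$ is nonzero. Hence the denominator is nonzero, which already shows that $(ABCD)$ is a well-defined finite number, and the numerator is nonzero, which shows $(ABCD)\neq 0$.

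Finally, for the assertion $(ABCD)\neq 1$ I would argue by contradiction. Either by a one-line expansion — $(ABCD)=1$ is equivalent to $(c-a)(d-b)=(c-b)(d-a)$, which simplifies to $(c-d)(a-b)=0$, forcing $c=d$ or $a=b$, a contradiction — or, more in the spirit of the relations \eqref{eq:cross_ratio_identities}, by invoking \eqref{eq:CR2}: if $(ABCD)=1$ then $(ACBD)=1-(ABCD)=0$, contradicting the fact just established that the cross ratio of the four distinct points $A,C,B,D$ cannot vanish. I do not expect any genuine obstacle here; the only point deserving a word of care is the initial choice of affine chart avoiding $A,B,C,D$, which is what makes the closed-form rational expression legitimate.
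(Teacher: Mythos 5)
Your proposal is correct and follows essentially the same route as the paper: nonvanishing of the four differences in \eqref{eq:definition-of-cross-ratio} gives $(ABCD)\neq 0$, and the identity \eqref{eq:CR2} then rules out the value $1$. The extra direct expansion you offer as an alternative is fine but not needed.
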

\begin{proof}
If the four points $A,B,C,D$ are different, the four numbers in the
right-hand side of~\eqref{eq:definition-of-cross-ratio} are different
from $0$ and thus $(ABCD)\neq0$. Applying~\eqref{eq:CR2}, we have
also $(ABCD)\neq1$.
\end{proof}

Cross ratio can be defined even if some of the four points coincide,
or if one of the points lies at infinity in the chosen set of coordinates.
When one of the four points is the point at infinity of the line, the cross
ratio coincides with a harmonic ratio of the three remaining points.
For example, if $D$ is at infinity we have
\begin{equation}
\left(ABCD\right)=\dfrac{\brs{AC}}{\brs{BC}}.\label{eq:harmonic ratio equals cross ratio}
\end{equation}

The fundamental property of cross ratio is that it is invariant under
the operations of projection from a point and section with a line
\footnote{
The invariance of cross-ratio under projection and section can be checked
experimentally: (i) draw four collinear points on a blackboard; (ii)
take a ruler and put it in front of you (with one eye closed!) making it
coincide in your sight with the line containing the four points depicted;
(iii) write down the numbers on the ruler that correspond to the four points in your sight;
(iv) compute the cross-ratio of these four numbers. 
Repeat the experiment from another place of the classroom, 
with another ruler (cm., inches,...), etc.
The resulting cross-ratio will be approximately the same.

}.
Let $A,B,C,D$ be four points on a line $r$, let $s$ be another
line, and let $P$ be a point not incident with $r$ or $s$. Take
the lines $a,b,c,d$ joining the point $P$ with $A,B,C,D$ respectively,
and take the points $A',B',C',D'$ in $s$ given by (Figure~\ref{Fig:prespectivity-invariance-of-cross-ratios})
\[
A'=a\cdot s,\quad B'=b\cdot s,\quad C'=c\cdot s,\quad D'=d\cdot s.
\]
Then, the following relation holds:%
\footnote{If we take one of the four lines $a,b,c,d$ as the line at infinity
and we apply~\eqref{eq:harmonic ratio equals cross ratio}, 
then~\eqref{eq:invariance_of_Cross-ratio}
is Thales' theorem.%
}
\begin{equation}
\left(ABCD\right)=\left(A'B'C'D'\right).\label{eq:invariance_of_Cross-ratio}
\end{equation}

\begin{figure}
\centering
\includegraphics[width=0.5\textwidth]{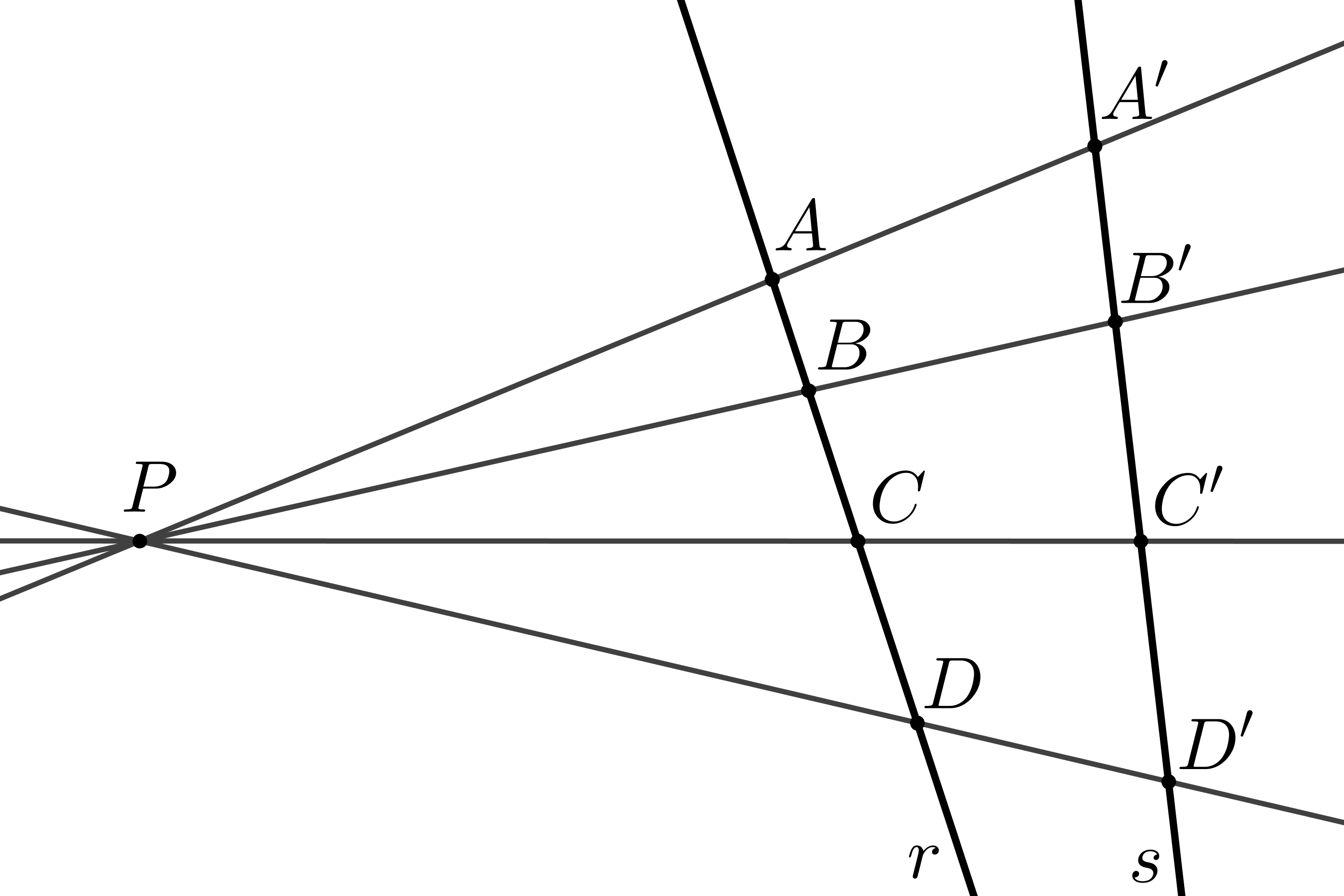}
\caption{cross ratio is invariant under projection and section}
\label{Fig:prespectivity-invariance-of-cross-ratios} 
\end{figure}
% \%\%\% ??? In this situation, we say that the four points $A',B',C',D'$
% are obtained from $A,B,C,D$ after \emph{a projection from} $P$ and
% \emph{a section with} $s$, and we use the notation ???SOBRA
% \[
% \left(ABCD\right)\barwedge_{P}\left(A'B'C'D'\right).
% \]
Property~\eqref{eq:invariance_of_Cross-ratio} allows us to define
the cross ratio of four concurrent lines. If $a,b,c,d$ are four concurrent
lines and $r$ is a line not concurrent with them, by taking the points
$A,B,C,D$ where $r$ intersects $a,b,c,d$ respectively we can define
\[
\left(a\,b\,c\,d\right):=\left(ABCD\right).
\]

An interesting property of cross ratio for real points is the following
(see
\cite[vol. II, Theorem 17]{V - Y}):
\begin{lemma}\label{lemma:ABCD-negative-iff-AB-separate-CD}
If $A,B,C,D$ are four points in a real line $p$, then
$$(ABCD)<0\iff A,B \text{ separate } C,D\,.$$
\end{lemma}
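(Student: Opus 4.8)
The plan is to work in affine coordinates on the real line $p$, choosing coordinates so that none of the four points is at infinity, and then simply analyze the sign of the product of ratios in formula~\eqref{eq:definition-of-cross-ratio}. Writing $a,b,c,d$ for the coordinates of $A,B,C,D$, we have
\[
(ABCD)=\frac{c-a}{c-b}\cdot\frac{d-b}{d-a},
\]
so the sign of $(ABCD)$ is the sign of $(c-a)(d-b)(c-b)(d-a)$ (the squared denominators being positive). The statement ``$A,B$ separate $C,D$'' means that exactly one of $C,D$ lies in the open segment between $A$ and $B$; equivalently, after relabelling so that $a<b$, exactly one of $c,d$ lies in the interval $(a,b)$.

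First I would reduce to the case $a<b$ by symmetry: both the cross ratio's sign and the separation relation are unchanged under swapping $A\leftrightarrow B$ together with $C\leftrightarrow D$ (by~\eqref{eq:CR1}, $(BACD)=(ABCD)$... more precisely $(ABDC)=(BACD)=(ABCD)^{-1}$, and inversion preserves sign; alternatively just swap the roles directly in the product above). Then the four points being distinct, $c$ and $d$ each lie in one of the three regions determined by $a<b$: to the left of $a$, strictly between, or to the right of $b$. I would tabulate the sign of each of the four factors $c-a$, $d-b$, $c-b$, $d-a$ in each of the resulting cases for the pair $(C,D)$. A short case check shows: if both $c,d<a$, or both $c,d>b$, or both $c,d\in(a,b)$, the product is positive; if one of $c,d$ is in $(a,b)$ and the other is outside, the product is negative. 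These are exactly the non-separating and separating configurations respectively, which gives both implications at once.

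The only mild subtlety — and the one place I would be careful — is the normalization step: one must make sure the coordinate choice placing all four points in the affine part is legitimate (possible since the four points are finitely many and a real projective line minus a point is affine) and that the separation relation, which is a projective-invariant notion for four points on a real line, coincides with the affine betweenness statement used above. This is essentially the definition of ``separate,'' but it is worth stating explicitly so that the case analysis is unambiguous. Once that is pinned down, the remainder is the routine sign bookkeeping sketched above, so I would not belabor it. (Alternatively, one could cite \cite[vol. II, Theorem 17]{V - Y} directly, but the self-contained argument is short enough to include.)
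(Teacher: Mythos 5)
The paper does not actually prove this lemma: it simply cites \cite[vol.\ II, Theorem 17]{V - Y} and moves on. Your self-contained coordinate argument is therefore a genuinely different (and more elementary) route, and it is essentially correct: normalizing so that no point is at infinity is legitimate because a fifth point of $p$ can always be sent to infinity, the cross ratio is coordinate-independent by construction, and with the paper's convention $\brs{XY}=Y-X$ one indeed gets $(ABCD)=\frac{c-a}{c-b}\cdot\frac{d-b}{d-a}$, whose sign is that of $(c-a)(c-b)(d-a)(d-b)$. Your identification of projective separation with ``exactly one of $c,d$ lies in $(a,b)$'' (after arranging $a<b$) is the right one, since $\{A,B\}$ cuts the projective line into the interval $(a,b)$ and its complement through infinity. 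The only slip is in the tabulation of non-separating cases: you list ``both $<a$, both $>b$, or both in $(a,b)$'' but omit the configuration $c<a<b<d$ (one point on each unbounded side), which is also non-separating because both $C$ and $D$ lie on the arc through infinity; there the four factors have signs $(-)(-)(+)(+)$ and the product is again positive, so the equivalence survives, but the case should be added for the enumeration to be exhaustive. With that row filled in, the proof is complete; what it buys over the paper's approach is self-containment at the cost of a half page of routine sign bookkeeping.
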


For the sake of simplicity, in the projective context we will consider
segments and triangles just as the subsets of the set of points of the
projective plane composed by their vertices, 
without entering deeper discussions about separation
or convexity. This will not be the same in the (non-euclidean) geometric
context: segments and triangles will retrieve their usual meaning
when considered in the hyperbolic or elliptic plane.

\section{Segments}

A \emph{segment} is a pair $\{P,Q\}$ of different points of the projective
plane. The segment $\{P,Q\}$ is denoted by $\overline{PQ}$. The points $P,Q$ are the \emph{endpoints} of the segment $\overline{PQ}$. Sometimes we will use the same name
for a segment and for the line that contains it.

\section{Triangles}

A \emph{triangle} is a set $\{P,Q,R\}$ composed by three noncollinear
points (the \emph{vertices} of the triangle). Because they are noncollinear,
in particular the three vertices of a triangle are different. The
triangle $\TT=\{P,Q,R\}$ is denoted by
$\widetriangle{PQR}$. A line joining two vertices of the triangle
is a \emph{side} of $\TT$.
The vertex $P$ and the side $p$ of a triangle are \emph{opposite}
to each other if $P\notin p$. If $p,q,r$ are three nonconcurrent lines, then
we denote by $\widetriangle{pqr}$ the triangle having $p,q,r$ as its sides.

\section{Quadrangles}

A \emph{quadrangle} (see Figure~\ref{Fig:quadrangle}) is the figure
composed by four points (the \emph{vertices} of the quadrangle), no
three of which are collinear, and all the lines 
joining any two of
them (the \emph{sides} of the quadrangle). If the intersection point
$P$ of two sides $r,s$ of the quadrangle is not a vertex of the
quadrangle, we say that $P$ is a \emph{diagonal point} of the quadrangle
and that $r$ and $s$ are \emph{opposite sides} to each other. A
quadrangle has six sides, arranged in three pairs of opposite sides,
and thus it has three diagonal points. The three diagonal points of
the quadrangle are noncollinear: they are the vertices of the \emph{diagonal
triangle} of the quadrangle.

\begin{figure}
\centering
\includegraphics[width=0.6\textwidth]{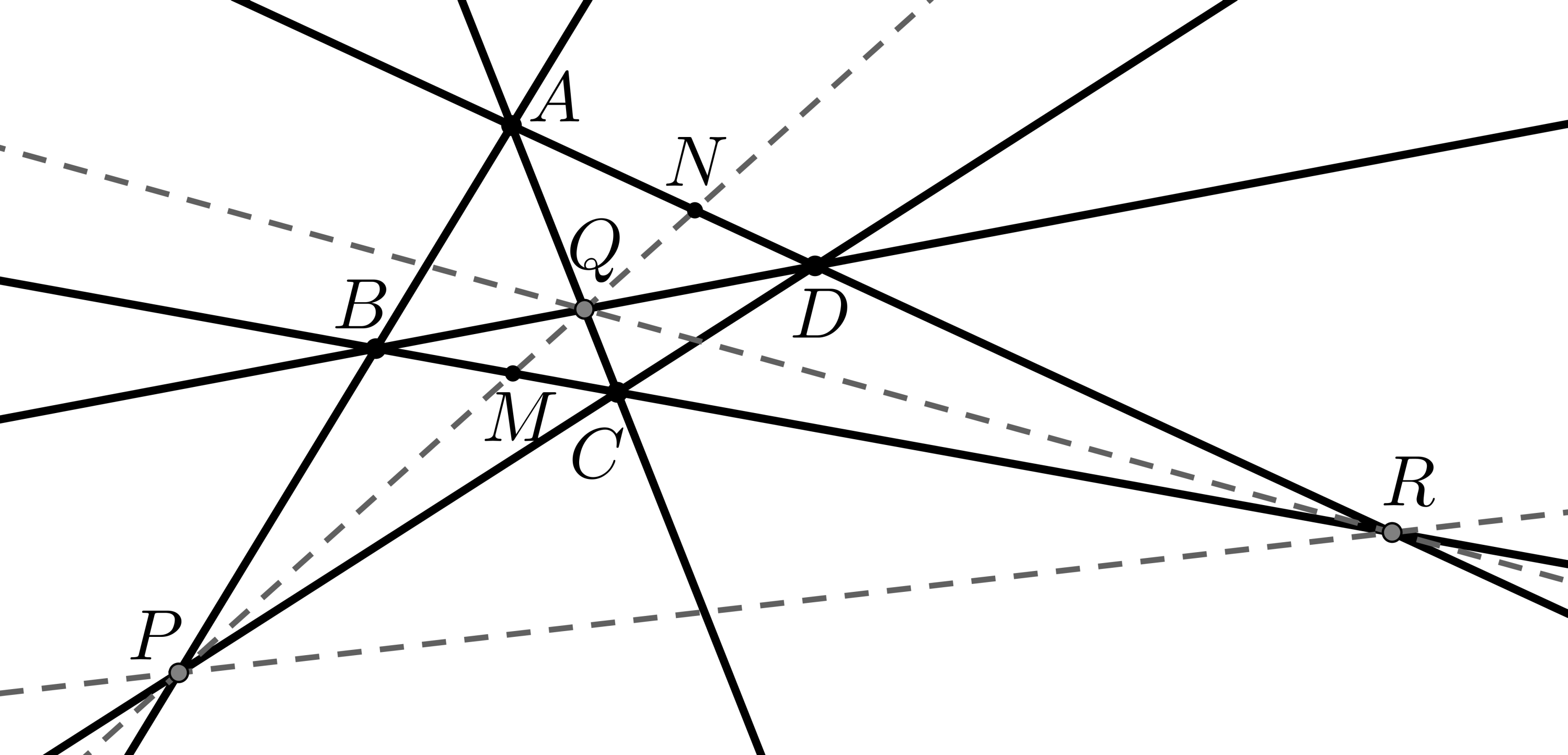}
\caption{quadrangle with vertices $A,B,C,D$ and diagonal points $P,Q,R$.}\label{Fig:quadrangle}
\end{figure}

\section{Poles and polars}

Let $\Phi$ be a nondegenerate conic.

The conic $\Phi$ allows us to associate to each point $P$ of the
plane a line $\rho(P)$ which is called the \emph{polar line} of $P$
with respect to $\Phi$. If $P$ does not belong to $\Phi$, there
are two (perhaps imaginary) lines $u,v$ through $P$ which are tangent
to $\Phi$. If $U,V$ are the two contact points with $\Phi$ of the
lines $u,v$ respectively, then it is $\rho(P)=UV$ (see 
Figure~\ref{Fig:drawing-polars-P-exterior}).
If $P$ lies on $\Phi$, then $\rho(P)$ is the unique line tangent
to $\Phi$ through $P$.

The map $\rho$ is the \emph{polarity} induced by $\Phi$, and
it is in fact a bijection: every line $p$ of the plane has a unique
point $P$ such that $\rho(P)=p$. If $p$ is not tangent to $\Phi$,
then $p\cdot\Phi$ is composed by two (perhaps imaginary) points $U,V$.
In this case, if $u,v$ are the two lines tangent to $\Phi$ at $U,V$
respectively, the point $P=u\cdot v$ verifies that $\rho(P)=p$.
If $p$ is tangent to $\Phi$, then $p\cdot\Phi$ contains only one
point $P$, and it is $\rho(P)=p$. If $\rho(P)=p$, we say that $P$
is the \emph{pole} of $p$ with respect to $\Phi$ and we denote also
$\rho(p)=P$.
\begin{proposition}
\label{prop:polarity-preserves-incidence}The polarity $\rho$ induced
by $\Phi$ is a \emph{correlation}: a bijection between the set of
points and the set of lines of the projective plane that preserves
incidence. In particular, the poles of concurrent lines are collinear and
the polars of collinear points are concurrent.
\end{proposition}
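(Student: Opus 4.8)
The plan is to pass to homogeneous coordinates and represent $\Phi$ by a symmetric invertible $3\times 3$ matrix $M$, so that $\Phi=\{[\mathbf{x}]:\mathbf{x}^{T}M\mathbf{x}=0\}$. First I would reconcile the geometric description of $\rho$ given above with the algebraic formula: for a point $P=[\mathbf{p}]$, the polar $\rho(P)$ is the line with coefficient vector $M\mathbf{p}$, i.e. $\rho(P)=\{[\mathbf{x}]:\mathbf{p}^{T}M\mathbf{x}=0\}$. The check is a short computation with the polar (bilinear) form $\mathbf{p}^{T}M\mathbf{x}$: when $P\in\Phi$ the line $\mathbf{p}^{T}M\mathbf{x}=0$ meets $\Phi$ only at $P$, so it is the tangent at $P$; when $P\notin\Phi$ it passes through the two (possibly imaginary) contact points of the tangents drawn from $P$. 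Granting this identification, the underlying map on coordinate vectors is $\mathbf{p}\mapsto M\mathbf{p}$, which is a linear isomorphism of $\mathbb{C}^{3}$ because $M$ is invertible; hence it descends to a bijection $\rho$ from points to lines, with inverse $[\mathbf{l}]\mapsto[M^{-1}\mathbf{l}]$. Since this inverse is exactly the pole map described above, the two uses of the symbol $\rho$ are consistent.

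Next I would verify that $\rho$ reverses incidence in the way demanded of a correlation. A point $P=[\mathbf{p}]$ lies on the line $\ell$ with coefficient vector $\mathbf{l}$ precisely when $\mathbf{l}^{T}\mathbf{p}=0$. Writing $\mathbf{l}=M\mathbf{q}$, where $Q=\rho(\ell)=[\mathbf{q}]$, this becomes $\mathbf{q}^{T}M^{T}\mathbf{p}=0$, that is $\mathbf{q}^{T}M\mathbf{p}=0$ since $M$ is symmetric, which is exactly the condition that $Q\in\rho(P)$. Thus $P\in\ell\iff\rho(\ell)\in\rho(P)$; equivalently, the conjugacy relation $\mathbf{p}^{T}M\mathbf{q}=0$ is symmetric in $P$ and $Q$. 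This is the content of ``$\rho$ is a correlation.''

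The two ``in particular'' assertions are then immediate. If $a,b,c$ are concurrent at a point $X$, then $X\in a$, $X\in b$ and $X\in c$, so by the incidence-reversing property $\rho(a),\rho(b),\rho(c)$ all lie on the single line $\rho(X)$, hence the poles of $a,b,c$ are collinear; dually, if $A,B,C$ lie on a line $p$, then $\rho(A),\rho(B),\rho(C)$ all pass through the point $\rho(p)$, so the polars of $A,B,C$ are concurrent. The only genuine obstacle is the very first step — matching the synthetic definition of $\rho$ (contact points of tangents, and the tangent line at points of $\Phi$) with the formula $\mathbf{p}\mapsto M\mathbf{p}$, i.e. the classical fact that $\mathbf{p}^{T}M\mathbf{x}=0$ cuts out exactly the points of $\Phi$ ``seen'' from $P$. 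Once that is in hand, everything else is linear algebra resting solely on the symmetry and invertibility of $M$. (If one prefers to avoid coordinates, one may instead invoke the standard fact that every nondegenerate conic induces a projective polarity and read the incidence-reversing property directly from the construction, but the coordinate argument is the most self-contained.)
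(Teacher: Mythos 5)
Your proof is correct. Note that the paper does not actually prove this proposition at all: it simply cites Veblen--Young (vol.~I, p.~124), so any comparison is with the classical synthetic treatment there rather than with an argument in the text. Your coordinate argument is the standard self-contained alternative: representing $\Phi$ by an invertible symmetric matrix $M$, identifying the synthetic polar with the line $\mathbf{p}^{T}M\mathbf{x}=0$, and reading bijectivity from the invertibility of $M$ and incidence-reversal from its symmetry. You correctly isolate the one non-formal step, namely matching the tangent-contact-point definition of $\rho(P)$ with the algebraic polar; your sketch of that step is sound (each contact point $U$ satisfies $\mathbf{u}^{T}M\mathbf{p}=0$ because the tangent at $U$ passes through $P$, and symmetry of $M$ puts $U$ on the algebraic polar of $P$; when $P\in\Phi$ nondegeneracy forces the algebraic polar to meet $\Phi$ only at $P$). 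The synthetic route in Veblen--Young buys coordinate-freedom and fits the paper's classical style; your route buys brevity and makes the role of the symmetry of the bilinear form completely transparent. The two ``in particular'' statements follow from incidence-reversal exactly as you say.
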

See \cite[vol. I, p.124]{V - Y} for a proof of this statement.

A quadrangle $\mathscr{Q}$ is \emph{inscribed into} $\Phi$ if the
four vertices of $\mathscr{Q}$ belong to~$\Phi$. Quadrangles are
important in the theory of conics due to the following theorem (see \cite[vol. I, p.123]{V - Y}).
\begin{theorem}
\label{thm:polars-quadrangles}The diagonal triangle of a quadrangle
inscribed into $\Phi$ is \emph{self-polar} with respect to $\Phi$:
each side is the polar of its opposite vertex with respect to $\Phi$.
Conversely, every self-polar triangle with respect to $\Phi$ is the
diagonal triangle of a quadrangle inscribed into $\Phi$.
\end{theorem}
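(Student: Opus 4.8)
The plan is to prove the two directions separately, using a projective normalization to reduce each to a concrete coordinate computation, and then checking that the incidence/polarity relations are preserved. Since the statement is purely projective, I can begin by moving $\Phi$ to a convenient normal form — say $x^2+y^2+z^2=0$ (or $x^2+y^2-z^2=0$; the algebra is identical), so that the polarity $\rho$ becomes the map sending a point with coordinate vector $\mathbf{p}$ to the line with the same coordinate vector (up to the diagonal matrix of signs). This turns ``polar of'' into transpose-of-coordinate-vector and makes incidence $P\in\rho(Q)$ symmetric in $P,Q$, which is exactly the content of Proposition~\ref{prop:polarity-preserves-incidence} rendered in coordinates.

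For the first (direct) implication, let $A,B,C,D$ be four points of $\Phi$ in general position. Using a projectivity fixing $\Phi$ I can further send these four points to a standard position on the conic (for a conic this is possible since $\mathrm{PGL}$ acting on $\Phi$ is $3$-transitive on its points, and a fourth point can be placed up to a one-parameter family, which I can absorb or keep as a parameter). Then I explicitly compute the three diagonal points $P=AB\cdot CD$, $Q=AC\cdot BD$, $R=AD\cdot BC$ and verify directly that $\rho(P)$ is the line $QR$, and cyclically. Concretely, one shows $\mathbf{p}^{\!\top}\!M\mathbf{q}=0$ and $\mathbf{p}^{\!\top}\!M\mathbf{r}=0$ where $M$ is the (diagonal) matrix of $\Phi$; since $\rho(P)$ is the line $M\mathbf{p}$, these two orthogonality relations say precisely $Q,R\in\rho(P)$, hence $\rho(P)=QR$. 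Doing this for one vertex suffices by the symmetry of the labels.

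For the converse, let $\TT=\wt{PQR}$ be self-polar, so $\rho(P)=QR$, $\rho(Q)=PR$, $\rho(R)=PQ$. Again I normalize: a self-polar triangle for $\Phi$ can be carried by a projectivity preserving $\Phi$ onto the coordinate triangle, because the conditions ``$P,Q,R$ mutually conjugate'' diagonalize the quadratic form, and in the basis $P,Q,R$ the matrix of $\Phi$ is diagonal, so after rescaling the basis vectors $\Phi$ is $x^2+y^2+z^2=0$ with $P=[1{:}0{:}0]$, etc. Then I must produce a quadrangle inscribed in $\Phi$ with this diagonal triangle: take the four points $[1{:}1{:}1],[1{:}1{:}-1],[1{:}-1{:}1],[-1{:}1{:}1]$ (which lie on $\Phi$ after adjusting signs to the correct normal form), and check that their six joining lines meet in pairs exactly at $P,Q,R$. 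This is a short direct verification. One must also note that the four constructed points are in general position (no three collinear) so that they genuinely form a quadrangle; this follows since a line meets $\Phi$ in at most two points.

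The main obstacle I anticipate is not the algebra, which is routine, but making the normalization steps clean and rigorous — in particular justifying that \emph{every} self-polar triangle is $\mathrm{PGL}$-equivalent to the coordinate triangle (which rests on simultaneous diagonalization of the quadratic form in a basis of mutually conjugate vectors, together with the fact that a nondegenerate form over an algebraically closed field is equivalent to the sum of squares), and that any four points of $\Phi$ in general position lie in a single orbit up to the harmless parameter mentioned above. Once these reductions are in place, both implications collapse to the two coordinate computations sketched. An alternative, more synthetic route — deriving the direct implication from properties of the complete quadrangle and harmonic conjugation, and the converse by reversing that construction — is also available and avoids coordinates entirely; I would mention it but carry out the coordinate proof as the primary argument for concreteness.
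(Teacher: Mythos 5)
The paper offers no proof of this theorem at all --- it is quoted from Veblen--Young (\cite[vol. I, p.123]{V - Y}) --- so any proof you give is necessarily your own route. Your overall strategy (normalize $\Phi$, reduce both implications to coordinate computations, exploit the symmetry of the three diagonal points under relabelling of $A,B,C,D$) is sound, and the normalizations you worry about are indeed the only delicate points: for the direct implication you must keep the fourth point as a free parameter (the cross ratio of four points on $\Phi$ is a projective invariant, so it cannot be ``absorbed''), and for the converse the diagonalization argument you sketch is correct over $\mathbb{CP}^2$.

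There is, however, one concrete error in the converse: the four points $[1{:}1{:}1],[1{:}1{:}-1],[1{:}-1{:}1],[-1{:}1{:}1]$ lie on \emph{no} conic of the form $\pm x^2\pm y^2\pm z^2=0$, since $\pm1\pm1\pm1$ is odd and hence never zero; no ``adjustment of signs'' repairs this. The fix is easy and worth stating explicitly: pick any point $[\alpha{:}\beta{:}\gamma]$ of $\Phi$ with $\alpha\beta\gamma\neq0$ (such a point exists because $\Phi$ meets the three coordinate lines in at most six points) and take its orbit $[\alpha{:}\pm\beta{:}\pm\gamma]$ under the sign-change group; then $E_1+E_2$ and $E_3+E_4$ are both proportional to $(1,0,0)$, so the coordinate points are exactly the diagonal points. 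With that correction the argument closes. Separately, the ``alternative synthetic route'' you mention in one sentence is in fact the one this paper's own machinery is built for: the harmonic property of the complete quadrangle established in \S\ref{sec:Harmonic-sets-harmonic-conjugacy}, combined with the fact that conjugacy with respect to $\Phi$ on a non-tangent line is harmonic conjugacy with respect to its two intersection points with $\Phi$, shows directly that $\rho(P)$ contains $QR\cdot AB$ and $QR\cdot CD$ and hence equals $QR$; that is essentially the Veblen--Young proof being cited, and it avoids all the normalization overhead.
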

If $\Phi$ is a real conic and $P$ is a real point, Theorem~\ref{thm:polars-quadrangles}
provides an algorithm for drawing $\rho(P)$ even if $P$ is interior
to $\Phi$ (the tangent lines to $\Phi$ through $P$ are imaginary
in this case, so we can't draw them!): (i) draw two secant lines $a,b$
to $\Phi$ through $P$ and take the points $A_{1},A_{2}$, and $B_{1},B_{2}$
lying in $a\cdot\Phi$ and $b\cdot\Phi$ respectively; (ii) for the
quadrangle with vertices $A_{1},A_{2},B_{1},B_{2}$, find the two
diagonal points $Q,R$ different from $P$; and (iii) the polar $\rho(P)$
of $P$ with respect to $\Phi$ is the line $QR$ (see 
Figure~\ref{Fig:drawing-polars}).
In order to find the pole of a line $p$, take two points $A,B\in p$
and draw their polars $\rho(A),\rho(B)$: the pole of $p$ is the
intersection point $\rho(A)\cdot\rho(B)$.

\begin{figure}\centering
\subfigure[$P$ exterior to $\Phi$]
{\includegraphics[width=0.48\textwidth]
{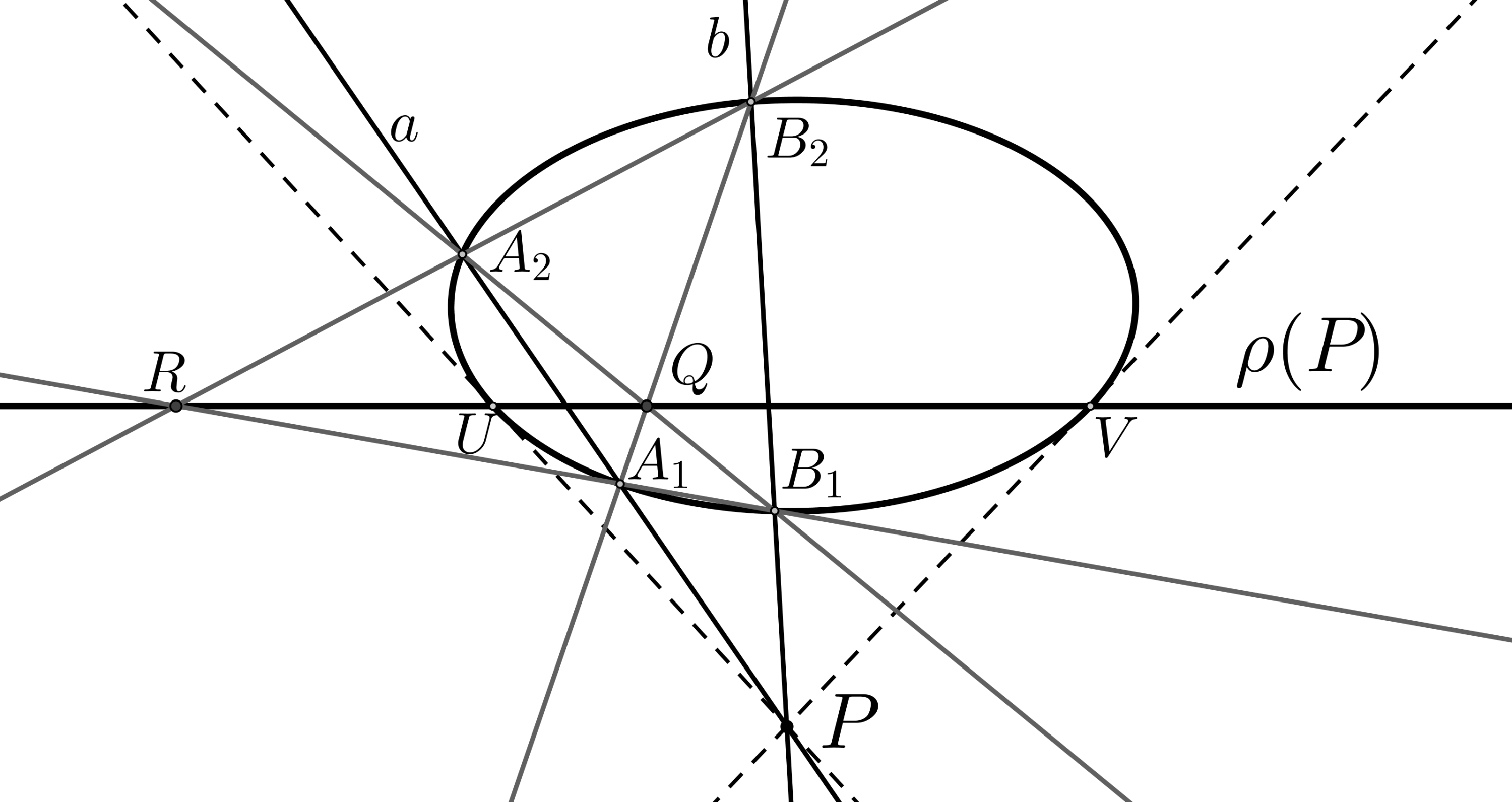}
\label{Fig:drawing-polars-P-exterior}}
\hfill
\subfigure[$P$ interior to $\Phi$]
{\includegraphics[width=0.48\textwidth]
{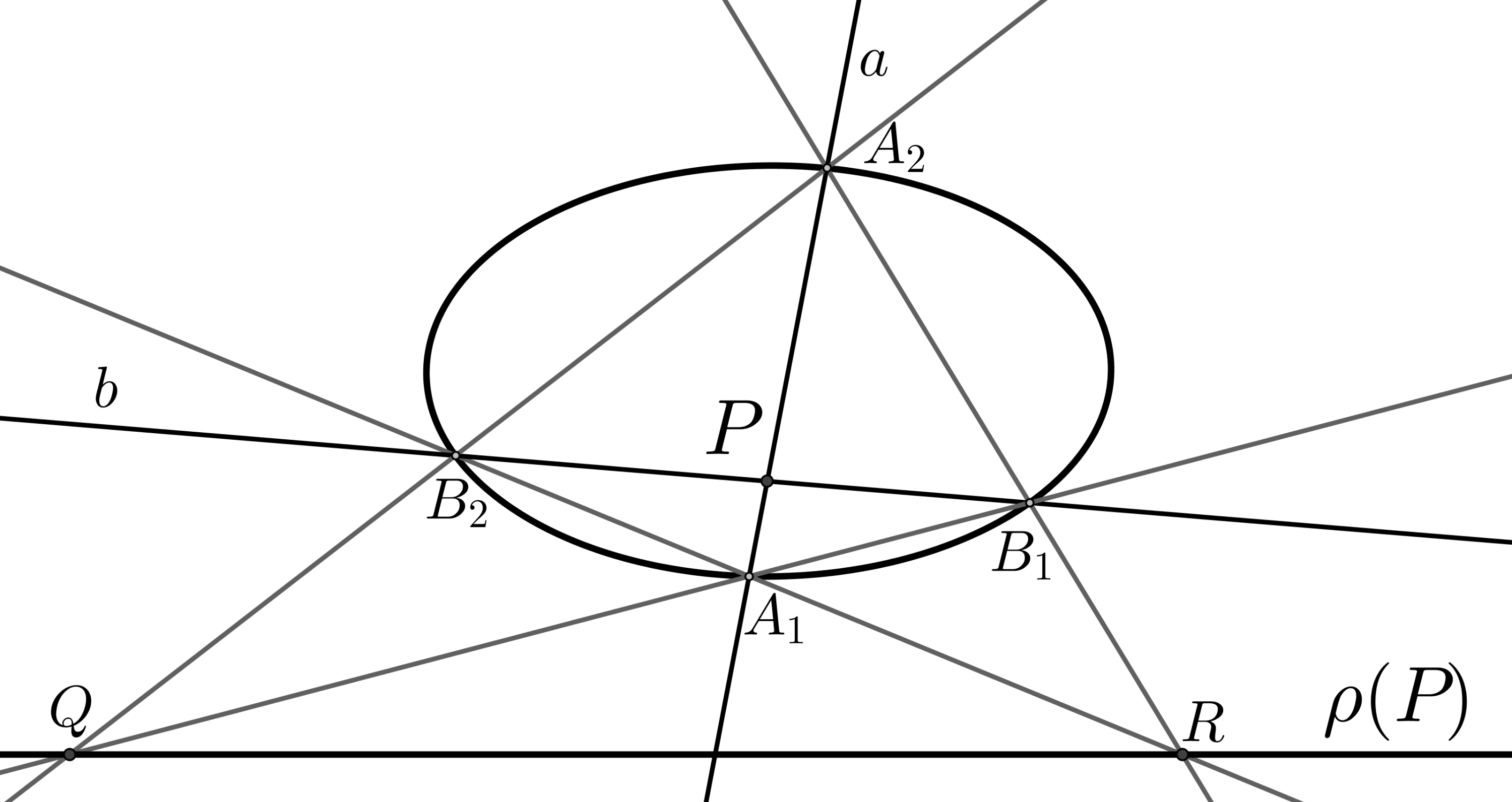}
\label{Fig:drawing-polars-P-interior}}
\caption{drawing polars}\label{Fig:drawing-polars}
\end{figure}

\section{Conjugate points and lines}

Let $p$ be a line not tangent to $\Phi$. The polarity with respect to $\Phi$ defines a 
\emph{conjugacy map} $\rho_p:p\rightarrow p$ given by
$$\rho_p(Q)=p\cdot \rho(Q)\quad\text{for all}\quad Q\in p\,.$$
We say that $\rho_p(Q)$ is \emph{the conjugate point} of $Q$ in $p$ \emph{with respect to}
$\Phi$, and we denote it by $Q_{p}$ (see Figure
\ref{Fig:conjugate-point}). If $P$ is a point not lying on $\Phi$, 
we define the map $\rho_P$ from the pencil of lines through $P$ onto itself given by
$$\rho_P(q)=P \rho(q)\,,$$
for any line $q$ with $P\in q$.
We say that $\rho_p(q)$ is \emph{the conjugate line} of $q$
\emph{with respect to} $\Phi$, and we denote it by $q_{P}$ (see 
Figure~\ref{Fig:conjugate-line}). Note that:
(i) if $Q\in p$ lies on $\Phi$ (resp. $q$ incident with $P$ is
tangent to $\Phi$), then it is $Q_{p}=Q$ (resp. $q_{P}=q$); and
(ii) the conjugacy maps $\rho_p,\rho_P$ are involutive, that is: $\left(Q_{p}\right)_{p}=Q$
and $\left(q_{P}\right)_{P}=q$.
\begin{figure}
\centering
\subfigure[conjugate of a point]{
\includegraphics[width=0.45\textwidth]
{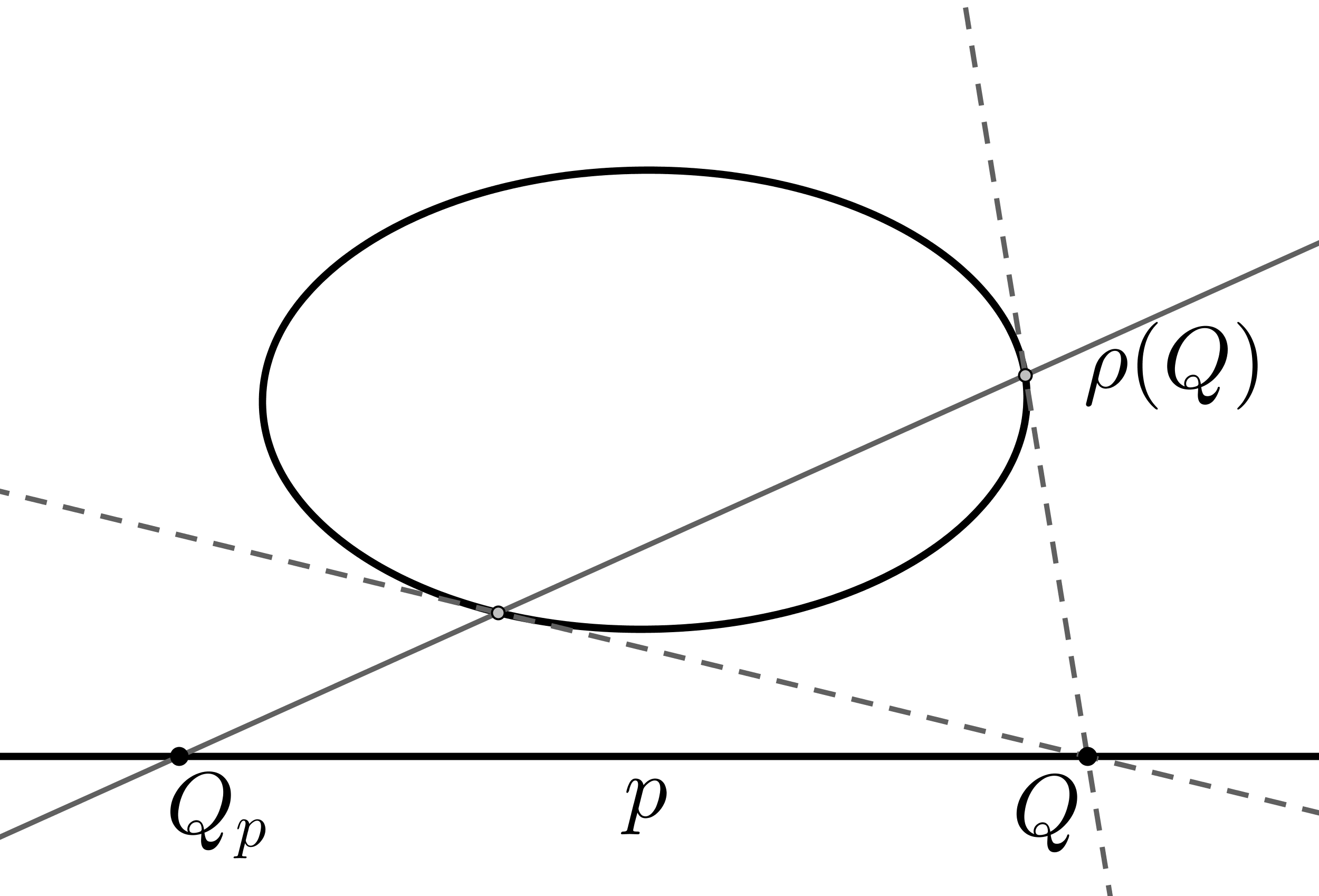}
\label{Fig:conjugate-point}}\hfill
\subfigure[conjugate of a line]{
\includegraphics[width=0.45\textwidth]
{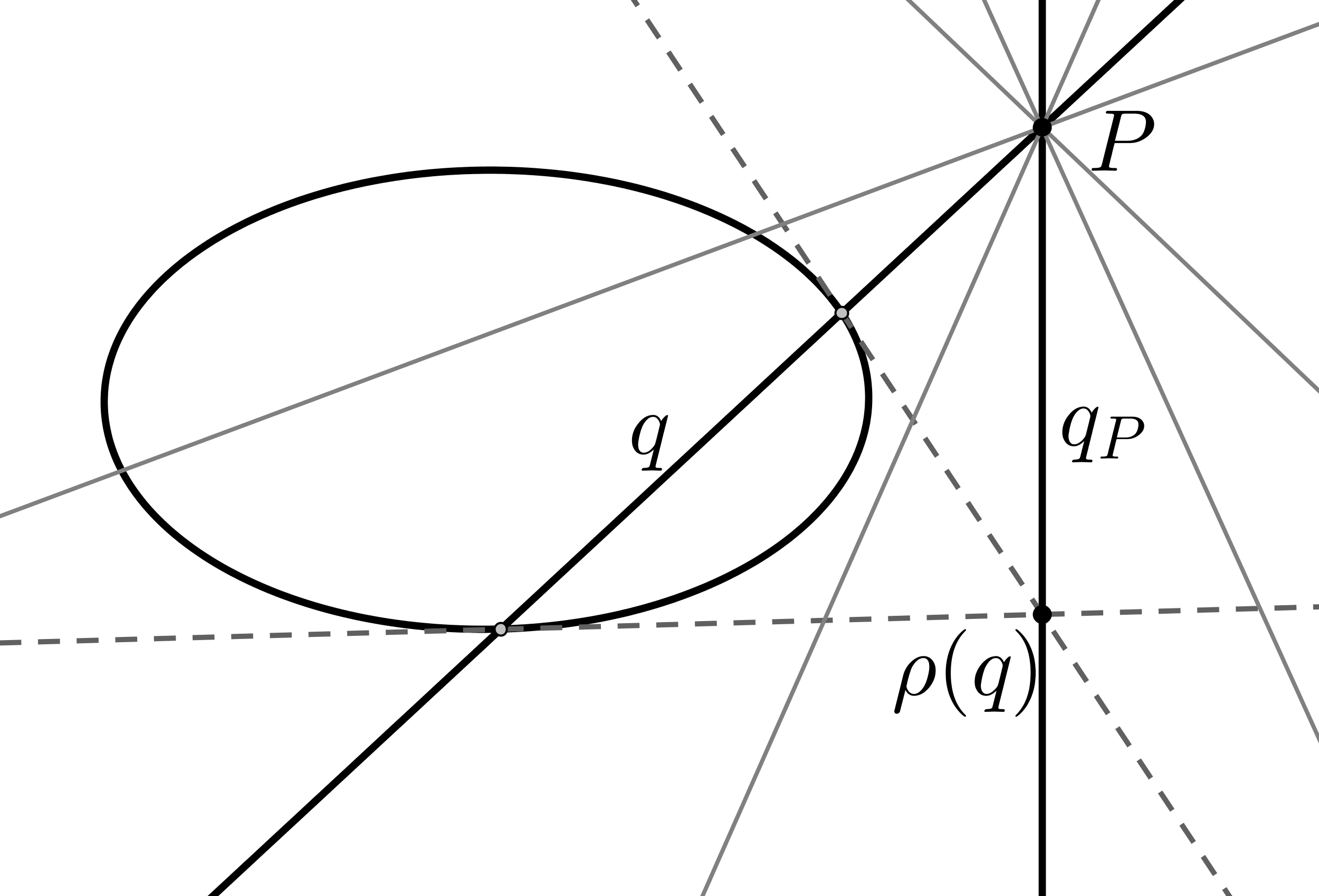}
\label{Fig:conjugate-line}}\caption{conjugacy with respect to $\Phi$}\label{Fig:conjugacy}
\end{figure}

Polarity and conjugacy are projective pransformations and therefore they
preserve cross ratios: if $p$ is
a projective line and $A,B,C,D$ are four points lying on $p$ it
is
\begin{equation}
\left(ABCD\right)=\left(\rho(A)\rho(B)\rho(C)\rho(D)\right)=\left(A_{p}B_{p}C_{p}D_{p}\right),\label{eq:invariance_cross-ratio-I}
\end{equation}
and if $P$ is a point and $a,b,c,d$ are four lines incident with
$P$ it is
\begin{equation}
\left(a\,b\,c\,d\right)=\left(\rho(a)\rho(b)\rho(c)\rho(d)\right)=\left(a_{P}\,b_{P}\,c_{P}\,d_{P}\right).\label{eq:invariance_cross-ratio-II}
\end{equation}

\section{Harmonic sets and harmonic
conjugacy}\label{sec:Harmonic-sets-harmonic-conjugacy}

Two collinear segments $\overline{AB}$ and $\overline{CD}$ form an \emph{harmonic
set} if
$$(ABCD)=-1.$$
Quadrangles are intimately
related with harmonic sets: if $P,Q$ are two diagonal points of a
quadrangle $\mathscr{Q}$, the two lines of the quadrangle not incident
with $P$ or $Q$ intersect the line $PQ$ in two points $M,N$ verifying
$(PQMN)=-1$. Indeed, if $\mathscr{Q}$ is the quadrangle
of Figure~\ref{Fig:quadrangle}, taking the points $Q_{1},Q_{2}$
given by
\[
Q_{1}=QR\cdot AB,\qquad Q_{2}=QR\cdot CD,
\]
by successive projections and sections we have
\[
(PQMN)=(PQ_{1}BA)=(PQ_{2}DC)=(PQNM),
\]
and thus by~\eqref{eq:CR1} it is $(PQMN)=(PQMN)^{-1}$. This only
could happen if $(PQMN)=\pm1$, but by 
Proposition~\ref{prop:cross-ratio-four-different-points}
it cannot be $(PQMN)=1$. Therefore it must be $(PQMN)=-1$.

Given three different collinear points $A,B,C$, the \emph{harmonic
conjugate of} $C$ \emph{with respect to} $A,B$ is the unique point
$D$, also collinear with $A,B,C$, such that $(ABCD)=-1$. 
By~\eqref{eq:CR2},
if $D$ is the harmonic conjugate of $C$ with respect to $A,B$,
then $D$ is the harmonic conjugate of $C$ with respect to $A,B$.
If $p$ is the line containing all these points, the \emph{harmonic conjugacy
map} 
$\tau_{AB}:p\rightarrow p$ that leaves $A$ and $B$ fixed and maps each point of
$p$ different from $A$ and $B$ to its harmonic conjugate with respect
to $A$ and $B$ is an involution of $p$ that is, indeed, a projectivity.
Every projective involution on a line has two (perhaps imaginary) fixed points,
and it coincides with the
harmonic conjugacy on the line with respect to its fixed points. In
particular, if $p$ is not tangent to $\Phi$ and $U,V$ are the intersection
points of $p$ with $\Phi$, the conjugacy $\rho_p$ on $p$ with respect to
$\Phi$ coincides with harmonic conjugacy $\tau_{UV}$ on $p$ with respect
to $U,V$ and for any point $A\in p$ it is 
\begin{equation*}
(UVAA_{p})=-1.
% \label{eq:UVAAp-harmonic-set}
\end{equation*}

An interesting property of harmonic sets, wich will be useful later, is the
following.
\begin{lemma}\label{lem:harmonic-sets-on-the-sides}
Let $\TT=\wt{ABC}$ be a projective triangle, and let $F_1,F_2$ and $E_1,E_2$ be
two points on the side $AB$ and on the side $CA$ respectively such that
$$(ABF_1F_2)=(CAE_1E_2)=-1\,.$$
The diagonal points $D_1,D_2$ different from $A$ of the quadrilateral
$\QQ=\{E_1,E_2,F_1,F_2\}$
lie on $BC$ and $(BCD_1D_2)=-1$.
\end{lemma}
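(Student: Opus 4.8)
The plan is to work with the quadrangle $\QQ=\{E_1,E_2,F_1,F_2\}$ and identify its three diagonal points. Since $F_1,F_2$ lie on $AB$ and $E_1,E_2$ lie on $CA$, the two sides $F_1F_2$ and $E_1E_2$ of the quadrangle are the lines $AB$ and $CA$, which meet at the vertex $A$ of $\TT$. Hence $A$ is one diagonal point of $\QQ$ (it is the intersection of a pair of opposite sides, and it is not one of the four vertices, since the four points are distinct from $A$). The other two diagonal points $D_1,D_2$ are therefore the intersection points of the remaining two pairs of opposite sides, namely $D_1 = E_1F_1\cdot E_2F_2$ and $D_2 = E_1F_2\cdot E_2F_1$ (up to relabelling).

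First I would show that $D_1,D_2$ lie on $BC$. Recall from \S\ref{sec:Harmonic-sets-harmonic-conjugacy} that if $A$ and, say, $D_1$ are two diagonal points of a quadrangle, then the two sides of the quadrangle not through $A$ or $D_1$ cut the line $AD_1$ in a harmonic pair with $A,D_1$. I would instead use the diagonal triangle directly: the diagonal triangle of $\QQ$ has vertices $A,D_1,D_2$, and the line $D_1D_2$ is one of its sides. The key observation is that the side $D_1D_2$ of the diagonal triangle is the line that meets $AB$ at the harmonic conjugate of $A$ with respect to $F_1,F_2$ — but wait, I must be careful, because the harmonic property involves the diagonal point on line $AB$, not $A$ itself. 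Since $AB = F_1F_2$ is a side of $\QQ$ through the diagonal point $A$, the line $AB$ meets the opposite side $E_1E_2 = CA$ at $A$ again; this degenerate-looking configuration is exactly why $A$ is a diagonal point. The cleaner route: the line $D_1D_2$ meets $AB$ at a point $M$ with $(ABF_1F_2)$ related to $M$ — specifically, by the standard quadrangle harmonic theorem applied to the diagonal point pair $\{A, M'\}$ where $M' = D_1D_2\cdot AB$, we get $M'$ is the harmonic conjugate of $A$ with respect to $F_1,F_2$. But $(ABF_1F_2)=-1$ means $B$ \emph{is} the harmonic conjugate of $A$ with respect to $F_1,F_2$. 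Hence $M'=B$, i.e. $D_1D_2$ passes through $B$. Symmetrically, using $(CAE_1E_2)=-1$, the line $D_1D_2$ meets $CA$ at the harmonic conjugate of $A$ with respect to $E_1,E_2$, which is $C$. Therefore $D_1D_2$ is the line $BC$, so both $D_1$ and $D_2$ lie on $BC$.

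Finally I would establish $(BCD_1D_2)=-1$. This is now immediate from the quadrangle harmonic theorem once more: $D_1D_2 = BC$ is a side of the diagonal triangle of $\QQ$, and on this side the two diagonal points lying on it are $D_1,D_2$, while the two sides of $\QQ$ not through $D_1$ or $D_2$ — namely $F_1F_2=AB$ and $E_1E_2=CA$ — meet $BC$ at $B$ and $C$ respectively. By the harmonic property of the complete quadrangle (the displayed computation in \S\ref{sec:Harmonic-sets-harmonic-conjugacy}), $(D_1D_2BC)=-1$, and by the cross-ratio identities~\eqref{eq:cross_ratio_identities} this is equivalent to $(BCD_1D_2)=-1$.

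The main obstacle is bookkeeping: correctly matching up which pair of opposite sides of $\QQ$ gives which diagonal point, and being careful that the ``degenerate'' pair of opposite sides $AB,CA$ genuinely produces the single diagonal point $A$ (this uses that $F_1,F_2,E_1,E_2$ are all different from $A$, which follows since $(ABF_1F_2)=-1\neq 0,1$ forces $F_1,F_2\notin\{A,B\}$ by Proposition~\ref{prop:cross-ratio-four-different-points}, and similarly for the $E_i$). One also needs $D_1\neq D_2$ and that neither equals $B$ or $C$, which again follows from the harmonic values being $-1$; these non-degeneracy checks are routine but should be stated. Everything else reduces to two applications of the complete-quadrangle harmonic theorem plus the identities in~\eqref{eq:cross_ratio_identities}.
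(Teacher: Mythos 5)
Your proof is correct, but it takes a genuinely different route from the paper's. The paper never identifies the diagonal points of $\QQ$ abstractly: it \emph{constructs} candidates on $BC$ directly, setting $P=BE_2\cdot CF_2$ and $D_1=AP\cdot BC$, and then applies the harmonic property of \S\ref{sec:Harmonic-sets-harmonic-conjugacy} to auxiliary quadrangles such as $\{C,P,E_2,D_1\}$ (whose diagonal points turn out to be $A$ and $B$) to show that $F_1,E_2,D_1$ and $F_2,E_1,D_1$ are collinear; only at the end does it observe that the points so built are the diagonal points of $\QQ$. You work in the opposite direction: you take $D_1=E_1F_1\cdot E_2F_2$ and $D_2=E_1F_2\cdot E_2F_1$ as given and show that the diagonal-triangle side $D_1D_2$ meets $AB$ and $CA$ at $B$ and $C$. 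Your final step is a clean, literal application of the property recorded in \S\ref{sec:Harmonic-sets-harmonic-conjugacy} (the sides $AB=F_1F_2$ and $CA=E_1E_2$ are exactly the two sides of $\QQ$ not through $D_1$ or $D_2$), and your organization is arguably more transparent than the paper's.

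The one step you must tighten is the middle one. The fact that $M'=D_1D_2\cdot AB$ is the harmonic conjugate of $A$ with respect to $F_1,F_2$ is true, but your justification is off: $\{A,M'\}$ is \emph{not} a pair of diagonal points of $\QQ$ (the diagonal points are $A,D_1,D_2$), so the statement proved in \S\ref{sec:Harmonic-sets-harmonic-conjugacy} does not apply to it as written. This is the other half of the complete-quadrangle harmonic property (a side of the quadrangle is divided harmonically by its two vertices versus its diagonal point and the trace of the opposite side of the diagonal triangle), which the paper's preliminaries do not record. You can repair it in one line by applying the stated property to the auxiliary quadrangle $\{E_1,E_2,D_1,D_2\}$: since $E_1D_1=E_1F_1$, $E_2D_2=E_2F_1$, $E_1D_2=E_1F_2$ and $E_2D_1=E_2F_2$, its diagonal points are $F_1$, $F_2$ and $E_1E_2\cdot D_1D_2$, and the two of its sides not through $F_1$ or $F_2$, namely $E_1E_2$ and $D_1D_2$, meet the line $F_1F_2=AB$ at $A$ and $M'$, giving $(F_1F_2AM')=-1$ as you need. (A direct double projection from $D_1$ and from $D_2$, as in the displayed computation of \S\ref{sec:Harmonic-sets-harmonic-conjugacy}, works just as well.) With that patch, and the non-degeneracy remarks you already flag, the argument is complete.
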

\begin{proof}
See Figure~\ref{Fig:quadrangle_02-harmonic-sets-on-the-sides}.
Consider the point $P=BE_2\cdot CF_2$, and consider also
$D_1=AP\cdot BC$. By considering the quadrangle $\{C,P,E_2, D_1\}$ we have that
$E_2D_1$ intersects the line $AB$ at the harmonic conjugate of $F_2$ with
respect to $A,B$. That is, $E_2D_1$ passes through $F_1$. In the same way, using
the quadrangle $\{B,P,F_2, D_1\}$ for example, it is proved that $E_1$ is
collinear with $F_2$ and $D_1$.

Consider now the point $Q=BE_1\cdot CF_2$, and take $D_2=AQ\cdot BC$.
By considering the quadrangle $\{C,Q,E_1, D_2\}$ we have that $E_1D_2$
intersects the line $AB$ at $F_1$. A similar argument can be used to conclude
that $D_2,E_2, F_2$ are collinear.

The points $D_1,D_2$ so constructed are the diagonal points
different from $A$ of the quadrangle $\QQ=\{E_1,E_2,F_1,F_2\}$
and it is $(BCD_1D_2)~=-1$.

\begin{figure}
\centering
\includegraphics[width=0.9\textwidth]
{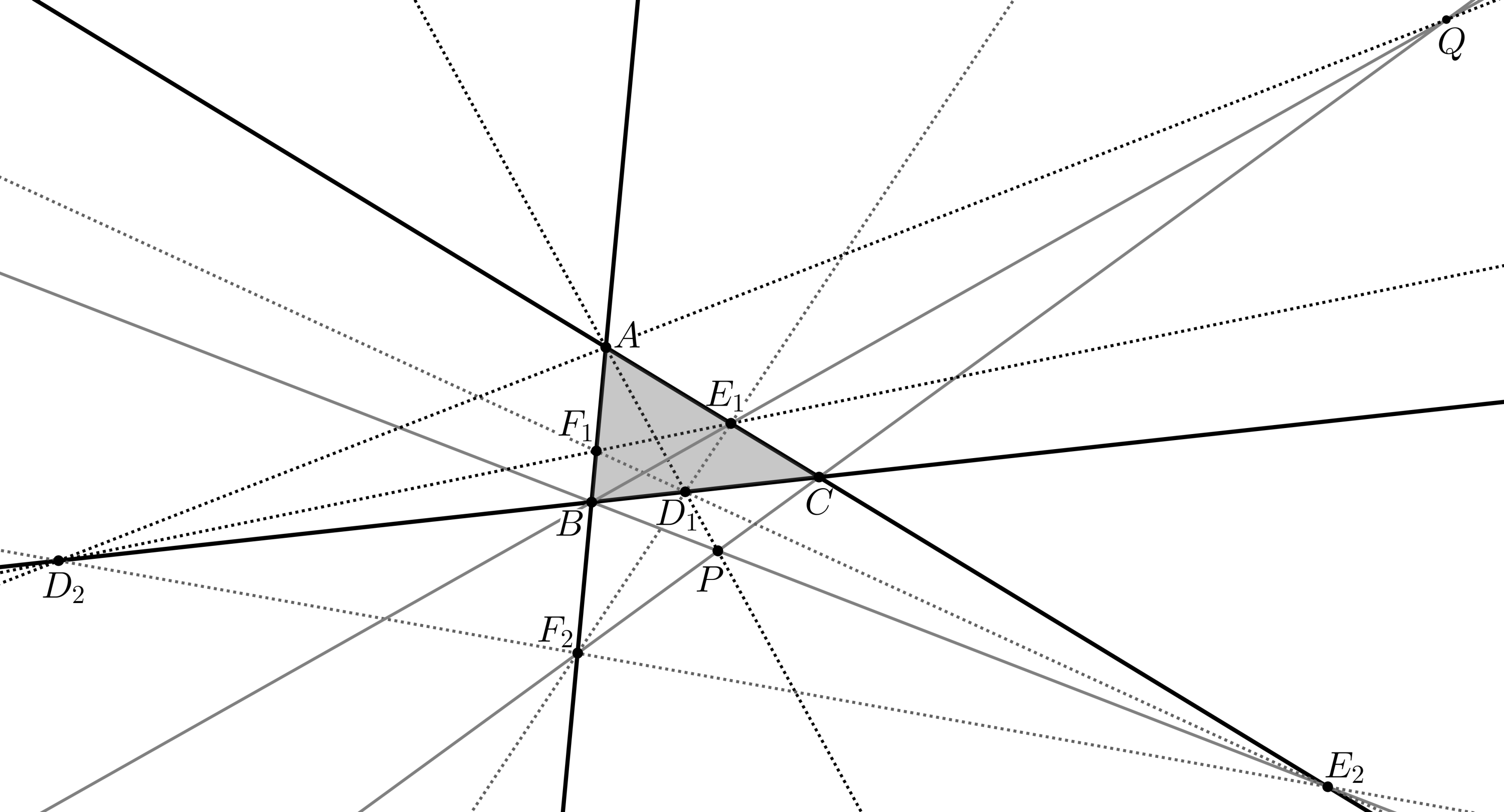}
\caption{Harmonic sets on the sides of a
triangle.}\label{Fig:quadrangle_02-harmonic-sets-on-the-sides}
\end{figure}
\end{proof}

\section{Conics whose polarity preserves real elements}

For the rest of the paper, we will consider a nondegenerate conic
$\Phi$ with respect to which the polar of a real point is always
a real line%
\footnote{The conics with this property are those that can be 
expressed by means
of a 2nd degree equation with real coefficients.%
}. This property is satified by all real conics and also by a set of
imaginary conics. Moreover, two such conics are equivalent under the
group of real projective collineations if and only if both are real
or both are imaginary (see \cite[vol. II, p. 186]{V - Y}). Through
the whole paper, when we were talking about polar lines or points
and conjugate lines or points, it must be understood that we are talking
about polar lines or points, and conjugate lines or points \emph{with respect
to} $\Phi$.

\chapter{Cayley-Klein models for hyperbolic and elliptic planar geometries}\label{sec:Cayley-Klein-models-for}

In 1871, in his paper \cite{Klein} Felix Klein presented his projective
interpretation of the geometries of Euclid, Lobachevsky and Riemann
and he introduced for them the names \emph{parabolic}, \emph{hyperbolic} and \emph{elliptic},
respectively. Klein completed the study \cite{Cayley} just made in 1859 by Arthur
Cayley by adding the geometry of Lobachevsky and
studying also the three-dimensional cases (Cayley only paid attention
to euclidean and spherical planar geometries). For the planar models,
Klein considers the non-degenerate conic $\Phi$ in the projective
plane $\mathbb{RP}^{2}\subset\mathbb{CP}^{2}$ (\emph{the absolute
conic}). When $\Phi$ is a real conic, the interior points of $\Phi$
compose the hyperbolic plane, and when $\Phi$ is an imaginary conic
the whole $\mathbb{RP}^{2}$ compose the elliptic plane. When $\Phi$
degenerates into a single line $\ell_{\infty}$, then $\mathbb{RP}^{2}\backslash
\ell_{\infty}$
gives a model of the euclidean (parabolic) plane.

We will use the common term \emph{the plane} $\mathbb{P}$ either
for the hyperbolic plane (when $\Phi$ is a real conic) or for the elliptic
plane (when $\Phi$ is an imaginary conic). 
Geodesics in these models are given by the intersection with $\mathbb{P}$ of
real projective lines, and
rigid motions are given by the set of real collineations
that leave invariant the absolute conic. 

In the hyperbolic plane, we still use expressions as ``the intersection point of
two lines'' or ``concurrent lines'' in a projective sense, even if the referred
point of intersection or concurrency is not interior to $\Phi$. A common
notation in hyperbolic geometry is to call ``parallel'' to those lines
intersecting on $\Phi$ and ``ultraparallel'' to those lines intersecting
outside $\Phi$. If $a,b$ are hyperbolic lines intersecting outside $\Phi$,
then $\rho(a\cdot b)$ is the common perpendicular to $a$ and $b$.
% Distances and angles are defined
% in terms of cross ratios as follows.

\section{Distances and angles}\label{sub:Cayley-Klein-models-distances-angles}

\begin{figure}
\centering
\hfill
\subfigure[]{\includegraphics[width=0.48\textwidth]
{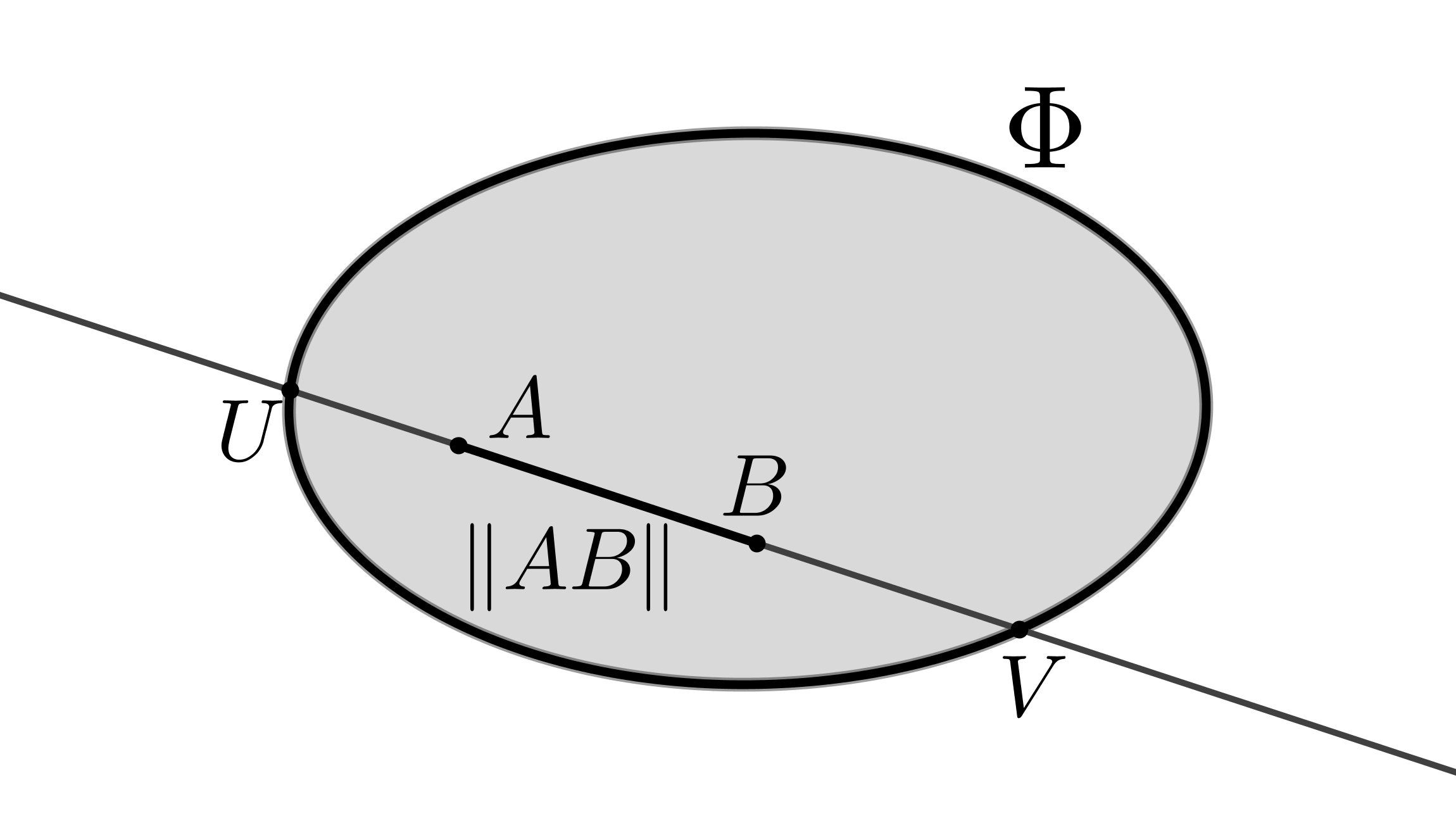}
\label{Fig:distance-hyperbolic}}\hfill
\subfigure[]{\includegraphics[width=0.48\textwidth]
{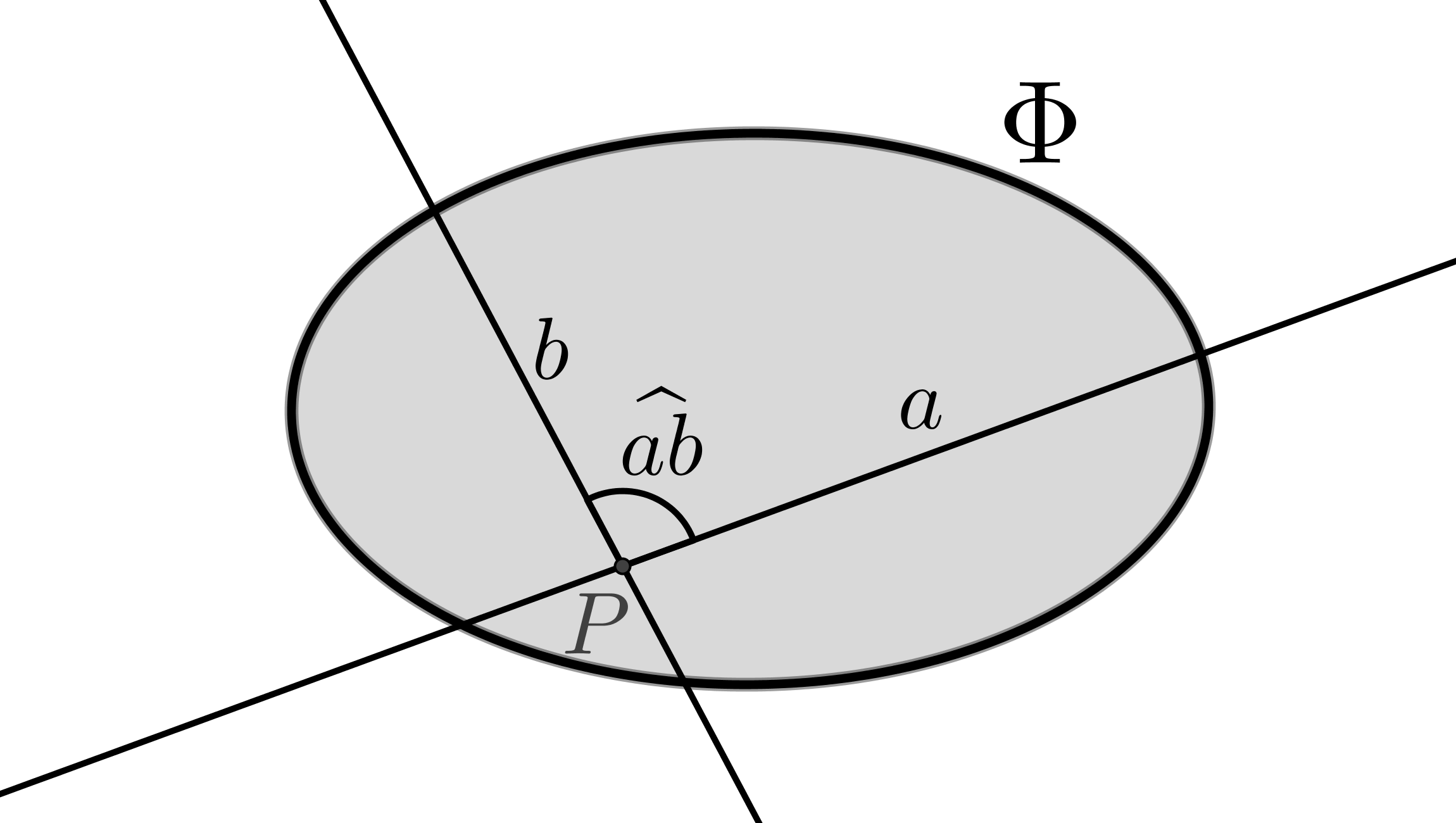}
\label{Fig:angle-hyperbolic}}\hfill\hfill
\caption{distances and angles in the hyperbolic plane}\label{Fig:distances-angles-hyperbolic}
\end{figure}

For two points $A,B\in\mathbb{P}$, let $U,V$ be the two intersection
points of the line $AB$ with $\Phi$. The points $U,V$ will be two
real points if the absolute conic is real (Figure~\ref{Fig:distance-hyperbolic}), and they will be imaginary
if the absolute conic is imaginary. The non-euclidean distance $\left\Vert AB\right\Vert $
between them is given by
\begin{equation}\label{eq:distances-hyperbolic}
	\left\Vert AB\right\Vert :=\dfrac{1}{2}\log\left(UVAB\right)
\end{equation}
if the absolute conic is real, and by
\begin{equation}\label{eq:distances-elliptic}
\left\Vert AB\right\Vert :=\dfrac{1}{2i}\log\left(UVAB\right)
\end{equation}
if the absolute conic is imaginary. The factors $\frac{1}{2},\frac{1}{2i}$
before the natural logarithms of the above expressions are arbitrary,
and they are taken in order to make the hyperbolic and elliptic
planes' curvature equal to $-1$ or $+1$ respectively.
In the elliptic case the total length of a line turns out to be $\pi$.

For two lines $a,b$ such that $a\cdot b\in\mathbb{P}$, let $u,v$
be the two tangent lines to $\Phi$ through $a\cdot b$. The lines
$u,v$ are both imaginary because of the hypothesis $a\cdot b\in\mathbb{P}$ 
(Figure~\ref{Fig:angle-hyperbolic}).
The angle $\widehat{ab}$ between the lines $a,b$ at their intersection
point $a\cdot b$ is given in the hyperbolic and elliptic cases by
the same formula
\begin{equation}
\widehat{ab}:=\dfrac{1}{2i}\log\left(u\,v\,a\,b\right).\label{eq:Laguerre-formula}
\end{equation}
This formula is similar to the formula given in \cite{Laguerre} for
computing euclidean angles, and for this reason we will refer to it
as  \emph{Laguerre's formula}. Note that $\widehat{ab}$ is an angle
between \emph{lines}, not an angle between \emph{rays} as usual (see
\cite{Forder,Johnson, Picken}). In particular, $\widehat{ab}$ does not distinguish
an angle from its supplement and it takes values between $0$ and
$\pi$.
We'll come back on this discussion in \S\ref{sec:Cosine-Rule}
and in the Appendix.

In our pictures, we will depict the hyperbolic plane as the interior
points of an ellipse in $\mathbb{R}^{2}=\mathbb{RP}^{2}\backslash
\ell_{\infty}$,
and the elliptic plane as a round 2-sphere of unit radius where antipodal
points are identified. In this picture of the elliptic plane it is
easier to visualize the pole-polar relation: lines are represented
by great circles of the sphere, and the pole of a line is the pair
of antipodal points representing the north and south poles when the
line is chosen as the equator of the 2-sphere. This elliptic model
has the virtue also of being locally isometric. In particular, angles
and lengths of segments lower than $\pi$ are the same in the modelling
sphere as in the elliptic plane.

Conjugacy of lines with respect to the absolute characterizes perpendicularity in both models 
(Figure~\ref{Fig:orthogonality}, see \cite[p.36]{Santalo}).
\begin{proposition}
\label{prop:perpendicular-iff-conjugate}The lines $a,b$ such that
$a\cdot b\in\mathbb{P}$ are perpendicular if and only if they are conjugate.
\end{proposition}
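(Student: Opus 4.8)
The plan is to compute both sides of the claimed equivalence in a convenient model, using the definition of the angle via Laguerre's formula \eqref{eq:Laguerre-formula} together with the characterization of conjugacy by cross ratios. Let $P=a\cdot b\in\PP$, and let $u,v$ be the two tangent lines to $\Phi$ through $P$; by hypothesis $u,v$ are imaginary. The key observation is that, inside the pencil of lines through $P$, the conjugacy map $\rho_P$ is the involution whose fixed lines are exactly $u$ and $v$ (since a line $q$ through $P$ is self-conjugate, $q_P=q$, precisely when $q$ is tangent to $\Phi$). Hence $\rho_P$ coincides with harmonic conjugacy with respect to $u,v$ in the pencil through $P$, exactly as in the point version discussed in \S\ref{sec:Harmonic-sets-harmonic-conjugacy}. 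Therefore $b=a_P$ (i.e. $a$ and $b$ are conjugate) if and only if $(uvab)=-1$.

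Next I would translate the perpendicularity condition $\widehat{ab}=\pi/2$ through \eqref{eq:Laguerre-formula}. By definition $\widehat{ab}=\frac{1}{2i}\log(uvab)$, so $\widehat{ab}=\pi/2$ is equivalent to $\log(uvab)=i\pi$, i.e. $(uvab)=e^{i\pi}=-1$. Combining this with the previous paragraph gives
\begin{equation*}
a\perp b\iff \widehat{ab}=\tfrac{\pi}{2}\iff (uvab)=-1\iff b=a_P\iff a,b\text{ conjugate},
\end{equation*}
which is precisely the assertion of Proposition~\ref{prop:perpendicular-iff-conjugate}. One should also dualize to pass between the pencil-of-lines picture through $P=a\cdot b$ and the cross ratio of the four corresponding points on a transversal, using \eqref{eq:invariance_of_Cross-ratio} and \eqref{eq:invariance_cross-ratio-II}, but this is routine once the involution identification is in place.

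The main obstacle I anticipate is purely one of branch bookkeeping for the complex logarithm: the quantities $u,v$ are imaginary lines and $(uvab)$ is a priori a complex number, so one must argue that when $P\in\PP$ the cross ratio $(uvab)$ with $a,b$ real lines actually takes a value on which $\log$ can be evaluated consistently with the normalization already used to define angles (values in $(0,\pi)$), and that $(uvab)=-1$ is the only solution of $\widehat{ab}=\pi/2$ in that range rather than one of several branch-shifted possibilities. Concretely, one checks that complex conjugation swaps $u\leftrightarrow v$ and fixes $a,b$, forcing $(uvab)$ to have modulus $1$, so it lies on the unit circle and $\widehat{ab}$ is real; then $\widehat{ab}=\pi/2$ pins down $(uvab)=-1$ unambiguously. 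Once this is dispatched, the proof reduces to the two elementary equivalences above. (Alternatively, and perhaps more in the spirit of the paper, one can cite the pole–polar picture in the spherical model described just before the statement: two great circles are perpendicular iff each passes through the pole of the other, which is exactly the conjugacy condition $b=\rho_P$-image of $a$; I would mention this as the geometric content but give the cross-ratio computation as the actual proof.)
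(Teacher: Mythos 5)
Your argument is correct. Note, though, that the paper itself does not prove Proposition~\ref{prop:perpendicular-iff-conjugate}: it is stated with a pointer to Santal\'o's book, so there is no internal proof to compare against. What you supply is a self-contained derivation from material already in the text, and both halves of it are sound. The conjugacy half is the dual of the observation made in \S\ref{sec:Harmonic-sets-harmonic-conjugacy}: $\rho_P$ is a projective involution of the pencil through $P$ whose fixed lines are exactly the tangents $u,v$, hence it equals $\tau_{uv}$ and $b=a_P$ iff $(u\,v\,a\,b)=-1$. The perpendicularity half is Laguerre's formula \eqref{eq:Laguerre-formula} evaluated at $(u\,v\,a\,b)=-1$, and your remark that complex conjugation swaps $u\leftrightarrow v$ while fixing $a,b$ — forcing $\lvert(u\,v\,a\,b)\rvert=1$ so that $\widehat{ab}$ is real and $\pi/2$ pins down $-1$ on the branch used to define angles in $(0,\pi)$ — is exactly the bookkeeping needed to make the equivalence two-sided. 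This is the standard proof (and in substance the one in the cited reference); the only caveat is that it presupposes perpendicularity is \emph{defined} by $\widehat{ab}=\pi/2$ via \eqref{eq:Laguerre-formula}, which is indeed the paper's convention in the Cayley--Klein setting, so no circularity arises.
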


\begin{figure}
\centering
\hfill
\subfigure[]{\includegraphics[width=0.45\textwidth]
{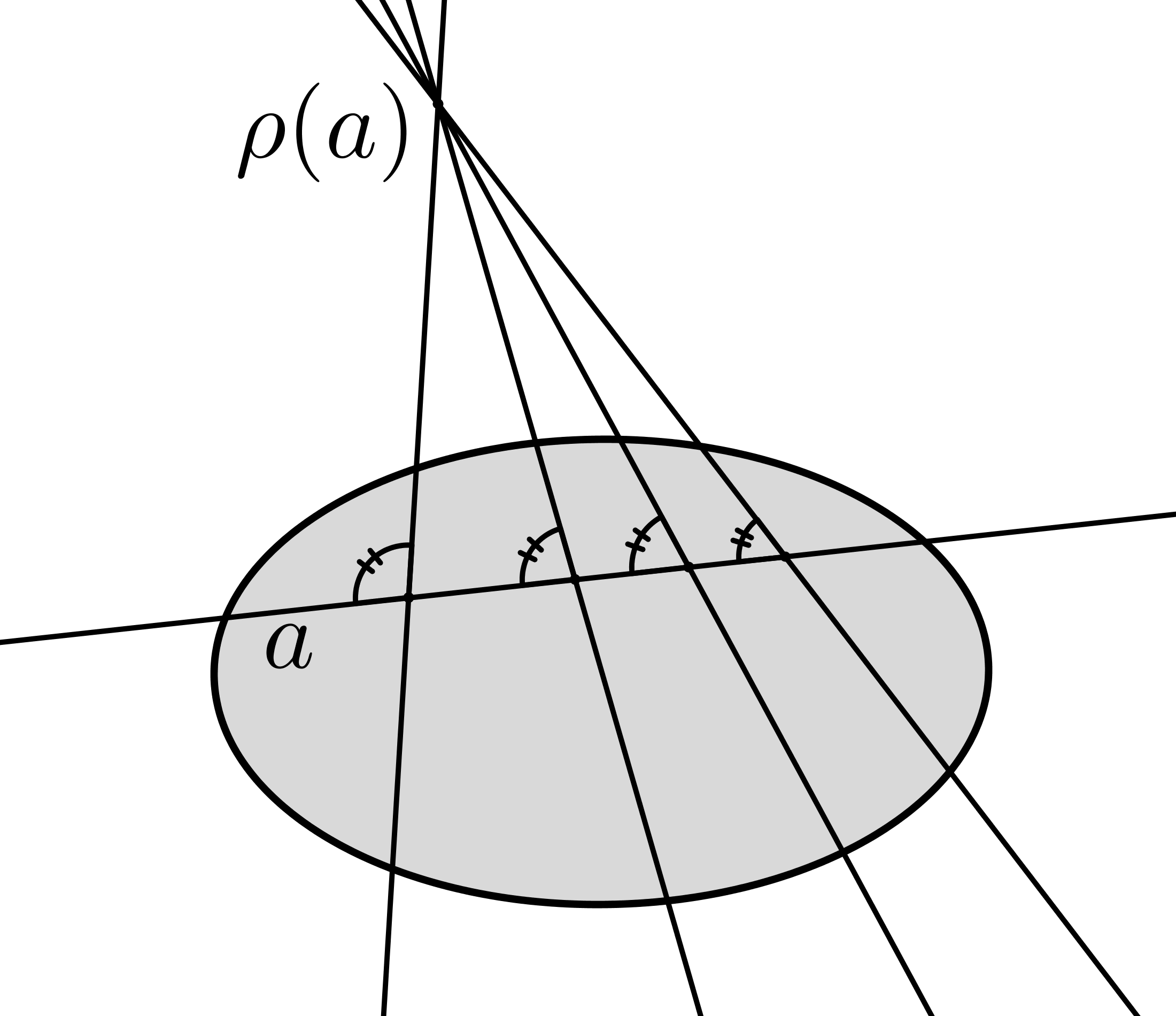}
\label{Fig:orthogonality-hyperbolic}}
\hfill
\subfigure[]{\includegraphics[width=0.42\textwidth]
{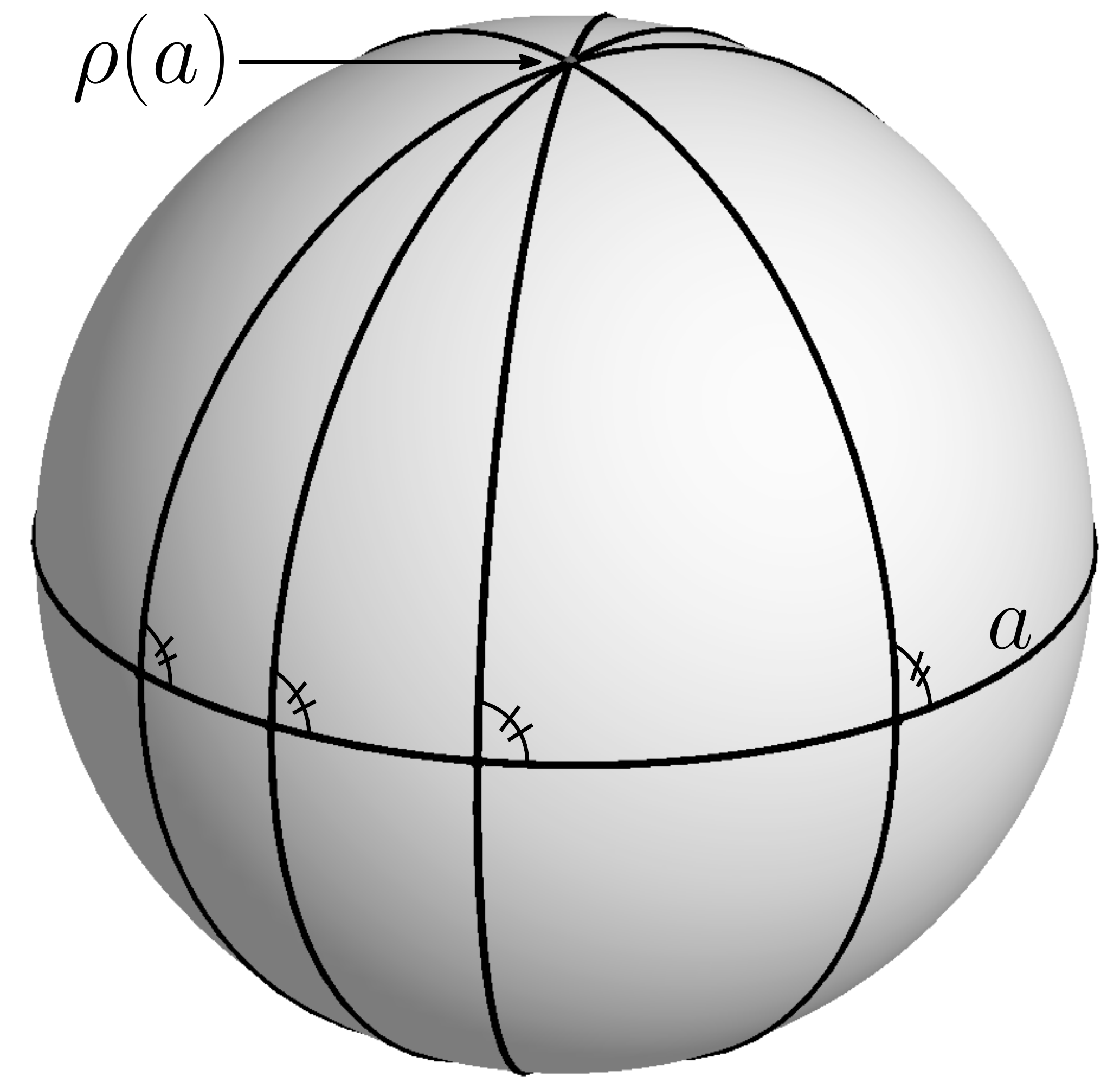}
\label{Fig:othogonality-elliptic}}
\hfill\hfill
\caption{orthogonality in Cayley-Klein models}\label{Fig:orthogonality}
\end{figure}

\section{Midpoints of a segment}\label{sec:midpoints-of-a-segment}

Take a segment $\ov{AB}$ in the projective plane whose endpoints $A,B$
do not lie on $\Phi$ and such that the line $p=AB$ is not tangent to
$\Phi$. Let $P$ be the pole of $p$, let $a,b$ be the lines joining
$P$ with $A,B$ respectively, and let $A_{1},A_{2}$ and $B_{1},B_{2}$
be the intersection points with $\Phi$ of $a$ and $b$ respectively (Figure
\ref{Fig:midponts-of-a-segment-1}).
The points $A_{1},A_{2},B_{1},B_{2}$ are the vertices of a quadrangle
$\mathscr{Q}$ inscribed in $\Phi$ having $P$ as a diagonal point.
By Theorem~\ref{thm:polars-quadrangles}, the other two diagonal points
$Q,R$ of $\mathscr{Q}$ lie in $p$ and it is $R=Q_{p}$. The segments
$\overline{AB}$ and $\overline{QQ_{p}}$ form a harmonic set and so it is
$(ABQQ_{p})=-1$.

Let $U,V$ be the two intersection points of $p$ with $\Phi$. As
$$(UVQQ_{p})=(ABQQ_{p})=-1\,,$$
the harmonic conjugacy of $p$ with respect
to $Q,Q_{p}$ sends the points $U,V,A,B$ into the points $V,U,B,A$
respectively. In particular, it is $(UVAQ)=(VUBQ)$, and by~\eqref{eq:CR1}
it is $(VUBQ)=(UVQB)$. By \eqref{eq:CR3}, we have
\begin{equation}
(UVAB)=(UVAQ)(UVQB)=(UVAQ)^{2}.\label{eq:midpoints-01}
\end{equation}
In the same way, it can be checked that
\begin{equation}
(UVAB)=(UVAQ_{p})^{2}=(UVQB)^{2}=(UVQ_{p}B)^{2}.\label{eq:midpoints-02}
\end{equation}
If the points $A,B,Q$ belong to $\mathbb{P}$, from the distance 
formulae~\eqref{eq:distances-hyperbolic} and~\eqref{eq:distances-elliptic}, we have
$$\Vert AB\Vert=2\Vert AQ\Vert=2\Vert BQ\Vert.$$ This property suggest the following definition:
\begin{definition}
\label{def:midpoints}The points $Q,Q_{p}$ are the \emph{midpoints}
of the segment $\overline{AB}$
% \footnote{By dualizing the construction of the midpoints of a segment, we can
% construct the bisectors of an angle.}
.
\end{definition}
If $A,B\in\mathbb{P}$, when $\Phi$ is a real conic there is exactly
one of the midpoints of the projective segment $\ov{AB}$, say $Q$, lying in the hyperbolic segment bounded
by $A$ and $B$. In this case, $Q$ is the geometric midpoint of
the segment  and $Q_{p}$ is the pole of the perpendicular bisector of
this segment. When $\Phi$ is an imaginary conic, then $A$ and $B$
divide the line $AB$ into two elliptic segments whose respective
midpoints are $Q$ and $Q_{p}$.

It is clear that the lines $PQ,PQ_p$ are the polars of $Q_p,Q$ respectively. As
these lines are orthogonal to $p$ through the midpoints of $\ov{AB}$, they are
the \emph{segment bisectors} or simply \emph{bisectors} of $\ov{AB}$. After
dualizing:

\begin{remark}\label{rem:midpoints-bisectors}
The polars of the midpoints of
$\ov{AB}$ are the \emph{angle bisectors} or \emph{bisectors}  of
$\widehat{\rho(A)\rho(B)}$.
\end{remark}

An interesting characterization of midpoints is the following:
\begin{lemma}\label{lem:midpoints-harmonic-UV-AB}
Let $p$ be a line not tangent to $\Phi$, let $p\cdot\Phi=\{U,V\}$,
and take two points $A,B\in p$ different from $U,V$. If $C,D$ are two points of $AB$ such that
\[
(ABCD)=(UVCD)=-1,
\]
then $C,D$ are the midpoints of $\ov{AB}$.
\end{lemma}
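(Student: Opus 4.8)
The plan is to show that the hypotheses force $C,D$ to be harmonic conjugates with respect to \emph{both} the pair $A,B$ and the pair $U,V$, and then to identify this common involution with the conjugacy $\rho_p$ on $p$. Recall from \S\ref{sec:Harmonic-sets-harmonic-conjugacy} that the harmonic conjugacy map $\tau_{CD}$ fixing $C,D$ is a projective involution of $p$, and that an involution of a line is completely determined by one pair of distinct points it swaps together with its fixed points; in particular $\tau_{CD}$ is the unique involution with fixed points $C,D$. The relation $(ABCD)=-1$ says precisely that $\tau_{CD}$ swaps $A$ and $B$, and $(UVCD)=-1$ says $\tau_{CD}$ swaps $U$ and $V$. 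So a single involution of $p$, namely $\tau_{CD}$, simultaneously interchanges $A\leftrightarrow B$ and $U\leftrightarrow V$.

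Next I would bring in the conjugacy $\rho_p$. By the discussion preceding Lemma~\ref{lem:midpoints-harmonic-UV-AB}, since $p$ is not tangent to $\Phi$ and $p\cdot\Phi=\{U,V\}$, the map $\rho_p$ is the projective involution of $p$ whose fixed points are exactly $U,V$ — equivalently, $\rho_p=\tau_{UV}$. Now apply the definition of midpoints (Definition~\ref{def:midpoints}) read backwards: the midpoints $Q,Q_p$ of $\ov{AB}$ are the unique pair of points on $p$ that are conjugate to each other under $\rho_p$ (i.e. $Q_p=\rho_p(Q)$) and that are harmonic with respect to $A,B$, that is $(ABQQ_p)=-1$; indeed this is exactly how $Q,R=Q_p$ arose in \S\ref{sec:midpoints-of-a-segment} as the residual diagonal points of the inscribed quadrangle. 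So it suffices to verify that $C,D$ satisfy $(ABCD)=-1$ (given) and $D=\rho_p(C)$.

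To get $D=\rho_p(C)$: the involution $\tau_{CD}$ swaps $U$ and $V$ and has $C,D$ as fixed points, while $\rho_p=\tau_{UV}$ swaps — among other pairs — the pair $C,D$ (this is the symmetric reading of $(UVCD)=-1$, using that $\tau_{UV}$ sends the point $C$ to its harmonic conjugate $D$ with respect to $U,V$, which is $D$ precisely because $(UVCD)=-1$). Since $\rho_p$ is an involution of $p$ that interchanges $C$ and $D$, we get $\rho_p(C)=D$, i.e. $C,D$ are $\Phi$-conjugate on $p$. Combined with $(ABCD)=-1$, the pair $\{C,D\}$ meets the two defining conditions of the midpoints of $\ov{AB}$, and by the uniqueness of that pair (two $\rho_p$-conjugate points harmonic to $A,B$ are the two residual diagonal points of the quadrangle of \S\ref{sec:midpoints-of-a-segment}, hence unique) we conclude $\{C,D\}=\{Q,Q_p\}$, as desired.

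The only genuinely delicate point is the uniqueness step: one must be sure that \emph{the} midpoints are characterized by the two conditions ``$\rho_p$-conjugate'' and ``harmonic with $A,B$'', and that no other pair does this. This is immediate from \S\ref{sec:midpoints-of-a-segment}, since those two conditions single out the two diagonal points of the inscribed quadrangle $\mathscr{Q}$ other than $P$, which are uniquely determined; alternatively, one can argue directly that two distinct involutions of a line ($\tau_{UV}$ and $\tau_{AB}$, which differ because $A,B\neq U,V$ in general position) have a unique common pair, namely their shared pair of swapped points, forcing $\{C,D\}$ to coincide with $\{Q,Q_p\}$. Everything else is a mechanical use of the cross-ratio identities~\eqref{eq:cross_ratio_identities} and the elementary theory of involutions on a projective line recalled in \S\ref{sec:Harmonic-sets-harmonic-conjugacy}.
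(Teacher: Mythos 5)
Your proof is correct and rests on the same mechanism as the paper's: an involution of $p$ is pinned down by the fact that a projectivity of a line with three or more fixed points is the identity. The paper composes $\tau_{QQ_p}\circ\tau_{CD}$ and observes it fixes $U,V,A,B$, whereas you equivalently observe that $\{C,D\}$ and $\{Q,Q_p\}$ are both common pairs of the distinct involutions $\tau_{AB}$ and $\tau_{UV}=\rho_p$ and hence coincide; this is the same argument in dual packaging.
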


\begin{proof}
Let $C,D$ be two points of $p$ such that $(ABCD)=(UVCD)=-1$.
We have seen that the midpoints $Q,Q_p$ of $\ov{AB}$ verify also the identity
$(UVQQ_{p})=(ABQQ_{p})=-1$. 
If we take the harmonic involutions $\tau_{QQ_p}$ and $\tau_{CD}$, of $p$ with fixed
points $Q,Q_p$ and $C,D$ respectively, the composition $\tau_{QQ_p}\circ\tau_{CD}$
fixes the points $U,V,A,B$ and so it must be the identity on $p$. This implies
that $\{Q,Q_p\}=\{C,D\}$.
\end{proof}

An immediate consequence of this lemma is:
\begin{lemma}\label{lem:midpoints-AB-midpoints-ApBp}
Let $A,B$ be two points not in $\Phi$ such that $p=AB$ is not tangent to
$\Phi$. Then, the midpoints of $\ov{AB}$ are also the midpoints of
$\ov{A_pB_p}$.
\end{lemma}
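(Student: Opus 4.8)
The plan is to show that the midpoints $Q,Q_p$ of $\ov{AB}$ satisfy the hypotheses of Lemma~\ref{lem:midpoints-harmonic-UV-AB} with the roles of $A,B$ played by $A_p,B_p$; that is, I will verify that $(A_pB_pQQ_p) = (UVQQ_p) = -1$, where $\{U,V\} = p\cdot\Phi$. Once both harmonic relations are established, Lemma~\ref{lem:midpoints-harmonic-UV-AB} applied to the line $p=AB=A_pB_p$ (note $A_p, B_p$ both lie on $p$ by definition of the conjugacy map $\rho_p$, and they are different from $U,V$ since $A,B\notin\Phi$) immediately gives that $Q,Q_p$ are the midpoints of $\ov{A_pB_p}$.

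The relation $(UVQQ_p)=-1$ is already recorded: it is the identity $(UVQQ_p) = (ABQQ_p) = -1$ noted in \S\ref{sec:midpoints-of-a-segment} in the construction of the midpoints. So the only thing to check is $(A_pB_pQQ_p)=-1$. For this I would use the fact that the conjugacy map $\rho_p\colon p\to p$ is a projectivity, hence preserves cross ratios, together with the observation that $\rho_p$ fixes $U$ and $V$ (since $U,V\in\Phi$ implies $U_p=U$, $V_p=V$) and also fixes both midpoints $Q,Q_p$. Indeed $Q$ and $Q_p$ are conjugate to each other by Theorem~\ref{thm:polars-quadrangles} (each is on the polar of the other as diagonal points of the inscribed quadrangle $\mathscr{Q}$), so $\rho_p$ swaps $Q$ and $Q_p$. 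Wait — I must be careful here: $\rho_p$ sends $Q\mapsto Q_p$ and $Q_p\mapsto Q$, it does not fix them individually. So instead: apply $\rho_p$ to the quadruple $(UVAB)$. Since $\rho_p$ is involutive and fixes $U,V$ while sending $A\mapsto A_p$, $B\mapsto B_p$, invariance of the cross ratio gives $(UVAB) = (UV A_p B_p)$. Combined with $(UVAB) = (UVAQ)^2 = (UVQ_pB)^2$ and the analogous squared identities for $A_p,B_p$, together with $(UVQQ_p)=-1$, one recovers that $Q,Q_p$ are precisely the pair of points harmonically conjugate with respect to both $\{U,V\}$ and $\{A_p,B_p\}$; equivalently, repeat the argument of~\eqref{eq:midpoints-01}--\eqref{eq:midpoints-02} verbatim with $A_p,B_p$ in place of $A,B$ to obtain $(A_pB_pQQ_p)=-1$ directly, since the harmonic conjugacy of $p$ with respect to $Q,Q_p$ sends $U,V$ to $V,U$ and hence (applying $\rho_p$, or by the same quadrangle $\mathscr{Q}$ which has $P=\rho(p)$ as diagonal point) sends $A_p,B_p$ to $B_p,A_p$.

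The cleanest route, and the one I would actually write, is the following short one. The midpoints $Q,Q_p$ are the diagonal points (other than $P$) of the quadrangle $\mathscr{Q}$ inscribed in $\Phi$ with vertices $A_1,A_2,B_1,B_2$, where $a=PA$, $b=PB$. But $\mathscr{Q}$ is equally the quadrangle associated to the segment $\ov{A_pB_p}$: the pole of $p$ is still $P$, and since $PA = \rho(\rho(P)\cdot\rho(A))$... — more simply, $A_p = p\cdot\rho(A)$ lies on $p$, and the line $PA_p$ meets $\Phi$ in the same points as... here one checks that the line through $P$ and $A_p$ is the conjugate line $a_P$ of $a$, whose intersection points with $\Phi$ together with those of $b_P$ define the \emph{same} diagonal point structure because harmonic conjugacy with respect to $Q,Q_p$ fixes $P$ and interchanges $A_1\leftrightarrow A_2$. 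I expect the main obstacle to be exactly this bookkeeping: confirming that the quadrangle realizing the midpoints of $\ov{AB}$ and the one realizing the midpoints of $\ov{A_pB_p}$ can be taken to be the same, or equivalently that the single cross-ratio computation transfers. Given that Lemma~\ref{lem:midpoints-harmonic-UV-AB} is available, I would bypass all of this: it suffices to exhibit $(A_pB_pCD)=(UVCD)=-1$ for $C=Q$, $D=Q_p$, and both equalities follow by applying the cross-ratio-preserving involution $\rho_p$ (which fixes $U,V$, sends $A\to A_p$, $B\to B_p$, and sends $Q\leftrightarrow Q_p$, hence fixes the pair $\{Q,Q_p\}$) to the known relations $(UVQQ_p)=-1$ and $(ABQQ_p)=-1$. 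This makes the lemma a two-line consequence, and that is how I would present it.
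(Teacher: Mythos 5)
Your final ``cleanest route'' is exactly the paper's proof: apply the cross-ratio-preserving involution $\rho_p$ (which fixes $U,V$, sends $A,B$ to $A_p,B_p$, and swaps $Q$ with $Q_p$) to $(ABQQ_p)=-1$ to get $(A_pB_pQ_pQ)=-1$, then invoke Lemma~\ref{lem:midpoints-harmonic-UV-AB} together with $(UVQQ_p)=-1$. The argument is correct; the earlier detours (the quadrangle bookkeeping, the momentary confusion about whether $\rho_p$ fixes $Q$ and $Q_p$ individually) can simply be deleted, since swapping the last two entries of a harmonic cross ratio leaves $-1$ unchanged.
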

\begin{proof}
Let $U,V$ be the intersection points of $p$ with $\Phi$, and let $Q,Q_p$ be the
midpoints of $\ov{AB}$. It is clear that $(UVQQ_p)=-1$. If we apply on $p$
the conjugacy $\rho_p$ with respect to $\Phi$, we get
$$(ABQQ_p)=(A_pB_pQ_pQ)=-1\,,$$
and thus by Lemma~\ref{lem:midpoints-harmonic-UV-AB} $Q,Q_p$ are the midpoints
of $\ov{A_pB_p}$.
\end{proof}

\begin{figure}
\centering
\subfigure[]{\includegraphics[width=0.8\textwidth]
{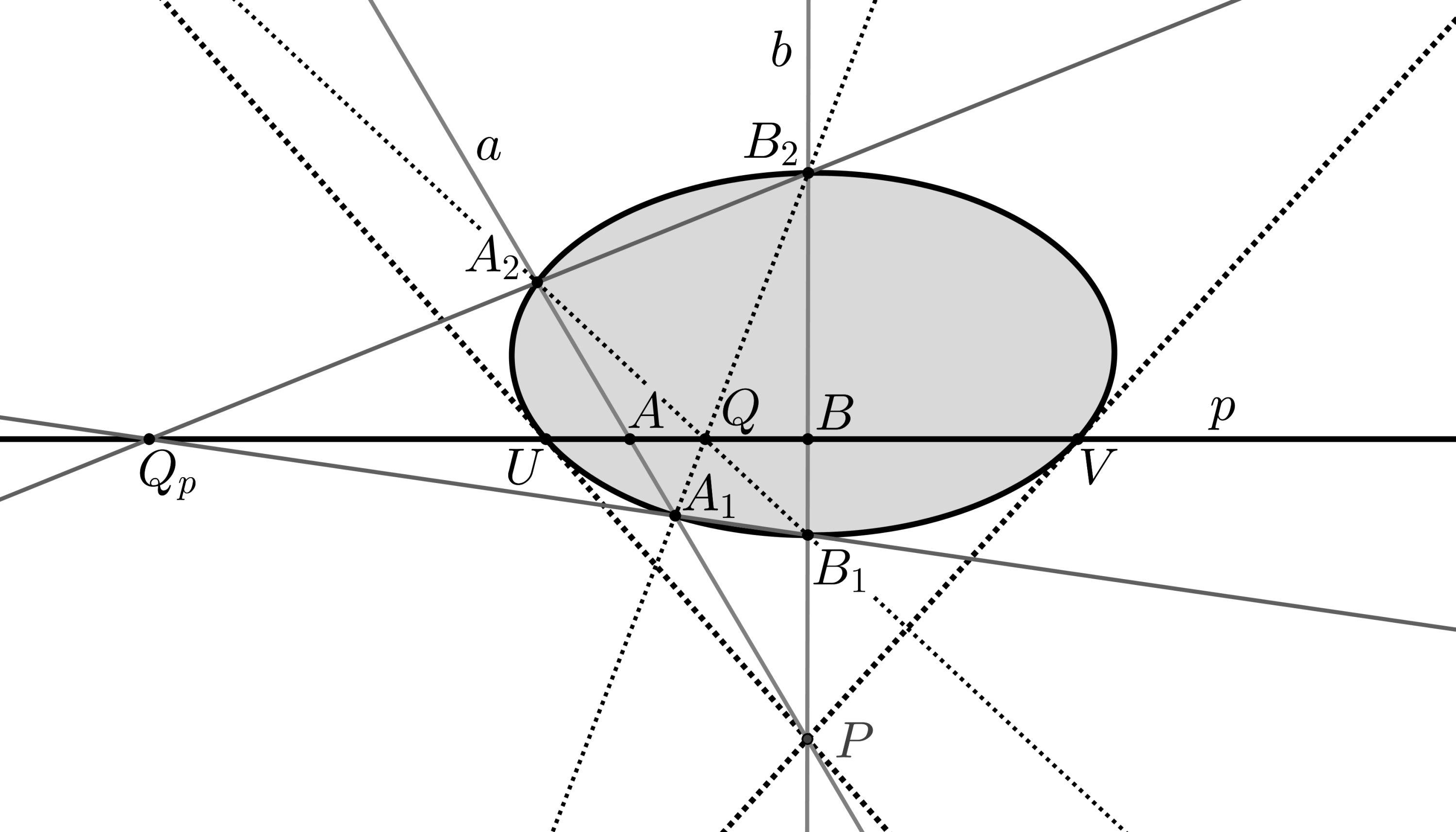}
\label{Fig:midponts-of-a-segment-1}}\\
\subfigure[]{\includegraphics[width=0.8\textwidth]
{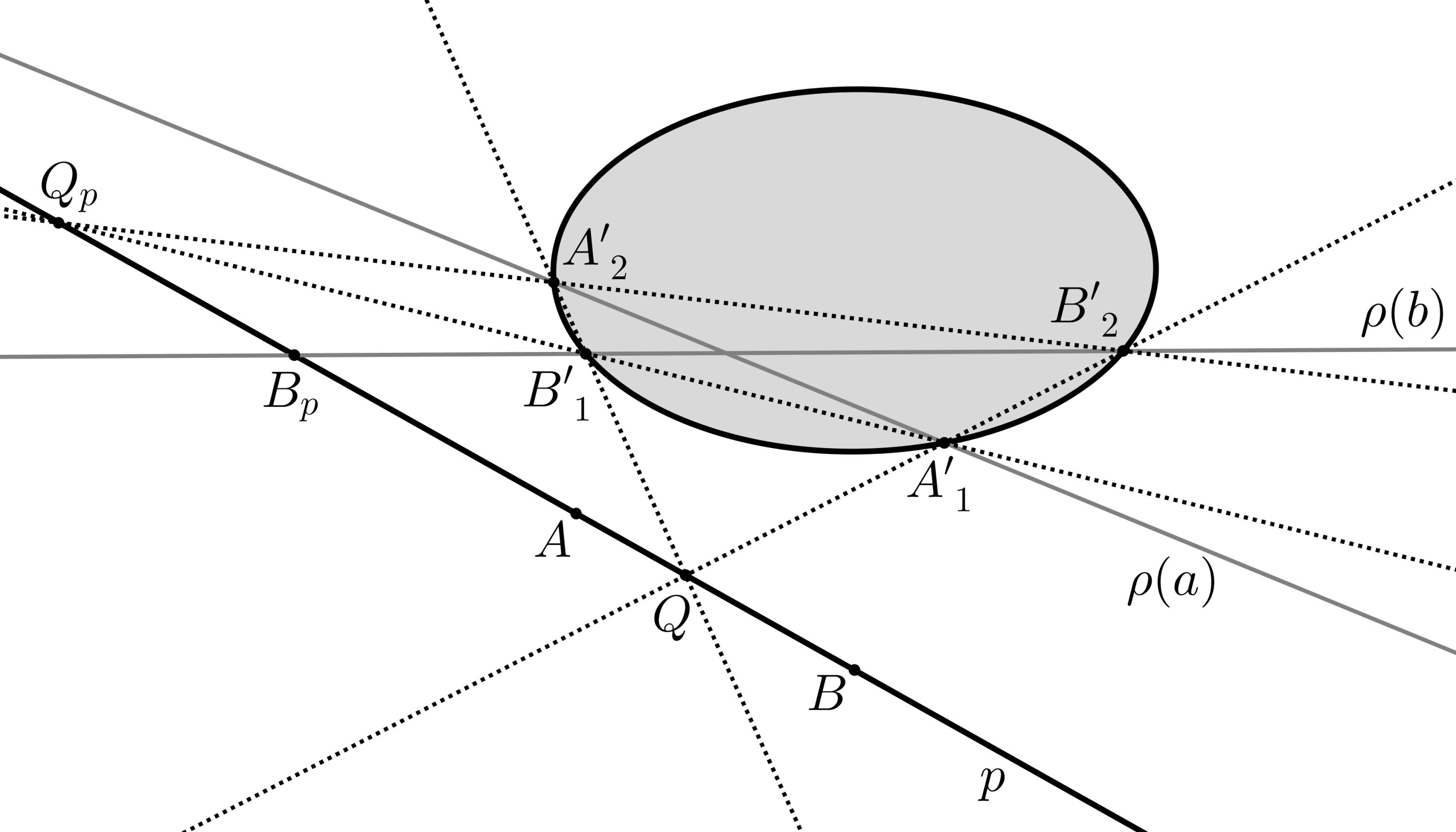}
\label{Fig:midponts-of-a-segment-2}}
\caption{midpoints of a segment}\label{Fig:midponts-of-a-segment}
\end{figure}

The previous lemma gives an alternative way for constructing the midpoints of
$\ov{AB}$ using polars (Figure~\ref{Fig:midponts-of-a-segment-2}): (i) take the
polars $\rho(A),\rho(B)$; (ii) take the intersection points $A'_1,A'_2$ and
$B'_1,B'_2$ of $\rho(A)$ and $\rho(B)$ respectively with $\Phi$; and (iii) the
midpoints of $\ov{AB}$ are the diagonal points of the quadrangle
$\{A'_1,A'_2,B'_1,B'_2\}$ different from $\rho(AB)$. This is the result of
applying the original construction of the midpoints to the segment
$\ov{A_pB_p}$.

In the general case, we will work with segments such that the line they belong
to is \emph{in general position} with respect to $\Phi$, that is, not tangent to
$\Phi$. Nevertheless, sometimes we will talk about the midpoints of a
segment whose underlying line could be tangent to $\Phi$. For dealing with
such limit cases, we extend the concept of ``midpoint'' to such
segments in the natural way. If in Figure
\ref{Fig:midponts-of-a-segment-2} we move continuously the points $A,B$ in
order to make $p$ tangent to $\Phi$ while $A,B$ remain out of $\Phi$, one of
the points $A'_1,A'_2$ (say $A'_1$) become coincident with one of the points
$B'_1,B'_2$ (say $B'_1$) and with one of the points $Q,Q_p$ (say $Q$) at the
contact point of $p$ with $\Phi$ (see Figure
\ref{Fig:midponts-of-a-segment-3}). The other midpoint $Q_p$ will become the
point $p\cdot A'_2B'_2$ and the identity $(ABQQ_p)=-1$ remains unchanged.

    \begin{definition}\label{def:midpoints-tangent}
	Let $A,B$ be two points not lying on $\Phi$ such that the line $AB$ is
	tangent to $\Phi$ at a point $Q$. We say that the \emph{midpoints} of
	the
	segment $\ov{AB}$ are $Q$ and the harmonic conjugate of $Q$ with respect
	to $A,B$.
    \end{definition}

\begin{figure}
\centering
\includegraphics[width=0.7\textwidth]
{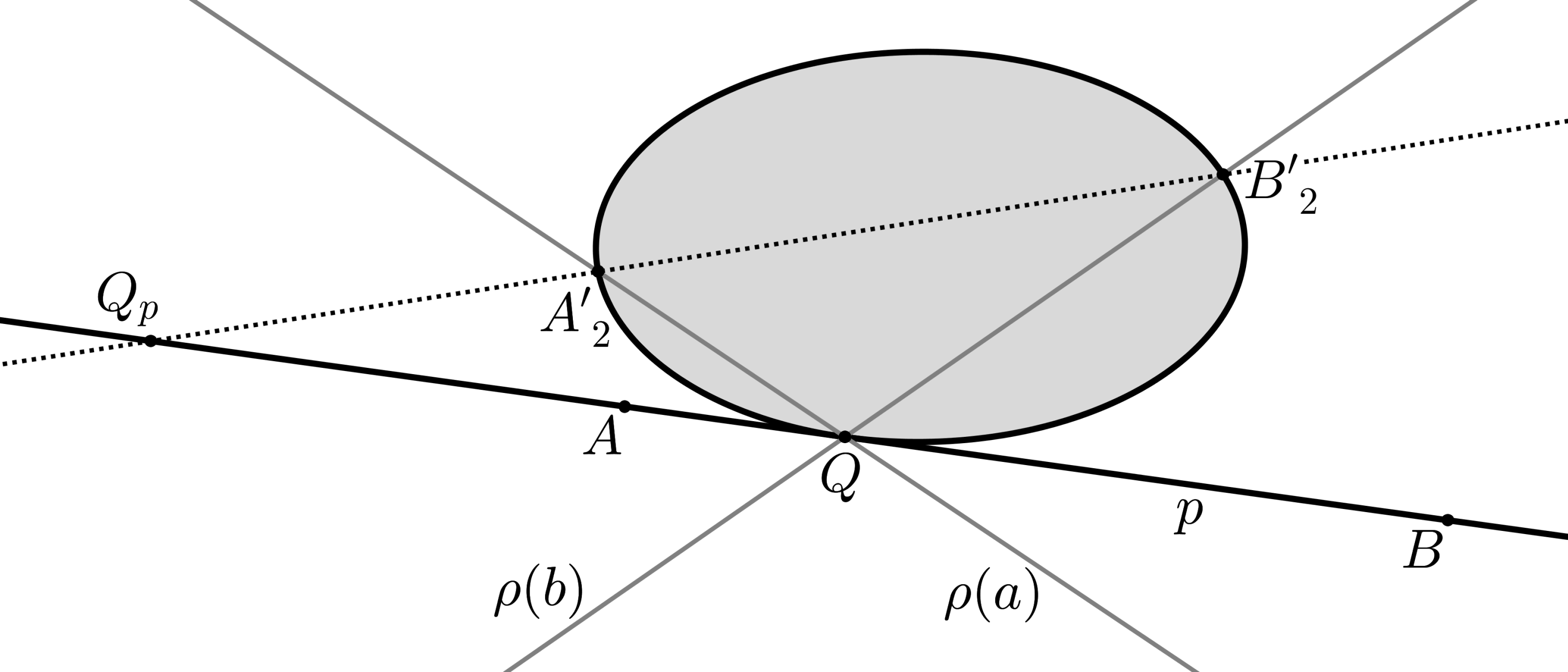}
\caption{Midpoints on a line tangent to
$\Phi$}\label{Fig:midponts-of-a-segment-3}
\end{figure}

Lemma~\ref{lem:midpoints-harmonic-UV-AB}
inspires the following notation. If $Q$ is a point not in $\Phi$ and $p$ is a
line through $Q$ not tangent to $\Phi$, we will say that the harmonic involution
$\tau_{QQ_p}$ of $p$ with respect to $Q$ and $Q_p$ is the \emph{symmetry of $p$
with respect to $Q$}. If we consider the
symmetry with respect to $Q$ acting on every line through $Q$ not tangent to
$\Phi$ at once, we obtain the \emph{symmetry of $\PP$ with respect to $Q$},
which corresponds projectively to the harmonic homology of the projective plane
with center $Q$ and axis $\rho(Q)$.

\section{Proyective trigonometric ratios}

% \begin{table}[h]
% \centering
% $\begin{tabular}{ccc}
%  Hyperbolic  &  Elliptic  &  Hyp. \& elliptic\\
% \ensuremath{\cosh^{2}\left\Vert AB\right\Vert =\left(ABB_{p}A_{p}\right)} &  \ensuremath{\cos^{2}\left\Vert AB\right\Vert =\left(ABB_{p}A_{p}\right)} &  \ensuremath{\cos^{2}\widehat{ab}=\left(a\,b\,b_{P}\,a_{P}\right)}
% \end{tabular}$
% 
% \caption{trigonometric formulae for angles and distances}
% \label{Angles_and_distances}
% \end{table}

The points $U,V$ of~\eqref{eq:distances-elliptic} and the lines $u,v$ 
of~\eqref{eq:Laguerre-formula} are imaginary. This is the reason why it is more usual to use another cross ratios when working with distances and angles.
    \begin{theorem}\label{thm:ABBpAp-angles-and-distances}
	Let $A,B$ be two points in $\PP$, and let $p$ be the line joining them.
	If $\PP$ is the hyperbolic plane, then
	    \begin{equation}
		(ABB_p A_p)=\cosh^{2}\VRT{AB};\label{eq:ABBpAp-hyperbolic}
	    \end{equation}
	and if $\PP$ is the elliptic plane, then
	    \begin{equation}
		(ABB_p A_p)=\cos^{2}\VRT{AB}.\label{eq:ABBpAp-elliptic}
	    \end{equation}
	Let $a,b$ be two projective lines such that their intersection point $P$ lies in $\PP$. Then,
	    \begin{equation}
		(a\,b\,b_{P}\,a_{P})=\cos^{2}\widehat{ab}.\label{eq:abbPaP-hyperbolic-elliptic}
	    \end{equation}
    \end{theorem}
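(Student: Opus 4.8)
The plan is to reduce everything to the definitions of distance and angle via logarithms of cross ratios, using the algebraic identities for cross ratios together with the key \emph{squaring} identities already established in the midpoints section. First I would treat the hyperbolic distance formula \eqref{eq:ABBpAp-hyperbolic}. Let $U,V$ be the intersection points of $p$ with $\Phi$. The crucial input is identity \eqref{eq:midpoints-01}, namely $(UVAB)=(UVAQ)^2$, and its companions in \eqref{eq:midpoints-02}; but I will not need the midpoints directly. Instead, I would argue as follows: the conjugacy $\rho_p$ on $p$ coincides with harmonic conjugacy $\tau_{UV}$ with respect to $U,V$, so $(UVAA_p)=(UVBB_p)=-1$ and, more usefully, $\rho_p$ fixes $\{U,V\}$ and swaps $A\leftrightarrow A_p$, $B\leftrightarrow B_p$. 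Hence by invariance of cross ratio under the projectivity $\rho_p$ we get $(UVAB)=(UVA_pB_p)$, and also $(UVAB_p)=(UVA_pB)$.

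Then I would expand $(ABB_pA_p)$ by repeatedly inserting the points $U,V$ using \eqref{eq:CR3} and collapsing with the relations in \eqref{eq:cross_ratio_identities}. Concretely, one computes
\[
(ABB_pA_p)=\frac{(UABB_p)\cdots}{\cdots}
\]
--- rather, the cleaner route is: write $(ABB_pA_p)$ in coordinates on $p$ in which $U,V$ are sent to the standard pair, so that the involution $\tau_{UV}$ becomes $x\mapsto -x$ (or $x\mapsto 1/x$, depending on the normalization). Placing $U=0$, $V=\infty$, the involution $\rho_p=\tau_{UV}$ that fixes $0,\infty$ and is involutive is $x\mapsto \lambda/x$ for some $\lambda$; but since it must also be harmonic conjugacy with respect to $0,\infty$, i.e. $x\mapsto -x$, we get $A_p=-A$, $B_p=-B$ in these coordinates. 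Now a direct computation gives
\[
(ABB_pA_p)=\frac{(A+B)^2}{4AB},\qquad (UVAB)=\frac{B}{A},
\]
so that $(ABB_pA_p)=\tfrac14\big(\sqrt{(UVAB)}+1/\sqrt{(UVAB)}\big)^2$. Writing $(UVAB)=e^{2\|AB\|}$ from \eqref{eq:distances-hyperbolic}, the right-hand side is exactly $\cosh^2\|AB\|$. The elliptic case \eqref{eq:ABBpAp-elliptic} is formally identical: with $(UVAB)=e^{2i\|AB\|}$ from \eqref{eq:distances-elliptic}, the same algebraic identity yields $\tfrac14(e^{i\|AB\|}+e^{-i\|AB\|})^2=\cos^2\|AB\|$.

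For the angle formula \eqref{eq:abbPaP-hyperbolic-elliptic} I would dualize: apply the polarity $\rho$, which preserves cross ratios by \eqref{eq:invariance_cross-ratio-II}, sending the pencil of lines through $P=a\cdot b$ to the range of points on $\rho(P)$, and sending conjugate lines to conjugate points. The tangent lines $u,v$ to $\Phi$ through $P$ correspond to the intersection points of $\rho(P)$ with $\Phi$. Thus $(a\,b\,b_P\,a_P)$ equals a cross ratio of four points on $\rho(P)$ of exactly the shape handled above, with $(u\,v\,a\,b)$ playing the role of $(UVAB)$; the same algebraic identity then gives $\tfrac14(e^{i\widehat{ab}}+e^{-i\widehat{ab}})^2=\cos^2\widehat{ab}$, using Laguerre's formula \eqref{eq:Laguerre-formula}. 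Note the hyperbolic and elliptic cases coincide here precisely because \eqref{eq:Laguerre-formula} is the same formula in both.

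The main obstacle I anticipate is purely bookkeeping: getting the normalization constant right so that the factor $\tfrac12$ (resp. $\tfrac1{2i}$) in \eqref{eq:distances-hyperbolic}--\eqref{eq:distances-elliptic} matches the square on the right-hand side, and checking that the branch of the logarithm and the sign choices are consistent so that one genuinely obtains $\cosh^2$ and $\cos^2$ rather than, say, $\sinh^2$ or $-\sin^2$. A secondary subtlety is justifying that in the chosen coordinates $\rho_p$ really is the map $x\mapsto -x$: this rests on the fact, recalled in the harmonic-conjugacy section, that a projective involution coincides with harmonic conjugacy with respect to its fixed points, so the only real content is that $\rho_p$ fixes $U,V$ (because $U,V\in\Phi$) and is involutive, both already established. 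Once the coordinate normalization is fixed, the remaining computations are the routine cross-ratio identities \eqref{eq:cross_ratio_identities}--\eqref{eq:CR3}.
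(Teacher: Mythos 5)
Your proposal is correct, and it reaches the theorem through the same pivotal identity as the paper, namely $4(ABB_pA_p)=(UVAB)+(UVBA)+2$ (Lemma~\ref{lem:ABBpAp-UVAB-UVBA}, attributed to Santal\'o), followed by the same final step $\cosh^2 d=\tfrac14\bigl[(UVAB)^{1/2}+(UVAB)^{-1/2}\bigr]^2$. What differs is how you establish that identity: you normalize coordinates on the (complexified) line so that $U=0$, $V=\infty$, observe that $\rho_p=\tau_{UV}$ becomes $x\mapsto -x$, and verify $(ABB_pA_p)=(A+B)^2/(4AB)$ and $(UVAB)=B/A$ by direct computation; the paper instead proves the identity synthetically, by splitting cross ratios with~\eqref{eq:CR3} and using the symmetry $\tau_{QQ_p}$ with respect to the midpoints of $\ov{AB}$ together with $(UVB_pB)=-1$. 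Your route is shorter and makes the identity transparent as an algebraic fact about the involution $x\mapsto -x$; the paper's route stays coordinate-free, which is more in keeping with its overall style and reuses the midpoint machinery it has already built. Your treatment of the angle case by dualizing through the polarity (which carries $u,v$ to $\rho(P)\cdot\Phi$ and conjugate lines to conjugate points, preserving cross ratios by~\eqref{eq:invariance_cross-ratio-II}) is a legitimate filling-in of the paper's ``can be proved in a similar way.'' One small slip to fix: a projectivity of the line \emph{fixing} $0$ and $\infty$ has the form $x\mapsto\mu x$, not $x\mapsto\lambda/x$ (the latter swaps them); the involutivity then forces $\mu=-1$, which is the conclusion you want, so the argument survives, but the intermediate sentence as written is wrong. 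Also note that your worry about branches of the square root is moot: the expression $\tfrac14\bigl(\sqrt{t}+1/\sqrt{t}\bigr)^2=\tfrac14(t+t^{-1}+2)$ is independent of the choice of root, so no sign analysis is actually needed.
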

For proving this theorem, we need the following lemma:
    \begin{lemma}\label{lem:ABBpAp-UVAB-UVBA}
	Let $p$ be a projective line not tangent to $\Phi$, let $U,V$ be the two intersection points 
	of $p$ with $\Phi$, and let $A,B$ be two points of $p$ different from $U,V$. Then
	    \begin{equation}
		4(ABB_pA_p)=(UVAB)+(UVBA)+2.\label{eq:lemma-4ABBpAp-UVAB}
	    \end{equation}
    \end{lemma}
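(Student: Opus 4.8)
The plan is to reduce everything to cross-ratio identities on the line $p$, using the fact that $\rho_p$ (conjugacy with respect to $\Phi$) is the harmonic conjugacy $\tau_{UV}$ with respect to the intersection points $U,V$ of $p$ with $\Phi$. The key structural fact is that $\tau_{UV}$, being a projectivity on $p$, acts on cross ratios in a controlled way: it fixes the pair $\{U,V\}$ and swaps $A\leftrightarrow A_p$, $B\leftrightarrow B_p$.

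First I would introduce abbreviations. Set $x=(UVAB)$ and note, via~\eqref{eq:CR1}, that $(UVBA)=x^{-1}$, so the right-hand side of~\eqref{eq:lemma-4ABBpAp-UVAB} is $x+x^{-1}+2=(x+1)^2/x$. Thus the goal becomes $(ABB_pA_p)=\dfrac{(x+1)^2}{4x}$. Next I would express the target cross ratio in terms of $x$ by breaking it up along $U,V$: using the multiplicativity~\eqref{eq:CR3} repeatedly, write
\[
(ABB_pA_p)=(ABB_pU)\,(ABUA_p)
\]
and then convert each factor into something involving only the pairs $\{U,V\}$ and the four original points. The crucial substitution is the invariance of cross ratio under the projectivity $\tau_{UV}=\rho_p$: applying~\eqref{eq:invariance_cross-ratio-I} on $p$ gives, for any four points $P_1,P_2,P_3,P_4$, the identity $(P_1P_2P_3P_4)=((P_1)_p(P_2)_p(P_3)_p(P_4)_p)$. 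In particular $(UVAA_p)=(VUA_pA)$, and since $(UVAA_p)=-1$ (the harmonic property recorded just before Lemma~\ref{lem:midpoints-harmonic-UV-AB}), one gets clean relations such as $(UVAB_p)=(VUA_pB)$. Combining these with the permutation rules~\eqref{eq:cross_ratio_identities} lets me rewrite $(ABB_pA_p)$ entirely in terms of $(UVAB)=x$ and $(UVBA)=x^{-1}$.

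Concretely, I expect the computation to go as follows. Repeatedly splitting with~\eqref{eq:CR3} at the points $U$ and $V$, one obtains
\[
(ABB_pA_p)=(ABUA_p)(ABB_pU)=\frac{(AUB_pA_p)\cdots}{\cdots},
\]
and after using $(UVAA_p)=(UVBB_p)=-1$ to eliminate $A_p,B_p$ in favor of $A,B,U,V$, every factor collapses to a rational expression in $x$. The bookkeeping must be organized so that the harmonic identities $(UVAA_p)=-1$ and $(UVBB_p)=-1$ are invoked exactly where needed; this is the step most prone to sign and ordering errors, and it is the main obstacle. A cleaner route, which I would try first, is to pick a coordinate on $p$ with $U=0$, $V=\infty$; then by~\eqref{eq:harmonic ratio equals cross ratio} a point $Z$ has $(UVAZ)$ equal to a ratio of coordinates, $A_p=-A$ and $B_p=-B$ (harmonic conjugate w.r.t. $0,\infty$ being negation), so $x=(UVAB)=B/A$ after a short computation, and
\[
(ABB_pA_p)=(A,B,-B,-A)=\frac{(A-(-B))(B-(-A))}{(B-(-B))(A-(-A))}=\frac{(A+B)^2}{4AB}=\frac{(x+1)^2}{4x},
\]
which is exactly $\tfrac14\bigl((UVAB)+(UVBA)+2\bigr)$. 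Verifying that this coordinate choice is legitimate (it is, since any projectivity may send $U,V$ to $0,\infty$, and all quantities in~\eqref{eq:lemma-4ABBpAp-UVAB} are projective invariants) and that $A_p,B_p$ really become $-A,-B$ in it (immediate from Proposition~\ref{prop:perpendicular-iff-conjugate}'s algebraic content, i.e. $\rho_p=\tau_{UV}$) finishes the proof.
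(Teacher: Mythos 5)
Your proof is correct, but it succeeds by the second route you describe, not the first: the purely synthetic decomposition you sketch at the outset is left with literal ellipses and never closed, whereas the coordinate normalization $U=0$, $V=\infty$ is a complete argument. This is genuinely different from the paper's proof, which stays coordinate-free: there one introduces the midpoints $Q,Q_p$ of $\ov{AB}$, uses the symmetry $\tau_{QQ_p}$ (which swaps $U\leftrightarrow V$, $A\leftrightarrow B$, $A_p\leftrightarrow B_p$) together with the harmonic relation $(UVB_pB)=-1$ to reduce $(ABB_pA_p)$ to $\tfrac14(UVAB)\bigl[1+(UVBA)\bigr]^2$, and then applies~\eqref{eq:CR1}. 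Your route buys brevity and transparency: once one grants that a nonhomogeneous coordinate on $p$ may be chosen with $U$ at the origin and $V$ at infinity (legitimate, since the cross ratio in~\eqref{eq:definition-of-cross-ratio} is independent of the coordinate chart, and valid even when $U,V$ are imaginary, as the whole computation lives in $\CP^2$), the fact that $\rho_p=\tau_{UV}$ becomes negation makes the identity a two-line rational computation. The paper's route buys consistency with its stated policy of avoiding explicit computations and reuses machinery (midpoints, symmetries) developed for later chapters. Two small blemishes in your write-up, neither fatal: the fact $\rho_p=\tau_{UV}$ is established in \S\ref{sec:Harmonic-sets-harmonic-conjugacy}, not in Proposition~\ref{prop:perpendicular-iff-conjugate}, which concerns perpendicularity of lines; and with the paper's convention one gets $x=(UVAB)=A/B$ rather than $B/A$, though this is immaterial because $(x+1)^2/(4x)$ is invariant under $x\mapsto x^{-1}$, which is exactly the symmetry $(UVAB)\leftrightarrow(UVBA)$ of the right-hand side of~\eqref{eq:lemma-4ABBpAp-UVAB}.
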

Identity~\eqref{eq:lemma-4ABBpAp-UVAB} appears in \cite[p. 24]{Santalo}. We
give a different proof.

\begin{proof}
We will make a reiterated use of cross ratio 
identities~\eqref{eq:cross_ratio_identities}. We have
    \begin{align*}
	(ABB_pA_p)&=(B_pA_pAB)=(UA_pAB)(B_pUAB)=\\
	&=(VA_pAB)(UVAB)(B_pUAB).
    \end{align*}

If $Q,Q_p$ are the two midpoints of the segment $\ov{AB}$, the symmetry
$\tau_{QQp}$ sends $U,A,A_p$ into $V,B,B_p$ respectively and vice versa. So it
is
$$(VA_pAB)=(A_pVBA)=(B_pUAB),$$
and therefore
$$(ABB_pA_p)=(UVAB)(B_pUAB)^2.$$
The cross ratio $(B_pUAB)$ can be splitted as
$(B_pUVB)(B_pUAV)$,
and because of the identity $(UVB_pB)=-1$, we have
\begin{align*}
(B_pUVB)&=\dfrac{1}{(UB_pVB)}=\dfrac{1}{1-(UVB_pB)}=\dfrac{1}{2};\\
(B_pUAV)&=(UB_pVA)=1-(UVB_pA)=1-(UVBA)(UVB_pB)=\\
&=1+(UVBA);
\end{align*}
which finally gives
$$(ABB_pA_p)=(UVAB)(B_pUAB)^2=\dfrac{1}{4}(UVAB)[1+(UVBA)]^2.$$
The proof now follows from~\eqref{eq:CR1}.
\end{proof}

\begin{proof}[Proof of Theorem~\ref{thm:ABBpAp-angles-and-distances}]
We will prove only~\eqref{eq:ABBpAp-hyperbolic}.
The identities~\eqref{eq:ABBpAp-elliptic} 
and~\eqref{eq:abbPaP-hyperbolic-elliptic} can be proved in a similar way.

Let assume that $\PP$ is the hyperblic plane, 
and let denote $\VRT{AB}$ simply by $d$. Then,
$$\cosh d=\dfrac{1}{2}(e^d+e^{-d})=\dfrac{1}{2}[(UVAB)^{1/2}+(UVAB)^{-1/2}],$$
and so by Lemma~\ref{lem:ABBpAp-UVAB-UVBA} it is
$$\cosh^2 d=\dfrac{1}{4}[(UVAB)+(UVBA)+2]=(ABB_pA_p).$$
\end{proof}

\begin{figure}[t]
\centering
\subfigure[elliptic segment and its polar angle]
{\includegraphics[width=0.35\textwidth]
{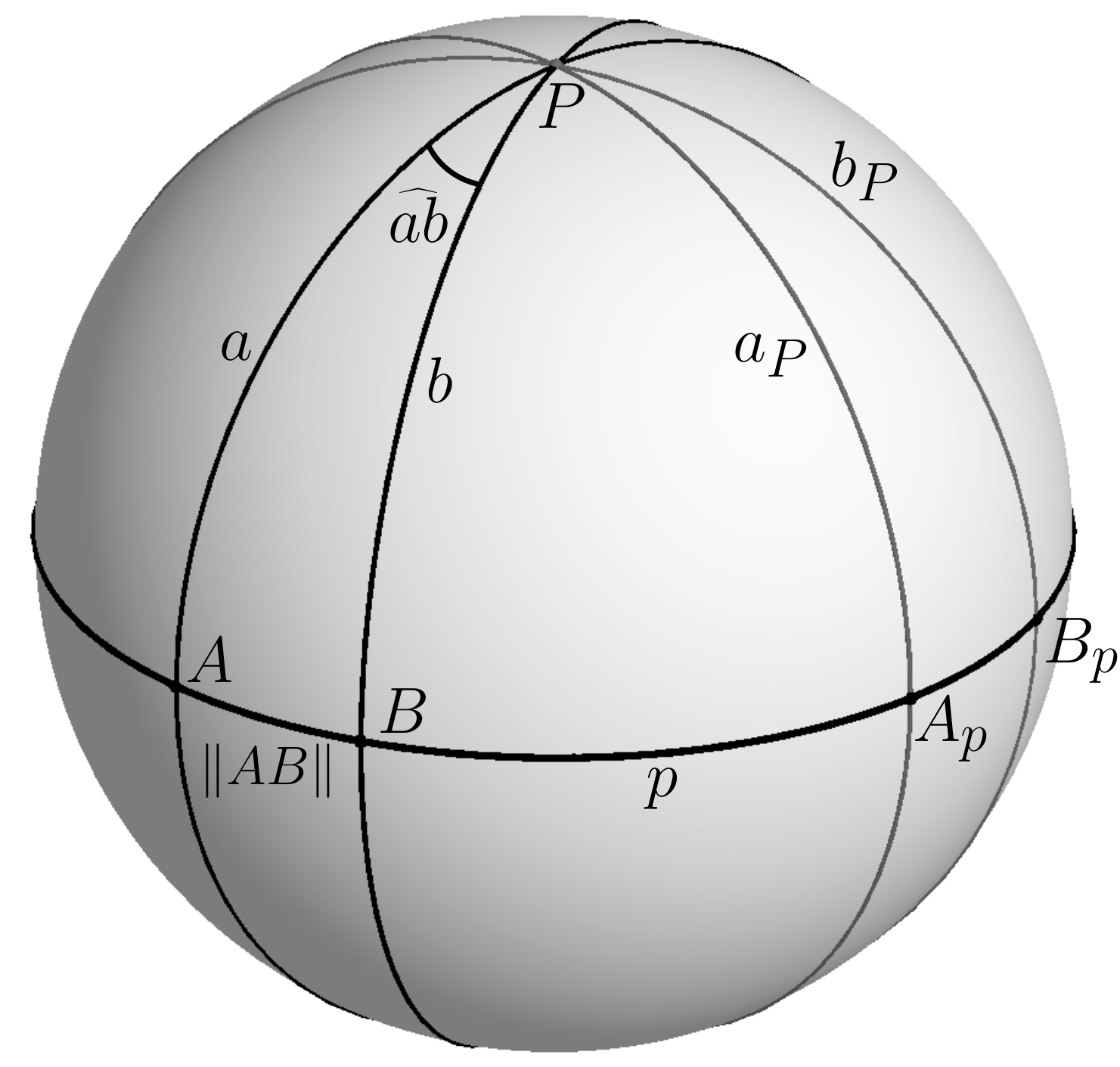}
\label{Fig:segment-spherical}}
\\
\begin{tabular}{cc}
\subfigure[hyperbolic segment 1]
{\includegraphics[width=0.48\textwidth]
{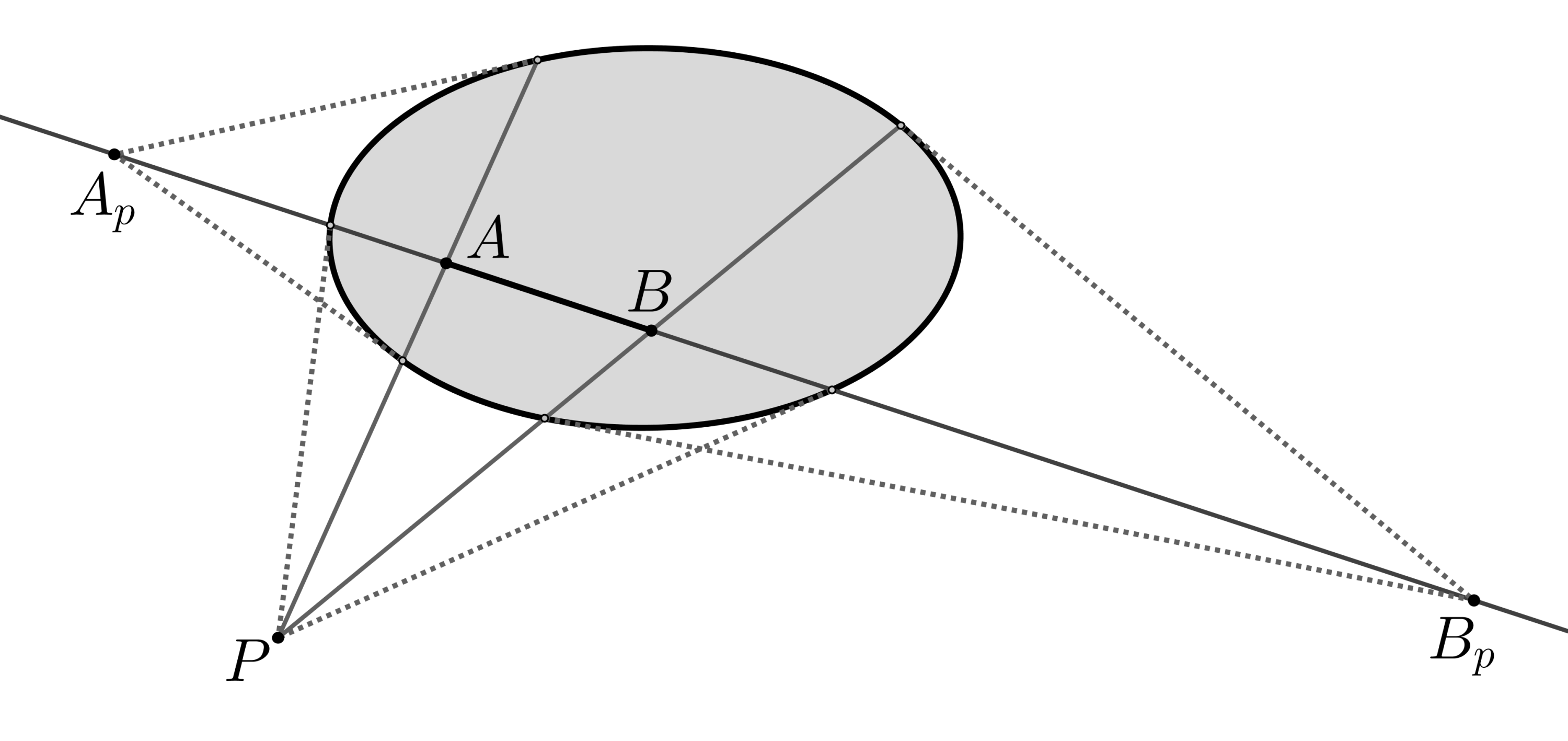}
\label{Fig:segment-hyperbolic-01}}
&
\subfigure[hyperbolic segment 2]
{\includegraphics[width=0.48\textwidth]
{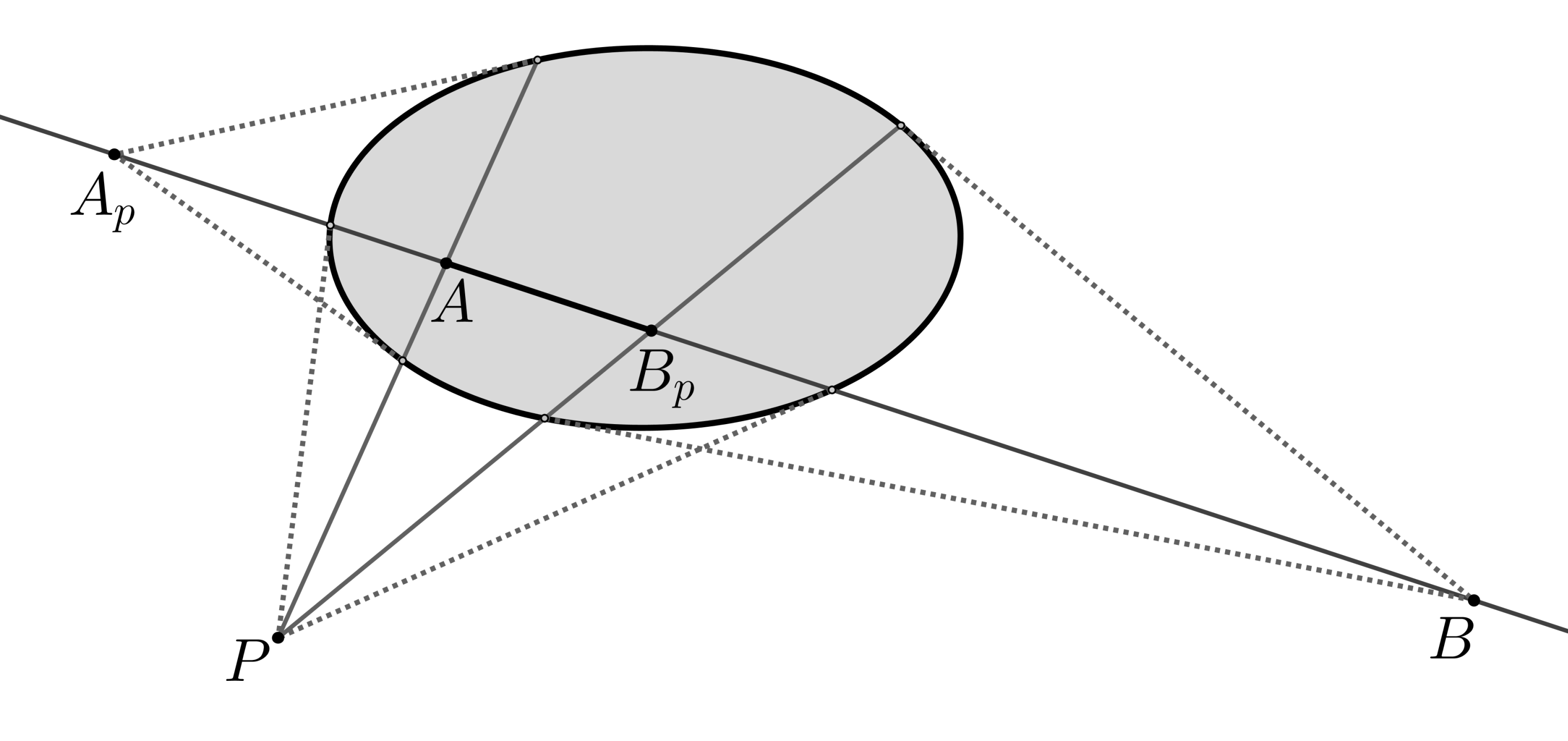}
\label{Fig:segment-hyperbolic-02}}
\\
\subfigure[hyperbolic segment 3]
{\includegraphics[width=0.48\textwidth]
{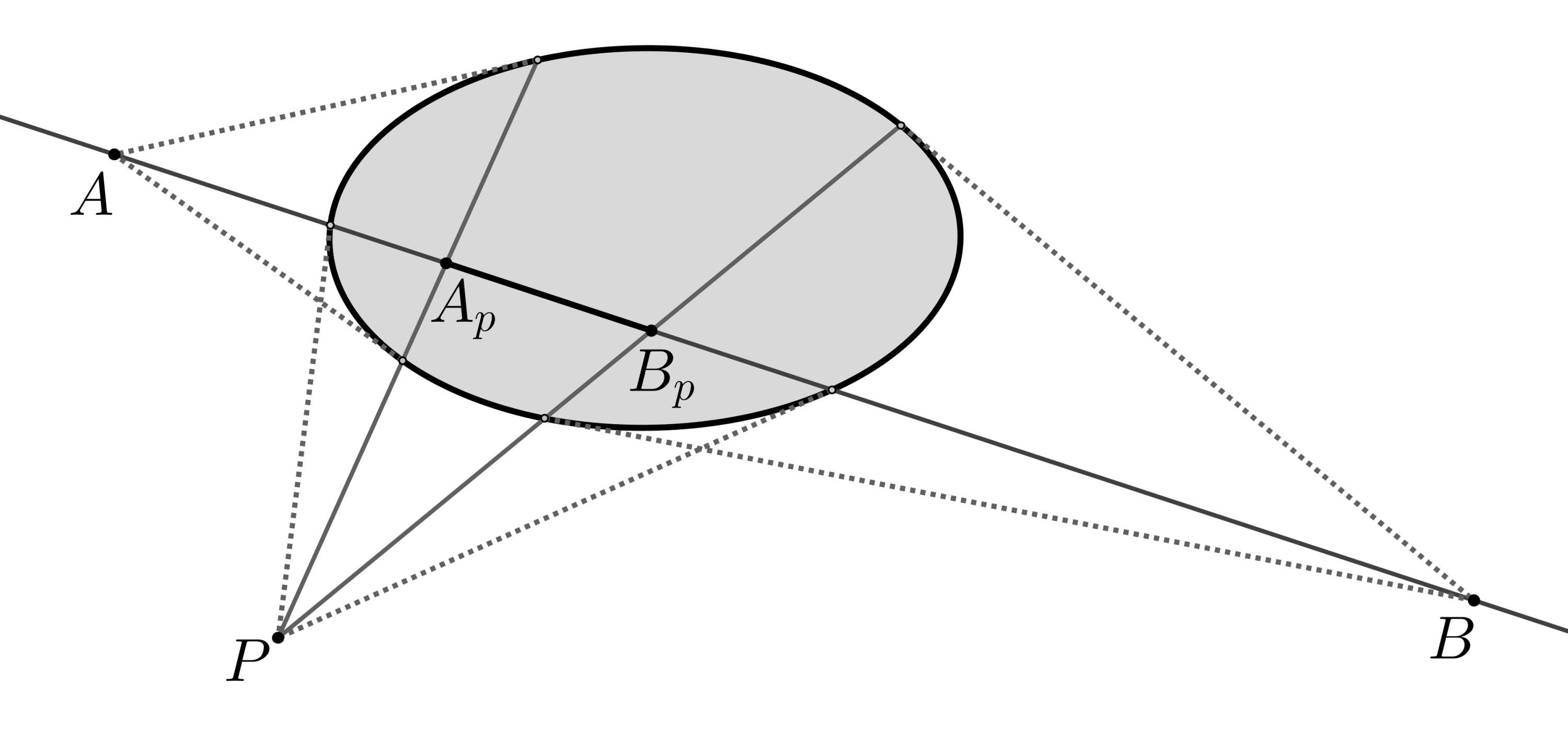}
\label{Fig:segment-hyperbolic-03}}
&
\subfigure[hyperbolic angle]
{\includegraphics[width=0.48\textwidth]
{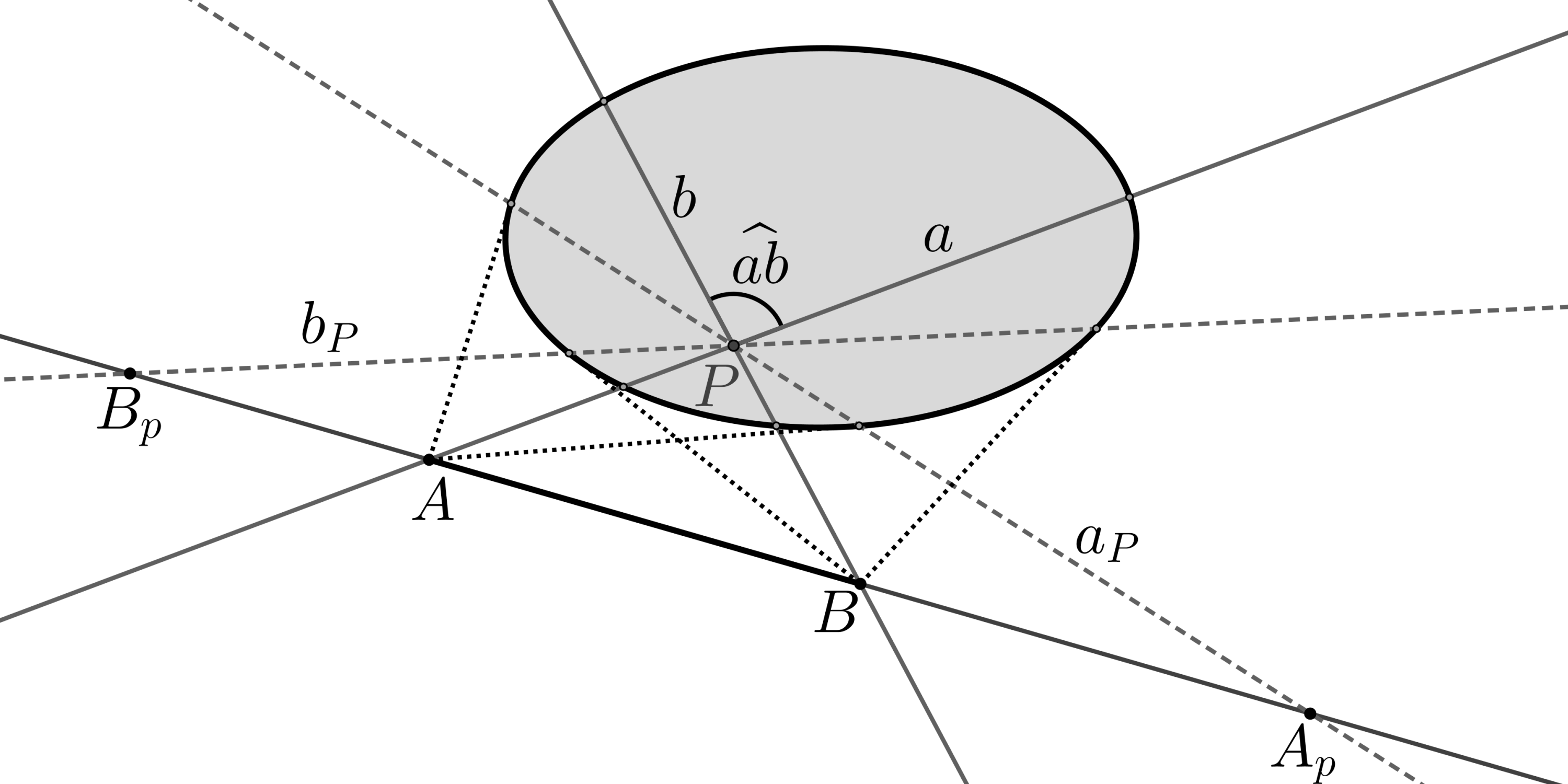}
\label{Fig:segment-hyperbolic-04}}
\end{tabular}
\caption{Positions of a projective segment with respect to the conic}
\label{Fig:non-euclidean_segments}
\end{figure}
\begin{table}
\renewcommand{\arraystretch}{1.5}
\centering
\footnotesize{\begin{tabular}{c|c|c|c|ccc}
% |c|c|c|c|c|c|c|c|}
% \hline 
\multicolumn{1}{c}{$\Phi$} & \multicolumn{1}{c}{$p$} & \multicolumn{1}{c}{$A$} & \multicolumn{1}{c}{$B$}&  $\mathbf{C}(c)$ & $\mathbf{S}(c)$ & $\mathbf{T}(c)$\tabularnewline
\hline 
\hline 
 im & $-$ & \multicolumn{2}{c|}{$-$} & $\begin{array}{c} \cos^{2}\left\Vert AB\right\Vert \\ \cos^{2}\widehat{ab} \end{array}$ & $\begin{array}{c} \sin^{2}\left\Vert AB\right\Vert \\ \sin^{2}\widehat{ab} \end{array}$  & $\begin{array}{c} \tan^{2}\left\Vert AB\right\Vert  \\ \tan^{2}\widehat{ab} \end{array}$\tabularnewline
\hline 
\multirow{5}{*}{real} & \multirow{4}{*}{sec} & \multirow{2}{*}{int} & int &  $\cosh^{2}\left\Vert AB\right\Vert $ & $-\sinh^{2}\left\Vert AB\right\Vert $ & $-\tanh^{2}\left\Vert AB\right\Vert $\tabularnewline
\cline{4-4} 
&  &  & \multirow{2}{*}{ext} & $-\sinh^{2}\left\Vert AB_{p}\right\Vert $ & $\cosh^{2}\left\Vert AB_{p}\right\Vert $ & $-\coth^{2}\left\Vert AB_{p}\right\Vert $\tabularnewline
\cline{3-3}
&  & \multirow{2}{*}{ext} &   & $\cosh^{2}\left\Vert A_{p}B_{p}\right\Vert $ & $-\sinh^{2}\left\Vert A_{p}B_{p}\right\Vert $ & $-\tanh^{2}\left\Vert A_{p}B_{p}\right\Vert $\tabularnewline
\cline{4-4}
&  &  & int &   $-\sinh^{2}\left\Vert A_{p}B\right\Vert $ & $\cosh^{2}\left\Vert A_{p}B\right\Vert $ & $-\coth^{2}\left\Vert A_{p}B\right\Vert $\tabularnewline
\cline{2-4} 
& ext & \multicolumn{2}{c|}{ext} & $\cos^{2}\widehat{ab}$ & $\sin^{2}\widehat{ab}$ & $\tan^{2}\widehat{ab}$\tabularnewline
% \hline 
\end{tabular}}
\smallskip

\caption{non-euclidean translations of projective trigonometric ratios.}
\label{table:non-euclidean-segments}

\end{table}

Expresions (\ref{eq:ABBpAp-hyperbolic}$-$\ref{eq:abbPaP-hyperbolic-elliptic})
suggest the following notation. Take a segment $c=\overline{AB}$ whose endpoints
$A,B\in\mathbb{RP}^{2}$ don't lie on $\Phi$ and such that the line
$AB$ containing $c$ is not tangent to $\Phi$. We define the \emph{projective
trigonometric ratios} of the segment $c$ (compare \cite{Wildberger})
as:
\begin{align*}
\mathbf{C}(c) & :=\left(ABB_{c}A_{c}\right)
% \nonumber 
\\
\mathbf{S}(c) & :=1-\mathbf{C}(c)=(AB_{c}BA_{c})
% \label{eq:projective_sines_cosines}
\\
\mathbf{T}(c) & :=
\dfrac{\mathbf{S}(c)}{\mathbf{C}(c)}=
\dfrac{1}{\left(ABB_{c}A_{c}\right)}-1=
% \nonumber
\\
 & =\left(ABA_{c}B_{c}\right)-1=-\left(AA_{c}BB_{c}\right).\nonumber 
\end{align*}

In the limit case when $B=A_c$, we take
$$\CC(c)=0\,,\quad\SS(c)=1\quad\text{and}\quad \TT(c)=\infty\,,$$
and we say that the segment $\ov{AA_c}$ is \emph{right}. The segments 
$\ov{A_cB}$ and $\ov{AB_c}$ are \emph{complementary segments} of $\ov{AB}$.
The identities~\eqref{eq:CR1} and~\eqref{eq:CR2} imply that $\mathbf{C}(c)$,
$\mathbf{S}(c)$ and $\mathbf{T}(c)$ do not depend on the chosen ordering of the endpoints
$A,B$ of the segment $c$. Proposition~\ref{prop:geometric-translations}
below follows directly from Theorem~\ref{thm:ABBpAp-angles-and-distances} and
from the properties~\eqref{eq:CR1} and~\eqref{eq:CR2} of cross ratios (see 
Figure~\ref{Fig:non-euclidean_segments}).

\begin{proposition}
\label{prop:geometric-translations}Depending on the type of absolute conic
(real or imaginary) we are working with and, when $\Phi$ is a real
conic, on the relative positions of the endpoints $A,B$ of the segment
$c$ with respect to $\Phi$ (Figure \ref{Fig:non-euclidean_segments}), the projective trigonometric ratios
$\mathbf{C}$, $\mathbf{S}$ and $\mathbf{T}$ have the non-euclidean trigonometric translations listed in Table~\ref{table:non-euclidean-segments}.
\end{proposition}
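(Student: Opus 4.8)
The plan is to derive Proposition~\ref{prop:geometric-translations} directly from Theorem~\ref{thm:ABBpAp-angles-and-distances} together with the cross-ratio identities~\eqref{eq:CR1} and~\eqref{eq:CR2}, by a careful bookkeeping of the possible positions of the segment $c=\ov{AB}$ relative to $\Phi$. The starting observation is that, once we know $\CC(c)=(ABB_cA_c)$, the other two ratios $\SS(c)=1-\CC(c)$ and $\TT(c)=\SS(c)/\CC(c)$ are determined; so the whole problem reduces to identifying $(ABB_cA_c)$ as the appropriate squared non-euclidean trigonometric function in each case, and then reading off $\SS$ and $\TT$ via the Pythagorean-type identity $\SS=1-\CC$ and the quotient. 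Thus the content of the proposition is really a case analysis, and the trigonometric content is entirely supplied by Theorem~\ref{thm:ABBpAp-angles-and-distances}.

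First I would dispose of the imaginary-conic (elliptic) case and of the purely-exterior case for a real conic. If $\Phi$ is imaginary, every real point lies in $\PP$, so~\eqref{eq:ABBpAp-elliptic} gives $\CC(c)=\cos^2\VRT{AB}$ immediately, and then $\SS(c)=1-\cos^2\VRT{AB}=\sin^2\VRT{AB}$ and $\TT(c)=\tan^2\VRT{AB}$; the angle interpretation is the same statement applied to the dual configuration via~\eqref{eq:abbPaP-hyperbolic-elliptic}. Likewise, if $\Phi$ is real and both $A,B$ are exterior to $\Phi$ with $p=AB$ secant, then $a=\rho(A)$ and $b=\rho(B)$ are lines whose intersection point $P=a\cdot b=\rho(p)$ is interior to $\Phi$, hence in $\PP$; applying~\eqref{eq:abbPaP-hyperbolic-elliptic} to $a,b$ and using that $\rho$ preserves cross ratios~\eqref{eq:invariance_cross-ratio-I} together with the involutivity $(A_c)_c=A$ gives $(ABB_cA_c)=(abb_Pa_P)=\cos^2\widehat{ab}$, which is the last row of the table. (When $p$ is exterior to $\Phi$ the two interior points play the role and one argues the same way for the bottom row.)

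The main work is the real-conic secant case, which splits according to whether each of $A,B$ is interior or exterior to $\Phi$; by the symmetry of $\CC,\SS,\TT$ in the two endpoints there are effectively three subcases (int-int, int-ext, ext-ext), but the table lists four rows because it records the measurement attached to whichever pair of points lies in $\PP$. When both $A,B$ are interior, they lie in $\PP$ and~\eqref{eq:ABBpAp-hyperbolic} gives $\CC(c)=\cosh^2\VRT{AB}$ directly, whence $\SS(c)=1-\cosh^2\VRT{AB}=-\sinh^2\VRT{AB}$ and $\TT(c)=-\tanh^2\VRT{AB}$, matching the second row. For the mixed case $A$ interior, $B$ exterior, I would use that $B_p$ (the conjugate of $B$ in $p$) is then interior — indeed on a secant line the conjugacy $\rho_p$ is the harmonic conjugacy with respect to $U,V=p\cdot\Phi$, which swaps the interior and exterior arcs of $p\setminus\{U,V\}$ — so $A,B_p\in\PP$ and one applies~\eqref{eq:ABBpAp-hyperbolic} to the pair $A,B_p$. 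The identity $\CC(c)=(ABB_cA_c)=(AB_cBA_c)^{-1}\cdot(\text{something})$ must be massaged: concretely $\SS(c)=(AB_cBA_c)=(AB_c(B_c)_pA_c)$, which by~\eqref{eq:ABBpAp-hyperbolic} applied to the segment $\ov{AB_p}$ equals $\cosh^2\VRT{AB_p}$, and then $\CC(c)=1-\SS(c)=-\sinh^2\VRT{AB_p}$ and $\TT(c)=\SS(c)/\CC(c)=-\coth^2\VRT{AB_p}$. The ext-ext-with-secant-$p$ row is handled the same way using that both $A_p,B_p$ are interior, applying~\eqref{eq:ABBpAp-hyperbolic} to $\ov{A_pB_p}$ and using the cross-ratio invariance under $\rho_p$ from~\eqref{eq:invariance_cross-ratio-I} to rewrite $(ABB_cA_c)$ in terms of $(A_pB_p(B_p)_c(A_p)_c)$.

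The step I expect to be the actual obstacle is not any single computation but getting the sign and the interior/exterior bookkeeping exactly right in the mixed cases: one must be careful that on a secant real line $p$ the two points $U,V$ separate the interior arc from the exterior arc (Lemma~\ref{lemma:ABCD-negative-iff-AB-separate-CD}), that $\rho_p$ interchanges these arcs, and that consequently exactly one of $B,B_p$ lies in $\PP$, so that the distance appearing in the table ($\VRT{AB_p}$, or $\VRT{A_pB}$, etc.) is the one between the two points that are genuinely in the hyperbolic plane. Once the correct pair is pinned down in each row, the trigonometric value is handed over verbatim by Theorem~\ref{thm:ABBpAp-angles-and-distances}, and the passage to $\SS$ and $\TT$ is the mechanical application of $\SS=1-\CC$ and $\TT=\SS/\CC$ together with $\cosh^2-\sinh^2=1$ and $\tanh=\sinh/\cosh$; the reference to Figure~\ref{Fig:non-euclidean_segments} is what organizes the case list, and pointing to Sant\'alo~\cite{Santalo} can corroborate the final table.
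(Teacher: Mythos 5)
Your proof is correct and follows exactly the route the paper intends: the paper gives no separate argument, stating only that the proposition ``follows directly from Theorem~\ref{thm:ABBpAp-angles-and-distances} and from the properties~\eqref{eq:CR1} and~\eqref{eq:CR2}'', and your case analysis (reducing everything to $\CC(c)$ via $\SS=1-\CC$, $\TT=\SS/\CC$, and transferring to the interior pair $A B_p$, $A_p B_p$, or to the polar lines, before invoking the theorem) is precisely the bookkeeping that remark leaves implicit. No gaps.
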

In Table~\ref{table:non-euclidean-segments}, \emph{im}, \emph{ext}, \emph{sec} and \emph{int} mean \emph{imaginary}, \emph{exterior to} $\Phi$, \emph{secant to} $\Phi$ and \emph{interior to} $\Phi$,
respectively.

\chapter{Pascal's and Chasles' theorems and classical triangle centers}\label{sec:Desargues}

We will deal with the most common
triangle centers for generalized triangles: orthocenter, 
circumcenter, barycenter and incenter. Although they are very 
well-known for triangles, their projective interpretation provides
equivalent centers for any generalized triangle.

\section{Quad theorems}\label{sub:quad-theorems}

\begin{figure}
\centering
\hfill
\subfigure[]{\includegraphics[width=0.48\textwidth]
{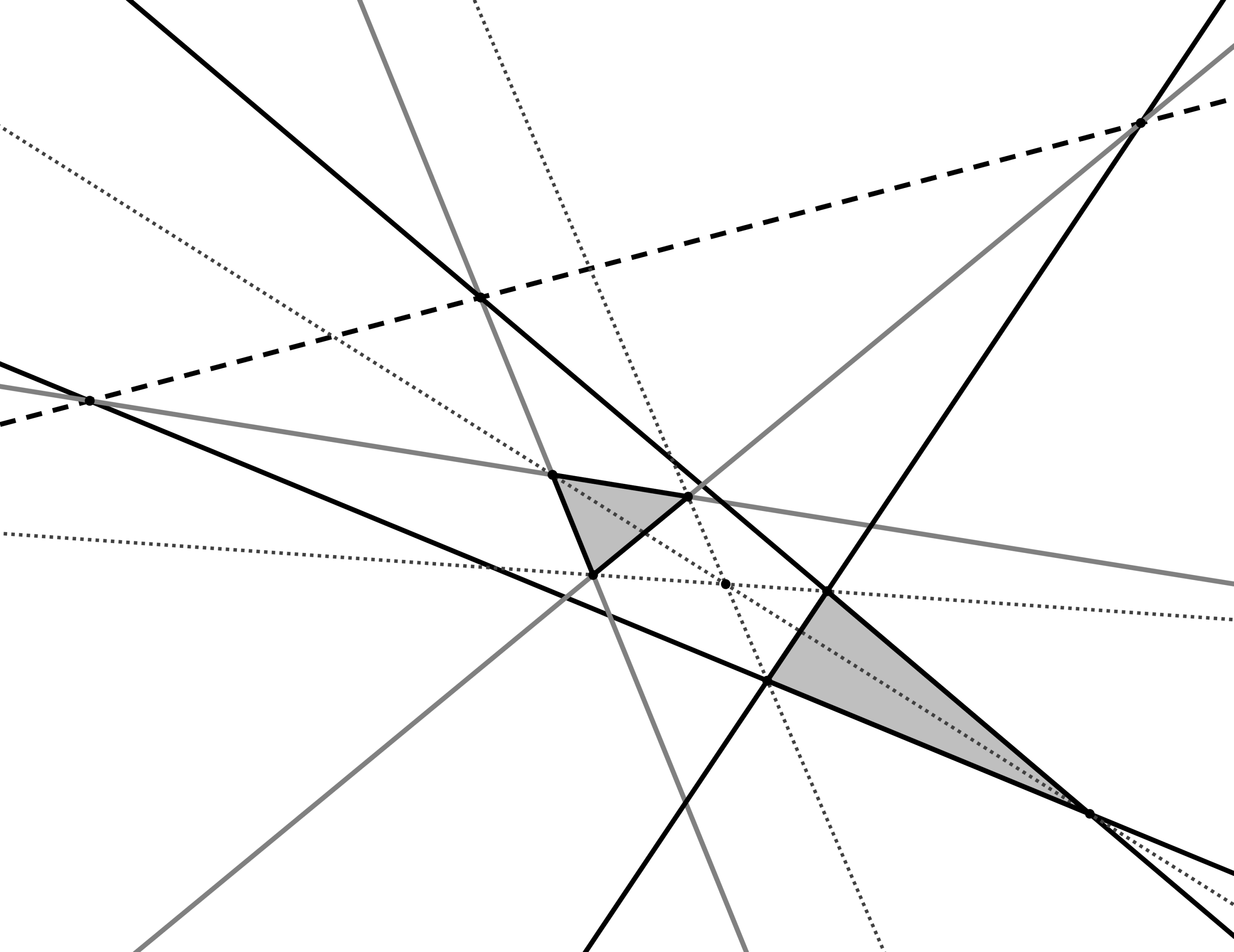}
\label{Fig:Desargues}}\hfill
\subfigure[]{\includegraphics[width=0.48\textwidth]
{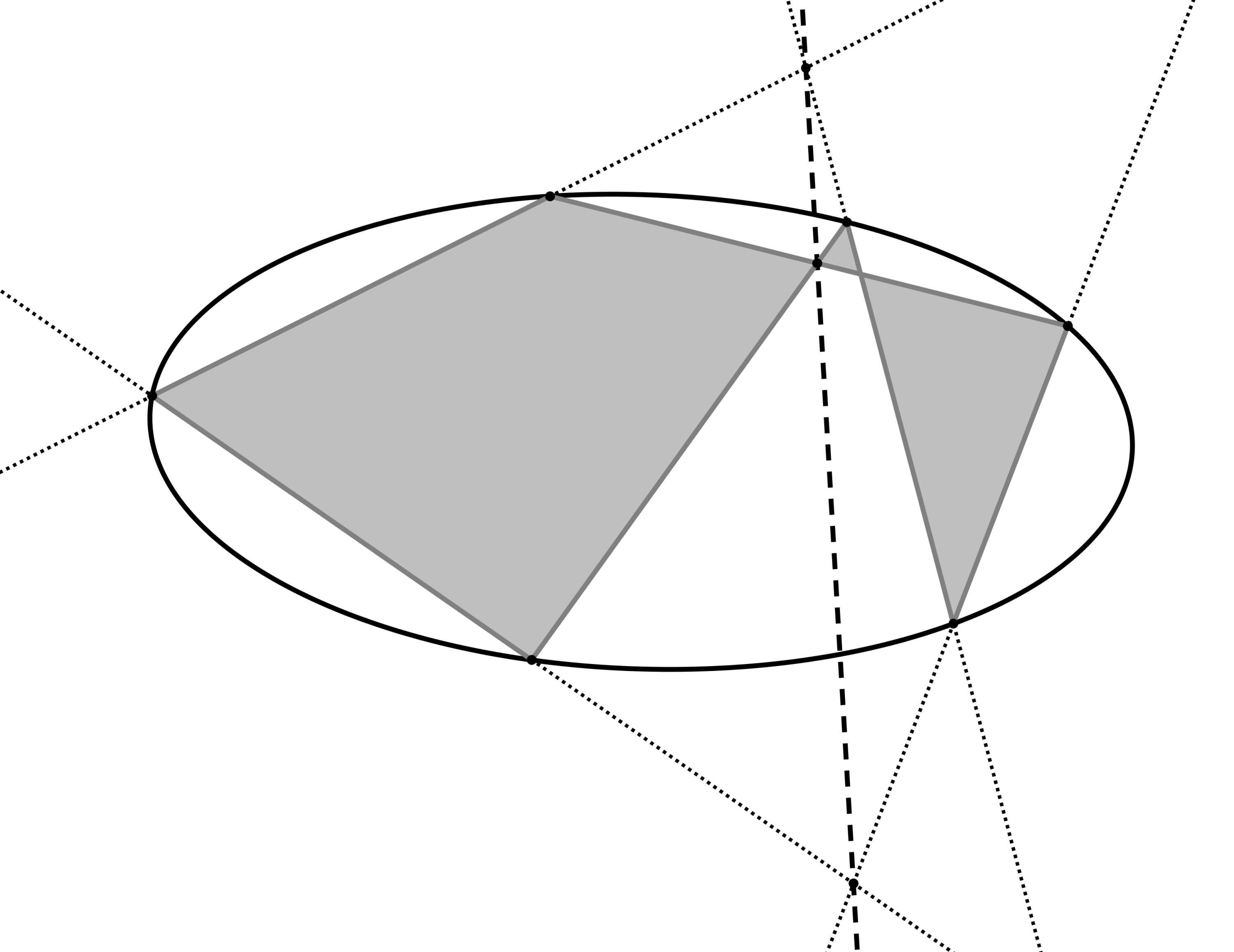}
\label{Fig:Pascal}}\hfill\hfill
\caption{Desargues' and Pascal's Theorems}\label{Fig:Desargues-Pascal}
\end{figure}

Let us state without proof four classical theorems of projective planar
geometry.

\begin{theorem}[Desargues' Theorem]\label{thm:Desargues}
Let  $\widetriangle{ABC}$ and $\widetriangle{A'B'C'}$ be two projective
triangles. 
The lines $AA',BB',CC'$ are concurrent if and only if the points
$$AB\cdot A'B',\quad BC\cdot B'C',\quad CA\cdot C'A'$$
are collinear.	
\end{theorem}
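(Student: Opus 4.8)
The plan is to prove Desargues' Theorem in the real (or complex) projective plane by the classical device of lifting to three dimensions, proving the statement for two triangles in ``general position'' in space, and then specializing back to the plane by a projection argument. I would begin by recalling that the projective plane $\RP$ (or $\CP$) can be realized as a plane $\pi$ inside a three-dimensional projective space, and that any projective incidence configuration in $\pi$ is the image, under projection from a point $O$ not on $\pi$, of a suitable configuration in space. The key step is therefore: given the two coplanar triangles $\wt{ABC}$ and $\wt{A'B'C'}$ with $AA', BB', CC'$ concurrent at a point $S$, I would show they can be lifted to two triangles $\wt{\tiA\tiB\tiC}$ and $\wt{\tiA'\tiB'\tiC'}$ lying in two distinct planes $\alpha,\alpha'$ of space, both projecting down to the given triangles, with the lifted lines $\tiA\tiA',\tiB\tiB',\tiC\tiC'$ still concurrent at a point $\tilde S$ above $S$.

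Once the spatial picture is set up, the forward direction becomes almost trivial: since $\tiA,\tiA',\tiB,\tiB'$ all lie in the plane spanned by the two concurrent lines $\tiA\tiA'$ and $\tiB\tiB'$, the lines $\tiA\tiB$ and $\tiA'\tiB'$ are coplanar and hence meet in a point $\tilde P$; similarly $\tiB\tiC\cap\tiB'\tiC'=\tilde Q$ and $\tiC\tiA\cap\tiC'\tiA'=\tilde R$. Each of these three points lies in $\alpha$ (being on a line of $\wt{\tiA\tiB\tiC}$) and also in $\alpha'$, hence all three lie on the line $\alpha\cap\alpha'$, so they are collinear. Projecting from $O$ back to $\pi$ carries $\tilde P,\tilde Q,\tilde R$ to $AB\cdot A'B'$, $BC\cdot B'C'$, $CA\cdot C'A'$ and collinearity is preserved. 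For the converse I would either run the same construction in reverse (collinearity of the three diagonal points lets one build the spatial lift with the two planes, forcing the three join-lines to be concurrent in space and hence in $\pi$), or simply invoke duality: the dual of Desargues' statement is Desargues' statement itself with the roles of the two implications swapped, so one direction formally yields the other. I would present the converse via the self-dual observation since the excerpt has already introduced correlations and the duality principle implicitly through polarities.

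The main obstacle is the lifting lemma: one must check that the degrees of freedom actually allow a lift with the join-lines remaining concurrent, and that the chosen center $O$ of projection can be taken off all the relevant planes and lines so that no projection becomes degenerate. Concretely, I would fix $\tilde S$ to be any point of space projecting to $S$ other than $S$ itself, pick the plane $\alpha'$ through $A',B',C'$ to be any plane of space containing the line $\pi\cap(\text{nothing})$—more carefully, choose $\alpha'$ to be a generic plane meeting $\pi$ in a line avoiding $A',B',C'$, lift $A',B',C'$ into $\alpha'$ along lines through $O$, then set $\tiA = \tiA'\tilde S\cap(\text{the line }O A)$, and so on, verifying that $\tiA,\tiB,\tiC$ are then automatically coplanar because $A,B,C$ were collinear with... (here one uses that $A,B,C$ are \emph{not} collinear, so a short argument is needed that the three points $\tiA,\tiB,\tiC$ still span a plane $\alpha$). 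This bookkeeping—ensuring genericity of $O$ and $\alpha'$ and that the resulting two spatial triangles are genuinely non-coplanar—is the only real content; everything else is incidence geometry in space plus invariance of incidence under central projection. An alternative, fully computational route would be to choose homogeneous coordinates with $S=[1\!:\!1\!:\!1]$ and $A,B,C$ the fundamental points, write $A',B',C'$ as $A'=A+\lambda S$ etc., and compute the three intersection points directly, checking that their $3\times 3$ determinant vanishes identically; I would mention this as the ``cheap'' backup proof but prefer the three-dimensional argument for its transparency.
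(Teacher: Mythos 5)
The paper deliberately states Desargues' Theorem without proof, listing it among four classical ``quad theorems'' to be quoted from the standard references, so there is no in-paper argument to compare yours against; your proposal must be judged on its own. It is the classical three-dimensional proof and it is essentially sound: the spatial case is exactly as you describe (the three meeting points of corresponding sides all lie on $\alpha\cap\alpha'$), and the planar case follows by lifting and projecting. Two points deserve care in a full write-up. First, the existence of the lift is precisely where the concurrency hypothesis enters: the lines $\tiA'\tilde{S}$ and $OA$ meet because both lie in the plane spanned by $O$ and the line $AA'S$, and that plane is well defined only because $A$, $A'$, $S$ are collinear; you should also arrange $\tilde{S}\notin\alpha'$, which forces $\tiA\neq\tiA'$ and hence $\alpha\neq\alpha'$, while non-collinearity of $\tiA,\tiB,\tiC$ follows from that of $A,B,C$ because central projection preserves collinearity. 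Second, your converse-by-duality is the clean route (Desargues' configuration is self-dual, so the dual of the direct implication is the converse) and avoids the extra bookkeeping of reversing the lift. The coordinate backup you mention does work and is very short: with $A,B,C$ the fundamental points, $S=[1\!:\!1\!:\!1]$ and $A'=A+\lambda S$, $B'=B+\mu S$, $C'=C+\nu S$, the three intersection points are $[\mu:-\lambda:0]$, $[0:\nu:-\mu]$, $[-\nu:0:\lambda]$, whose determinant is $\lambda\mu\nu-\lambda\mu\nu=0$ identically.
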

\begin{theorem}[Pascal's Theorem]\label{thm:Pascal}
If an hexagon is inscribed in a conic, the three pairs of opposite sides
meet in collinear points.
\end{theorem}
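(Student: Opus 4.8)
The plan is to derive Pascal's Theorem from the \emph{projective generation of conics} (Steiner's theorem): for a nondegenerate conic $\Phi$ and two distinct points $S,T\in\Phi$, the correspondence $SP\leftrightarrow TP$, with $P$ ranging over $\Phi$, is a projectivity between the pencil of lines through $S$ and the pencil through $T$; equivalently, the cross ratio of four lines joining $S$ to four points of $\Phi$ is unchanged when $S$ is replaced by $T$. This is the conic counterpart of the invariance~\eqref{eq:invariance_of_Cross-ratio} of cross ratios under projection and section, and it may be taken from \cite{Cox Proj,V - Y}. Label the vertices of the inscribed hexagon cyclically as $A_1,\dots,A_6\in\Phi$, pairwise distinct, so that the three pairs of opposite sides are $(A_1A_2,A_4A_5)$, $(A_2A_3,A_5A_6)$ and $(A_3A_4,A_6A_1)$, meeting respectively at $X=A_1A_2\cdot A_4A_5$, $Y=A_2A_3\cdot A_5A_6$ and $Z=A_3A_4\cdot A_6A_1$. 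Since no three points of $\Phi$ are collinear, all these lines and intersection points are well defined, and $X,Y,Z$ are three distinct points; the goal is to show they are collinear.

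First I would apply Steiner's theorem to the base points $A_2$ and $A_6$, obtaining a projectivity between the corresponding pencils that matches the line $A_2A_i$ with the line $A_6A_i$ for each $i$. Sectioning the $A_2$-pencil by the line $A_4A_5$ and the $A_6$-pencil by the line $A_3A_4$ converts this into a projectivity $\chi\colon A_4A_5\to A_3A_4$ between these two distinct lines. The key point is that $\chi$ fixes their common point $A_4$, because $A_2A_4$ corresponds to $A_6A_4$ and both $A_2A_4\cdot A_4A_5$ and $A_6A_4\cdot A_3A_4$ equal $A_4$. A projectivity between two distinct lines fixing their intersection point is a perspectivity; let $O$ be its center. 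To locate $O$ I would evaluate $\chi$ on three further lines of the Steiner correspondence: (i) $A_2A_1\,(=A_2X)$ corresponds to $A_6A_1$, so $\chi(X)=A_6A_1\cdot A_3A_4=Z$, whence $O$ lies on $XZ$; (ii) $A_2A_5$ corresponds to $A_6A_5$, so $\chi$ sends $A_5$ to $A_5A_6\cdot A_3A_4$, and the line through these two points is $A_5A_6$, whence $O$ lies on $A_5A_6$; (iii) $A_2A_3$ corresponds to $A_6A_3$, so $\chi$ sends $A_2A_3\cdot A_4A_5$ to $A_3$, and the line through these two points is $A_2A_3$, whence $O$ lies on $A_2A_3$.

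From (ii) and (iii), $O=A_5A_6\cdot A_2A_3=Y$; then (i) gives $Y\in XZ$, so $X,Y,Z$ are collinear, which is the theorem. The only genuinely non-elementary ingredient is Steiner's projective generation of conics; the perspectivity lemma and everything else belong to the elementary theory of projection and section already used in \S\ref{sub:Cross-ratios.}. The rest is routine verification that the configuration is nondegenerate --- e.g.\ that $A_4A_5\ne A_3A_4$, that $X\ne A_4\ne Z$, that $\chi$ is not the identity, and that $A_2A_3\ne A_5A_6$ --- all of which follow from the six vertices being distinct and from a line meeting $\Phi$ in at most two points. If one prefers to avoid invoking the perspectivity lemma, the same computation can be carried out directly with cross ratios via~\eqref{eq:invariance_of_Cross-ratio}, comparing the pencil cross ratios at $A_2$ and $A_6$ after sectioning by $A_3A_4$ and $A_4A_5$ and reading off the collinearity of $X,Y,Z$ at the end.
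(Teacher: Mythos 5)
The paper does not prove this statement at all: it is listed in \S\ref{sub:quad-theorems} among ``four classical theorems of projective planar geometry'' stated explicitly without proof, so there is no argument of the author's to compare yours against. Your proof is the standard and correct Steiner-style argument: the projectivity between the pencils at $A_2$ and $A_6$ induced by the conic, sectioned by $A_4A_5$ and $A_3A_4$, fixes the common point $A_4$ and is therefore a perspectivity, whose center is pinned down as $Y=A_2A_3\cdot A_5A_6$ by the images of $A_5$ and of $A_2A_3\cdot A_4A_5$, while the pair $X\mapsto Z$ forces that center onto the line $XZ$. All the ingredients you use are available within the paper's own toolkit --- Steiner's Theorem appears there as Theorem~\ref{thm:Steiner} in the Appendix, and the invariance of cross ratio under projection and section is~\eqref{eq:invariance_of_Cross-ratio} --- and your nondegeneracy checks (the two section lines are distinct, $X\ne A_4\ne Z$, $A_5$ and $A_3$ are not fixed under the induced map, and $A_2A_3\ne A_5A_6$) are exactly the ones needed; each follows from the fact that no three points of a nondegenerate conic are collinear. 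The only stylistic remark is that the perspectivity lemma (a projectivity between distinct lines fixing their common point is a perspectivity) is itself a small theorem that the paper never states, so if you wanted the argument to be fully self-contained you would either cite it from \cite{Cox Proj} or carry out the cross-ratio computation you sketch at the end; as written, the proof is complete and correct.
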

\begin{theorem}[Chasles' polar triangle
Theorem]\label{thm:Chasles-Polar-triangle}
A triangle and its polar triangle with respect to a conic are perspective.
\end{theorem}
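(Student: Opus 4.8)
The plan is to prove this as a direct application of Desargues' Theorem (Theorem~\ref{thm:Desargues}). Let $\TT=\wt{ABC}$ be a triangle and $\TT'=\wt{A'B'C'}$ its polar triangle with respect to $\Phi$, so that $A'=\rho(BC)$, $B'=\rho(CA)$, $C'=\rho(AB)$, and dually $\rho(A')=BC$, $\rho(B')=CA$, $\rho(C')=AB$. By Desargues' Theorem, the lines $AA'$, $BB'$, $CC'$ are concurrent if and only if the three points
\[
P_1=AB\cdot A'B',\qquad P_2=BC\cdot B'C',\qquad P_3=CA\cdot C'A'
\]
are collinear. So it suffices to establish this collinearity.

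The key observation is that the polarity $\rho$ interchanges the two triangles in a way that pairs up exactly these three intersection points. Since $\rho$ is a correlation (Proposition~\ref{prop:polarity-preserves-incidence}), it maps the point $P_1=AB\cdot A'B'$ to the line $\rho(AB)\cdot\rho(A'B')$ — here I mean the line joining the poles $\rho(AB)$ and $\rho(A'B')$. Now $\rho(AB)=C'$, and $\rho(A'B')$ is the point where $\rho$ sends the line $A'B'$; but $\rho(A')=BC$ and $\rho(B')=CA$, so $\rho(A'B')$ is the pole of $A'B'$, which by incidence-preservation is the intersection $BC\cdot CA=C$. Hence $\rho(P_1)=C'C$, i.e.\ $P_1$ lies on the polar of the point $CC'\cdot(\text{something})$ — more cleanly: $\rho(P_1)$ is the line $CC'$. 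Running the same computation for $P_2$ and $P_3$ I get $\rho(P_2)=AA'$ and $\rho(P_3)=BB'$.

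At this point the two halves of Desargues fit together self-referentially, so I would instead argue as follows, which is cleaner. The three points $P_1,P_2,P_3$ are collinear if and only if their polars $\rho(P_1),\rho(P_2),\rho(P_3)$ are concurrent (Proposition~\ref{prop:polarity-preserves-incidence}). By the computation above, $\rho(P_1)=CC'$, $\rho(P_2)=AA'$, $\rho(P_3)=BB'$. So: $P_1,P_2,P_3$ collinear $\iff$ $AA',BB',CC'$ concurrent. But Desargues' Theorem says exactly that $P_1,P_2,P_3$ collinear $\iff$ $AA',BB',CC'$ concurrent. This looks circular, so the genuine content must come from elsewhere — and it does: the symmetry forces the equivalence to be an identity of the form ``$X \iff X$'' only superficially; what actually pins it down is that we can exhibit \emph{one} of the two configurations directly.

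The honest route, which I would ultimately take: show directly that $P_1,P_2,P_3$ are collinear by identifying the line containing them. Note $P_1=AB\cdot A'B'$ lies on $AB=\rho(C')$, so $C'$ is conjugate to $P_1$; it also lies on $A'B'$, whose pole is $C$, so $C$ is conjugate to $P_1$. Thus $P_1$ lies on the polar of both $C$ and $C'$ — wait, that is not quite it either; rather, $C$ and $C'$ both lie on $\rho(P_1)$. The same for the other two points. Since $\rho(P_1)\ni C,C'$ we get $\rho(P_1)=CC'$; similarly $\rho(P_2)=AA'$, $\rho(P_3)=BB'$. Now the three lines $AA',BB',CC'$ — here is where I break the symmetry — either concur or they don't; if they concur, their poles $P_3,P_1,P_2$... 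I realize the cleanest finish is: apply Desargues to the \emph{pair} $(\TT,\TT')$ in the direction ``collinear $\Rightarrow$ concurrent'' after establishing collinearity of $P_1,P_2,P_3$ independently. And collinearity of $P_1,P_2,P_3$ follows because each $P_i$ is self-conjugate-adjacent in a way forcing them onto the polar of the (as yet hypothetical) perspector — so in fact the slick proof is: the map $\rho$ carries the \emph{line} $P_1P_2P_3$ (whatever it is) to the \emph{point} common to $\rho(P_1)=CC'$, $\rho(P_2)=AA'$, $\rho(P_3)=BB'$; since $\rho$ is a bijection sending lines to points, the image of the line $P_1P_2$ is a single point lying on both $AA'$ and $BB'$, and it must also be $\rho(P_3)\cap(\cdots)$. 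The main obstacle, and the step to handle carefully, is precisely disentangling this apparent circularity: the correct formulation is that $\rho$ induces a duality between ``$\TT$ perspective with $\TT'$'' and itself, and Chasles' statement is the assertion that this self-dual configuration is \emph{nonempty} — which is exactly Desargues applied once, using that the axis of perspectivity is the polar of the center. I will write it up by (i) computing $\rho(P_i)$ for $i=1,2,3$ as above, (ii) observing that $P_1,P_2,P_3$ collinear $\iff$ $AA',BB',CC'$ concurrent via Proposition~\ref{prop:polarity-preserves-incidence}, and (iii) invoking Desargues to convert this equivalence into the desired conclusion, noting that Desargues' equivalence and the polarity equivalence together force both configurations to hold simultaneously.
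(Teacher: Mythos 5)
The paper lists Chasles' polar triangle Theorem among the results it states \emph{without} proof, so there is no internal proof to compare against; judged on its own, your argument has a genuine gap at its final step. Your computations $\rho(P_1)=CC'$, $\rho(P_2)=AA'$, $\rho(P_3)=BB'$ are correct, and via Proposition~\ref{prop:polarity-preserves-incidence} they give the equivalence ``$P_1,P_2,P_3$ collinear $\iff$ $AA',BB',CC'$ concurrent.'' But Desargues' Theorem gives \emph{exactly the same} equivalence. Having two derivations of one biconditional $X\iff Y$ yields neither $X$ nor $Y$: both sides could perfectly well be false (as they are for a generic pair of non-perspective triangles, for which both equivalences hold vacuously). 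Your closing assertion that the two equivalences ``together force both configurations to hold simultaneously'' is a non sequitur --- it is precisely the circularity you flagged twice and never escaped. What your correct computations actually establish is only that \emph{if} $\TT$ and $\TT'$ are perspective, then the axis of perspectivity is the polar of the center; that is a true feature of the configuration, but it presupposes the conclusion.

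To close the gap you must prove one side of the equivalence independently, and this necessarily uses more about $\rho$ than that it is an incidence-preserving bijection: a non-involutive correlation also preserves incidence, yet a triangle need not be perspective with its image under one, so the symmetric (involutive) character of the polarity has to enter in an essential, quantitative way. Two standard routes: (i) in coordinates, writing the conic as $x^{T}Mx=0$ with $M$ symmetric, one gets $A'\propto M^{-1}(B\times C)$, $B'\propto M^{-1}(C\times A)$, $C'\propto M^{-1}(A\times B)$, and the determinantal (projective Ceva) criterion for the concurrency of $AA',BB',CC'$ reduces to the identity $\bigl(u^{T}M^{-1}v\bigr)\bigl(v^{T}M^{-1}w\bigr)\bigl(w^{T}M^{-1}u\bigr)=\bigl(v^{T}M^{-1}u\bigr)\bigl(w^{T}M^{-1}v\bigr)\bigl(u^{T}M^{-1}w\bigr)$ with $u=B\times C$, $v=C\times A$, $w=A\times B$, which holds exactly because $M^{-1}$ is symmetric; or (ii) synthetically, one proves the collinearity of $P_1,P_2,P_3$ directly by a cross-ratio or inscribed-quadrangle argument in the spirit of Theorem~\ref{thm:polars-quadrangles}, and only then invokes Desargues in the single direction needed.
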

% \begin{proof}
% See \cite[p. 62]{Cox Proj}.
% \end{proof}
\begin{theorem}[Pappus' Involution Theorem]\label{thm:Pappus-involution}
The three pairs of opposite sides of a quadrangle meet any line (not
through a
vertex) in three pairs of an involution.
\end{theorem}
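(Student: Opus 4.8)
The plan is to exhibit the three pairs of points cut on the transversal line $\ell$ as pairs of a single involution, using the fact (recalled just before this theorem in the discussion of harmonic conjugacy) that a projectivity of a line with two fixed points is determined once we know it is an involution, together with the standard principle that an involution on a line is pinned down by two of its pairs. Concretely, let the quadrangle have vertices $A,B,C,D$, with the three pairs of opposite sides being $\{AB,CD\}$, $\{AC,BD\}$, $\{AD,BC\}$. Let $\ell$ be a line meeting none of the vertices, and let it cut these six sides in the points
\begin{align*}
P_1 &= \ell\cdot AB, & P_1' &= \ell\cdot CD,\\
P_2 &= \ell\cdot AC, & P_2' &= \ell\cdot BD,\\
P_3 &= \ell\cdot AD, & P_3' &= \ell\cdot BC.
\end{align*}
We want to show there is an involution $\sigma:\ell\to\ell$ with $\sigma(P_i)=P_i'$ for $i=1,2,3$.

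The key step is to produce $\sigma$ as a composition of two perspectivities and show it swaps $P_1\leftrightarrow P_1'$ and $P_2\leftrightarrow P_2'$; the third pair then follows. First I would project $\ell$ from the vertex $A$ onto the line $BC$: this sends $P_1\mapsto B$ (since $A,P_1,B$ are collinear on side $AB$), $P_2\mapsto C$, and $P_3\mapsto P_3'$. Next I would project $BC$ from the vertex $D$ back onto $\ell$: this sends $B\mapsto P_2'$ (since $D,B,P_2'$ lie on side $BD$), $C\mapsto P_1'$ (since $D,C,P_1'$ lie on side $CD$), and $P_3'\mapsto P_3'$ (it lies on $\ell$ already, being $\ell\cdot BC$, and $D,P_3',\dots$—more carefully, $P_3'$ is fixed because it is on both $\ell$ and $BC$). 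The composite $\sigma$ of these two perspectivities is a projectivity of $\ell$ onto itself satisfying $\sigma(P_1)=P_1'$, $\sigma(P_2)=P_2'$, and $\sigma(P_3)=P_3'$.

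It remains to check that $\sigma$ is an \emph{involution}, i.e. $\sigma^2=\mathrm{id}$, equivalently that $\sigma$ also sends $P_1'\mapsto P_1$ and $P_2'\mapsto P_2$. A projectivity of a line that is not the identity is an involution as soon as it interchanges a single pair of distinct points, so it suffices to verify $\sigma(P_1')=P_1$. Repeating the construction with the roles of $A$ and $D$ interchanged — project $\ell$ from $D$ onto $BC$, then from $A$ back onto $\ell$ — gives a projectivity $\tau$ with $\tau(P_1')=P_1$ and $\tau(P_2')=P_2$ and $\tau(P_3')=P_3$; but a projectivity of $\ell$ is determined by its action on three points, so $\tau=\sigma^{-1}$, and since $\sigma$ and $\sigma^{-1}$ agree on the three points $P_1',P_2',P_3'$ (both being the reverse assignments), we get $\sigma^{-1}=\sigma$, hence $\sigma$ is an involution. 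The main obstacle is purely bookkeeping: keeping the incidences straight when naming which vertex lies on which side, and handling the degenerate possibility that $\sigma$ is the identity (which is excluded because, e.g., $P_1\neq P_1'$ as $\ell$ is not through a vertex, so the pairs are genuinely distinct and the involution is non-trivial).
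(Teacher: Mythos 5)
The paper states this result without proof---it is one of the four ``quad theorems'' quoted as classical---so there is no in-text argument to compare yours against; judging the proposal on its own terms, it has a genuine error at its computational core. The composite $\sigma:\ell\xrightarrow{A}BC\xrightarrow{D}\ell$ does not act as you claim, and your own intermediate steps already show this: you correctly record $P_1\mapsto B$ and then $B\mapsto P_2'$, so $\sigma(P_1)=P_2'$, not $P_1'$; likewise $\sigma(P_2)=P_1'$. The claim that the first perspectivity sends $P_3\mapsto P_3'$ is also false: since $P_3\in AD$, the projecting line $AP_3$ is $AD$ itself, so $P_3$ goes to the diagonal point $AD\cdot BC$, which the second perspectivity (from $D$, also on $AD$) sends back to $P_3$; meanwhile $P_3'=\ell\cdot BC$ lies on both $\ell$ and the intermediate line $BC$ and is fixed by both perspectivities. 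Hence $\sigma$ fixes $P_3$ and $P_3'$ and sends $P_1\mapsto P_2'$, $P_2\mapsto P_1'$: it is not the map pairing opposite sides, and a projectivity with two fixed points is not in general an involution. The back-up argument collapses for the same reason: your $\tau$ is exactly $\sigma^{-1}$ by construction, and a correct trace gives $\tau(P_1')=P_2$, not $P_1$.

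Your construction can be salvaged, but it needs one further idea, namely the cross-ratio criterion for an involution. Since $\sigma$ is a projectivity of $\ell$ it preserves cross ratios, so the correct images give
\[
(P_1P_2P_3P_3')=(P_2'P_1'P_3P_3')=(P_1'P_2'P_3'P_3),
\]
the last equality because simultaneously swapping the first two and the last two entries leaves a cross ratio unchanged (apply~\eqref{eq:CR1} twice). This says precisely that the projectivity of $\ell$ determined by $P_1\mapsto P_1'$, $P_2\mapsto P_2'$, $P_3\mapsto P_3'$ sends $P_3'$ back to $P_3$; a projectivity of a line interchanging one pair of distinct points is an involution, so the three pairs are pairs of an involution, as claimed. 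One last caveat: ``not through a vertex'' does not by itself guarantee $P_3\neq P_3'$---that fails exactly when $\ell$ passes through the diagonal point $AD\cdot BC$, in which case you should run the same argument on a different pair of opposite sides.
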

% See \cite[p. 49]{Cox Proj} for a proof.
Theorem~\ref{thm:Pappus-involution} is a partial version of 
Desargues' Involution Theorem (see \cite[p. 81]{Cox Proj}).
Using this theorem, a given quadrangle in the projective plane
determines a \emph{quadrangular involution} on every line not through a vertex.
If $\QQ$ is a quadrangle, we usually denote by $\sigma_\QQ$ the quadrangular
involution that it induces on the line considered. In the limit case where the
line passes through a vertex of $\QQ$, this vertex is a double point of
$\sigma_\QQ$. For the sake of simplicity, 
\hyperref[thm:Chasles-Polar-triangle]{Chasles' polar triangle Theorem}
will be called ``Chasles' Theorem''.

\section{Triangle notation}\label{sec:triangle-notation}

Along the whole text we will deal with a projective triangle $\TT$ with
vertices $A,B,C$
and sides
\[
a=\overline{BC}\,,\quad b=\overline{CA}\,,\quad c=\overline{AB}\,.
\]
The polar triangle of $\TT$ will be denoted by $\TT'$, its sides $a',b',c'$
are
the polars of $A,B,C$ respectively with respect to the absolute conic
$\Phi$, and its vertices
$$A'=b'\cdot c'\,,\quad B'=c'\cdot a'\,,\quad C'=a'\cdot b'\,,$$
are the poles of $a,b,c$ respectively. We will assume always that $\TT$ and
$\TT'$ are \emph{in general position}:
the vertices and sides of $\TT$ and $\TT'$ are all different and no vertex from
$\TT$ or $\TT'$  lies on $\Phi$. This last condition
implies also that  no side of $\TT$ or $\TT'$ is tangent to $\Phi$. Depending on
the type of conic
(real or imaginary) we are working with and, in the case of a real
conic, on the relative position of the triangle $\TT$ 
with respect to the conic $\Phi$, the triangles $\TT$ and $\TT'$ can produce in
$\mathbb{P}$ any of the generalized triangles of Figures
\ref{Fig:hyperbolic_generalized_triangles_I}--\ref{Fig:hyperbolic_generalized_triangles_star_II}.

Inspired by Proposition~\ref{prop:perpendicular-iff-conjugate}, we
say that $\TT$ is \emph{right-angled} if two of its sides
are conjugate to each other. In this case, only two of its sides can be
conjugate because if the sides $a$ and $b$ are both conjugate to $c$, it turns
out that $a\cdot b$ is the pole of $c$, in contradiction with the general
position assumptions.

The conjugate points of the vertices of $\TT$ at the sides they belong to are
$$
\begin{array}{ccc}
A_b=b\cdot a'& B_c=c\cdot b' & C_a=a\cdot c'\\
A_c=c\cdot a'& B_a=a\cdot b' & C_b=b\cdot c'\,.\\
\end{array}
$$
In the same way, the conjugate lines of the sides of $\TT$ are
$$
\begin{array}{ccc}
a_B=BA'& b_C=CB' & c_A=AC'\\
a_C=CA'& b_A=AB' & c_B=BC'\,.\\
\end{array}
$$
Note that it is
$$
\begin{array}{cccccc}
A_b=B'_{a'}& B_c=C'_{b'} & C_a=A'_{c'}& 
A_c=C'_{a'} & B_a=A'_{b'} a& C_b=B'_{c'}\\
a_B=b'_{A'}&b_C=c'_{B'}& c_A=a'_{C'}&
a_C=c'_{A'}& b_A=a'_{B'}&c_B=b'_{C'}\,.
\end{array}
$$

The midpoints of the segments $\ov{BC}$, $\ov{CA}$ and $\ov{AB}$ are
$D,D_a$, $E,E_b$ and
$F,F_c$ respectively. Equivalently, the midpoints of
$\ov{B'C'},\ov{C'A'}$ and $\ov{A'B'}$ are denoted $D',D'_{a'}$, $E',E_{b'}$ and
$F',F'_{c'}$ respectively.

Most of all these points and lines are depicted in 
Figure~\ref{Fig:triangle_midpoints_model}.

\begin{figure}[h]
\centering
\includegraphics[width=0.9\textwidth]
{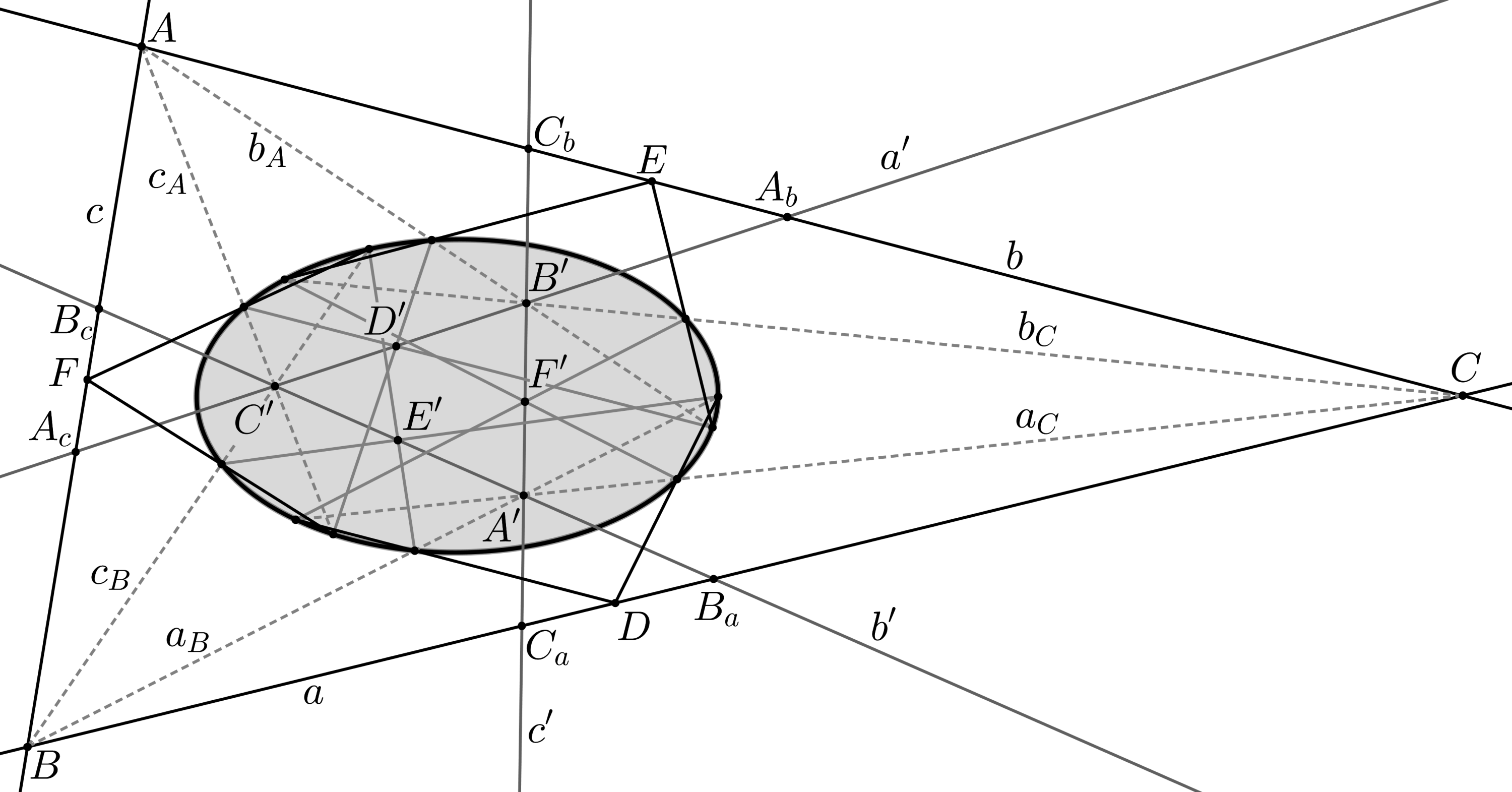}
\caption{Triangle and its polar one. Midpoints and conjugate points and lines}
\label{Fig:triangle_midpoints_model}
\end{figure}

As the lines $a,a'$ are
different, among the points $D,D_a,D',D'_{a'}$ , there must be at least three
different points. In some
cases, a midpoint of $\ov{BC}$ and a midpoint of
$\ov{B'C'}$ could coincide at the intersection point $A_0$ of $a$ with $a'$.
We want to clarify what happens in this limit situation. 
Note that $A_0$ is also the pole of the line $AA'$.

\begin{lemma}\label{lem:equivalences-DD'-coincide-1}
Assume that $D$ and $D'$ are both different from $A_0$.
If one of the points $D_a,D'_{a'}$ coincide with $A_0$, the other one
coincides too, and this happens if and only if the lines $AA'$ and $DD'$
coincide.
\end{lemma}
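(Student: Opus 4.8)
The plan is to reduce the whole statement to the single equivalence $D_a=A_0\iff D\in AA'$ (together with its analogue for $\TT'$), and then to transport the harmonic relation characterizing the midpoints of $\ov{BC}$ across the polarity $\rho$.

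First I would reformulate the condition $D_a=A_0$. Since $D\in a$ forces $A'=\rho(a)\in\rho(D)$ by Proposition~\ref{prop:polarity-preserves-incidence}, the point $D_a=\rho_a(D)=a\cdot\rho(D)$ is well defined ($D\neq A'$, because $a$ is not tangent to $\Phi$), and it equals $A_0=a\cdot a'$ exactly when $A_0\in\rho(D)$. Using incidence preservation and the fact (noted just above the statement) that $A_0$ is the pole of $AA'$, this happens precisely when $D\in\rho(A_0)=AA'$. Hence $D_a=A_0\iff D\in AA'$, and symmetrically $D'_{a'}=A_0\iff D'\in AA'$.

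Next I would prove $D_a=A_0\implies D'_{a'}=A_0$. Assume $D\in AA'$, so $D=AA'\cdot a$. Then $\rho(D)$ contains $\rho(a)=A'$ and $\rho(AA')=A_0$, hence $\rho(D)=A_0A'$; and $\rho(D_a)=\rho(a\cdot\rho(D))$ is the join of $\rho(a)=A'$ with $\rho(\rho(D))=D$, i.e. $\rho(D_a)=A'D=AA'$. Applying $\rho$ to the collinear points $B,C,D,D_a$ of $a$ and using invariance of cross ratios~\eqref{eq:invariance_cross-ratio-I}, the lines $b'=\rho(B)$, $c'=\rho(C)$, $A_0A'$, $AA'$ through $A'$ satisfy $(b'\,c'\,(A_0A')\,(AA'))=(BCDD_a)=-1$. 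Cutting this pencil with $a'$ (which misses $A'$) gives on $a'$ the points $C'=b'\cdot a'$, $B'=c'\cdot a'$, $A_0=(A_0A')\cdot a'$ and $N:=(AA')\cdot a'$, with $(B'C'A_0N)=-1$. Moreover $N$ and $A_0$ are conjugate on $a'$: $\rho(N)$ contains $\rho(AA')=A_0$ and $\rho(a')=A$, so $\rho(N)=A_0A$ and $N_{a'}=a'\cdot\rho(N)=A_0$. Thus $(B'C'NN_{a'})=-1$ and $(U'V'NN_{a'})=-1$ for $\{U',V'\}=a'\cdot\Phi$, so Lemma~\ref{lem:midpoints-harmonic-UV-AB} identifies $\{N,A_0\}$ with the midpoints of $\ov{B'C'}$. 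Since $D'\neq A_0$ by hypothesis, this forces $D'_{a'}=A_0$ (and $D'=N$). Exchanging $\TT$ and $\TT'$ — legitimate, since the polar triangle of $\TT'$ is $\TT$ and both $A_0$ and $AA'$ are unchanged under the exchange — yields the converse, so $D_a=A_0\iff D'_{a'}=A_0$.

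Finally, when $D_a=A_0$ the first step gives $D,D'\in AA'$, and $D\neq D'$ because $a\cap a'=\{A_0\}$ while $D,D'\neq A_0$; hence $DD'=AA'$. Conversely, $DD'=AA'$ gives $D\in AA'$, whence $D_a=A_0$. The only real work is in the third paragraph, and the point that needs care is the pole/polar bookkeeping — computing $\rho(D),\rho(D_a),\rho(N)$ from the rule that the polar of $p\cdot q$ is the join of the poles of $p$ and $q$, and keeping track of which element of each pair $\{B',C'\}$ and $\{D',D'_{a'}\}$ is which; everything else is a routine use of incidence- and cross-ratio-invariance of $\rho$ together with Lemma~\ref{lem:midpoints-harmonic-UV-AB}.
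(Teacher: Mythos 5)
Your proof is correct and follows essentially the same route as the paper's: both transport the harmonic relation characterizing the midpoints from one of the lines $a,a'$ to the other by a cross-ratio-preserving map and then invoke Lemma~\ref{lem:midpoints-harmonic-UV-AB}, handling the $DD'=AA'$ equivalence by the observation that $A_0$ is the pole of $AA'$. The only difference is cosmetic: the paper projects $a'$ onto $a$ from $A'$ and lands on the conjugate points $C_a,B_a$ (so it also needs Lemma~\ref{lem:midpoints-AB-midpoints-ApBp}), whereas you compose the polarity with the section by $a'$ and land on $B',C'$ directly.
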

\begin{proof}
Assume that $D'_{a'}=A_0$. This implies that the polar $AA'$ of $A_0$ passes
through $D'$. The point $H_A=AA'\cdot a$ is the conjugate point of $A_0$ in
$a$. Projecting the line $a'$ onto $a$ since $A'$ we get
$$-1=(B'C'D'A_0)=(C_aB_aH_AA_0)\,.$$
By Lemmas~\ref{lem:midpoints-AB-midpoints-ApBp} and~\ref{lem:D-Da-midpoints}
this implies that $A_0,H_A$ are the midpoints of $\ov{BC}$.

On the other hand, if $AA'=DD'$, the
polar of $D$ passes through the pole of $AA'$, which is $A_0$ and so it is
$D_a=A_0$, and equivalently it is $D'_{a'}=A_0$.
\end{proof}

The line $AA'$ is the line orthogonal to $a$ through $A$, and therefore it is
the \emph{altitude} of $\TT$ through $A$. In the situation of previous lemma,
in the limit case when $D_a$ and $D'_{a'}$ coincide, the altitude $AA'$ is also
a \emph{segment bisector} of $\ov{BC}$ (as it passes through $D$), and an
\emph{angle bisector} of $\widehat{bc}$ (as it passes through $D'$, cf.  Remark
\ref{rem:midpoints-bisectors}). This is the reason why in this situation we
say that the triangle $\TT$ is \emph{isosceles} at $A$. The triangle $\TT$ is
\emph{equilateral} if it is isosceles at its three vertices. As we can expect,
if $\TT$ is isosceles at $A$ the sides incident with $A$ have the same
``length'': let $B_1,B_2$ and $C_1,C_2$ be the intersection points of $b$ and
$c$ with the absolute conic $\Phi$; then

\begin{proposition}
The triangle $\TT$ is isosceles at $A$ if and only if we can label
$B_1,B_2,C_1,C_2$ such that
\begin{equation}\label{eq:prop-isosceles}
(B_1B_2AC)=(C_1C_2AB)\,.	
\end{equation}
\end{proposition}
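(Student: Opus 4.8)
The plan is to recast ``$\TT$ isosceles at $A$'' as a statement about one projective involution and then read \eqref{eq:prop-isosceles} off the invariance of cross ratio under that involution.

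First I would set $A_0=a\cdot a'$ (the pole of the altitude $AA'$, as in Lemma~\ref{lem:equivalences-DD'-coincide-1}) and $H_A=AA'\cdot a$ (the foot of the altitude from $A$). Since $AA'=\rho(A_0)$, the points $A_0$ and $H_A$ are conjugate in the line $a$, hence harmonic with respect to $a\cdot\Phi$; therefore ``$A_0$ is a midpoint of $\ov{BC}$'' amounts to $(A_0H_ABC)=-1$, and by the discussion around Lemma~\ref{lem:equivalences-DD'-coincide-1} this is precisely what ``$\TT$ is isosceles at $A$'' means. Next I would introduce $\sigma$, the symmetry of $\PP$ with respect to the point $A_0$, i.e.\ the harmonic homology with center $A_0$ and axis $\rho(A_0)=AA'$. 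Then $\sigma$ is a collineation fixing $\Phi$ (a harmonic homology at a pole--polar pair), it fixes $A$ (as $A$ lies on the axis $\rho(A_0)$, because $A_0\in\rho(A)=a'$), and on $\Phi$ it acts as the involution cut out by the secants through $A_0$ (each $P\in\Phi$ going to the second intersection of $A_0P$ with $\Phi$). Since $\sigma(B)$ is the harmonic conjugate of $B$ with respect to $A_0$ and $AA'\cdot(A_0B)=AA'\cdot a=H_A$, we get the key reformulation: $\TT$ is isosceles at $A$ if and only if $\sigma(B)=C$.

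For ``isosceles $\Rightarrow$ \eqref{eq:prop-isosceles}'': if $\sigma(B)=C$ then $\sigma$ maps the side $AB$ to the side $AC$, i.e.\ $\sigma(c)=b$, so it carries $c\cdot\Phi=\{C_1,C_2\}$ onto $b\cdot\Phi=\{B_1,B_2\}$. Labelling so that $\sigma(C_1)=B_1$ and $\sigma(C_2)=B_2$, invariance of cross ratio under the collineation $\sigma$, together with $\sigma(A)=A$ and $\sigma(B)=C$, gives $(C_1C_2AB)=(B_1B_2AC)$, which is \eqref{eq:prop-isosceles}. For the converse I would start from a labelling for which $(B_1B_2AC)=(C_1C_2AB)$ and take the projectivity $\phi\colon c\to b$ with $\phi(C_1)=B_1$, $\phi(C_2)=B_2$, $\phi(A)=A$ (well defined, since general position makes $C_1,C_2,A$ distinct and $B_1,B_2,A$ distinct). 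Invariance of cross ratio and the hypothesis force $\phi(B)=C$; as $\phi$ fixes $c\cdot b=A$ it is a perspectivity, whose center $O$ lies on $C_1B_1$, on $C_2B_2$ and on $B\,\phi(B)=BC=a$. Now $\{C_1,C_2,B_1,B_2\}$ is a quadrangle inscribed in $\Phi$, and $A=c\cdot b$ is one of its diagonal points while $O=C_1B_1\cdot C_2B_2$ is another; by Theorem~\ref{thm:polars-quadrangles} its diagonal triangle is self-polar, so the side joining the two diagonal points other than $A$ is $\rho(A)=a'$, whence $O\in a'$; combined with $O\in a$ this gives $O=a\cdot a'=A_0$. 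Therefore $A_0,C_1,B_1$ and $A_0,C_2,B_2$ are collinear, so each $B_i$ is the second intersection of the secant $A_0C_i$ with $\Phi$, i.e.\ $\sigma(C_i)=B_i$; since also $\sigma(A)=A$, the line $c$ (which contains $A,C_1,C_2$) is sent by $\sigma$ to the line through $A,B_1,B_2$, namely $b$. Then $\sigma(B)\in\sigma(c)\cdot(A_0B)=b\cdot a=\{C\}$, so $\sigma(B)=C$ and $\TT$ is isosceles at $A$.

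I expect the only real friction to be degeneracy bookkeeping rather than anything deep: checking that the standing general-position assumptions on $\TT$ and $\TT'$ already exclude the coincidences used above ($A_0\notin\Phi$, $A_0\neq B,C$, $C_1\neq C_2$, $B_1\neq B_2$, $a\neq a'$, and $\phi$'s center distinct from $A$), and making the identification ``isosceles at $A$'' $\Leftrightarrow$ ``$A_0$ is a midpoint of $\ov{BC}$'' (equivalently of $\ov{B'C'}$) watertight via Lemma~\ref{lem:equivalences-DD'-coincide-1}. Once that is in place, everything else is routine juggling of cross ratios and of the homology $\sigma$.
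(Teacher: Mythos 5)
Your proof is correct. It runs on the same underlying engine as the paper's: a perspectivity between $b$ and $c$ centered at the diagonal point $B_1C_1\cdot B_2C_2$ of the quadrangle $\{B_1,B_2,C_1,C_2\}$ inscribed in $\Phi$, together with the observation that this diagonal point equals $A_0$ precisely when $\TT$ is isosceles at $A$. The packaging, however, is genuinely different, especially in the converse. The paper uses the dictionary from \S\ref{sec:midpoints-of-a-segment} identifying the midpoints of $\ov{B'C'}$ with the diagonal points of that quadrangle other than $A$: for the forward direction it labels so that $B_1C_1\cdot B_2C_2=D'_{a'}=A_0$ and projects from $A_0$; for the converse it sets $D'_*=B_1C_1\cdot B_2C_2$, defines the auxiliary point $C_*=BD'_*\cdot b$, and shows $C_*=C$, forcing $D'_*\in a$ and hence $D'_*=A_0$. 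You instead make the conic-preserving harmonic homology $\sigma$ with center $A_0$ and axis $AA'$ the protagonist, reformulate ``isosceles at $A$'' as $\sigma(B)=C$, and in the converse locate the center of your perspectivity $\phi$ as $a\cdot a'$ by combining $O\in a$ (from $\phi(B)=C$) with $O\in a'$ (from the self-polarity of the diagonal triangle, Theorem~\ref{thm:polars-quadrangles}). What your route buys is that the involution fixing $\Phi$ is explicit, which makes the forward direction a one-line cross-ratio invariance; what the paper's route buys is brevity, since the midpoint/diagonal-point characterization has already been established and no plane collineation needs to be introduced. Your closing caveats about degeneracies are exactly the right ones, and all are covered by the standing general-position assumptions of \S\ref{sec:triangle-notation}.
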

\begin{proof}
	If $\TT$ is isosceles at $A$, the midpoint $D'_{a'}$ of $\ov{B'C'}$
coincides with $A_0$. By \S\ref{sec:midpoints-of-a-segment}, the
midpoints of $\ov{B'C'}$ are the diagonal points of the quadrangle
$\{B_1,B_2,C_1,C_2\}$ different from $A$, so we can label $B_1,B_2,C_1,C_2$
such that $B_1C_1\cdot B_2C_2=D'_{a'}=A_0$, and~\eqref{eq:prop-isosceles}
follows from projection and section since $A_0$.

On the other hand, if~\eqref{eq:prop-isosceles} holds, we consider the midpoint
$$D'_*=B_1C_1\cdot B_2C_2$$
of $\ov{B'C'}$. If we take the point $C_*=BD'_*\cdot
b$, it turns out that 
$$(B_1B_2AC_*)=(C_1C_2AB)$$
by projection and section since $D'_*$, and then~\eqref{eq:prop-isosceles}
implies that $C_*=C$. Therefore, $D'_*$ belongs to $BC$, it coincides with
$A_0$ and the triangle $\TT$ is isosceles at $A$.
\end{proof}

\begin{proposition}\label{prop:2-isosceles-equilateral}
	If $\TT$ is isosceles at two vertices, it is equilateral. \hfill$\blacksquare$
\end{proposition}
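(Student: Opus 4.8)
The plan is to reduce the claim to a statement about midpoints and polar triangles, using the characterization of ``isosceles at a vertex'' in terms of the altitude being simultaneously a segment bisector and an angle bisector (equivalently, $D_a = D'_{a'} = A_0$ in the notation of Lemma~\ref{lem:equivalences-DD'-coincide-1}). Suppose $\TT$ is isosceles at $A$ and at $B$; I must show it is also isosceles at $C$. By the discussion following Lemma~\ref{lem:equivalences-DD'-coincide-1}, being isosceles at $A$ means the altitude $AA'$ passes through the midpoint $D$ of $\ov{BC}$ and through the midpoint $D'$ of $\ov{B'C'}$; similarly, being isosceles at $B$ means the altitude $BB'$ passes through the midpoint $E$ of $\ov{CA}$ and through $E'$ of $\ov{C'A'}$. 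The goal is to deduce that the altitude $CC'$ passes through the midpoint $F$ of $\ov{AB}$ and through $F'$ of $\ov{A'B'}$.

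The key step is to exploit \hyperref[thm:Chasles-Polar-triangle]{Chasles' Theorem}: the triangle $\TT$ and its polar triangle $\TT'$ are perspective, so the three altitudes $AA', BB', CC'$ are concurrent at a point $H$ (the orthocenter). Under the isosceles hypotheses at $A$ and $B$, the first two altitudes $AA'$ and $BB'$ pass through $D$ and $E$ respectively. I would then argue that the line $DE$ — together with the analogous midpoint on $c$ — forms a configuration governed by \hyperref[thm:Pascal]{Pascal's Theorem} applied to the conic $\Phi$ and the six points where $a,b,c$ meet $\Phi$; concretely, the set of midpoints $D, D_a, E, E_b, F, F_c$ sits inside the inscribed hexagon structure, and collinearities among them propagate. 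The cleanest route is probably to show directly that $H$, being the intersection of $AA'$ and $BB'$, must also lie on the median-type lines $AD$, $BE$, hence $H$ is forced (by a median-concurrency argument of the kind used for barycenters in this chapter) to lie on $CF$ as well; once $CC'$ passes through $F$, isoscelity at $C$ follows after checking the corresponding statement for $F'$ by the polar-dual argument.

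The main obstacle I anticipate is the bookkeeping of \emph{which} of the two midpoints ($D$ versus $D_a$, etc.) is involved, since each projective segment has two midpoints and the hypothesis only identifies one of them with $A_0$; I will need to use Lemma~\ref{lem:equivalences-DD'-coincide-1} carefully to see that when $D_a = A_0$, the altitude $AA'$ genuinely coincides with the line $DD'$, and similarly at $B$, so that the point $H = AA' \cdot BB'$ equals $DD' \cdot EE'$. Then the symmetry between $\TT$ and $\TT'$ (they have the same midpoints by Lemma~\ref{lem:midpoints-AB-midpoints-ApBp}) should let me run one argument that simultaneously handles the $F$ and $F'$ conditions. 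A secondary technical point is ensuring the general-position hypotheses survive: I must check that assuming isoscelity at two vertices does not force a degeneration (e.g.\ a vertex landing on $\Phi$, or two midpoints colliding unexpectedly), which would make the limit definitions kick in; if such a degeneration did occur it would have to be ruled out or absorbed into the statement. Modulo these case distinctions, the proof is a short chain of perspectivity and harmonic-conjugacy arguments rather than any real computation.
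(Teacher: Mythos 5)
Your proposal never settles on a single argument, and it misses that the paper offers no proof here because none is needed: the proposition is meant to be read off from the one immediately preceding it. That proposition characterizes isoscelity at $A$ as $(B_1B_2AC)=(C_1C_2AB)$ for a suitable labelling of $b\cdot\Phi$ and $c\cdot\Phi$, i.e.\ as equality (as cross ratios) of the two sides of $\TT$ adjacent to $A$. Isoscelity at $A$ and at $B$ thus says ``$b$ equals $c$'' and ``$c$ equals $a$'', and the identity at $C$ follows by transitivity together with \eqref{eq:CR1} (a swap $A_1\leftrightarrow A_2$ absorbs the inversion that appears when a segment is read from its other endpoint). That two-line computation is the intended proof; your synthetic machinery is a genuinely different and much heavier route.

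That said, the second strategy you sketch (the ``cleanest route'') does work, while the first (the Pascal/``propagating collinearities'' paragraph) is superfluous: Theorem~\ref{thm:medians-concurrent} already packages everything you need. Concretely: by Lemma~\ref{lem:equivalences-DD'-coincide-1}, isoscelity at $A$ and $B$ forces $h_a=AD=A'D'$ and $h_b=BE=B'E'$, so $H=h_a\cdot h_b$ is simultaneously a barycenter of $\TT$ (choose the midpoint $F^\sharp$ of $\ov{AB}$ making $D,E,F^\sharp$ non-collinear) and, dually, a barycenter of $\TT'$. Hence $h_c=CC'$ contains a midpoint of $\ov{AB}$ and a midpoint of $\ov{A'B'}$; if either equals $C_0$ you are done immediately, and otherwise $h_c=FF'$ and Lemma~\ref{lem:equivalences-DD'-coincide-1} applied at $C$ gives isoscelity there. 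The one genuine hole is the step ``$H$ lies on $CF^\sharp$ and on $CC'$, both pass through $C$, hence the lines coincide'': this fails precisely when $H=C$, i.e.\ when $a$ and $b$ are conjugate ($\TT$ right-angled at $C$), and dually when $H=C'$. You flag ``degenerations'' generically, but this specific case must be excluded or treated separately before your argument is complete; the cross-ratio route is immune to it.
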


As in the hypotheses of Lemma~\ref{lem:equivalences-DD'-coincide-1}, we will
always assume that $D$ and $D'$ are different from $A_0$. In
the same way, taking the points
$B_0=b\cdot b'$ and $C_0=c\cdot c'$, we will always assume that $E$ and $E'$
are different from $B_0$ and that $F$ and $F'$ are different from $C_0$.
In particular, it must be $D\neq D'$, $E\neq E'$ and $F\neq F'$.

\section{Chasles' Theorem and the orthocenter}\label{sub:Orthocenter}

\begin{figure}[h]
\centering
\includegraphics[width=0.9\textwidth]
{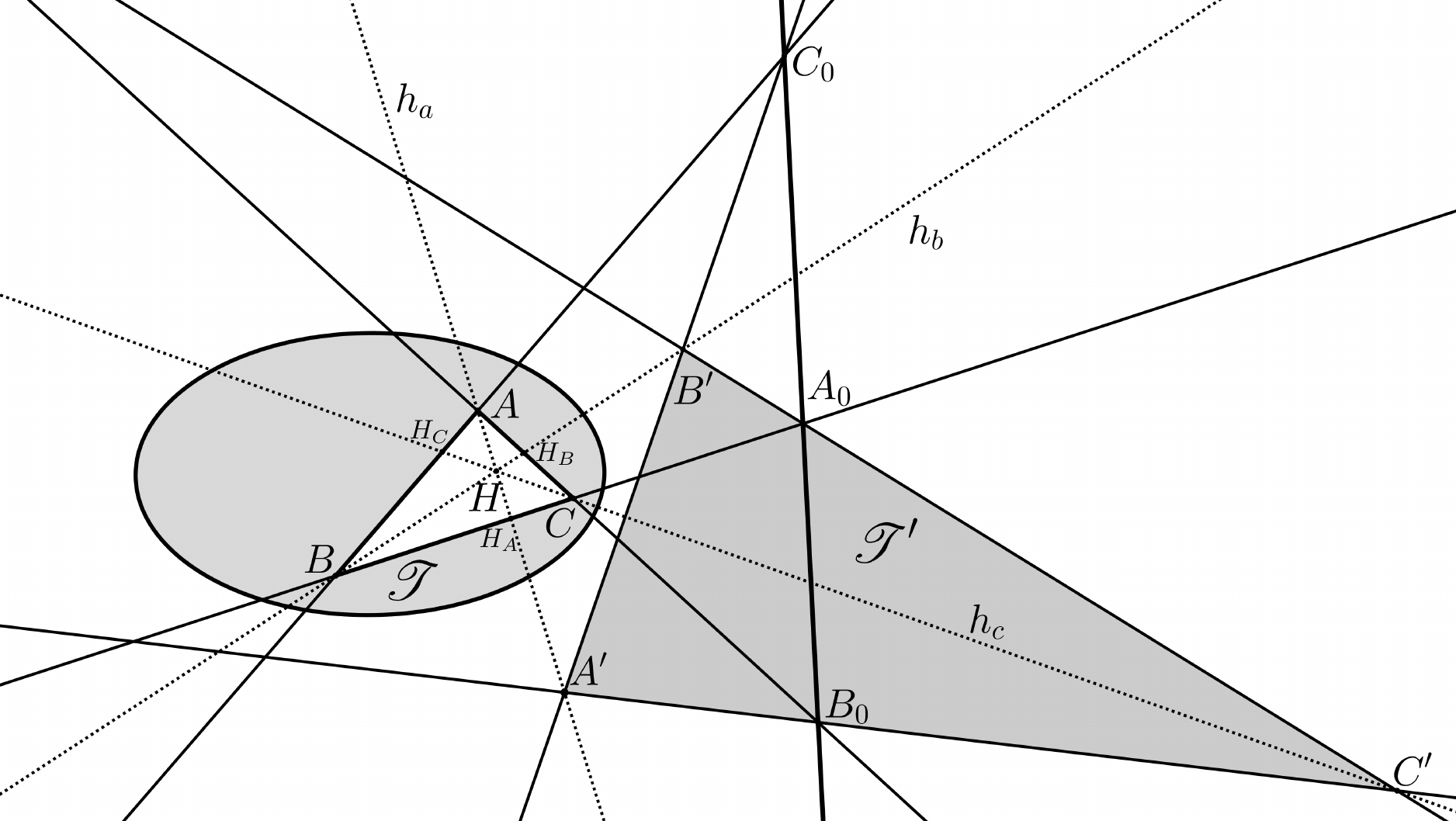}
\caption{orthocenter}\label{Fig:orthocenter}
\end{figure}

The altitudes of $\TT$ through the vertices
$A,B$ and $C$ are the lines
$$h_{a}=AA'\,,\quad h_{b}=BB'\quad \text{and}\quad h_{c}=CC'\,,$$
respectively. A straightforward consequence of 
\hyperref[thm:Chasles-Polar-triangle]{Chasles' Theorem} is:
\begin{theorem}[Concurrency of altitudes]
The lines $h_a,h_b$ and $h_c$ are concurrent. 
\end{theorem}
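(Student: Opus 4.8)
The plan is to recognize that the theorem is almost a restatement of Chasles' polar triangle Theorem (Theorem~\ref{thm:Chasles-Polar-triangle}) once the altitudes have been correctly identified with the lines joining corresponding vertices of $\TT$ and $\TT'$. First I would recall from the triangle notation in \S\ref{sec:triangle-notation} that the vertices of the polar triangle $\TT'$ are $A'=\rho(a)$, $B'=\rho(b)$, $C'=\rho(c)$, so that $A'$ is the pole of the side $a=BC$. Next I would invoke Proposition~\ref{prop:perpendicular-iff-conjugate}: since the polar $\rho(A)=a'$ of $A$ passes through the pole $A'$ of $a$, and since $A\in a'$ forces $AA'$ to be conjugate to $a$ at the point $AA'\cdot a$ (because $\rho(AA')$ lies on both $a$ and $a'$, hence equals $a\cdot a'$, and dually the polar of any point of $a$ near this configuration...), one concludes that the line $h_a=AA'$ is orthogonal to $a$ through $A$, i.e.\ it really is the altitude through $A$. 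The cleanest way to phrase this is: $AA'$ is the polar of $A_0=a\cdot a'$, so $AA'$ is conjugate to $a$, and by Proposition~\ref{prop:perpendicular-iff-conjugate} the lines $AA'$ and $a$ are perpendicular; since $A\in AA'$ this is the altitude. The same argument applies verbatim to $h_b=BB'$ and $h_c=CC'$.

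Having established that $h_a=AA'$, $h_b=BB'$, $h_c=CC'$ are precisely the three lines joining each vertex of $\TT$ to the corresponding vertex of $\TT'$, the concurrency is now immediate: Chasles' Theorem (Theorem~\ref{thm:Chasles-Polar-triangle}) asserts exactly that $\TT$ and $\TT'$ are perspective, which by definition of perspectivity means the lines $AA'$, $BB'$, $CC'$ meet in a single point. So the proof is just: identify altitudes $\leftrightarrow$ vertex-joining lines, then quote Chasles.

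The only genuine content, and thus the only possible obstacle, is the identification step — making sure the general position hypotheses of \S\ref{sec:triangle-notation} guarantee that $AA'$, $BB'$, $CC'$ are honest lines (i.e.\ $A\neq A'$ etc., so the joins are defined) and that the perpendicularity criterion of Proposition~\ref{prop:perpendicular-iff-conjugate} applies (i.e.\ the relevant intersection points lie in $\PP$, or at least that the conjugacy characterization of orthogonality is the one we want in the projective sense). Since the excerpt has already set up ``no vertex of $\TT$ or $\TT'$ lies on $\Phi$'' and ``all vertices and sides are distinct'', the joins $AA'$, $BB'$, $CC'$ are well-defined lines and the argument goes through with no further work. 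I would therefore keep the proof to two or three sentences, and I expect the author's proof to be essentially this: the altitudes are the perspectivity lines of $\TT$ and $\TT'$, so Chasles' Theorem finishes it.
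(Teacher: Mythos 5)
Your proof is correct and is essentially the paper's own argument: the paper defines $h_a=AA'$, $h_b=BB'$, $h_c=CC'$ (having already noted that $AA'$ is the line orthogonal to $a$ through $A$, since $AA'$ is the polar of $A_0=a\cdot a'$) and then states the concurrency as a straightforward consequence of Chasles' polar triangle Theorem. Your extra care about the general position hypotheses matches the setup of \S\ref{sec:triangle-notation}, so nothing further is needed.
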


We say that the intersection point $H$ of $h_a,h_b$ and $h_c$ is the
\emph{orthocenter} of
$\TT$. Note that the orthocenter of $\TT$ is also the orthocenter of $\TT'$.

The poles of $h_{a},h_{b},h_{c}$ are the points $A_{0},B_{0},C_{0}$
respectively. Because $h_{a},h_{b},h_{c}$ intersect at the point
$H$, their poles $A_{0},B_{0},C_{0}$ lie on the polar $h$ of $H$. In our
constructions of \S\ref{sub:Baricenters} to \S\ref{sec:nine-point-conic} 
the line $h$ will have the same relation to the triangle $\TT$ as
the line at infinity has with any euclidean triangle.

\section{Pascal's Theorem and classical
centers}\label{sec:midpoint-quadrilateral-classical-centers}

\begin{figure}
\centering
\includegraphics[width=0.9\textwidth]
{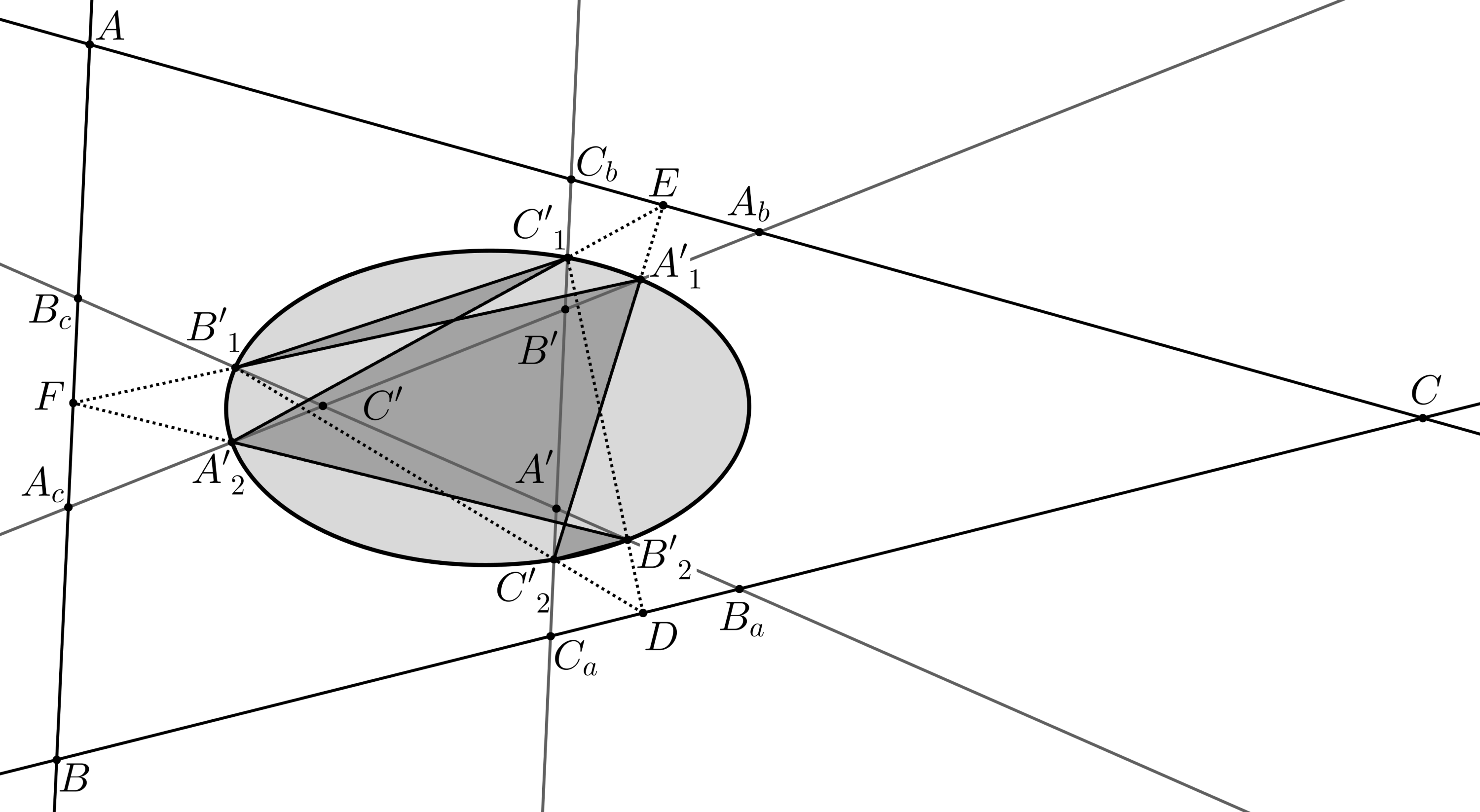}
\caption{Pascal's Theorem and the midpoints
of a
triangle}\label{Fig:Pascal_midpoints}
\end{figure}

First of all, we will introduce on the points 
$D,D',E,E',F,F'$ another assumption additional to those
of the last paragraph of \S\ref{sec:triangle-notation}: 
we will assume from now on that $D,E,F$ are non-collinear and that 
$D',E',F'$ are non-collinear too. We show how this always can be done
with the points $D,E,F$.
If $\TT$ is not isosceles at any vertex: (i)
we take as $D$ and $E$ any of the midpoints of $\ov{BC}$ and $\ov{CA}$
respectively; (ii) as $D,E$ cannot be collinear with both $F,F_c$, we choose as
$F$ a midpoint of $\ov{AB}$ not collinear with $D,E$; and (iii) this point $F$ 
will be different from $C_0$ because $\TT$ is not isosceles at $C$. 
If $\TT$ is isosceles only at $A$: (i) we choose as $D$ the midpoint of $\ov{BC}$
different from $A_0$; (ii) we choose as $E$ any of the midpoints of $\ov{CA}$; 
and (iii) repeat steps (ii) and (iii) of the non-isosceles case. Finally, if
$\TT$ is equilateral, we take as $D,E,F$ the midpoint of
$\ov{BC},\ov{CA},\ov{AB}$ different from $A_0,B_0,C_0$ respectively. In this
case, as $A_0,B_0,C_0$ are collinear it follows from Lemma 
\ref{lem:harmonic-sets-on-the-sides} that $D,E,F$ cannot be collinear.

By Lemma
\ref{lem:harmonic-sets-on-the-sides}, we know that $EF\cdot E_bF_c$ and
$EF_c\cdot E_bF$ are two points on $a$ which are harmonic conjugates with
respect to $B,C$. In fact, these two points are $D,D_a$.

\begin{lemma}\label{lem:midpoints-collinear}
The points $D_a,E,F$ are collinear. The points $D,D_a$ are the diagonal points
different from $A$ of the quadrangle $\{E,E_b,F,F_c\}$.
\end{lemma}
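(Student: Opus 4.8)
The plan is to exploit Lemma~\ref{lem:harmonic-sets-on-the-sides} together with the explicit identification of $D, D_a$ as harmonic conjugates with respect to $B, C$, which has just been recalled. First I would set up the quadrangle $\QQ = \{E, E_b, F, F_c\}$ explicitly, where $E, E_b$ are the midpoints of $\ov{CA}$ (so $(CAEE_b) = -1$) and $F, F_c$ are the midpoints of $\ov{AB}$ (so $(ABFF_c) = -1$). This is precisely the configuration of Lemma~\ref{lem:harmonic-sets-on-the-sides} applied with the roles $F_1 = F$, $F_2 = F_c$ on side $AB$ and $E_1 = E$, $E_2 = E_b$ on side $CA$. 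That lemma tells us directly that the two diagonal points of $\QQ$ other than $A$ lie on the line $BC = a$ and form a harmonic set with $B, C$.

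Next I would match those two diagonal points to $D$ and $D_a$. The diagonal points of $\QQ$ distinct from $A$ are, by definition of a quadrangle, the intersection points of the two remaining pairs of opposite sides; tracing through the proof of Lemma~\ref{lem:harmonic-sets-on-the-sides}, these are the points $EF \cdot E_b F_c$ and $E F_c \cdot E_b F$. But the sentence immediately preceding the statement of Lemma~\ref{lem:midpoints-collinear} has already identified these two points as exactly $D$ and $D_a$ (again via Lemma~\ref{lem:harmonic-sets-on-the-sides}, since $D, D_a$ are the unique pair on $a$ harmonic with respect to $B, C$ arising from the midpoint configuration). So $\{D, D_a\}$ is the set of diagonal points of $\QQ$ other than $A$, which is the second assertion of the lemma.

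For the first assertion — that $D_a, E, F$ are collinear — I would argue as follows: one of the two diagonal points of $\QQ$ distinct from $A$ is $EF \cdot E_b F_c$ and the other is $E F_c \cdot E_b F$. The line $EF$ contains one of these diagonal points by construction; whichever of $D, D_a$ equals $EF \cdot E_b F_c$ lies on $EF$, hence $D, E, F$ or $D_a, E, F$ are collinear. The labelling convention in the paper (with $D$ the midpoint ``of the same type'' and $D_a = D \cdot a'$ its conjugate) should be checked to fix which one; consulting Figure~\ref{Fig:Pascal_midpoints} and the standing conventions, the collinear triple is $D_a, E, F$ as stated. The main obstacle, such as it is, is purely bookkeeping: one must be careful about which of the two diagonal points gets the name $D$ versus $D_a$, and correspondingly which of the six sides of the quadrangle $\QQ$ is which. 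Once the naming is pinned down against the figure, both claims fall out immediately from Lemma~\ref{lem:harmonic-sets-on-the-sides} with no further computation.
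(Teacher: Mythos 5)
There is a genuine gap, and it sits at the heart of your second paragraph. Lemma~\ref{lem:harmonic-sets-on-the-sides} only tells you that the two diagonal points of $\QQ=\{E,E_b,F,F_c\}$ other than $A$ lie on $a=BC$ and are harmonic conjugates with respect to $B,C$. That property does \emph{not} single out the pair $\{D,D_a\}$: the harmonic conjugacy $\tau_{BC}$ is an involution of $a$, so there is a one-parameter family of pairs on $a$ harmonic with respect to $B,C$. By Lemma~\ref{lem:midpoints-harmonic-UV-AB}, the midpoints $D,D_a$ are pinned down only by the \emph{two} conditions $(BCDD_a)=(UVDD_a)=-1$, where $U,V=a\cdot\Phi$; your argument never establishes the second one. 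Indeed, Lemma~\ref{lem:harmonic-sets-on-the-sides} is a purely projective statement that makes no reference to $\Phi$ at all, so it cannot by itself prove that the diagonal points of $\QQ$ are the $\Phi$-midpoints of $\ov{BC}$. Your appeal to ``the sentence immediately preceding the statement'' (``In fact, these two points are $D,D_a$'') is circular: that sentence is an announcement of the lemma being proved, not a previously established fact, and the phrase ``the unique pair on $a$ harmonic with respect to $B,C$ arising from the midpoint configuration'' is exactly the claim that needs proof. The same circularity infects your first assertion, since ``whichever of $D,D_a$ equals $EF\cdot E_bF_c$'' presupposes that one of them does.

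The missing ingredient is an argument that actually involves the conic. The paper's proof takes the six points $A'_1,A'_2,B'_1,B'_2,C'_1,C'_2$ where the polar lines $a',b',c'$ meet $\Phi$, recalls that the midpoints of each side of $\TT$ are diagonal points of the corresponding inscribed quadrangle, and applies \hyperref[thm:Pascal]{Pascal's Theorem} to the hexagon $A'_1B'_1C'_1A'_2B'_2C'_2$ (and to a second hexagon for $E_b,F_c$) to conclude that $E,F$ are collinear with the midpoint $B'_1C'_1\cdot B'_2C'_2$ of $\ov{BC}$, which must be $D_a$ by the non-collinearity assumption on $D,E,F$. Some such use of $\Phi$ (Pascal, or the Menelaus/Carnot route suggested as an exercise later in the paper) is indispensable; Lemma~\ref{lem:harmonic-sets-on-the-sides} alone cannot close the argument.
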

\begin{proof}
Let $A'_1,A'_2$, $B'_1,B'_2$ and $C'_1,C'_2$ be the intersection points of $a'$,
$b'$ and $c'$ respectively with $\Phi$.
By \S
\ref{sec:midpoints-of-a-segment}, the midpoints of $\ov{AB}$ are
the diagonal points different from
$C'$ of the quadrangle $\{A'_1,A'_2,B'_1,B'_2\}$. In the same way, the midpoints
of $\ov{BC}$ are the diagonal points different from $A'$ of the quadrangle
$\{B'_1,B'_2,C'_1,C'_2\}$, and the midpoints of $\ov{CA}$ are the diagonal
points different from $B'$ of the quadrangle $\{C'_1,C'_2,A'_1,A'_2\}$.

Imagine
that we have given the names $A'_1,A'_2$, $B'_1,B'_2$ and $C'_1,C'_2$ to the
points of $a'\cdot\Phi$, $b'\cdot\Phi$ and $c'\cdot\Phi$ respectively in such a
way that
$$D=B'_1C'_2\cdot B'_2C'_1\quad\text{and}\quad E=C'_1A'_2\cdot C'_2A'_1\,.$$
Taking the hexagon $A'_1B'_1C'_1A'_2B'_2C'_2$ inscribed in $\Phi$, by
\hyperref[thm:Pascal]{Pascal's Theorem}
the points $E,F$ are collinear with the point $B'_1C'_1\cdot B'_2C'_2$,
which is a midpoint of $\ov{BC}$ (see Figure~\ref{Fig:Pascal_midpoints}).
Because we have assumed that $D,E,F$ are non-collinear, it must be
$D_a=B'_1C'_1\cdot B'_2C'_2$.

Taking the hexagon $B'_1C'_1A'_1B'_2C'_2A'_2$, the points $E_b,F_c$ are
also collinear with $D_a$ and therefore $D_a$ is a diagonal point of the
quadrangle $\{E,E_b,F,F_c\}$. In the same way it can be proved that $D$ is
another diagonal point of $\{E,E_b,F,F_c\}$.
\end{proof}

As the polar of $D_a$ is the bisector of $\ov{BC}$ through $D$, 
straightforward shadows of this lemma are (compare \cite[Thm. 7]{Santalo}):

\begin{theorem}
Let $T$ be an elliptic or hyperbolic triangle, and let $\frac{1}{2}T$ be a 
\emph{medial triangle} of $T$, i.e. a triangle whose vertices are midpoints of the sides of $T$. The side bisectors of $T$ through the vertices of $\frac{1}{2}T$ are the altitudes of $\frac{1}{2}T$.
\end{theorem}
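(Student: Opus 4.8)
The plan is to translate the statement into projective language and apply Lemma~\ref{lem:midpoints-collinear} directly. Let $T$ be realized projectively as a triangle $\TT=\wt{ABC}$ intersected with its polar triangle $\TT'$ with respect to $\Phi$. A medial triangle $\tfrac12 T$ has vertices chosen among the midpoints $D,D_a$ of $\ov{BC}$, $E,E_b$ of $\ov{CA}$, $F,F_c$ of $\ov{AB}$; without loss of generality (relabelling the midpoints of each side) it suffices to treat the medial triangle with vertices $D,E,F$, since any medial triangle is obtained from this one by replacing one or more vertices by their conjugates, and the argument is symmetric under such replacements. So the goal becomes: the side bisector of $\ov{BC}$ passing through $D$ is the altitude of $\wt{DEF}$ through $D$, and similarly at $E$ and $F$.

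First I would recall from \S\ref{sec:midpoints-of-a-segment} (specifically the discussion following Definition~\ref{def:midpoints} and Remark~\ref{rem:midpoints-bisectors}) that the side bisector of $\ov{BC}$ through $D$ is precisely the polar line $\rho(D_a)$ of the other midpoint $D_a$. By Proposition~\ref{prop:perpendicular-iff-conjugate}, to show this line is the altitude of $\wt{DEF}$ from $D$ it suffices to show two things: (i) $\rho(D_a)$ passes through $D$, which is immediate because $(BCDD_a)=-1$ forces $D$ and $D_a$ to be conjugate, hence $D\in\rho(D_a)$; and (ii) $\rho(D_a)$ is perpendicular to the opposite side $EF$ of $\wt{DEF}$, i.e. $\rho(D_a)$ and $EF$ are conjugate lines, equivalently the pole of $EF$ lies on $\rho(D_a)$, equivalently $D_a$ lies on $\rho(EF)$, equivalently $D_a$ is conjugate to every point of $EF$ — the cleanest formulation being that $D_a$ lies on the line $EF$ is \emph{not} what we want; rather we want $D_a = \rho(EF)$ is false in general, so the right statement is: the altitude of $\wt{DEF}$ from $D$ is $\rho$ of the point $\rho(D)\cdot\text{(pole of }EF)$... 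Let me instead use the direct route: the altitude from $D$ in $\wt{DEF}$ is the line through $D$ perpendicular to $EF$, which by Proposition~\ref{prop:perpendicular-iff-conjugate} is the line $D\,P$ where $P$ is the pole of $EF$; and $\rho(D_a)$ equals this line iff its pole $D_a$ equals $\rho(DP)=\rho(D)\cdot EF$. So the task reduces to: \emph{$D_a$ lies on the line $EF$.}

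That is exactly the first sentence of Lemma~\ref{lem:midpoints-collinear}: $D_a, E, F$ are collinear. Thus step (ii) is handed to us. To close the argument I would assemble the pieces: $D_a \in EF$ means $\rho(D_a)$ passes through $\rho(EF)$ (the pole of $EF$), and combined with $D\in\rho(D_a)$ from step (i), the line $\rho(D_a)$ is the line joining $D$ to the pole of $EF$, which by Proposition~\ref{prop:perpendicular-iff-conjugate} is the altitude of $\wt{DEF}$ from $D$ (it passes through $D$ and is perpendicular to $EF$). Finally I would note $\rho(D_a)$ is the side bisector of $\ov{BC}$ through $D$, completing that vertex; the cases of $E$ and $F$ follow by the identical argument with the roles of the sides permuted, using that Lemma~\ref{lem:midpoints-collinear} (applied to the other sides) gives $E_b\in DF$-type collinearities, or more directly re-running the symmetric version $E\in D_aF_c$ etc.

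The main obstacle is essentially bookkeeping rather than a genuine difficulty: one must be careful that for an \emph{arbitrary} medial triangle — not just $\wt{DEF}$ — the relevant collinearity still holds, i.e. that Lemma~\ref{lem:midpoints-collinear} and its analogues cover all eight choices of medial triangle, and that the non-collinearity hypotheses built into \S\ref{sec:triangle-notation} and the setup of \S\ref{sec:midpoint-quadrilateral-classical-centers} guarantee $\wt{DEF}$ (and its siblings) really is a triangle. I would address this by invoking Lemma~\ref{lem:harmonic-sets-on-the-sides} together with Lemma~\ref{lem:midpoints-collinear}: for any medial triangle, the ``opposite midpoint'' to a chosen vertex lies on the line through the two other chosen vertices, by the same Pascal-hexagon argument with the hexagon's vertices on $\Phi$ relabelled to match the chosen midpoints. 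Everything else is a direct citation of Proposition~\ref{prop:perpendicular-iff-conjugate} and the pole–polar dictionary.
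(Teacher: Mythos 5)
Your proposal is correct and is exactly the argument the paper intends: the theorem is stated as a direct shadow of Lemma~\ref{lem:midpoints-collinear}, prefaced only by the remark that the side bisector of $\ov{BC}$ through $D$ is $\rho(D_a)$, so that $D_a\in EF$ forces this polar line to join $D$ to the pole of $EF$, i.e.\ to be the altitude of $\wt{DEF}$ from $D$. The paper leaves these incidence details (and the bookkeeping over the four possible medial triangles) unwritten; your write-up simply supplies them.
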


\begin{theorem}
Let $H$ be a hyperbolic right-angled hexagon, and let $\frac{1}{2}H$ be a triangle whose vertices are the midpoints of alternate sides of $H$. The side bisectors of $H$ through the vertices of $\frac{1}{2}H$ are the altitudes of $\frac{1}{2}H$.
\end{theorem}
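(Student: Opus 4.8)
The plan is to reduce this to Lemma~\ref{lem:midpoints-collinear}, running the same argument that proves the preceding theorem about medial triangles, once the sides of $H$ have been matched up with the sides of $\TT$ and $\TT'$.

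First I would use that the right-angled hexagon $H$ is produced, in a suitable position of $\TT$ with respect to the real conic $\Phi$, by the intersection pattern of $\TT$ and $\TT'$: its six sides lie alternately on the sides $a,b,c$ of $\TT$ and on the sides $a',b',c'$ of $\TT'$, and two sides of $H$ meeting at a vertex of $H$ lie on conjugate lines, hence are perpendicular by Proposition~\ref{prop:perpendicular-iff-conjugate}. Concretely, since $\rho(a)=A'=b'\cdot c'$, the side of $H$ on $a$ is flanked by the sides on $b'$ and on $c'$, so the cyclic order of the supporting lines is $a,c',b,a',c,b'$. Therefore ``alternate sides of $H$'' means either the three sides on $a,b,c$ or the three sides on $a',b',c'$; since $\TT'$ is the polar triangle of $\TT$, the two cases are interchanged by swapping $\TT$ and $\TT'$, so I would assume without loss of generality that the vertices of $\frac{1}{2}H$ are the midpoints of the sides of $H$ lying on $a,b,c$.

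Next I would identify those midpoints. The side of $H$ on $a$ has endpoints $a\cdot b'$ and $a\cdot c'$; since $b'=\rho(B)$ and $c'=\rho(C)$, these are the conjugate points $B_a=\rho_a(B)$ and $C_a=\rho_a(C)$, so this side of $H$ is the segment $\ov{B_aC_a}$. By Lemma~\ref{lem:midpoints-AB-midpoints-ApBp}, applied to the line $a$ and the points $B,C$, the midpoints of $\ov{B_aC_a}$ coincide with the midpoints $D,D_a$ of $\ov{BC}$; likewise the sides of $H$ on $b$ and on $c$ have midpoints $E,E_b$ of $\ov{CA}$ and $F,F_c$ of $\ov{AB}$. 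Thus $\frac{1}{2}H$ is, after relabeling, the triangle $\wt{DEF}$ with $D,E,F$ the geometric midpoints, which we may take non-collinear as in \S\ref{sec:midpoint-quadrilateral-classical-centers}, so that Lemma~\ref{lem:midpoints-collinear} is available.

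Finally I would check that each side bisector of $H$ through a vertex of $\frac{1}{2}H$ is the corresponding altitude of $\wt{DEF}$. The side bisector of $H$ through $D$ is the perpendicular to $a$ at $D$, which is the polar $\rho(D_a)$ (as recorded in the remark following Lemma~\ref{lem:midpoints-collinear}). By Lemma~\ref{lem:midpoints-collinear} the point $D_a$ lies on $EF$, the side of $\wt{DEF}$ opposite to $D$; hence the pole $D_a$ of $\rho(D_a)$ lies on $EF$, so $\rho(D_a)$ is conjugate, i.e. perpendicular, to $EF$ by Proposition~\ref{prop:perpendicular-iff-conjugate}. Being the line through $D$ perpendicular to $EF$, $\rho(D_a)$ is exactly the altitude of $\wt{DEF}$ from $D$; cycling $A\to B\to C$ gives the statement for $E$ and $F$. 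I expect the only genuine work to be the combinatorial bookkeeping of the first two steps --- deciding which of the six lines $a,b,c,a',b',c'$ carries which side of $H$ and recognizing the endpoints of the side on $a$ as $\rho_a(B)$ and $\rho_a(C)$ so that Lemma~\ref{lem:midpoints-AB-midpoints-ApBp} applies; once $\frac{1}{2}H$ has been identified with $\wt{DEF}$, the argument is word-for-word the one used for the medial-triangle theorem.
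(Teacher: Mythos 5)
Your proposal is correct and follows exactly the route the paper intends: the theorem is presented there as a ``straightforward shadow'' of Lemma~\ref{lem:midpoints-collinear}, using precisely the observation that the hexagon side on $a$ is $\ov{B_aC_a}$ with midpoints $D,D_a$ (via Lemma~\ref{lem:midpoints-AB-midpoints-ApBp}) and that the bisector through $D$ is $\rho(D_a)$ with $D_a\in EF$. Your write-up simply makes explicit the bookkeeping the paper leaves to the reader.
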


The dual figure of a quadrangle is a \emph{quadrilateral}: the figure
composed by four lines not three of which are concurrent (the \emph{sides}
of the quadrilateral), and the six points at which these four lines
intersect in pairs (the \emph{vertices} of the quadrilateral). For
any vertex $P$ of a quadrilateral $\mathscr{M}$ there is exactly
another vertex $Q$ of $\mathscr{M}$ such that the line $PQ$ is
not a side of $\mathscr{M}$. We say that $P$ and $Q$ are \emph{opposite}
vertices of $\mathscr{M}$ and that $PQ$ is a \emph{diagonal line}
of $\mathscr{M}$. The three diagonal lines of $\mathscr{M}$ compose
the \emph{diagonal triangle} of $\mathscr{M}$.
A glance at Figure~\ref{Fig:quadrangle} gives the following result.

    \begin{lemma}\label{lem:quadrangle-quadrilateral}
	Let $\QQ$ be a quadrangle, let $P$ be a diagonal point of $\QQ$ and let
	$x,y,z,w$ be the four sides of $\QQ$ not passing through $P$. Then, the
	vertices of the quadrilateral $\{x,y,z,w\}$ are the vertices of $\QQ$
	together with the two diagonal points of $\QQ$ different from $P$. The
	diagonal triangle of $\{x,y,z,w\}$ has as sides the sides of $\QQ$
	passing
	through $P$ and the side of the diagonal triangle of $\QQ$ not passing
	through
	$P$.
    \end{lemma}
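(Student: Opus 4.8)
The plan is to fix explicit names and reduce everything to bookkeeping guided by Figure~\ref{Fig:quadrangle}. Write $\QQ=\{A,B,C,D\}$, so the six sides are $AB,AC,AD,BC,BD,CD$, grouped into the three pairs of opposite sides $\{AB,CD\}$, $\{AC,BD\}$, $\{AD,BC\}$, and the three diagonal points are $P=AB\cdot CD$, $Q=AC\cdot BD$, $R=AD\cdot BC$. There is no loss of generality in taking the prescribed diagonal point to be $P$; then the two sides of $\QQ$ through $P$ are $AB$ and $CD$, and the four remaining sides are $\{x,y,z,w\}=\{AC,AD,BC,BD\}$.

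First I would list the six vertices of the quadrilateral $\{AC,AD,BC,BD\}$, that is, the six pairwise intersections of these four lines: $AC\cdot AD=A$, $AC\cdot BC=C$, $BD\cdot AD=D$, $BD\cdot BC=B$, and the two remaining ones $AC\cdot BD=Q$ and $AD\cdot BC=R$. Hence the vertex set of $\{x,y,z,w\}$ is exactly $\{A,B,C,D\}\cup\{Q,R\}$: the four vertices of $\QQ$ together with the two diagonal points of $\QQ$ other than $P$, which is the first assertion. Next I would read off the diagonal triangle. Of the lines joining two of these six vertices, the four sides $AC,AD,BC,BD$ account for all pairs except $\{A,B\}$, $\{C,D\}$ and $\{Q,R\}$ (each of $A,B,C,D$ lies on two of the four sides, hence is joined by a side to four of the other vertices and opposite to the fifth; the remaining pair is $\{Q,R\}$), so the three diagonal lines are $AB$, $CD$ and $QR$. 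The first two are precisely the sides of $\QQ$ incident with $P$, and $QR$ is the side of the diagonal triangle $\wt{PQR}$ of $\QQ$ not passing through $P$; this is the second assertion.

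The only point deserving a remark beyond direct inspection is non-degeneracy: that the six intersection points above are genuinely distinct and that $AB,CD,QR$ are non-concurrent, so that $\{x,y,z,w\}$ really is a quadrilateral with an honest diagonal triangle. Distinctness of $A,B,C,D$ and of $P,Q,R$ is part of the definition of a quadrangle (no three vertices collinear, and the three diagonal points noncollinear), and the non-concurrency of $AB,CD,QR$ is the dual of the noncollinearity of $P,Q,R$, so nothing new has to be proved here. I do not expect any real obstacle: the whole argument is a few lines of careful labelling, and the only thing to watch is keeping the correspondence among sides, vertices and diagonal elements straight — which is exactly what Figure~\ref{Fig:quadrangle} is there for.
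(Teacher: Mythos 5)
Your proof is correct and is essentially the same argument the paper intends: the paper simply asserts the lemma as something read off from Figure~\ref{Fig:quadrangle}, and your write-up is exactly that direct inspection carried out explicitly (identifying the six pairwise intersections of $AC,AD,BC,BD$ and the three diagonal lines $AB$, $CD$, $QR$), together with the correct observation that all the needed non-degeneracy is already built into the definition of a quadrangle.
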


The following is an interesting
property of midpoints (compare \cite[Theorem 22.5]{Richter-Gebert}). Its proof
is straightforward from Lemmas
\ref{lem:midpoints-collinear} and~\ref{lem:quadrangle-quadrilateral}.
\begin{theorem}\label{thm:midpoints-quadrilateral}
The midpoints of the sides of $\TT$ are the vertices
of a quadrilateral $\mathscr{M}_{\TT}$ whose diagonal triangle is $\TT$.
\end{theorem}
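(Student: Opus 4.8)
The plan is to read the statement off from Lemma~\ref{lem:midpoints-collinear} and Lemma~\ref{lem:quadrangle-quadrilateral}, so that essentially all of the work has been done already. Recall that the six midpoints of the sides of $\TT$ are $D,D_a$ on $a=BC$, $E,E_b$ on $b=CA$, and $F,F_c$ on $c=AB$, and that they are pairwise distinct: for instance $D\neq D_a$ because $(BCDD_a)=-1$, and $D\neq E$ since otherwise $D=E=a\cdot b=C$, which is impossible for a midpoint of $\ov{BC}$.

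First I would record that $\QQ:=\{E,E_b,F,F_c\}$ is a genuine quadrangle. The line through $E,E_b$ is $b$ and the line through $F,F_c$ is $c$; since $F,F_c$ are midpoints of $\ov{AB}$ they differ from $A=b\cdot c$, hence they do not lie on $b$, and no three of the four points $E,E_b,F,F_c$ are collinear. The three pairs of opposite sides of $\QQ$ are $\{b,c\}$, $\{EF,E_bF_c\}$ and $\{EF_c,E_bF\}$, so one diagonal point of $\QQ$ is $b\cdot c=A$; by the discussion preceding Lemma~\ref{lem:midpoints-collinear} (an application of Lemma~\ref{lem:harmonic-sets-on-the-sides}) the other two diagonal points of $\QQ$ are exactly $D$ and $D_a$, both lying on $a$. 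In particular $DD_a=a$.

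Now I would apply Lemma~\ref{lem:quadrangle-quadrilateral} to $\QQ$ with the marked diagonal point $P=A$. The four sides of $\QQ$ not through $A$ are $EF$, $EF_c$, $E_bF$, $E_bF_c$; put $\MM_\TT:=\{EF,EF_c,E_bF,E_bF_c\}$. The lemma says that the vertices of the quadrilateral $\MM_\TT$ are the four vertices $E,E_b,F,F_c$ of $\QQ$ together with the two diagonal points of $\QQ$ other than $A$, namely $D$ and $D_a$; these are precisely the six midpoints of the sides of $\TT$. The lemma also says that the diagonal triangle of $\MM_\TT$ has as sides the two sides of $\QQ$ through $A$, namely $EE_b=b$ and $FF_c=c$, together with the side of the diagonal triangle of $\QQ$ not through $A$, which is $DD_a=a$. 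Hence the diagonal triangle of $\MM_\TT$ is $\wt{abc}=\TT$.

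There is no real obstacle, since Lemma~\ref{lem:quadrangle-quadrilateral} already encodes the combinatorics of passing from a quadrangle with a marked diagonal point to a quadrilateral. The only step needing a line of care is the identification of the data of $\MM_\TT$: one must recognize that the ``missing'' side $DD_a$ of the diagonal triangle of $\QQ$ is the side $a$ of $\TT$, which is immediate from $D,D_a\in a$ and $D\neq D_a$.
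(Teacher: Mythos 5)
Your proof is correct and follows exactly the route the paper intends: the paper states that Theorem~\ref{thm:midpoints-quadrilateral} is ``straightforward from Lemmas~\ref{lem:midpoints-collinear} and~\ref{lem:quadrangle-quadrilateral},'' and you have simply supplied the details of that deduction, applying Lemma~\ref{lem:quadrangle-quadrilateral} to the quadrangle $\{E,E_b,F,F_c\}$ with marked diagonal point $A$ and identifying $DD_a$ with $a$. The preliminary checks (the six midpoints are distinct, no three of $E,E_b,F,F_c$ are collinear) are sound and welcome, even if the paper leaves them implicit.
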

% \begin{proof}
% By Lemma~\ref{lem:midpoints-collinear}, the points $D,E,F_c$ are collinear. By the same lemma, the line $F_cE_b$ pass through one of the points $D,D_a$, and it cannot be $D$ because $F_cE$ pass through $D$. Thus, the \emph{midlines} $r_1=DE$ and $r_2=D_aE_b$ of $\TT$ intersect at $F_c$. In the same way, the midlines $r_3=DE_b$ and $r_4=D_aE$ must intersect at $F$. Taking the quadrilateral $\MM_{\TT}=\{r_1,r_2,r_3,r_4\}$, it is
% $$D=r_1\cdot r_3\,, \; D_a=r_2\cdot r_4\,\; E=r_1\cdot r_4\,,\; E_b=r_2\cdot r_3\,,\; F=r_3\cdot r_4\,,\; F_c=r_1\cdot r_2\,.$$
% Thus, the midpoints of $\TT$ are the vertices of $\MM_{\TT}$. Moreover, $D$ and $D_a$ are opposite vertices of $\MM_{\TT}$, and so are the points $E$ and $E_b$ and $F$ and $F_c$. This means that the sides of $\TT$ are the diagonal lines of $\MM_{\TT}$.
% \end{proof}

We say that $\MM_{\TT}$ is the \emph{midpoint quadrilateral} of $\TT$.
This theorem has many important consequences as it allows to prove
the concurrence of medians, the concurrence of side bisectors and
(after dualizing) the concurrence of angle bisectors of generalized triangles.

\begin{figure}
\centering
\includegraphics[width=0.7\textwidth]
{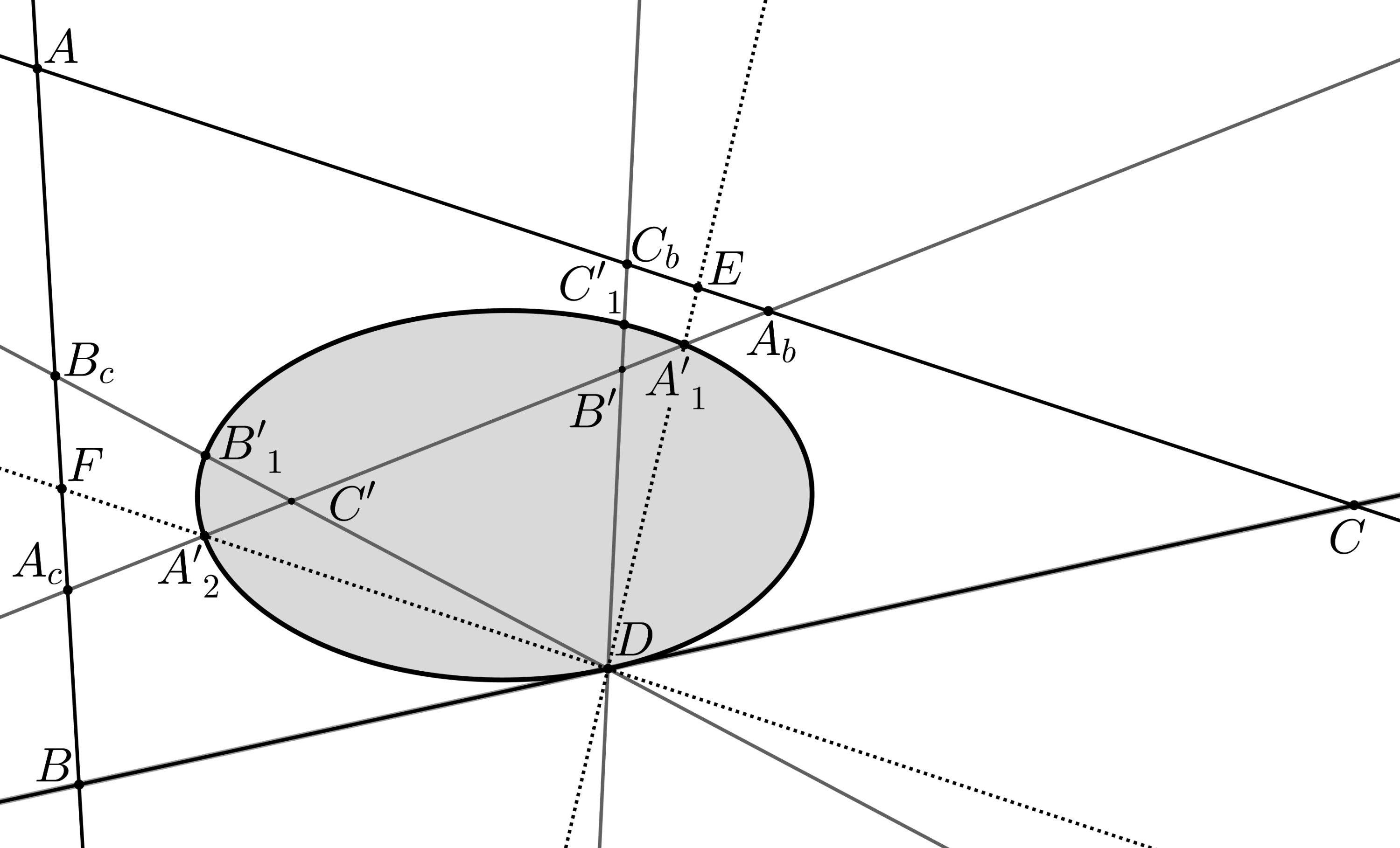}
\caption{Triangle with one side tangent to
$\Phi$}\label{Fig:Pascal_midpoints_03}
\end{figure}

Although we are interested in triangles $\TT$, $\TT'$ in general position with
respect to $\Phi$, in some of our constructions we will need to use
some
accessory triangles which could not verify the general position assumptions.
In particular, we wonder if Theorem
\ref{thm:midpoints-quadrilateral} is valid for triangles with sides
tangent to $\Phi$.
\begin{lemma}\label{lem:midpoints-quadrilateral-also-with-tangencies}
Theorem~\ref{thm:midpoints-quadrilateral} remains valid even if the triangle
$\wt{ABC}$ has sides tangent to $\Phi$.
\end{lemma}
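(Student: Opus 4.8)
The strategy is to reduce the tangent case to the generic case already established in Theorem~\ref{thm:midpoints-quadrilateral} by a continuity/limit argument, using Definition~\ref{def:midpoints-tangent} to pin down what the midpoints are when a side is tangent. Concretely, suppose the side $a=BC$ of $\wt{ABC}$ is tangent to $\Phi$ at a point $T_a$; by the general position hypotheses on the remaining data we may assume the sides $b=CA$ and $c=AB$ are not tangent to $\Phi$ (if two sides were tangent the triangle would be very degenerate, and the argument below adapts with only notational changes). By Definition~\ref{def:midpoints-tangent}, the midpoints of $\ov{BC}$ are $T_a$ and the harmonic conjugate $T_a^*$ of $T_a$ with respect to $B,C$, while the midpoints of $\ov{CA}$ and $\ov{AB}$ are the usual pairs $E,E_b$ and $F,F_c$ obtained from the quadrangle construction of \S\ref{sec:midpoints-of-a-segment}. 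What must be shown is that these six points $\{T_a,T_a^*,E,E_b,F,F_c\}$ are the vertices of a quadrilateral whose diagonal triangle is $\wt{ABC}$.

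\textbf{Key steps.} First I would reprove the analogue of Lemma~\ref{lem:midpoints-collinear} in this limit situation: that $T_a^*,E,F$ are collinear and that $T_a,T_a^*$ are the diagonal points different from $A$ of the quadrangle $\{E,E_b,F,F_c\}$. In the generic proof this came from applying \hyperref[thm:Pascal]{Pascal's Theorem} to a hexagon inscribed in $\Phi$ with vertices among the intersection points of $a',b',c'$ with $\Phi$; when $a$ is tangent to $\Phi$, the pole $A'=\rho(a)$ is the contact point, so the line $a'=\rho(A)$ still meets $\Phi$ in two points and $b',c'$ are genuine secants, so the Pascal hexagon still makes sense — the only change is that one now uses the alternative polar construction of midpoints from Figure~\ref{Fig:midponts-of-a-segment-2} (valid because $b,c$ are non-tangent), which realizes $E,E_b$ and $F,F_c$ as diagonal points of quadrangles inscribed in $\Phi$. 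The harmonic-set bookkeeping of Lemma~\ref{lem:harmonic-sets-on-the-sides} still applies since it is purely projective and makes no reference to $\Phi$; it identifies $T_a,T_a^*$ (the points with $(BCT_aT_a^*)=-1$) as the two diagonal points of $\{E,E_b,F,F_c\}$ on the line $a$. Once Lemma~\ref{lem:midpoints-collinear} is available in this form, the passage to Theorem~\ref{thm:midpoints-quadrilateral} is word-for-word the same: apply Lemma~\ref{lem:quadrangle-quadrilateral} to the quadrangle $\{E,E_b,F,F_c\}$ with diagonal point $A$, whose four sides not through $A$ are $EF$, $E_bF_c$, $EF_c$, $E_bF$, and read off that the six midpoints form a quadrilateral with diagonal triangle $\wt{ABC}$.

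\textbf{Alternative (limit) approach and the main obstacle.} Instead of rechecking Pascal, one could argue by continuity: perturb $B,C$ (keeping them off $\Phi$) so that $a=BC$ becomes a genuine secant, apply Theorem~\ref{thm:midpoints-quadrilateral} in the generic case, and let the perturbation tend to zero; the discussion preceding Definition~\ref{def:midpoints-tangent} already records that in this limit one of $A'_1,A'_2$ merges with one of $B'_1,B'_2$ and with one of $Q,Q_p$ at the contact point, so the midpoints degenerate exactly to $T_a$ and $T_a^*$, and the identity $(BCT_aT_a^*)=-1$ survives. The conclusion of Theorem~\ref{thm:midpoints-quadrilateral} — being a quadrilateral with diagonal triangle $\wt{ABC}$ — is a closed incidence condition, hence stable under the limit, provided the six limiting points remain distinct and no three of them become collinear. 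I expect the main obstacle to be precisely this non-degeneracy check in the limit: one must verify that $T_a$ does not accidentally coincide with $E$, $E_b$, $F$, or $F_c$ and that, e.g., $T_a^*, E, F$ are the \emph{only} collinearity among the six (so that the quadrilateral is honest), which amounts to excluding a short list of special configurations using the standing hypothesis that $A,B,C\notin\Phi$ and that the contact point $T_a$ is distinct from $B$ and $C$. This is routine but needs to be stated; the projective core of the theorem transfers without change.
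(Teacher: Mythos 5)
Your overall architecture (re-establish the analogue of Lemma~\ref{lem:midpoints-collinear} in the tangent situation, then invoke Lemma~\ref{lem:quadrangle-quadrilateral}) matches the paper's, but there are two genuine gaps. First, the dismissal of the multi-tangency cases is wrong: a triangle with two or three sides tangent to $\Phi$ is not ``very degenerate'' --- these are exactly the configurations behind Lambert quadrilaterals, right-angled pentagons and circumscribed triangles, and the lemma is needed for them. The paper treats them separately and with genuinely different arguments: for two tangent sides it applies \hyperref[thm:Pascal]{Pascal's Theorem} to the \emph{degenerate} hexagon $DDEEB'_1A'_2$ (a tangent line playing the role of the side $DD$), and for three tangent sides the contact triangle is the polar triangle $\TT'$ itself and the proof runs through \hyperref[thm:Chasles-Polar-triangle]{Chasles' Theorem} and \hyperref[thm:Desargues]{Desargues' Theorem}, producing the quadrilateral $\{a',b',c',h\}$. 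Neither is a notational variant of the one-tangent argument.

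Second, even in the one-tangent case your key step does not go through as written. If $a=BC$ is tangent to $\Phi$ at $T_a$, then $T_a=\rho(a)=A'$ and, since $B,C\in a=\rho(T_a)$, \emph{both} polars $b'$ and $c'$ pass through $T_a$. Hence the six points $a'\cdot\Phi$, $b'\cdot\Phi$, $c'\cdot\Phi$ of the generic Pascal hexagon collapse to five, and the quadrangle $\{B'_1,B'_2,C'_1,C'_2\}$ that would define the midpoints of $\ov{BC}$ no longer exists; the claim that ``the Pascal hexagon still makes sense'' is therefore false without a limiting form of Pascal. The paper sidesteps this by a direct incidence argument: the lines $T_aA'_1$ and $T_aA'_2$ each join a midpoint of $\ov{AB}$ to a midpoint of $\ov{CA}$, which exhibits $T_a$ itself as a diagonal point of $\{E,E_b,F,F_c\}$. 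Relatedly, your appeal to Lemma~\ref{lem:harmonic-sets-on-the-sides} only shows that the two diagonal points of $\{E,E_b,F,F_c\}$ on $a$ form \emph{some} harmonic pair with respect to $B,C$; it does not identify that pair with $\{T_a,T_a^*\}$, and showing that the contact point is one of the two diagonal points is precisely the substantive content of the lemma. Your continuity alternative is plausible in principle, but you correctly note, and then leave unproved, the non-degeneracy check on which it entirely depends.
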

\begin{proof}
Let assume that the side $a=BC$ is tangent to $\Phi$, while $AB$ and $CA$ are
not. Let $D$ be the point of tangency of $a$ with $\Phi$. The polars $b',c'$
of $B,C$ respectively pass through $D$. Let $A'_1,A'_2$ be the intersection
points of $a'$ with
$\Phi$, and let $B'_1,C'_1$ be the
intersection points of $b',c'$ with $\Phi$ respectively different from $D$ 
(Figure~\ref{Fig:Pascal_midpoints_03}).
The
midpoints of $\ov{AB}$ are the diagonal points different from $C'$ of the
quadrangle $\{A'_1,A'_2,B'_1,D\}$, and the midpoints of $\ov{CA}$ are the
diagonal points different from $B'$ of the quadrangle $\{A'_1,A'_2,C'_1,D\}$.
Thus, the line $DA'_1$ pass through a midpoint of $\ov{AB}$ and through a
midpoint of $\ov{CA}$, and so does the line $DA'_2$. In other words, $D$ is a
diagonal point of the quadrangle $\{E,E_b,F,F_c\}$. By Definition
\ref{def:midpoints-tangent} and Lemma~\ref{lem:harmonic-sets-on-the-sides} it
follows that the other midpoint $D_a$ of $\ov{BC}$ is the other diagonal point
of the quadrangle $\{E,E_b,F,F_c\}$ different from $A$.

\begin{figure}
\centering
\includegraphics[width=0.7\textwidth]
{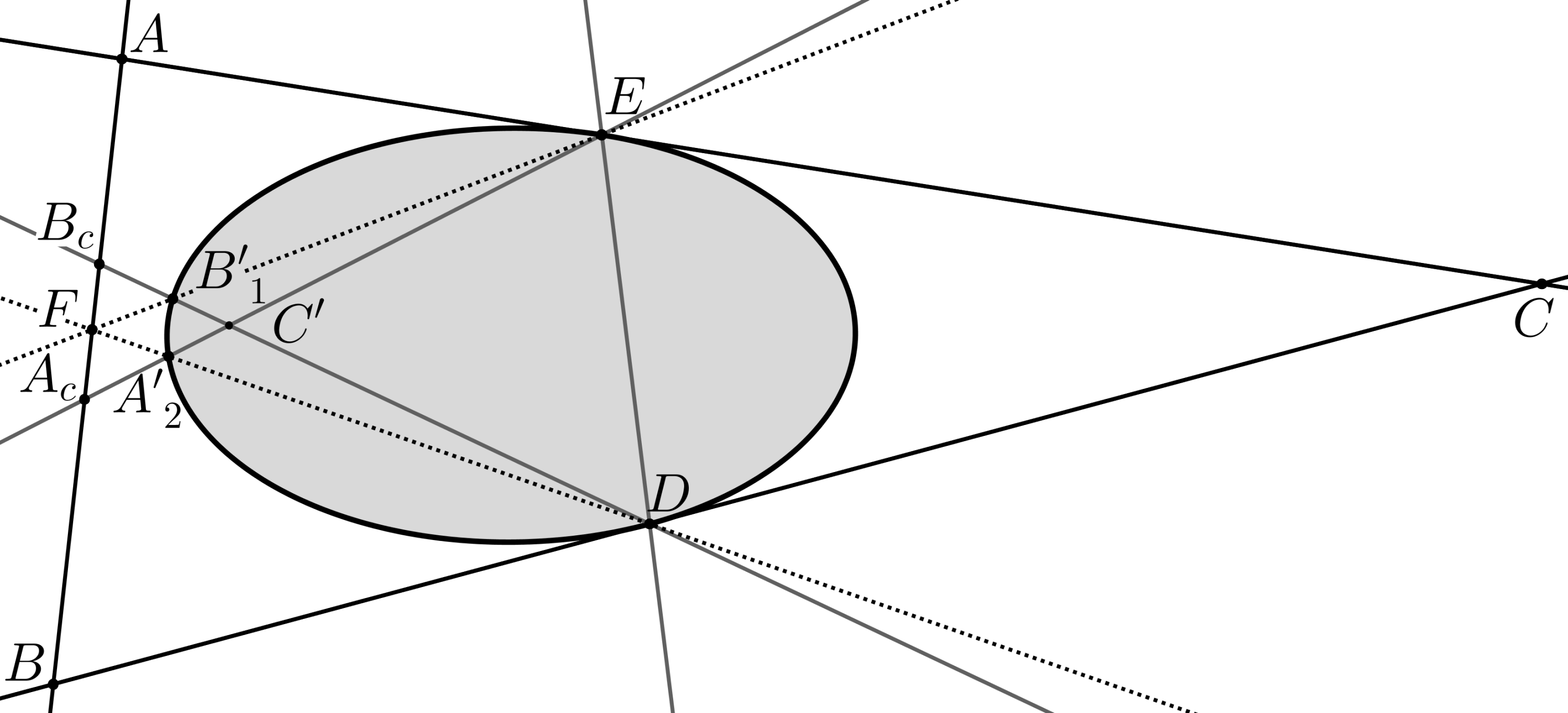}
\caption{Triangle with two sides tangent to
$\Phi$}\label{Fig:Pascal_midpoints_04}
\end{figure}

Let assume now that the sides $a,b$ are tangent to $\Phi$ while $c$ is not. Let
$D,E$ be the contact points of $a,b$ with $\Phi$  and let
$D_a,E_b$ be the midpoints of $\ov{BC},\ov{CA}$ different from $D,E$
respectively. The polar $c'$ of $C$ is the line $DE$. Let $A'_2,B'_1$ be
the intersection points of $a',b'$ with $\Phi$ different from $D,E$
respectively (Figure~\ref{Fig:Pascal_midpoints_04}). The points $F,F_c$ are the
diagonal points of the
quadrangle $\{D,E,A'_2,B'_1\}$ different from $C$.
Consider the points $D_a^*=EF\cdot a$ and $E_b^*=DF\cdot b$. By 
\hyperref[thm:Pascal]{Pascal's Theorem}
on the degenerate hexagon $DDEEB'_1A'_2$ the points $F_c$, $E_b^*$ and
$D_a^*$ are collinear. Taking the quadrangle $\{F_c,E_b^*,E,F\}$ it must be
$D_a^*=D_a$ and, equivalently, $E_b^*=E_b$. Therefore, $F,F_c$ are the diagonal
points of $\{D,D_a,E,E_b\}$ different from $C$.

In the two previous cases, Lemma~\ref{lem:quadrangle-quadrilateral} completes
the proof.

Finally, let assume that $a,b,c$ are tangent to $\Phi$ at the points $D,E,F$
respectively. In this case, $\wt{DEF}$ is the polar triangle $\TT'$ of $\TT$.
As $\TT$ and $\TT'$ are perspective by 
\hyperref[thm:Chasles-Polar-triangle]{Chasles' Theorem}, the points
$D_0=a\cdot EF$, $E_0=b\cdot FD$, $F_0=c\cdot DE$ belong to the line $h$.
In particular, $D,D_0$ are the diagonal points of the quadrangle
$\{E,E_0,F,F_0\}$ different from $A$. This implies that $(BCDD_0)=-1$, and thus
$D_0$ is the midpoint of $\ov{BC}$ different from $D$. In the same way,
$E_0,F_0$ are the midpoints of $\ov{CA},\ov{AB}$ different from $E,F$
respectively. The midpoints $D,D_0,E,E_0,F,F_0$ are the vertices of the
quadrilateral $\{a',b',c',d\}$.
\end{proof}

\begin{theorem}[Concurrence of medians]\label{thm:medians-concurrent}
The \emph{medians} $AD$, $BE$ and $CF$ of $\TT$ are concurrent.
\end{theorem}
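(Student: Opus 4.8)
The plan is to use Theorem~\ref{thm:midpoints-quadrilateral}, which tells us that the six midpoints $D,D_a,E,E_b,F,F_c$ of the sides of $\TT$ are the vertices of a quadrilateral $\MM_\TT$ whose diagonal triangle is $\TT$ itself. In a quadrilateral, the three diagonal lines are concurrent by definition only in degenerate cases; what is true in general is that the diagonal \emph{triangle} of a quadrilateral is the dual of the diagonal triangle of a quadrangle, so I should instead extract concurrency from a suitable dual (or direct) application of Desargues' Theorem. The cleanest route: among the six midpoints, $D,E,F$ form one ``medial'' triangle $\tfrac12\TT$ and $D_a,E_b,F_c$ another, and I will show that $\TT=\wt{ABC}$ and the medial triangle $\wt{DEF}$ are perspective, with the lines $AD$, $BE$, $CF$ as the lines joining corresponding vertices.

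First I would identify the axis of perspectivity. By Lemma~\ref{lem:midpoints-collinear}, $D_a,E,F$ are collinear; by the symmetric statements (interchanging the roles of the vertices) $E_b$ lies on $FD$ extended appropriately and $F_c$ lies on $DE$, so in fact each of $D_a,E_b,F_c$ lies on a side of the medial triangle $\wt{DEF}$. More to the point, Lemma~\ref{lem:midpoints-collinear} gives that $D_a = EF\cdot a = EF\cdot BC$, and cyclically $E_b = FD\cdot CA$, $F_c = DE\cdot AB$. Thus the three points
\[
EF\cdot BC,\qquad FD\cdot CA,\qquad DE\cdot AB
\]
are precisely $D_a,E_b,F_c$. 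Now recall from \S\ref{sec:midpoint-quadrilateral-classical-centers} that $D,D_a$ are harmonic conjugates with respect to $B,C$ (they are the diagonal points different from $A$ of the quadrangle $\{E,E_b,F,F_c\}$, and $(BCDD_a)=-1$); the same holds on the other two sides. In particular $D_a,E_b,F_c$ are well-defined points on the lines $BC,CA,AB$ respectively, and by Theorem~\ref{thm:midpoints-quadrilateral} — since $\TT$ is the diagonal triangle of the quadrilateral $\MM_\TT$ whose vertices include $D_a,E_b,F_c$ — these three points are collinear (they lie on the diagonal line of $\MM_\TT$ that is a side of $\TT$ only in the degenerate case; in general they lie on a common line, the ``axis'').

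Given that $D_a = BC\cdot EF$, $E_b = CA\cdot FD$, $F_c = AB\cdot DE$ are collinear, Desargues' Theorem (Theorem~\ref{thm:Desargues}), applied to the two triangles $\wt{ABC}$ and $\wt{DEF}$, yields immediately that the lines $AD$, $BE$, $CF$ joining corresponding vertices are concurrent. The three lines $AD,BE,CF$ are exactly the medians of $\TT$ (each joins a vertex to a midpoint of the opposite side), so this is the desired conclusion.

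The main obstacle is the bookkeeping of \emph{which} midpoint plays which role: each side carries two midpoints, and one must check that the choice ``$D$ on one side, $D_a = EF\cdot BC$ on the same side'' is consistent cyclically so that $D_a,E_b,F_c$ genuinely align. This is handled by the standing non-collinearity assumption on $D,E,F$ (last paragraph of \S\ref{sec:triangle-notation} together with \S\ref{sec:midpoint-quadrilateral-classical-centers}) and by Lemma~\ref{lem:midpoints-collinear}, which pins down $D_a$ as $B'_1C'_1\cdot B'_2C'_2 = EF\cdot BC$ once the labelling is fixed. Once the collinearity of $D_a,E_b,F_c$ is in hand, the appeal to Desargues is one line and there is nothing further to compute.
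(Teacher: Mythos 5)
Your proposal is correct and follows essentially the same route as the paper: identify $D_a=BC\cdot EF$, $E_b=CA\cdot FD$, $F_c=AB\cdot DE$ as collinear via Lemma~\ref{lem:midpoints-collinear} and Theorem~\ref{thm:midpoints-quadrilateral}, then apply Desargues' Theorem to $\wt{ABC}$ and $\wt{DEF}$. The only slip is terminological: the line through $D_a,E_b,F_c$ is the fourth \emph{side} of the quadrilateral $\MM_\TT$ (its \emph{diagonal} lines are the sides of $\TT$ itself), but this does not affect the argument.
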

\begin{proof}
By considering the triangles $\TT=\wt{ABC}$ and $\wt{DEF}$, by 
Theorem~\ref{thm:midpoints-quadrilateral} the intersection points of corresponding sides are
$$AB\cdot DE=F_c\,,\quad BC\cdot EF = D_a\,,\quad CA\cdot FD=E_b\,,$$
which are collinear. The result follows from 
\hyperref[thm:Desargues]{Desargues' Theorem}.
\end{proof}
\begin{theorem}[Concurrence of side bisectors]\label{thm:side-bisectors-concurrent}
The \emph{side bisectors} $A'D$, $B'E$ and $C'F$ of $\TT$ are concurrent.
\end{theorem}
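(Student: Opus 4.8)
The plan is to dualize: the concurrency of $A'D$, $B'E$, $C'F$ will drop out of a collinearity of midpoints that is already available, through the fact that the polarity $\rho$ is a correlation. First I would identify each named bisector as the polar of a midpoint. Since $D_a$ lies on the side $a=BC$, its polar $\rho(D_a)$ passes through the pole $\rho(a)=A'$ of $a$; and since the two midpoints $D,D_a$ of $\ov{BC}$ are conjugate on $a$, the intersection $a\cdot\rho(D_a)$ is the point conjugate to $D_a$ on $a$, namely $D$, so $D\in\rho(D_a)$. The general position hypothesis forbids $a$ being tangent to $\Phi$, hence $A'\neq D$, and therefore $\rho(D_a)=A'D$. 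Cyclically, $\rho(E_b)=B'E$ and $\rho(F_c)=C'F$. (This is just the observation, made in \S\ref{sec:midpoints-of-a-segment} right after Definition~\ref{def:midpoints}, that the segment bisectors are the polars of the midpoints.)

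With this in place the theorem is immediate. By Lemma~\ref{lem:midpoints-collinear} the three points $D_a,E_b,F_c$ are collinear: they are the three vertices of the midpoint quadrilateral $\MM_{\TT}$ lying on its side $E_bF_c$, which by that lemma also passes through $D_a$ (indeed $D_a$ is the diagonal point of the quadrangle $\{E,E_b,F,F_c\}$ lying on $EF$, hence $D_a=EF\cdot E_bF_c$). Calling $\ell$ that side, Proposition~\ref{prop:polarity-preserves-incidence} gives that the polars $A'D=\rho(D_a)$, $B'E=\rho(E_b)$, $C'F=\rho(F_c)$ of three collinear points are concurrent, meeting at the pole $\rho(\ell)$.

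I do not expect a genuine obstacle once Lemma~\ref{lem:midpoints-collinear} (equivalently Theorem~\ref{thm:midpoints-quadrilateral}) is granted; the only care needed is bookkeeping — the bisector written $A'D$ is the polar of the \emph{other} midpoint $D_a$ of $\ov{BC}$, not of $D$ — together with the non-degeneracy $A'\neq D$, which is precisely why general position excludes sides tangent to $\Phi$. An alternative that would match the proof of Theorem~\ref{thm:medians-concurrent} is to apply Desargues' Theorem (Theorem~\ref{thm:Desargues}) to $\TT'=\wt{A'B'C'}$ and $\wt{DEF}$, whose corresponding vertices are joined by $A'D$, $B'E$, $C'F$; but that route first requires locating the three intersection points $a'\cdot EF$, $b'\cdot FD$, $c'\cdot DE$ of corresponding sides and checking their collinearity, which is strictly more work than the direct dualization above and is in any case equivalent to it.
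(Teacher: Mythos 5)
Your proof is correct and is essentially the paper's own argument: the paper likewise observes that $D_a$, $E_b$, $F_c$ are the poles of $A'D$, $B'E$, $C'F$ and concludes from their collinearity (via Lemma~\ref{lem:midpoints-collinear} / Theorem~\ref{thm:midpoints-quadrilateral}) that the polars are concurrent. Your extra bookkeeping verifying $\rho(D_a)=A'D$ is a sound elaboration of what the paper leaves implicit.
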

\begin{proof}
The points $D_a,E_b$, and $F_c$ are, respectively, 
the poles of the lines $A'D$, $B'E$ and $C'F$. Because $D_a,E_b$, and $F_c$ are
collinear, their polars are concurrent.
\end{proof}

It is interesting to note that \emph{the side bisectors of $\TT$ are the angle
bisectors of $\TT'$} (cf. Remark~\ref{rem:midpoints-bisectors}). Thus, if
$D',E',F'$ are non-collinear midpoints of $\ov{B'C'},\ov{C'A'},\ov{A'B'}$,
respectively, we have (compare \cite[10.21]{Cox Non-euc}):
\begin{corollary}[Concurrence of angle bisectors]
The angle bisectors $AD',BE',CF'$ of $\TT$ are concurrent.
\end{corollary}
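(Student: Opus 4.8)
The plan is to recognize this corollary as Theorem~\ref{thm:side-bisectors-concurrent} read on the polar triangle $\TT'$ instead of on $\TT$, combined with the pole--polar dictionary of Remark~\ref{rem:midpoints-bisectors}, which turns ``side bisector of $\TT'$'' into ``angle bisector of $\TT$''. In short, I would simply dualize the concurrency of side bisectors already obtained for $\TT$.

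First I would record that the polarity induced by $\Phi$ is an involution, so $\TT$ is the polar triangle of $\TT'$; concretely, the pole of $a'=B'C'$ is $A$, the pole of $b'=C'A'$ is $B$, and the pole of $c'=A'B'$ is $C$. Since $\TT$ and $\TT'$ are in general position with respect to $\Phi$, so are $\TT'$ and its polar triangle $\TT$, and by the standing assumption imposed just before the statement the points $D',E',F'$ are non-collinear midpoints of the sides $\ov{B'C'},\ov{C'A'},\ov{A'B'}$ of $\TT'$. Thus $\TT'$ meets all the hypotheses under which Theorem~\ref{thm:side-bisectors-concurrent} was proved, and applying that theorem to $\TT'$ gives that its side bisectors through the vertices $A,B,C$ of its polar triangle and through the midpoints $D',E',F'$ of its sides --- that is, the lines $AD'$, $BE'$ and $CF'$ --- are concurrent. (As in the proof of Theorem~\ref{thm:side-bisectors-concurrent}, this step rests on the collinearity of the ``other'' midpoints $D'_{a'},E'_{b'},F'_{c'}$, which is Theorem~\ref{thm:midpoints-quadrilateral} applied to $\TT'$, and which is exactly why the non-collinearity of $D',E',F'$ was required.)

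It then remains to identify these three lines with angle bisectors of $\TT$. By Remark~\ref{rem:midpoints-bisectors} applied to the segment $\ov{B'C'}$, the polars of its midpoints $D',D'_{a'}$ are the bisectors of $\widehat{\rho(B')\,\rho(C')}=\widehat{bc}$, i.e. the angle bisectors of $\TT$ at the vertex $A$; and since the pole of $a'$ is $A$, the side bisector $AD'$ of $\TT'$ is precisely the polar $\rho(D'_{a'})$, hence an angle bisector of $\TT$ at $A$. (Equivalently one may quote the remark, stated in the paragraph preceding the corollary, that the side bisectors of a triangle are the angle bisectors of its polar triangle, used with $\TT$ and $\TT'$ interchanged.) Symmetrically $BE'$ and $CF'$ are angle bisectors of $\TT$ at $B$ and $C$, and the concurrency of $AD',BE',CF'$ just established is the asserted concurrency of angle bisectors.

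The ``main obstacle'' here is, honestly, no obstacle: the entire content is bookkeeping. The one point deserving care is to check that the roles of a triangle and its polar triangle may legitimately be exchanged throughout --- general position is a symmetric condition, polarity is involutive, and the auxiliary non-collinearity has been arranged for both $\{D,E,F\}$ and $\{D',E',F'\}$ --- so that Theorem~\ref{thm:side-bisectors-concurrent} genuinely applies to $\TT'$; and then to verify that ``$AD'$ viewed as a segment bisector of $\ov{B'C'}$'' and ``$AD'$ viewed as a bisector of $\widehat{bc}$'' name literally the same line under $\rho$.
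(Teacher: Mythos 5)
Your proposal is correct and is exactly the route the paper intends: the corollary is stated immediately after the observation that the side bisectors of a triangle are the angle bisectors of its polar triangle, so it is Theorem~\ref{thm:side-bisectors-concurrent} applied to $\TT'$ together with Remark~\ref{rem:midpoints-bisectors}. Your added care about the symmetry of the general-position hypotheses and the non-collinearity of $D',E',F'$ only makes explicit what the paper leaves implicit.
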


A point where three concurrent medians of $\TT$ intersect is a \emph{barycenter}
of $\TT$, 
a point where three concurrent side bisectors of $\TT$ intersect is a
\emph{circumcenter} of $\TT$, and a point where three angle bisectors of $\TT$
intersect is an \emph{incenter} of $\TT$. Each generalized triangle has four
barycenters, four circumcenters and four incenters. All these centers have
different geometric interpretations depending on the relative position of the
figure with respect to $\Phi$.

\begin{figure}
\centering
\includegraphics[width=0.7\textwidth]
{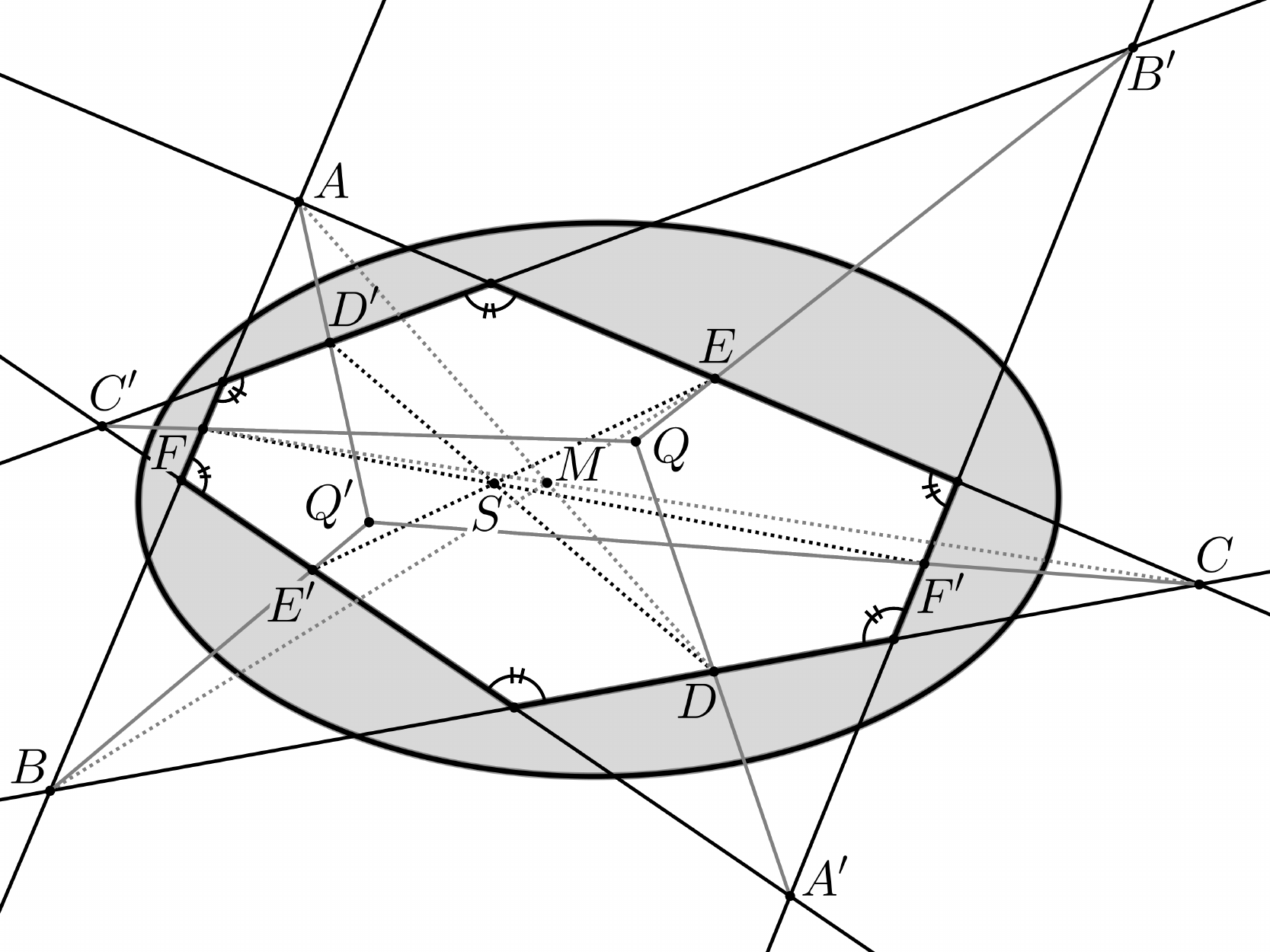}
\caption{Centers of a right-angled hexagon}
\label{Fig:hexagon-centers}
\end{figure}

Another center, which as the orthocenter is shared by $\TT$ and $\TT'$, is given by the following result.
\begin{theorem}\label{thm:DD'-EE'-FF'-concurrent}
The lines $DD',EE'$ and $FF'$ are concurrent.
\end{theorem}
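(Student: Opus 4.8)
The plan is to realize the three lines $DD'$, $EE'$, $FF'$ as joining corresponding vertices of two perspective triangles and then invoke \hyperref[thm:Desargues]{Desargues' Theorem}. The natural candidates are the triangle $\wt{DEF}$ built from one midpoint on each side of $\TT$ and the triangle $\wt{D'E'F'}$ built from one midpoint on each side of $\TT'$ (the midpoints chosen so as to satisfy the non-collinearity assumptions fixed in \S\ref{sec:midpoint-quadrilateral-classical-centers}). To apply Desargues I must show that the three intersection points of corresponding sides,
\[
DE\cdot D'E'\,,\qquad EF\cdot E'F'\,,\qquad FD\cdot F'D'\,,
\]
are collinear — and the obvious line on which they should lie is $h=\rho(H)$, the polar of the orthocenter, which already plays the role of ``line at infinity'' for $\TT$ (and symmetrically for $\TT'$, since $H$ is shared).

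First I would identify the sides of $\wt{DEF}$ and of $\wt{D'E'F'}$ using Lemma~\ref{lem:midpoints-collinear} and Theorem~\ref{thm:midpoints-quadrilateral}. By Theorem~\ref{thm:midpoints-quadrilateral} the line $DE$ is a side of the midpoint quadrilateral $\MM_\TT$, hence it passes through the vertex $C$ of the diagonal triangle $\TT$; more precisely $DE$ is one of the two sides of $\MM_\TT$ through $C$, the other being $D_aE_b$ (which contains $F_c$). Dually, applying Theorem~\ref{thm:midpoints-quadrilateral} to the polar triangle $\TT'$, the line $D'E'$ is a side of $\MM_{\TT'}$ through the corresponding vertex $C'$ of its diagonal triangle $\TT'$. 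Thus $DE$ passes through $C$ and $D'E'$ passes through $C'=\rho(c)$. The next step is to pin down the point $DE\cdot D'E'$: I expect it to be exactly a point of $h$, namely the pole-related companion $C_0=c\cdot c'$ or, more likely, the point $\rho(C)\cdot$(something) — this is the crux. The cleanest route is to recall that $D_a$ is the pole of the side bisector $A'D$ (proof of Theorem~\ref{thm:side-bisectors-concurrent}) and that the midpoints come in conjugate pairs with $(BCDD_a)=-1$; combined with $(B'C'D'D'_{a'})=-1$ and the conjugacy relations of \S\ref{sec:triangle-notation}, one gets that $DE$ and $D'E'$ are conjugate lines with respect to $\Phi$, so their intersection lies on the polar of some specific point, and tracking which point it is gives a point on $h$. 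Symmetric arguments handle $EF\cdot E'F'$ and $FD\cdot F'D'$, and their collinearity on $h$ follows because $A_0,B_0,C_0\in h$ (the poles of the altitudes all lie on $\rho(H)$, as noted in \S\ref{sub:Orthocenter}).

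An alternative, perhaps more robust, execution avoids computing the three intersection points explicitly: apply Desargues in the other direction. Consider instead the triangles $\wt{D_aE_bF_c}$ and $\wt{D'_{a'}E'_{b'}F'_{c'}}$ formed by the \emph{other} midpoints; by Theorem~\ref{thm:side-bisectors-concurrent} the side bisectors $A'D,B'E,C'F$ of $\TT$ concur at a circumcenter, which says that the lines joining $D_a=\rho(A'D)$-type points are controlled, and one feeds this concurrence into Desargues to get a collinearity that, transported back through the polarity $\rho$, yields the concurrence of $DD',EE',FF'$. Either way, the main obstacle is the bookkeeping of \emph{which} of the two midpoints on each side is labelled $D$ versus $D_a$ (and $D'$ versus $D'_{a'}$): the statement ``$DD',EE',FF'$ concurrent'' is only claimed for a \emph{consistent} choice compatible with the non-collinearity conventions of \S\ref{sec:midpoint-quadrilateral-classical-centers}, and Lemma~\ref{lem:equivalences-DD'-coincide-1} together with Proposition~\ref{prop:2-isosceles-equilateral} must be used to rule out the degenerate coincidences $D=D'$ etc. Once the labelling is fixed and the common transversal is confirmed to be $h$, Desargues' Theorem closes the argument immediately.
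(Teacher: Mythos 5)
Your proposal does not close the argument: it reduces the theorem to the collinearity of the three points $DE\cdot D'E'$, $EF\cdot E'F'$, $FD\cdot F'D'$, but by the converse direction of Desargues that collinearity is exactly equivalent to the statement being proved, and you never actually establish it. The step you offer in its place is wrong: a side of the midpoint quadrilateral $\MM_{\TT}$ does \emph{not} pass through a vertex of its diagonal triangle. By Lemma~\ref{lem:midpoints-collinear} and Theorem~\ref{thm:midpoints-quadrilateral} the sides of $\MM_{\TT}$ are the lines $DEF_c$, $DE_bF$, $D_aEF$ and $D_aE_bF_c$; in particular $DE$ contains the midpoint $F_c$, not the vertex $C$, and likewise $D'E'$ contains $F'_{c'}$. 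Since $F_c\neq F'_{c'}$ in general, nothing forces $DE\cdot D'E'$ onto $h$, and the alleged conjugacy of $DE$ with $D'E'$ is neither proved nor supported by any result in the paper. Your fallback route is also unusable: under the standing convention that $D,E,F$ are non-collinear, the points $D_a,E_b,F_c$ \emph{are} collinear (they form the fourth side of $\MM_{\TT}$), so $\wt{D_aE_bF_c}$ is not a triangle and cannot be fed into Desargues' Theorem.

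The paper itself warns that this proof is ``harder than expected'' and defers it to \S\ref{sec:magic-midpoints-oriented-triangles}, precisely because the direct Desargues attack on $\wt{DEF}$ and $\wt{D'E'F'}$ does not go through on its own. The actual argument needs machinery you never invoke: the complementary midpoints $G,G_a,H,H_b,I,I_c$, the identification of $DD'$ as the Pappus line of a hexagon built from the harmonic tetrads $E,E_b,H,H_b$ and $F,F_c,I,I_c$ (Lemma~\ref{lem:cross-ratio-coincides-Pappus-octogon} and Remark~\ref{rem:DD'-Pappus-line}), and then a chain of three claims --- each a quadrangular-involution computation or a Desargues application to auxiliary triangles such as $\wt{BD_aI_c}$ and $\wt{MEH_b}$, or $\wt{BFG_a}$ and $\wt{MES}$ --- culminating in the fact that both $S=DD'\cdot EE'$ and $S'=DD'\cdot FF'$ are collinear with $G_a$ and $N=EF\cdot H_bI_c$, whence $S=S'$. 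If you want to salvage a Desargues-on-the-medial-triangles proof, the burden is to give an independent construction of the axis of perspectivity of $\wt{DEF}$ and $\wt{D'E'F'}$; your sketch identifies no such line and supplies no proof of its existence.
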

This theorem fits perfect for a right-angled hexagon.
\begin{theorem}\label{thm:shadow-joins-midpoints-hexagon-concurrent}
	In a hyperbolic right-angled hexagon the lines joining the midpoints of
opposite sides are concurrent.
\end{theorem}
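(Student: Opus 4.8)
The plan is to recognise a hyperbolic right-angled hexagon as nothing more than the intersection with $\PP$ of a projective triangle $\TT=\wt{ABC}$ and its polar triangle $\TT'$, and then to read the statement off from Theorem~\ref{thm:DD'-EE'-FF'-concurrent}. First I would fix the combinatorics: the six sides of the hexagon lie, in cyclic order, on the lines $a,b',c,a',b,c'$, so that its consecutive vertices are the conjugate points $C_a=a\cdot c'$, $B_a=a\cdot b'$, $B_c=c\cdot b'$, $A_c=c\cdot a'$, $A_b=b\cdot a'$, $C_b=b\cdot c'$. I would then check that this figure really is right-angled: each side of $\TT$ is conjugate to the polars of both of its endpoints --- for instance $a$ and $b'=\rho(B)$ are conjugate because $\rho(a)=A'\in b'$, and likewise $a$ and $c'$ --- so consecutive sides of the hexagon are conjugate lines and hence, by Proposition~\ref{prop:perpendicular-iff-conjugate}, meet at a right angle. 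This also identifies the three pairs of opposite sides of the hexagon as the pairs $\{a,a'\}$, $\{b,b'\}$, $\{c,c'\}$.

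Next I would match the midpoints of the hexagon's sides with the points $D,D_a,D',D'_{a'},\dots$ of the triangle notation. The side carried by $a$ has endpoints $B_a=a\cdot\rho(B)$ and $C_a=a\cdot\rho(C)$, i.e.\ it is the segment $\ov{B_aC_a}$ joining the conjugate points of $B$ and $C$ in the line $a$; by Lemma~\ref{lem:midpoints-AB-midpoints-ApBp} its midpoints are therefore exactly the midpoints $D,D_a$ of $\ov{BC}$. Symmetrically, the opposite side lies on $a'$ with endpoints $A_b=a'\cdot b=B'_{a'}$ and $A_c=a'\cdot c=C'_{a'}$, so by the same lemma its midpoints are $D',D'_{a'}$, the midpoints of $\ov{B'C'}$. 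The genuine (geometric) midpoint of each hyperbolic side of the hexagon is the one of its two projective midpoints lying on the corresponding hyperbolic segment; choosing labels accordingly, the line joining the midpoints of the two opposite sides carried by $a$ and $a'$ is $DD'$, and running the argument for the remaining two pairs produces $EE'$ and $FF'$.

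With these identifications in hand the theorem is immediate from Theorem~\ref{thm:DD'-EE'-FF'-concurrent}: the three lines joining midpoints of opposite sides of the hexagon are $DD'$, $EE'$ and $FF'$, which are concurrent. The one point needing care --- and the main (minor) obstacle --- is the bookkeeping of \emph{which} of the two midpoints on each of $a,b,c$ (resp.\ $a',b',c'$) is the geometric one, and checking that the resulting coherent choice is precisely a concurrent triple; this is settled by tracking, on each line, which of the two sub-segments cut out by $\Phi$ actually supports the hexagon side, and is best accompanied by Figure~\ref{Fig:general_hyperbolic_hexagon}. No projective input beyond Lemma~\ref{lem:midpoints-AB-midpoints-ApBp}, Proposition~\ref{prop:perpendicular-iff-conjugate} and Theorem~\ref{thm:DD'-EE'-FF'-concurrent} is required.
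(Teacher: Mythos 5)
Your proposal is correct and follows exactly the route the paper intends: the paper states this theorem as the immediate hyperbolic shadow of Theorem~\ref{thm:DD'-EE'-FF'-concurrent} (``this theorem fits perfect for a right-angled hexagon'') and gives no further argument, while you simply spell out the dictionary --- the hexagon's sides lie on $a,b',c,a',b,c'$, opposite sides pair up as $\{a,a'\},\{b,b'\},\{c,c'\}$, and Lemma~\ref{lem:midpoints-AB-midpoints-ApBp} identifies the midpoints of $\ov{B_aC_a}$, etc.\ with $D,D_a$ and $D',D'_{a'}$, so the three joining lines are $DD'$, $EE'$, $FF'$. The only point you flag but do not fully close --- that the three geometric midpoints form a non-collinear (hence admissible) triple --- is settled by noting that a hyperbolic line meets the boundary of the convex hexagon in at most two points, so it cannot pass through interior points of three distinct sides.
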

In Figure~\ref{Fig:hexagon-centers} it is depicted a right-angled 
hexagon with the intersection point $S$ of the lines joining opposite midpoints.
In the same figure, we have depicted also: three alternate \emph{medians}
of the hexagon, lines orthogonal to a side through the midpoint of its opposite side, 
and the intersection point $M$ of them; and its two circumcenters 
(or incenters, for this figure both concepts are the same),
the points $Q$ and $Q'$ where the orthogonal bisectors
of alternate sides intersect.

In order to interpret Theorem~\ref{thm:DD'-EE'-FF'-concurrent} 
for elliptic or hyperbolic triangles, note that the point $D'$ is the pole of a
bisector $d$ of $\widehat{bc}$, and therefore $DD'$ is
the line through $D$ orthogonal to $d$. As the points $D',E',F'$ are
non-collinear, their polars are non-concurrent. This leads us to the
shadow of Theorem~\ref{thm:DD'-EE'-FF'-concurrent} for hyperbolic or elliptic
triangles.
\begin{theorem}\label{thm:shadow-Spieker-center-elliptic-hyperbolic}
	Let $T$ be a hyperbolic or elliptic triangle with vertices $A,B,C$
and opposite sides $a,b,c$ respectively. Let $D,E,F$ be non-collinear midpoints
of
$a,b,c$ respectively, and let $d,e,f$ be non-concurrent bisectors of $T$
through $A,B,C$ respectively. The lines orthogonal to $d,e,f$ through $D,E,F$
respectively are concurrent.
\end{theorem}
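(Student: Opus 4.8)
The plan is to read the statement as the non-euclidean shadow of Theorem~\ref{thm:DD'-EE'-FF'-concurrent}, so that the whole argument is a dictionary translation through the pole--polar correspondence. Place $T$ inside a Cayley--Klein model, so that it is produced by a projective triangle $\TT=\wt{ABC}$ in general position with its polar triangle $\TT'=\wt{A'B'C'}$, with the sides of $T$ being $a=\ov{BC}$, $b=\ov{CA}$, $c=\ov{AB}$ and the angle of $T$ at $A$ being $\widehat{bc}$ (and cyclically). The hypothesis that $D,E,F$ are non-collinear midpoints of $a,b,c$ is exactly the standing assumption from \S\ref{sec:midpoint-quadrilateral-classical-centers}, so the set-up of Theorem~\ref{thm:DD'-EE'-FF'-concurrent} applies to this choice.

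First I would identify the bisectors in projective terms. By Remark~\ref{rem:midpoints-bisectors}, the two bisectors of $\widehat{bc}$ (the angle of $T$ at $A$) are the polars of the two midpoints of $\ov{B'C'}$; hence for the chosen bisector $d$ through $A$ there is a midpoint $D'$ of $\ov{B'C'}$ with $d=\rho(D')$, and likewise $e=\rho(E')$, $f=\rho(F')$ for midpoints $E'$ of $\ov{C'A'}$ and $F'$ of $\ov{A'B'}$. Since $\rho$ is a correlation (Proposition~\ref{prop:polarity-preserves-incidence}), the lines $d,e,f$ are non-concurrent if and only if the points $D',E',F'$ are non-collinear; thus the hypothesis on $d,e,f$ is precisely the standing non-collinearity assumption on $D',E',F'$ needed to invoke Theorem~\ref{thm:DD'-EE'-FF'-concurrent} for this labelling of midpoints.

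Next I would identify the perpendiculars. Let $\ell$ be the line through $D$ orthogonal to $d$. By Proposition~\ref{prop:perpendicular-iff-conjugate}, $\ell$ and $d$ are conjugate at their common point, so $\ell$ passes through $\rho(d)=D'$ (using that $\rho$ is involutive); since $\ell$ also passes through $D$ and $D\neq D'$ by the general-position assumptions, we get $\ell=DD'$. In the same way the perpendicular to $e$ through $E$ is $EE'$ and the perpendicular to $f$ through $F$ is $FF'$. Hence the three lines in the statement are exactly $DD',EE',FF'$, and they are concurrent by Theorem~\ref{thm:DD'-EE'-FF'-concurrent}, which proves the claim.

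There is no genuine analytic obstacle here: the only step requiring care is recognising the perpendicular to a bisector $d$ through the opposite midpoint $D$ as the specific cevian $DD'$, which rests on Proposition~\ref{prop:perpendicular-iff-conjugate} together with the fact that $\rho$ is an involutive correlation. All the substantive work has already been isolated in Theorem~\ref{thm:DD'-EE'-FF'-concurrent}.
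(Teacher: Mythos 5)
Your proposal is correct and follows exactly the route the paper takes: it reads the statement as the non-euclidean shadow of Theorem~\ref{thm:DD'-EE'-FF'-concurrent}, using Remark~\ref{rem:midpoints-bisectors} to identify each bisector as the polar of a midpoint of $\TT'$ and the conjugacy characterization of perpendicularity (Proposition~\ref{prop:perpendicular-iff-conjugate}) to recognise the perpendicular to $d$ through $D$ as the cevian $DD'$. Your version merely makes explicit the steps the paper states in passing (in particular that $D\neq D'$ and that non-concurrency of $d,e,f$ corresponds under the correlation $\rho$ to non-collinearity of $D',E',F'$), so there is nothing to add.
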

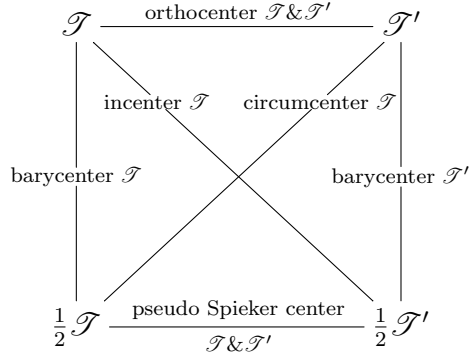
\begin{figure}
\centering
\[
\xymatrix@=0.25\textwidth{
\TT \ar@{-}[d]|{\text{barycenter}\;\TT} 
      \ar@{-}[r]^{\text{orthocenter}\;\TT \& \TT'}
    \ar@{-}[rd]|<<<<<<<<<<<<{\text{incenter}\;\TT} &
\TT'\ar@{-}[d]|{\text{barycenter}\;\TT'}\\
\frac{1}{2}\TT
    \ar@{-}[r]^{\text{pseudo Spieker center}}_{\TT\&\TT'}
    \ar@{-}[ru]|>>>>>>>>>>>>{\text{circumcenter}\; \TT} &\frac{1}{2}\TT'
      }
\]
\caption{concurrency graph of the triangles $\TT$, $\TT'$, $\frac{1}{2}\TT$ and $\frac{1}{2}\TT'$}
\label{Fig:perspectivity-graph}
\end{figure}

This theorem is true also in Euclidean geometry, where the lines orthogonal to
$d,e,f$ through $D,E,F$ are in fact angle bisectors of the medial triangle of
$T$ (the
triangle whose vertices are the midpoints of the sides of $T$). 
The concurrency point of the angle bisectors of the medial triangle of $T$ is
the \emph{Spieker center} of the triangle $T$. In the
non-euclidean case, the lines $DD'$, $EE'$ and $FF'$ are not necessarily
bisectors of $\wt{DEF}$, and because of this we say that the concurrency point
of $DD'$, $EE'$ and $FF'$ given by Theorem~\ref{thm:DD'-EE'-FF'-concurrent} is
the \emph{pseudo Spieker center} of $\TT$.

Surprisingly, the proof of Theorem~\ref{thm:DD'-EE'-FF'-concurrent} is harder
than expected, and it is left to \S
\ref{sec:magic-midpoints-oriented-triangles}
(p. \pageref{proof:thm:DD'-EE'-FF'-concurrent}).

The pseudo Spieker center closes an interesting
``concurrency graph''. Let denote by $\frac{1}{2}\TT$ and $\frac{1}{2}\TT'$ the
medial triangles $\wt{DEF}$ and $\wt{D'E'F'}$. The triangles $\TT$,
$\TT'$,
$\frac{1}{2}\TT$ and $\frac{1}{2}\TT'$ are perspective in pairs. If we
codify each 
perspectivity between triangles as an edge joining the 
corresponding triangles, we obtain the
complete graph of Figure~\ref{Fig:perspectivity-graph}.

\chapter{Desargues' Theorem, alternative triangle centers and the Euler-Wildberger line}\label{sec:Desargues2}

In \S\ref{sec:Desargues} we have presented the most common
centers of a triangle, using their standard definitions. In euclidean geometry, 
these points can be defined in multiple ways, all of them equivalent,
but these definitions which are equivalent in 
euclidean geometry could be non-equivalent in the 
non-euclidean context. Thus, definitions that in euclidean geometry produce
the same center in non-euclidean geometry produce different centers, all
of them having reminiscences of their euclidean analogue. 
\hyperref[thm:Desargues]{Desargues' Theorem} 
will allow us to construct a collection of such \emph{pseudocenters}
having some interesting properties.

\section{Pseudobarycenter}\label{sub:Baricenters}

\begin{figure}[h]
\centering
\includegraphics[width=0.9\textwidth]
{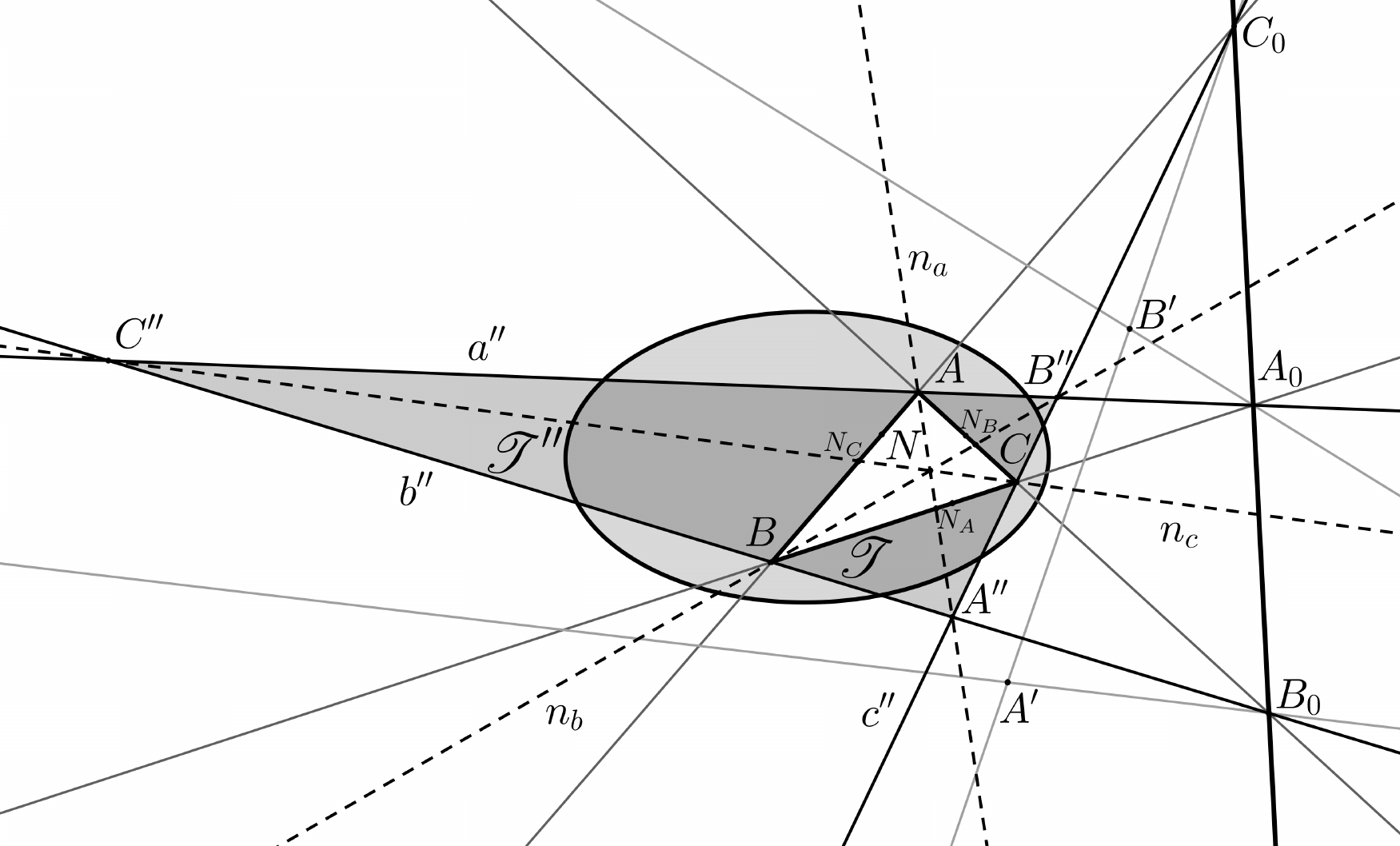}
\caption{Double triangle, pseudomedians and pseudobarycenter}
\label{Fig:pseudobarycenter}
\end{figure}

The barycenter of an euclidean or non-euclidean triangle is the intersection
point of the medians of the triangle. In euclidean geometry, the barycenter of a
triangle $T$ is also the barycenter of its medial triangle and the barycenter
of its double triangle (the triangle whose medial triangle is $T$). This is not
true in non-euclidean geometry.

We define the \emph{double triangle} $\TT''$ of $\TT$ as the triangle
whose sides
$$a''=A_0 A,\quad b''=B_0 B,\quad c''=C_0 C,$$
are the orthogonal lines to the altitudes
$h_{a},h_{b},h_{c}$ through the vertices $A,B,C$ of $\TT$ respectively.
Let $A''=b''\cdot c''$, $B''=c''\cdot a''$, $C''=a''\cdot b''$
be the vertices of $\TT''$. In euclidean geometry, the lines $a'',b'',c''$
are parallel to $a,b,c$ respectively and the lines $AA'',BB'',CC''$ coincide with
the medians of $\TT$. This does not happen in the non-euclidean geometries,
so we will call \emph{pseudomedians} of $\TT$ to the
lines $n_{a}=AA'',n_{b}=BB'',n_{c}=CC''$.
\begin{proposition}
The pseudomedians of $\TT$ are concurrent.\end{proposition}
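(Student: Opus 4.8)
The plan is to obtain the concurrency of the pseudomedians as an immediate application of \hyperref[thm:Desargues]{Desargues' Theorem} to the pair formed by $\TT=\wt{ABC}$ and its double triangle $\TT''=\wt{A''B''C''}$. The point is that the lines joining corresponding vertices of these two triangles are exactly the pseudomedians $n_a=AA''$, $n_b=BB''$, $n_c=CC''$. So by \hyperref[thm:Desargues]{Desargues' Theorem} it suffices to check that the three intersection points of corresponding sides of $\TT$ and $\TT''$ are collinear.

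To carry this out I would first identify those three intersection points. Since $a''=A_0A$ and $A_0=a\cdot a'$ lies on $a$, the point $A_0$ lies on both $a$ and $a''$; as $A\notin a$ these two lines are distinct, whence $a\cdot a''=A_0$, and likewise $b\cdot b''=B_0$ and $c\cdot c''=C_0$. Thus the collinearity to be proved is precisely the collinearity of $A_0,B_0,C_0$. But this has already been observed in \S\ref{sub:Orthocenter}: the points $A_0,B_0,C_0$ are the poles of the altitudes $h_a,h_b,h_c$, and since these altitudes meet at the orthocenter $H$ by \hyperref[thm:Chasles-Polar-triangle]{Chasles' Theorem}, Proposition~\ref{prop:polarity-preserves-incidence} forces $A_0,B_0,C_0$ to lie on the polar $h$ of $H$. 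Feeding this collinearity into \hyperref[thm:Desargues]{Desargues' Theorem} then gives that $n_a,n_b,n_c$ are concurrent.

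I do not expect a genuine obstacle here: the argument is essentially a relabelling of \hyperref[thm:Desargues]{Desargues' Theorem}, with no cross-ratio computation involved. The only thing that needs a moment's care is the bookkeeping required for Desargues' Theorem to apply in its stated (non-degenerate) form: one should confirm that $a'',b'',c''$ are genuinely non-concurrent, so that $\TT''$ really is a triangle, and that $A_0,B_0,C_0$ are genuinely distinct. Both follow from the standing general position assumptions on $\TT$ and $\TT'$ with respect to $\Phi$ --- for instance, $A_0=B_0$ would force $h_a=h_b$, hence $A,B,A',B'$ to be collinear and $\TT$, $\TT'$ to share the side $AB$, contradicting general position --- so they can be disposed of with a short remark rather than a separate argument.
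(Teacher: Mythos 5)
Your argument is exactly the paper's: apply \hyperref[thm:Desargues]{Desargues' Theorem} to $\TT$ and $\TT''$, noting that corresponding sides meet in the points $A_0,B_0,C_0$, which lie on the polar $h$ of the orthocenter. The extra non-degeneracy remarks are welcome but the route is the same.
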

\begin{proof}
The sides $a,b,c$ of the triangle $\TT$ intersect the sides $a'',b'',c''$
of $\TT''$ at the collinear points $A_{0},B_{0},C_{0}$ respectively.
The result follows from \hyperref[thm:Desargues]{Desargues' Theorem}.
\end{proof}

We say that the point $N$ of intersection
of the pseudomedians of $\TT$ is the \emph{pseudobarycenter} of $\TT$, and that the points $N_A=n_{a}\cdot a$,
$N_B=n_{b}\cdot b$, $N_C=n_{c}\cdot c$ where each pseudomedian
of $\TT$ intersects its opposite side are the \emph{pseudomidpoints} of $\TT$ (compare \cite{Akopyan-other-proposal}).

\begin{proposition}\label{prop:pseudomedial-triangle-orthogonal-to-altitudes}
The altitudes of $\TT$ are orthogonal to the sides of the pseudomedial
triangle $\wt{N_AN_BN_C}$.
\end{proposition}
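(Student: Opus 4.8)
The plan is to convert the orthogonality assertion into an incidence assertion and then settle the latter by a single application of Desargues' Theorem. Since the pole of the altitude $h_a=AA'$ is $A_0$ (and likewise $\rho(h_b)=B_0$, $\rho(h_c)=C_0$), and since two lines meeting in $\PP$ are orthogonal exactly when they are conjugate with respect to $\Phi$ (Proposition~\ref{prop:perpendicular-iff-conjugate}), while conjugacy of two lines means that the pole of one lies on the other (Proposition~\ref{prop:polarity-preserves-incidence}), it suffices to prove the three collinearities
\[
A_0\in N_BN_C,\qquad B_0\in N_CN_A,\qquad C_0\in N_AN_B .
\]
By the cyclic symmetry of the construction I only treat $A_0\in N_BN_C$.

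The heart of the matter is choosing the right pair of perspective triangles. By construction $C''=a''\cdot b''$ lies on $b''=B_0B$, so, being distinct from $B$, it spans with $B$ the line $BC''=b''$; symmetrically $CB''=c''$. Moreover the pseudobarycenter $N$ lies on all three pseudomedians, so $NB''=BB''=n_b$ and $NC''=CC''=n_c$; and $N,B'',C''$ are not collinear, for otherwise $N$ would lie on $a''=B''C''$, against the general-position hypotheses. Consider then the triangles $\wt{ABC}$ and $\wt{NC''B''}$ under the correspondence $A\leftrightarrow N$, $B\leftrightarrow C''$, $C\leftrightarrow B''$. The joins of corresponding vertices are $AN=AA''=n_a$, $BC''=b''$ and $CB''=c''$, and all three pass through $A''=b''\cdot c''$; hence the two triangles are perspective from the point $A''$.

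Desargues' Theorem (Theorem~\ref{thm:Desargues}) now yields that they are also perspective from a line, i.e.\ the points
\[
BC\cdot C''B''=a\cdot a''=A_0,\qquad CA\cdot B''N=b\cdot n_b=N_B,\qquad AB\cdot NC''=c\cdot n_c=N_C
\]
are collinear. This is precisely $A_0\in N_BN_C$; cyclically $B_0\in N_CN_A$ and $C_0\in N_AN_B$. By the reduction of the first paragraph, $h_a=\rho^{-1}(A_0)$ is then conjugate, hence orthogonal, to $N_BN_C$, and likewise $h_b$ to $N_CN_A$ and $h_c$ to $N_AN_B$.

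The only genuine difficulty is spotting the two triangles and the vertex correspondence above; once they are guessed, the whole argument collapses to the definitional incidences $C''\in a''\cap b''$, $B''\in a''\cap c''$, $N\in n_a\cap n_b\cap n_c$, together with $a''=A_0A$. The remaining points to be careful about are the hidden non-degeneracy conditions in Desargues' Theorem — that $\wt{NC''B''}$ is a genuine triangle, that $A''$ is not incident with one of its sides, and that $\wt{N_AN_BN_C}$ is a genuine triangle — all of which follow from the standing assumption that $\TT$ and $\TT'$ are in general position (so that all vertices and sides of $\TT$, $\TT'$, $\TT''$ are distinct and the pseudomedians are well defined).
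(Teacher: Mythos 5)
Your proof is correct and follows essentially the same strategy as the paper's: reduce the orthogonality of $h_a$ and $N_BN_C$ to the incidence $A_0\in N_BN_C$ (via pole--polar and Proposition~\ref{prop:perpendicular-iff-conjugate}) and obtain that collinearity from a single application of \hyperref[thm:Desargues]{Desargues' Theorem}. The only difference is the choice of perspective triangles: the paper uses $\wt{N_BAN_C}$ and $\wt{BA''C}$, perspective from $N$, and identifies the Desargues axis with $h=B_0C_0$ to conclude $N_BN_C\cdot BC=A_0$, whereas your pair $\wt{ABC}$, $\wt{NC''B''}$, perspective from $A''$, delivers the collinearity of $A_0,N_B,N_C$ directly; both are equally valid.
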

\begin{proof}It suffices to prove that $h_a$ is orthogonal to $N_BN_C$.
The triangles $\widetriangle{N_BAN_C}$ and $\widetriangle{BA''C}$ are perspective with perspective center $N$. By \hyperref[thm:Desargues]{Desargues' Theorem}, 
the intersection points
$$N_BA\cdot BA'' =CA\cdot C''A''=B_0,\; AN_C\cdot A''C=AB\cdot A''B''=C_0, \; N_BN_C\cdot BC$$ are collinear. This implies that $N_BN_C\cdot BC=BC\cdot h=A_0$. The point $A_0$ is the pole of $h_a$, so the lines $N_BN_C$ and $h_a$ are conjugate.
\end{proof}

The following proposition, trivial in euclidean geometry, is Theorem 16 of
\cite{Wildberger2}.
\begin{proposition}
Even in the non-euclidean geometries, the points $A,B,C$ are
midpoints of the sides of $\TT''$.\end{proposition}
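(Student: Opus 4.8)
The plan is to show that $A$, together with the point $A_0=a\cdot a'$, is a pair of midpoints of the side $\ov{B''C''}$ of $\TT''$; the full statement then follows by the cyclic symmetry $A\mapsto B\mapsto C$ of the construction. First I would note that the side of $\TT''$ opposite $A''=b''\cdot c''$ is the line $B''C''=a''$, since both $B''=c''\cdot a''$ and $C''=a''\cdot b''$ lie on $a''$; and since $a''=A_0A$, the vertex $A$ lies on this side. By Lemma~\ref{lem:midpoints-harmonic-UV-AB}, it then suffices to exhibit two points $X,Y$ of $a''$ with $(B''C''XY)=(UVXY)=-1$, where $\{U,V\}=a''\cdot\Phi$, and I would try $X=A$, $Y=A_0$.

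The relation $(UVAA_0)=-1$ should come for free: the $\Phi$-conjugate of $A$ on the line $a''$ is $a''\cdot\rho(A)=a''\cdot a'$, and $A_0=a\cdot a'$ lies both on $a'$ and on $a''=A_0A$, so (as $a'\neq a''$, because $A\in a''$ but $A\notin a'=\rho(A)$) this conjugate point is precisely $A_0$; by the description of $\Phi$-conjugacy on a line in \S\ref{sec:Harmonic-sets-harmonic-conjugacy} this forces $(UVAA_0)=-1$. Thus everything would reduce to the single harmonic identity $(B''C''AA_0)=-1$.

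To get that identity I would look for a quadrangle that produces it, and the one I would use is $\QQ=\{C,B_0,B,C_0\}$. Its three pairs of opposite sides are $\{CB_0,BC_0\}=\{b,c\}$, $\{CB,B_0C_0\}=\{a,h\}$ and $\{CC_0,B_0B\}=\{c'',b''\}$ --- using that $B_0\in b=CA$, that $C_0\in c=AB$, and that $A_0,B_0,C_0$ are collinear on $h$ (\S\ref{sub:Orthocenter}). Hence the diagonal points of $\QQ$ are $b\cdot c=A$, $a\cdot h=A_0$ and $b''\cdot c''=A''$. So the two sides of $\QQ$ not incident with $A$ or $A_0$ are $c''=CC_0$ and $b''=B_0B$, and they meet the line $AA_0=a''$ at $c''\cdot a''=B''$ and $b''\cdot a''=C''$ respectively; by the harmonic property of quadrangles recalled in \S\ref{sec:Harmonic-sets-harmonic-conjugacy} one gets $(AA_0B''C'')=-1$, hence $(B''C''AA_0)=-1$.

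Combining the two identities, Lemma~\ref{lem:midpoints-harmonic-UV-AB} would give that $A,A_0$ are the midpoints of $\ov{B''C''}$; applying the same argument to the other vertices (the construction being invariant under cyclic relabelling, which permutes $A_0,B_0,C_0$, the sides $a'',b'',c''$ and the vertices $A'',B'',C''$) shows $B$ and $C$ are midpoints of $\ov{C''A''}$ and $\ov{A''B''}$, which is the proposition. I expect the only real obstacle to be discovering the quadrangle $\QQ=\{C,B_0,B,C_0\}$; once it is in hand the rest is mechanical, the remaining bookkeeping being that no three of $C,B_0,B,C_0$ are collinear and that $a''$ is not tangent to $\Phi$ --- both true under the standing general-position hypotheses, with the limiting tangent case handled via Definition~\ref{def:midpoints-tangent} exactly as in Lemma~\ref{lem:midpoints-quadrilateral-also-with-tangencies}.
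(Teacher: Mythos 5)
Your proof is correct and follows essentially the same route as the paper's: the paper likewise uses the quadrangle $BCB_0C_0$, observes that $A$ and $A_0$ are its diagonal points with $AA_0\cdot BB_0=C''$ and $AA_0\cdot CC_0=B''$, deduces $(AA_0B''C'')=-1$, and concludes via Lemma~\ref{lem:midpoints-harmonic-UV-AB} using that $A$ and $A_0$ are conjugate. Your write-up just makes explicit a few verifications (the pairing of opposite sides, why the conjugate of $A$ on $a''$ is $A_0$) that the paper leaves implicit.
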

\begin{proof}
The quadrangle $BCB_{0}C_{0}$ has $A$ and $A_{0}$ as diagonal points,
and it is $AA_{0}\cdot BB_{0}=C''$ and $AA_{0}\cdot CC_{0}=B''$.
Thus, it is $(AA_{0}B''C'')=-1$. As the points $A$
and $A_{0}$ are conjugate with respect to $\Phi$, by Lemma
\ref{lem:midpoints-harmonic-UV-AB}
$A$ and $A_{0}$ are the midpoints of the segment $\overline{B''C''}$.
The same holds for $B,B_{0}$ and $C,C_{0}$, 
which are the midpoints of $\overline{C''A''}$ and $\overline{A''B''}$
respectively.
\end{proof}

\section{Pseudocircumcenter}

\begin{figure}[h]
\centering
\includegraphics[width=0.9\textwidth]
{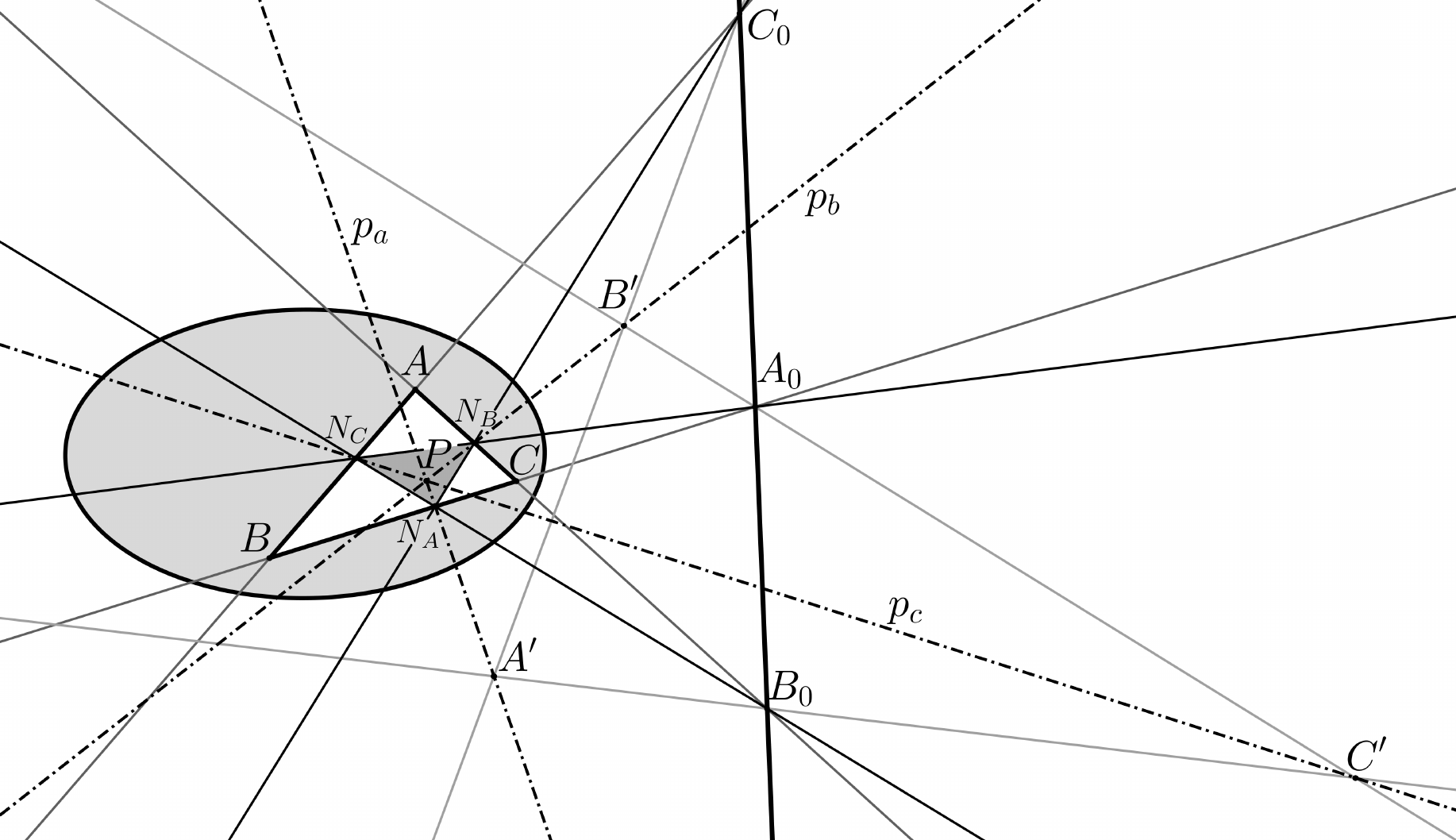}
\caption{Pseudobisectors and pseudocircumcenter}\label{Fig:pseudocircumcenters}
\end{figure}

Let $p_{a},p_{b},p_{c}$ be the orthogonal lines to $a,b,c$ through
the pseudomidpoints $N_A,N_B,N_C$ of $T$, respectively. We
will say that $p_{a},p_{b},p_{c}$ are the \emph{pseudobisectors}
of the sides $a,b,c$ of $\TT$ respectively.
\begin{proposition}
The pseudobisectors of $\TT$ are concurrent.\end{proposition}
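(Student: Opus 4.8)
The plan is to identify the pseudobisectors as the lines joining corresponding vertices of two triangles and then invoke \hyperref[thm:Desargues]{Desargues' Theorem}, exactly in the spirit of the earlier proofs in this chapter. First I would recast $p_a,p_b,p_c$ in polar language. By Proposition~\ref{prop:perpendicular-iff-conjugate}, the line through $N_A$ orthogonal to $a$ is the line through $N_A$ conjugate to $a$; and a line is conjugate to $a$ precisely when it passes through the pole $\rho(a)=A'$. Since $N_A\in a$ while $A'\notin a$ (general position forbids $a$ tangent to $\Phi$), we have $N_A\neq A'$, hence
\[
p_a=A'N_A,\qquad p_b=B'N_B,\qquad p_c=C'N_C.
\]
So it suffices to show that the lines joining corresponding vertices of $\TT'=\wt{A'B'C'}$ and of the pseudomedial triangle $\wt{N_AN_BN_C}$ are concurrent.

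Next I would locate the intersections of corresponding sides of these two triangles. The proof of Proposition~\ref{prop:pseudomedial-triangle-orthogonal-to-altitudes} shows that $N_BN_C$ meets $BC$ at $A_0=a\cdot a'=BC\cdot h$, and the analogous arguments (cyclic in $A,B,C$) give that $N_CN_A$ passes through $B_0$ and $N_AN_B$ passes through $C_0$. Now $A_0$ lies on $a'=B'C'$ by the very definition of $A_0$, so the side $B'C'$ of $\TT'$ and the side $N_BN_C$ of $\wt{N_AN_BN_C}$ meet exactly at $A_0$ (these are distinct lines under the general position hypotheses); likewise $C'A'\cdot N_CN_A=B_0$ and $A'B'\cdot N_AN_B=C_0$.

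Finally, the points $A_0,B_0,C_0$ are collinear, since they are the poles of the concurrent altitudes $h_a,h_b,h_c$ and hence all lie on the polar $h=\rho(H)$ of the orthocenter $H$. By \hyperref[thm:Desargues]{Desargues' Theorem}, the collinearity of the three intersection points of corresponding sides forces the lines $A'N_A,B'N_B,C'N_C$ joining the corresponding vertices to be concurrent; that is, $p_a,p_b,p_c$ concur. The only delicate point — and the main thing to be checked — is the non-degeneracy tacitly used throughout: that $N_A,N_B,N_C$ are genuinely non-collinear and that each pair of corresponding sides above consists of two distinct lines, so that Desargues applies. Both follow from the standing general position assumptions on $\TT$ and $\TT'$ together with the non-collinearity of $D,E,F$.
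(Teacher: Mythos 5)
Your proof is correct and follows essentially the same route as the paper: it uses Proposition~\ref{prop:pseudomedial-triangle-orthogonal-to-altitudes} to place the intersections of corresponding sides of $\wt{N_AN_BN_C}$ and $\TT'$ at the collinear points $A_0,B_0,C_0$ on $h$, and then applies \hyperref[thm:Desargues]{Desargues' Theorem} to those two triangles. Your explicit identification $p_a=A'N_A$ and the non-degeneracy remarks only make explicit what the paper leaves tacit.
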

\begin{proof}
By Proposition~\ref{prop:pseudomedial-triangle-orthogonal-to-altitudes}, 
the sides of the triangle $\wt{N_A N_B N_C}$ pass through $A_0,B_0$ and $C_0$.
The result follows by applying \hyperref[thm:Desargues]{Desargues' Theorem} to 
the triangles $\widetriangle{N_AN_BN_C}$ and $\TT'$.

% The lines $p_{a},p_{b},p_{c}$ are given by
% \[
% p_{a}=A'N_A,\qquad p_{b}=B'N_B,\qquad p_{c}=C'N_C.
% \]
% Taking the quadrangle $BCB''C''$ we have that $(AN_AA''N)=-1$.
% 
% Consider the point $A^{*}=n_{a}\cdot h$. Taking the quadrangle $BCB_{0}C_{0}$
% we have $(AA''A^{*}N_A)=-1$. Then, we have also
% 
% \begin{eqnarray*}
% (A^{*}NAA'') & = & (N_ANAA'')(A^{*}N_AAA'')=[1-(N_AANA'')](AA''A^{*}N_A)=-2,\\
% (A^{*}NN_AA) & = & (A''NN_AA)(A^{*}A''N_AA)=-1\cdot[1-(AA''N_AA^{*})]=-2.
% \end{eqnarray*}
% 
% 
% Thus, the homology $\eta$ with center $N$, axis $h$ and ratio $-2$
% ??? transforms $T$ into $T''$, and it transforms the triangle $\widetriangle{N_AN_BN_C}$
% into $T$. The lines $AB$ and $N_AN_B$ must itersect at $h$
% and so $N_AN_B\cdot h=AB\cdot h=C_{0}$. The result follows by
% applying Desargues' theorem now to the triangles $T'$ and $\widetriangle{N_AN_BN_C}$.
% ???USANDO DESARGUES AGAIN TOMANDO $\widetriangle{AN_BN_C}$ y $\widetriangle{A''BC}$???
\end{proof}

We will call \emph{pseudocircumcenter} to the point $P$ where the
pseudobisectors intersect.

The pseudocircumcenter has other analogies with the classical circumcenter
of euclidean geometry. Consider the lines
\begin{eqnarray*}
a_{B}=BA', & b_{C}=CB', & c_{A}=AC',\\
a_{C}=CA', & b_{A}=AB', & c_{B}=BC',
\end{eqnarray*}
and the points
\[
A_{1}=b_{C}\cdot c_{B},\qquad B_{1}=c_{A}\cdot a_{C},\qquad C_{1}=a_{B}\cdot b_{A},
\]
as in Figure~\ref{Fig:pseudocircumcenter-prop}.
In euclidean geometry,
the points $A_{1},B_{1},C_{1}$ lie on the circumcircle of $\TT$, and
they are the symmetric points of $A,B,C$ respectively through the
circumcenter. This does not happen in general in non-euclidean geometry, but
nevertheless we have:
\begin{proposition}
The lines $AA_{1},BB_{1},CC_{1}$ intersect at the pseudocircumcenter
$P$.\end{proposition}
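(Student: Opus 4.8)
The plan is to reach the conclusion by three successive applications of \hyperref[thm:Desargues]{Desargues' Theorem}: one to force a collinearity on the sides of $\wt{A_1B_1C_1}$, a second to deduce the concurrency, and a third to identify the concurrency point with $P$.

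First I would show that $AA_1,BB_1,CC_1$ are concurrent. Consider the triangles $\wt{BCA'}$ and $\wt{B'C'A}$, with the correspondence $B\leftrightarrow B'$, $C\leftrightarrow C'$, $A'\leftrightarrow A$. The lines joining corresponding vertices are $BB'=h_b$, $CC'=h_c$ and $A'A=h_a$, which meet at the orthocenter $H$; so these triangles are perspective from $H$. By Desargues the intersections of corresponding sides are collinear, and these are $BC\cdot B'C'=a\cdot a'=A_0$, $\;CA'\cdot C'A=a_C\cdot c_A=B_1$ and $A'B\cdot AB'=a_B\cdot b_A=C_1$. Hence $A_0,B_1,C_1$ are collinear, and by the cyclic symmetry of the whole construction also $B_0,C_1,A_1$ and $C_0,A_1,B_1$ are collinear. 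In particular $BC\cdot B_1C_1=A_0$, $CA\cdot C_1A_1=B_0$ and $AB\cdot A_1B_1=C_0$, and since $A_0,B_0,C_0$ all lie on the line $h=\rho(H)$ (cf. \S\ref{sub:Orthocenter}), Desargues applied to $\wt{ABC}$ and $\wt{A_1B_1C_1}$ gives that $AA_1,BB_1,CC_1$ are concurrent; call their common point $P^{*}$. So $\wt{ABC}$ and $\wt{A_1B_1C_1}$ are perspective, with axis $h$ and centre $P^{*}$.

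It remains to check $P^{*}=P$. Since $P=p_a\cdot p_b\cdot p_c$ with $p_a=N_AA'$, $p_b=N_BB'$, $p_c=N_CC'$, and $P^{*}=AA_1\cdot BB_1=AA_1\cdot CC_1$, it is enough to show that $AA_1$, $BB_1$ and $p_a$ are concurrent (then $P^{*}=AA_1\cdot BB_1\in p_a$), and cyclically. I would get this from Desargues applied to $\wt{ABN_A}$ and $\wt{A_1B_1A'}$: the lines joining corresponding vertices are precisely $AA_1$, $BB_1$ and $N_AA'=p_a$, so it suffices to verify that the three intersections of corresponding sides are collinear. Two are immediate: $AB\cdot A_1B_1=C_0$ (from the previous paragraph) and $BN_A\cdot B_1A'=C$, since $N_A\in BC$ gives $BN_A=BC$ while $B_1\in a_C=CA'$ gives $B_1A'=CA'$, two lines meeting at $C$. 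The third is $N_AA\cdot A'A_1=n_a\cdot A'A_1$, where $n_a=AA''$ is the pseudomedian through $A$ and $\TT''=\wt{A''B''C''}$ is the double triangle; using the auxiliary collinearity $A_1\in A'A''$ and the fact that $A''\in n_a$, this third point equals $A''$. Finally $C_0$, $C$, $A''$ are collinear: by \S\ref{sub:Baricenters} the points $C$ and $C_0$ are the two midpoints of the side $\ov{A''B''}$ of $\TT''$, hence both lie on $A''B''$. Thus Desargues yields $P^{*}\in p_a$, and cyclically $P^{*}\in p_b$, $P^{*}\in p_c$, so $P^{*}=P$.

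The main obstacle is the auxiliary collinearity $A_1\in A'A''$ (equivalently, that the polar of $A_1$, the side $a$, and the polar of $A''$ are concurrent): proving this cleanly again amounts to exhibiting the right pair of perspective triangles for Desargues, and locating that pair is the delicate step. Should a short synthetic proof of this collinearity not be available, the identity $P^{*}=P$ can instead be confirmed by a direct — if somewhat longer — computation in a symmetric‑matrix model of $\Phi$, where all the relevant determinants are seen to vanish.
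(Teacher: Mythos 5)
Your argument is sound in outline, but as written it has one genuine hole: the auxiliary collinearity $A_1\in A'A''$, which you correctly identify as the delicate step and then leave to a coordinate computation. That step does not need coordinates, and it yields to exactly the technique you are already using. Apply \hyperref[thm:Desargues]{Desargues' Theorem} to the triangles $\wt{BC'B_0}$ and $\wt{CB'C_0}$: the lines joining corresponding vertices are $BC=a$, $C'B'=a'$ and $B_0C_0=h$, which concur at $A_0$, so the triangles are perspective from $A_0$; the corresponding sides then meet in the collinear points $BC'\cdot CB'=c_B\cdot b_C=A_1$, $C'B_0\cdot B'C_0=b'\cdot c'=A'$ and $B_0B\cdot C_0C=b''\cdot c''=A''$. (This is in fact a separate proposition of the paper, proved right after the one you are addressing, and its proof uses nothing downstream of it, so there is no circularity.) With that inserted, your second Desargues application to $\wt{ABN_A}$ and $\wt{A_1B_1A'}$ goes through, and the whole proof closes.

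That said, you should know your route is considerably longer than necessary. The paper proves the statement with a single application of Desargues: take the triangles $\wt{A_1B'C'}$ and $\wt{AN_BN_C}$; their corresponding sides meet at $C$, $A_0$ and $B$, which are collinear on $a$, so the triangles are perspective from a point, i.e.\ $AA_1$, $B'N_B=p_b$ and $C'N_C=p_c$ concur --- and $p_b\cdot p_c=P$, so $AA_1$ passes through $P$ directly. This avoids your two-stage structure (first concurrency at an unknown $P^*$, then identification $P^*=P$) and the double triangle entirely. One genuinely useful by-product of your version, though, is your derivation of the collinearity of $A_0,B_1,C_1$ from the perspectivity of $\wt{BCA'}$ and $\wt{B'C'A}$ from $H$: the paper obtains the same fact (Proposition~\ref{prop:pseudosymmetric-triangle-orthogonal-to-altitudes}) only afterwards, using perspectivity from $P$, so your argument gives it independently of $P$.
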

\begin{proof}
It suffices to show that $P$ lies on $AA_{1}$.

We consider the triangles $\widetriangle{A_{1}B'C'}$ and $\widetriangle{AN_BN_C}$.
We have:
\[
A_{1}B'\cdot AN_B=C,\qquad B'C'\cdot N_BN_C=A_{0},\qquad C'A_{1}\cdot N_CA=B.
\]
Because the points $B,C,A_{0}$ are collinear, by 
\hyperref[thm:Desargues]{Desargues' Theorem}
the triangles must be perspective. Thus, the line $A_{1}A$ is concurrent
with the pseudobisectors $B'N_B=p_{b}$ and $C'N_C=p_{c}$, whose intersection point is $P$.
\end{proof}

\begin{figure}[h]
\centering
\includegraphics[width=0.9\textwidth]
{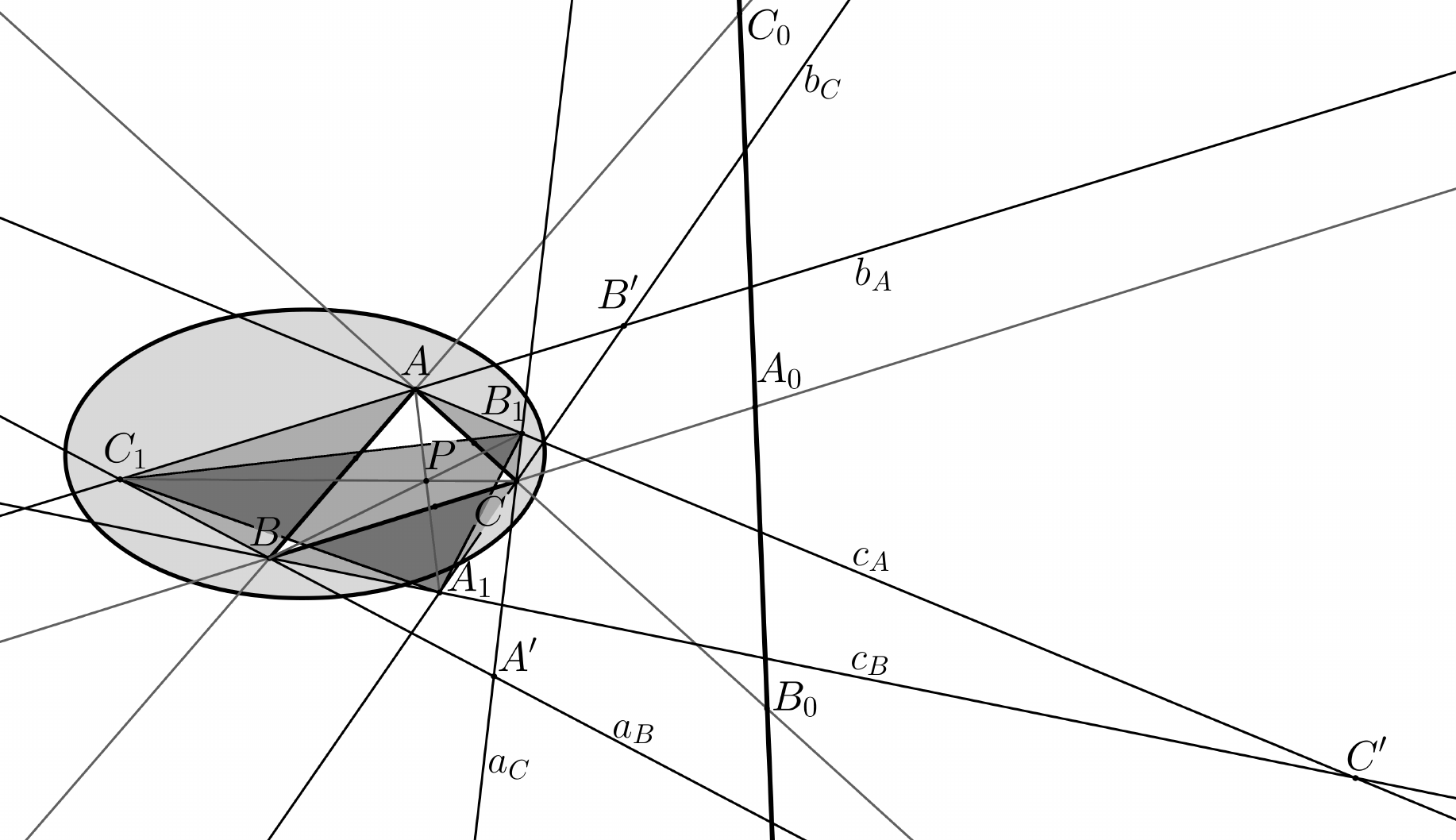}
\caption{more about the pseudocircumcenter}\label{Fig:pseudocircumcenter-prop}
\end{figure}

The definition of the points $A_{1},B_{1},C_{1}$ is symmetric with
respect to $\TT$ and $\TT'$, so we have also:
\begin{proposition}
The lines $A'A_{1},B'B_{1},C'C_{1}$ intersect at the pseudocircumcenter
$P'$ of $\TT'$.
\end{proposition}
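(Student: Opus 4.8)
The plan is to exploit the symmetry of the whole construction under the exchange $\TT\leftrightarrow\TT'$ (adding or removing a prime to every label) and to rerun the proof of the previous proposition with the roles of $\TT$ and $\TT'$ interchanged. First I would check that $A_1,B_1,C_1$ are fixed by this exchange: indeed $A_1=b_C\cdot c_B=CB'\cdot BC'$, which the exchange turns into $C'B\cdot B'C=A_1$, and likewise for $B_1$ and $C_1$. Hence it is legitimate to speak of \emph{the same} points $A_1,B_1,C_1$ when we pass from $\TT$ to $\TT'$.

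Next I would spell out the primed counterparts of the auxiliary objects. Since $\TT$ and $\TT'$ share the altitudes $h_a,h_b,h_c$ and their poles $A_0,B_0,C_0$, the double triangle of $\TT'$ has sides $A_0A',B_0B',C_0C'$; its pseudomedians meet at the pseudobarycenter $N'$ of $\TT'$ and cut $a',b',c'$ at the pseudomidpoints $N'_A,N'_B,N'_C$; and the pseudobisectors of $\TT'$ are the lines through $N'_A,N'_B,N'_C$ orthogonal to $a',b',c'$, meeting at the pseudocircumcenter $P'$. Two facts transfer verbatim by the $\TT\leftrightarrow\TT'$ symmetry: the sides $N'_BN'_C,\,N'_CN'_A,\,N'_AN'_B$ of the pseudomedial triangle $\wt{N'_AN'_BN'_C}$ pass through $A_0,B_0,C_0$ respectively (the analogue of Proposition~\ref{prop:pseudomedial-triangle-orthogonal-to-altitudes}); and, since a line orthogonal to $b'$ passes through the pole $B$ of $b'$ (cf. Proposition~\ref{prop:perpendicular-iff-conjugate}), the pseudobisector of $\TT'$ orthogonal to $b'$ is $BN'_B$, and that orthogonal to $c'$ is $CN'_C$.

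Now I would run the Desargues argument, exactly as before. By the cyclic symmetry in $A,B,C$ it suffices to show $P'\in A_1A'$. Consider the triangles $\wt{A_1BC}$ and $\wt{A'N'_BN'_C}$; their corresponding sides meet in
\[
A_1B\cdot A'N'_B=BC'\cdot C'A'=C',\qquad BC\cdot N'_BN'_C=A_0,\qquad CA_1\cdot N'_CA'=CB'\cdot A'B'=B',
\]
using $A_1\in BC'\cap CB'$ for the outer equalities and the previous paragraph for the middle one. The points $C',A_0,B'$ all lie on the line $a'=B'C'$, hence are collinear, so by \hyperref[thm:Desargues]{Desargues' Theorem} the two triangles are perspective; their centre of perspectivity lies on $A_1A'$ and on the common point of $BN'_B$ and $CN'_C$, which is $P'$. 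Therefore $P'\in A_1A'$, and the cyclic symmetry gives $P'\in B_1B'$ and $P'\in C_1C'$ as well.

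The only genuine work is the routine incidence bookkeeping in the display — that $A_1$ lies on both $BC'$ and $CB'$, and that $a'$ is precisely the line through $C'$, $A_0$ and $B'$ — together with the equally elementary remark that orthogonality to $b'$ forces passage through the pole $B$; none of this is deep, but it is exactly what certifies that the $\TT\leftrightarrow\TT'$ symmetry transports the previous proposition to the present one.
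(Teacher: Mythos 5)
Your proposal is correct and is exactly the paper's argument: the paper justifies this proposition solely by remarking that the definition of $A_1,B_1,C_1$ is symmetric under the exchange $\TT\leftrightarrow\TT'$, so the previous proposition transfers; you have simply carried out that symmetry transfer explicitly, including the rerun of the Desargues step with the triangles $\wt{A_1BC}$ and $\wt{A'N'_BN'_C}$. The incidence bookkeeping you supply is all accurate, so nothing is missing.
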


The triangle $\wt{A_1B_1C_1}$ has a property similar to that of 
Proposition~\ref{prop:pseudomedial-triangle-orthogonal-to-altitudes} for the triangle $\wt{N_A N_B N_C}$:
\begin{proposition}\label{prop:pseudosymmetric-triangle-orthogonal-to-altitudes}
The altitudes of $\TT$ are orthogonal to the sides of $\wt{A_1B_1C_1}$.
\end{proposition}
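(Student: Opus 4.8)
The plan is to prove the equivalent, purely projective statement that each side of $\wt{A_1B_1C_1}$ passes through the pole of the corresponding altitude of $\TT$; by Proposition~\ref{prop:perpendicular-iff-conjugate} this makes that side conjugate to the altitude, hence orthogonal to it (at their intersection point, when it lies in $\PP$). Since the pole of $h_a=AA'$ is the point $A_0=a\cdot a'$, and likewise the poles of $h_b,h_c$ are $B_0,C_0$, it suffices to show
$$A_0\in B_1C_1,\qquad B_0\in C_1A_1,\qquad C_0\in A_1B_1\,,$$
and because the whole construction of $A_1,B_1,C_1$ is cyclic in $A,B,C$, only the first of these needs to be proved.

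To show $A_0\in B_1C_1$ I would apply the converse of \hyperref[thm:Desargues]{Desargues' Theorem} to the triangles $\wt{B_1B'C}$ and $\wt{C_1C'B}$, with the vertex correspondence $B_1\leftrightarrow C_1$, $B'\leftrightarrow C'$, $C\leftrightarrow B$. The three intersection points of corresponding sides are $B_1B'\cdot C_1C'$, $B'C\cdot C'B$, and $CB_1\cdot BC_1$, and each of them is a point already named in the text. Indeed, since $B_1\in CA'$ and $C_1\in BA'$ we have $CB_1=CA'=a_C$ and $BC_1=BA'=a_B$, so $CB_1\cdot BC_1=a_C\cdot a_B=A'$; since $B'C=CB'=b_C$ and $C'B=BC'=c_B$, we get $B'C\cdot C'B=b_C\cdot c_B=A_1$; and, by the Proposition asserting that $A'A_1,B'B_1,C'C_1$ concur at the pseudocircumcenter $P'$ of $\TT'$, the lines $B_1B'$ and $C_1C'$ meet at $P'$, so $B_1B'\cdot C_1C'=P'$.

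The key observation is then that the three points $A'$, $A_1$, $P'$ are collinear: they all lie on the line $A'A_1$, since $P'\in A'A_1$ by the very definition of $P'$. Hence the converse of \hyperref[thm:Desargues]{Desargues' Theorem} shows that $\wt{B_1B'C}$ and $\wt{C_1C'B}$ are perspective from a point, i.e. the lines $B_1C_1$, $B'C'$ and $CB$ are concurrent. Their common point lies on $B'C'=a'$ and on $CB=a$, so it is $A_0=a\cdot a'$; therefore $A_0\in B_1C_1$. Since $A_0$ is the pole of $h_a$, the lines $h_a$ and $B_1C_1$ are conjugate, hence orthogonal by Proposition~\ref{prop:perpendicular-iff-conjugate}; the claims for $h_b$ and $h_c$ follow by the cyclic symmetry $A\to B\to C$.

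The part I expect to require some care is the verification of general position: one must check that the objects appearing are genuinely distinct, so that both triangles are non-degenerate and Desargues' converse applies — for instance that $A'\neq A_1$ (so $A'A_1$ is a well-defined line), that $B_1B'\neq C_1C'$, and that $B_1\neq C$, $C_1\neq B$, $B'\neq C$. All of these are consequences of the standing assumptions that $\TT$, $\TT'$ are in general position with respect to $\Phi$, but they should be confirmed. Apart from this, the argument is a single application of Desargues' Theorem, exactly in the spirit of the proof of Proposition~\ref{prop:pseudomedial-triangle-orthogonal-to-altitudes}.
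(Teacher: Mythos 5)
Your argument is correct, but it is not quite the paper's. The paper applies the \emph{forward} direction of \hyperref[thm:Desargues]{Desargues' Theorem} to the pair $\wt{C_1AB_1}$, $\wt{CA_1B}$: these are perspective from the pseudocircumcenter $P$ (since $AA_1,BB_1,CC_1$ concur there), so the intersection points of corresponding sides, namely $C_1A\cdot CA_1=B'$, $AB_1\cdot A_1B=C'$ and $B_1C_1\cdot BC$, are collinear; the third point then lies on $B'C'=a'$ as well as on $a$, so it is $A_0$, and conjugacy of $B_1C_1$ with $h_a$ follows. You instead run Desargues in the converse direction on $\wt{B_1B'C}$ and $\wt{C_1C'B}$, feeding in the collinearity of $A'$, $A_1$ and $P'$ (which is exactly the statement $P'\in A'A_1$ from the proposition on the pseudocircumcenter of $\TT'$) to conclude that $B_1C_1$, $a'$ and $a$ concur, necessarily at $A_0$. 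Both proofs are a single Desargues application resting on a pseudocircumcenter concurrency ($P$ for the paper, $P'$ for you); the paper's choice of triangles is marginally more economical, because the collinear triple it produces contains $B'$ and $C'$ outright, so the identification of the third point with $A_0$ is immediate and no appeal to the $P'$ proposition is required. Your closing concern about non-degeneracy (e.g.\ $A'\neq A_1$, the two triangles being genuine) is legitimate, but the paper is equally silent on it and treats it as absorbed into the general position assumptions of \S\ref{sec:triangle-notation}.
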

\begin{proof}The proof is also identical to that of  Proposition~\ref{prop:pseudomedial-triangle-orthogonal-to-altitudes}. It suffices to prove that $h_a$ is orthogonal to $B_1C_1$.
The triangles $\widetriangle{C_1AB_1}$ and $\widetriangle{CA_1B}$ are perspective with perspective center $P$. By 
\hyperref[thm:Desargues]{Desargues' Theorem}, the intersection points
$$C_1A\cdot CA_1 =B',\quad AB_1\cdot A_1B=C', \quad B_1C_1\cdot BC,$$ are collinear. This implies that $B_1C_1\cdot BC=BC\cdot h=A_0$, and so the lines $B_1C_1$ and $h_a$ are conjugate.
\end{proof}

Another interesting property is the following:
\begin{proposition}
The points $A',A'',A_{1}$ are collinear.\end{proposition}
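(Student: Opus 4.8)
The plan is to exhibit two perspective triangles whose perspectivity forces the collinearity of $A'$, $A''$, $A_1$, using \hyperref[thm:Desargues]{Desargues' Theorem} in the same spirit as the preceding propositions. First I would gather the relevant incidences. Recall that $A'' = b''\cdot c'' = (B_0B)\cdot(C_0C)$ is the vertex of the double triangle, that $A_1 = b_C\cdot c_B = (CB')\cdot(BC')$, and that $A'$ is the vertex of the polar triangle, $A' = b'\cdot c'$. The lines $b' = CA'$ and $c' = BA'$ pass through $A'$; also $b_C = CB'$ and $c_B = BC'$ meet at $A_1$; and $b'' = B_0B$, $c'' = C_0C$ meet at $A''$. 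So all three target points lie ``between'' the vertices $B$ and $C$ of $\TT$ in an appropriate sense: each is the intersection of a line through $B$ with a line through $C$.

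The key step is to set up the triangle $\wt{BA'C}$ (or a close relative) against another triangle so that $A''$ and $A_1$ appear as intersections of corresponding sides, or as a perspective center. Concretely, consider the three lines through $B$ — namely $BC$, $c' = BA'$, $c_B = BC'$, $c'' = B_0B$ — and the three lines through $C$ — namely $CB$, $b' = CA'$, $b_C = CB'$, $b'' = C_0C$. Pairing $BA'$ with $CA'$ gives $A'$; pairing $BC'$ with $CB'$ gives $A_1$; pairing $BB_0$ with $CC_0$ gives $A''$. I would therefore look for a single line $\ell$ through $B$ and a single line through $C$ whose intersections with a fixed transversal realize all three points as an involution on that transversal, or — more in keeping with the Desargues pattern used above — find triangles $\TT_1$, $\TT_2$ with perspective center some already-identified point (the orthocenter $H$, the point $A_0$, or the pseudocircumcenter $P$) such that two of the three ``intersection of corresponding sides'' points are known to be collinear with a third, and that third forces $A',A'',A_1$ onto a line. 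The cleanest route is probably: show $\wt{BC'A'}$ and $\wt{CB'A'}$... no — rather, use that $B,C,B_0,C_0$ form a quadrangle with diagonal points $A$, $A_0$, and a third point on $AA_0 = h_a$; intersecting its sides with the line $BC$ and with $h_a$ gives harmonic relations, and $A'$, being $b'\cdot c'$ with $b',c'$ the polars of $C,B$, is conjugate-related to this configuration. I expect the decisive identity to be $(B C A_1 X) = -1$ or a similar harmonic relation on the line $BC$ linking $A_1$, $A''$, and $A'$ through the pole–polar correspondence, after which Desargues applied to $\wt{BC'A'}$ and $\wt{CB'A''}$ (sides $BC'$–$CB'$ meeting at $A_1$, $C'A'$–$B'A''$ meeting at a point of $h$, and $A'B$–$A''C$ meeting at a point of $h$) yields that $A',A'',A_1$ is the axis of perspectivity, hence collinear.

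The main obstacle will be pinning down \emph{which} pair of triangles to feed into \hyperref[thm:Desargues]{Desargues' Theorem}: the three points $A'$, $A''$, $A_1$ are each built from a symmetric pair of ``crossed'' lines ($CA'$/$BA'$, then $CB'$/$BC'$, then $CC_0$/$BB_0$), and one must choose the grouping of these six lines into two triangles so that the three points become either the perspective axis or are each pinned to the line $h = \rho(H)$. I suspect the correct statement is that all three points lie on $h$ itself: $A'$ lies on $h$ because... (this needs checking — $A'$ is the pole of $a$, and $a$ passes through $B,C$, so $A'$ lies on the polars of $B$ and $C$, i.e.\ on $b'$ and $c'$, which is just the definition), so the real claim to verify is whether $A''$ and $A_1$ also lie on $h$, which would follow from $A_0 \in h$ (already established) together with the harmonic-quadrangle identity for $BCB_0C_0$. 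If instead the three points are \emph{not} on $h$, one falls back on a two-step Desargues argument analogous to Proposition~\ref{prop:pseudosymmetric-triangle-orthogonal-to-altitudes}, using the pseudocircumcenter $P$ as perspective center for $\wt{C_1AB_1}$ and $\wt{CA_1B}$ and extracting the axis. Either way the routine work is the cross-ratio bookkeeping; the nontrivial choice is the triangle pairing, and I would resolve it by first drawing Figure~\ref{Fig:pseudocircumcenter-prop} and reading off which three ``opposite-side intersections'' are manifestly collinear.
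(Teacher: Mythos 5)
Your overall strategy --- a single application of \hyperref[thm:Desargues]{Desargues' Theorem} with $A'$, $A''$, $A_1$ arising as the three intersections of corresponding sides of two perspective triangles --- is exactly the paper's, but you never produce the pair of triangles that makes it work, and the candidates you do float would not. The grouping you need is $\wt{BC'B_0}$ versus $\wt{CB'C_0}$. The lines joining corresponding vertices are $BC=a$, $C'B'=a'$ and $B_0C_0=h$, which concur at $A_0$, so the two triangles are perspective from $A_0$, and Desargues gives collinearity of the three side intersections: $BC'\cdot CB'=c_B\cdot b_C=A_1$; $C'B_0\cdot B'C_0=b'\cdot c'=A'$ (because $C'$ and $B_0$ both lie on the polar $b'$ of $B$, and $B'$ and $C_0$ both lie on $c'$); and $B_0B\cdot C_0C=b''\cdot c''=A''$. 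No harmonic relation, no involution, and no second Desargues step are needed.

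Two concrete errors in your write-up block you from finding this pairing. First, you assert $b'=CA'$ and $c'=BA'$; this is false. The line $b'=\rho(B)$ contains $A'$, $C'$, $B_0$, $B_a$ and $B_c$, but it passes through $C$ only if $B$ and $C$ are conjugate, which is not the case in general; the lines $CA'$ and $BA'$ are what the paper calls $a_C$ and $a_B$. Misreading $b'$ and $c'$ as lines through $C$ and $B$ hides the identification $A'=C'B_0\cdot B'C_0$ that makes $A'$ an ``opposite-sides'' intersection of the right configuration. Second, your fallback guess that all three points might lie on $h$ fails: $A'\in h=\rho(H)$ would force $H\in\rho(A')=a$, i.e.\ the orthocenter on a side of $\TT$, which does not hold in general. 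Finally, the pairing $\wt{BC'A'}$ versus $\wt{CB'A''}$ cannot work as stated: to run Desargues in the needed direction you would have to know beforehand that $A'A''$ passes through $A_0=a\cdot a'$, which is not available (it is close to what you are trying to prove), and in any case $A'$ and $A''$ are \emph{vertices} of those triangles, so they could not appear on the perspective axis in the way you describe.
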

\begin{proof}
Take the triangles $\widetriangle{BC'B_{0}}$ and $\widetriangle{CB'C_{0}}$.
The lines $BC=a$, $C'B'=a'$and $B_{0}C_{0}=h$ concur at the point
$A_{0}$. Thus, the intersection points
\begin{eqnarray*}
BC'\cdot CB' & = & c_{B}\cdot b_{C}=A_{1}\\
C'B_{0}\cdot B'C_{0} & = & b'\cdot c'=A'\\
B_{0}B\cdot C_{0}C & = & b''\cdot c''=A''
\end{eqnarray*}
are collinear.
\end{proof}

The previous proposition has the following geometric translation:
\begin{proposition}
The lines $A''A_1,B''B_1,C''C_1$ are orthogonal to $a,b,c$ respectively.
\end{proposition}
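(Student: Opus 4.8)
The plan is to reduce this statement to the previous proposition together with the projective characterization of orthogonality. Recall that by Proposition~\ref{prop:perpendicular-iff-conjugate} two lines meeting inside $\PP$ are perpendicular precisely when they are conjugate with respect to $\Phi$, and that, since the polarity $\rho$ is a correlation preserving incidence, a line $\ell$ is conjugate to the side $a=BC$ exactly when $\ell$ passes through the pole $\rho(a)$ of $a$. But $\rho(a)=A'$ by the very definition of the polar triangle $\TT'$. Hence it suffices to show that the line $A''A_1$ passes through $A'$, and symmetrically that $B''B_1$ passes through $B'=\rho(b)$ and $C''C_1$ passes through $C'=\rho(c)$.

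First I would invoke the previous proposition, which states exactly that $A'$, $A''$ and $A_1$ are collinear; this says precisely that $A''A_1$ passes through $A'=\rho(a)$, so $A''A_1$ and $a$ are conjugate, hence orthogonal. Then I would observe that the construction of $A_1,B_1,C_1$ and of $A'',B'',C''$ is invariant under cyclic permutation of the vertices $A,B,C$ of $\TT$ (and of the sides $a,b,c$), so the proof of the previous proposition — carried out there with the triangles $\wt{BC'B_0}$ and $\wt{CB'C_0}$ — yields in the same way that $B',B'',B_1$ are collinear and that $C',C'',C_1$ are collinear. Applying the first paragraph's observation at $B$ and $C$ then gives that $B''B_1$ is conjugate (hence orthogonal) to $b$ and $C''C_1$ is conjugate (hence orthogonal) to $c$.

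There is no genuine obstacle here: the statement is essentially the geometric reading of the previous proposition, and the only point to keep in mind is that "orthogonal" should be understood in the projective sense of "conjugate with respect to $\Phi$", a reading which coincides with the metric one exactly when the lines involved meet inside $\PP$ (Proposition~\ref{prop:perpendicular-iff-conjugate}); for generalized triangles in which the relevant intersection falls outside $\Phi$ the conclusion is still the natural projective statement that $A''A_1=\rho(\rho(a))$ passes through $\rho(a)$.
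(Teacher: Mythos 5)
Your proposal is correct and is exactly the paper's reading: the paper offers no separate proof, presenting the orthogonality as the ``geometric translation'' of the preceding proposition, since $A''A_1$ passing through $A'=\rho(a)$ makes it conjugate, hence orthogonal, to $a$, and likewise at $B$ and $C$ by the symmetry of the construction. Your added remark about when conjugacy coincides with metric perpendicularity is a reasonable clarification but not needed for the claim as the paper intends it.
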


\section{The Euler-Wildberger line}
\begin{theorem}
The orthocenter $H$, the pseudobarycenters $N$ and $N'$ of $\TT$
and $\TT'$ respectively, and the pseudocircumcenters $P$ and $P'$
of $\TT$ and $\TT'$ respectively are collinear.\end{theorem}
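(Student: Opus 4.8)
The plan is to isolate one lemma about chaining triangle perspectivities that share a common axis --- a lemma provable by two applications of \hyperref[thm:Desargues]{Desargues' Theorem} --- and then to apply it three times. The lemma I would use is the following: \emph{if $T_1$ is perspective to $T_2$ with center $O_{12}$ and axis $\ell$, and $T_2$ is perspective to $T_3$ with center $O_{23}$ and the same axis $\ell$, then $T_1$ is perspective to $T_3$, again with axis $\ell$, and the three centers $O_{12},O_{23},O_{13}$ are collinear.} To prove it, I would first observe that, since both given perspectivities have axis $\ell$, each side of $T_1$, its corresponding side of $T_2$, and its corresponding side of $T_3$ all pass through one and the same point of $\ell$; call these three points $P_1,P_2,P_3$. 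Applying \hyperref[thm:Desargues]{Desargues' Theorem} (perspective from a line yields perspective from a point) to $T_1$ and $T_3$ shows that these two are perspective, again with axis $\ell$. Writing now $T_i=\wt{X_iY_iZ_i}$ with corresponding vertices aligned, the lines $X_1Y_1,X_2Y_2,X_3Y_3$ all pass through $P_3$, so the triangles $\wt{X_1X_2X_3}$ and $\wt{Y_1Y_2Y_3}$ are perspective from $P_3$; \hyperref[thm:Desargues]{Desargues' Theorem} applied to these last two shows that $X_1X_2\cdot Y_1Y_2$, $X_2X_3\cdot Y_2Y_3$ and $X_3X_1\cdot Y_3Y_1$ are collinear, and these three points are exactly $O_{12},O_{23}$ and $O_{13}$.

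Next I would record that every perspectivity I need has the single line $h=\rho(H)$ (the line through $A_0,B_0,C_0$) as its axis. By \hyperref[thm:Chasles-Polar-triangle]{Chasles' Theorem}, $\TT$ and $\TT'$ are perspective from $H$, and the axis is $h$ since $AB\cdot A'B'=C_0$ (and cyclically). The pseudomedians show $\TT$ perspective to the pseudomedial triangle $\wt{N_AN_BN_C}$ from $N$, with axis $h$ because $N_BN_C$ meets $BC$ at $A_0$ --- this intersection is computed in the proof of Proposition~\ref{prop:pseudomedial-triangle-orthogonal-to-altitudes} --- and cyclically. The pseudobisectors $N_AA',N_BB',N_CC'$ show $\wt{N_AN_BN_C}$ perspective to $\TT'$ from $P$, with axis $h$ because $N_BN_C$ and $B'C'=a'$ both pass through $A_0$ (and cyclically). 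Finally, by the Propositions establishing that $AA_1,BB_1,CC_1$ concur at $P$ and that $A'A_1,B'B_1,C'C_1$ concur at $P'$, the triangle $\wt{A_1B_1C_1}$ is perspective to $\TT$ from $P$ and to $\TT'$ from $P'$, and in both cases the axis is $h$ because $B_1C_1$ passes through $A_0$ (proof of Proposition~\ref{prop:pseudosymmetric-triangle-orthogonal-to-altitudes}) and hence meets both $BC$ and $B'C'=a'$ at $A_0$.

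Then I would run the lemma three times. Applied to $\TT,\wt{N_AN_BN_C},\TT'$ with centers $N$ and $P$, it produces a perspectivity of $\TT$ and $\TT'$ matching $A$ with $N_A$ with $A'$ (and cyclically), whose center is therefore $AA'\cdot BB'=H$; so $H,N,P$ are collinear. Applied to $\TT,\wt{A_1B_1C_1},\TT'$ with centers $P$ and $P'$, it produces a perspectivity of $\TT$ and $\TT'$ matching $A$ with $A_1$ with $A'$, whose center is again $H$; so $H,P,P'$ are collinear. Since the whole construction is invariant under the interchange $\TT\leftrightarrow\TT'$, the first of these conclusions, read with the roles of $\TT$ and $\TT'$ exchanged, gives that $H,N',P'$ are collinear. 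Now the first two conclusions place $H,N,P,P'$ on one line $\ell$, and the third then places $N'$ on the line $HP'$, which is $\ell$; hence $H,N,N',P,P'$ all lie on $\ell$.

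The hard part is not any single step but the bookkeeping: one must check, for each perspectivity used, that its axis really is $h$, and --- the crucial point --- that when the perspectivities are chained the vertex correspondences always compose to the Chasles correspondence $A\leftrightarrow A'$, since it is precisely this that forces each composite center to be $H$ and not merely some point of $\ell$. The remaining fine print --- that the auxiliary triangles $\wt{N_AN_BN_C}$, $\wt{A_1B_1C_1}$, $\wt{X_1X_2X_3}$, $\wt{Y_1Y_2Y_3}$ are genuine (non-degenerate) triangles, that $H\notin\Phi$ so $h$ is a proper axis, and that the pairs of points fed into the Desargues arguments are distinct --- is absorbed by the standing general-position hypotheses on $\TT$ and $\TT'$, the few excluded configurations being recovered by a continuity argument.
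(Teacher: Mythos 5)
Your proposal is correct and is essentially the paper's own argument in abstracted form: the paper proves the result by applying \hyperref[thm:Desargues]{Desargues' Theorem} directly to the pairs $\wt{AA'N_A},\wt{BB'N_B}$ and $\wt{AA'A_{1}},\wt{BB'B_{1}}$ (whose corresponding sides concur at $C_0$), which is precisely the second Desargues application inside your chaining lemma, specialized to the triples $\TT,\TT',\wt{N_AN_BN_C}$ and $\TT,\TT',\wt{A_1B_1C_1}$. The only difference is organizational: you package the key step as a reusable statement about pairwise perspective triangles sharing the axis $h$, while the paper performs the same computation in place.
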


\begin{figure}[h]
\centering
\includegraphics[width=0.9\textwidth]{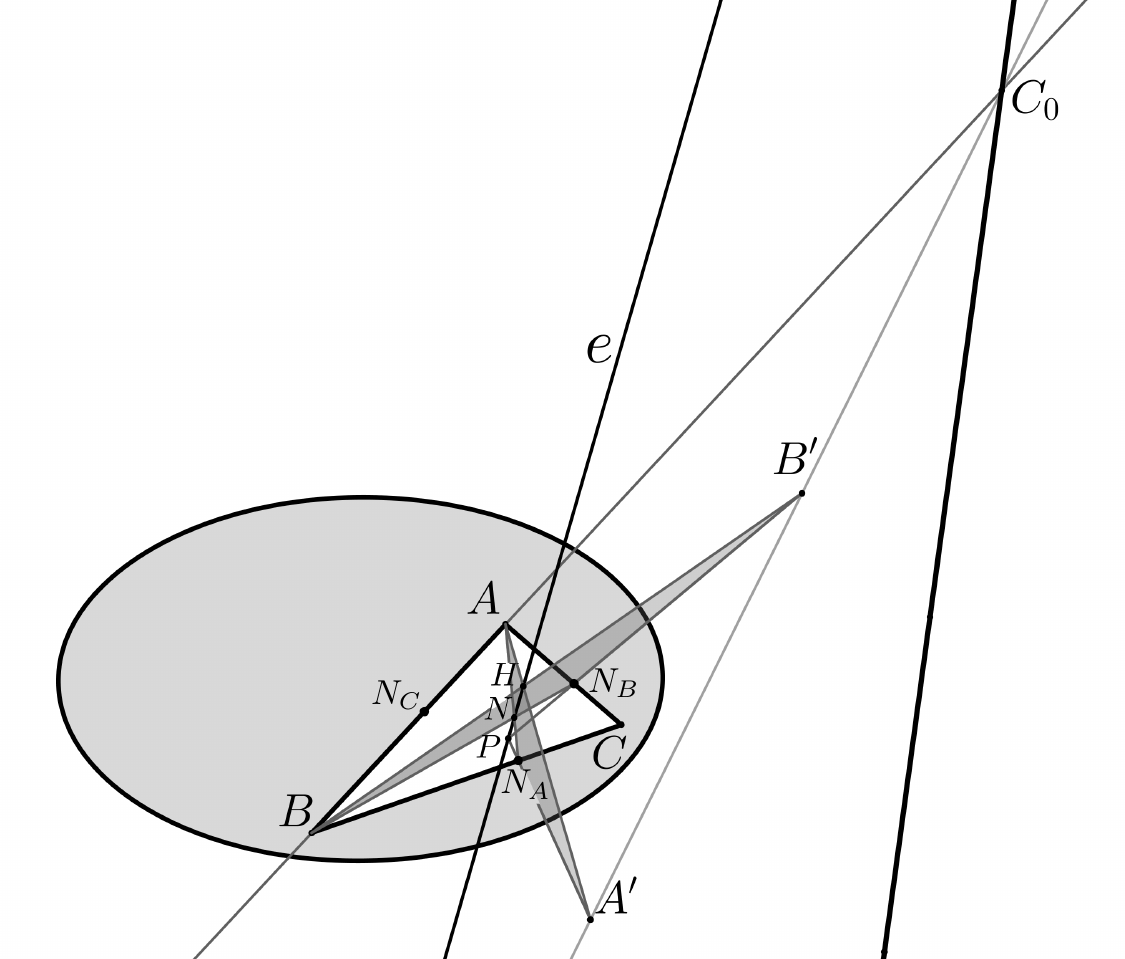}
\caption{Euler-Wildberger line. $H,N,P$ collinear}\label{Fig:thm-Euler-line-01}
\end{figure}

\begin{proof}
Consider the triangles $\widetriangle{AA'N_A}$ and $\widetriangle{BB'N_B}$ 
(Figure~\ref{Fig:thm-Euler-line-01}).
The lines $AB$, $A'B'$ and $N_AN_B$ concur at $C_{0}$. By
\hyperref[thm:Desargues]{Desargues' Theorem}, the intersection points 
\begin{eqnarray*}
AA'\cdot BB' & = & h_{a}\cdot h_{b}=H\\
A'N_A\cdot B'N_B & = & p_{a}\cdot p_{b}=P\\
N_AA\cdot N_BB & = & n_{a}\cdot n_{b}=N
\end{eqnarray*}
are collinear. Let $e$ be the line through $H,N,P$. In the same
way, it can be proved that $H,P'$ and $N'$ belong to a line $e'$.

Let consider now the triangles $\widetriangle{AA'A_{1}}$ and $\widetriangle{BB'B_{1}}$ (Figure~\ref{Fig:thm-Euler-line-02}).
The lines $AB=c,A'B'=c'$ and $A_{1}B_{1}$ concur at the point $C_{0}$.
The intersection points 
\begin{eqnarray*}
AA'\cdot BB' & = & H\\
A'A_{1}\cdot B'B_{1} & = & P'\\
AA_{1}\cdot BB_{1} & = & P
\end{eqnarray*}
are collinear and so it must be $e=e'$.
\end{proof}

\begin{figure}[h]
\centering
\includegraphics[width=0.9\textwidth]{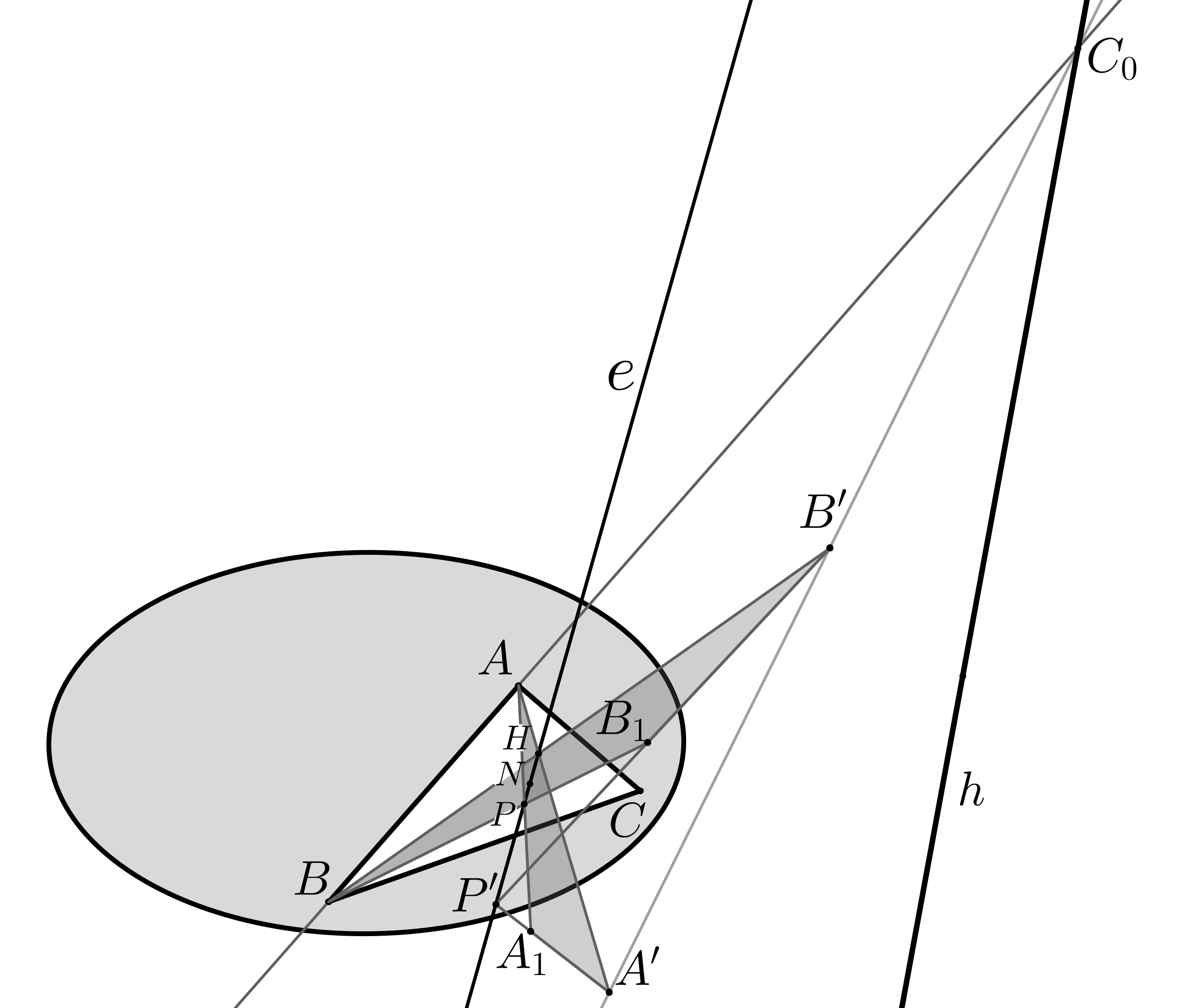}
\caption{Euler-Wildberger line. $H,P,P'$ collinear}\label{Fig:thm-Euler-line-02}
\end{figure}

Thus, the line $e$  joining $H,N,N',P$ and $P'$ is a non-euclidean version of 
the Euler line. This is the line which is called orthoaxis in \cite{Wildberger2}, 
and because of this we say that $e$ is the
\emph{Euler-Wildberger line} of $\TT$. Another interesting 
non-euclidean version
of the Euler line is the
\emph{Euler-Akopyan line} given in \cite{Akopyan-other-proposal}.

    \begin{theorem}
	 The pseudobarycenter $N'$ of $\TT'$ is the pole of the orthic axis of $\TT$.
    \end{theorem}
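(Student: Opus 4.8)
The statement to prove is that the pseudobarycenter $N'$ of $\TT'$ is the pole of the orthic axis of $\TT$. Recall that the orthic axis of $\TT$ is the line $h=B_0C_0$ (equivalently, the polar of the orthocenter $H$), since the altitudes $h_a,h_b,h_c$ meet at $H$ and their poles $A_0,B_0,C_0$ therefore lie on $\rho(H)=h$. By the duality between pole-polar, proving $N'=\rho(h)$ is equivalent to proving that $\rho(N')=h$, i.e. that the polar of $N'$ passes through $A_0$, $B_0$ and $C_0$. The plan is to exploit the symmetry between $\TT$ and $\TT'$ in all the pseudocenter constructions of the previous sections: by definition $N'$ is the intersection of the pseudomedians $n_{a'}=A'A''',\ n_{b'}=B'B''',\ n_{c'}=C'C'''$ of $\TT'$, where $\TT'''$ is the double triangle of $\TT'$, whose sides are the orthogonals to the altitudes $h_a,h_b,h_c$ through $A',B',C'$ respectively (note the altitudes of $\TT$ and $\TT'$ coincide). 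So the sides $a''',b''',c'''$ of $\TT'''$ pass through $A_0,B_0,C_0$ respectively, just as the sides $a'',b'',c''$ of $\TT''$ do.

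First I would set up the polar picture. Apply the polarity $\rho$ to the whole configuration defining $N'$. Since $\rho$ is a correlation (Proposition~\ref{prop:polarity-preserves-incidence}), the three concurrent pseudomedians $n_{a'},n_{b'},n_{c'}$ of $\TT'$ map to three collinear points, and their common point $N'$ maps to the line $\rho(N')$ containing these three points. The pseudomedian $n_{a'}=A'A'''$ joins $A'$ (the pole of $a$) with $A'''$ (a vertex of $\TT'''$), so $\rho(n_{a'})=\rho(A')\cdot\rho(A''')=a\cdot\rho(A''')$. Thus it suffices to identify $\rho(A''')$, $\rho(B''')$, $\rho(C''')$ and check that the three intersection points $a\cdot\rho(A''')$, $b\cdot\rho(B''')$, $c\cdot\rho(C''')$ all equal $A_0,B_0,C_0$ — but in fact we only need that all three lie on $h$, and then collinearity pins down $\rho(N')=h$.

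The cleanest route, and the one I would pursue, is to recognize $\TT'''$ directly. The double triangle $\TT'''$ of $\TT'$ has, by the proposition proved above for double triangles, the property that $A',B',C'$ are the midpoints of the sides of $\TT'''$; equivalently, $\TT'$ is the medial triangle $\frac12\TT'''$. Combined with the fact that the sides of $\TT'''$ pass through $A_0,B_0,C_0$ and are orthogonal to the altitudes of $\TT$, and that $A_0=\rho(h_a)$ etc., one should be able to show that $\TT'''$ is precisely the polar triangle of the pseudomedial triangle $\wt{N_AN_BN_C}$ of $\TT$ — because Proposition~\ref{prop:pseudomedial-triangle-orthogonal-to-altitudes} says the altitudes of $\TT$ are orthogonal to the sides of $\wt{N_AN_BN_C}$, so the poles of those sides are $A_0,B_0,C_0$, matching the sides of $\TT'''$. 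Granting $\TT'''=\rho(\wt{N_AN_BN_C})$, the pseudobarycenter $N'=\rho(h_a)\text{-median-intersection}$ of $\TT'$ becomes, under $\rho$, the perspectrix of $\wt{N_AN_BN_C}$ with $\TT$ via Desargues — and that perspectrix is exactly the line $h$, since $\wt{N_AN_BN_C}$ and $\TT$ are perspective from $N$ (the pseudobarycenter of $\TT$) with the three corresponding-side intersections landing on $A_0,B_0,C_0\in h$, as established in the proof of Proposition~\ref{prop:pseudosymmetric-triangle-orthogonal-to-altitudes}-style arguments. Hence $\rho(N')=h$ and $N'=\rho(h)$, as claimed.

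The main obstacle I anticipate is the bookkeeping identification $\TT'''=\rho(\wt{N_AN_BN_C})$: one must verify not just that the sides of $\TT'''$ and the polars of the sides of $\wt{N_AN_BN_C}$ pass through the common points $A_0,B_0,C_0$, but that the triangles genuinely coincide — i.e. that the correspondence of vertices is the right one and no degenerate coincidence (forbidden by the general-position assumptions on $\TT,\TT'$) spoils it. If that identification proves awkward, the fallback is the brute-force version: directly apply Desargues' Theorem to the triangles $\wt{A'B'N_A'}$ and $\wt{B'C'N_B'}$ (the analogues, for $\TT'$, of the triangles used in the Euler-Wildberger proof), extract that $N'$ lies on the polar of $A_0$ and of $B_0$, and conclude $\rho(N')\ni A_0,B_0,C_0$ hence $\rho(N')=h$. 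Either way the theorem follows from one more application of Desargues plus the incidence-preserving property of $\rho$.
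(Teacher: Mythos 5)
Your proof fails at the very first step: you identify the orthic axis of $\TT$ with the line $h$ through $A_0,B_0,C_0$, the polar of the orthocenter $H$ with respect to $\Phi$. That is not what the orthic axis is. The paper defines the orthic axis $\mathfrak{o}$ as the \emph{trilinear polar} of $H$ with respect to the triangle $\TT$, i.e.\ the line through the three points $a\cdot H_BH_C$, $b\cdot H_CH_A$, $c\cdot H_AH_B$, where $H_A,H_B,H_C$ are the feet of the altitudes; it is the perspectrix of $\TT$ and its orthic triangle, and it is a different line from $h$. With your identification the theorem would read $N'=\rho(h)=H$, which is false in general (and would make the subsequent corollary --- that the Euler--Wildberger line is the perpendicular to the orthic axis through $H$ --- vacuous, since \emph{every} line through $H$ is conjugate to $h$). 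So the statement you set out to prove is not the statement of the theorem, and no amount of Desargues bookkeeping downstream can recover from that.

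There is also a secondary error: the identification $\TT'''=\rho\bigl(\wt{N_AN_BN_C}\bigr)$ that you propose as the ``clean route'' is wrong. The side $a'''$ of the double triangle of $\TT'$ is $A'A_0$; the polar of $N_A$ does pass through $A'$ (because $N_A\in a$) but not through $A_0$ (that would force $N_A\in\rho(A_0)=h_a$, i.e.\ the pseudomedian $n_a$ to coincide with the altitude). The triangle whose polar triangle is $\TT'''$ is the \emph{orthic} triangle $\wt{H_AH_BH_C}$, since $\rho(H_A)=\rho(h_a\cdot a)=A_0A'=a'''$. This is exactly the paper's short argument: $\rho(a\cdot H_BH_C)$ is the line joining $\rho(a)=A'$ to $\rho(H_BH_C)=A'''$, hence a pseudomedian of $\TT'$, so the pole of $\mathfrak{o}$ is the common point $N'$ of the three pseudomedians. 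Your overall strategy of polarizing the configuration is the right one, but it must be applied to the orthic triangle, not to $h$ or to the pseudomedial triangle.
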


The \emph{orthic axis} $\mathfrak{o}$ of $\TT$ is the trilinear polar of the
orthocenter $H$ with respect to the triangle $\TT$. If
$$H_A=a\cdot h_a,\quad H_B=b\cdot h_b,\quad H_C=c\cdot h_c,$$
are the feet of the altitudes of $\TT$, by 
\hyperref[thm:Desargues]{Desargues' Theorem} the points
$$a\cdot H_BH_C,\quad b\cdot H_CH_A, \quad c\cdot H_AH_B$$
are collinear, and $\mathfrak{o}$ is the line joining them.

\begin{proof}
The polar of $H_A$ is the line $a'''=\rho(a)\rho(h_a)=A'A_0$. 
Hence, the polar triangle of the orthic triangle $\widetriangle{H_A H_B H_C}$ of
$\TT$ is the double triangle $\TT'''$ of $\TT'$. If $b'''=\rho(H_B)$ and
$c'''=\rho(H_C)$ are the other two sides of $\TT'''$ and
$$A'''=b'''\cdot c''',\quad B'''=c'''\cdot a''',\quad C'''=a'''\cdot b'''$$
are the vertices of $\TT'''$, then we have also
$$A'''=\rho(H_B H_C),\quad B'''=\rho(H_C H_A),\quad C'''=\rho(H_A H_B).$$
The polar line of $a\cdot H_BH_C$ is the line joining $A'$ and $A'''$. Thus, it
is a pseudomedian of $\TT'$, and the pole of $\mathfrak{o}$ is the intersection
of the pseudomedians of $\TT'$, which is the pseudobarycenter $N'$ of $\TT'$.
\end{proof}

Because $e$ passes through $N'$, we have the following corollary, which is also true in the euclidean case.
\begin{corollary}
The Euler-Wildberger line is the line orthogonal to the orthic axis of $\TT$ through the orthocenter $H$.
\end{corollary}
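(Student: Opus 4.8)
The plan is to combine the two facts already at hand: by the previous theorem, the pseudobarycenter $N'$ of $\TT'$ is the pole of the orthic axis $\mathfrak{o}$ of $\TT$; and by the definition of the Euler-Wildberger line, $N'$ lies on $e$. So I have a point, namely $N'=\rho(\mathfrak{o})$, which lies simultaneously on $e$ and is the pole of $\mathfrak{o}$.

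Next I would recall how the pole-polar relation interacts with perpendicularity in the Cayley-Klein model. By Proposition~\ref{prop:perpendicular-iff-conjugate}, two lines meeting inside $\PP$ are perpendicular exactly when they are conjugate, and a line through a point $Q$ is perpendicular to a line $\ell$ precisely when it passes through the pole $\rho(\ell)$ --- this is exactly the characterization used repeatedly in \S\ref{sec:Desargues} (e.g. "the altitude $AA'$ is the line orthogonal to $a$ through $A$" is just the statement that $AA'$ joins $A$ to the pole $A'$ of $a$). Applying this with $\ell=\mathfrak{o}$: the line orthogonal to $\mathfrak{o}$ through the orthocenter $H$ is the line $H\rho(\mathfrak{o})=HN'$.

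Finally I would close the argument: the line $e$ passes through both $H$ (by the theorem establishing the Euler-Wildberger line) and $N'$ (by the theorem identifying $N'$ as the pole of $\mathfrak{o}$, together with the defining property of $e$). Hence $e=HN'$, and by the previous paragraph $HN'$ is the line through $H$ orthogonal to $\mathfrak{o}$. Therefore $e$ is orthogonal to the orthic axis of $\TT$ through $H$, as claimed.

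The argument is essentially a two-line deduction once the ingredients are assembled, so there is no serious obstacle. The only point requiring a little care is the degenerate possibility that $H$ and $N'$ coincide, or that $\mathfrak{o}$ passes through $H$, which would make "the line orthogonal to $\mathfrak{o}$ through $H$" ambiguous or ill-defined; but $H\notin\mathfrak{o}$ since $\mathfrak{o}$ is the trilinear polar of $H$ with respect to $\TT$ (a point never lies on its own trilinear polar when the triangle is nondegenerate), and $H\neq N'$ under the standing general-position hypotheses, so the statement is unambiguous. I would state this parenthetically rather than belabor it.
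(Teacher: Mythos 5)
Your argument is correct and is exactly the one the paper intends: the paper dispatches this corollary with the single remark ``Because $e$ passes through $N'$\ldots'', leaving implicit precisely the chain you spell out ($N'=\rho(\mathfrak{o})$, so any line through $N'$ is conjugate---hence orthogonal---to $\mathfrak{o}$, and $e=HN'$). Your parenthetical check that $H\neq N'$ and $H\notin\mathfrak{o}$ is a reasonable extra precaution that the paper omits.
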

The previous corollary can be used to define the line $e$ before introducing the pseudocenters $N,P$. This is the way followed in \cite{Wildberger}.

\section{The nine-point conic}\label{sec:nine-point-conic}

The nine-point circle of euclidean triangle is a particular version of a more general construction from projective geometry: the \emph{eleven point conic}. Let us describe briefly this object.

Let $\QQ$ be a quadrangle in the projective plane with vertices $A,B,C,D$, and
let $\ell$ be a line not through a vertex of $\QQ$. 
Let $q_1,q_2,\ldots,q_6$ be the six sides of $\QQ$, and assume that $q_1$ and
$q_2$, $q_3$ and $q_4$, $q_5$ and $q_6$ are pairs of opposite sides of $\QQ$.
For each $i=1,2,\ldots,6$, take the intersection point $Q_i$ of $q_i$ with
$\ell$, and consider also the harmonic conjugate $L_i$ of $Q_i$ with respect to
the two vertices of $\QQ$ lying on $q_i$.
Let $\sigma_\QQ$ be the quadrangular involution that $\QQ$ induces in $\ell$,
and let $I,J$ be the two
fixed points of $\sigma_\QQ$.
\begin{theorem}[The eleven-point conic]\label{thm:eleven-point-conic}
The points, $I,J$, the diagonal points of $\QQ$ and the points $L_1,L_2,\ldots,L_6$ lie on a conic.
\end{theorem}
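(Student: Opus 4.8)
I would set up projective coordinates adapted to the diagonal triangle of $\QQ$, which by Theorem~\ref{thm:polars-quadrangles} is a self-polar triangle for any conic we care about. Put the three diagonal points at the coordinate vertices $[1:0:0]$, $[0:1:0]$, $[0:0:1]$. A standard computation then shows that the four vertices of $\QQ$ can be taken as $[\pm1:\pm1:\pm1]$ (the four points with an even number of minus signs, say): any quadrangle whose diagonal triangle is the reference triangle is projectively equivalent to this one. The six sides $q_i$ are then the six lines $x_i\pm x_j=0$ for $\{i,j\}\subset\{1,2,3\}$, paired into opposite sides by the sign. With the line $\ell$ given a general equation $\alpha x_0+\beta x_1+\gamma x_2=0$, I would compute the six points $Q_i=q_i\cdot\ell$, their harmonic conjugates $L_i$ with respect to the pair of vertices on $q_i$, and the two fixed points $I,J$ of the quadrangular involution $\sigma_\QQ$ on $\ell$.

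**Key steps in order.** First, I would record that $\sigma_\QQ$ on $\ell$ is the involution whose pairs are $(Q_1,Q_2)$, $(Q_3,Q_4)$, $(Q_5,Q_6)$ — this is exactly Theorem~\ref{thm:Pappus-involution} (Pappus' involution theorem). Second, I would observe that the conic we want to hit is forced: a conic through the three diagonal points $[1:0:0],[0:1:0],[0:0:1]$ has equation $\lambda x_1x_2+\mu x_2x_0+\nu x_0x_1=0$, a two-parameter family, but requiring it to contain the two further points $I,J$ (or equivalently two of the $L_i$) pins it down. So the content of the theorem is: once the conic $\SC{C}$ is defined to pass through the three diagonal points and through $I$ and $J$, then automatically all six points $L_i$ lie on it. Third — the computational heart — I would verify that each $L_i\in\SC{C}$. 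Here harmonic conjugacy gives $L_i$ a clean closed form: if $q_i$ joins vertices $P,P'$ of $\QQ$ and meets $\ell$ at $Q_i$, then $L_i$ is the harmonic conjugate, so $(PP'Q_iL_i)=-1$, which in the coordinates above makes $L_i$ a simple bilinear expression in $\alpha,\beta,\gamma$. Plugging into $\lambda x_1x_2+\mu x_2x_0+\nu x_0x_1=0$ (with $\lambda,\mu,\nu$ determined by $I,J$) should collapse by symmetry: the six sides split into three $\pm$ pairs permuted by sign changes, and the conic equation is invariant under those sign changes, so it suffices to check one representative $L_i$ and let symmetry do the rest.

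**Alternative cleaner route.** Rather than brute force, I would prefer to exploit a projectivity argument. Consider the map sending a point $X$ on $\ell$ to the point $X^\ast$ defined as follows: $X$ lies on a unique conic $\SC{C}_X$ through the three diagonal points and through $X$ and $\sigma_\QQ(X)$; more usefully, the pencil of conics through the four vertices $A,B,C,D$ of $\QQ$ cuts $\ell$ in the involution $\sigma_\QQ$ (that is Pappus again, since the pencil's degenerate members are the three pairs of opposite sides). So $I,J$ are the two points where members of the pencil are tangent to $\ell$, equivalently they are the base points of the involution, and the unique conic of a \emph{different} relevant pencil through the diagonal triangle and $I,J$ is what we must understand. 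I would instead show directly: the six points $L_i$, together with the three diagonal points, already lie on a conic (nine points, but they satisfy enough linear conditions), and that conic meets $\ell$ exactly at the fixed points of $\sigma_\QQ$. The first half follows because three of the $L_i$ and the three diagonal points are six points in general position determining a conic $\SC{C}$, and then one checks the remaining three $L_i$ by a cross-ratio/harmonic computation as above; the second half follows because $\SC{C}\cap\ell$ is an involution pair of $\sigma_\QQ$ by construction of the $L_i$ via harmonic conjugacy, and a conic meets $\ell$ in at most two points, which must then be $I,J$.

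**Main obstacle.** The real work is the verification that the three "extra" $L_i$ lie on the conic determined by the first three together with the diagonal triangle — i.e., showing the nine-point incidence is not over-determined. I expect this to come down to a single polynomial identity in $\alpha,\beta,\gamma$ that factors nicely because of the $(\mathbb{Z}/2)^2$ sign symmetry of the configuration; the risk is purely bookkeeping (getting the harmonic-conjugate coordinates and the conic coefficients right), not conceptual. If the direct computation is unpleasant, the fallback is to invoke the classical fact that the locus of points $X$ such that the polar of $X$ with respect to all conics of the pencil through $A,B,C,D$ passes through a fixed point is itself a conic (the Jacobian / eleven-point conic is well documented), and then identify the eleven named points as lying on it one family at a time: the three diagonal points are the singular points of the three degenerate pencil members, the $L_i$ are harmonic conjugates hence self-conjugate data, and $I,J$ are the tangency points — each identification being a short incidence check rather than a computation.
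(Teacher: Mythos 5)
The first thing to say is that the paper does not actually prove this theorem: its ``proof'' is the citation to Baker, where the argument is the classical one --- the eleven\mbox{-}point conic is the locus of poles of $\ell$ with respect to the pencil of conics through the four vertices of $\QQ$, generated \`a la Steiner by intersecting the polars of two points of $\ell$ as the conic runs through the pencil. That is essentially your fallback, and it is the version worth writing out: the diagonal points are the singular points of the three degenerate members; since every member of the pencil meets the side $q_i$ exactly in the two vertices lying on it, the polar of $Q_i$ with respect to every member passes through the harmonic conjugate $L_i$, so the pole of $\ell$ (the meet of the polars of $Q_i$ and $Q_j$) traces a conic through $L_i$ and $L_j$ for every choice of $i,j$; and $I,J$ are the points where a member of the pencil touches $\ell$, i.e.\ where the pole of $\ell$ falls on $\ell$ itself.

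Your two primary routes each contain a concrete gap. In the coordinate route, the assertion that ``the conic equation is invariant under those sign changes, so it suffices to check one representative $L_i$'' is false: a general conic $\lambda x_1x_2+\mu x_2x_0+\nu x_0x_1=0$ through the diagonal points is not preserved by $x_k\mapsto -x_k$, and, more to the point, those sign changes move the line $\ell$ and hence $I$, $J$ and the conic you are testing against, so they are not symmetries of the configuration $(\QQ,\ell)$. The computation does go through if you simply treat all six points: with $\ell:\alpha x_0+\beta x_1+\gamma x_2=0$, the two $L$'s on the opposite pair $x_1\pm x_2=0$ force $\lambda\beta=\alpha\mu$ and $\lambda\gamma=\alpha\nu$, and the resulting conic $(\lambda:\mu:\nu)=(\alpha:\beta:\gamma)$ visibly satisfies the analogous conditions coming from the other two opposite pairs --- but that is not the symmetry argument you stated, and pinning the conic down by $I,J$ first (which requires solving a quadratic for the fixed points) is the wrong order of operations; determine it from the $L$'s and verify $I,J$ last, e.g.\ by recognizing $\alpha x_1x_2+\beta x_2x_0+\gamma x_0x_1=0$ as the pole locus. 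In the ``alternative cleaner route,'' the final step --- ``$\SC{C}\cap\ell$ is an involution pair of $\sigma_\QQ$ \dots which must then be $I,J$'' --- is a non sequitur: an involution on a line has a one\mbox{-}parameter family of pairs and only one fixed\mbox{-}point pair, so even granting that $\SC{C}$ cuts $\ell$ in a pair of $\sigma_\QQ$ (which you have not justified), nothing identifies that pair with $\{I,J\}$. It is precisely the pole\mbox{-}locus characterization that forces a point of $\SC{C}\cap\ell$ to be a point of tangency of a member of the pencil with $\ell$, hence a fixed point of $\sigma_\QQ$.
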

\begin{proof}
See \cite[vol. II, pp. 41-42]{Baker}
\end{proof}

We say that the conic given by this theorem is the \emph{eleven-point conic} of
the quadrangle $\QQ$ and the line $\ell$.

When $\wt{ABC}$ is an euclidean triangle, $D$ is its orthocenter 
and $\ell$ is the line at infinity, the conic given by Theorem
\ref{thm:eleven-point-conic} is the nine-point circle of $\wt{ABC}$.

Let recover our projective triangle $\TT=\wt{ABC}$ as before. By analogy with
the euclidean case, we consider the cuadrangle $\QQ=\{A,B,C,H\}$ and the line
$h$ polar of the orthocenter $H$ with respect to $\Phi$, and we will study the
eleven-point conic $\Gamma$ of $\QQ$ and $h$.

The diagonal points of $\QQ$ are the feet of the altitudes of $\TT$, so it is $H_A,H_B,H_C\in\Gamma$.

\begin{figure}[h]
\centering
\includegraphics[width=0.9\textwidth]
{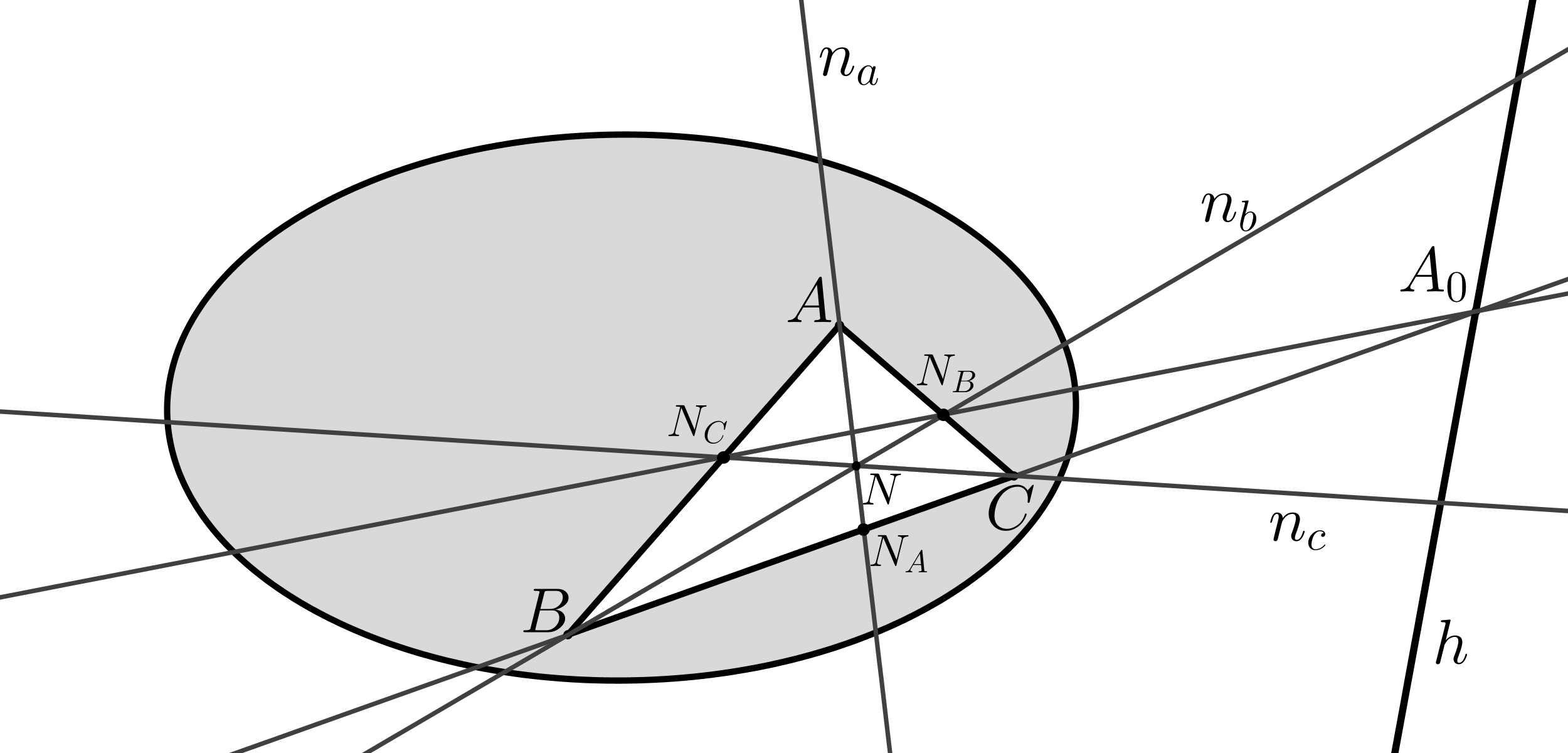}
\caption{$N_A$ is the harmonic conjugate of $A_0$ with respect to $B,C$}
\label{Fig:nine-point-conic-pseudomidpoints}
\end{figure}

Let consider now the sides $a=BC,b=CA,c=ab$ of $\QQ$. The intersection points $a\cdot h,b\cdot h,c\cdot h$ are $A_0,B_0,C_0$ respectively.
\begin{lemma}
The harmonic conjugate of $A_0$ with respect to $B$ and $C$ is $N_A$
respectively.
\end{lemma}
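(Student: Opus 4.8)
The plan is to realize the four points $B,C,A_0,N_A$ as the harmonic quadruple cut on the side $BC$ by a suitable complete quadrangle, and then to pin down the relevant cross ratio by a short projection--section chain in the style of \S\ref{sec:Harmonic-sets-harmonic-conjugacy}. First I would unwind the notation. Since $h=\rho(H)$ contains the poles $A_0,B_0,C_0$ of the altitudes $h_a,h_b,h_c$, we have $h=B_0C_0$, and (as already noted just above the lemma) $A_0=a\cdot h$; moreover $C_0=c\cdot c'$ lies on $c=AB$ and $B_0=b\cdot b'$ lies on $b=CA$. Recall also $a''=AA_0$, $b''=BB_0$, $c''=CC_0$, $A''=b''\cdot c''$, $n_a=AA''$ and $N_A=n_a\cdot a$.

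The key observation is that the three diagonal points of the complete quadrangle $\QQ=\{B,C,B_0,C_0\}$ are
\[
BC\cdot B_0C_0=a\cdot h=A_0,\qquad BB_0\cdot CC_0=b''\cdot c''=A'',\qquad BC_0\cdot CB_0=AB\cdot AC=A,
\]
the last equality because $C_0\in AB$ and $B_0\in AC$; hence the diagonal triangle of $\QQ$ is $\wt{AA_0A''}$. The classical harmonic property of a side of a complete quadrangle then says that the side $BC$ of $\QQ$, the diagonal point $A_0$ lying on it, and the point where the opposite side $AA''=n_a$ of the diagonal triangle meets $BC$ --- which is exactly $N_A$ --- satisfy $(BCA_0N_A)=-1$, which is the assertion.

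To keep this self-contained I would establish that last cross ratio directly. Projecting the line $a=BC$ from $A$ onto $h$ sends $B\mapsto AB\cdot h=C_0$, $C\mapsto AC\cdot h=B_0$, $A_0\mapsto A_0$ (since $A_0\in h$) and $N_A\mapsto AA''\cdot h=:Z$, so $(BCA_0N_A)=(C_0B_0A_0Z)$. Projecting $h$ from $A''$ back onto $a$ sends $C_0\mapsto CC_0\cdot a=C$, $B_0\mapsto BB_0\cdot a=B$, $A_0\mapsto A_0$ and $Z\mapsto AA''\cdot a=N_A$ (since $Z\in AA''$), so $(C_0B_0A_0Z)=(CBA_0N_A)$. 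Hence $(BCA_0N_A)=(CBA_0N_A)=(BCA_0N_A)^{-1}$ by \eqref{eq:CR1}; since the four points are distinct under the standing general position hypotheses, Proposition~\ref{prop:cross-ratio-four-different-points} rules out the value $1$, and therefore $(BCA_0N_A)=-1$. The analogous statements for $B_0,N_B$ and $C_0,N_C$ follow by cyclic permutation of $A,B,C$.

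I expect the only friction to be degeneracy bookkeeping: one must check that the two projections are admissible (that $A\notin h$, $A''\notin BC$, and the various auxiliary lines are pairwise distinct) and that $A_0\neq N_A$, all of which are routine consequences of the general position assumptions on $\TT$ and $\TT'$. The genuinely non-formal step is the identification of $\wt{AA_0A''}$ as the diagonal triangle of $\{B,C,B_0,C_0\}$; once that is in hand, the remainder is pure cross-ratio manipulation.
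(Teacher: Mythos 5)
Your proof is correct, but it is built on a different quadrangle than the one the paper uses. The paper takes the quadrangle $\{N,A,N_B,N_C\}$: there $B=n_b\cdot c$ and $C=n_c\cdot b$ are two of its \emph{diagonal} points, and the two sides not through $B$ or $C$, namely $n_a=NA$ and $N_BN_C$, cut the line $BC$ in $N_A$ and $A_0$; the stated harmonic property of \S\ref{sec:Harmonic-sets-harmonic-conjugacy} then gives $(BCN_AA_0)=-1$ immediately. This route, however, leans on Proposition~\ref{prop:pseudomedial-triangle-orthogonal-to-altitudes} (i.e.\ on the fact $A_0\in N_BN_C$, itself a Desargues argument). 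You instead take $\{B,C,B_0,C_0\}$, for which $B,C$ are \emph{vertices} and $A_0,A,A''$ are the diagonal points --- the same quadrangle the paper already exploits to show that $A,A_0$ are the midpoints of $\overline{B''C''}$ --- and you invoke the companion harmonic property (two vertices on a side, the diagonal point on that side, and the trace of the opposite diagonal line), which you then rederive by the same two-projection trick the paper uses for quadrangles in \S\ref{sec:Harmonic-sets-harmonic-conjugacy}. What your version buys is independence from the pseudomedial-triangle proposition: you only need the definitions of $B_0,C_0,A'',n_a$ and the collinearity of $A_0,B_0,C_0$ on $h$, and you supply the cross-ratio computation explicitly rather than relying on ``it can be seen that''. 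What the paper's choice buys is brevity, since its quadrangle puts $B,C$ directly in the role of diagonal points so that the already-stated harmonic lemma applies verbatim. Your closing remarks on the degeneracy checks ($A\notin h$, $A''\notin a\cup h$, distinctness of the four points) are the right caveats and are indeed routine under the general position hypotheses.
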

\begin{proof}
By considering the quadrangle $\{N,A,N_B,N_C\}$ 
(Figure~\ref{Fig:nine-point-conic-pseudomidpoints}) and using that 
$A_0\in N_BN_C$, it can be seen that $N_A$ is the harmonic conjugate of $A_0$
with respect to $B,C$.
\end{proof}

Thus, we have $N_A\in\Gamma$ and in the same way $N_B,N_C\in\Gamma$.

\begin{figure}[h]
\centering
\includegraphics[width=0.9\textwidth]
{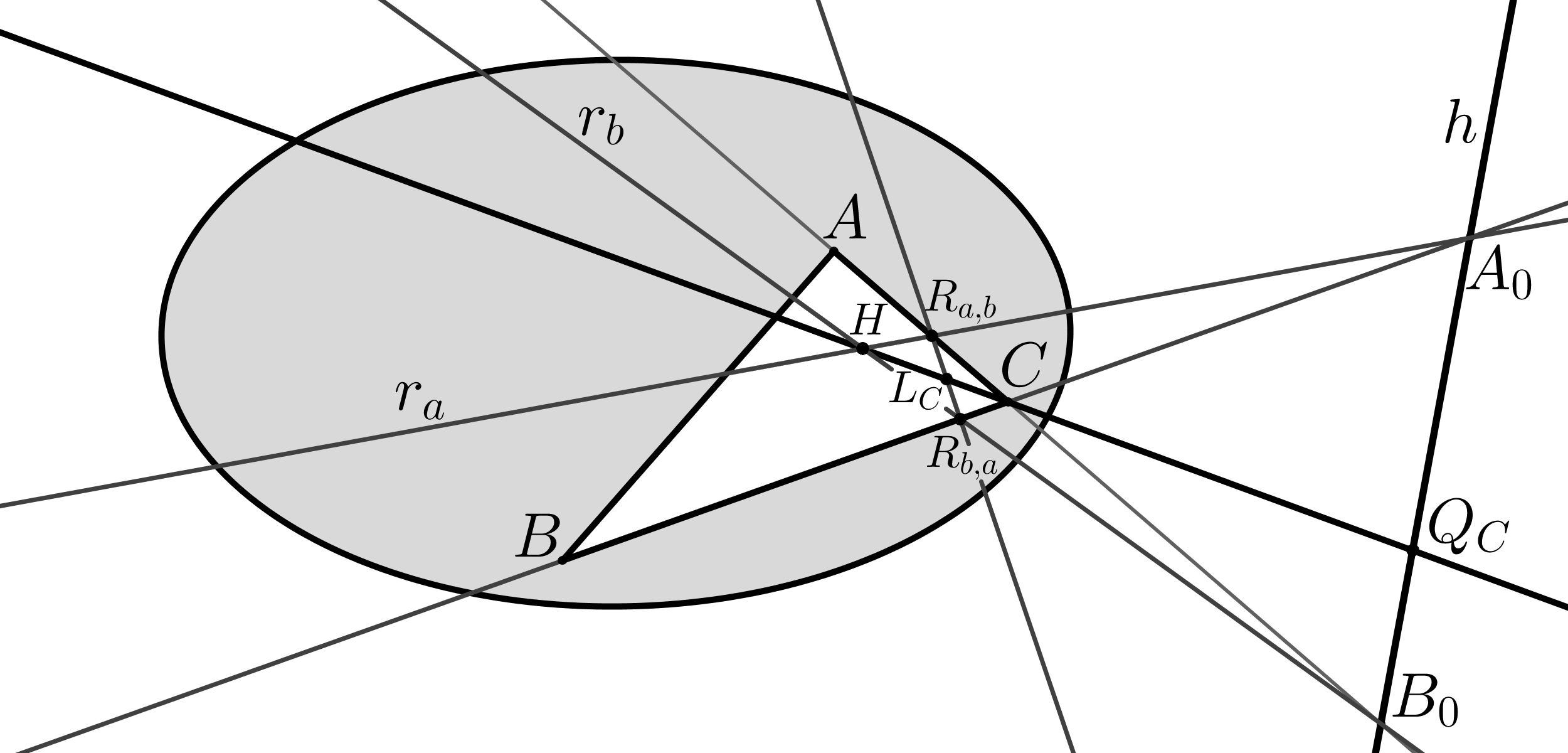}
\caption{Finding the harmonic conjugate of $h_c\cdot h$ with respect to $C,H$}
\label{Fig:nine-point-conic-other-three}
\end{figure}

Finally, consider the side $CH=h_c$ of the quadrangle $\QQ$. 
Let $Q_C$ be the intersection point of $h_c$ with $h$. We need to find the
harmonic conjugate $L_C$ of $Q_C$ with respect to $C,H$. Take the lines
$r_a=A_0H$ and $r_b=B_0H$, and the points $R_{a,b}=r_a\cdot b$ and
$R_{b,a}=r_b\cdot a$. If we take the quadrangle
$\SC{R}_C=\{A_0,B_0,R_{a,b},R_{b,a}\}$, we have that $C$ and $H$ are diagonal
points of $\SC{R}_C$ and thus $L_C=R_{a,b}R_{b,a}\cdot h_c$ (Figure
\ref{Fig:nine-point-conic-other-three}). In a similar way, using also the line
$r_c=C_0H$ and the points $R_{b,c}=r_b\cdot c$, $R_{c,b}=r_c\cdot b$,
$R_{c,a}=r_c\cdot a$ and $R_{a,c}=r_a\cdot c$, we can find other two points
$L_A,L_B$ of $\Gamma$ as in Figures
\ref{Fig:Euler-line-nine-point-conic-hyperbolic} and
\ref{Fig:Euler-line-nine-point-conic-elliptic}. The lines $r_a,r_b,r_c$ are the
perpendicular lines to the altitudes through the orthocenter, and so in
euclidean geometry they are parallel to the sides of the triangle. The
quadrangular involution $\sigma_{\QQ}$ of $h$
coincides with the conjugacy $\rho_h$ with respect to $\Phi$, so if $\Phi$ is
imaginary, or if $\Phi$ is real and $H$ is interior to $\Phi$, the fixed points
$I,J$ of $\sigma_{\QQ}$ are imaginary.

\begin{figure}[h]
\centering
\includegraphics[width=0.98\textwidth]
{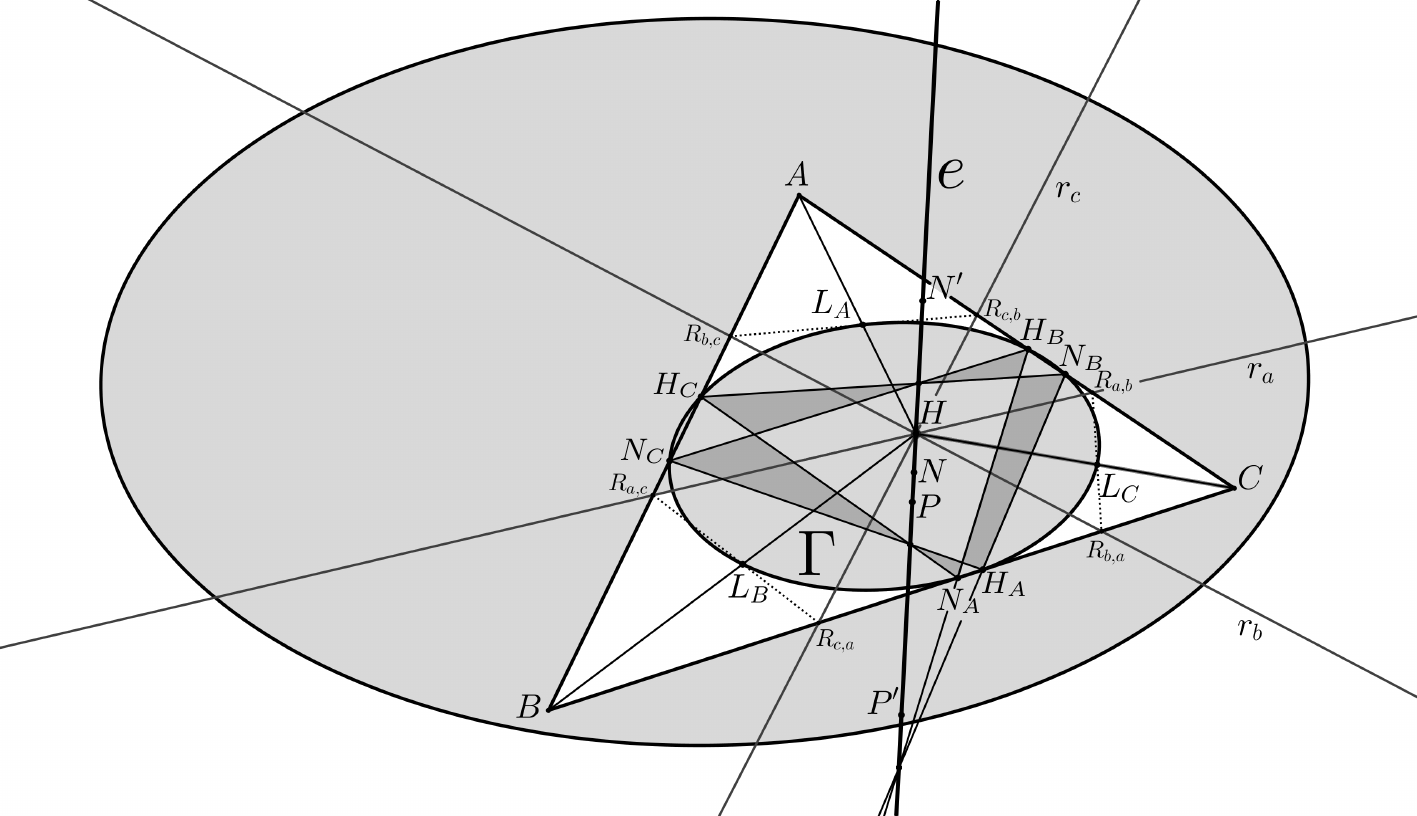}
\caption{The Euler-Wildberger line $e$ and the nine-point conic $\Gamma$ of a
hyperbolic triangle}
\label{Fig:Euler-line-nine-point-conic-hyperbolic}
\end{figure}

In analogy with euclidean geometry, we have
\begin{theorem}
The Euler-Wildberger line $e$ is the Pascal line of the hexagon 
$\SC{H}=H_AN_BH_CN_AH_BN_C$ (see Figures
\ref{Fig:Euler-line-nine-point-conic-hyperbolic} and
\ref{Fig:Euler-line-nine-point-conic-elliptic}).
\end{theorem}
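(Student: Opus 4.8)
The plan is to locate the three Pascal points of $\SC H$ on $e$, which is exactly the statement that $e$ is the Pascal line of $\SC H$. First one records that $\SC H$ is genuinely a hexagon inscribed in $\Gamma$: the vertices $H_A,H_B,H_C$ are the three diagonal points of the quadrangle $\QQ=\{A,B,C,H\}$ and so lie on $\Gamma$ by Theorem~\ref{thm:eleven-point-conic}, while $N_A,N_B,N_C$ are precisely the harmonic-conjugate points that this theorem attaches to the three sides $a,b,c$ of $\QQ$ (this is the content of the Lemma identifying $N_A$ with the harmonic conjugate of $A_0$ with respect to $B,C$), so they lie on $\Gamma$ as well; the running general position hypotheses on $\TT$ keep these six points distinct. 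Writing $\SC H=v_1v_2v_3v_4v_5v_6$ with $v_1=H_A$, $v_2=N_B$, $v_3=H_C$, $v_4=N_A$, $v_5=H_B$, $v_6=N_C$, \hyperref[thm:Pascal]{Pascal's Theorem} gives that the points
\begin{align*}
X_1&=\ov{H_AN_B}\cdot\ov{N_AH_B}, & X_2&=\ov{N_BH_C}\cdot\ov{H_BN_C}, & X_3&=\ov{H_CN_A}\cdot\ov{N_CH_A}
\end{align*}
are collinear; their common line $\lambda$ is the Pascal line of $\SC H$, and the assertion is that $\lambda=e$.

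Next I would cut the work down to a single Pascal point by symmetry. The relabelling $A\to B\to C\to A$ carries $H_A,H_B,H_C$ and $N_A,N_B,N_C$ cyclically, hence it sends $\SC H$ to a cyclic rotation of itself and permutes $X_1,X_2,X_3$ cyclically, while it fixes $e$ because $H$, the pseudobarycenters $N,N'$ and the pseudocircumcenters $P,P'$ do not depend on the labelling of the vertices of $\TT$ (and $\TT'$ is determined by $\TT$ and $\Phi$). Therefore, once one knows that a single $X_i$ lies on $e$, all three do, and since in general position at least two of $X_1,X_2,X_3$ are distinct this forces $\lambda=e$. To put, say, $X_3=\ov{H_CN_A}\cdot\ov{N_CH_A}$ on $e$ I would argue with \hyperref[thm:Desargues]{Desargues' Theorem} in the same spirit as the rest of \S\ref{sec:Desargues2}: one exhibits two auxiliary triangles having $\ov{H_CN_A}$ and $\ov{N_CH_A}$ as a pair of corresponding sides, verifies that they are perspective using the collinearity of $A_0,B_0,C_0$ on $h$ (or one of the concurrences at $H$, $N$, $P$ already established), and chooses the triangles so that the intersections of their other two pairs of corresponding sides are among the points of $e$ already known to us ($H$, $N$, $N'$, $P$, $P'$, or the feet $H_A,H_B,H_C$ and the pseudomidpoints $N_A,N_B,N_C$); the line of collinear intersections furnished by Desargues is then $e$, and it contains $X_3$. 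A variant, perhaps cleaner, starts from the Corollary that $e=\ov{HN'}$ with $N'=\rho(\mathfrak o)$ the pole of the orthic axis $\mathfrak o$: it then suffices to show that $\lambda$ passes through $H$ and through $N'$, the second containment being accessible through a pole--polar computation of the $X_i$ with respect to $\Phi$.

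The main obstacle is pinning down the right auxiliary triangles in the Desargues step, since none of $X_1,X_2,X_3$ is one of the named centres; the combinatorics of which intermediate configuration ties $X_3$ to two previously located points of $e$ is the heart of the matter. Besides this, care is needed with the degenerate cases (two Pascal points coinciding, so that $\lambda$ must be obtained as a limit, and the isosceles or equilateral configurations, where the earlier choices of midpoints and of $D,E,F$ interfere), and with checking that under the general position hypotheses $\Gamma$ is really a nondegenerate conic and $\SC H$ a proper hexagon, so that Theorems~\ref{thm:eleven-point-conic} and~\ref{thm:Pascal} genuinely apply.
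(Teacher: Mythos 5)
Your setup is sound and in fact coincides with the paper's: $\SC{H}$ is inscribed in $\Gamma$, and by the cyclic relabelling $A\to B\to C\to A$ it suffices to show that one pair of opposite sides, say $H_AN_B$ and $N_AH_B$, meets on $e$. But the decisive step --- actually proving that this single Pascal point lies on $e$ --- is exactly what you leave open. You propose a Desargues argument with two auxiliary triangles whose corresponding sides include $\ov{H_CN_A}$ and $\ov{N_CH_A}$ and whose other intersection points land on known points of $e$, and you concede that identifying those triangles is ``the heart of the matter.'' As written, the proposal is a correct reduction plus a declaration of intent, not a proof; nothing in it certifies that such triangles exist or that the Desargues line they produce is $e$ rather than some other line through two of the named centres.

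The paper closes this gap not with Desargues but with Pappus' hexagon theorem (the degenerate case of Pascal's Theorem for a conic that splits into two lines), and the point of the trick is that the two transversal lines are already present in the figure: they are the sides $a$ and $b$ of $\TT$. Consider the hexagon $AN_AH_BBN_BH_A$; its vertices $A,H_B,N_B$ lie on $b$ and its vertices $N_A,B,H_A$ lie on $a$. Pappus gives that the three intersection points of opposite sides,
\[
AN_A\cdot BN_B=n_a\cdot n_b=N,\qquad N_AH_B\cdot N_BH_A,\qquad H_BB\cdot H_AA=h_b\cdot h_a=H,
\]
are collinear, so the Pascal point $H_AN_B\cdot N_AH_B$ lies on the line $NH=e$, and your symmetry argument finishes the job. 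If you want to repair your own route, this is the observation to aim for: each side of $\SC{H}$ joins a point of one side of $\TT$ to a point of another, so the relevant collinearity is a two-line Pappus statement whose remaining two diagonal points are forced to be $N$ and $H$; no auxiliary triangles need to be invented.
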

\begin{proof}
It suffices to prove that the opposite sides $H_AN_B$ 
and $N_AH_B$ of the hexagon $\SC{H}$ inscribed in $\Gamma$ intersect at $e$. If
we consider the hexagon $AN_AH_BBN_BH_A$, with alternate vertices in the lines
$a,b$, by Pappus' Theorem the intersection points of opposite sides
$$AN_A\cdot BN_B=N,\quad N_AH_B\cdot N_BH_A,\quad H_BB\cdot H_A A=H$$
are collinear. This completes the proof.
\end{proof}

\begin{figure}[h]
\centering
\includegraphics[width=0.8\textwidth]
{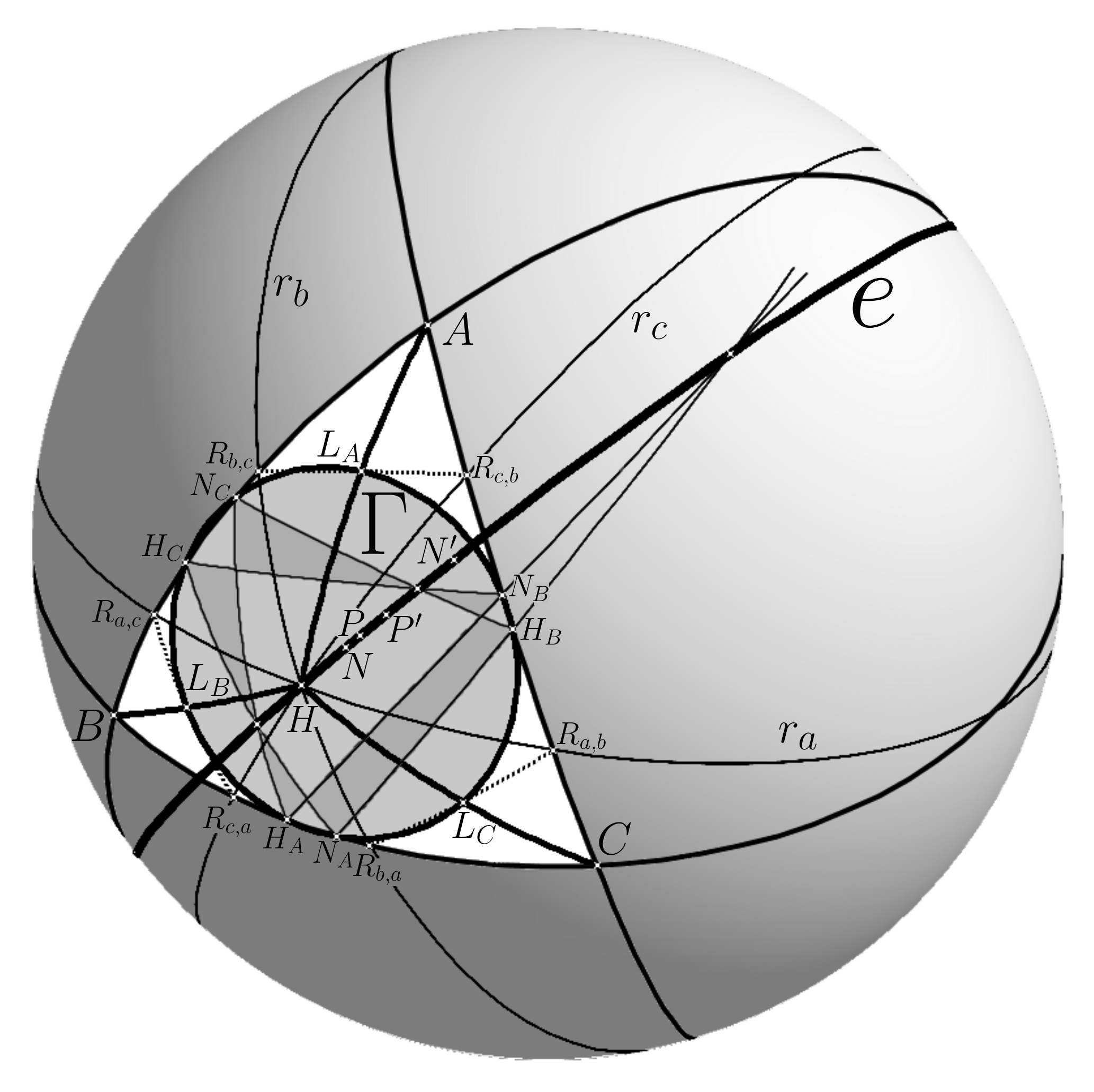}
\caption{The Euler-Wildberger line $e$ and the nine-point conic $\Gamma$ of an elliptic triangle}\label{Fig:Euler-line-nine-point-conic-elliptic}
\end{figure}

There are some more properties of the Euler-Wildberger line 
$e$ and the nine-point conic $\Gamma$ that we have checked experimentally
(working with a real conic $\Phi$ in GeoGebra \cite{GeoGebra}), but for which we
have no proofs. Among them:
\begin{enumerate}
	\item The midpoints of $\overline{HP}$ and the pole $E$ of $e$ with respect to $\Phi$ are the vertices of a self-polar triangle with respect to $\Phi$ and $\Gamma$.
	\item The line $e$ is a simmetry axis of $\Gamma$.
	\item When $\Gamma$ is an ellipse\footnote{$\Gamma$ is always an ellipse if $\Phi$ is imaginary. See \cite{Coolidge} for a classification of conics in the hyperbolic plane.}, its center is a midpoint of $\overline{HP}$ and the orthogonal line to $e$ through the center is also a simmetry axis of $\Gamma$.
\end{enumerate}
It should be interesting to find proofs for these statements and also to find
more analogies between the euclidean Euler line and
nine-point circle and their noneuclidean versions
proposed here or in~\cite{Akopyan-other-proposal}.

\chapter{Menelaus' Theorem and non-euclidean\\ trigonometry}\label{sec:Menelaus-Theorem}

The following theorem is classical Menelaus' Theorem as it is usually
stated in affine geometry.
\begin{theorem}[Menelaus' Theorem]
\label{thm:Menelaus-affine}Let $\TT=\widetriangle{XYZ}$
be a triangle in the affine plane. The points $X_{1},Y_{1},Z_{1}$
on the lines $x=YZ$, $y=ZX$, $z=XY$ respectively are collinear
if and only if
\begin{equation}
\frac{\brs{XZ_{1}}}{\brs{YZ_{1}}}\cdot\frac{\brs{YX_{1}}}{\brs{ZX_{1}}}\cdot\frac{\brs{ZY_{1}}}{\brs{XY_{1}}}=1.\label{eq:Menelaus_Affine}
\end{equation}
\end{theorem}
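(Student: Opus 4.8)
The plan is to prove the ``only if'' direction by the classical transversal--height computation and then to get the ``if'' direction from a uniqueness argument. For the forward direction, suppose $X_1,Y_1,Z_1$ lie on a common line $\ell$, which we may take to avoid the vertices $X,Y,Z$ (if $\ell$ passes through a vertex, two of the ratios in \eqref{eq:Menelaus_Affine} are $0/0$ and the identity is read in the evident limiting sense). Choose a nonconstant affine function $\delta$ vanishing exactly on $\ell$, so that $\delta(X),\delta(Y),\delta(Z)$ are nonzero ``signed heights'' over $\ell$; since $X_1,Y_1,Z_1\in\ell$ are ordinary points, each side of $\TT$ meets $\ell$. Writing $Z_1=X+t(Y-X)$ on the line $XY$ and using that $\delta$ is affine, $0=\delta(Z_1)=(1-t)\,\delta(X)+t\,\delta(Y)$, hence $t/(1-t)=-\delta(X)/\delta(Y)$; on the other hand, with the paper's convention $\brs{PQ}=Q-P$ one has $Z_1-X=t(Y-X)$ and $Z_1-Y=-(1-t)(Y-X)$, so that
\[
\frac{\brs{XZ_1}}{\brs{YZ_1}}=\frac{Z_1-X}{Z_1-Y}=-\frac{t}{1-t}=\frac{\delta(X)}{\delta(Y)}\,.
\]
The identical computation on $YZ$ and $ZX$ gives $\brs{YX_1}/\brs{ZX_1}=\delta(Y)/\delta(Z)$ and $\brs{ZY_1}/\brs{XY_1}=\delta(Z)/\delta(X)$, and multiplying the three the heights telescope, yielding $1$.

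For the converse, assume \eqref{eq:Menelaus_Affine} holds with $X_1,Y_1,Z_1$ on the three sidelines and distinct from the vertices. If the line $X_1Y_1$ is not parallel to $XY$, set $Z_1':=X_1Y_1\cdot XY$; applying the forward direction to the collinear triple $X_1,Y_1,Z_1'$ and comparing with \eqref{eq:Menelaus_Affine} gives $\brs{XZ_1}/\brs{YZ_1}=\brs{XZ_1'}/\brs{YZ_1'}$, and since $W\mapsto(W-X)/(W-Y)$ is a nonconstant M\"obius function, hence injective on the affine part of $XY$, we conclude $Z_1=Z_1'$; thus $X_1,Y_1,Z_1$ are collinear. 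It remains to exclude $X_1Y_1\parallel XY$: in that case, taking $\delta$ to vanish on $X_1Y_1$ forces $\delta(X)=\delta(Y)$, so the last two factors of \eqref{eq:Menelaus_Affine} multiply to $\big(\delta(Y)/\delta(Z)\big)\big(\delta(Z)/\delta(X)\big)=\delta(Y)/\delta(X)=1$, whence \eqref{eq:Menelaus_Affine} would give $\brs{XZ_1}/\brs{YZ_1}=1$, i.e. $X=Y$, a contradiction. Hence \eqref{eq:Menelaus_Affine} implies collinearity.

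A slicker route that handles both implications at once is to take projective coordinates with $X=[1:0:0]$, $Y=[0:1:0]$, $Z=[0:0:1]$: a short computation expressing an affine ratio in terms of homogeneous coordinates identifies the three sideline points as $X_1=[0:1:-p]$, $Y_1=[-q:0:1]$, $Z_1=[1:-r:0]$, where $p=\brs{YX_1}/\brs{ZX_1}$, $q=\brs{ZY_1}/\brs{XY_1}$ and $r=\brs{XZ_1}/\brs{YZ_1}$, and the $3\times3$ collinearity determinant of $X_1,Y_1,Z_1$ evaluates to $1-pqr$; collinearity is thus equivalent to $pqr=1$, which is exactly \eqref{eq:Menelaus_Affine}. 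Either way, the one genuinely delicate point -- the step I would check most carefully -- is the sign bookkeeping: the dictionary between the affine ratio $\brs{PQ}=Q-P$ of \eqref{eq:definition-of-cross-ratio} and the coordinate of a point on a side carries a sign, and it is precisely this sign that makes the right-hand side of \eqref{eq:Menelaus_Affine} equal to $+1$ rather than the $-1$ obtained with the opposite orientation convention; the only other care needed is for the degenerate positions (transversal through a vertex, and the sub-case $X_1Y_1\parallel XY$ of the converse). One may also recast the statement in cross-ratio form via \eqref{eq:harmonic ratio equals cross ratio}, since $\brs{XZ_1}/\brs{YZ_1}=(XYZ_1\,\Omega)$ with $\Omega$ the point at infinity of $XY$, so that \eqref{eq:Menelaus_Affine} becomes the vanishing of a product of three cross ratios; this reformulation, however, does not appreciably shorten the argument.
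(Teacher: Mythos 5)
Your proof is correct, but note that the paper itself offers no proof of this statement: Theorem~\ref{thm:Menelaus-affine} is presented as ``classical Menelaus' Theorem as it is usually stated in affine geometry'' and is used as the starting point for deriving the projective formula~\eqref{eq:Menelaus_Projective}, with the reader pointed to Veblen--Young for background. So there is nothing in the paper to compare your argument against; what you have supplied is a self-contained proof of the black box the paper takes for granted. Both of your routes check out. The signed-height computation is clean: with the convention $\brs{PQ}=Q-P$ each ratio $\brs{XZ_1}/\brs{YZ_1}$ equals $\delta(X)/\delta(Y)$ (not its negative), which is exactly why the product telescopes to $+1$ here rather than the $-1$ of the textbook convention $\overline{AF}/\overline{FB}$ -- each of the three factors differs from the textbook one by a sign, and $(-1)^3$ absorbs the discrepancy. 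Your converse is also sound: the uniqueness step via injectivity of $W\mapsto (W-X)/(W-Y)$ is fine, and you correctly isolate and dispose of the one genuine degeneracy, $X_1Y_1\parallel XY$, by showing it would force $\brs{XZ_1}/\brs{YZ_1}=1$, i.e.\ $X=Y$. The determinant computation $\det=1-pqr$ with $X_1=[0:1:-p]$, $Y_1=[-q:0:1]$, $Z_1=[1:-r:0]$ is likewise correct and gives both implications at once, at the cost of the (routine but sign-sensitive) translation between affine ratios and normalized homogeneous coordinates. Either argument would serve the paper, since everything downstream (the passage to~\eqref{eq:Menelaus_Projective-1} via~\eqref{eq:harmonic ratio equals cross ratio}, and then to Corollary~\ref{cor:Menelaus}) only uses the statement as you have proved it.
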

Identity~\eqref{eq:harmonic ratio equals cross ratio} allows
to make a projective interpretation of 
\hyperref[thm:Menelaus-affine]{Menelaus' Theorem}
by introducing the line at infinity as part of the figure (see \cite[vol. II, pp. 89-90]{V - Y}):
if $X_{\infty},Y_{\infty},Z_{\infty}$ are the points at infinity
of the lines $x,y,z$ respectively, then~\eqref{eq:Menelaus_Affine}
becomes
\begin{equation}
\left(XYZ_{1}Z_{\infty}\right)\left(YZX_{1}X_{\infty}\right)\left(ZXY_{1}Y_{\infty}\right)=1.\label{eq:Menelaus_Projective-1}
\end{equation}
In this projective version of~\eqref{eq:Menelaus_Affine}, the line
at infinity can be replaced with any other line of the projective
plane. Assume that $X_{1},Y_{1},Z_{1}$ are collinear, and let $s$ be the line
they belong to. If we consider another line $r$ and its intersection
points with $x$, $y$ and $z$:
\[
X_{0}=x\cdot r,\qquad Y_{0}=y\cdot r,\qquad Z_{0}=z\cdot r,
\]
by \hyperref[thm:Menelaus-affine]{Menelaus' Theorem} it is
\begin{equation}
\left(XYZ_{0}Z_{\infty}\right)\left(YZX_{0}X_{\infty}\right)\left(ZXY_{0}Y_{\infty}\right)=1.\label{eq:Menelaus_Projective-2}
\end{equation}
If we apply the identity~\eqref{eq:CR3} to each cross ratio 
in~\eqref{eq:Menelaus_Projective-1},
we have
\[
\left(XYZ_{1}Z_{0}\right)\left(XYZ_{0}Z_{\infty}\right)\left(YZX_{1}X_{0}\right)\left(YZX_{0}X_{\infty}\right)\left(ZXY_{1}Y_{0}\right)\left(ZXY_{0}Y_{\infty}\right)=1,
\]
and by~\eqref{eq:Menelaus_Projective-2} the previous identity turns
into
\begin{equation}
\left(XYZ_{1}Z_{0}\right)\left(YZX_{1}X_{0}\right)\left(ZXY_{1}Y_{0}\right)=1.\label{eq:Menelaus_Projective}
\end{equation}

\begin{figure}
\centering
\includegraphics[width=0.6\textwidth]
{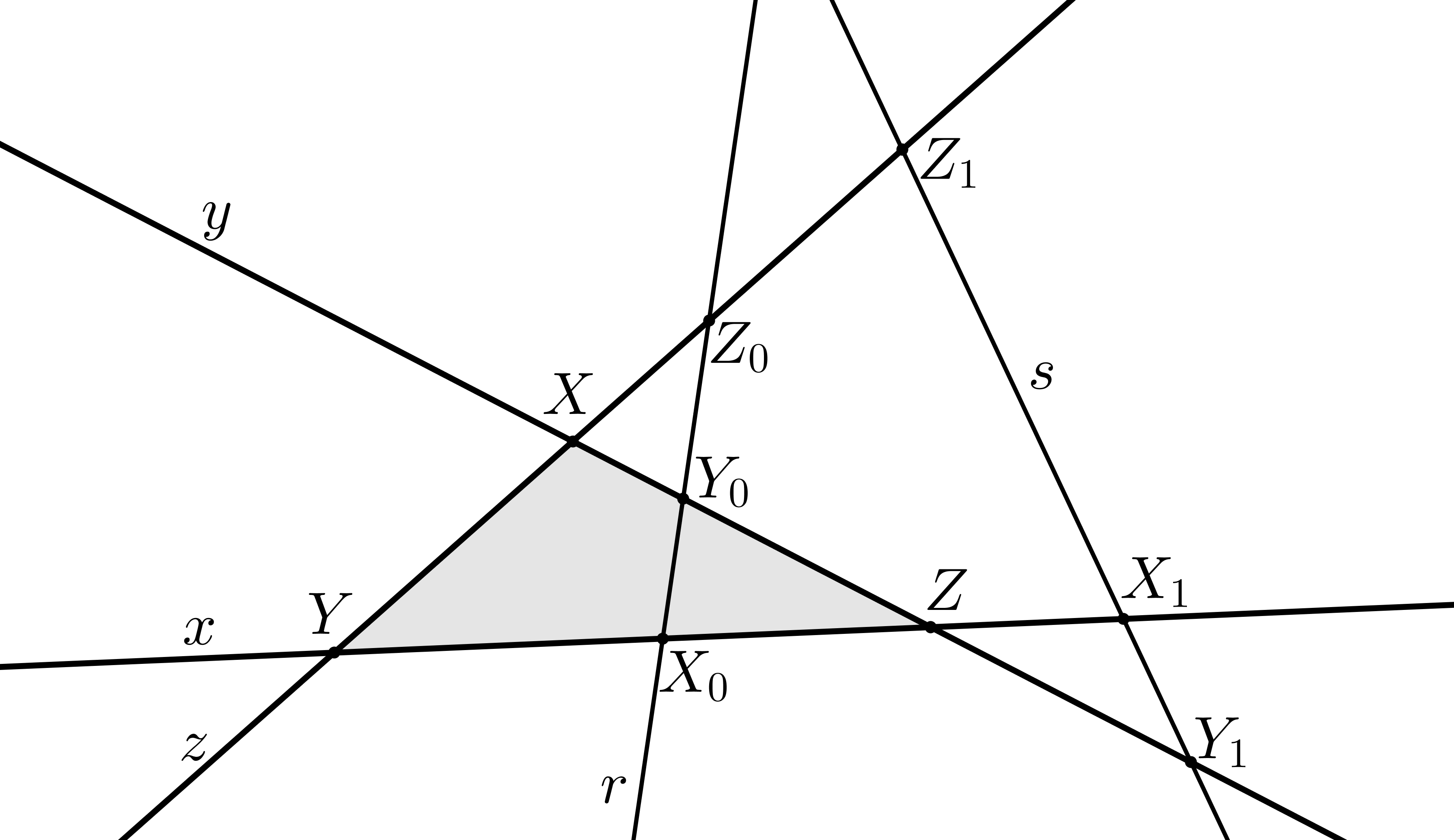}
\caption{Menelaus configuration}
\label{Menelaus_configuration} 
\end{figure}

We will say that the figure composed by the triangle $\widetriangle{xyz}$
and the two lines $r,s$ as above is a \emph{Menelaus configuration}
with triangle $\widetriangle{xyz}$ and transversals $r,s$. In order to
avoid degenerate cases, we will assume that the five lines involved
in a Menelaus' configuration are always in general position: they
are all distinct and no three of them are concurrent.

Thus, we have proved the following corollary of 
Theorem~\ref{thm:Menelaus-affine},
which will be enough for our purposes:
\begin{corollary}[Menelaus' Projective Formula]
\label{cor:Menelaus}For any Menelaus' configuration with triangle
$\TT=\widetriangle{xyz}$ and transversals $r,s$, if we label
the intersection points of the lines of the figure (with the only
exception of $r\cdot s$) as in Figure~\ref{Menelaus_configuration},
the identity~\eqref{eq:Menelaus_Projective} holds.
\end{corollary}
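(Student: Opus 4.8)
The plan is to derive the projective identity~\eqref{eq:Menelaus_Projective} from the affine Menelaus Theorem~\ref{thm:Menelaus-affine} in two stages, following the computation already sketched in the paragraphs preceding the corollary. First I would pass to a projective reading of Menelaus. Choosing nonhomogeneous coordinates and letting $\ell_\infty$ be the line at infinity, which meets $x,y,z$ at $X_\infty,Y_\infty,Z_\infty$, the identity~\eqref{eq:harmonic ratio equals cross ratio} turns each affine ratio $\brs{XZ_1}/\brs{YZ_1}$ into the cross ratio $(XYZ_1Z_\infty)$; hence the collinearity of $X_1,Y_1,Z_1$ is equivalent to~\eqref{eq:Menelaus_Projective-1}. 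Since~\eqref{eq:Menelaus_Projective-1} is an identity between cross ratios and collinearity is a projective notion, $\ell_\infty$ plays no privileged role here: composing with a projectivity that carries $\ell_\infty$ to an arbitrary line $r$ in general position with the configuration and writing $X_0=x\cdot r$, $Y_0=y\cdot r$, $Z_0=z\cdot r$, the points $X_0,Y_0,Z_0$ are collinear (they lie on $r$), so the same theorem yields~\eqref{eq:Menelaus_Projective-2}. The general-position hypothesis on the five lines of the configuration guarantees that the four points entering each cross ratio are distinct, so by Proposition~\ref{prop:cross-ratio-four-different-points} every cross ratio below is a well-defined number different from $0$ and $1$, and no step divides by zero.

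Second, I would combine the two identities using the factorization~\eqref{eq:CR3}. Applied to each of the three factors of~\eqref{eq:Menelaus_Projective-1}, it gives $(XYZ_1Z_\infty)=(XYZ_1Z_0)(XYZ_0Z_\infty)$, and likewise $(YZX_1X_\infty)=(YZX_1X_0)(YZX_0X_\infty)$ and $(ZXY_1Y_\infty)=(ZXY_1Y_0)(ZXY_0Y_\infty)$. Multiplying the three and substituting~\eqref{eq:Menelaus_Projective-1} on the left gives
\[
(XYZ_1Z_0)(YZX_1X_0)(ZXY_1Y_0)\cdot(XYZ_0Z_\infty)(YZX_0X_\infty)(ZXY_0Y_\infty)=1,
\]
and the last three factors multiply to $1$ by~\eqref{eq:Menelaus_Projective-2}; cancelling them leaves~\eqref{eq:Menelaus_Projective}. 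It then only remains to check that the points occurring in~\eqref{eq:Menelaus_Projective} are exactly those named in Figure~\ref{Menelaus_configuration}, which is a matter of reading off the labels in the figure.

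I do not anticipate a genuine obstacle: the argument is bookkeeping with the cross-ratio identities of~\eqref{eq:cross_ratio_identities} and~\eqref{eq:CR3}. The point requiring the most care — the closest thing to a hard step — is the passage ``replace $\ell_\infty$ by an arbitrary line $r$'': one must make sure that the chosen projectivity keeps the full configuration (the triangle $\widetriangle{xyz}$ together with $s$ and the points $X_1,Y_1,Z_1$ on it) in a position where Theorem~\ref{thm:Menelaus-affine} applies in the new affine chart, i.e.\ no relevant point is sent to infinity; the general-position convention built into the definition of a Menelaus configuration is precisely what makes this legitimate. As a consistency check, interchanging the two transversals $r$ and $s$ replaces the left-hand side of~\eqref{eq:Menelaus_Projective} by its reciprocal (by~\eqref{eq:CR1}), which is consistent since that side equals $1$.
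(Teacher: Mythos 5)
Your proposal reproduces the paper's own derivation: rewrite affine Menelaus as the cross-ratio identity \eqref{eq:Menelaus_Projective-1} via \eqref{eq:harmonic ratio equals cross ratio}, apply it again with the second transversal $r$ in place of $\ell_\infty$ to get \eqref{eq:Menelaus_Projective-2}, split each factor with \eqref{eq:CR3}, and cancel. It is correct and takes essentially the same route; your extra care about the general-position hypotheses and the choice of affine chart only makes explicit what the paper leaves implicit.
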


\section{Trigonometry of generalized right-angled triangles}
\label{sec:Trigonometry-for-right-angled}

\begin{figure}
\centering
\subfigure[spherical right-angled triangle]
{\includegraphics[width=0.9\textwidth]
{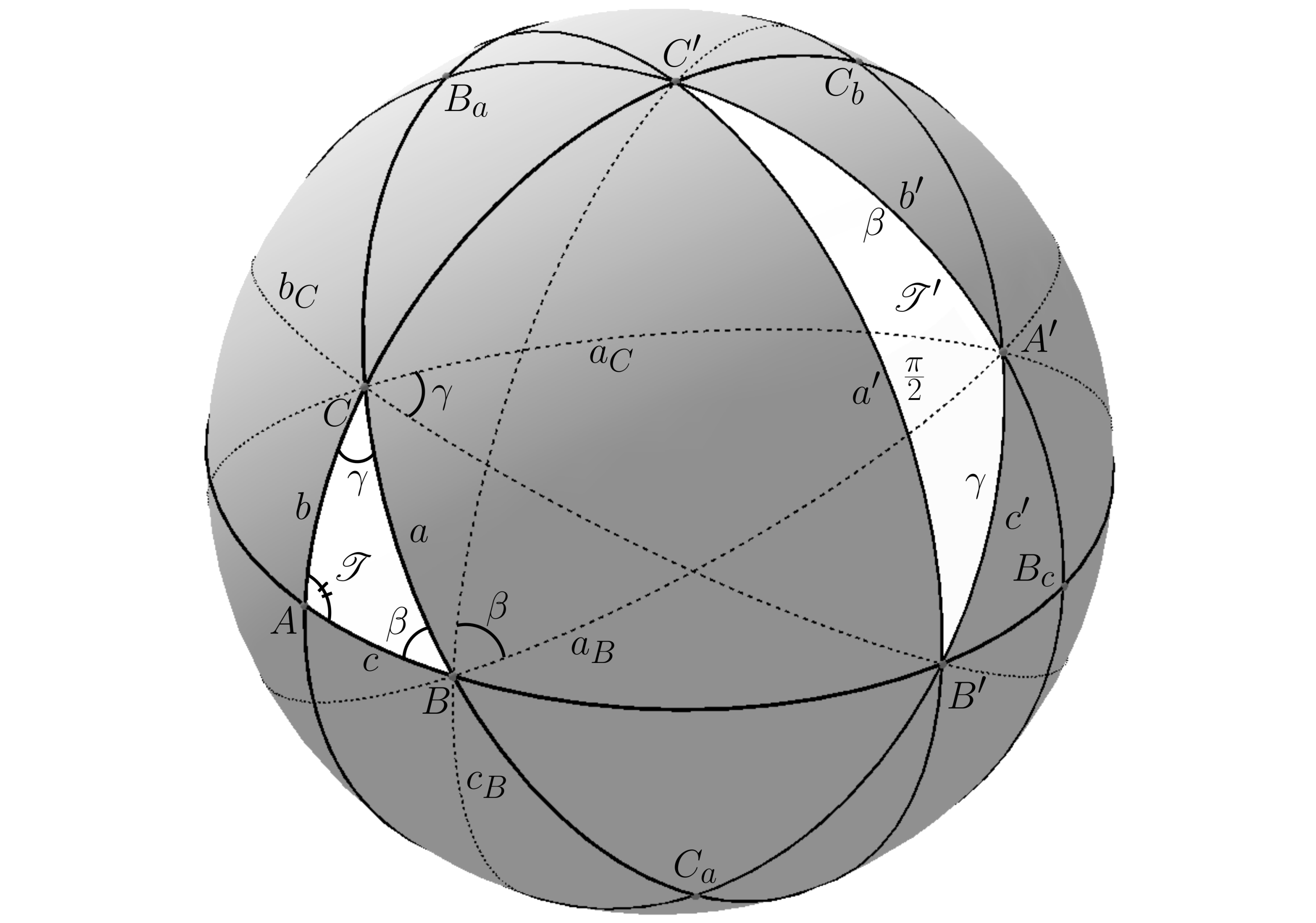}\label{sph_triangle} }
\\
\subfigure[hyperbolic right-angled triangle]
{\includegraphics[width=0.9\textwidth]
{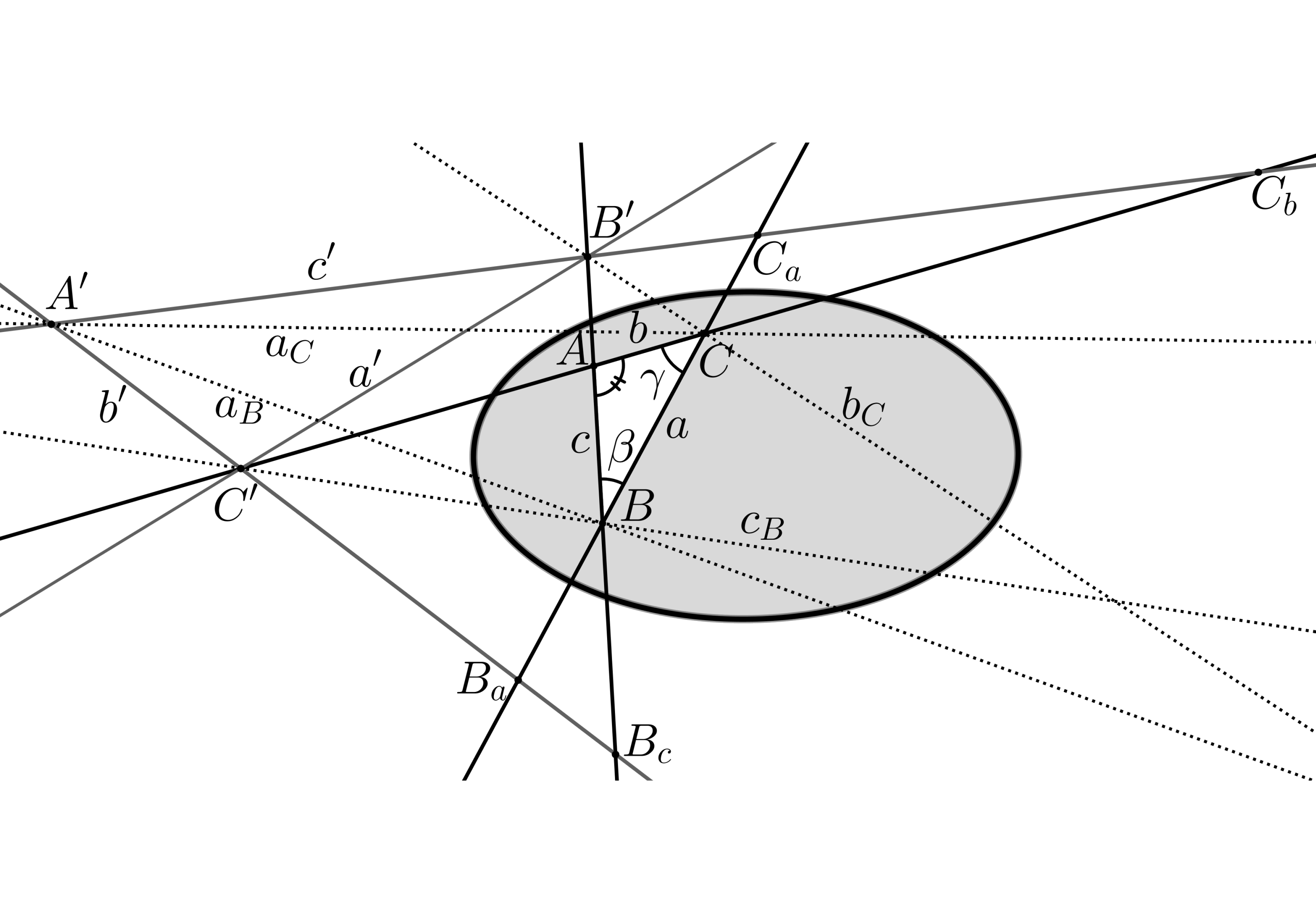}
\label{hyp_triangle} }

\caption{generalized right-angled triangles I}
\label{Fig:generalized-right-angled-triangles-I}
\end{figure}

Consider a projective triangle $\TT$ with vertices $A,B,C$ and its polar triangle $\TT'$ as before.
Assume from now on that $\TT$ is right-angled: $b$ and $c$ are conjugate to
each other; and
that $A\in\mathbb{P}$. Thus, $C'\in b$ and $B'\in c$. Taking the
conjugate points and lines
\[
\begin{split}B_{a}=a\cdot b',\quad B_{c}=c\cdot b',\quad a_{B}=BA',\quad c_{B}=BC',\\
C_{a}=a\cdot c',\quad C_{b}=b\cdot c',\quad a_{C}=CA',\quad b_{C}=CB',
\end{split}
\]
the following relations hold
\begin{equation*}
\begin{split}
A_{b}=b\cdot a'=C',\quad B_{a}=A'_{ b'},\quad B_{c}=C'_{ b'},\quad b_{A}=AB'=c,\\
A_{c}=c\cdot a'=B',\quad C_{a}=A'_{ c'},\quad C_{b}=B'_{ c'},\quad c_{A}=AC'=b.
\end{split}
% \label{eq:point-and-line-identities}
\end{equation*}

\begin{figure}
\centering

\subfigure[hyperbolic Lambert quadrilateral]
{\includegraphics[width=0.9\textwidth]
{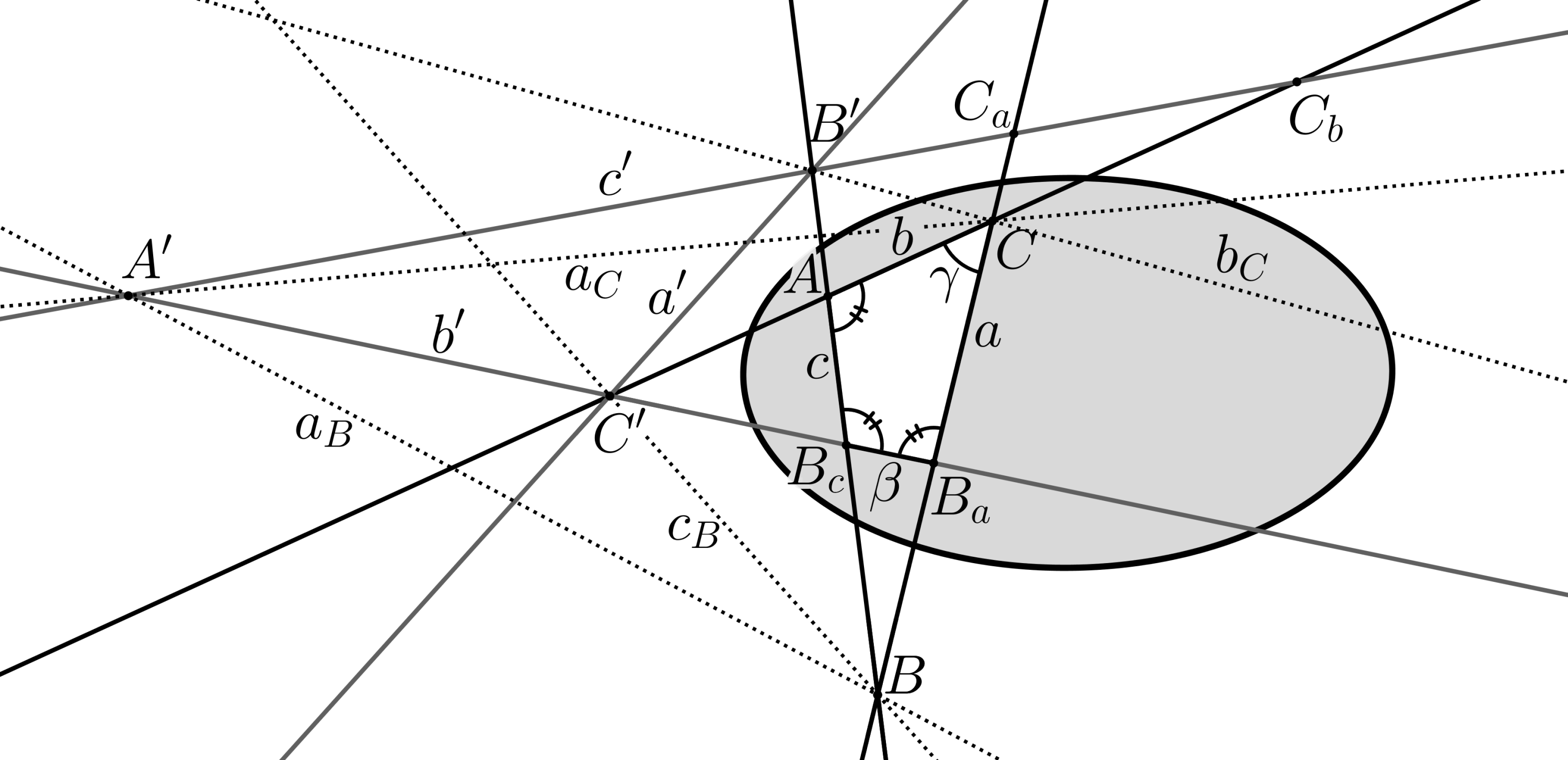}
\label{hyp_Lambert} }
\\
\subfigure[hyperbolic right-angled pentagon]
{\includegraphics[width=0.9\textwidth]
{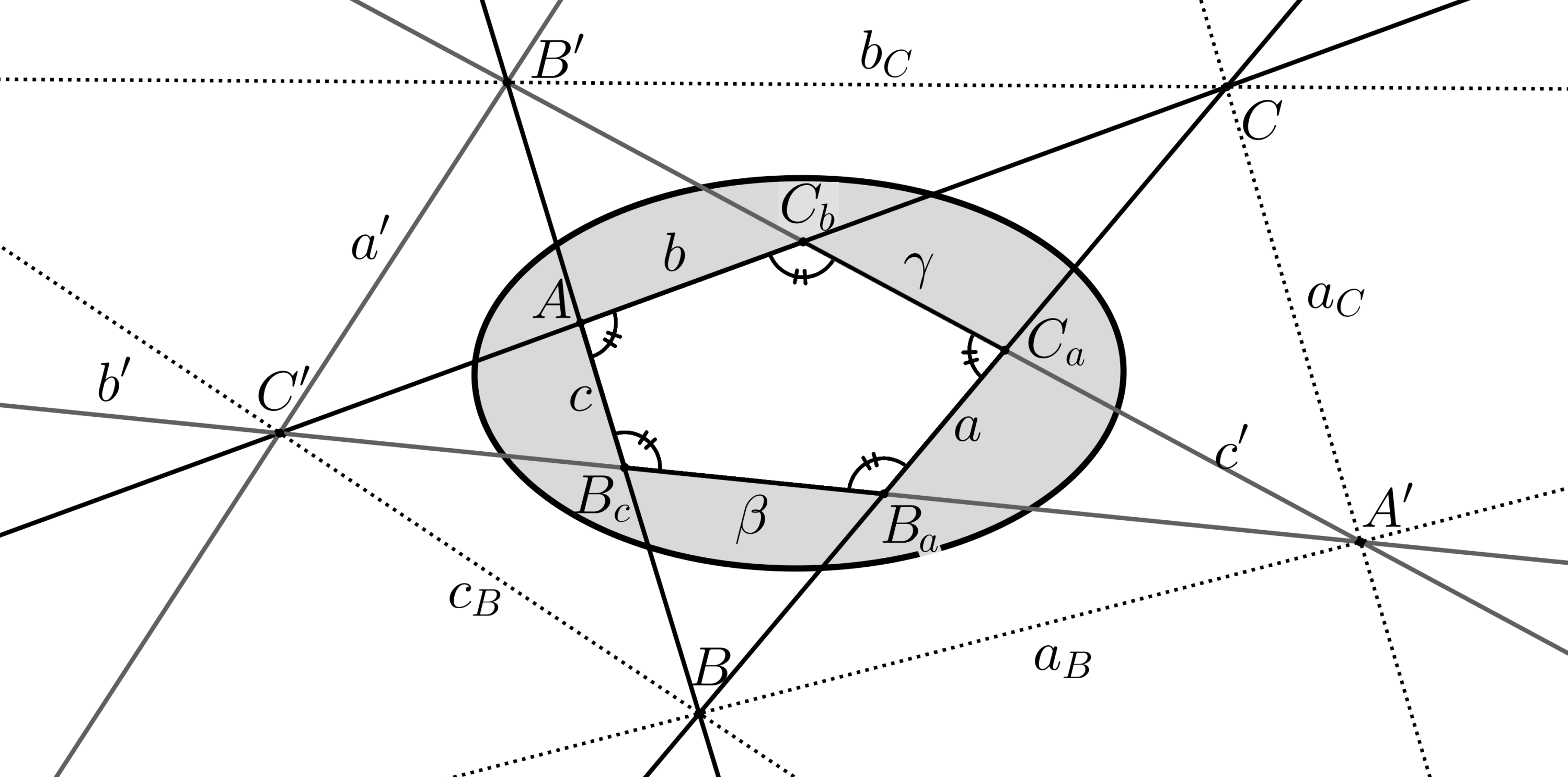}
\label{hyp_pentagon} }

\caption{generalized right-angled triangles II}
\label{Fig:generalized-right-angled-triangles-II}
\end{figure}

The projective figure composed by the triangles $\TT,\TT'$
and the rest of points and lines considered above has four different
geometric interpretations as generalized right-angled triangles. As
the lines $b$ and $c$ are conjugate to each other, they form a right
angle at the point $A$. When $\Phi$ is an imaginary conic (and
so $\mathbb{P}$ is the elliptic plane) $\TT$ is an
elliptic right-angled triangle (Figure~\ref{sph_triangle}). When $\Phi$ is a real conic, the triangles $\TT$ and $\TT'$
can produce three different hyperbolic polygons: (i) a right-angled
hyperbolic triangle when $B,C$ are also interior to $\Phi$ (Figure
\ref{hyp_triangle}); (ii) a Lambert quadrilateral, when one of the
vertices $B,C$ turns out to be exterior to $\Phi$ (Figure~\ref{hyp_Lambert});
and (iii) a right-angled pentagon, when both vertices $B,C$ are considered
exterior to $\Phi$ but the line $BC$ is secant to $\Phi$ (Figure
\ref{hyp_pentagon}). If the line $BC$ is exterior to $\Phi$, there
appears again a Lambert quadrilateral inside $\mathbb{P}$ and this
is the same as case (ii).

Our strategy for obtaining trigonometric relations for generalized
right-angled triangles will follow these steps:
\begin{enumerate}
\item Take a Menelaus' configuration with the five lines $a,b,c, b', c'$
by choosing three of the lines as the sides of the triangle and the
other two lines as the transversals of the configuration.
\item Apply 
\hyperref[cor:Menelaus]{Menelaus' Projective Formula} \eqref{eq:Menelaus_Projective} 
in order to obtain a formula relating three
cross ratios of points on the figure.
\item Translate
the three cross ratios of the formula just obtained into projective
trigonometric ratios associated to the sides of the triangles $\TT$
and $\mathscr{T'}$. We so obtain a \emph{projective trigonometric
formula} associated with the figure.
\item For every particular generalized right-angled triangle, translate
the projective trigonometric formula obtained in the previous step
into a non-euclidean trigonometric formula using 
Table~\ref{table:non-euclidean-segments}.
\end{enumerate}
After the fourth step, we will obtain a squared non-euclidean trigonometric formula
due to the presence of the square power in all the circular and hyperbolic
trigonometric ratios of Table~\ref{table:non-euclidean-segments}. As we will explain later,
the subjacent unsquared trigonometric formula will be the result of
removing all the square powers and choosing positive signs in the
trigonometric functions of the corresponding squared formula.

By an abuse of notation, we will denote each of the segments
$\ov{AB}$, $\ov{BC}$, $\ov{CA}$, $\ov{A'B'}$, $\ov{C'A'}$ with the name of the line that contains it.

\subsection{Non-euclidean Pythagorean Theorems}\label{sub:Non-euclidean-Pythagoras'-Theorem}

Take the Menelaus' configuration with triangle $\widetriangle{abc}$ and
transversals $ b', c'$ (Figure~\ref{Fig:projective_right_triangle_Menelaus_01}).
By \hyperref[cor:Menelaus]{Menelaus' Projective Formula} 
we have:
\begin{align}
\!\!\!\left(ABB_{c}B'\right)\left(BCB_{a}C_{a}\right)\left(CAC'C_{b}\right)=1 \underset{C'=A_{b}}{\overset{B'=A_{c}}{\Longleftrightarrow}} &\nonumber \\ 
\!\!\!\iff \left(ABB_{c}A_{c}\right)\left(BCB_{a}C_{a}\right)\left(CAA_{b}C_{b}\right)=1\overset{\eqref{eq:CR1}}{\Longleftrightarrow}& \nonumber\\
\!\!\!\iff  \mathbf{C}(c)\dfrac{1}{\mathbf{C}(a)}\mathbf{C}(b)=1 
\Longleftrightarrow  & \; \mathbf{C}(a)=\mathbf{C}(b)\mathbf{C}(c)\label{eq:trigo1}\tag{\textbf{T1}}
\end{align}

If we give the names $a,b,c, \beta, \gamma$ to the sides and angles
of generalized right-angled triangles as in Figures~\ref{Fig:generalized-right-angled-triangles-I} and~\ref{Fig:generalized-right-angled-triangles-II}, by
Table~\ref{table:non-euclidean-segments} identity~\eqref{eq:trigo1}
translates into:
\begin{itemize}
\item for an elliptic right-angled triangle
\begin{equation}
\cos^{2}a=\cos^{2}b\cos^{2}c.\label{eq:squared-formula-sph-Triangle}
\end{equation}

\item for a hyperbolic right-angled triangle 
\begin{equation}
\cosh^{2}a=\cosh^{2}b\cosh^{2}c,\label{eq:squared-formula-hyp-Triangle}
\end{equation}

\item for a Lambert quadrilateral
\begin{equation}
-\sinh^{2}a=\cosh^{2}b\left(-\sinh^{2}c\right),\label{eq:squared-formula-hyp-Lambert}
\end{equation}

\item for a right-angled pentagon
\begin{equation}
\cosh^{2}a=\left(-\sinh^{2}b\right)\left(-\sinh^{2}c\right),\label{eq:squared-formula-hyp-Pentagon}
\end{equation}
\end{itemize}
If we look for the unsquared versions of formulae $(\ref{eq:squared-formula-hyp-Triangle}-
\ref{eq:squared-formula-sph-Triangle})$, we must look at each figure
separately.

In the three hyperbolic figures there is no discussion if we assume,
as usual, that the length of a segment is always positive. In this
case, because hyperbolic sines and cosines are positive for positive
arguments, it must be
\[
\cosh a=\cosh b\cosh c
\]
for the hyperbolic right angled triangle (this formula is known as
the \emph{hyperbolic Pythagorean theorem}),
\[
\sinh a=\cosh b\sinh c
\]
for the Lambert quadrilateral, and
\[
\cosh a=\sinh b\sinh c
\]
for the right-angled pentagon.

The elliptic case is subtler than the hyperbolic ones because $\cos x$
can be positive or negative for $x\in(0,\pi)$. We need more geometric
information for deciding which of the two unsquared versions of~\eqref{eq:squared-formula-sph-Triangle}:
\[
\cos a=\cos b\cos c,\qquad\text{or}\qquad\cos a=-\cos b\cos c;
\]
is the correct one, or if both are correct but they apply to different
figures. Because elliptic segments have their length in $[0,\pi]$,
we will extend the usual terms for angles: \emph{acute}, \emph{right}
and \emph{obtuse}; to segments in the natural way.
The proof of the following proposition is left to the reader.
% If we consider
% the triangle $\TT$ with vertices $A,B,C$ depicted in white
% in Figure~\ref{sph_triangle}, and we imagine the points $B,C$ moving
% along the lines $c,b$ respectively in all possible ways, one can
% convince himself that the following statement is true:
\begin{proposition}[elliptic triangles I]
\label{prop:elliptic-triangle-I-obtuse-sides}In the triangle $\TT$ of 
Figure~\ref{sph_triangle},
the hypotenuse $a$ is obtuse if and only if exactly one of the catheti
$b,c$ is obtuse. In particular, if the triangle $\TT$ has
an obtuse side, it has exactly $2$ obtuse sides.
\end{proposition}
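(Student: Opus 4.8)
The plan is to recover the unsquared Pythagorean relation $\cos a=\cos b\cos c$ hidden behind the ambiguous formula~\eqref{eq:squared-formula-sph-Triangle} and then read off the signs. First I would move to the modelling sphere $S^{2}$: since the elliptic model is locally isometric, the figure of Figure~\ref{sph_triangle} is a genuine spherical right triangle, with $a,b,c$ its three side arcs (each of length in $(0,\pi)$) and a true right angle at $A$. Applying the classical spherical law of cosines at the vertex $A$ then gives
\[
\cos a=\cos b\cos c+\sin b\sin c\cos\widehat{A}=\cos b\cos c,
\]
with no sign indeterminacy, so~\eqref{eq:squared-formula-sph-Triangle} indeed unsquares with the plus sign.

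Next I would translate the word \emph{obtuse}. By the general position assumptions none of $a,b,c$ is a right segment --- for instance $c$ is right only if $B=A_{c}=B'$, which is excluded, and the analogous point and line coincidences for $a$ and $b$ are ruled out in the same way --- so $\cos a,\cos b,\cos c$ are all nonzero; moreover for $x\in(0,\pi)$ one has $\cos x<0$ exactly when $x>\pi/2$, i.e. exactly when the corresponding side is obtuse. Hence $a$ is obtuse iff $\cos a<0$, iff $\cos b\cos c<0$, iff exactly one of $\cos b,\cos c$ is negative, iff exactly one of the catheti $b,c$ is obtuse; this is the stated equivalence.

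The ``in particular'' clause follows by a short case check built on this equivalence. If $a$ is obtuse, exactly one cathetus is obtuse, so the two obtuse sides of $\TT$ are $a$ and that cathetus. If some cathetus, say $b$, is obtuse while $c$ is not, then exactly one cathetus is obtuse, hence $a$ is obtuse and the two obtuse sides of $\TT$ are $a$ and $b$. If both catheti are obtuse, then $a$ is acute and the two obtuse sides of $\TT$ are precisely $b$ and $c$. In every case $\TT$ has exactly two obtuse sides.

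I expect the only delicate point to be making the passage to $S^{2}$ rigorous: a pair of antipodal point classes is joined in the elliptic plane by two complementary segments (of lengths summing to $\pi$), so one must check that the three prescribed side lengths of the elliptic triangle are simultaneously realized by a single spherical lift rather than by a ``mixed'' one. Alternatively one can stay inside the projective picture and unsquare the identity $\CC(a)=\CC(b)\CC(c)$ of~\eqref{eq:trigo1} directly: the squared relation forces $\cos a=\pm\cos b\cos c$, and the sign can be pinned to $+$ by deforming continuously to a small all-acute right triangle --- but that route requires checking that the relevant family of right triangles is connected and that the two factors $\cos a\mp\cos b\cos c$ never vanish simultaneously along the deformation.
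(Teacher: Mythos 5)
The paper does not actually prove this proposition: it is explicitly ``left to the reader,'' so there is no argument of the author's to match yours against. Judged on its own, your case analysis of the signs and the ``in particular'' clause are correct, and the reduction of ``obtuse'' to ``$\cos<0$'' (together with the observation that right sides must be excluded) is exactly the right way to read the statement. The problem is with how you establish $\cos a=\cos b\cos c$. Quoting the classical spherical law of cosines is circular relative to the role this proposition plays in the paper: Proposition~\ref{prop:elliptic-triangle-I-obtuse-sides} is invoked precisely in order to choose the sign when unsquaring the Menelaus-derived identity \eqref{eq:trigo1}/\eqref{eq:squared-formula-sph-Triangle} and thereby \emph{derive} the spherical Pythagorean theorem. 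The law of cosines contains that theorem as the special case $\widehat{A}=\pi/2$, so a proof of the proposition that assumes it makes the whole derivation in \S5.1.1 vacuous. The proposition is a purely qualitative statement about which sides are obtuse, and the author clearly intends it to be established by elementary geometric means, independent of the trigonometric identities it is then used to unsquare.

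Your fallback route (unsquare $\CC(a)=\CC(b)\CC(c)$ by a continuity/deformation argument) avoids the circularity but has a genuine gap that you name without resolving, and it is not a small one: the family of spherical right triangles with no right side is disconnected precisely along the sign classes at issue (all sides acute; hypotenuse and one cathetus obtuse; both catheti obtuse), so a continuous deformation to a small all-acute triangle can never leave the all-acute component, and the argument determines the sign only there. A proof in the spirit of the paper would instead reduce to that component synthetically: replacing a vertex, say $C$, by its antipode preserves the right angle at $A$ and replaces $a,b$ by their supplements while fixing $c$, and this operation leaves the assertion ``$a$ is obtuse iff exactly one of $b,c$ is'' invariant; after at most two such replacements both catheti are acute, and for that case one checks directly (e.g., $C$ lies on the arc of the perpendicular at $A$ strictly between $A$ and the pole of the line $AB$, hence strictly inside the hemisphere centered at $B$) that the hypotenuse is acute. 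I would either supply an argument of that kind or, at minimum, state explicitly that you are importing the spherical law of cosines from outside the paper's framework.
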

Proposition~\ref{prop:elliptic-triangle-I-obtuse-sides} implies that
if one of the three ratios $\cos a$, $\cos b$, $\cos c$ is negative,
then exactly another one is also negative while the remaining one
is positive. Therefore, the correct formula for elliptic right-angled
triangles is
\[
\cos a=\cos b\cos c,
\]
which is known as the \emph{spherical Pythagorean theorem}.

\subsection{More trigonometric relations}\label{sub:More-trigonometric-relations}

\begin{sidewaysfigure}
\centering
\subfigure[Menelaus configuration 1]
{\includegraphics[width=0.45\textwidth]
{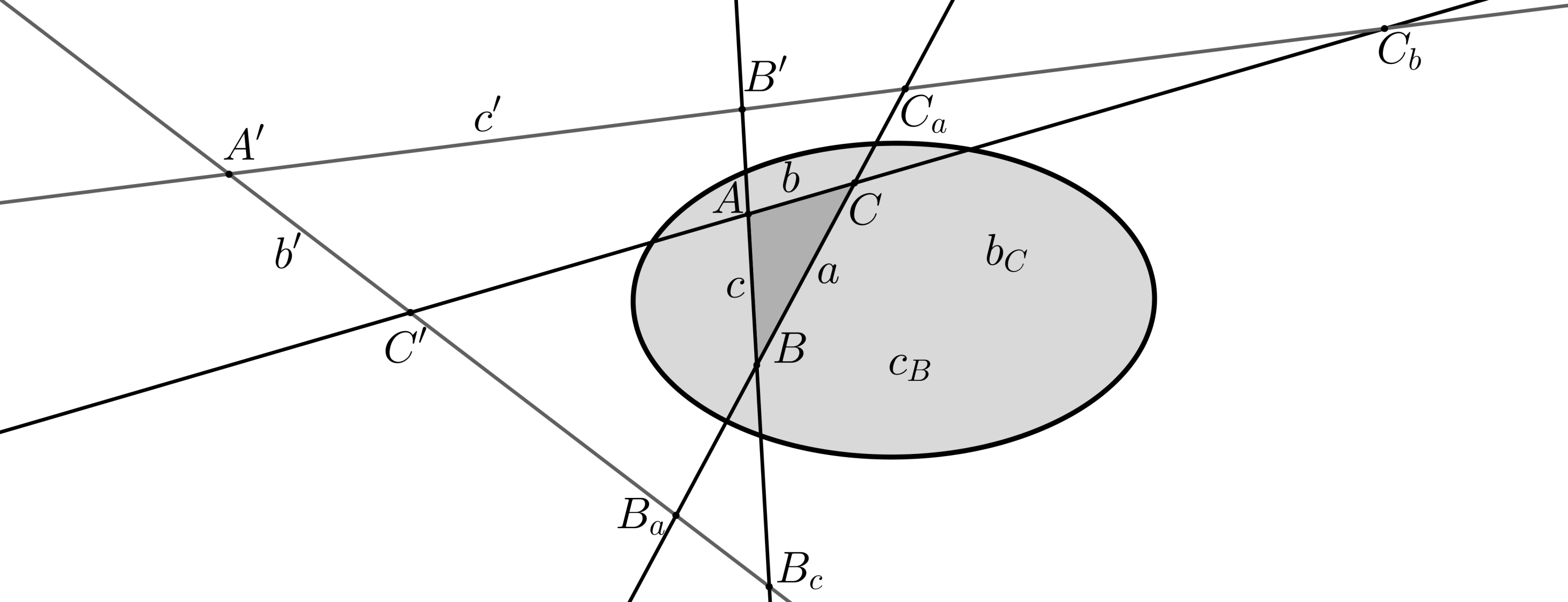}
\label{Fig:projective_right_triangle_Menelaus_01}}
\hfill
\subfigure[Menelaus configuration 2]
{\includegraphics[width=0.45\textwidth]
{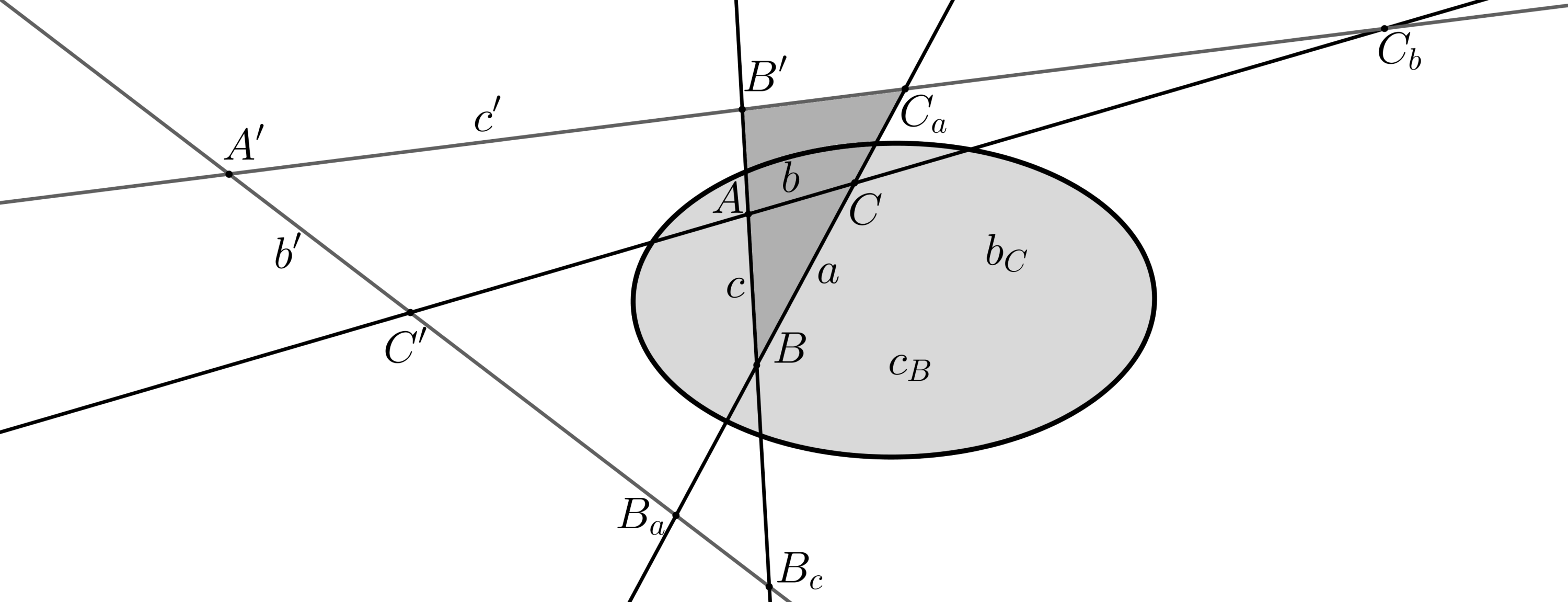}
\label{Fig:projective_right_triangle_Menelaus_02}}
\\
\subfigure[Menelaus configuration 3]
{\includegraphics[width=0.45\textwidth]
{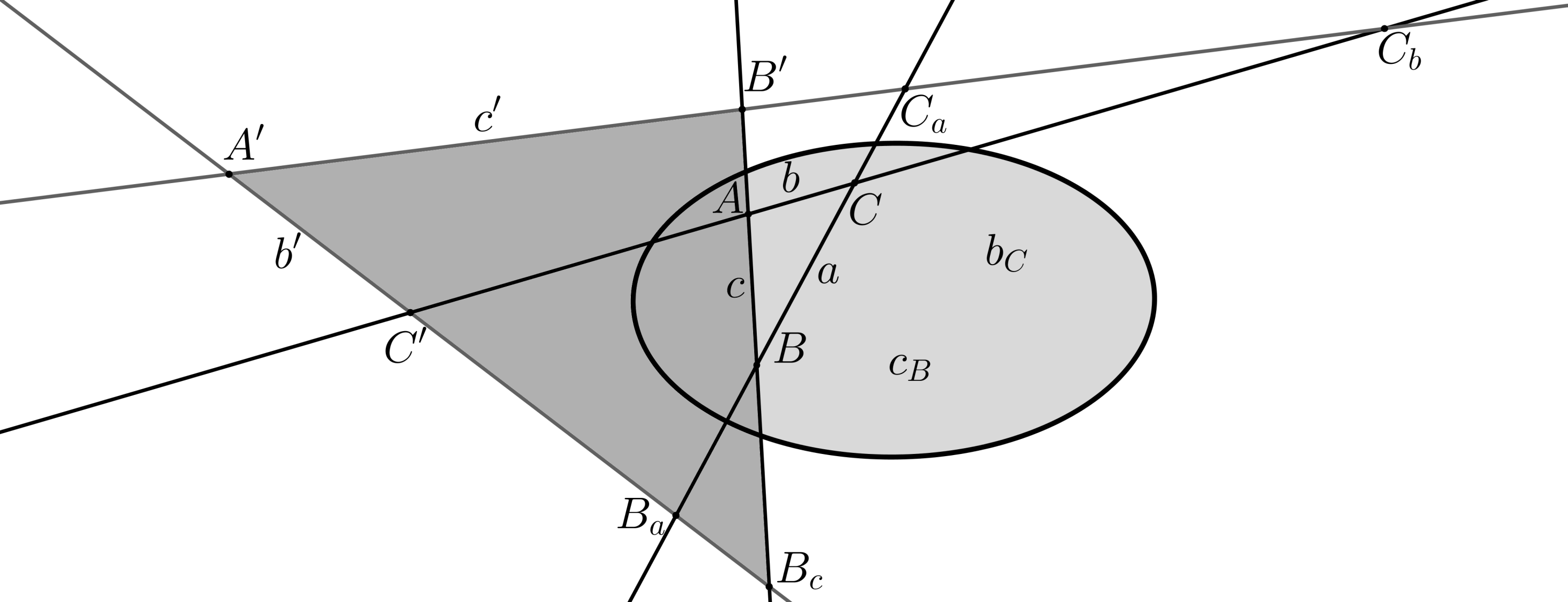}
\label{Fig:projective_right_triangle_Menelaus_03}}
\hfill
\subfigure[Menelaus configuration 4]
{\includegraphics[width=0.45\textwidth]
{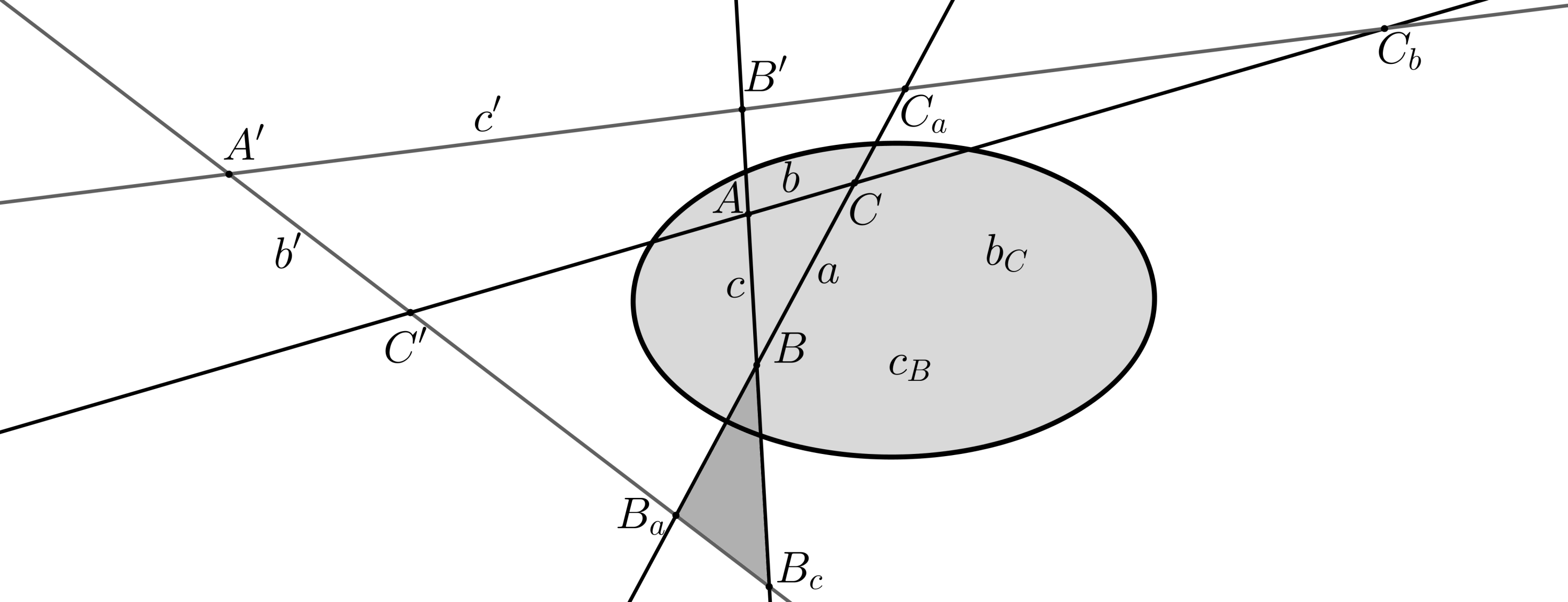}
\label{Fig:projective_right_triangle_Menelaus_04}}
\\
\subfigure[Menelaus configuration 5]
{\includegraphics[width=0.45\textwidth]
{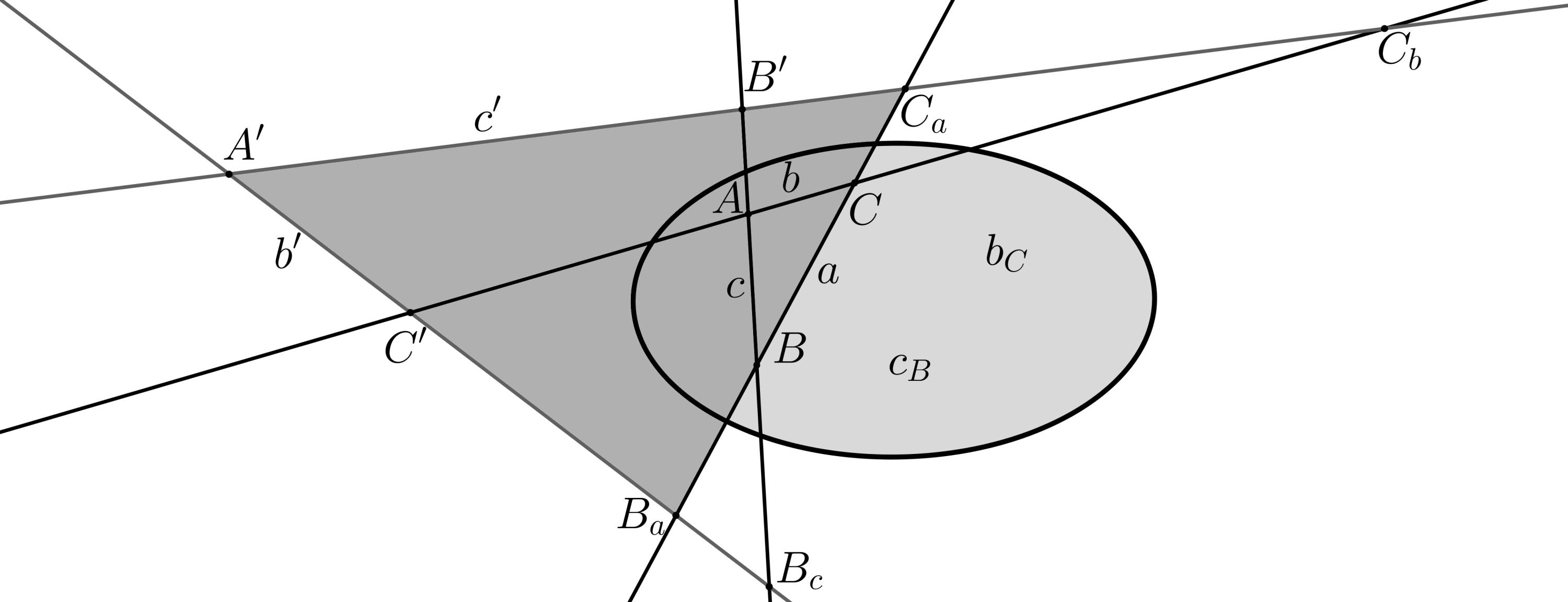}
\label{Fig:projective_right_triangle_Menelaus_05}}
\hfill
\subfigure[Menelaus configuration 6]
{\includegraphics[width=0.45\textwidth]
{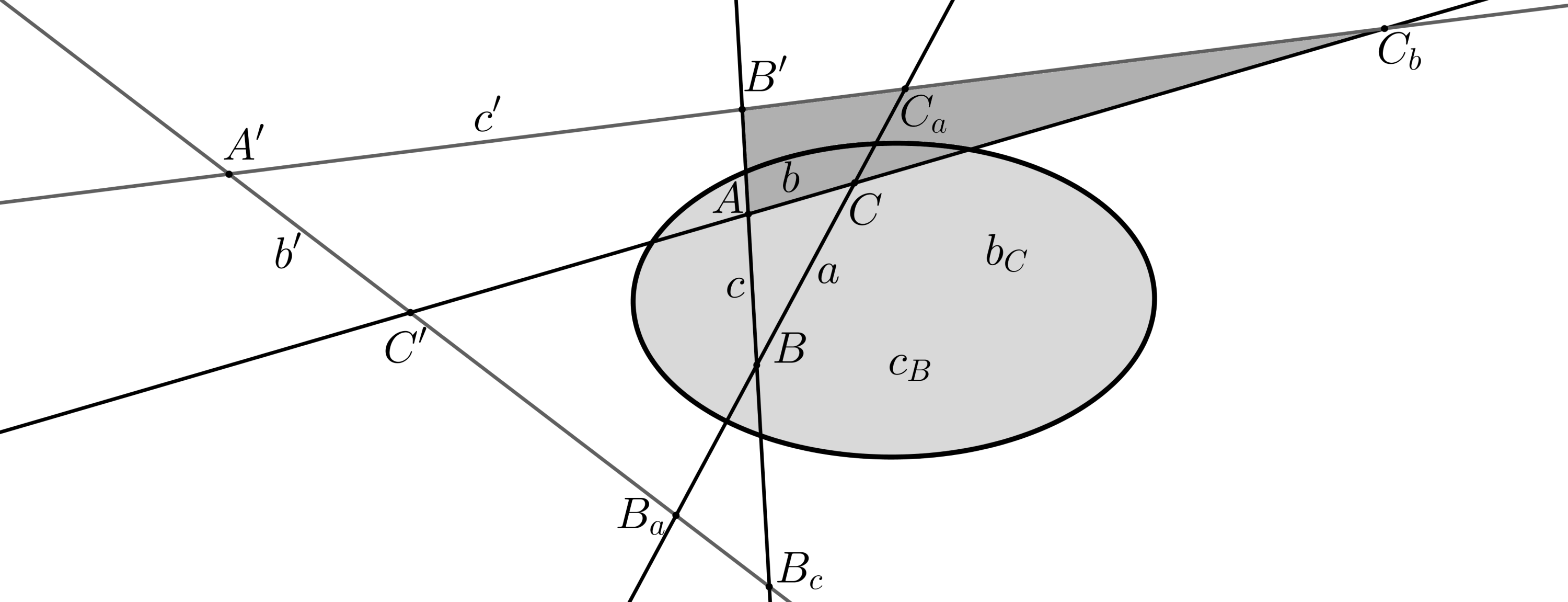}
\label{Fig:projective_right_triangle_Menelaus_06}}

\caption{Menelaus' configurations for a generalized right-angled triangle}
\label{Fig:generalized-right-angled-triangles-Menelaus-Configurations}
\end{sidewaysfigure}

There are $\binom{5}{2}=10$ different Menelaus' configurations that
we can build from the five lines $a,b,c, b', c'$. If we exclude
those relations that are equivalent after renaming the
lines of the figure, there rest the six configurations depicted in
Figure~\ref{Fig:generalized-right-angled-triangles-Menelaus-Configurations}.
\begin{itemize}
\item Taking the triangle $\widetriangle{ac c'}$ and the transversals $b, b'$
(see Figure~\ref{Fig:projective_right_triangle_Menelaus_02}), by 
\hyperref[cor:Menelaus]{Menelaus' Projective Formula}
we have:
\end{itemize}
\vspace{-0.5cm}\begin{align}
\!\!\!\!\!\!\left(C_{a}BCB_{a}\right)\left(BB'AB_{c}\right)\left(B'C_{a}C_{b}A'\right)=1  \iff& \nonumber\\
\!\!\!\!\!\!\iff \left(BC_{a}B_{a}C\right)\left(BA_{c}AB_{c}\right)\left(B'A'_{ c'}B'_{ c'}A'\right)=1\iff &\nonumber \\
\!\!\!\!\!\! \iff \dfrac{1}{\mathbf{S}(a)} \mathbf{S}(c)\dfrac{1}{\mathbf{S}( c')}=1 
\iff & \; \mathbf{S}(c)=\mathbf{S}(a) \mathbf{S}( c')\label{eq:trigo2}\tag{\textbf{T2}}
\end{align}

\begin{itemize}
\item Taking the triangle $\widetriangle{c b' c'}$ and the transversals $a,b$ 
(Figure~\ref{Fig:projective_right_triangle_Menelaus_03}),
we arrive to:
\end{itemize}
\vspace{-0.5cm}\begin{align}
\!\!\!\!\!\!\left(A'B'C_{b}C_{a}\right)\left(B'B_{c}AB\right)\left(B_{c}A'C'B_{a}\right)=1 \!\!\!\iff & \nonumber\\
\!\!\!\!\!\!\iff \left(A'B'B'_{ c'}A'_{ c'}\right)\left(A_{c}B_{c}AB\right)\left(C'_{ b'}A'C'A'_{ b'}\right)=1\!\!\!\iff &\nonumber \\
\!\!\!\!\!\!\iff \mathbf{C}( c')\dfrac{1}{\mathbf{C}(c)}\dfrac{1}{\mathbf{S}( b')}=1 \!\!\iff & \; \mathbf{C}( c')=\mathbf{C}(c) \mathbf{S}( b')\label{eq:trigo3}\tag{\textbf{T3}}
\end{align}

\begin{itemize}
\item Taking the triangle $\widetriangle{ac b'}$ and transversals $b,c'$ (Figure
\ref{Fig:projective_right_triangle_Menelaus_04}) we obtain:
\begin{equation}
\mathbf{T}(c)=\mathbf{T}(a) \mathbf{C}( b').\label{eq:trigo4}\tag{\textbf{T4}}
\end{equation}

\item Taking the triangle $\widetriangle{a b' c'}$ and transversals $b,c$ 
(Figure~\ref{Fig:projective_right_triangle_Menelaus_05}),
we arrive to:
\begin{equation}
\mathbf{C}(a)=\dfrac{1}{\mathbf{T}( b')}\dfrac{1}{\mathbf{T}( c')}.\label{eq:trigo5}\tag{\textbf{T5}}
\end{equation}

\item Finally, taking the triangle $\widetriangle{bc c'}$ and transversals $a,b'$ 
(Figure~\ref{Fig:projective_right_triangle_Menelaus_06}),
we get:
\begin{equation}
\mathbf{T}(c)=\mathbf{S}(b) \mathbf{T}( c').\label{eq:trigo6}\tag{\textbf{T6}}
\end{equation}

\end{itemize}
It can be seen that the projective trigonometric formulae~\eqref{eq:trigo4},
\eqref{eq:trigo5} and~\eqref{eq:trigo6} can be deduced from~\eqref{eq:trigo1},
\eqref{eq:trigo2} and~\eqref{eq:trigo3}.

If we apply Table~\ref{table:non-euclidean-segments} to expressions
\eqref{eq:trigo2} to~\eqref{eq:trigo6}, for each general right-angled
triangle we obtain a collection of squared non-euclidean trigonometric formulae
associated with each figure. In order to decide which is the unsquared
correct formula corresponding to each squared formula, we need to
use the geometric properties of each figure. In the right-angled hyperbolic
pentagon there will be no discussion because all the relevant magnitudes
of the figure are segments. For right-angled hyperbolic triangles
and Lambert quadrilaterals, it is enough to remark that:
\begin{proposition}
\label{prop:hyperbolic-triangles-Lambert-acute-angle}A hyperbolic
right-angled triangle and a Lambert quadrilateral cannot have an obtuse
angle.
\end{proposition}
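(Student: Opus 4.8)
The plan is to deduce both assertions from the basic fact that hyperbolic polygons have positive angular defect: a hyperbolic triangle has angle sum strictly less than $\pi$, hence (by triangulating along a diagonal) a simple hyperbolic quadrilateral has angle sum strictly less than $2\pi$. The angles $\beta,\gamma$ appearing in Figures~\ref{hyp_triangle} and~\ref{hyp_Lambert} are honest interior angles of such a polygon, all of whose vertices lie in $\PP$, and by Laguerre's formula~\eqref{eq:Laguerre-formula} their measures agree with the usual geometric ones and lie in $(0,\pi)$; so the classical estimates apply verbatim.

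First, for the hyperbolic right-angled triangle of Figure~\ref{hyp_triangle}, the vertices $B,C$ are interior to $\Phi$, so $\widetriangle{ABC}$ is a genuine hyperbolic triangle with a right angle at $A$ (the sides $b,c$ being conjugate, hence perpendicular by Proposition~\ref{prop:perpendicular-iff-conjugate}). The defect inequality gives $\frac{\pi}{2}+\beta+\gamma<\pi$, so $\beta+\gamma<\frac{\pi}{2}$; since $\beta,\gamma>0$ this forces $0<\beta<\frac{\pi}{2}$ and $0<\gamma<\frac{\pi}{2}$. Thus neither non-right angle of the triangle is obtuse; in fact each is strictly acute, which is exactly the sign information needed in \S\ref{sub:More-trigonometric-relations}.

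Next, for the Lambert quadrilateral of Figure~\ref{hyp_Lambert}, which arises when one of $B,C$, say $C$, is exterior to $\Phi$. Its sides are arcs of $b$, $c$, $a$ and the polar $c'=\rho(C)$, with consecutive vertices $A=b\cdot c$, $B=c\cdot a$, $a\cdot c'$ and $b\cdot c'$. Now both $a=\ov{BC}$ and $b=\ov{CA}$ pass through $C$, which is the pole of $c'$, so each of $a$ and $b$ is conjugate to $c'$ and therefore perpendicular to it; together with the right angle $\widehat{bc}$ at $A$, this exhibits three right angles of the quadrilateral, so its only possibly non-right angle is the one at $B$, namely $\beta$. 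Splitting this convex hyperbolic quadrilateral along a diagonal into two triangles of angle sum less than $\pi$ each, its angle sum is less than $2\pi$, whence $3\cdot\frac{\pi}{2}+\beta<2\pi$, that is $\beta<\frac{\pi}{2}$; the other positions of $B,C$ giving a Lambert quadrilateral are handled identically after relabelling. So the non-right angle is again strictly acute, never obtuse.

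The only delicate point, and the single real obstacle, is to be sure that $\beta$ and $\gamma$ really are interior angles of a genuine convex hyperbolic polygon, so that the angle-sum theorem is legitimately available; once the relative positions of $B,C$ with respect to $\Phi$ are read off from Figures~\ref{hyp_triangle} and~\ref{hyp_Lambert}, the rest is immediate from classical hyperbolic geometry. No such restriction is claimed for the remaining two figures: the elliptic right-angled triangle can have obtuse sides and angles---this is precisely why Proposition~\ref{prop:elliptic-triangle-I-obtuse-sides} was needed---and the right-angled pentagon has no non-right angle at all.
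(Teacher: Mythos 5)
Your proof is correct and follows the same route the paper indicates: the paper disposes of this proposition in one sentence as ``a simple consequence of the fact that the sum of the angles of a hyperbolic triangle is lower than $\pi$,'' which is precisely the defect argument you carry out (directly for the right-angled triangle, and via triangulation of the quadrilateral into two triangles for the Lambert case). Your version merely supplies the details the paper leaves to the reader.
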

Proposition~\ref{prop:hyperbolic-triangles-Lambert-acute-angle} is
a simple consequence of the fact that the sum of the angles of a hyperbolic
triangle is lower than $\pi$. For elliptic right-angled triangles,
besides Proposition~\ref{prop:elliptic-triangle-I-obtuse-sides} it
should be necessary also the properties listed in 
Proposition~\ref{prop:elliptic-triangles-II} (whose proof is left to the reader).
\begin{proposition}[elliptic triangles II]
\label{prop:elliptic-triangles-II}The right-angled triangle $\TT$
of Figure~\ref{sph_triangle} verifies:
\begin{itemize}
\item the angles of $\TT$ are equal to the sides of the polar triangle
$\TT'$ and vice versa;
\item one of the angles $\beta,\gamma$ of $\TT$ is obtuse (right)
if and only if its opposite side is also obtuse (right).
\end{itemize}
\end{proposition}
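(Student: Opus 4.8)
The plan is to treat the two bullets separately: the first is a one‑line consequence of Laguerre's formula, and the second reduces to a single explicit computation on the modelling sphere. For the first bullet, the angle of $\TT$ at the vertex $C=a\cdot b$ is $\widehat{ab}=\frac{1}{2i}\log(u\,v\,a\,b)$ by Laguerre's formula~\eqref{eq:Laguerre-formula}, where $u,v$ are the two tangents to $\Phi$ through $C$. Their poles $\rho(u),\rho(v)$ are the contact points of $u,v$ with $\Phi$, i.e.\ the two points of $\rho(C)\cap\Phi$, while $\rho(a)=A'$ and $\rho(b)=B'$ both lie on $\rho(C)=c'$. Since polarity preserves cross ratios of concurrent lines~\eqref{eq:invariance_cross-ratio-II}, $\widehat{ab}=\frac{1}{2i}\log\!\big(\rho(u)\,\rho(v)\,A'\,B'\big)$, and as $c'$ is exactly the line $A'B'$, the elliptic distance formula~\eqref{eq:distances-elliptic} identifies this with $\VRT{A'B'}$, the side of $\TT'$ not through $C'$. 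Thus the angles $\pi/2,\beta,\gamma$ of $\TT$ at $A,B,C$ equal the lengths $\VRT{B'C'},\VRT{C'A'},\VRT{A'B'}$ of the sides of $\TT'$ (the last two being the segments ``$b'$'' and ``$c'$'' of \S\ref{sec:Trigonometry-for-right-angled}); applying the same argument to $\TT'$, whose polar triangle is $\TT$ again, gives the reciprocal statement for the sides of $\TT$ and the angles of $\TT'$.

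For the second bullet, recall that $\TT$ is right‑angled at $A$ (so $b\perp c$, $C'\in b$, $B'\in c$) and that $\beta,\gamma$ are the interior angles at $B,C$ of the triangle of Figure~\ref{sph_triangle}, i.e.\ $\widehat{ac}$ and $\widehat{ab}$. I would lift the configuration to the unit sphere — legitimate since, by \S\ref{sub:Cayley-Klein-models-distances-angles}, the sphere is locally isometric to the elliptic plane and every side and angle involved is $<\pi$ — and normalize it, after an isometry and possibly a reflection, so that $A=(0,0,1)$, $B=(\sin c,0,\cos c)$, $C=(0,\sin b,\cos b)$; this is a spherical triangle with a right angle at $A$ and $\VRT{AB}=c$, $\VRT{CA}=b$, $\VRT{BC}=a$. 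Writing down the unit tangent vectors at $B$ to the arcs $BA$ and $BC$ and taking their inner product yields
\[
\cos\beta=\frac{\cos b\,\sin c}{\sin a},
\]
and the same computation with the roles of $b,c$ (and $\beta,\gamma$) exchanged gives $\cos\gamma=\cos c\,\sin b/\sin a$. Since $\sin a,\sin b,\sin c>0$ for proper segments, $\cos\beta$ has the sign of $\cos b$ and $\cos\gamma$ the sign of $\cos c$, so $\beta$ (resp.\ $\gamma$) is obtuse, right or acute exactly when $b$ (resp.\ $c$) is. (The ``right iff right'' half also follows without the sphere, by translating~\eqref{eq:trigo3} $\mathbf{C}(c')=\mathbf{C}(c)\,\mathbf{S}(b')$ and its mirror image under the symmetry $b\leftrightarrow c$, $B\leftrightarrow C$ of the figure into $\cos^2\gamma=\cos^2 c\,\sin^2\beta$ and $\cos^2\beta=\cos^2 b\,\sin^2\gamma$ via Table~\ref{table:non-euclidean-segments}, since $\sin^2\beta,\sin^2\gamma\neq0$.)

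The one delicate point — and the only place real care is needed — is the branch bookkeeping for Laguerre's formula: one has to check that the value $\widehat{ab}\in[0,\pi]$ it returns is the interior angle of the depicted triangle (and of the normalized spherical triangle), not its supplement, and that the lift to the sphere and the normalizing isometry can always be chosen to carry the triangular region of Figure~\ref{sph_triangle} onto the spherical triangle $ABC$. Once these conventions are pinned down, the identifications in the first bullet and the cross‑ratio/inner‑product computations in the second are entirely routine.
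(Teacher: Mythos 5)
The paper never proves this proposition --- it is explicitly ``left to the reader'' just before the statement --- so there is no official argument to compare against; judged on its own, your proof is correct. For the first bullet, the polarity argument is exactly the natural one in the paper's framework: $u,v$ are the tangents from $C$, their poles are the two points of $c'\cdot\Phi$, and $\rho(a)=A'$, $\rho(b)=B'$ lie on $c'$, so \eqref{eq:invariance_cross-ratio-II} turns Laguerre's formula \eqref{eq:Laguerre-formula} into the distance formula \eqref{eq:distances-elliptic} for $\VRT{A'B'}$; one could equivalently compare $(a\,b\,b_C\,a_C)$ with $(A'B'B'_{c'}A'_{c'})$ via Theorem~\ref{thm:ABBpAp-angles-and-distances}. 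The supplement ambiguity you flag is genuine but is already built into the statement, which refers to the specific segments and angles of Figure~\ref{sph_triangle}. For the second bullet, your computation $\cos\beta=\cos b\,\sin c/\sin a$ is right and does settle the obtuse case; note that some step outside the squared ratios $\CC,\SS,\TAN$ is unavoidable here, since those only determine $\cos^2\beta$ and $\cos^2 b$ and therefore can never detect obtuseness --- your parenthetical via \eqref{eq:trigo3} correctly handles only the ``right iff right'' half. Using the unit sphere is legitimately licensed by \S\ref{sub:Cayley-Klein-models-distances-angles} (the model is locally isometric and all magnitudes are below $\pi$), even though it departs from the paper's stated policy of avoiding explicit computations; a reader wanting to stay purely synthetic would have to supply a separation/cross-ratio argument in the style of Lemma~\ref{lemma:ABCD-negative-iff-AB-separate-CD} instead, which is considerably more work for no extra payoff.
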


\begin{table}
\centering\renewcommand{\arraystretch}{1.2}
{\scriptsize
\begin{tabular}{clllll}
% \hline 
& \multicolumn{1}{c}{Elliptic} && \multicolumn{3}{c}{Hyperbolic} \tabularnewline
\cline{2-2}\cline{4-6}
% \multicolumn{1}{c}{PTF}
&\multicolumn{1}{c}{right-angled} && \multicolumn{1}{c}{right-angled}  & \multicolumn{1}{c}{Lambert}  & \multicolumn{1}{c}{right-angled} \tabularnewline
 & \multicolumn{1}{c}{triangle} && \multicolumn{1}{c}{triangle} & \multicolumn{1}{c}{quadrilateral} & \multicolumn{1}{c}{pentagon}\tabularnewline
% \hline 
\ref{eq:trigo1} & $\cos a=\cos c\cos b$ && $\cosh a=\cosh c\cosh b$ & $\sinh a=\sinh c\cosh b$ & $\cosh a=\sinh c\sinh b$\tabularnewline
% \hline 
\ref{eq:trigo2} & $\sin c=\sin a\sin\gamma$ && $\sinh c=\sinh a\sin\gamma$ & $\cosh c=\cosh a\sin\gamma$ & $\cosh c=\sinh a\sinh\gamma$\tabularnewline
% \hline 
\ref{eq:trigo3} & $\cos\gamma=\cos c\sin\beta$ && $\cos\gamma=\cosh c\sin\beta$ & $\cos\gamma=\sinh c\sinh\beta$ & $\cosh\gamma=\sinh c\sinh\beta$\tabularnewline
% \hline 
\ref{eq:trigo4} & $\tan c=\tan a\cos\beta$ && $\tanh c=\tanh a\cos\beta$ & $\coth c=\coth a\cosh\beta$ & $\coth c=\tanh a\cosh\beta$\tabularnewline
% \hline 
\ref{eq:trigo5} & $\cos a=\cot\beta\cot\gamma$ && $\cosh a=\cot\beta\cot\gamma$ & $\sinh a=\coth\beta\cot\gamma$ & $\cosh a=\coth\beta\coth\gamma$\tabularnewline
% \hline 
\ref{eq:trigo6} & $\tan c=\sin b\tan\gamma$ && $\tanh c=\sinh b\tan\gamma$ & $\coth c=\sinh b\tan\gamma$ & $\coth c=\cosh b\tanh\gamma$\tabularnewline
% \hline 
\end{tabular}
}
\smallskip
\caption{trigonometric relations for generalized right-angled triangles}\label{Table:trigonometric_formulae}
\end{table}

Using Propositions~\ref{prop:elliptic-triangle-I-obtuse-sides},~\ref{prop:hyperbolic-triangles-Lambert-acute-angle}
and~\ref{prop:elliptic-triangles-II}, it can be seen that the (unsquared)
non-euclidean trigonometric translations of formulae $(\ref{eq:trigo2}-\ref{eq:trigo6})$ for each generalized right-angled triangle are
those listed in Table~\ref{Table:trigonometric_formulae} (we include
also the translations of~\eqref{eq:trigo1} for completeness).

\section{Trigonometry of generalized, non right-angled, triangles}
\label{sec:Trigonometry-for-generalized-triangles}

Let us consider again the triangle $\TT=\widetriangle{ABC}$ with sides
$a,b,c$ and its polar triangle $\TT'=\widetriangle{A'B'C'}$ with
sides $ a', b', c'$ as in \S~\ref{sec:triangle-notation}. We assume
again that $\TT$ is in general position with respect to $\Phi$
and $\TT'$, but we will not assume now that $\TT$ has
conjugate lines. Thus, $\TT$ could be an elliptic or hyperbolic
triangle, or it could compose with $\TT'$ any of the generalized
triangles depicted in Figures
\ref{Fig:hyperbolic_generalized_triangles_I}--\ref{Fig:hyperbolic_generalized_triangles_star_II}. 
We will see
how all the trigonometic formulae for these figures can be deduced
from the results of the previous section.

Consider the line $h_a=AA'$, and take the point $H_A=h_a \cdot a$ and its
conjugate point in $a$, which is the point $A_0$ (see  \S~\ref{sec:Desargues}).
For simplifying the notation, we denote now $X,X_a$ to the points $H_A,A_0$
respectively. Take also the point $X'=h_a \cdot a'$ (Figure
\ref{Fig:general_sine_rule}). The line
$h_a$ decomposes the Figure $\TT\cup\TT'$ into two
Menelaus' configurations with some common lines. We will use the following segments (see Figure
\ref{Fig:general_sine_rule}):
\[
p=\overline{AX},\quad a_{1}=\overline{BX},\quad a_{2}=\overline{CX},\quad a'_{1}=\overline{C'X'},\quad a'_{2}=\overline{B'X'}.
\]

\begin{figure}
\centering
\includegraphics[width=0.6\textwidth]{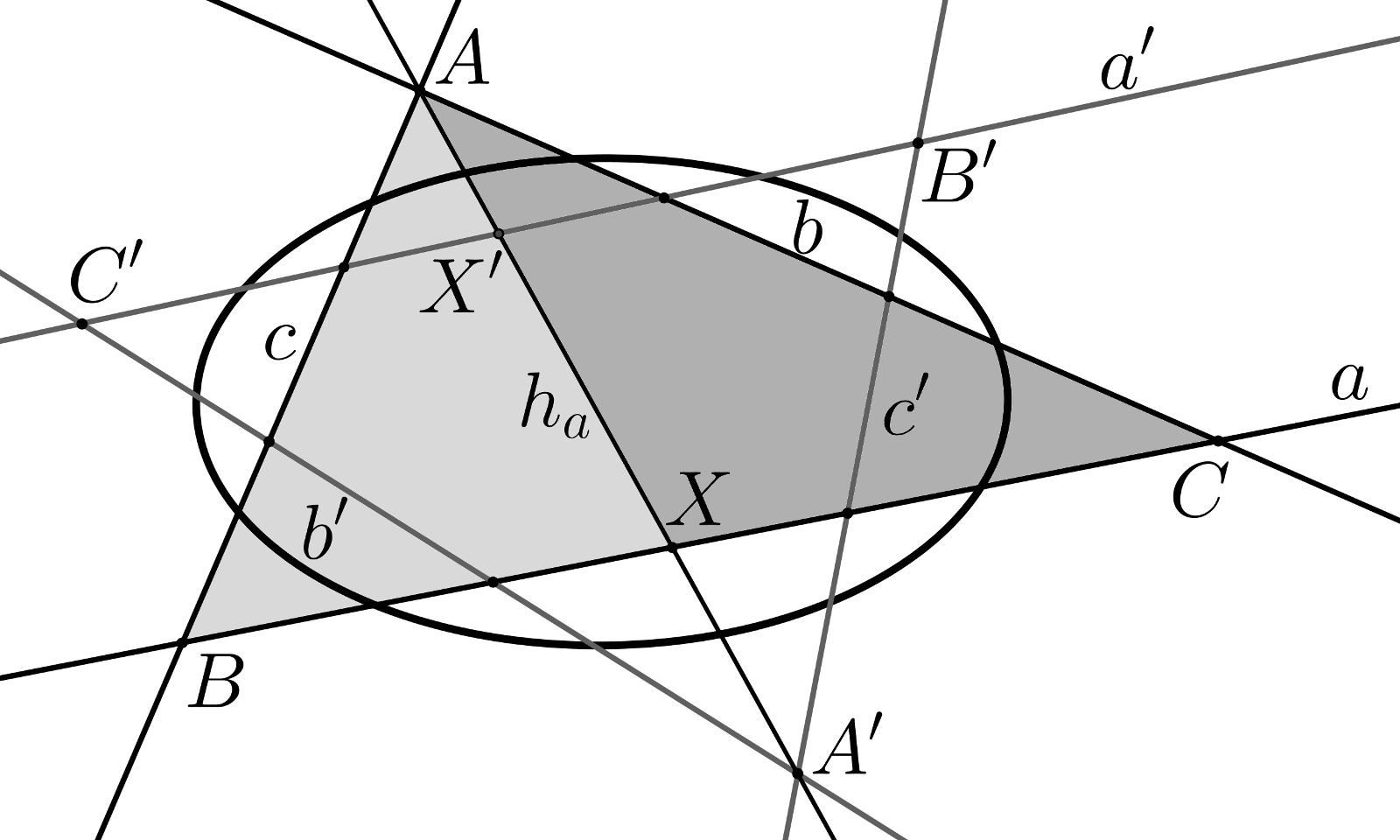}
\caption{projective triangle splitted into two right-angled projective triangles}\label{Fig:general_sine_rule}
\end{figure}

\subsection{The general (squared) law of sines.}\label{sub:The-general-sine-rule}

If we apply the identity~\eqref{eq:trigo2}
to the projective right-angled triangles $\widetriangle{AXB}$ and $\widetriangle{AXC}$
we obtain 
\[
\mathbf{S}(p)=\mathbf{S}(c) \mathbf{S}( b')\quad\text{and}\quad \mathbf{S}(p)=\mathbf{S}(b) \mathbf{S}( c'),
\]
respectively, and so the ratios $\mathbf{S}(b)/\mathbf{S}( b')$ and $\mathbf{S}(c)/\mathbf{S}( c')$
must coincide. Repeating the process, they must coincide also with
the ratio $\mathbf{S}(a)/\mathbf{S}( a')$. Therefore, the following identity holds
\begin{equation}
\dfrac{\mathbf{S}(a)}{\mathbf{S}( a')}=\dfrac{\mathbf{S}(b)}{\mathbf{S}( b')}=\dfrac{\mathbf{S}(c)}{\mathbf{S}( c')}.\label{eq:general-law-of-sines}
\end{equation}
We will call to identity~\eqref{eq:general-law-of-sines} the \emph{general
law of sines} (compare \cite[Theorem 2.6.20]{Buser} and \cite[Theorem 22.6]{Richter-Gebert}).

If we apply Table~\ref{table:non-euclidean-segments} to~\eqref{eq:general-law-of-sines}
for each generalized triangle, we obtain the law of sines associated
to each figure. For example, for an elliptic triangle 
(Figure~\ref{Fig:general_spherical_triangle}), the unsquared
translation of~\eqref{eq:general-law-of-sines} is 
\[
\dfrac{\sin a}{\sin\alpha}=\dfrac{\sin b}{\sin\beta}=\dfrac{\sin c}{\sin\gamma},
\]
while for a hyperbolic triangle (Figure~\ref{Fig:general_hyperbolic_triangle})
we have
\[
\dfrac{\sinh a}{\sin\alpha}=\dfrac{\sinh b}{\sin\beta}=\dfrac{\sinh c}{\sin\gamma}.
\]
For example, for the hyperbolic pentagon with four right angles 
(Figure~\ref{Fig:general_hyperbolic_pentagon})
we obtain
\[
\dfrac{\sinh a}{\sin\alpha}=\dfrac{\cosh b}{\sinh\beta}=\dfrac{\cosh c}{\sinh\gamma}.
\]

\subsection{The general (squared) law of cosines}

In order to obtain a projective trigonometric formula 
similar to the cosine rules of generalized triangles we need the
following lemma.
\begin{lemma}
\label{lem:cross-ratio-relations}The following relations hold:
\begin{align}
 & (BXCC_{a})^{2}=\dfrac{\mathbf{T}(a)}{\mathbf{T}(a_{2})},\label{eq:BCXXa-TBX-TCX}\\
 & \mathbf{C}(a_{1})=\left(\sqrt{\mathbf{C}(a)\mathbf{C}(a_{2})}+\sqrt{\mathbf{S}(a)\mathbf{S}(a_{2})}\right)^{2}.\label{eq:cosine-sum}
\end{align}
\end{lemma}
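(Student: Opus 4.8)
The plan is to push both identities onto the line $a$ and exploit the coordinate description of the conjugacy there. Everything in sight depends only on the three points $B,C,X$ of $a$, on their conjugates $B_a=a\cdot b'$, $C_a=a\cdot c'$, $X_a=A_0=a\cdot a'$ with respect to $\Phi$, and on the segments $a=\ov{BC}$, $a_1=\ov{BX}$, $a_2=\ov{CX}$. Since $a$ is not tangent to $\Phi$ (general position), the conjugacy $\rho_a$ on $a$ is the harmonic conjugacy $\tau_{UV}$ with respect to $U,V=a\cdot\Phi$; it is a projectivity of $a$ fixing $U$ and $V$. I would choose a nonhomogeneous coordinate $t$ on $a$ with $U\mapsto 0$, $V\mapsto\infty$, so that $\rho_a$ becomes $t\mapsto -t$; writing $\beta,\gamma,\xi$ for the coordinates of $B,C,X$ (all nonzero, finite and pairwise distinct by general position), the conjugates $B_a,C_a,X_a$ have coordinates $-\beta,-\gamma,-\xi$. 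After this, everything is an evaluation of cross ratios through \eqref{eq:definition-of-cross-ratio}.

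For \eqref{eq:BCXXa-TBX-TCX}: by the definition of $\mathbf{T}$ we have $\mathbf{T}(a)=-(BB_aCC_a)$ and $\mathbf{T}(a_2)=-(CC_aXX_a)$, and a one-line evaluation gives
\[
(BB_aCC_a)=\frac{(\beta-\gamma)^2}{(\beta+\gamma)^2},\quad (CC_aXX_a)=\frac{(\gamma-\xi)^2}{(\gamma+\xi)^2},\quad (BXCC_a)^2=\frac{(\beta-\gamma)^2(\gamma+\xi)^2}{(\gamma-\xi)^2(\beta+\gamma)^2};
\]
dividing the first two and comparing with the third gives $(BXCC_a)^2=\mathbf{T}(a)/\mathbf{T}(a_2)$. (One can also stay coordinate-free: $(BB_aCC_a)$ and $(CC_aXX_a)$ are $\rho_a$-invariant by \eqref{eq:invariance_cross-ratio-I}, and the identity can be unwound with \eqref{eq:CR1}--\eqref{eq:CR3}; the coordinate version is faster.)

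For \eqref{eq:cosine-sum}, I would first record, in the same coordinate,
\[
\mathbf{C}(a)=\frac{(\beta+\gamma)^2}{4\beta\gamma},\qquad \mathbf{C}(a_1)=\frac{(\beta+\xi)^2}{4\beta\xi},\qquad \mathbf{C}(a_2)=\frac{(\gamma+\xi)^2}{4\gamma\xi},
\]
together with $\mathbf{S}(\cdot)=1-\mathbf{C}(\cdot)$, so that $\mathbf{S}(a)=-(\beta-\gamma)^2/(4\beta\gamma)$ and $\mathbf{S}(a_2)=-(\gamma-\xi)^2/(4\gamma\xi)$, and then prove the sign-free rational identity
\[
\bigl(\mathbf{C}(a_1)-\mathbf{C}(a)\mathbf{C}(a_2)-\mathbf{S}(a)\mathbf{S}(a_2)\bigr)^2=4\,\mathbf{C}(a)\mathbf{C}(a_2)\mathbf{S}(a)\mathbf{S}(a_2),
\]
which is equivalent to \eqref{eq:cosine-sum} up to the sign of the cross term. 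The only non-routine algebraic step is the factorisation
\[
\gamma^2(\beta+\xi)^2-(\beta\xi+\gamma^2)^2=(\beta-\gamma)(\gamma-\xi)(\beta+\gamma)(\gamma+\xi),
\]
after which $\mathbf{C}(a_1)-\mathbf{C}(a)\mathbf{C}(a_2)-\mathbf{S}(a)\mathbf{S}(a_2)$ reduces to $\dfrac{(\beta-\gamma)(\gamma-\xi)(\beta+\gamma)(\gamma+\xi)}{8\beta\gamma^2\xi}$ and its square to $\dfrac{(\beta-\gamma)^2(\gamma-\xi)^2(\beta+\gamma)^2(\gamma+\xi)^2}{64\beta^2\gamma^4\xi^2}$, which is exactly $4\mathbf{C}(a)\mathbf{C}(a_2)\mathbf{S}(a)\mathbf{S}(a_2)$.

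The main obstacle is not the calculation but the sign in \eqref{eq:cosine-sum}: the rational identity above only yields $\mathbf{C}(a_1)=\bigl(\sqrt{\mathbf{C}(a)\mathbf{C}(a_2)}\pm\sqrt{\mathbf{S}(a)\mathbf{S}(a_2)}\bigr)^2$, and I would still have to argue that the $+$ sign is the correct one. This I would settle by tracking the signs of $\beta,\gamma,\xi$ — equivalently, the location of the foot $X=H_A$ relative to $B$ and $C$ — or, more cleanly, by reading the two radicals as $\sqrt{\mathbf{C}(a)}\,\sqrt{\mathbf{C}(a_2)}$ and $\sqrt{\mathbf{S}(a)}\,\sqrt{\mathbf{S}(a_2)}$ with each factor in its natural branch (imaginary precisely when the corresponding $\mathbf{S}$ is negative, as in Table~\ref{table:non-euclidean-segments}), so that the $+$ comes out automatically. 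This sign bookkeeping, rather than anything computational, is the delicate point, and it is the same subtlety that makes the unsquaring of the law of cosines awkward in the later chapters.
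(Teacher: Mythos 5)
Your argument is correct, and it verifies both identities, but it travels a genuinely different road from the paper. The paper never introduces coordinates: it proves \eqref{eq:BCXXa-TBX-TCX} by splitting $(BXCC_a)$ twice with \eqref{eq:CR3}, inverting with \eqref{eq:CR1}, and then using the invariance of cross ratio under the conjugacy $\rho_a$ (equation \eqref{eq:invariance_cross-ratio-I}) to recognize $(B_aX_aCC_a)$ as $1/(BXCC_a)$, so that the unknown cross ratio appears on both sides and squares out; \eqref{eq:cosine-sum} is then obtained by the same splitting applied to $(BXX_aB_a)$ together with the intermediate evaluation $(BX_aCC_a)=-\sqrt{\mathbf{T}(a)\mathbf{T}(a_2)}$. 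Your route instead normalizes $U\mapsto 0$, $V\mapsto\infty$ so that $\rho_a$ becomes $t\mapsto -t$, and reduces everything to rational identities in $\beta,\gamma,\xi$; I have checked the evaluations of $\mathbf{C}(a)$, $\mathbf{S}(a)$, $(BXCC_a)$ and the factorisation $\gamma^2(\beta+\xi)^2-(\beta\xi+\gamma^2)^2=(\beta-\gamma)(\gamma-\xi)(\beta+\gamma)(\gamma+\xi)$, and they are all right. What the paper's method buys is that it stays entirely inside the cross-ratio calculus of \S\ref{sub:Cross-ratios.} (consistent with the book's no-computation ethos) and hands you the auxiliary identity for $(BX_aCC_a)$ that is reused immediately afterwards in the derivation of the general law of cosines. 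What your method buys is transparency on precisely the point the paper glosses over: the branch of the square roots. Your direct computation shows that with the two radicals read as $(\beta+\gamma)(\gamma+\xi)/(4\gamma\sqrt{\beta\xi})$ and $(\beta-\gamma)(\gamma-\xi)/(4\gamma\sqrt{\beta\xi})$ --- i.e.\ with a common choice of $\sqrt{\beta\xi}$ --- the numerator of the sum collapses to $2\gamma(\beta+\xi)$ and the $+$ sign comes out automatically, so you need not fall back on the weaker ``rational identity up to $\pm$'' formulation at all; note that the paper's own proof carries exactly the same implicit branch choice when it writes $(BXCC_a)=\sqrt{\mathbf{T}(a)/\mathbf{T}(a_2)}$, so you are not being held to a higher standard than the original. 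Two small hygiene points: your coordinates require $B,C,X\notin\{U,V\}$, which is exactly the hypothesis under which the projective trigonometric ratios in the statement are defined, so nothing is lost; and when $\Phi$ is imaginary the coordinate $t$ is complex, but the rational identities hold over $\mathbb{C}$, so the argument covers the elliptic case as well.
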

\begin{proof}
The proof relies in the properties~\eqref{eq:CR1},~\eqref{eq:CR2}
and~\eqref{eq:CR3} of cross ratios.

By applying~\eqref{eq:CR3} twice
and~\eqref{eq:CR1} once we have:
\[
(BXCC_{a})=(X_{a}XCC_{a}) (BX_{a}CC_{a})=\dfrac{1}{(XX_{a}CC_{a})} (B_{a}X_{a}CC_{a}) (BB_{a}CC_{a}).
\]
As cross ratios are invariant under conjugacy, it is
\[
(B_{a}X_{a}CC_{a})=(BXC_{a}C)=\dfrac{1}{(BXCC_{a})},
\]
and so we obtain
\[
(BXCC_{a})=\dfrac{1}{(BXCC_{a})} \dfrac{(BB_{a}CC_{a})}{(XX_{a}CC_{a})}\Rightarrow(BXCC_{a})^{2}=\dfrac{\mathbf{T}(a)}{\mathbf{T}(a_{2})}.
\]
By applying~\eqref{eq:CR3} again, we have:
\begin{align*}
\mathbf{C}(a_{1}) & =(BXX_{a}B_{a})=(BXC_{a}B_{a}) (BXX_{a}C_{a})=\\
 & =(CXC_{a}B_{a})(BCC_{a}B_{a}) (CXX_{a}C_{a})(BCX_{a}C_{a})=\\
 & =(CXC_{a}B_{a})  \mathbf{C}(a)  \mathbf{C}(a_{2}) (BCX_{a}C_{a}).
\end{align*}
Therefore
\[
(CXC_{a}B_{a})=(C_{a}X_{a}CB)=(BCX_{a}C_{a}).
\]
This implies:
\[
\mathbf{C}(a_{1})=\mathbf{C}(a)  \mathbf{C}(a_{2}) (BCX_{a}C_{a})^{2}=\mathbf{C}(a)  \mathbf{C}(a_{1}) [1-(BX_{a}CC_{a})]^{2}.
\]
Using \textbf{\eqref{eq:BCXXa-TBX-TCX}}, we obtain
\[
(BX_{a}CC_{a})=(XX_{a}CC_{a})(BXCC_{a})=-\mathbf{T}(a_{2})\sqrt{\dfrac{\mathbf{T}(a)}{\mathbf{T}(a_{2})}}=-\sqrt{\mathbf{T}(a)\mathbf{T}(a_{2})}.
\]
Then,
\begin{align*}
\mathbf{C}(a_{1}) & =\mathbf{C}(a)  \mathbf{C}(a_{2}) [1+\sqrt{\mathbf{T}(a)\mathbf{T}(a_{2})}]^{2}=\\
 & =\left(\sqrt{\mathbf{C}(a)\mathbf{C}(a_{2})}+\sqrt{\mathbf{S}(a)\mathbf{S}(a_{2})}\right)^{2}.
\end{align*}

\end{proof}

From the proof of Lemma~\ref{lem:cross-ratio-relations} it follows
that~\eqref{eq:BCXXa-TBX-TCX} and~\eqref{eq:cosine-sum} 
are valid\footnote{Formula~\eqref{eq:cosine-sum} is a projective version of the
trigonometric formulae for the angle sum $\cos (\alpha\pm \beta), \sin
(\alpha\pm \beta), \cosh (\alpha\pm \beta)$ and $\sinh (\alpha\pm \beta)$.}
for any three points $B,C,X$ lying on a line $a$ not tangent to
$\Phi$, independently of the figure $\TT\cup\TT'$.

If we apply~\eqref{eq:trigo1} to the projective right-angled triangles
$\widetriangle{ABX}$ and $\widetriangle{ACX}$, we obtain
\[
\mathbf{C}(c)=\mathbf{C}(p)  \mathbf{C}(a_{1})\qquad\text{and}\qquad \mathbf{C}(b)=\mathbf{C}(p)  \mathbf{C}(a_{2})
\]
respectively. It can be deduced
\[
\mathbf{C}(c)=\dfrac{\mathbf{C}(b)}{\mathbf{C}(a_{2})}  \mathbf{C}(a_{1}),
\]
and by~\eqref{eq:cosine-sum}, we arrive to
\begin{align*}
\mathbf{C}(c) & =\dfrac{\mathbf{C}(b)}{\mathbf{C}(a_{2})} \left(\sqrt{\mathbf{C}(a)\mathbf{C}(a_{2})}+\sqrt{\mathbf{S}(a)\mathbf{S}(a_{2})}\right)^{2}=\\
 & =\mathbf{C}(b) \left(\sqrt{\mathbf{C}(a)}+\sqrt{\mathbf{S}(a)\mathbf{T}(a_{2})}\right)^{2}.
\end{align*}
If we apply {\footnotesize~\eqref{eq:trigo4}} to the triangle $\widetriangle{ACX}$,
we obtain that $\mathbf{T}(a_{2})=\mathbf{T}(b)  \mathbf{C}( c')$, and so we have
\begin{equation}
\mathbf{C}(c)=\left(\sqrt{\mathbf{C}(a)\mathbf{C}(b)}+\sqrt{\mathbf{S}(a)\mathbf{S}(b)\mathbf{C}( c')}\right)^{2}.\label{eq:general-law-of-cosines}
\end{equation}

We will call to identity~\eqref{eq:general-law-of-cosines} the \emph{general
law of cosines} (compare \cite[Theorem 2.6.20]{Buser}).

For a given generalized triangle, there is a great difference between
the general law of sines~\eqref{eq:general-law-of-sines} and the
general law of cosines~\eqref{eq:general-law-of-cosines} when we
want to obtain their unsquared non-euclidean trigonometric translations. The unsquared
trigonometric translations of~\eqref{eq:general-law-of-sines} depend
on a simple choice of signs that can be done in an straightforward
way. On the other hand, the unsquared trigonometric
translations of~\eqref{eq:general-law-of-cosines} depend on a multiple
choice of signs due to the presence of the two square roots and the
square power in the right-hand side of~\eqref{eq:general-law-of-cosines}.

Every non-euclidean generalized triangle $T$ have six laws of cosines, one based
in
each side of $T$ and another one based in each non-right angle of
$T$. If $\TT$ and $\TT'$ are the two projective triangles,
polar to each other, that produce the non-euclidean polygon $T$,
these six cosine laws are the unsquared trigonometric translations
of~\eqref{eq:general-law-of-cosines}, when the segment $c$ varies
along the three sides of $\TT$ and the three sides of $\TT'$.
When $T$ is a hyperbolic generalized triangle, in \cite{Buser} it
is shown how to obtain all the cosine rules of $T$ by taking an orientation
on the elements (sides and non-right angles) of $T$ and assigning
some $\pm1$ coefficients to these elements. In our context
we would like to have another
\emph{projective general law of cosines} different from~\eqref{eq:general-law-of-cosines}
whose trigonometric translations are straightforward
and give the cosine rules for each (hyperbolic and elliptic) generalized
triangle.  We'll explore the existence of this formula in \S
\ref{sec:Cosine-Rule}.
In order to obtain
it, it would be desirable to express in projective terms
the actual (unsquared) trigonometric functions associated to the measurements
of segments and angles.

Nevertheless, every generalized triangle is the result of pasting
together two generalized right-angled triangles along a side (Figures
\ref{Fig:hyperbolic_generalized_triangles_I}--\ref{Fig:hyperbolic_generalized_triangles_star_II}).
This allows us to deduce the trigonometry of all generalized triangles
from the trigonometry of generalized right-angled triangles. This suffices for
presenting the whole non-euclidean trigonometry as a corollary of
\hyperref[thm:Menelaus-affine]{Menelaus' Theorem}.

\chapter{Carnot's Theorem and... Carnot's
Theorem?}\label{sec:Carnot's-theorem-and}

There are many geometric results which are known as \emph{Carnot's Theorem}.
One of them is a generalization of 
\hyperref[thm:Menelaus-affine]{Menelaus' Theorem}
(see Figure~\ref{Carnot_theorem}):
\begin{theorem}[Carnot's Theorem on affine triangles]
\label{thm:Carnot-affine}$ $

\noindent Let  $\wt{XYZ}$ be a triangle
in the affine plane. The six points $X_{1},X_{2}\in YZ$, $Y_{1},Y_{2}\in ZX$,
$Z_{1},Z_{2}\in XY$ on the sides of $\TT$ lie on a conic if and
only if
\begin{equation*}
\dfrac{\brs{X_{1}Y}}{\brs{X_{1}Z}}\cdot\dfrac{\brs{X_{2}Y}}{\brs{X_{2}Z}}\cdot\dfrac{\brs{Y_{1}Z}}{\brs{Y_{1}X}}\cdot\dfrac{\brs{Y_{2}Z}}{\brs{Y_{2}X}}\cdot\dfrac{\brs{Z_{1}X}}{\brs{Z_{1}Y}}\cdot\dfrac{\brs{Z_{2}X}}{\brs{Z_{2}Y}}=1\,.
% \label{eq:Carnot_Affine}
\end{equation*}
\end{theorem}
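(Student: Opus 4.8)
The plan is to prove both implications at once from a single identity: if $Q(x,y)$ is a second--degree polynomial (possibly degenerate) cutting out a conic $\mathcal{C}$ meeting $YZ$ in the two points $X_1,X_2$, then
\[
\frac{\brs{X_1Y}}{\brs{X_1Z}}\cdot\frac{\brs{X_2Y}}{\brs{X_2Z}}=\frac{Q(Y)}{Q(Z)}\,,
\]
and cyclically for the other two sides. This is the degree--two analogue of the elementary remark that a transversal line with equation $L=0$ satisfies $\brs{X_1Y}/\brs{X_1Z}=L(Y)/L(Z)$, which is exactly how affine \hyperref[thm:Menelaus-affine]{Menelaus' Theorem} drops out of evaluating the transversal's linear equation at the vertices. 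Indeed, when $\mathcal{C}$ degenerates into a pair of lines $\ell_1\cup\ell_2$, with $\ell_i$ cutting the three sides at $X_i,Y_i,Z_i$, Carnot's relation is literally the product of the two Menelaus relations attached to $\ell_1$ and $\ell_2$ --- so the general statement is the natural ``one degree up'' from Menelaus.

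To establish the displayed identity I would fix the affine parametrisation $P(t)=(1-t)Y+tZ$ of the line $YZ$ and restrict $Q$ to it, obtaining a polynomial $q(t)=at^2+bt+c$ with $q(0)=Q(Y)$, $q(1)=Q(Z)$, whose roots $t_1,t_2$ are the parameters of $X_1,X_2$. Computing in this parameter gives $\brs{X_iY}/\brs{X_iZ}=t_i/(t_i-1)$, and Vieta's formulas $t_1t_2=c/a$, $(t_1-1)(t_2-1)=q(1)/a$ make the product collapse to $q(0)/q(1)=Q(Y)/Q(Z)$. Doing the same on $ZX$ and $XY$ produces the factors $Q(Z)/Q(X)$ and $Q(X)/Q(Y)$, and the product of the three telescopes to $1$. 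This proves the direction ``lie on a conic $\Rightarrow$ the product is $1$'', under the tacit hypotheses that the six points are genuine affine points distinct from the vertices and that no vertex lies on $\mathcal{C}$ (so that $Q(X),Q(Y),Q(Z)$ are finite and nonzero).

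For the converse I would start from the six given points with product $1$, take the conic $\mathcal{C}$ through the five points $X_1,X_2,Y_1,Y_2,Z_1$ (such a conic exists, and since no four of these five are collinear it is uniquely determined in general position), and let $Z_2^{*}$ be the second point of $\mathcal{C}\cap XY$. Applying the already--proven direction to $\mathcal{C}$ shows that the Carnot product formed with $X_1,X_2,Y_1,Y_2,Z_1,Z_2^{*}$ also equals $1$; comparing with the hypothesis forces $\brs{Z_2X}/\brs{Z_2Y}=\brs{Z_2^{*}X}/\brs{Z_2^{*}Y}$. Since the map $P\mapsto\brs{PX}/\brs{PY}$ is injective on the line $XY$, this gives $Z_2=Z_2^{*}\in\mathcal{C}$, so all six points lie on $\mathcal{C}$.

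I expect the only real difficulty to be the bookkeeping of degenerate configurations rather than the computation: the conic through the five chosen points may itself split into two lines (harmless, it is still ``a conic''), the second intersection point $Z_2^{*}$ may fall at infinity or onto a vertex, or a vertex may happen to lie on $\mathcal{C}$ and make some $Q(\cdot)$ vanish. These cases must be either excluded by a clean general--position hypothesis stated before the argument or disposed of by a standard limiting argument; everything else reduces to the quadratic--form / two--line bookkeeping sketched above.
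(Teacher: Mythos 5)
Your argument is correct, and it is worth noting that the paper itself does not prove Theorem~\ref{thm:Carnot-affine}: it only cites Veblen--Young (vol.~II, p.~90), so there is no internal proof to compare against line by line. Your route --- restricting a defining quadratic $Q$ of the conic to each side, parametrising affinely, and using Vieta to get $\brs{X_1Y}\brs{X_2Y}/(\brs{X_1Z}\brs{X_2Z})=Q(Y)/Q(Z)$ so that the three factors telescope --- is the clean ``degree two'' upgrade of evaluating a linear form at the vertices (which is exactly how affine Menelaus falls out), and the converse via the conic through five of the points plus injectivity of $P\mapsto\brs{PX}/\brs{PY}$ is the standard uniqueness step. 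This is a different flavour from the classical synthetic treatments (which typically degenerate the conic into a pair of transversals and multiply two Menelaus relations, or work projectively with cross ratios as the paper does in its Theorem~\ref{thm:Carnot-projective}); your version is more computational but entirely self-contained and arguably shorter. Two small points to make explicit if you write it up: (i) the forward direction needs the restriction $q$ of $Q$ to each side to be genuinely quadratic, i.e.\ no side of $\wt{XYZ}$ may be a component of the conic and no vertex may lie on it (otherwise some $Q(X),Q(Y),Q(Z)$ vanishes and the ratios are meaningless); (ii) in the converse, the five points $X_1,X_2,Y_1,Y_2,Z_1$ determine a unique conic only when no four of them are collinear, and the second intersection $Z_2^{*}$ of that conic with $XY$ should be taken in the projective closure so that the comparison of ratios makes sense even when it is at infinity. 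With those general-position caveats stated up front, the proof is complete.
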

Another ``Carnot's Theorem'' is Theorem~\ref{thm:Carnot_theorem_euclidean} below.

Let $T$ be a triangle
in euclidean plane with vertices $A,B,C$ and sides $a=BC,b=CA,c=AB$ as usual.
Let $A^{*},B^{*},C^{*}$
be three points on the lines $a,b,c$ respectively, and let
$a^{*},b^{*},c^{*}$ be the perpendicular lines to $a,b,c$ through
the points $A^{*},B^{*},C^{*}$ respectively. Let $a_1,a_2,b_1,b_2,c_1,c_2$ 
denote the euclidean lengths of the segments
$\ov{BA^*},\ov{CA^*},\ov{CB^*},\ov{AB^*},\ov{AC^*},\ov{BC^*}$, respectively 
(see Figure~\ref{Fig:Carnot_theorem_euclidean}).
\begin{theorem}[Carnot's Theorem on euclidean triangles]
\label{thm:Carnot_theorem_euclidean}$ $

\noindent The lines $a^{*},b^{*},c^{*}$
are concurrent if and only if
\begin{equation*}
a_1^2+b_1^2+c_1^2=a_2^2+b_2^2+c_2^2
% \VRT{AC^{*}}^{2}_e+\VRT{BA^{*}}^{2}_e+\VRT{CB^{*}}^{2}_e=\VRT{AB^{*}}^{2}_e+\VRT{BC^{*}}^{2}_e+\VRT{CA^{*}}^{2}_e
% \;.\label{eq:Carnot-euclidean}
\end{equation*}
\end{theorem}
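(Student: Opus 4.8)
The plan is to turn each of the three concurrency conditions into a single scalar identity by means of the classical ``difference of squared distances'' lemma, and then to observe that the three identities add up telescopically. I would first record the following elementary fact: for two distinct points $B,C$ of the euclidean plane and a real number $k$, the locus of points $P$ with $PB^{2}-PC^{2}=k$ (euclidean distances) is a line perpendicular to $BC$, and conversely every line perpendicular to $BC$ arises this way. This is immediate from the observation that $P\mapsto PB^{2}-PC^{2}$ is an affine function of $P$ whose gradient is parallel to the vector $\oV{CB}$, so its level sets are exactly the lines orthogonal to $BC$. Evaluating the constant at $P=A^{*}$ gives $A^{*}B^{2}-A^{*}C^{2}=a_{1}^{2}-a_{2}^{2}$, since $a_{1},a_{2}$ are by definition the lengths $\VRT{BA^{*}},\VRT{CA^{*}}$ and only their squares intervene; hence $a^{*}=\{P:PB^{2}-PC^{2}=a_{1}^{2}-a_{2}^{2}\}$, and likewise $b^{*}=\{P:PC^{2}-PA^{2}=b_{1}^{2}-b_{2}^{2}\}$ and $c^{*}=\{P:PA^{2}-PB^{2}=c_{1}^{2}-c_{2}^{2}\}$.

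For the forward implication, if $P$ is a common point of $a^{*},b^{*},c^{*}$ I would simply add the three displayed equations: the left-hand side collapses to $(PB^{2}-PC^{2})+(PC^{2}-PA^{2})+(PA^{2}-PB^{2})=0$, which forces $(a_{1}^{2}-a_{2}^{2})+(b_{1}^{2}-b_{2}^{2})+(c_{1}^{2}-c_{2}^{2})=0$, i.e. the asserted identity $a_{1}^{2}+b_{1}^{2}+c_{1}^{2}=a_{2}^{2}+b_{2}^{2}+c_{2}^{2}$. For the converse, I would first note that $a^{*}\perp a$ and $b^{*}\perp b$ while $a$ and $b$ are sides of a genuine triangle, hence non-parallel; therefore $a^{*}$ and $b^{*}$ are non-parallel and meet at a point $P$. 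This $P$ satisfies $PB^{2}-PC^{2}=a_{1}^{2}-a_{2}^{2}$ and $PC^{2}-PA^{2}=b_{1}^{2}-b_{2}^{2}$; adding these and rewriting the hypothesis as $(a_{1}^{2}-a_{2}^{2})+(b_{1}^{2}-b_{2}^{2})=-(c_{1}^{2}-c_{2}^{2})$ yields $PA^{2}-PB^{2}=c_{1}^{2}-c_{2}^{2}$, so $P\in c^{*}$ by the characterization above and the three perpendiculars concur at $P$.

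I do not expect a real obstacle here: all of the content sits in the locus lemma, and the two implications reduce to a one-line cancellation afterwards. The only point requiring care is the bookkeeping of which squared length is attached to which ordered pair of vertices — a single interchange of $a_{1}$ and $a_{2}$ would flip a sign — but since every term enters squared this is harmless, and it is worth remarking explicitly that the non-parallelism of the sides rules out the degenerate possibility that $a^{*},b^{*},c^{*}$ are pairwise parallel. (One could alternatively run the whole argument in vector coordinates, writing $P$, $A^{*}$, $B^{*}$, $C^{*}$ as vectors and expanding the orthogonality relations $\oV{PA^{*}}\cdot\oV{BC}=0$ etc. directly; this is the same computation with the lemma inlined.)
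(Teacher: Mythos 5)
Your proof is correct and complete. The paper does not actually write out a proof of this theorem --- it only remarks that it follows ``by simple application of Pythagoras' Theorem'' --- and your locus lemma (that $PB^{2}-PC^{2}$ is an affine function of $P$ with gradient parallel to $\oV{CB}$, so its level sets are exactly the perpendiculars to $BC$) is precisely that Pythagorean argument packaged cleanly, with the forward direction by telescoping and the converse handled correctly by intersecting the non-parallel perpendiculars $a^{*}$ and $b^{*}$ first.
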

This theorem can be proved by simple application of Pythagoras' Theorem. For a proof of Theorem~\ref{thm:Carnot-affine}, see 
See \cite[vol. II, p. 90]{V - Y}. We say that the points $A^*,B^*,C^*$ such that
$a^*,b^*,c^*$ are concurrent are \emph{Carnot points} of $T$.

A version of Theorem~\ref{thm:Carnot_theorem_euclidean} for hyperbolic triangles
is stated in \cite{Demirel-Soyturk}. Another versions for different generalized
triangles can be constructed using the non-euclidean versions of Pythagoras'
theorem (see \S\ref{sub:Non-euclidean-Pythagoras'-Theorem}), or its projective
version~\eqref{eq:trigo1}. What we will see in the following is that\emph{ the
non-euclidean shadows of (the projective version of)
Theorem~\ref{thm:Carnot-affine} are the non-euclidean
versions of Theorem~\ref{thm:Carnot_theorem_euclidean}
for generalized triangles}.

\begin{figure}
\centering
\subfigure[Carnot's theorem on projective triangles]
{\includegraphics[width=0.75\textwidth]{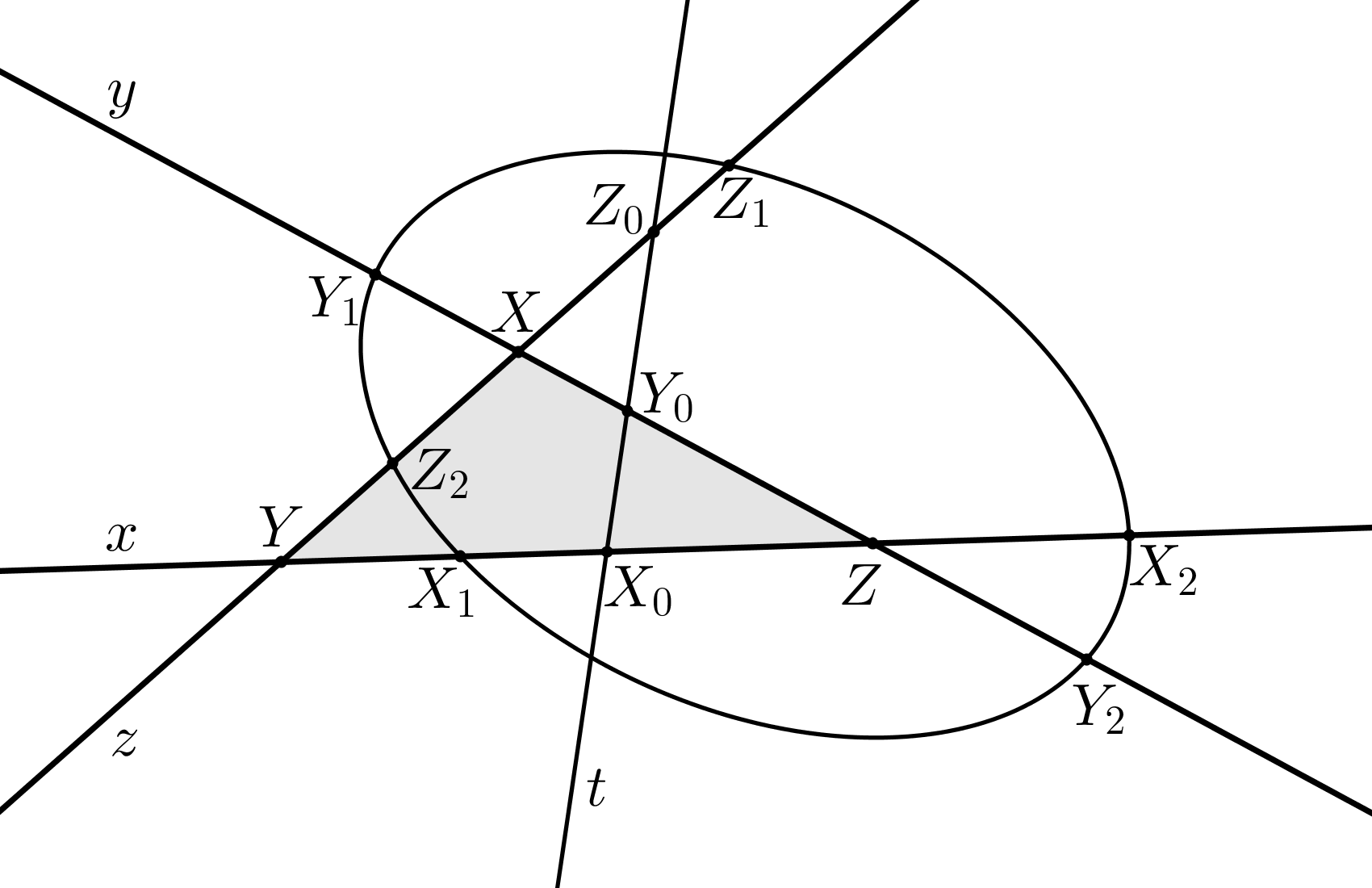}
\label{Carnot_theorem}}
\\
\subfigure[Carnot's theorem on euclidean triangles]
{\includegraphics[width=0.45\textwidth]{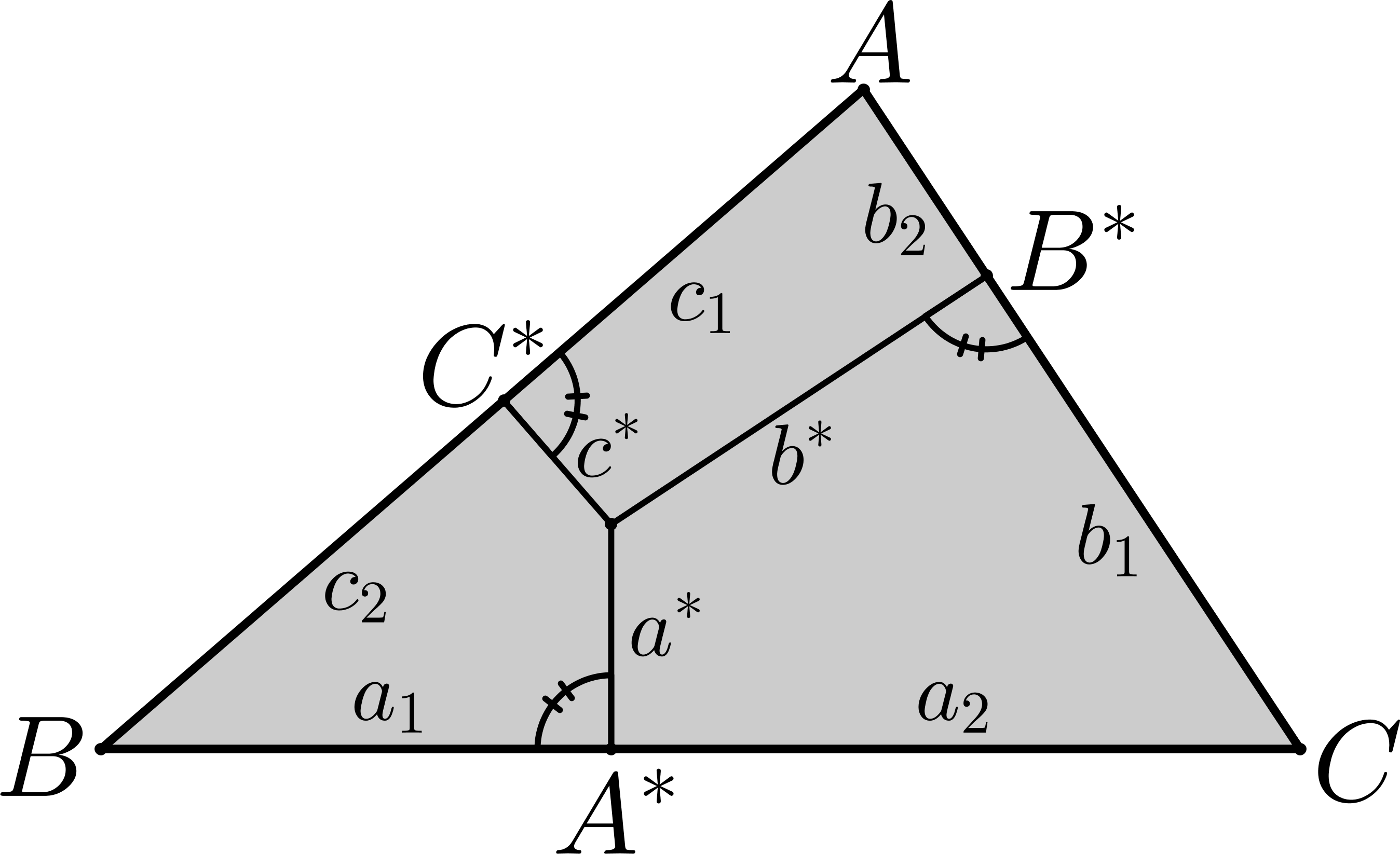}
\label{Fig:Carnot_theorem_euclidean}}

\caption{Carnot's theorems}
\label{Carnot_theorems} 
\end{figure}

As we did with \hyperref[thm:Menelaus-affine]{Menelaus' Theorem} 
in \S\ref{sec:Menelaus-Theorem}, 
we can make a projective
interpretation of Theorem~\ref{thm:Carnot-affine} by introducing the line at
infinity
as part of the figure. We
will use the same notation as in \S\ref{sec:Menelaus-Theorem}. Let
$x,y,z,t$ be four projective lines such that no three of
them are concurrent. Consider the projective triangle $\TT=\widetriangle{xyz}$
and take
the points
\begin{align*}
\begin{split}
X=y\cdot z,\quad Y=z\cdot x, \quad Z=x\cdot y,\\
X_{0}=x\cdot t, \quad Y_{0}=y\cdot t, \quad Z_{0}=z\cdot t.
\end{split}
\end{align*}
Then, Theorem~\ref{thm:Carnot-affine} turns into
\begin{theorem}[Carnot's Theorem on projective triangles]
\label{thm:Carnot-projective}$ $

\noindent Six points $X_{1},X_{2}\in x$,
$Y_{1},Y_{2}\in y$, $Z_{1},Z_{2}\in z$ on the sides of $\TT$ lie
on a conic if and only if
\[
\left(XYZ_{0}Z_{1}\right)\left(XYZ_{0}Z_{2}\right)\left(YZX_{0}X_{1}\right)
\left(YZX_{0}X_{2}\right)\left(ZXY_{0}Y_{1}\right)\left(ZXY_{0}Y_{2}\right)=1\,.
\]
\end{theorem}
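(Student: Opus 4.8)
The plan is to deduce Theorem~\ref{thm:Carnot-projective} from Theorem~\ref{thm:Carnot-affine} by exactly the two-step passage used to obtain \hyperref[cor:Menelaus]{Menelaus' Projective Formula} (Corollary~\ref{cor:Menelaus}) from Theorem~\ref{thm:Menelaus-affine}. First I would rewrite the affine Carnot relation projectively, keeping the line at infinity as an explicit auxiliary line. Choose a line $\ell_\infty$ of the projective plane in general position with respect to the sides $x,y,z$ and the six points (only finitely many lines are excluded, so this is possible), regard its complement as an affine plane, and let $X_\infty=x\cdot\ell_\infty$, $Y_\infty=y\cdot\ell_\infty$, $Z_\infty=z\cdot\ell_\infty$ be the points at infinity of the three sides. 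By~\eqref{eq:harmonic ratio equals cross ratio}, every affine ratio appearing in Theorem~\ref{thm:Carnot-affine} is a cross ratio with the corresponding point at infinity, e.g.\ $\brs{X_1Y}/\brs{X_1Z}=(YZX_1X_\infty)$, $\brs{Y_1Z}/\brs{Y_1X}=(ZXY_1Y_\infty)$, $\brs{Z_1X}/\brs{Z_1Y}=(XYZ_1Z_\infty)$, and similarly for $X_2,Y_2,Z_2$. Hence Theorem~\ref{thm:Carnot-affine} says that the six points lie on a conic if and only if
\begin{equation*}
(YZX_1X_\infty)(YZX_2X_\infty)(ZXY_1Y_\infty)(ZXY_2Y_\infty)(XYZ_1Z_\infty)(XYZ_2Z_\infty)=1.
\end{equation*}

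Next I would replace $\ell_\infty$ by the given line $t$. Applying~\eqref{eq:CR3} in the form $(YZX_1X_\infty)=(YZX_0X_\infty)(YZX_1X_0)$ to each of the six factors above (and the analogous splittings for the $Y$- and $Z$-factors), the left-hand side becomes
\begin{equation*}
\left[(YZX_0X_\infty)(ZXY_0Y_\infty)(XYZ_0Z_\infty)\right]^{2}\prod_{i=1,2}(YZX_iX_0)(ZXY_iY_0)(XYZ_iZ_0).
\end{equation*}
Since $X_0,Y_0,Z_0$ all lie on $t$ they are collinear, so the bracketed factor equals $1$ by the projective version~\eqref{eq:Menelaus_Projective-1} of Menelaus' Theorem applied to $X_0,Y_0,Z_0$ (this is precisely identity~\eqref{eq:Menelaus_Projective-2}). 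Consequently the six points lie on a conic if and only if $\prod_{i=1,2}(YZX_iX_0)(ZXY_iY_0)(XYZ_iZ_0)=1$, and since by~\eqref{eq:CR1} exchanging the last two entries of a cross ratio replaces it by its reciprocal, while a product of reciprocals equals $1$ exactly when the product does, this is equivalent to the asserted identity $(XYZ_0Z_1)(XYZ_0Z_2)(YZX_0X_1)(YZX_0X_2)(ZXY_0Y_1)(ZXY_0Y_2)=1$. (Alternatively, one could simply transport the line at infinity to $t$ by a projective collineation, conics, collinearity and cross ratios being projectively invariant; I prefer the explicit route, matching the paper's treatment of Menelaus.)

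Finally, to see that the equivalence genuinely goes in both directions, I would note that every step above is either a reversible cross-ratio identity drawn from~\eqref{eq:cross_ratio_identities} and~\eqref{eq:CR3}, or an invocation of Theorem~\ref{thm:Carnot-affine} or of Menelaus' Theorem as an equality that holds unconditionally for collinear triples; under the general-position hypotheses on $x,y,z,t$ and the six points, each cross ratio that appears has four distinct collinear points and is therefore finite and nonzero by Proposition~\ref{prop:cross-ratio-four-different-points}, so nothing degenerates. The step I would double-check most carefully is the very first one --- verifying that $\ell_\infty$ can be chosen so as to avoid the six points, the vertices $X,Y,Z$ and the three sides, so that all points at infinity and all cross ratios $(YZX_1X_\infty)$ etc.\ are well defined. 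Beyond that the argument is routine bookkeeping with~\eqref{eq:cross_ratio_identities}, structurally identical to the derivation of Corollary~\ref{cor:Menelaus}, and I expect no real obstacle.
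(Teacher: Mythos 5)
Your proposal is correct and follows essentially the same route the paper takes: the paper derives Theorem~\ref{thm:Carnot-projective} from Theorem~\ref{thm:Carnot-affine} precisely by the two-step passage used for \hyperref[cor:Menelaus]{Menelaus' Projective Formula} (rewrite affine ratios as cross ratios with the points at infinity via~\eqref{eq:harmonic ratio equals cross ratio}, then trade $\ell_\infty$ for $t$ using~\eqref{eq:CR3} together with Menelaus applied to the collinear points $X_0,Y_0,Z_0$). Your explicit handling of the squared bracket and of the final reciprocal via~\eqref{eq:CR1} fills in exactly the bookkeeping the paper leaves implicit.
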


\begin{exercise}
Prove Lemma~\ref{lem:midpoints-collinear} and Theorem
\ref{thm:midpoints-quadrilateral} based in Menelaus' and Carnot's Theorems
instead of \hyperref[thm:Pascal]{Pascal's Theorem}.
\end{exercise}

We will use the following corollary
% of Theorem~\ref{thm:Carnot-projective} 
instead of Theorem~\ref{thm:Carnot-projective}
itself. 
\begin{corollary}
\label{thm:Carnot-projective-Corollary}Let $\TT=\widetriangle{XYZ}$
be a triangle in the projective plane, and let $X_{1},X_{2}\in YZ$,
$Y_{1},Y_{2}\in ZX$,
$Z_{1},Z_{2}\in XY$ be six points on the sides of $\TT$
different from $X,Y,Z$ lying on a conic.
Let $X_{0}\in YZ$, $Y_{0}\in ZX$, $Z_{0}\in XY$ be another three
points on the sides of $\TT$. If the points $X_{0},Y_{0},Z_{0}$
are collinear then 
\begin{equation}
\!\!\!\!\!\!\left(XYZ_{0}Z_{1}\right)\!
\left(XYZ_{0}Z_{2}\right)\!
\left(YZX_{0}X_{1}\right)\!
\left(YZX_{0}X_{2}\right)\!
\left(ZXY_{0}Y_{1}\right)\!
\left(ZXY_{0}Y_{2}\right)=1\,.\label{eq:Carnot_projective}
\end{equation}

\end{corollary}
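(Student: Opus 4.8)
The plan is to deduce the corollary from Theorem~\ref{thm:Carnot-projective} by the same device that was used to pass from the affine version of Menelaus' Theorem to the projective formula~\eqref{eq:Menelaus_Projective}: introduce an \emph{auxiliary transversal} playing the role of the line at infinity, apply the biconditional form of Carnot's theorem twice, and divide. Concretely, since $X_1,X_2,Y_1,Y_2,Z_1,Z_2$ lie on a conic, Theorem~\ref{thm:Carnot-projective} gives, for \emph{any} choice of an auxiliary line $t$ meeting the sides at $X_t=x\cdot t$, $Y_t=y\cdot t$, $Z_t=z\cdot t$ (with $x=YZ$, etc.),
\begin{equation*}
\left(XYZ_tZ_1\right)\left(XYZ_tZ_2\right)\left(YZX_tX_1\right)\left(YZX_tX_2\right)\left(ZXY_tY_1\right)\left(ZXY_tY_2\right)=1\,.
\end{equation*}
Here I need the hypothesis that $X_1,\dots,Z_2$ are all different from $X,Y,Z$ precisely so that none of these cross ratios degenerates and the conic through the six points is nondegenerate; that is why the corollary is stated with that restriction.

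The key step is to take as auxiliary transversal the line $s$ through the collinear points $X_0,Y_0,Z_0$ (so $X_t=X_0$, $Y_t=Y_0$, $Z_t=Z_0$). Because $X_0,Y_0,Z_0$ are collinear, this line $s$ is a legitimate transversal: it meets the three sides of $\TT$ at exactly those three points, and applying Carnot's theorem with $t=s$ yields exactly
\begin{equation*}
\left(XYZ_0Z_1\right)\left(XYZ_0Z_2\right)\left(YZX_0X_1\right)\left(YZX_0X_2\right)\left(ZXY_0Y_1\right)\left(ZXY_0Y_2\right)=1\,,
\end{equation*}
which is precisely~\eqref{eq:Carnot_projective}. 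In other words, once one has the projective biconditional~\ref{thm:Carnot-projective}, the corollary is the ``only if'' direction read off with the transversal chosen to be the line $X_0Y_0Z_0$; no division of two instances is even needed, since the projective statement already absorbed the line at infinity.

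Alternatively — and this is the route I would actually write out if one wants to avoid invoking the full strength of Theorem~\ref{thm:Carnot-projective} and instead start from the affine Theorem~\ref{thm:Carnot-affine} — one applies the affine Carnot relation once with the line at infinity and once with $s=X_0Y_0Z_0$ as the ``line at infinity'', rewrites each length ratio $\brs{X_1Y}/\brs{X_1Z}$ as a cross ratio $(YZX_1Y_\infty)$ using~\eqref{eq:harmonic ratio equals cross ratio}, and then uses the multiplicativity identity~\eqref{eq:CR3}, $(YZX_1Y_\infty)=(YZX_1X_0)(YZX_0Y_\infty)$, on each of the six factors; the six ``$X_0$-type'' factors collect into the affine Carnot product for the transversal $s$, which equals $1$, leaving exactly~\eqref{eq:Carnot_projective}. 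The only mild obstacle is bookkeeping: one must match the orientation conventions of the six length ratios in Theorem~\ref{thm:Carnot-affine} with the orderings of the points inside the six cross ratios of Theorem~\ref{thm:Carnot-projective}, and check that each substitution via~\eqref{eq:CR3} is applied with the right triple of points on the right side. This is entirely routine, exactly parallel to the Menelaus computation~\eqref{eq:Menelaus_Projective-1}--\eqref{eq:Menelaus_Projective} already carried out, so I would simply cite that derivation as the template and leave the verification to the reader.
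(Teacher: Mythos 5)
Your first route is exactly how the paper intends the corollary to be obtained: since $X_0,Y_0,Z_0$ are collinear, their common line serves as the transversal $t$ in Theorem~\ref{thm:Carnot-projective}, and the ``only if'' direction of that theorem is precisely~\eqref{eq:Carnot_projective}; the paper states the corollary without further proof for this reason. Your alternative derivation from the affine Theorem~\ref{thm:Carnot-affine} via~\eqref{eq:CR3}, modelled on the passage from~\eqref{eq:Menelaus_Projective-1} to~\eqref{eq:Menelaus_Projective}, is also sound but is already subsumed in the paper's derivation of Theorem~\ref{thm:Carnot-projective} itself.
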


The converse of Corollary~\ref{thm:Carnot-projective-Corollary} is
not true, but we have the following result instead.
\begin{proposition}
\label{prop:Carnot-converse}Let $\TT=\widetriangle{XYZ}$ be a
projective triangle, and let $X_{1},X_{2}\in YZ$, $Y_{1},Y_{2}\in ZX$,
$Z_{1},Z_{2}\in XY$ be six points on the sides of $\TT$
different from $X,Y,Z$ lying on a conic.
Let $X_{0}\in YZ$, $Y_{0}\in ZX$, $Z_{0}\in XY$ be another three
points on the sides of $\TT$, and consider also the point $Z'_{0}=XY\cdot
X_{0}Y_{0}$.
If the identity~\eqref{eq:Carnot_projective} holds, then it is $Z_{0}=Z'_{0}$
(and so $X_{0},Y_{0},Z_{0}$ are collinear) or $Z_{0}$ is the harmonic
conjugate of $Z'_{0}$ with respect to $X,Y$.\end{proposition}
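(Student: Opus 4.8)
The plan is to exploit the fact that Corollary~\ref{thm:Carnot-projective-Corollary} already tells us what happens for the collinear point $Z_0'$: since $X_0,Y_0,Z_0'$ are collinear by construction, identity~\eqref{eq:Carnot_projective} holds with $Z_0$ replaced by $Z_0'$. Writing the hypothesis (identity~\eqref{eq:Carnot_projective} for $Z_0$) and dividing it by this known identity for $Z_0'$, all four factors involving $X_0,X_1,X_2,Y_0,Y_1,Y_2$ cancel, and we are left with
$$\frac{\left(XYZ_0Z_1\right)\left(XYZ_0Z_2\right)}{\left(XYZ_0'Z_1\right)\left(XYZ_0'Z_2\right)}=1\,.$$
Using the cocycle identity~\eqref{eq:CR3} in the form $\left(XYZ_0Z_i\right)=\left(XYZ_0'Z_i\right)\left(XYZ_0Z_0'\right)$ for $i=1,2$, this reduces to $\left(XYZ_0Z_0'\right)^2=1$. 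By Proposition~\ref{prop:cross-ratio-four-different-points}, a cross ratio of four distinct points cannot equal $1$, so either $Z_0=Z_0'$, or $\left(XYZ_0Z_0'\right)=-1$, i.e. $Z_0$ is the harmonic conjugate of $Z_0'$ with respect to $X,Y$. That is exactly the dichotomy in the statement.

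The first thing I would do is to pin down the degenerate configurations that must be excluded for the cancellation step to make sense: one needs $X_0,Y_0,X_1,X_2,Y_1,Y_2$ all distinct from $X,Y,Z$ (this is already hypothesized for the conic points) and one needs $Z_0,Z_0'$ distinct from $X,Y$ so that the relevant cross ratios are defined and~\eqref{eq:CR3} applies. The hypothesis that the six conic points are ``different from $X,Y,Z$'' and the general-position spirit of the Carnot configuration cover all of this; I would state briefly that $Z_0\ne X,Y$ follows from $Z_0$ lying on the side $XY$ and not being a vertex (and if $Z_0=X$ or $Z_0=Y$ the left side of~\eqref{eq:Carnot_projective} is $0$ or $\infty$, contradicting the hypothesis that it equals $1$).

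The only genuinely delicate point — and the main obstacle — is making sure the division of the two instances of~\eqref{eq:Carnot_projective} is legitimate: one must know that all twelve cross ratios appearing in the two identities are finite and nonzero. Finiteness and non-vanishing of the factors with $X_1,X_2,Y_1,Y_2,Z_1,Z_2$ come from Proposition~\ref{prop:cross-ratio-four-different-points} once we know these points differ from $X,Y,Z,X_0,Y_0,Z_0,Z_0'$; the points on the conic are automatically distinct from the vertices by hypothesis, and if, say, $Z_1$ coincided with $Z_0$ then the factor $\left(XYZ_0Z_1\right)$ would be $1$ and could be cancelled trivially — so these coincidences only make the argument easier. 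After handling these bookkeeping cases, the algebra above is immediate, and the proof closes with the observation that $\left(XYZ_0Z_0'\right)^2=1$ together with Proposition~\ref{prop:cross-ratio-four-different-points} forces $\left(XYZ_0Z_0'\right)\in\{1,-1\}$, of which only $-1$ is possible when $Z_0\ne Z_0'$.
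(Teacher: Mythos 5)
Your proof is correct and follows essentially the same route as the paper: apply the forward (collinearity) direction of the projective Carnot theorem to $X_0,Y_0,Z_0'$, cancel the common factors against the hypothesis, and use the cross-ratio identity \eqref{eq:CR3} together with Proposition~\ref{prop:cross-ratio-four-different-points} to conclude $\left(XYZ_0Z_0'\right)=\pm1$. The only difference is cosmetic (the paper pairs the factors slightly differently before invoking \eqref{eq:CR3}, and omits the bookkeeping about degenerate coincidences that you discuss).
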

\begin{proof}
Assume that~\eqref{eq:Carnot_projective} is true. By Theorem
\ref{thm:Carnot-projective}
we have
\[
\left(XYZ'_{0}Z_{1}\right)\left(XYZ'_{0}Z_{2}\right)\left(YZX_{0}X_{1}\right)
\left(YZX_{0}X_{2}\right)\left(ZXY_{0}Y_{1}\right)\left(ZXY_{0}Y_{2}\right)=1\;.
\]
This relation, together with~\eqref{eq:Carnot_projective} imply that
\[
\left(XYZ'_{0}Z_{1}\right)\left(XYZ'_{0}Z_{2}\right)=\left(XYZ_{0}Z_{1}
\right)\left(XYZ_{0}Z_{2}\right)
\]
or equivalently by~\eqref{eq:cross_ratio_identities}
\begin{align*}
\left(XYZ'_{0}Z_{1}\right)\left(XYZ_{1}Z_{0}\right)&=
\left(XYZ_{0}Z_{2}\right)\left(XYZ_{2}Z'_{0}\right)\iff\\
\iff\left(XYZ'_{0}Z_{0}\right)&=\left(XYZ_{0}Z'_{0}\right)\iff
\left(XYZ'_{0}Z_{0}\right)=\pm1\,.
\end{align*}
If $\left(XYZ'_{0}Z_{0}\right)=+1$, because $X,Y$ are different points, it must
be $Z_{0}=Z'_{0}$,
and if $\left(XYZ'_{0}Z_{0}\right)=-1$, then $Z_{0}$ is the harmonic
conjugate of $Z'_{0}$ with respect to $X,Y$. 
\end{proof}

When dealing with our projective triangle $\TT$ and its polar triangle $\TT'$
with respect to $\Phi$ as in \S\ref{sec:triangle-notation},
we have the following:
\begin{theorem}
\label{thm:six-points-on-a-conic}
The points $A_b,A_c,B_a,B_c,C_a,C_b$
lie on a conic.\end{theorem}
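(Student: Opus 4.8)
The plan is to recognize the six points as the vertices of a hexagon and apply the converse of \hyperref[thm:Pascal]{Pascal's Theorem}. Each of the points is a crossing of a side of $\TT$ with a side of $\TT'$: explicitly $A_b=b\cdot a'$, $A_c=c\cdot a'$, $B_c=c\cdot b'$, $B_a=a\cdot b'$, $C_a=a\cdot c'$, $C_b=b\cdot c'$. First I would arrange them cyclically as $A_b,A_c,B_c,B_a,C_a,C_b$ and note that two consecutive points always lie on a common side of $\TT$ or of $\TT'$, so that the successive sides of this hexagon are, in order,
$$a',\ c,\ b',\ a,\ c',\ b.$$
Consequently the three pairs of opposite sides of the hexagon are $\{a',a\}$, $\{c,c'\}$ and $\{b',b\}$, and they meet respectively at $A_0=a\cdot a'$, $C_0=c\cdot c'$ and $B_0=b\cdot b'$.

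The heart of the matter is that $A_0,B_0,C_0$ are collinear. This has already been observed in \S\ref{sub:Orthocenter}: they are the poles of the altitudes $h_a=AA'$, $h_b=BB'$, $h_c=CC'$, which concur at the orthocenter $H$ by \hyperref[thm:Chasles-Polar-triangle]{Chasles' Theorem}, hence $A_0,B_0,C_0$ all lie on the polar $h$ of $H$. Thus the hexagon $A_bA_cB_cB_aC_aC_b$ has its three pairs of opposite sides meeting in three collinear points, and the converse of \hyperref[thm:Pascal]{Pascal's Theorem} (the Braikenridge--Maclaurin construction) yields that its six vertices lie on a conic. Before applying it one must check that the configuration is non-degenerate, i.e. that the six points are pairwise distinct and no three of them are collinear; I expect both to follow routinely from the general position hypotheses on $\TT$ and $\TT'$ (no vertex of $\TT$ or $\TT'$ on $\Phi$, all sides and vertices distinct), with the few limiting configurations settled by continuity. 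This genericity bookkeeping is the only delicate point; all the geometric content sits in the collinearity of $A_0,B_0,C_0$, which is free.

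In the spirit of this chapter one can instead deduce the statement from \hyperref[thm:Carnot-projective]{Carnot's Theorem on projective triangles} applied to $\TT=\wt{abc}$: the two listed points on each side are $\{B_a,C_a\}\subset a$, $\{A_b,C_b\}\subset b$, $\{A_c,B_c\}\subset c$, and taking a transversal $t$ carrying $A_0,B_0,C_0$ (again their collinearity is exactly what the theorem's hypothesis asks for), Carnot's criterion becomes the cross-ratio identity
$$(AB\,C_0\,A_c)(AB\,C_0\,B_c)\,(BC\,A_0\,B_a)(BC\,A_0\,C_a)\,(CA\,B_0\,A_b)(CA\,B_0\,C_b)=1.$$
Choosing homogeneous coordinates in which $\Phi$ is the unit form and $A,B,C$ are the basis vectors, every side of $\TT$ and every side of $\TT'$ has an obvious expression, each of the points above is given by a vector triple product, and the left-hand side factors into three rational expressions in the products $\mathbf a\!\cdot\!\mathbf b$, $\mathbf b\!\cdot\!\mathbf c$, $\mathbf c\!\cdot\!\mathbf a$, $\mathbf a\!\cdot\!\mathbf a$, $\mathbf b\!\cdot\!\mathbf b$, $\mathbf c\!\cdot\!\mathbf c$ which are cyclic permutations of one another; their product telescopes to $1$. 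The cost of this second route is the routine evaluation of those cross ratios, which I would regard as the main (but entirely mechanical) obstacle.
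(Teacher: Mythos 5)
Your proof is correct and is essentially the paper's own argument: the same hexagon (up to a cyclic relabelling) with sides alternating between $\TT$ and $\TT'$, opposite sides meeting at $A_0,B_0,C_0$ on the line $h$, and the converse of Pascal's Theorem. The only cosmetic difference is that you obtain the collinearity of $A_0,B_0,C_0$ from their being poles of the concurrent altitudes, whereas the paper's proof invokes Chasles plus Desargues; the two justifications are interchangeable and both already appear in the text.
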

\begin{proof}
The lines $a c' b a' c b'$ are, in this order, the sides
of the hexagon $\SC{H}$ with consecutive vertices $B_a,C_a,C_b,A_b,A_c,B_c$.
By \hyperref[thm:Chasles-Polar-triangle]{Chasles' Theorem}
the lines $AA'$, $BB'$, $CC'$ are concurrent,
and so, by \hyperref[thm:Desargues]{Desargues' Theorem}, the points $a\cdot a'$, $b\cdot b'$
and $c\cdot c'$ are collinear. Thus, the opposite sides of the
hexagon $\SC{H}$ intersect at three collinear points. By the converse
of \hyperref[thm:Pascal]{Pascal's Theorem}, 
the six vertices of the hexagon $\SC{H}$ must lie
on a conic.
\end{proof}

It can be deduced from the previous proof that this theorem holds for any pair
of perspective triangles\footnote{In fact, any pair of perspective triangles are
polar to each other with respect to a conic (see \cite[p. 65]{Cox Proj}).}.
When the triangle $\TT$ has two conjugate sides, the
conic given by Theorem~\ref{thm:six-points-on-a-conic} is a degenerate
conic (see Figure~\ref{Fig:generalized-right-angled-triangles}). Theorem
\ref{thm:six-points-on-a-conic}, together with
Corollary~\ref{thm:Carnot-projective-Corollary}, allows us to obtain
geometric information about a non-euclidean generalized triangle when
the collinear points $X_{0},Y_{0},Z_{0}$ of Corollary
\ref{thm:Carnot-projective-Corollary} are suitably
chosen.

For a similar notation to that of
Corollary~\ref{thm:Carnot-projective-Corollary} and Proposition
\ref{prop:Carnot-converse}, we will use now the names $A_1$, $A_2$, $B_1$,
$B_2$, $C_1$, $C_2$ for the points $B_a$, $C_a$, $C_b$, $A_b$, $A_c$, $B_c$,
respectively.
Let $A^{*},B^{*},C^{*}$ be three points lying on the sides
$a,b,c$ of the triangle $\TT$ respectively, and consider
the lines
\[
a^{*}=A^{*}A',\qquad b^{*}=B^{*}B',\qquad c^{*}=C^{*}C'\;,
\]
and the segments
\[
\begin{split}
a_{1}=\ov{BA^*},\quad  b_{1}=\ov{CB^*}, \quad c_{1}=\ov{AC^*},\\
a_{2}=\ov{CA^*}, \quad b_{2}=\ov{AB^*}, \quad c_{2}=\ov{BC^*}.
\end{split}
\]

\begin{theorem}
\label{thm:Carnot-projective-cosines}If the lines $a^{*},b^{*},c^{*}$ are
concurrent, the following relation holds:
\begin{equation}\label{eq:Carnot-projective-trigonometric-formula}
\mathbf{C}(a_1)\,\mathbf{C}(b_1)\,\mathbf{C}(c_1)=\mathbf{C}(a_2)\,\mathbf{C}
(b_2)\,\mathbf{C}(c_2)\,.
\end{equation}
\end{theorem}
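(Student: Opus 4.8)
The plan is to reduce the statement to Corollary~\ref{thm:Carnot-projective-Corollary} applied to the conic furnished by Theorem~\ref{thm:six-points-on-a-conic}; the only new ingredients needed are a suitable triple of collinear points on the sides of $\TT$, and a translation of the resulting cross-ratio identity into the ratios $\mathbf{C}(a_i)$.

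First I would build the bridge between the concurrency hypothesis and a collinearity. Since $A^{*}\in a$ and $A'$ is the pole of $a$, the line $a^{*}=A^{*}A'$ joins $A^{*}$ to $\rho(a)$, so its pole is the intersection of the polars of $A^{*}$ and of $A'$, that is $\rho(A^{*})\cdot a$, which is exactly the conjugate point of $A^{*}$ in $a$; denote it $(A^{*})_{a}$. The same computation gives that the pole of $b^{*}$ is $(B^{*})_{b}\in b$ and the pole of $c^{*}$ is $(C^{*})_{c}\in c$. Since $\rho$ is a correlation (Proposition~\ref{prop:polarity-preserves-incidence}), the hypothesis that $a^{*},b^{*},c^{*}$ are concurrent is equivalent to the collinearity of $(A^{*})_{a},(B^{*})_{b},(C^{*})_{c}$. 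Now I invoke Theorem~\ref{thm:six-points-on-a-conic}: the six points $B_{a},C_{a}\in a$, $C_{b},A_{b}\in b$, $A_{c},B_{c}\in c$ lie on a conic (and are distinct from the vertices of $\TT$ by the general position assumptions). Applying Corollary~\ref{thm:Carnot-projective-Corollary} to $\TT=\wt{ABC}$ with these six points and with the collinear triple $(A^{*})_{a},(B^{*})_{b},(C^{*})_{c}$ in the role of $X_{0},Y_{0},Z_{0}$ yields identity~\eqref{eq:Carnot_projective}, which here is a product over the three sides of two cross ratios each; on $a$ the two factors are $(BC\,(A^{*})_{a}\,B_{a})$ and $(BC\,(A^{*})_{a}\,C_{a})$, and analogously on $b$ and $c$.

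It remains to recognise each such pair of factors. On $a$, writing $a\cdot\Phi=\{U,V\}$ and using that conjugacy with respect to $\Phi$ is harmonic conjugation with respect to $U,V$, the points $(A^{*})_{a},B_{a},C_{a}$ are the harmonic conjugates of $A^{*},B,C$, while by definition $\mathbf{C}(a_{1})=(B\,A^{*}\,(A^{*})_{a}\,B_{a})$ and $\mathbf{C}(a_{2})=(C\,A^{*}\,(A^{*})_{a}\,C_{a})$. A short manipulation with the identities~\eqref{eq:cross_ratio_identities} (or, most quickly, the affine coordinate on $a$ in which $U=0$, $V=\infty$, so that harmonic conjugation is $t\mapsto -t$) gives
$$(BC\,(A^{*})_{a}\,B_{a})(BC\,(A^{*})_{a}\,C_{a})=\frac{\mathbf{C}(a_{1})}{\mathbf{C}(a_{2})},$$
and the analogous identities on $b$ and $c$. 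Substituting into~\eqref{eq:Carnot_projective} produces
$$\frac{\mathbf{C}(a_{1})}{\mathbf{C}(a_{2})}\cdot\frac{\mathbf{C}(b_{1})}{\mathbf{C}(b_{2})}\cdot\frac{\mathbf{C}(c_{1})}{\mathbf{C}(c_{2})}=1,$$
which is exactly~\eqref{eq:Carnot-projective-trigonometric-formula}. I expect the main obstacle to be purely organisational: keeping the labelling of the six conic points and of the six cross-ratio factors consistent with the precise order dictated by Corollary~\ref{thm:Carnot-projective-Corollary}, and matching them to the six segments $a_{1},a_{2},b_{1},b_{2},c_{1},c_{2}$; once the bookkeeping is done, everything collapses to the one-variable cross-ratio identity displayed above.
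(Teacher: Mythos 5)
Your proposal is correct and follows essentially the same route as the paper: both identify the poles of $a^{*},b^{*},c^{*}$ as the conjugate points of $A^{*},B^{*},C^{*}$ on $a,b,c$ (the paper phrases this via the polar $h^{*}$ of the concurrency point $H^{*}$, you via the correlation property directly), both feed the resulting collinear triple and the conic of Theorem~\ref{thm:six-points-on-a-conic} into Corollary~\ref{thm:Carnot-projective-Corollary}, and both convert each pair of cross-ratio factors into $\mathbf{C}(a_{1})/\mathbf{C}(a_{2})$ etc.\ (the paper isolates this last step as Lemma~\ref{lem:cosines}, which your one-variable computation reproves). No gaps.
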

Before proving Theorem~\ref{thm:Carnot-projective-cosines}, we will
prove the following lemma:
\begin{lemma}
\label{lem:cosines}Let $p$ be a projective line not tangent to $\Phi$, and let
$X,Y,Z$
be three points of $p$ not lying on $\Phi$. Then,
\[
\left(XYZ_{p}X_{p}\right)\left(XYZ_{p}Y_{p}\right)=\dfrac{\mathbf{C}(\ov{XZ})}{
\mathbf{C}(\ov{YZ})}\,.
\]
\end{lemma}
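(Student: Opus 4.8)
The plan is to expand the product of the two cross ratios using the splitting identity \eqref{eq:CR3} and the basic relations \eqref{eq:cross_ratio_identities}, exactly in the spirit of the proofs of Lemma \ref{lem:ABBpAp-UVAB-UVBA} and Lemma \ref{lem:cross-ratio-relations}. Recall that, by definition, $\mathbf{C}(\ov{XZ})=(XZZ_pX_p)$ and $\mathbf{C}(\ov{YZ})=(YZZ_pY_p)$, so the target identity reads
\[
(XYZ_pX_p)(XYZ_pY_p)=\frac{(XZZ_pX_p)}{(YZZ_pY_p)}\,.
\]
The key observation is that all six cross ratios involve only the five points $X,Y,Z,Z_p,X_p,Y_p$ on the line $p$, and conjugacy with respect to $\Phi$ (which swaps $X\leftrightarrow X_p$, $Y\leftrightarrow Y_p$, $Z\leftrightarrow Z_p$ and preserves all cross ratios, by \eqref{eq:invariance_cross-ratio-I}) gives extra relations. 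The hard part will be choosing the order of the splittings so that the conjugacy symmetry can be applied cleanly; a brute-force expansion produces many terms and it is easy to lose track of signs.

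Concretely, first I would write $(XYZ_pX_p)=(XYZ_pZ)(XYZX_p)$ by \eqref{eq:CR3}, and similarly $(XYZ_pY_p)=(XYZ_pZ)(XYZY_p)$, so that the left-hand side becomes $(XYZ_pZ)^2(XYZX_p)(XYZY_p)$. Applying conjugacy to $(XYZ_pZ)$ gives $(XYZ_pZ)=(X_pY_pZZ_p)$, and one more use of \eqref{eq:CR1} relates this to $(XYZZ_p)$-type ratios; the point is to recognize $(XYZ_pZ)$ as a quantity that, together with the harmonic relation $(UVZZ_p)=-1$ on $p\cdot\Phi=\{U,V\}$, can be pinned down just as $(B_pUVB)=\tfrac12$ was computed in Lemma \ref{lem:ABBpAp-UVAB-UVBA}. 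Then I would handle $(XYZX_p)$ by splitting off $Z_p$: $(XYZX_p)=(XYZZ_p)(XYZ_pX_p)$... — here I must be careful not to go in circles, so instead I expect the cleaner route is to split each factor toward the "diagonal" point $Z_p$ or $X_p$ and then apply conjugacy to fold $(XYZX_p)(XYZY_p)$ into a single ratio involving $(XZZ_pX_p)$ and $(YZZ_pY_p)$.

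An alternative, and probably safer, approach: observe that by Lemma \ref{lem:cross-ratio-relations} (whose formulas are valid for any three points on a line not tangent to $\Phi$) we have $\mathbf{C}(\ov{XZ})$, $\mathbf{C}(\ov{YZ})$ and $\mathbf{C}(\ov{XY})$ all expressible via $\mathbf{C}$ and $\mathbf{S}$ of the pieces, and the two cross ratios $(XYZ_pX_p)$, $(XYZ_pY_p)$ can likewise be reduced to $\mathbf{C},\mathbf{S},\mathbf{T}$ of the segments $\ov{XZ},\ov{YZ},\ov{XY}$; then the claimed identity becomes a purely algebraic relation among these projective trigonometric ratios which follows from the "angle-sum" relation \eqref{eq:cosine-sum} together with $\mathbf{S}=1-\mathbf{C}$. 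I expect the main obstacle either way to be purely bookkeeping — tracking which of $X,Y$ plays the role of the subtracted point in each \eqref{eq:CR3} split and applying \eqref{eq:CR1}--\eqref{eq:CR2} consistently — rather than any conceptual difficulty; once the expansion is organized around the conjugacy symmetry $X\leftrightarrow X_p$, $Y\leftrightarrow Y_p$, $Z\leftrightarrow Z_p$, the two square factors cancel against the denominator $\mathbf{C}(\ov{YZ})$ and the identity drops out.
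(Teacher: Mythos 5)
Your plan names the right toolkit --- the identities \eqref{eq:cross_ratio_identities}, the splitting rule \eqref{eq:CR3}, and the invariance of cross ratios under conjugacy \eqref{eq:invariance_cross-ratio-I} --- and that is indeed all the paper uses. But the proposal stops short of the one idea that makes the argument close, and the concrete expansion you do begin points in an unpromising direction. The paper's proof splits each factor at $Z$ \emph{in the first slot}: by \eqref{eq:CR3}, $(XYZ_pX_p)=(ZYZ_pX_p)\,(XZZ_pX_p)$ and $(YXZ_pY_p)=(ZXZ_pY_p)\,(YZZ_pY_p)$, so that $\mathbf{C}(\ov{XZ})=(XZZ_pX_p)$ and $\mathbf{C}(\ov{YZ})=(YZZ_pY_p)$ appear immediately as factors; writing the left-hand side as $(XYZ_pX_p)/(YXZ_pY_p)$ via \eqref{eq:CR1}, the two stray factors cancel because conjugacy plus the pair-swap identity give $(ZXZ_pY_p)=(Z_pX_pZY)=(ZYZ_pX_p)$. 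That single conjugacy relation is the whole content of the lemma, and it does not appear anywhere in your plan. Your own split goes the other way --- toward $Z$ in the third/fourth slot --- producing $(XYZ_pZ)^{2}(XYZX_p)(XYZY_p)$, in which neither $\mathbf{C}(\ov{XZ})$ nor $\mathbf{C}(\ov{YZ})$ is visible; eliminating the square and folding $(XYZX_p)(XYZY_p)$ back into conjugate-point cross ratios is precisely the work you defer as ``bookkeeping,'' and you explicitly flag that you may be going in circles. So the gap is not clerical: the decisive cancellation is missing.

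Your alternative route via Lemma~\ref{lem:cross-ratio-relations} is likewise only asserted. The quantities $(XYZ_pX_p)$ and $(XYZ_pY_p)$ are not of the form $\mathbf{C}$, $\mathbf{S}$ or $\mathbf{T}$ of any single segment (those require the conjugate points of \emph{both} entries of the first pair), so reducing them to projective trigonometric ratios of $\ov{XZ}$, $\ov{YZ}$, $\ov{XY}$ already requires the same kind of split-and-conjugate manipulation you are trying to avoid; invoking \eqref{eq:cosine-sum} does not bypass it. As a side remark, if you do carry out the correct computation you will find the product equals $(XZZ_pX_p)/(YZZ_pY_p)=\mathbf{C}(\ov{XZ})/\mathbf{C}(\ov{YZ})$, consistent with the statement of the lemma; the final fraction displayed in the paper's own proof has numerator and denominator inverted, which is a typo there, not an error in the statement.
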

\begin{proof}
\[
\left(ZXZ_{p}Y_{p}\right)
\overset{(\ref{eq:invariance_cross-ratio-I})}{=}
\left(Z_{p}X_{p}ZY\right)
\overset{(\ref{eq:cross_ratio_identities})}{=}
\left(ZYZ_{p}X_{p}\right)\,.
\]
So we have
\[
\left(XYZ_{p}X_{p}\right)\left(XYZ_{p}Y_{p}\right)
\overset{\eqref{eq:cross_ratio_identities}}{=}\dfrac{\left(XYZ_{p}X_{p}\right)}
{\left(YXZ_{p}Y_{p}\right)}\overset{\eqref{eq:CR3}}{=}
\dfrac{\cancel{\left(ZYZ_{p}X_{p}\right)}
\left(XZZ_{p}X_{p}\right)}{\cancel{\left(ZXZ_{p}Y_{p}\right)}\left(YZZ_{p}Y_{p}
\right)}=\dfrac{\mathbf{C}(\ov{YZ})}{\mathbf{C}(\ov{XZ})}\,.
\]

\end{proof}

\begin{proof}[Proof of Theorem~\ref{thm:Carnot-projective-cosines}]
Let $H^*$ be the intersection point of the lines $a^{*},b^{*},c^{*}$,
and let $h^*$ be the polar line of $H^*$. Consider also the three points:
$$A_{0}^*=a\cdot h^*,\quad B_{0}^*=b\cdot h^*, \quad C_{0}^*=c\cdot h^*\,.$$
By Theorem~\ref{thm:six-points-on-a-conic} and Corollary
\ref{thm:Carnot-projective-Corollary} 
we have that
\begin{equation}
\!\!\!\!\!\left(ABC_{0}^*C_{1}\right)\!\left(ABC_{0}^*C_{2}\right)\!\left(BCA_{0
}^*A_{1}\right)\!\left(BCA_{0}^*A_{2}\right)\!\left(CAB_{0}^*B_{1}
\right)\!\left(CAB_{0}^*B_{2}\right)=1.\label{eq:Carnot_projective-1}
\end{equation}

Because $a^{*}=A^{*}A'=A'H^*$, we have that
$$\rho(a^{*})=\rho(A')\cdot\rho(H^*)=a\cdot h^*=A_{0}^*\,.$$
Moreover, as $A^{*}\in a^{*}$ it is $\rho(A^{*})\ni\rho(a^{*})$
and this implies that $A_0^*$ and $A^*$ are conjugate to each other. In the same
way, we have that $B_0^*,C_0^*$ are the conjugate points of $B^*,C^*$ in $b,c$
respectively.
Finally, by Lemma~\ref{lem:cosines}
\begin{align*}
\left(ABC_{0}^*C_{1}\right)\left(ABC_{0}^*C_{2}\right)&=\dfrac{\mathbf{C}(\ov{
AC^{*}})}{\mathbf{C}(\ov{BC^{*}})}=\dfrac{\mathbf{C}(c_1)}{\mathbf{C}(c_2)}\,,\\
\left(BCA_{0}^*A_{1}\right)\left(BCA_{0}^*A_{2}\right)&=\dfrac{\mathbf{C}(\ov{
BA^{*}})}{\mathbf{C}(\ov{CA^{*}})}=\dfrac{\mathbf{C}(a_1)}{\mathbf{C}(a_2)}\,,\\
\left(CAB_{0}^*B_{1}\right)\left(CAB_{0}^*B_{2}\right)&=\dfrac{\mathbf{C}(\ov{
CB^{*}})}{\mathbf{C}(\ov{AB^{*}})}=\dfrac{\mathbf{C}(b_1)}{\mathbf{C}(b_2)}\,.
\end{align*}
This completes the proof.
\end{proof}

Due to Proposition~\ref{prop:Carnot-converse}, identity
\eqref{eq:Carnot-projective-trigonometric-formula} is a necessary but not
sufficient condition for the concurrency of the lines $a^*,b^*,c^*$ in Theorem
\ref{thm:Carnot-projective-cosines}. In order to obtain a partial converse of
Theorem~\ref{thm:Carnot-projective-cosines} we need to introduce some new
notation. For any pair of points $X,Y$ in the projective plane such the line
$z=XY$ is not tangent to $\Phi$, the \emph{Carnot involution on $z$ with respect
to} $X,Y$ is the composition $\zeta_{XY}=\rho_z  \tau_{XY} \rho_z$, where
$\rho_z$ and $\tau_{XY}$ are the conjugacy with
respect to $\Phi$ and the harmonic conjugacy with respect to $X,Y$ respectively
as introduced in \S\ref{sec:Basics-of-projective}. For any other point $W$
on $z$, $\zeta_{XY}(W)$ is the \emph{Carnot conjugate} of $W$ with respect to
$X,Y$.
With the same notation as that introduced before Theorem
\ref{thm:Carnot-projective-cosines}, let denote now by $H^*$ the point $b^*\cdot
c^*$ and consider also the line $d^*=A'H^*$ and the point $D^*=a\cdot d^*$.
\begin{theorem}\label{thm:converse-Carnot-projective-cosines}
If the identity~\eqref{eq:Carnot-projective-trigonometric-formula} holds, then
it is $A^*=D^*$ or $A^*$ is the Carnot conjugate of $D^*$ with respect to $B,C$.
\end{theorem}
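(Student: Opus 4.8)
The plan is to invoke Theorem~\ref{thm:Carnot-projective-cosines} a second time, now for the auxiliary triple $D^*\in a$, $B^*\in b$, $C^*\in c$, and then to compare the identity it produces with the hypothesis~\eqref{eq:Carnot-projective-trigonometric-formula}. First I would observe that, by construction, the line $d^*=A'H^*$ passes through $H^*=b^*\cdot c^*$, so the three lines $d^*,b^*,c^*$ are concurrent. Since the six points $A_1,\dots ,C_2$ lie on a conic (Theorem~\ref{thm:six-points-on-a-conic}) and the segments $b_1,b_2,c_1,c_2$ are the same as before, Theorem~\ref{thm:Carnot-projective-cosines} applied to $D^*,B^*,C^*$ gives
\[
\mathbf{C}(\ov{BD^*})\,\mathbf{C}(b_1)\,\mathbf{C}(c_1)=\mathbf{C}(\ov{CD^*})\,\mathbf{C}(b_2)\,\mathbf{C}(c_2)\,.
\]
Comparing with~\eqref{eq:Carnot-projective-trigonometric-formula} and cancelling the common factors (no cross ratio involved vanishes under the standing general position assumptions, by Proposition~\ref{prop:cross-ratio-four-different-points}), I would be left with
\[
\dfrac{\mathbf{C}(a_1)}{\mathbf{C}(a_2)}=\dfrac{\mathbf{C}(\ov{BD^*})}{\mathbf{C}(\ov{CD^*})}\,.
\]

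Next I would transport both sides onto the line $a$. Writing $A_0^*=\rho_a(A^*)$ and $D_0^*=\rho_a(D^*)$ for the conjugate points of $A^*,D^*$ on $a$, and recalling that $B_a=A_1$ and $C_a=A_2$, Lemma~\ref{lem:cosines} applied to the triples $(B,C,A^*)$ and $(B,C,D^*)$ turns the previous identity into
\[
\bigl(BC\,A_0^*\,A_1\bigr)\bigl(BC\,A_0^*\,A_2\bigr)=\bigl(BC\,D_0^*\,A_1\bigr)\bigl(BC\,D_0^*\,A_2\bigr)\,.
\]
Applying~\eqref{eq:CR3} with the auxiliary point $A_1$ (and again with $A_2$), together with~\eqref{eq:CR1}, one checks that $(BC\,A_0^*\,D_0^*)$ equals both $(BC\,A_0^*\,A_1)/(BC\,D_0^*\,A_1)$ and $(BC\,A_0^*\,A_2)/(BC\,D_0^*\,A_2)$; multiplying these two expressions and using the displayed equality gives $(BC\,A_0^*\,D_0^*)^2=1$, i.e. $(BC\,A_0^*\,D_0^*)=\pm 1$.

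Finally I would unwind the two cases. If $(BC\,A_0^*\,D_0^*)=+1$, then, since $B\neq C$, Proposition~\ref{prop:cross-ratio-four-different-points} forces $A_0^*=D_0^*$, and hence $A^*=\rho_a(A_0^*)=\rho_a(D_0^*)=D^*$ because $\rho_a$ is an involution. If $(BC\,A_0^*\,D_0^*)=-1$, then $A_0^*=\tau_{BC}(D_0^*)$, so
\[
A^*=\rho_a(A_0^*)=\rho_a\bigl(\tau_{BC}(\rho_a(D^*))\bigr)=\zeta_{BC}(D^*)\,,
\]
which is precisely the Carnot conjugate of $D^*$ with respect to $B,C$ (here $z=BC=a$ in the definition $\zeta_{BC}=\rho_a\,\tau_{BC}\,\rho_a$).

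I do not expect a conceptual obstacle here; the care needed is entirely bookkeeping. The two points to watch are: that Theorem~\ref{thm:Carnot-projective-cosines} legitimately applies to the auxiliary triple $D^*,B^*,C^*$ — its only hypothesis, the concurrency of $d^*,b^*,c^*$, is built into the definition of $d^*$ — and that the general position assumptions are kept in force, so that every $\mathbf{C}$-value and every cross ratio appearing above is defined and nonzero, which is what makes the cancellation step valid.
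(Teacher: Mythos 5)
Your argument is correct, but it reaches the conclusion by a genuinely different route from the paper's. The paper uses Lemma~\ref{lem:cosines} to convert the hypothesis \eqref{eq:Carnot-projective-trigonometric-formula} back into the cross-ratio identity \eqref{eq:Carnot_projective-1}, and then simply cites Proposition~\ref{prop:Carnot-converse} applied to the conjugate points $A_0,B_0,C_0$ (with $D_0=a\cdot B_0C_0$ playing the role of $Z_0'$), finishing with the same $\rho_a$-conjugation you use. You instead invoke the forward Theorem~\ref{thm:Carnot-projective-cosines} a second time for the concurrent triple $d^*,b^*,c^*$, cancel the common factors, and re-derive the dichotomy $(BC\,A_0^*\,D_0^*)=\pm1$ by hand on the line $a$ --- in effect inlining the proof of Proposition~\ref{prop:Carnot-converse} at the level of the $\mathbf{C}$-ratios. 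What your version buys is self-containedness (you never need \eqref{eq:Carnot_projective-1} or the general converse proposition); what the paper's buys is brevity, since the $\pm1$ dichotomy was already packaged once and for all. One caveat, shared by both arguments: the cancellation step (and the paper's passage through Lemma~\ref{lem:cosines}) needs the relevant $\mathbf{C}$-values to be nonzero, which fails exactly when one of $A^*,B^*,C^*,D^*$ is the conjugate of a vertex on its side; this is not literally ruled out by the general position assumptions on $\TT$ and $\TT'$ (which constrain the triangles, not the starred points), so your appeal to Proposition~\ref{prop:cross-ratio-four-different-points} tacitly adds the hypothesis that no such coincidence occurs --- but in that degenerate case the identity \eqref{eq:Carnot-projective-trigonometric-formula} itself collapses to $0=0$, so nothing of substance is lost.
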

\begin{proof}
Let $A_0,B_0,C_0$ be the conjugate points of $A^*,B^*,C^*$ in $a,b,c$
respectively. Let $D_0$ be the conjugate point of $D^*$ in $a$. By the proof of
Theorem~\ref{thm:Carnot-projective-cosines}, it is $D_0=a\cdot B_0C_0$. By Lemma
\ref{lem:cosines}, if~\eqref{eq:Carnot-projective-trigonometric-formula} holds,
then~\eqref{eq:Carnot_projective-1} also holds. By Proposition
\ref{prop:Carnot-converse}, if~\eqref{eq:Carnot_projective-1} holds, it is
$A_0=D_0$ or $A_0$ is the harmonic conjugate $\tau_{BC}(D_0)$ of $D_0$ with
respect to $B,C$, and this implies that
$$A^*=\rho_a(A_0)=\rho_a(D_0)=D^*$$
or
$$A^*=\rho_a(A_0)=\rho_a(\tau_{BC}(D_0))=\rho_a(\tau_{BC}(\rho_a
(D^*)))=\zeta_{BC}(D^*)\,.$$
\end{proof}
\begin{figure}
\centering
\subfigure[]{\includegraphics[width=0.62\textwidth]
{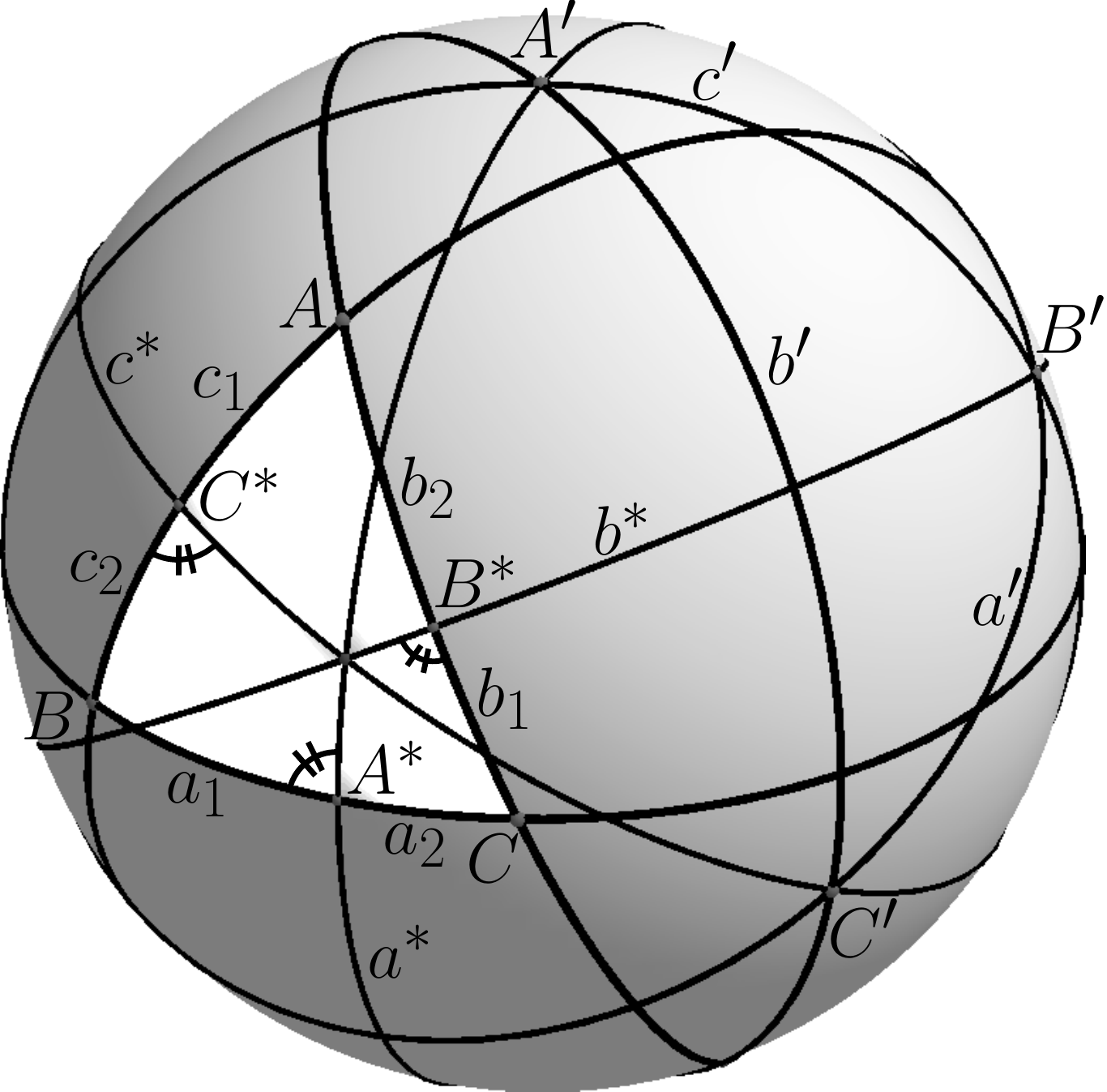}
\label{Fig:Carnot-spherical} }
\\
\subfigure[]{\includegraphics[width=0.93\textwidth]
  {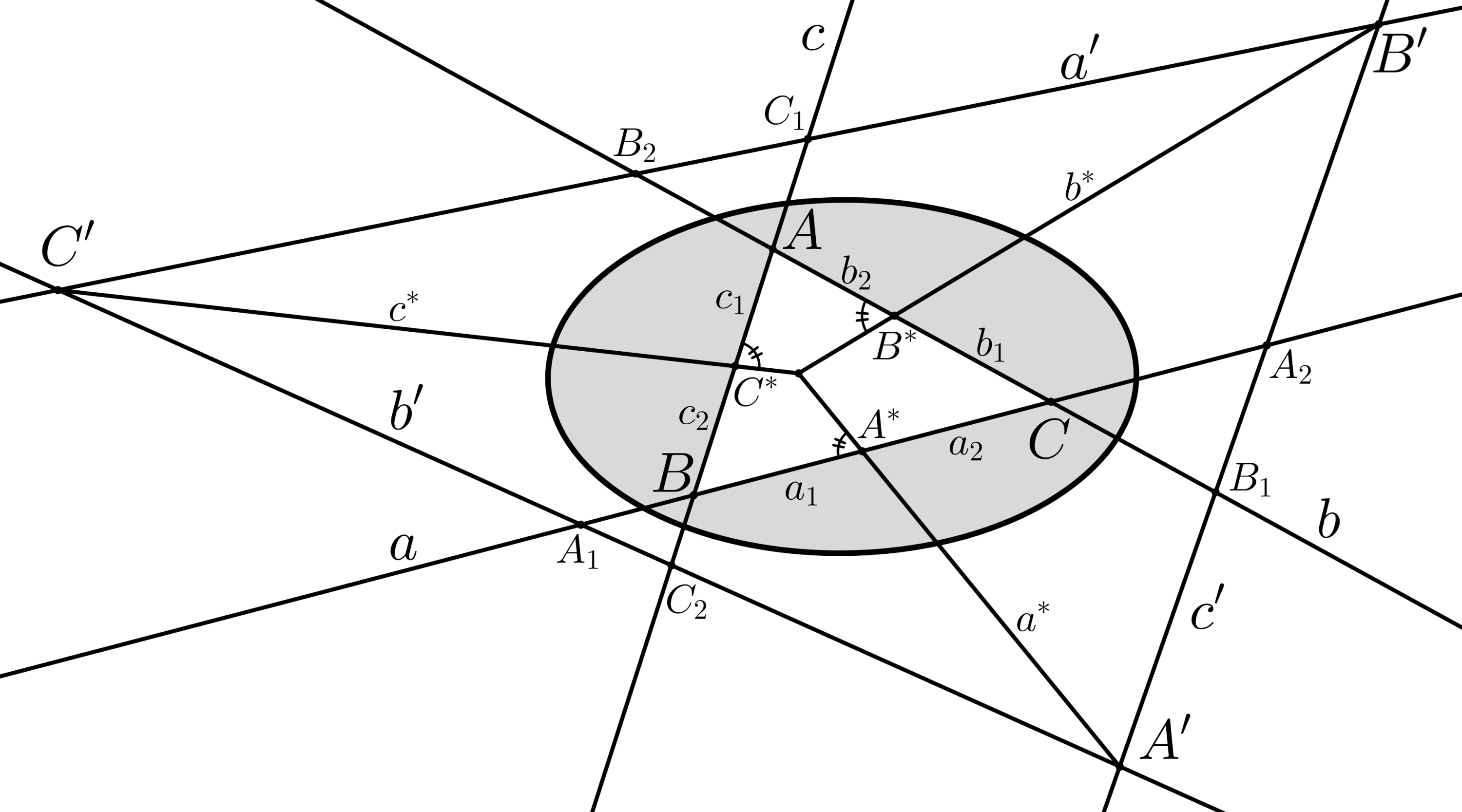}
\label{Fig:Carnot_hyp_triangle} }
\caption{Carnot's theorem on non-euclidean triangles}
\label{Fig:Carnot-triangles}

\end{figure}

If the vertices $A,B,C$ of the projective triangle $\TT$ and the points $A^{*}$,
$B^{*}$, $C^{*}$ lie in the non-euclidean plane $\mathbb{P}$, then $\TT$ is a
non-euclidean triangle and $a^{*},b^{*},c^{*}$ are the perpendicular lines to
$a,b,c$ through
$A^{*},B^{*},C^{*}$ respectively. Let denote also by $a_1,a_2,b_1,b_2,c_1,c_2$
the non-euclidean lengths in $\mathbb{P}$ of the segments
$a_1,a_2,b_1,b_2,c_1,c_2$, respectively. Theorem
\ref{thm:Carnot-projective-cosines} implies
the following (Figure~\ref{Fig:Carnot-spherical}):
\begin{theorem}
\label{thm:Carnot-spherical-triangle}If $\mathbb{P}$ is the elliptic plane and
the lines $a^{*},b^{*},c^{*}$
are concurrent, then
\begin{equation}
\cos a_1 \cos b_1 \cos c_1 = \cos a_2 \cos b_2 \cos c_2\,.
\label{eq:Carnot-elliptic-triangle}
\end{equation}
\end{theorem}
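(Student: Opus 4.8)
The plan is to obtain \eqref{eq:Carnot-elliptic-triangle} as a direct non-euclidean shadow of the projective identity~\eqref{eq:Carnot-projective-trigonometric-formula}, exactly in the spirit of how the trigonometric formulae of generalized right-angled triangles were extracted from their projective counterparts via Table~\ref{table:non-euclidean-segments}. First I would invoke Theorem~\ref{thm:Carnot-projective-cosines}: since the lines $a^*,b^*,c^*$ are concurrent, the projective relation
\[
\mathbf{C}(a_1)\,\mathbf{C}(b_1)\,\mathbf{C}(c_1)=\mathbf{C}(a_2)\,\mathbf{C}(b_2)\,\mathbf{C}(c_2)
\]
holds among the projective trigonometric ratios of the six segments $a_1,a_2,b_1,b_2,c_1,c_2$ into which the points $A^*,B^*,C^*$ split the sides of $\TT$.

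Next I would translate each $\mathbf{C}$ using Proposition~\ref{prop:geometric-translations} and Table~\ref{table:non-euclidean-segments}. Since $\mathbb{P}$ is the elliptic plane, the absolute conic $\Phi$ is imaginary, so we are always in the first row of the table: for every segment $s$ appearing here, $\mathbf{C}(s)=\cos^2\|s\|$, where $\|s\|$ denotes its elliptic length. Substituting this into the projective identity gives immediately
\[
\cos^2 a_1\,\cos^2 b_1\,\cos^2 c_1=\cos^2 a_2\,\cos^2 b_2\,\cos^2 c_2\,,
\]
i.e. the \emph{squared} version of the desired formula.

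It then remains to unsquare, and this is the only delicate point. Taking square roots yields
\[
\cos a_1\cos b_1\cos c_1=\pm\,\cos a_2\cos b_2\cos c_2\,,
\]
and one must rule out the $-$ sign. I would argue this by a continuity/connectedness argument: the configuration space of a point $H^*$ ranging over the interior of the elliptic triangle $\TT$ (equivalently, of admissible triples $(A^*,B^*,C^*)$ arising from concurrent perpendiculars) is connected, and on this space both sides of the equation are continuous functions that never vanish simultaneously on an open set; hence the sign is locally constant, so it suffices to check it at one convenient position. Choosing $H^*$ to be a point for which the configuration is symmetric — for instance the circumcenter or incenter of $\TT$, where $a_1=a_2$, $b_1=b_2$, $c_1=c_2$ and all six cosines are positive (each segment being a half-side, hence acute) — forces the $+$ sign there, and therefore everywhere. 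One must take a little care with the degenerate cases where one of the feet $A^*,B^*,C^*$ lands so that a relevant cosine vanishes or a segment becomes obtuse; but since $\cos$ changes sign continuously through $\pi/2$ and the vanishing of a factor on one side is matched by the vanishing of the corresponding factor on the other (both encode the same right segment), the identity extends across these loci by continuity. This yields \eqref{eq:Carnot-elliptic-triangle} and completes the proof.

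The main obstacle, as usual in this circle of ideas, is precisely the unsquaring: verifying that the sign is globally $+$ requires the connectedness of the relevant configuration space together with a correct bookkeeping of which segments may be obtuse, so that no spurious sign can creep in at the boundary between the acute and obtuse regimes.
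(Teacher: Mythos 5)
Your first two steps coincide with the paper's entire argument: the paper states this theorem as a direct consequence of Theorem~\ref{thm:Carnot-projective-cosines} and Proposition~\ref{prop:geometric-translations} (first row of Table~\ref{table:non-euclidean-segments}) and never discusses the sign. You are right to flag the unsquaring as the one nontrivial point --- unlike the hyperbolic case, where $\cosh>0$ settles it, here the segments $a_i,b_i,c_i$ may be obtuse and their cosines negative. The problem is that your argument for the sign does not hold together. Local constancy of the sign is only guaranteed on the complement of the locus where the common value $\cos^2a_1\cos^2b_1\cos^2c_1=\cos^2a_2\cos^2b_2\cos^2c_2$ vanishes, and that complement need not be connected; to cross the vanishing locus you claim that ``the vanishing of a factor on one side is matched by the vanishing of the corresponding factor on the other (both encode the same right segment)''. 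That is false: $\cos a_1=0$ says that $B$ and $A^*$ are conjugate with respect to $\Phi$ and implies nothing about $a_2=\ov{CA^*}$ or about any individual factor on the right-hand side. All the squared identity gives is that the two \emph{products} vanish together, which by itself does not exclude a sign flip from one component of the complement to the next.

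Two ways to repair this. (i) After parametrizing by the concurrency point $H^*$, both functions $\cos a_1\cos b_1\cos c_1\mp\cos a_2\cos b_2\cos c_2$ are restrictions of rational functions whose product vanishes identically; by the identity theorem one of the two factors vanishes identically, and your evaluation at a symmetric position selects the difference. (ii) More in the spirit of the paper, which remarks that such statements ``can be constructed using the non-euclidean versions of Pythagoras' theorem'': since $a^*$ is perpendicular to $a$ at $A^*$, the unsquared spherical Pythagorean theorem applied to the right-angled triangles $\wt{BA^*H^*}$ and $\wt{CA^*H^*}$ gives $\cos a_1/\cos a_2=\cos\VRT{BH^*}/\cos\VRT{CH^*}$, and the product of the three analogous ratios telescopes to $1$ with the correct sign built in. Either repair completes your proof; as written, the sign step is a genuine gap.
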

Theorem~\ref{thm:converse-Carnot-projective-cosines} shows how we can obtain
three points $A^*,B^*,C^*$ for which the identity
\eqref{eq:Carnot-elliptic-triangle} holds and such that the lines $a^*,b^*,c^*$
are not concurrent. If this is the case, we say that the points
$A^*,B^*,C^*$ are \emph{fake Carnot points} of $\TT$.
This cannot happen if $\TT$ is a triangle in the hyperbolic plane (Figure
\ref{Fig:Carnot_hyp_triangle}).
\begin{theorem}
\label{thm:Carnot-hyperbolic-triangle}Hyperbolic triangles have
no fake Carnot points: if $\mathbb{P}$ is the hyperbolic plane, the lines
$a^{*},b^{*},c^{*}$
are concurrent if and only if
\begin{equation}
\cosh a_1 \cosh b_1 \cosh c_1 = \cosh a_2 \cosh b_2 \cosh c_2\,.
\label{eq:Carnot-hyperbolic-triangle}
\end{equation}

\end{theorem}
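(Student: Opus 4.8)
The plan is to read off both implications from Theorems~\ref{thm:Carnot-projective-cosines} and~\ref{thm:converse-Carnot-projective-cosines}, using Table~\ref{table:non-euclidean-segments} to translate between projective trigonometric ratios and hyperbolic ones, and to rule out the "fake Carnot point" alternative by a position argument relative to $\Phi$. For the direction ``concurrent $\Rightarrow$ identity'': if $a^{*},b^{*},c^{*}$ meet at a point $H^{*}$, Theorem~\ref{thm:Carnot-projective-cosines} gives $\mathbf{C}(a_1)\mathbf{C}(b_1)\mathbf{C}(c_1)=\mathbf{C}(a_2)\mathbf{C}(b_2)\mathbf{C}(c_2)$. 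Each of the six segments $a_1,\dots,c_2$ lies on a side of the hyperbolic triangle $\TT$, a line secant to $\Phi$, and has both endpoints interior to $\Phi$, so by the ``real/sec/int/int'' row of Table~\ref{table:non-euclidean-segments} we have $\mathbf{C}(a_i)=\cosh^{2}a_i$ and likewise for $b_i,c_i$; since all these quantities are positive, taking positive square roots yields \eqref{eq:Carnot-hyperbolic-triangle}. (Equivalently one may argue directly with the hyperbolic Pythagorean theorem in the six right triangles $\wt{H^{*}A^{*}B},\wt{H^{*}A^{*}C},\dots$, where $\cosh\VRT{H^{*}B}/\cosh\VRT{H^{*}C}=\cosh a_1/\cosh a_2$ and cyclically, the product of the three ratios being $1$.)

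For the converse I would assume \eqref{eq:Carnot-hyperbolic-triangle}, square it, and use $\mathbf{C}(a_i)=\cosh^{2}a_i$ exactly as above to get the projective identity \eqref{eq:Carnot-projective-trigonometric-formula}. Then Theorem~\ref{thm:converse-Carnot-projective-cosines} applies: with $H^{*}=b^{*}\cdot c^{*}$, $d^{*}=A'H^{*}$ and $D^{*}=a\cdot d^{*}$, either $A^{*}=D^{*}$ or $A^{*}$ is the Carnot conjugate $\zeta_{BC}(D^{*})$. If $A^{*}=D^{*}$, then $a^{*}=A^{*}A'=A'D^{*}=d^{*}$, and since $H^{*}\in d^{*}$ the line $a^{*}$ passes through $b^{*}\cdot c^{*}$, so $a^{*},b^{*},c^{*}$ are concurrent. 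Everything then reduces to excluding the alternative $A^{*}=\zeta_{BC}(D^{*})$ with $A^{*}\neq D^{*}$, which is precisely the claim that a hyperbolic triangle has no fake Carnot points.

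The key tool for that exclusion is that $\zeta_{BC}=\rho_a\circ\tau_{BC}\circ\rho_a$ is an involution of the line $a$ whose fixed points are $\rho_a(B)=a\cdot b'=B_a$ and $\rho_a(C)=a\cdot c'=C_a$; writing $\{U,V\}=a\cdot\Phi$ and using $(UVBB_a)=(UVCC_a)=-1$ together with Lemma~\ref{lemma:ABCD-negative-iff-AB-separate-CD}, both $B_a$ and $C_a$ lie on the arc of $a$ exterior to $\Phi$. Hence $\zeta_{BC}=\tau_{B_aC_a}$, and another application of Lemma~\ref{lemma:ABCD-negative-iff-AB-separate-CD} shows this map carries every point of $a$ interior to $\Phi$ to a point exterior to $\Phi$ (in fact to an exterior point lying outside the sub-arc bounded by $B_a$ and $C_a$). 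Since $A^{*}$ is a point of the hyperbolic plane, $D^{*}=\zeta_{BC}(A^{*})$ would have to be such an exterior point. On the other hand $D^{*}$ is pinned down by the interior points $B^{*},C^{*}$: tracking conjugates as in the proof of Theorem~\ref{thm:converse-Carnot-projective-cosines}, $\rho_a(D^{*})=a\cdot B_0C_0$, where $B_0,C_0$ are the poles of the secant lines $b^{*},c^{*}$ and $B_0C_0=\rho(H^{*})$.

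The hard part will be deriving a contradiction from these two descriptions of $D^{*}$. When $b^{*}$ and $c^{*}$ meet inside $\Phi$ (so $H^{*}$ is interior), $\rho(H^{*})=B_0C_0$ is a line disjoint from $\Phi$, hence $a\cdot B_0C_0$ is exterior, hence $D^{*}=\rho_a(a\cdot B_0C_0)$ is interior to $\Phi$ — contradicting that $\zeta_{BC}(A^{*})$ is exterior — so the fake alternative is impossible and the lines are concurrent. The delicate case is when $b^{*}$ and $c^{*}$ are ultraparallel, so $H^{*}$ is exterior and $D^{*}$ need not be interior a priori; here I would show either that $D^{*}$ still lands interior to $\Phi$ (same contradiction), or that the configuration degenerates so that $a^{*},b^{*},c^{*}$ already pass through $H^{*}$, by a short cross-ratio computation on $a$ tracking the position of $D^{*}$ relative to $B_a,C_a,U,V$. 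Once the ultraparallel subcase is settled, the dichotomy of Theorem~\ref{thm:converse-Carnot-projective-cosines} forces $A^{*}=D^{*}$, hence concurrency, completing the equivalence.
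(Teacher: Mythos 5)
Your forward direction and the reduction of the converse to excluding the alternative $A^{*}=\zeta_{BC}(D^{*})$ match the paper exactly, and your identification of $\zeta_{BC}$ as the harmonic involution $\tau_{B_aC_a}$ with the two exterior fixed points $B_a=\rho_a(B)$, $C_a=\rho_a(C)$ is correct and is a nice way to package the paper's chain $\rho_a\circ\tau_{BC}\circ\rho_a$. But the converse is not finished: you explicitly leave open the case where $H^{*}=b^{*}\cdot c^{*}$ is exterior to $\Phi$, and your proposed fallback for that case --- ``show that $D^{*}$ still lands interior to $\Phi$'' --- is false in general. When $b^{*}$ and $c^{*}$ are ultraparallel, $\rho(H^{*})$ is secant to $\Phi$ and $D^{*}=\rho_a\bigl(a\cdot\rho(H^{*})\bigr)$ can perfectly well be exterior; the paper explicitly treats the subcase $D^{*}\notin\mathbb{P}$ rather than ruling it out. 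So the argument as written only covers the easy half of the dichotomy, and the contradiction you derive (``$D^{*}$ must be exterior by the involution, but interior by its construction'') is simply unavailable in the delicate case.

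The missing ingredient is not the interiority of $D^{*}$ but its \emph{location relative to $B_a$ and $C_a$}. The paper shows that $H^{*}$, and hence $D^{*}=a\cdot A'H^{*}$, lies in the triangular region $\TT'_\Phi$ cut out by $\TT'$ that contains $\Phi$ (this is where the hypothesis $B^{*},C^{*}\in\mathbb{P}$ is used), and then chases $D^{*}$ through $\rho_a$, $\tau_{BC}$, $\rho_a$ in two cases ($D^{*}$ interior or exterior) to conclude $\zeta_{BC}(D^{*})\notin\mathbb{P}$ either way. With your reformulation the same fact finishes the proof in one stroke: $a\cap\TT'_\Phi$ is exactly the arc of $a$ bounded by the fixed points $B_a,C_a$ of $\zeta_{BC}=\tau_{B_aC_a}$ that contains $a\cap\mathbb{P}$, the involution swaps the two arcs, so for any $D^{*}$ on that arc (interior or not) the point $\zeta_{BC}(D^{*})$ lies on the complementary arc, which is disjoint from $\mathbb{P}$ and therefore cannot be $A^{*}$. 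What you must actually prove, and currently only gesture at, is precisely that $D^{*}$ lies on the arc of $a$ between $B_a$ and $C_a$ meeting $\Phi$; without that, the ``fake Carnot point'' alternative is not excluded.
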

\begin{proof}
If the lines $a^*,b^*,c^*$ are concurrent, then
\eqref{eq:Carnot-hyperbolic-triangle} is a consequence of Theorem
\ref{thm:Carnot-projective-cosines} and Proposition
\ref{prop:geometric-translations}.

If~\eqref{eq:Carnot-hyperbolic-triangle} holds, then
\eqref{eq:Carnot-projective-trigonometric-formula} also holds by Proposition
\ref{prop:geometric-translations}. Thus, by Theorem
\ref{thm:converse-Carnot-projective-cosines}, we have that $A^*$ equals $D^*$ or
the Carnot conjugate $\zeta_{BC}(D^*)$ of $D^*$ with respect to $B,C$, where
$D^*$ is, as in Theorem~\ref{thm:converse-Carnot-projective-cosines}, the
orthogonal projection of $H^*=b^*\cdot c^*$ into the line $a$.

The polar triangle $\TT'$ of $\TT$ divides the projective plane into four
triangular regions, and one of them contains the absolute conic $\Phi$ (see
Figure~\ref{Fig:Carnot_hyp_triangle}). Let us call $\TT'_\Phi$ to this region.
The point $H^*$ does not necessarily lie in $\mathbb{P}$, but because $B^*$ and
$C^*$ are in $\mathbb{P}$ it must lie in $\TT'_\Phi$. This implies that $D^*$
also lies in $\TT'_\Phi$, and we will see that in this case the Carnot conjugate
of $D^*$ with respect to $B,C$ cannot be in $\mathbb{P}$. If $U,V$ are the
intersection points of $a$ with $\Phi$, as it is mentioned in 
\S\ref{sec:Basics-of-projective} the conjugacy involution 
$\rho_a$ of $a$ 
coincides with the harmonic conjugacy
$\tau_{UV}$ with respect to $U,V$.

For any real projective line $x$, any two different real points $Y,Z$  on $x$ divide the line into two connected subsets (segments), say $x_0$ and $x_1$, and the harmonic conjugacy $\tau_{YZ}$ with respect to $Y,Z$ sends $x_0$ onto $x_1$ and vice versa (cf. Lemma~\ref{lemma:ABCD-negative-iff-AB-separate-CD}). Therefore, 
\begin{gather*}
D^*\in\mathbb{P}\Rightarrow \rho_a(D^*)\notin \mathbb{P}\Rightarrow \\
\Rightarrow \tau_{BC}(\rho_a(D^*)) \text{ belongs to the hyperbolic segment } \ov{BC}\subset\mathbb{P}\Rightarrow\\
\Rightarrow \zeta_{BC}(D^*)=\rho_a(\tau_{BC}(\rho_a(D^*)))\notin \mathbb{P}\,.
\end{gather*}
On the other hand, if $D^*$ does not lie in $\mathbb{P}$, because it is in $\TT'_\Phi$, its conjugate $\rho_a(D^*)$ lies in $\mathbb{P}$ but outside the hyperbolic segment $\ov{BC}$. Thus, $\tau_{BC}(\rho_a(D^*))$ is in the hyperbolic segment $\ov{BC}$, and so $\zeta_{BC}(D^*)$ lies outside $\mathbb{P}$. In any case, because $B^*$ and $C^*$ are in $\mathbb{P}$, it cannot be $\zeta_{BC}(D^*)\in\mathbb{P}$. Because $A^*$ belongs to $\mathbb{P}$, it must be $A^*=D^*$ and in consequence the lines $a^*,b^*,c^*$ are concurrent.

\end{proof}

\begin{figure}
\centering
\centering\includegraphics[width=0.6\textwidth]
{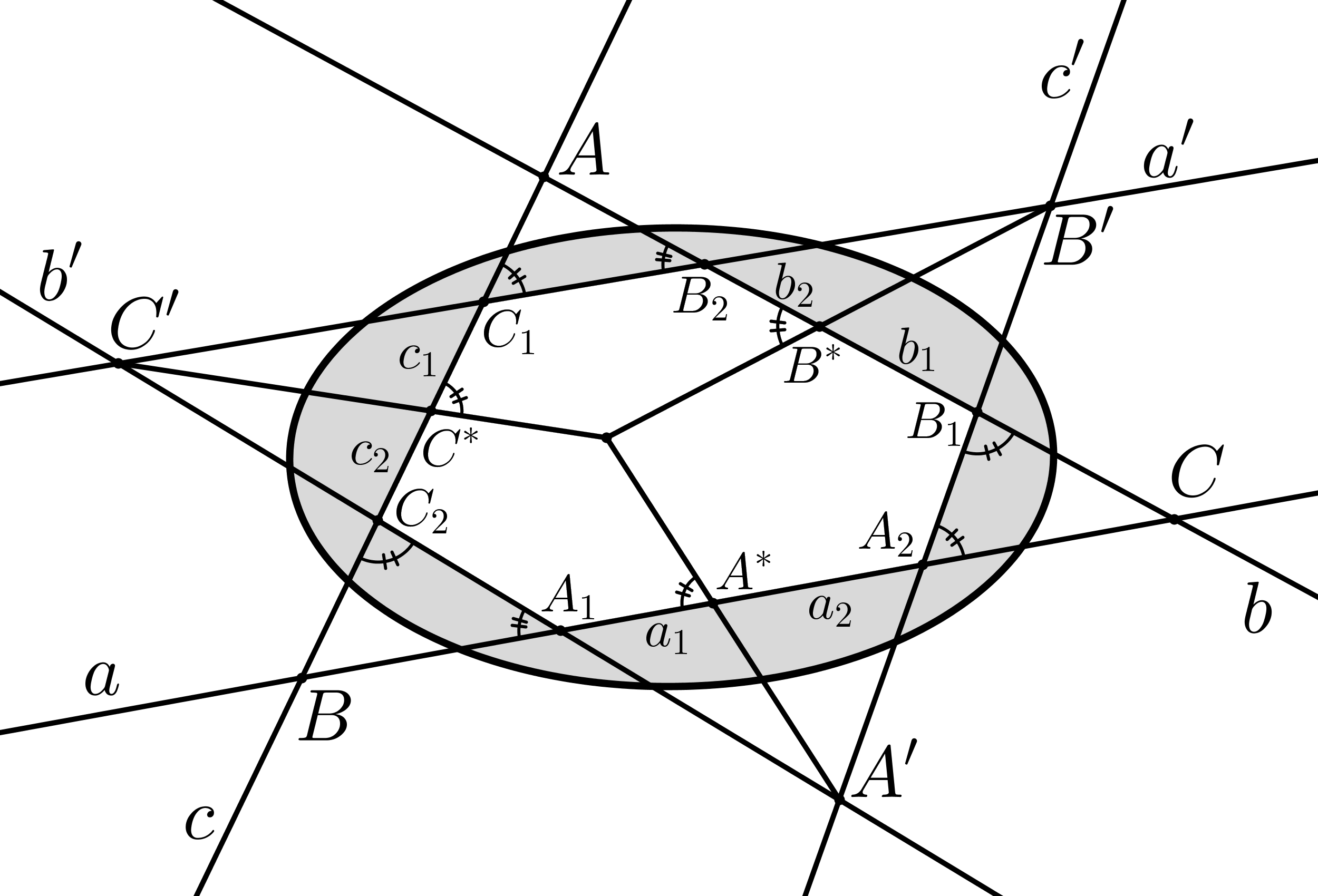}
\caption{Carnot's theorem on right-angled hexagons}
\label{Fig:Carnot_hexagon}
\end{figure}

Finally, we will present a version of Theorem~\ref{thm:Carnot_theorem_euclidean} for right-angled hyperbolic hexagons. Let $\SC{H}$ be a right-angled hexagon
in the hyperbolic plane with consecutive vertices $A_1,A_2,B_1,B_2,C_1,C_2$. Consider three points $A^{*},B^{*},C^{*}$ lying on the alternate sides $a=A_1A_2$, $b=B_1B_2$, $c=C_1C_2$ of $\SC{H}$ respectively, and let $a^{*},b^{*},c^{*}$ be the perpendicular lines to $a,b,c$ through $A^{*},B^{*},C^{*}$ respectively. Let $a_1,a_2,b_1,b_2,c_1,c_2$ denote now the hyperbolic lengths of the segments $\ov{A_1A^*},\ov{A_2A^*},\ov{B_1B^*},\ov{B_2B^*},\ov{C_1C^*},\ov{C_2C^*}$ (see Figure~\ref{Fig:Carnot_hexagon}). If we construct the projective triangle $\TT$ with vertices $A=b\cdot c$, $B=c\cdot a$ and $C=a\cdot b$, we can apply Theorem~\ref{thm:Carnot-projective-cosines} to $\TT$ and the points $A^*,B^*,C^*$. After translating the projective trigonometric ratios 
of~\eqref{eq:Carnot-projective-trigonometric-formula} to our construction using 
Proposition~\ref{prop:geometric-translations}, we obtain:
\begin{theorem}
\label{thm:Carnot-hyperbolic-hexagon}If the lines $a^{*},b^{*},c^{*}$
are concurrent, then
\begin{equation*}
\sinh a_1 \sinh b_1 \sinh c_1 = \sinh a_2 \sinh b_2 \sinh c_2\,.
% \label{eq:Carnot-Hexagon}
\end{equation*}
\end{theorem}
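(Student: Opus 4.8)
The plan is to recognise the right-angled hexagon $\SC{H}$ as a configuration of the type $\TT\cup\TT'$ and then to invoke Theorem~\ref{thm:Carnot-projective-cosines}, translating the resulting projective trigonometric identity into hyperbolic sines by means of Proposition~\ref{prop:geometric-translations}.

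First I would pin down the three sides of $\SC{H}$ other than $a,b,c$. The side $A_2B_1$ is perpendicular to $a$ at $A_2$ and to $b$ at $B_1$, so it is the common perpendicular of $a$ and $b$; by Proposition~\ref{prop:perpendicular-iff-conjugate} and the description of common perpendiculars in \S\ref{sec:Cayley-Klein-models-for} this line is $\rho(a\cdot b)=\rho(C)=c'$, and likewise $B_2C_1=a'$ and $C_2A_1=b'$. Hence $\SC{H}$ is exactly the figure cut out by $\TT=\wt{ABC}$ and its polar triangle $\TT'=\wt{A'B'C'}$; each of $A=b\cdot c$, $B=c\cdot a$, $C=a\cdot b$ is the intersection of two ultraparallel sides of $\SC{H}$ and so lies outside $\Phi$; and, in the notation of \S\ref{sec:triangle-notation}, the vertices of $\SC{H}$ are $A_1=B_a=a\cdot b'$, $A_2=C_a=a\cdot c'$, $B_1=C_b=b\cdot c'$, $B_2=A_b=b\cdot a'$, $C_1=A_c=c\cdot a'$, $C_2=B_c=c\cdot b'$, that is, the six points of Theorem~\ref{thm:six-points-on-a-conic}. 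Since $a^{*}$ is the perpendicular to $a$ through $A^{*}$, it equals the conjugate line $A^{*}\rho(a)=A^{*}A'$, and similarly $b^{*}=B^{*}B'$, $c^{*}=C^{*}C'$, so $\TT$ together with $A^{*},B^{*},C^{*}$ satisfies the hypotheses of Theorem~\ref{thm:Carnot-projective-cosines}.

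Applying that theorem gives
\begin{equation*}
\mathbf{C}(\ov{BA^{*}})\,\mathbf{C}(\ov{CB^{*}})\,\mathbf{C}(\ov{AC^{*}})=\mathbf{C}(\ov{CA^{*}})\,\mathbf{C}(\ov{AB^{*}})\,\mathbf{C}(\ov{BC^{*}})\,.
\end{equation*}
I would then translate each factor with Proposition~\ref{prop:geometric-translations}: in $\ov{BA^{*}}$ the endpoint $A^{*}$ lies in $\mathbb{P}$ while $B$ is exterior to $\Phi$, the line $BA^{*}=a$ is secant to $\Phi$, and the conjugate of $B$ in $a$ is $B_a=A_1$, so the relevant row of Table~\ref{table:non-euclidean-segments} gives $\mathbf{C}(\ov{BA^{*}})=-\sinh^{2}\VRT{A_1A^{*}}=-\sinh^{2}a_1$. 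The same reading of the remaining five factors produces $\mathbf{C}(\ov{CA^{*}})=-\sinh^{2}a_2$, $\mathbf{C}(\ov{CB^{*}})=-\sinh^{2}b_1$, $\mathbf{C}(\ov{AB^{*}})=-\sinh^{2}b_2$, $\mathbf{C}(\ov{AC^{*}})=-\sinh^{2}c_1$ and $\mathbf{C}(\ov{BC^{*}})=-\sinh^{2}c_2$. Substituting, the factor $(-1)^{3}$ on each side cancels and leaves $\sinh^{2}a_1\,\sinh^{2}b_1\,\sinh^{2}c_1=\sinh^{2}a_2\,\sinh^{2}b_2\,\sinh^{2}c_2$; since the six lengths are positive their hyperbolic sines are positive, and extracting positive square roots yields the asserted identity.

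The one step needing care is the translation: one must check that in each of the six segments exactly one endpoint is interior and the other exterior to $\Phi$, that the conjugate of the exterior endpoint is precisely the matching vertex $A_1,A_2,B_1,B_2,C_1,C_2$ of $\SC{H}$, and that the sign $-1$ appears equally often on the two sides, so that the unsquaring is unambiguous. Because no converse is claimed here, the issue of fake Carnot points (Theorem~\ref{thm:converse-Carnot-projective-cosines}) does not intervene.
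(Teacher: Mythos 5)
Your proposal is correct and follows exactly the route the paper takes: build $\TT$ from the alternate sides $a,b,c$, apply Theorem~\ref{thm:Carnot-projective-cosines}, and translate each $\mathbf{C}$-factor via Table~\ref{table:non-euclidean-segments} using the identifications $A_1=B_a$, $A_2=C_a$, etc. The paper leaves these translation details implicit; your version merely spells them out, including the correct bookkeeping of the three $-1$ factors on each side.
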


% There is a remarkable difference between Theorems~\ref{thm:Carnot-spherical-triangle},~\ref{thm:Carnot-hyperbolic-triangle} or~\ref{thm:Carnot-hyperbolic-hexagon} with Theorem~\ref{thm:Carnot_theorem_euclidean}. The identity~\eqref{eq:Carnot-euclidean} is a necessary and sufficient condition in Theorem~\ref{thm:Carnot_theorem_euclidean}, but due to Proposition~\ref{prop:Carnot-converse} the identities~\eqref{eq:Carnot-elliptic-triangle},~\eqref{eq:Carnot-hyperbolic-triangle} and~\eqref{eq:Carnot-Hexagon} are necessary and not sufficient conditions in their respective theorems. If we call \emph{Carnot points} of the triangle $\TT=\wt{ABC}$ with sides $a=BC,b=CA,c=AB$ to a triple $A^*, B^*,C^*$ of points on the sides $a,b,c$ of the triangle respectively such that the orthogonal lines to $a,b,c$ through $A^*, B^*,C^*$ respectively are concurrent, when the triangle $\TT$ is euclidean, the triples of Carnot points of $\TT$ coincides with the triples of points  $A^*, B^*,C^*$ on $a,b,c$ respectively such that 
% the 
% identity~\eqref{eq:Carnot-euclidean} holds. By Proposition 
%~\ref{prop:Carnot-converse}, when $\TT$ is elliptic (or hyperbolic), there could be \emph{fake Carnot points} of $\TT$: triples of points on the sides of $\TT$ which are not Carnot points but for which the identity~\eqref{eq:Carnot-elliptic-triangle} (or~\eqref{eq:Carnot-hyperbolic-triangle}) hold.???ARREGLAR

\chapter{Where do laws of cosines come from?}
\label{sec:Cosine-Rule}

The projective trigonometric ratios $\CC,\SS,\TAN$ 
have the inconvenience of the presence of the square power 
in all their geometric translations 
(Table~\ref{table:non-euclidean-segments}). 
In \S\ref{sec:Menelaus-Theorem} we saw that 
they are sufficient to deal with figures whose trigonometric 
formulae are simple, and that they are not suitable to work with 
the law of cosines of a generalized non-euclidean triangle, 
for example. We are looking for projective expressions 
associated with segments or angles whose non-euclidean 
translation give the actual, \emph{unsquared}, 
trigonometric ratios of the corresponding segment or angle. 
This can be done using the midpoints of segments 
as accesory points. 
Another construction appears in the Appendix 
as Formula~\eqref{eq:angle-between-rays-cosine}.

\section{Midpoints and the unsquaring of projective trigonometric ratios}

\begin{figure}[t]
\centering
\begin{tabular}{lr}
\subfigure[]{\includegraphics[width=0.38\textwidth]
{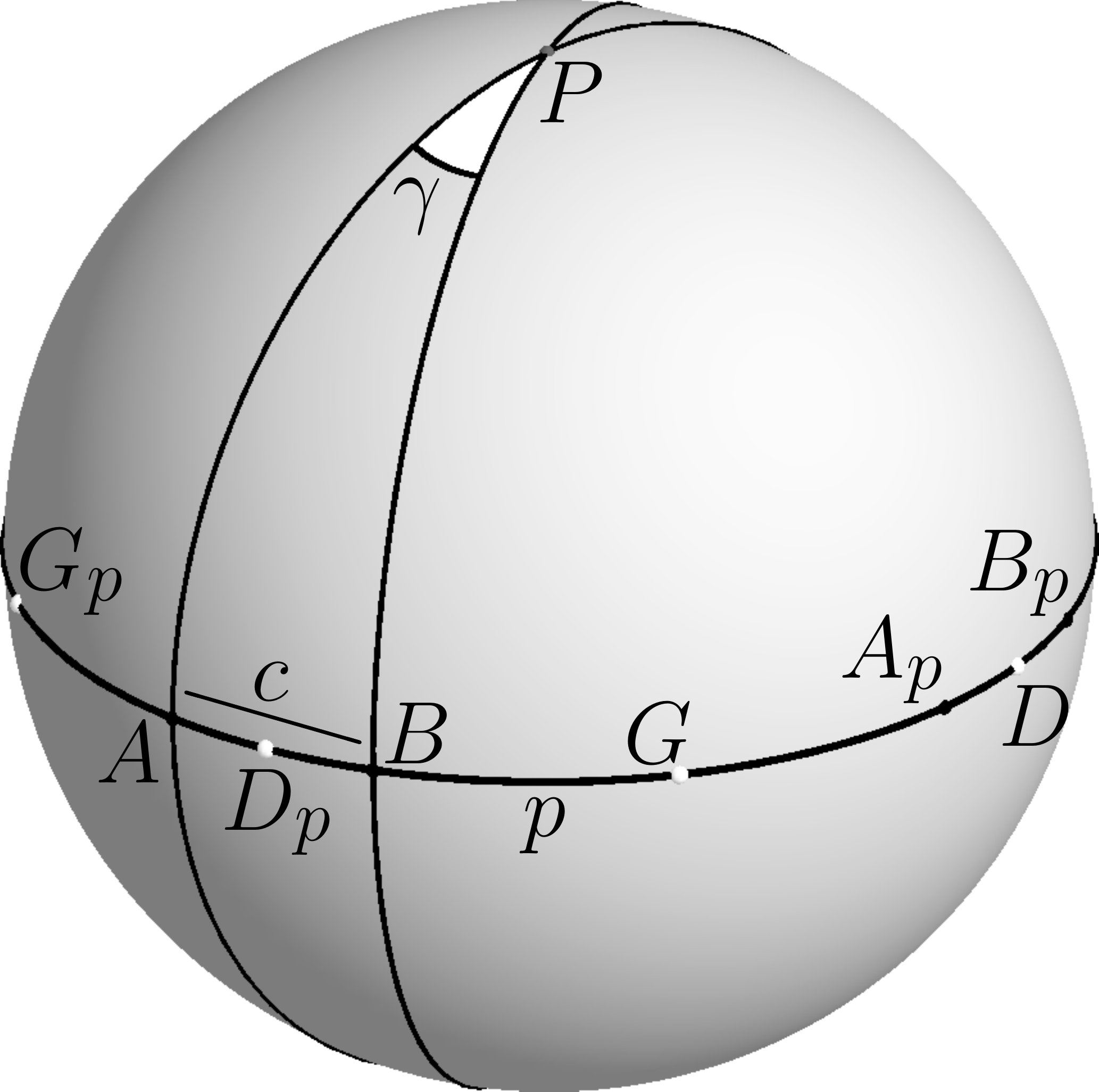}
\label{Fig:Laguerre-segment-spherical}}&
\subfigure[]{\includegraphics[width=0.55\textwidth]
{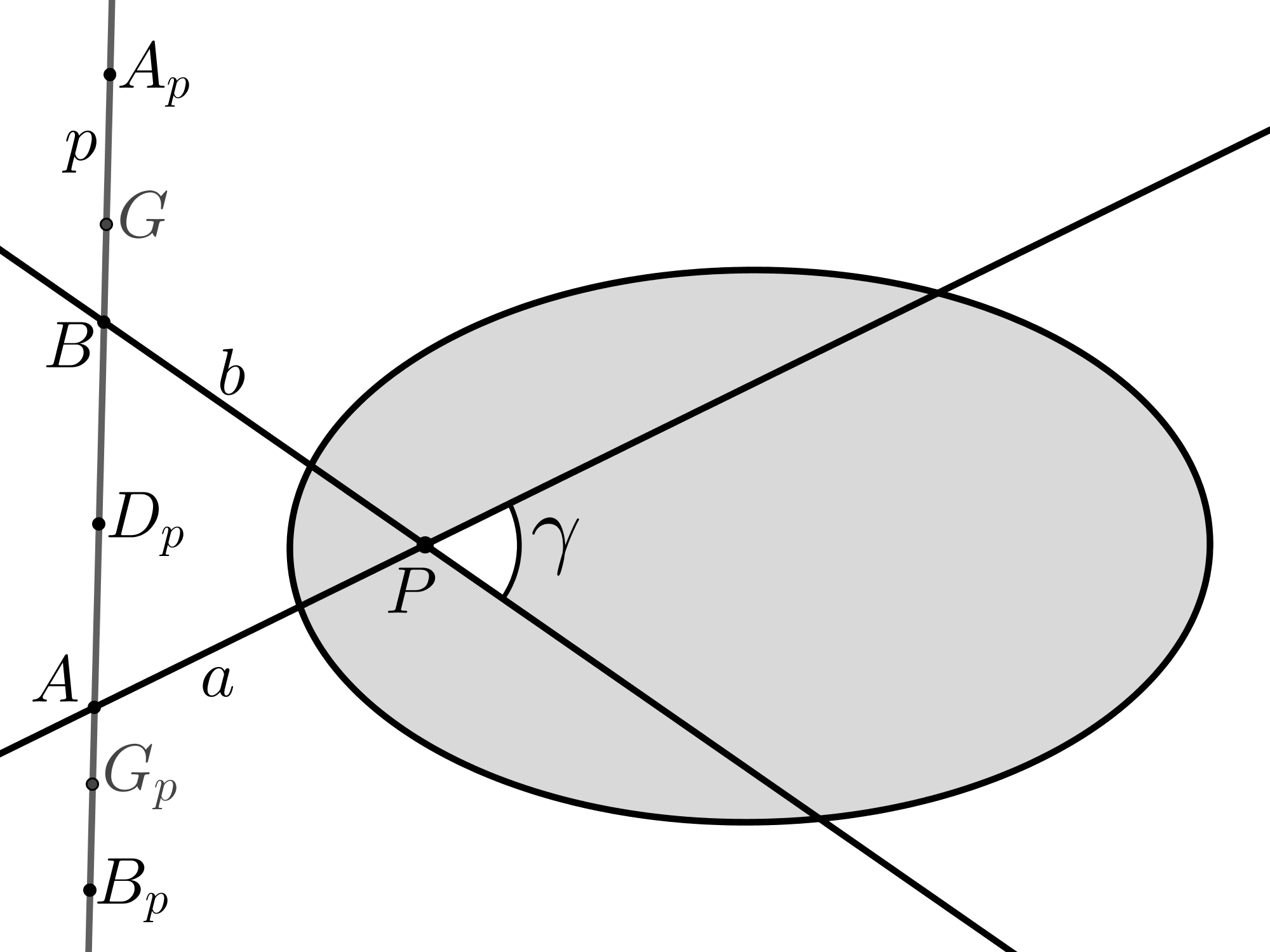}
\label{Fig:Laguerre-segment-hyperbolic-01}}
\end{tabular}\\
\caption{Projective segments and their
midpoints I}
\label{Fig:Laguerre-non-euclidean_segments_I}
\end{figure}

Let $A,B$ be two points not lying on $\Phi$ and such that the line $p$ joining
them is not tangent to $\Phi$. Let $D,D_{p}$ be the two midpoints of $\ov{AB}$.
The symmetry $\tau_{DD_p}$ of $p$ with respect to $D,D_{p}$ maps
$A$ and $A_{p}$ into $B$ and $B_{p}$ respectively. This
implies that 
\[
(ABDA_{p})=(BADB_{p}),
\]
and so it is
\[
(ABB_{p}A_{p})=(ABDA_{p})(ABB_{p}D)=(BADB_{p})(ABB_{p}D)=(ABB_{p}D)^{2}.
\]
Moreover,
\[
(ABB_{p}D)=(ABD_{p}D)(ABB_{p}D_{p})=-(ABB_{p}D_{p}).
\]
\begin{remark}\label{remark:midpoints-square-roots}
The previous identities imply that the two square roots of
$\CC(\ov{AB})=(ABB_{p}A_{p})$
are given by $(ABB_{p}D)$ and $(ABB_{p}D_{p})$.
\end{remark}

By definition, it is $\SS(\ov{AB})=\CC(\ov{AB_p})$. Therefore, the two square
roots of $\SS(\ov{AB})$ are given by
$(AB_pBG)$ and $(AB_pBG_p)$,
where $G,G_p$ are the two midpoints of $\ov{AB_p}$.

If we want to compute a square root of $\CC(\ov{AB})$ and a square root of
$\SS(\ov{AB})$, we must choose a midpoint of the segment 
$\ov{AB}$ and a midpoint of
the complementary segment $\ov{AB_p}$.
By Lemma~\ref{lem:midpoints-AB-midpoints-ApBp}, the midpoints of
$\ov{AB}$ are also the midpoints of $\ov{A_pB_p}$ and the midpoints of
$\ov{AB_p}$ are the midpoints of $\ov{A_pB}$. We say that the midpoints of
$\ov{AB_p}$ are the \emph{complementary midpoints} of $\ov{AB}$.

\begin{definition}\label{def:oriented-segment}
The segment $\ov{AB}$ is \emph{oriented} if we have chosen for it a 
midpoint and
a complementary midpoint as \emph{preferred midpoints}.
If $\ov{AB}$ is
oriented, we define its two associated 
\emph{vectors} $\overrightarrow{AB}$ and
$\overrightarrow{BA}$ as the \emph{ordered} pairs $(A,B)$ and $(B,A)$,
respectively, 
together with the preferred midpoints of $\ov{AB}$.
We define also the \emph{projective trigonometric ratios} of 
$\overrightarrow{AB}$ and $\overrightarrow{BA}$:
\begin{equation}\label{eq:def-unsquared-trigonometric-ratios}
\begin{aligned}
\cc(\overrightarrow{AB})=(ABB_pD)\,,\quad&\quad
\cc(\overrightarrow{BA})=(BAA_pD)\,,\\
\ss(\overrightarrow{AB})=(AB_pBG)\,,\quad&\quad 
\ss(\overrightarrow{BA})=(BA_pAG)\,.
\end{aligned}
\end{equation}
where $D$ and $G$ are respectively the preferred midpoint and the preferred
complementary midpoint of $\ov{AB}$.
\end{definition}
From now on, we will reserve the term \emph{projective trigonometric ratios} for
the functions $\cc$ and $\ss$ just defined, while the functions $\CC,\SS,\TAN$
defined in \S\ref{sec:Cayley-Klein-models-for} will be called \emph{squared
projective trigonometric ratios}.

Note that the functions $\cc$ and $\ss$ for oriented segments work as it is
expected in analogy with the circular and hyperbolic cosine and sine functions,
because it is:
\begin{align*}
\cc(\overrightarrow{AB})=(ABB_pD)
\overset{\tau_{DD_p}}{=}&(BAA_pD)=\cc(\overrightarrow{BA})\,,\\
\ss(\overrightarrow{AB})=(AB_pBG)
\overset{\tau_{DD_p}}{=}&(BA_pAG_p)=-\ss(\overrightarrow{BA})\,,
\end{align*}
and so $\cc$ works like an ``even'' function while $\ss$ behaves like an 
``odd'' function. For a simpler notation we use 
the expressions $\cc(AB),\ss(AB)$
instead of $\cc(\overrightarrow{AB}),\cc(\overrightarrow{AB})$.

% We are used to think about vectors in euclidean space, and so we are used to
% think about an orientation as a binary choice. In the sphere or in the
% projective plane, a binary choice is not enough for determining an oriented
% segment from its endpoints, because in those cases straight lines are closed. If
% we have two points $A,B$ in the sphere or in the projecting plane, for having an
% oriented segment with $A,B$ as endpoints we must make two choices: (i) first,
% choose one of the two supplementary segments into which the points $A,B$ divide
% the line $AB$; and (ii) finally, choose a ``positive'' travel sense for the
% selected segment. The choice of the preferred midpoints in Definition
% \ref{def:oriented-segment} is equivalent to these two choices.

\begin{figure}[t]
\centering
\subfigure[]{\includegraphics[width=0.48\textwidth]
{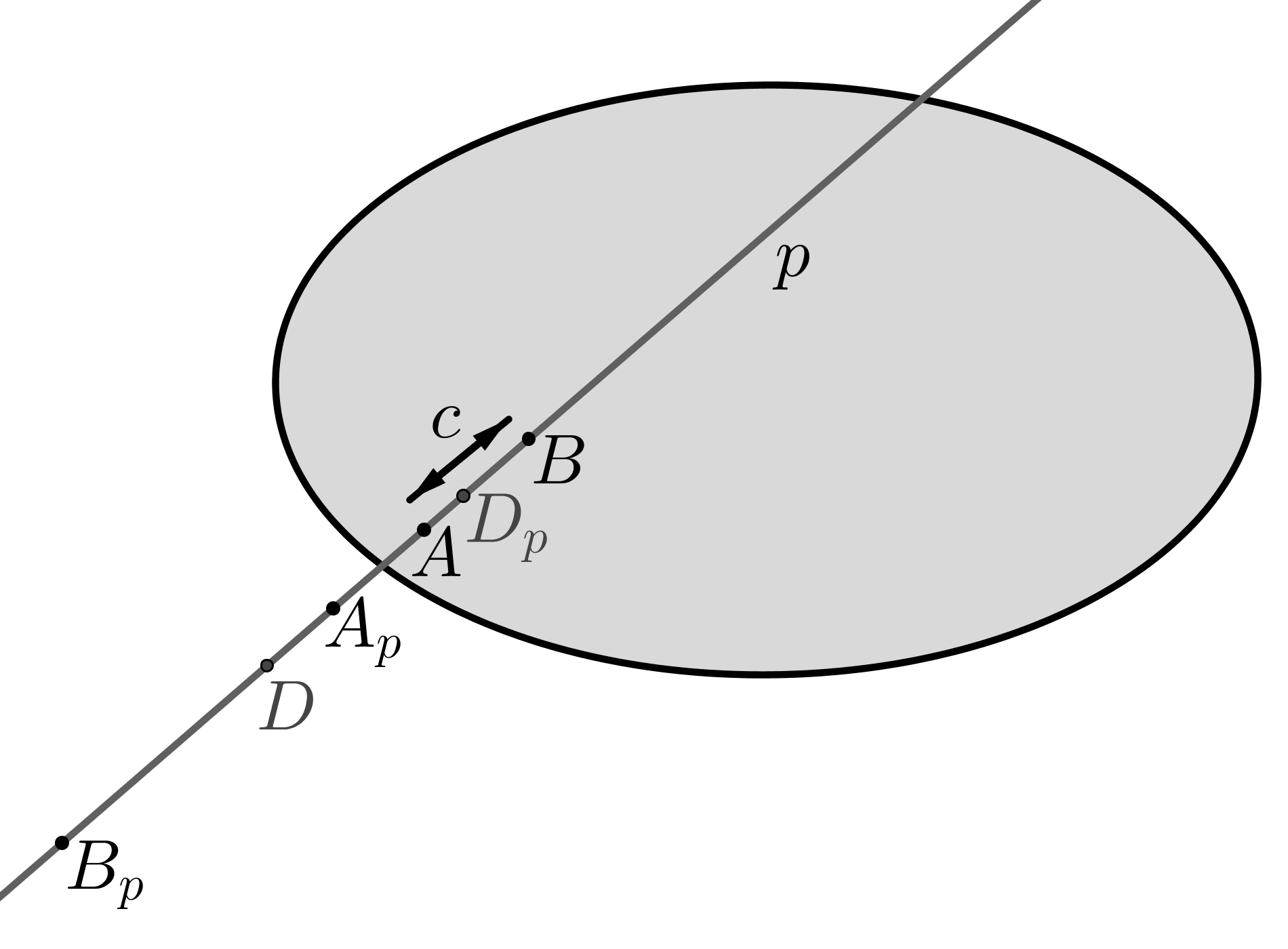}
\label{Fig:Laguerre-segment-hyperbolic-02}}
\subfigure[]{\includegraphics[width=0.48\textwidth]
{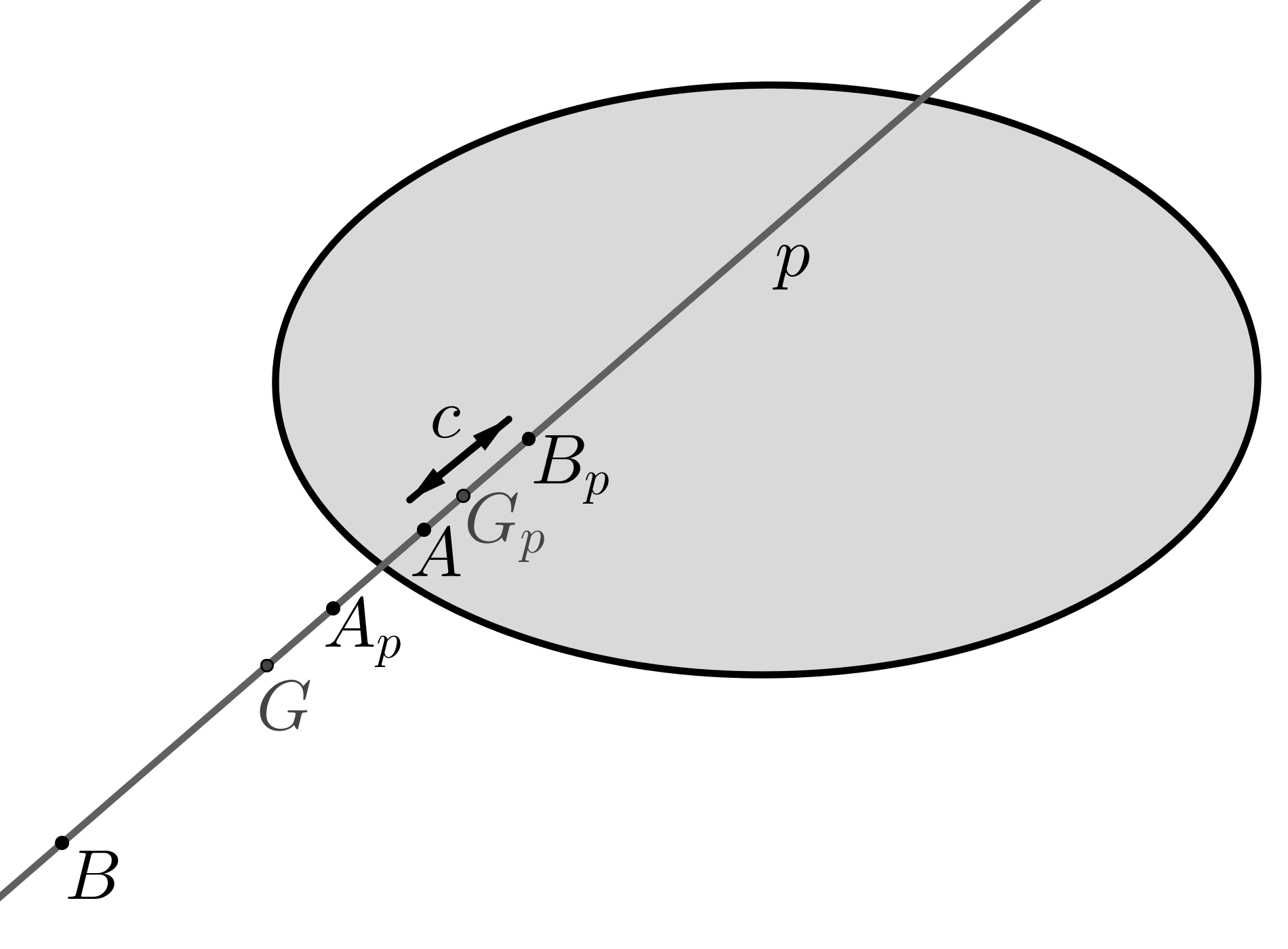}
\label{Fig:Laguerre-segment-hyperbolic-03}}\\
\subfigure[]{\includegraphics[width=0.48\textwidth]
{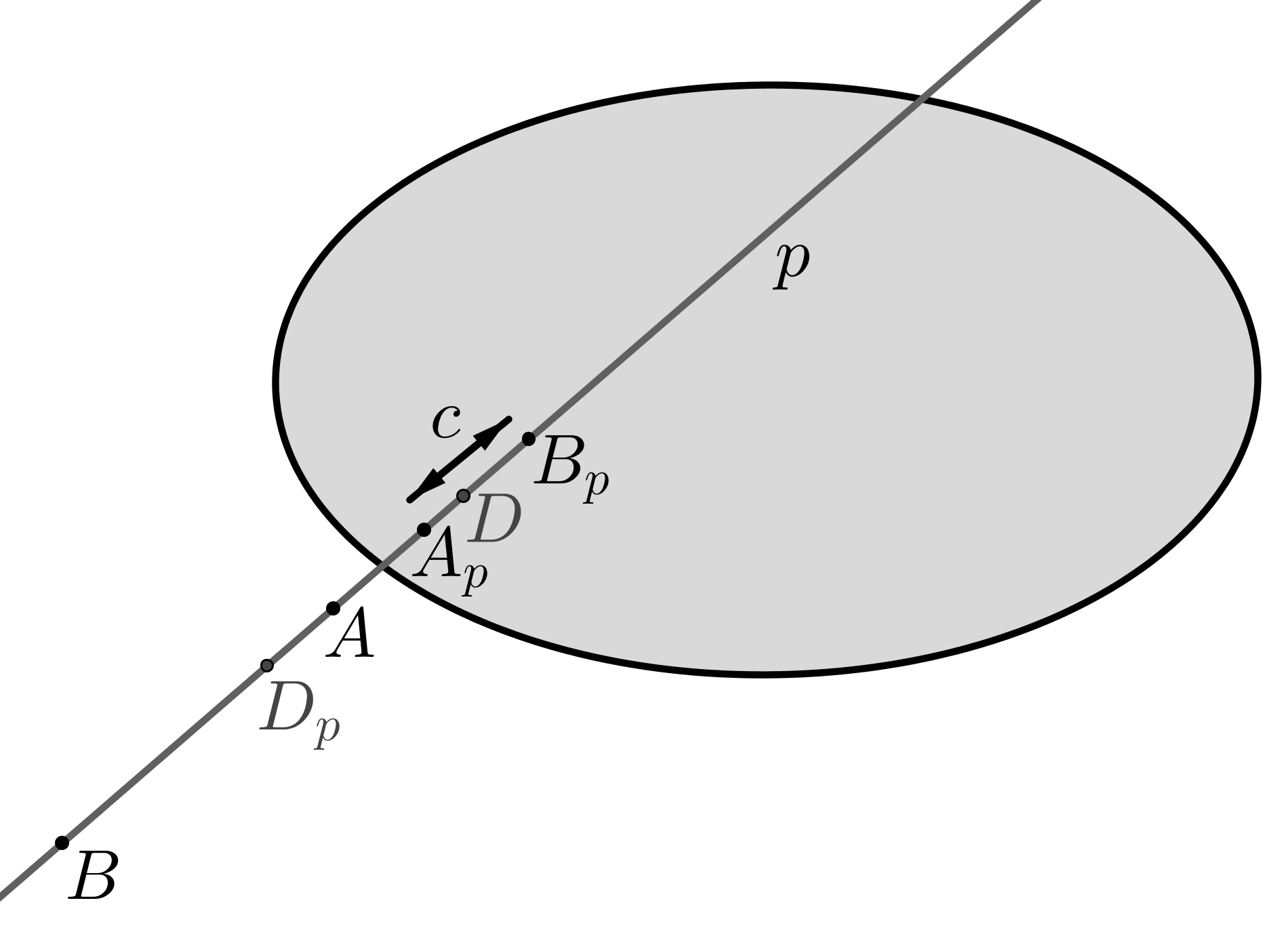}
\label{Fig:Laguerre-segment-hyperbolic-04}}
\caption{Projective segments and their midpoints II}
\label{Fig:Laguerre-non-euclidean_segments_II}
\end{figure}

If $\Phi$ is imaginary, the points $D,D_p,G,G_p$ are real points. If $A,B$
are the points depicted in Figure~\ref{Fig:Laguerre-segment-spherical}, and we
choose as preferred midpoints for them the points $D$ and $G$ of the same
figure, by Lemma~\ref{lemma:ABCD-negative-iff-AB-separate-CD} it is
$$\cc(AB)=(ABB_pD)>0\quad \text{and}\quad \ss(AB)=(AB_pBG)>0$$
Therefore, in this case it is
$\cc(AB)=\cos c$ and $\ss(AB)=\sin c$, where $c$ is the elliptic distance
between $A$ and $B$ as depicted in Figure~\ref{Fig:Laguerre-segment-spherical}.
If $a,b$ are the lines joining $A,B$ with the pole $P$ of $p$ and $\gamma$ is
the angle between $a$ and $b$ depicted in Figure
\ref{Fig:Laguerre-segment-spherical}, it is also $\cc(AB)=\cos \gamma$ and
$\ss(AB)=\sin \gamma$.

When $\Phi$ is real and $p$ is exterior to $\Phi$, the four points $D,D_p,G,G_p$
are again real points. In the situation of Figure
\ref{Fig:Laguerre-segment-hyperbolic-01}, if $D$ and $G$ are the preferred
midpoints of $\ov{AB}$, it is $\cc(AB)>0$ and also $\ss(AB)>0$. Let consider
again the lines $a,b$ joining $A,B$ with the pole $P$ of $p$. Then, because the
angle $\gamma$ in this figure is an acute angle (observe that the line $a$ and
its conjugate $a_P=PA_p$ form a right angle), it must be $\cc(AB)=\cos \gamma$
and $\ss(AB)=\sin \gamma$.

When $\Phi$ is a real conic and $p$ is secant to $\Phi$, there are some
possibilities depending on the relative positions of the points $A,B$ with
respect to $\Phi$. In this case, the midpoints of $\ov{AB}$ are real points if
and only if the complementary midpoints are imaginary. This will imply that for
any choice of preferred midpoints for the segment $\ov{AB}$, one of the
projective trigonometric ratios $\cc(AB),\ss(AB)$ will be real while the other
is	 pure imaginary. If we want to translate the projective trigonometric
ratios $\cc(AB),\ss(AB)$ into geometric trigonometric ratios associated with a
hyperbolic magnitude, we need to decide the \emph{sign} (positive or negative
when the number is real, \emph{positive imaginary} or \emph{negative
imaginary}\footnote{\label{footnote:positive-imaginary}
We use the terms \emph{positive imaginary} and
\emph{negative imaginary} in the obvious way: a positive (resp. negative)
imaginary number has the form $i\lambda$, with $\lambda\in\mathbb{R}$ positive
(resp. negative).} when the number is pure 
imaginary) of $\cc(AB)$ and $\ss(AB)$. The sign of the real number among
$\cc(AB)$ and $\ss(AB)$ can always be decided ``visually'', without doing
explicit computations, using Lemma~\ref{lemma:ABCD-negative-iff-AB-separate-CD}.
On the contrary, the sign of the pure imaginary number among $\cc(AB)$ and
$\ss(AB)$ cannot be decided in a simple way.

If both points $A,B$ are interior to $\Phi$, the two midpoints $D,D_p$ are real
points, while the two complementary midpoints $G,G_p$ are imaginary points.
Thus, we have no visual way for choosing a preferred complementary midpoint. In
the situation of Figure~\ref{Fig:Laguerre-segment-hyperbolic-02}, if $D$ is the
preferred midpoint of $\ov{AB}$, we have $\cc(AB)>0$ and so it is $\cc(AB)=\cosh
c$, where $c$ is the hyperbolic distance between $A$ and $B$. On the other hand,
if $G$ is a complementary midpoint of $\ov{AB}$, the cross ratio $(AB_pBG)$ must
be pure imaginary because by 
Lemma~\ref{lemma:ABCD-negative-iff-AB-separate-CD}
it is
$$(AB_pBG)^2=\SS(AB)=(AB_pBA_p)<0\,.$$
Thus, for any choice of preferred complementary midpoint it will be $\ss(AB)=\pm
i\sinh c$, and the correct sign of this equality cannot be decided without doing
explicit computations.

In the same way, if both points $A,B$ are exterior to $\Phi$, for any choice of
preferred midpoints it will be $\cc(AB)=\pm\cosh c$ and $\ss(AB)=\pm i\sinh c$,
where $c$ is the hyperbolic distance between the poins $A_p$ and $B_p$. For
example, in Figure~\ref{Fig:Laguerre-segment-hyperbolic-04}, if $D$ is the
preferred midpoint of the segment $\ov{AB}$ it is $\cc(AB)=\cosh c$.

If exactly one of the two points $A,B$ is interior to $\Phi$, then the midpoints
of $\ov{AB}$ are imaginary numbers while the complementary midpoints are real.
In Figure~\ref{Fig:Laguerre-segment-hyperbolic-03}, for example, if $G$ is the
preferred complementary midpoint of $\ov{AB}$ it is $\ss(AB)=\cosh c$ while for
any choice of a preferred midpoint of $\ov{AB}$ it will be $\cc(AB)=\pm i \sinh
c$, where $c$ is the hyperbolic distance between $A$ and $B_p$.

In our aim of
obtaining a projective version of the law of cosines, we will use the
functions $\cc,\ss$ just defined, but we have the annoying
problems of dealing with imaginary midpoints, and of deciding the sign of
projective trigonometric ratios when they are pure imaginary. As we will see, we
can
overcome these drawbacks: we will give a construction such that no imaginary
midpoint needs to be explicitly constructed and such that no sign of pure
imaginary 
projective trigonometric ratio needs to be explicitly computed.

\section{The magic midpoints. Oriented
triangles}\label{sec:magic-midpoints-oriented-triangles}

Consider a projective triangle $\TT$ and its polar triangle $\TT'$ as
in \S\ref{sec:triangle-notation}. The points $D,D_a$ and
$G,G_a$ are the midpoints and the complementary midpoints of $\ov{BC}$,
respectively. In the same way, the points $E,E_b$ and $H,H_b$ are  the midpoints
and the complementary midpoints of $\ov{CA}$, respectively, and $F,F_c$ and
$I,I_c$ are  the midpoints and the complementary midpoints of $\ov{AB}$,
respectively. As usual, the same notation but adding an apostrophe
$$D',D'_{a'},G',G'_{a'},E',E'_{b'},H',H'_{b'},F',F'_{c'},I',I'_{c'}$$
holds for the midpoints and complementary midpoints of $\TT'$.

In order to simplify notation, we say that the midpoints 
(complementary midpoints) of a segment whose 
endpoints are vertices of $\TT$ are midpoints 
(complementary midpoints) of the corresponding side 
of $\TT$ and also midpoints (complementary midpoints) of $\TT$.

If we want to apply our functions $\cc,\ss$ to the triangles $\TT$ and $\TT'$, 
we need to orient each side of both triangles in the sense of 
Definition~\ref{def:oriented-segment}. 
It should be interesting to do so in a coherent way. 
For the midpoints of $\TT$ and $\TT'$ it is not difficult to 
establish a choice criterion based on Theorem
\ref{thm:midpoints-quadrilateral}. 
We have assumed that $D,E,F$ are non-collinear, and this implies that
$D,E,F_c$ are collinear. 
There is a big difference between chosing $D,E,F$ or $D,E,F_c$ 
as preferred midpoints. We will say that the preferred 
midpoints of the sides of $\TT$ are \emph{coherently chosen} 
if they are not collinear, and the same for the preferred 
midpoints of $\TT'$. Thus, if we choose the midpoints $D,E,F$ and
$D',E',F'$ with the assumptions of \S\ref{sec:triangle-notation}
as preferred midpoints of their respective segments, they will be
coherently chosen.
The choice criterion is not as easy 
for the complementary midpoints of $\TT$, because they 
have not the same structure as the midpoints.

\begin{figure}
\centering
\includegraphics[width=0.8\textwidth]
{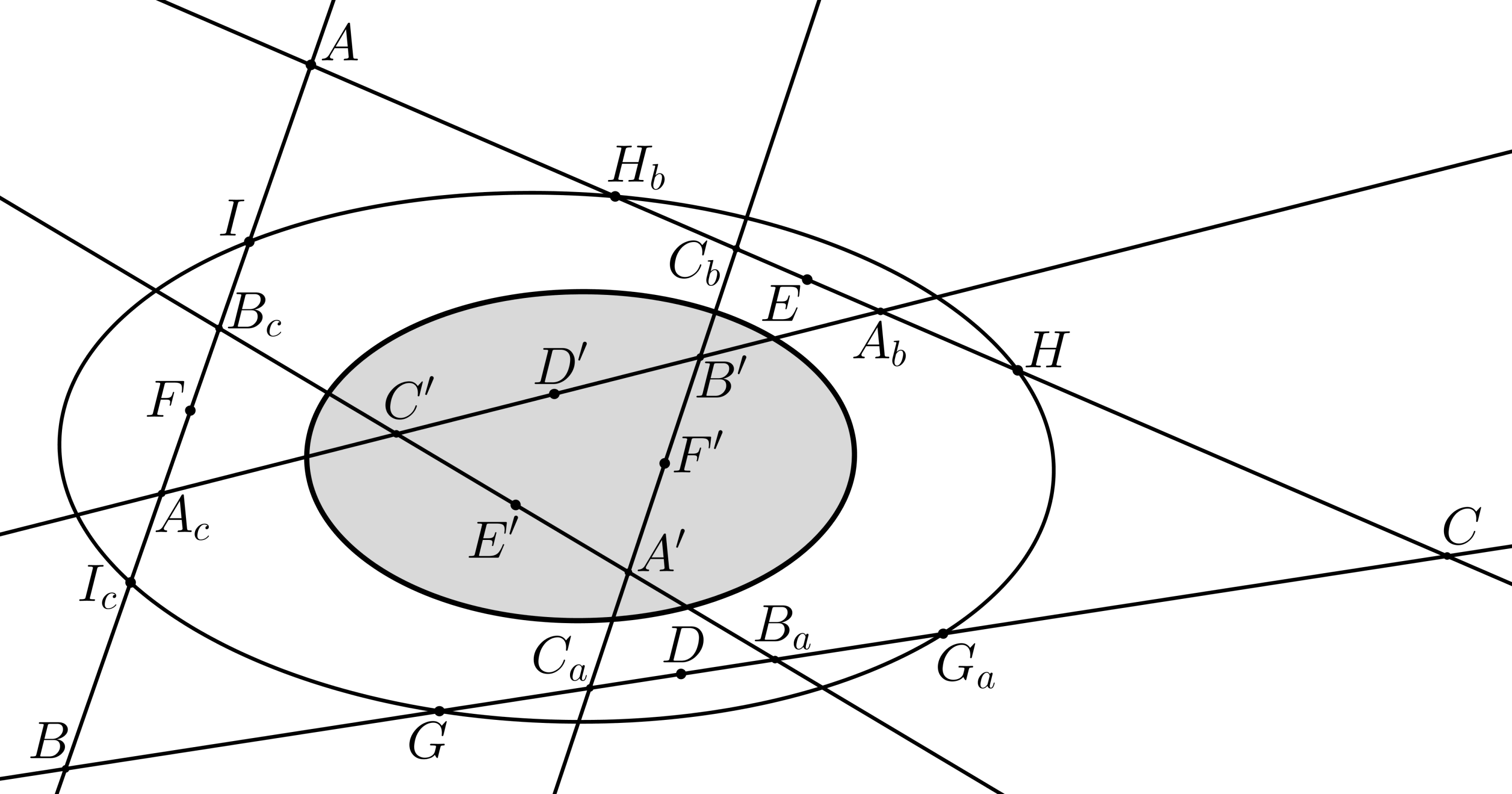}
\caption{Complementary midpoints lie on a
conic}\label{Fig:complementary-midponts-on-a-conic}
\end{figure}

\begin{theorem}\label{thm:complementary-midpoints-lie-on-conic}
The complementary midpoints of the triangle $\TT$ lie on a conic.
\end{theorem}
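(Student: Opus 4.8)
The plan is to realize the six complementary midpoints as the common points of a Pascal-type configuration, exactly mimicking the proof of Theorem~\ref{thm:six-points-on-a-conic}. Recall that the complementary midpoints of $\ov{BC}$ are, by Lemma~\ref{lem:midpoints-AB-midpoints-ApBp}, the midpoints of $\ov{B_pC_p}$; but $B_p = B'_{a'}$ and $C_p = C'_{a'}$ are the conjugate points of $B',C'$ on $a'$, so the complementary midpoints of $a$ are built from the intersections of $a'$ with $\Phi$ in the same way the midpoints of $a$ are built from the intersections of $b'$ and $c'$ with $\Phi$. Concretely, if $A'_1,A'_2$ are the points of $a'\cdot\Phi$, then the midpoints of $\ov{B_pC_p}$ — i.e.\ the complementary midpoints $G,G_a$ of $\ov{BC}$ — are the diagonal points different from $A_0$ of the quadrangle inscribed in $\Phi$ with vertices $A'_1,A'_2$ and the two tangency-type data coming from $B',C'$; more usefully, by the construction in \S\ref{sec:midpoints-of-a-segment} applied to $\ov{B_pC_p}$, they are obtained as harmonic conjugates on $a$ with respect to $B_p,C_p$ of the trace of a chord of $\Phi$. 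First I would pin down, for each side $a,b,c$, an explicit description of its two complementary midpoints purely in terms of the six points $A'_1,A'_2,B'_1,B'_2,C'_1,C'_2$ of $a'\cdot\Phi$, $b'\cdot\Phi$, $c'\cdot\Phi$.

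Next I would set up the Pascal hexagon. Because $\TT$ and $\TT'$ are perspective (Chasles' Theorem~\ref{thm:Chasles-Polar-triangle}), Desargues' Theorem~\ref{thm:Desargues} gives that $a\cdot a'$, $b\cdot b'$, $c\cdot c'$ — that is, the points $A_0,B_0,C_0$ — are collinear, lying on the line $h$. This is exactly the collinearity hypothesis needed to run the converse of Pascal's Theorem on a suitable hexagon inscribed in $\Phi$ whose vertices are among $A'_1,A'_2,B'_1,B'_2,C'_1,C'_2$: the three "opposite side" intersection points of such a hexagon will be forced to lie on $h$. The key computation is to check that, with the right cyclic arrangement of the six points on $\Phi$ (the arrangement dictated by which labelling produces the midpoints, as fixed in Lemma~\ref{lem:midpoints-collinear}), the three pairwise intersections of opposite sides are precisely $A_0,B_0,C_0$. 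Granting that, the converse of Pascal's Theorem~\ref{thm:Pascal} yields a conic $\Gamma^{*}$ through all six of $A'_1,A'_2,B'_1,B'_2,C'_1,C'_2$ — but that is not yet what we want.

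The actual target conic is the one through the six complementary midpoints $G,G_a,H,H_b,I,I_c$, not through the six points on $\Phi$. So the cleanest route is: rather than using the $\Phi$-points as hexagon vertices, use the complementary midpoints themselves as the vertices of a hexagon and show its three pairs of opposite sides meet in three collinear points, again necessarily on $h$. To see why the opposite sides of the hexagon $G\,H\,I\,G_a\,H_b\,I_c$ (in an order to be chosen) meet on $h$, I would use that each complementary midpoint of $a$ is the harmonic conjugate, on $a$, of the corresponding midpoint with respect to a pair of $\Phi$-points, together with the fact — essentially Lemma~\ref{lem:harmonic-sets-on-the-sides} and the quadrangle relations of \S\ref{sec:Harmonic-sets-harmonic-conjugacy} — that harmonic conjugacy with respect to $B,C$ on $a$ fixes $A_0$. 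Feeding in the collinearity of $A_0,B_0,C_0$ then forces the three opposite-side intersection points of the complementary-midpoint hexagon onto $h$, and the converse of Pascal's Theorem produces the desired conic.

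The main obstacle is bookkeeping of labels and cyclic orders: there are $2^3$ ways to pick one complementary midpoint from each side, and only the correct cyclic arrangement makes the Pascal hexagon work; tracking which diagonal point of which inscribed quadrangle is $G$ versus $G_a$, in parallel with the midpoint labelling already fixed in Lemma~\ref{lem:midpoints-collinear}, is where an error would creep in. A secondary subtlety is the degenerate case when $\TT$ is right-angled: then (as noted after Theorem~\ref{thm:six-points-on-a-conic}) the analogous conic degenerates, and one should either exclude that case by the standing general-position hypothesis or note that the statement persists with a degenerate conic. Apart from that, every ingredient — Chasles, Desargues, the converse of Pascal, and the harmonic-set lemmas — is already available, so the proof is a matter of assembling them with the bookkeeping done carefully.
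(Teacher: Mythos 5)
Your strategy (converse of Pascal's Theorem applied to a hexagon whose vertices are the six complementary midpoints) is not the paper's route, and as written it has a genuine gap at its central step: the claim that the three pairs of opposite sides of a hexagon on $G,H,I,G_a,H_b,I_c$ meet in points of $h$. This is neither established by your argument nor correct. The justification you offer --- that ``harmonic conjugacy with respect to $B,C$ on $a$ fixes $A_0$'' --- is false, since $\tau_{BC}$ fixes only $B$ and $C$; and the collinearity of $A_0,B_0,C_0$ concerns the points $a\cdot a'$, $b\cdot b'$, $c\cdot c'$, none of which is an intersection of sides of your hexagon. What is actually true (Theorem~\ref{thm:magic-midpoints}, which the paper proves \emph{after} this theorem) is that, for example, $HI\cdot H_bI_c$ and $HI_c\cdot H_bI$ are the midpoints $\tiD,\tiD_{\tia}$ of $\ov{B_cC_b}$, i.e.\ they lie on the side $\tia$ of the magic triangle $\tiT$, not on $h$; the other opposite-side intersections are likewise magic midpoints on $\tib$ and $\tic$. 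To finish along your lines you would have to show that one magic midpoint on each of $\tia,\tib,\tic$ can be chosen collinear, with the hexagon ordered compatibly --- this follows from Theorem~\ref{thm:midpoints-quadrilateral} applied to $\tiT$, but it is precisely the labelling-and-ordering bookkeeping you flag as the danger point, and it is the part that is missing.

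The paper sidesteps all of this by using Carnot's Theorem on projective triangles (Theorem~\ref{thm:Carnot-projective}) instead of Pascal. The whole computation is Lemma~\ref{lem:DDa-midpoints-of-GGa}: since $D,D_a$ are the midpoints of $\ov{GG_a}$, the symmetry $\tau_{DD_a}$ gives $(BCD_aG)=(CBD_aG_a)$, hence $(BCD_aG)(BCD_aG_a)=1$, and similarly on the other two sides; multiplying the three identities and using that $D_a,E_b,F_c$ are collinear, Carnot's criterion places the six complementary midpoints on a conic. A Pascal-style proof is salvageable, but it essentially requires first re-deriving part of Theorem~\ref{thm:magic-midpoints}; the Carnot route is shorter and independent of the magic-triangle machinery.
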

Before proving this theorem, we need the following lemma:
\begin{lemma}\label{lem:DDa-midpoints-of-GGa}
$D,D_a$ are the midpoints of $\ov{GG_a}$ and $G,G_a$ are the midpoints of
$\ov{DD_a}$.
\end{lemma}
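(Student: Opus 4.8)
The plan is to reduce both assertions to the single harmonic identity $(DD_aGG_a)=-1$ and then quote Lemma~\ref{lem:midpoints-harmonic-UV-AB}. Set $p=a=BC$ and let $U,V$ be the two points of $a\cdot\Phi$. Recall (from \S\ref{sec:midpoints-of-a-segment}) that the midpoints $D,D_a$ of $\ov{BC}$ satisfy $(BCDD_a)=(UVDD_a)=-1$. By definition the complementary midpoints $G,G_a$ of $\ov{BC}$ are the midpoints of $\ov{BC_a}$, where $C_a=a\cdot c'$ is the conjugate point of $C$ in $a$; by Lemma~\ref{lem:midpoints-AB-midpoints-ApBp} they are equally the midpoints of $\ov{B_aC}$, with $B_a=a\cdot b'$ the conjugate point of $B$ in $a$. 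In particular $(BC_aGG_a)=(UVGG_a)=-1$. Granting $(DD_aGG_a)=-1$ for a moment, Lemma~\ref{lem:midpoints-harmonic-UV-AB} applied on $a$ to the segment $\ov{GG_a}$ (using $(GG_aDD_a)=-1$ and $(UVDD_a)=-1$) gives that $D,D_a$ are the midpoints of $\ov{GG_a}$, and the same lemma applied to $\ov{DD_a}$ (using $(DD_aGG_a)=-1$ and $(UVGG_a)=-1$) gives that $G,G_a$ are the midpoints of $\ov{DD_a}$.

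So everything comes down to proving $(DD_aGG_a)=-1$, and for this I would work with the symmetry $\tau_{DD_a}$ of $a$ with respect to $D$, i.e.\ the harmonic involution of $a$ whose fixed points are $D,D_a$. Since $(DD_aBC)=(DD_aUV)=-1$, this involution interchanges $B\leftrightarrow C$ and $U\leftrightarrow V$. The key step is to show that it also interchanges $B_a\leftrightarrow C_a$. For this I would observe that $\tau_{DD_a}$ commutes with the polar conjugacy $\rho_a=\tau_{UV}$ of $a$: the involution $\tau_{DD_a}\,\rho_a\,\tau_{DD_a}$ has fixed-point set $\tau_{DD_a}(\{U,V\})=\{U,V\}$, hence equals $\rho_a$. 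Therefore $\tau_{DD_a}(B_a)=\tau_{DD_a}(\rho_a(B))=\rho_a(\tau_{DD_a}(B))=\rho_a(C)=C_a$, and likewise $\tau_{DD_a}(C_a)=B_a$. Thus $\tau_{DD_a}$ carries the pair $\{B,C_a\}$ to the pair $\{B_a,C\}$; since it preserves cross ratios and fixes $\{U,V\}$ setwise, it carries the midpoints of $\ov{BC_a}$ to the midpoints of $\ov{B_aC}$. Both of these pairs are $\{G,G_a\}$, so $\tau_{DD_a}$ maps the set $\{G,G_a\}$ onto itself.

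To finish I must rule out that $\tau_{DD_a}$ fixes $G$ and $G_a$ individually. Since the only fixed points of $\tau_{DD_a}$ are $D$ and $D_a$, it is enough that $\{G,G_a\}$ and $\{D,D_a\}$ are disjoint; and if, say, $G=D$, then $G_a=\rho_a(G)=\rho_a(D)=D_a$, so $D,D_a$ would be midpoints both of $\ov{BC}$ and of $\ov{BC_a}$, giving $C=\tau_{DD_a}(B)=C_a=\rho_a(C)$ and hence $C\in\Phi$, against the general position hypotheses. Consequently $\tau_{DD_a}$ swaps $G$ and $G_a$, which is exactly $(DD_aGG_a)=-1$, completing the argument.

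I expect the only genuinely nontrivial point to be the commutation $\tau_{DD_a}\,\rho_a=\rho_a\,\tau_{DD_a}$, together with the resulting fact that $\tau_{DD_a}$ respects the complementary segment $\ov{BC_a}$; the rest is routine manipulation of the harmonic identities already recorded for midpoints, plus Lemmas~\ref{lem:midpoints-harmonic-UV-AB} and~\ref{lem:midpoints-AB-midpoints-ApBp}. An essentially equivalent alternative would be to note that the three involutions $\rho_a$, $\tau_{DD_a}$ and $\tau_{BC}$ of $a$ pairwise commute (each pair of their fixed-point pairs being harmonic) and read off $(DD_aGG_a)=-1$ from the orbit structure, but I would keep the first argument since it is self-contained.
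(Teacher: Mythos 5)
Your argument is correct. It follows the same overall template as the paper's proof (reduce everything to the single harmonic identity $(GG_aDD_a)=-1$ via Lemma~\ref{lem:midpoints-harmonic-UV-AB}, then establish that identity by playing the involutions $\tau_{DD_a}$, $\tau_{GG_a}$, $\rho_a$ against each other on the line $a$), but the mechanism differs in one place. The paper checks that the composition $\tau_{GG_a}\circ\tau_{DD_a}$ agrees with $\rho_a$ on the four points $B,B_a,C,C_a$, concludes $\tau_{GG_a}\circ\tau_{DD_a}=\rho_a$, and evaluates at $D$ to get $\tau_{GG_a}(D)=D_a$ immediately. You instead prove the commutation $\tau_{DD_a}\rho_a=\rho_a\tau_{DD_a}$, deduce that $\tau_{DD_a}$ permutes the pair $\{G,G_a\}$, and then must separately exclude the possibility that it fixes $G$ and $G_a$ individually — which you do correctly by showing $G=D$ would force $C=C_a\in\Phi$, against general position. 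So your route costs one extra (but valid) degeneracy argument that the paper's direct computation of the composition sidesteps; on the other hand, your commutation lemma for $\tau_{DD_a}$ and $\rho_a$ is exactly what justifies the paper's unproved assertion that $\tau_{DD_a}$ sends $B_a$ to $C_a$, so the two proofs are genuinely complementary in what they make explicit.
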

\begin{proof}
By Lemma~\ref{lem:midpoints-harmonic-UV-AB}, it suffices to prove that
$(GG_aDD_a)=-1$.
The composition $\tau_{GG_a}\circ\tau_{DD_a}$ sends $B,B_a,C,C_a$ into
$B_a,B,C_a,C$ respectively.
Therefore, it is $\tau_{GG_a}\circ\tau_{DD_a}=\rho_a$, and so
$$\tau_{GG_a}(D)=\tau_{GG_a}(\tau_{DD_a}(D))=\rho(D)=D_a\Rightarrow
(GG_aDD_a)=-1\,.$$
\end{proof}

\begin{proof}[Proof of Theorem~\ref{thm:complementary-midpoints-lie-on-conic}]
Assume that the midponts $D,E,F$ of $\TT$ are not collinear  (see Figure
\ref{Fig:complementary-midponts-on-a-conic}). Then, $D_a,E_b,F_c$ are collinear.

By Lemma~\ref{lem:DDa-midpoints-of-GGa}, we have
$$(BCD_aG)\overset{\tau_{DD_a}}{=}(CBD_aG_a)\Rightarrow
(BCD_aG)(BCD_aG_a)=1\,.$$
In the same way, it is 
$$(CAE_bH)(CAE_bH_b)=1\quad\text{and} \quad (ABF_cI)(ABF_cI_c)=1\,.$$
Therefore,
$$(BCD_aG)(BCD_aG_a)(CAE_bH)(CAE_bH_b)(ABF_cI)(ABF_cI_c)=1\,.$$
The result now is a consequence of Carnot's Theorem on projective triangles
(Theorem~\ref{thm:Carnot-projective}).
\end{proof}

Thus, until now there is no remarkable difference between choosing $G,H,I$ or
$G,H,I_c$ as preferred complementary midpoints of $\TT$. We must make a deeper
exploration for finding a choice criterion for the preferred complementary
midpoins of $\TT$. This will need... some magic.

\begin{figure}
\centering
\includegraphics[width=0.9\textwidth]
{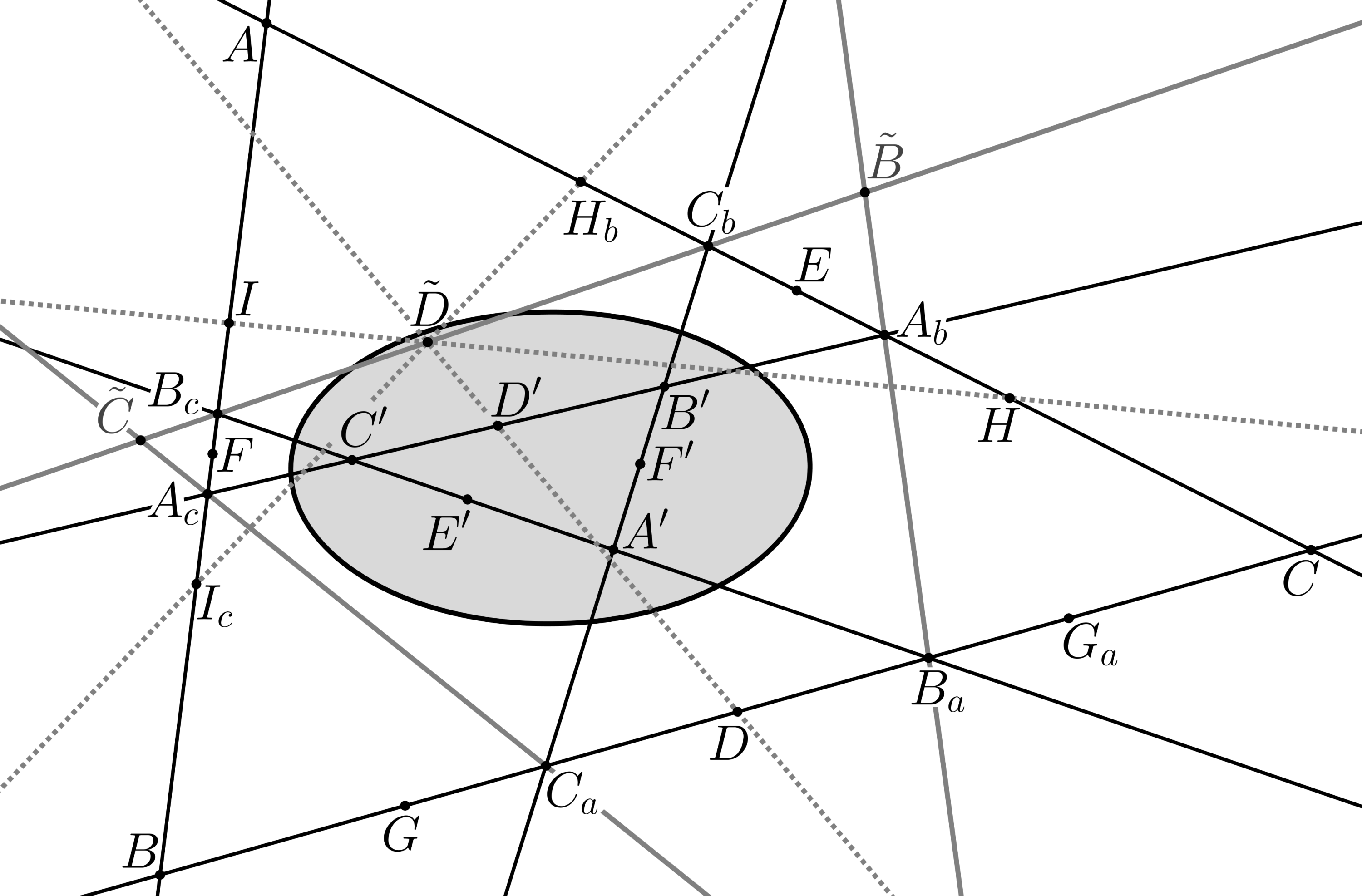}
\caption{Concurrence of lines at a magic midpoint}
\label{Fig:magic-midpoints-01}
\end{figure}

Let fix our sight now in the segment $\ov{B_cC_b}$. Its midpoints, and the
midpoints of $\ov{C_aA_c}$ and $\ov{A_bB_a}$ are very interesting because they
are ``in the middle of everywhere''. We need to introduce some new notation. Let
$\tia,\tib,\tic$ be the lines $B_cC_b,C_aA_c,A_bB_a$ respectively, let $\tiT$ be
the triangle with sides $\tia,\tib,\tic$, and consider the vertices of $\tiT$:
$$\tiA=\tib\cdot\tic,\quad\tiB=\tic\cdot\tia,\quad\tiC=\tia\cdot\tib\,.$$
The poles of $\tia,\tib,\tic$ are, respectively, the points
\[
A_{1}=b_{C}\cdot c_{B},\qquad B_{1}=c_{A}\cdot a_{C},\qquad C_{1}=a_{B}\cdot
b_{A},
\]
of \S\ref{sec:Desargues2}.
By the proof of Proposition
\ref{prop:pseudosymmetric-triangle-orthogonal-to-altitudes}, the lines $a,a'$
and $B_1C_1$ are concurrent, and this implies that their poles $A',A$ and $\tiA$
respectively are collinear. Equivalently, $\tiB$ is collinear with $B$ and $B'$
and $\tiC$ is collinear with $C$ and $C'$.
Let $\tiD,\tiD_{\tia}$ be the midpoints of $\ov{B_cC_b}$.

\begin{theorem}\label{thm:magic-midpoints}
The set $\{\tiD,\tiD_{\tia}\}$ coincides with:
    \begin{enumerate}[\bfseries I]
	\item the set of intersection points $\{HI\cdot H_bI_c,HI_c\cdot H_bI\}$;
	\item the set of intersection points $\{H'I'\cdot H'_{b'}I'_{c'},H'I'_{c'}\cdot H'_{b'}I'\}$; and
	\item the set of midpoints of $\ov{\tiB\tiC}$.
    \end{enumerate}
Moreover,
    \begin{enumerate}[\bfseries {IV}\alph{enumi}]
	\item if $\TT$ is not isosceles at $A$, the set $\{\tiD,\tiD_{\tia}\}$
		coincides with the set of intersection points 
		$\{DD'\cdot D_aD'_{a'},DD'_{a'}\cdot D_aD'\}$;
	\item if $\TT$ is isosceles at $A$, one of the points $\tiD,\tiD_{\tia}$
		coincides with
		$A_0$ while the other is collinear with $D$ and $D'$.		
    \end{enumerate}
\end{theorem}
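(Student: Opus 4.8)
The plan is to establish the four descriptions of $\{\tiD,\tiD_{\tia}\}$ one at a time, using repeatedly the basic ``midpoints as diagonal points of an inscribed quadrangle'' mechanism of \S\ref{sec:midpoints-of-a-segment} together with Lemmas~\ref{lem:midpoints-harmonic-UV-AB} and~\ref{lem:midpoints-AB-midpoints-ApBp}, and the perspectivity/collinearity facts that $\tiA,A,A'$ are collinear (and cyclically). The guiding principle is the characterization of Lemma~\ref{lem:midpoints-harmonic-UV-AB}: a pair of points $\{C,D\}$ on a line $p$ with $p\cdot\Phi=\{U,V\}$ equals the set of midpoints of $\ov{AB}$ iff $(ABCD)=(UVCD)=-1$. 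So in each of parts I--IV I only need to produce the relevant pair of points and check these two harmonic conditions, or — which is often easier — exhibit the pair as the non-$A$ diagonal points of a quadrangle inscribed in $\Phi$ that has $\tia\cdot\rho(\tia)$, i.e.\ the intersection of $\tia$ with the polar of its pole $A_1$, in the appropriate role.

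For part III, note that $\tiD,\tiD_{\tia}$ lie on $\tia=B_cC_b$, and by Lemma~\ref{lem:midpoints-AB-midpoints-ApBp} the midpoints of $\ov{B_cC_b}$ are also the midpoints of $\ov{(B_c)_{\tia}(C_b)_{\tia}}$; since $B_c=C'_{b'}$ and $C_b=B'_{c'}$ one computes the conjugates in $\tia$ and identifies them with $\tiB,\tiC$ up to relabelling, using that $\tiB\in BB'$, $\tiC\in CC'$ and the conjugacy relations among the named points in \S\ref{sec:triangle-notation}. For part I, I would use the quadrangle $\{H,H_b,I,I_c\}$: its vertices are the complementary midpoints of $\ov{CA}$ and of $\ov{AB}$, which (by Lemma~\ref{lem:midpoints-AB-midpoints-ApBp} again) are the midpoints of $\ov{C_aA}$--type segments; a Pascal/Carnot-style argument — exactly parallel to the proof of Lemma~\ref{lem:midpoints-collinear} but applied to the complementary midpoints — shows that this quadrangle has two of its diagonal points on $\tia$, and that these are forced to be $\tiD,\tiD_{\tia}$ by the harmonic conditions; part II is the same statement for $\TT'$, obtained by applying the polarity $\rho$, which swaps $\TT\leftrightarrow\TT'$ and sends $\tia$ to $A_1$ but preserves the whole configuration's incidences. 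For part IV, the pair $\{DD'\cdot D_aD'_{a'},\,DD'_{a'}\cdot D_aD'\}$ consists of the two non-$A_0$ diagonal points of the quadrangle $\{D,D_a,D',D'_{a'}\}$ (whose fourth diagonal point is $A_0=a\cdot a'$, since $D,D_a$ lie on $a$ and $D',D'_{a'}$ on $a'$); I then show this quadrangle is inscribed in $\Phi$ — equivalently that $(DD_aD'D'_{a'})$ sits in the right involution — and that its diagonal points on $\tia$ are the midpoints of $\ov{B_cC_b}$, invoking Lemma~\ref{lem:equivalences-DD'-coincide-1} and the isosceles dichotomy to handle the degenerate case IVb where $DD'$ passes through $A_0$.

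The main obstacle I expect is \emph{bookkeeping and the limit cases}, not any single deep idea. Getting parts I--IV to agree as \emph{the same} pair requires carefully tracking which of $H$ vs.\ $H_b$, $I$ vs.\ $I_c$, $D'$ vs.\ $D'_{a'}$ goes with which, so that the two claimed intersection points genuinely coincide setwise; a sign/labelling slip here produces the harmonic conjugate instead of the intended point. The cleanest route is probably to fix part III as the \emph{definition} of $\{\tiD,\tiD_{\tia}\}$ up to the harmonic characterization, prove I, II, IVa each by showing the candidate pair $\{P_1,P_2\}$ satisfies $(B_cC_b P_1 P_2)=-1$ and $(\widetilde U\widetilde V P_1 P_2)=-1$ where $\tia\cdot\Phi=\{\widetilde U,\widetilde V\}$ — the first harmonic relation coming from the quadrangle structure, the second from applying the conjugacy $\rho_{\tia}$ and using that the listed endpoints pair up under it — and then appeal to Lemma~\ref{lem:midpoints-harmonic-UV-AB}. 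Case IVb is then a separate short continuity/degeneration argument: when $\TT$ is isosceles at $A$, $D'_{a'}$ and $D_a$ both fall on $A_0$ (Lemma~\ref{lem:equivalences-DD'-coincide-1}), the quadrangle $\{D,D_a,D',D'_{a'}\}$ collapses, and one checks directly that $A_0\in\tia$ is a midpoint of $\ov{B_cC_b}$ while the other midpoint lies on $DD'$.
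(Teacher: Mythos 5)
Your part I is sound and is essentially the paper's argument: $H,H_b$ and $I,I_c$ are the ordinary midpoints of the two sides through $A$ of the triangle $\wt{AB_cC_b}$, whose third side is $\tia$, so Theorem~\ref{thm:midpoints-quadrilateral} immediately exhibits $\tiD,\tiD_{\tia}$ as the diagonal points of $\{H,H_b,I,I_c\}$ other than $A$; part II is the same construction applied to the triangle $\wt{A'B_cC_b}$ (note that the polarity image of the part-I configuration is a statement about \emph{lines through} $A_1$, not part II, so ``apply $\rho$'' is not quite the right justification, though the fix is trivial). The real problems are in III and IV. For III, your key identification fails: $\rho(B_c)=\rho(c)\rho(b')=BC'$ and $\rho(C_b)=CB'$, so the conjugates of $B_c,C_b$ in $\tia$ are $\tia\cdot BC'$ and $\tia\cdot CB'$, whereas $\tiB=\tia\cdot BB'$ and $\tiC=\tia\cdot CC'$; in general position these are four distinct points, so Lemma~\ref{lem:midpoints-AB-midpoints-ApBp} says nothing about $\ov{\tiB\tiC}$. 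What the paper does instead is identify the harmonic involution $\tau_{\tiD\tiD_{\tia}}$ with the quadrangular involution that $\{B,C,B',C'\}$ induces on $\tia$ (which pairs $(B_c)_{\tia}$ with $(C_b)_{\tia}$, $\tiB$ with $\tiC$, and $J=a\cdot\tia$ with $J'=a'\cdot\tia$); but pinning that involution down requires knowing that $\tiD,\tiD_{\tia}$ are the midpoints of $\ov{JJ'}$, a fact that is only established inside the proof of IV.

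For IVa your sketch asserts exactly the two facts that constitute the claim --- that the non-$A_0$ diagonal points of $\{D,D_a,D',D'_{a'}\}$ lie on $\tia$ and are the midpoints of $\ov{B_cC_b}$ --- without supplying an argument, and the intermediate goal you name (``show this quadrangle is inscribed in $\Phi$'') is false as stated: midpoints of segments do not lie on the absolute conic. The missing idea is the auxiliary triangle $\TT_a=\wt{aa'\tia}$ with vertices $A_0$, $J=a\cdot\tia$, $J'=a'\cdot\tia$, which is a genuine triangle precisely when $\TT$ is not isosceles at $A$ (the content of the paper's preliminary claim that isoscelesness at $A$ is equivalent to concurrence of $a,a',\tia$). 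One then shows, via quadrangular involutions induced by $\{A,A',B_c,C_b\}$ and $\{A,B,C,A_1\}$ composed with conjugacy, that $D,D_a$, that $D',D'_{a'}$, and that $\tiD,\tiD_{\tia}$ are the midpoints of the three sides of $\TT_a$; Theorem~\ref{thm:midpoints-quadrilateral} applied to $\TT_a$ then yields IVa, and the pair $(J,J')$ so obtained is what makes III work. Your Lemma~\ref{lem:midpoints-harmonic-UV-AB}-based verification scheme is the right closing move in each part, but without the $\TT_a$ construction you cannot produce the required harmonic relations for IV, and III as written rests on a false incidence.
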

In particular, $\tiD,\tiD_{\tia}$ are: (i)
the diagonal points different from $A$ of the quadrangle whose
vertices
are the complementary midpoints $H,H_b,I,I_c$ of $\ov{CA}$ and
$\ov{AB}$; (ii) the diagonal points different from $A'$ of the quadrangle whose
vertices are the complementary midpoints $H',H'_{b'},I',I'_{c'}$ of
$\ov{C'A'}$ and $\ov{A'B'}$; and (iii) if $\TT$ is not isosceles at $A$,
the diagonal points different from $A_0$ of the quadrangle whose
vertices are the midpoints $D,D_a,D',D'_{a'}$ of $\ov{BC}$ and
$\ov{B'C'}$ (Figure~\ref{Fig:magic-midpoints-01}).

\begin{proof}
We consider the triangle $\wt{AB_cC_b}$. Its midpoints are the vertices of a
complete quadrilateral. This proves \textbf{I}. An equivalent construction
proves \textbf{II}

We prove \textbf{IV} before \textbf{III}.
In order to prove \textbf{IV} we need:
\begin{claim}\label{claim:T-isosceles-a-a'-tia-concurrent}
  The triangle $\TT$ is isosceles at $A$ if and only if $a$, $a'$ and
  $\tia$ are concurrent.
\end{claim}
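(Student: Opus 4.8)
The plan is to reduce both sides of the equivalence to one and the same condition on the point $A_0=a\cdot a'$ (the pole of the altitude $h_a=AA'$), and then check that condition by a short computation in a convenient projective frame. For the right‑hand side this is immediate: since $a\cdot a'=A_0$, the three lines $a$, $a'$, $\tia=B_cC_b$ are concurrent if and only if $A_0\in\tia$. For the left‑hand side I would invoke Lemma~\ref{lem:equivalences-DD'-coincide-1} together with the definition of ``isosceles at $A$'' given just after it: $\TT$ is isosceles at $A$ precisely when $D'_{a'}=A_0$, and because the standing hypothesis $D'\neq A_0$ forbids $A_0=D'$, this is exactly the statement that $A_0$ is one of the two midpoints of $\ov{B'C'}$. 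So the claim is equivalent to: $A_0\in B_cC_b$ if and only if $A_0$ is a midpoint of $\ov{B'C'}$. This translation is the only conceptual step; I expect the chief obstacle to be that there seems to be no purely synthetic argument for the reduced statement, because the coincidence ultimately hinges on the specific position of $A_0$, so one is led to compute.

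The computation I would carry out runs as follows. Since $\TT'$ is a genuine triangle we may take homogeneous coordinates with $A'=[0:0:1]$, $B'=[0:1:0]$, $C'=[1:0:0]$, so that $a'=\{z=0\}$, $b'=\{y=0\}$, $c'=\{x=0\}$; let $M=(m_{ij})$ be the symmetric matrix of $\Phi$. As $\TT$ is the polar triangle of $\TT'$, the sides $a,b,c$ are the polars of $A',B',C'$, i.e.\ the lines with coefficient vectors $M(0,0,1)^{t}$, $M(0,1,0)^{t}$, $M(1,0,0)^{t}$; intersecting these with $a',b',c'$ gives $A_0=a\cdot a'=[m_{23}:-m_{13}:0]$, $B_c=c\cdot b'=[m_{13}:0:-m_{11}]$ and $C_b=b\cdot c'=[0:m_{23}:-m_{22}]$, whence the line $B_cC_b$ has coefficient vector $(m_{11}m_{23},\,m_{13}m_{22},\,m_{13}m_{23})$ and $A_0\in B_cC_b$ is equivalent to $m_{11}m_{23}^{2}=m_{22}m_{13}^{2}$. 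On the other side, parametrising $a'=\{z=0\}$ by $[x:y:0]$ one has $\tau_{B'C'}\colon[x:y:0]\mapsto[x:-y:0]$ and $\rho_{a'}\colon[x:y:0]\mapsto[m_{12}x+m_{22}y:-(m_{11}x+m_{12}y):0]$, so by Lemma~\ref{lem:midpoints-harmonic-UV-AB} the midpoints of $\ov{B'C'}$, being the two fixed points of $\tau_{B'C'}\circ\rho_{a'}$, are the points $[x:y:0]$ with $m_{11}x^{2}=m_{22}y^{2}$; substituting $A_0=[m_{23}:-m_{13}:0]$ yields once more $m_{11}m_{23}^{2}=m_{22}m_{13}^{2}$. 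Since the two conditions coincide, the claim follows.

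Beyond the bookkeeping, the only point requiring care is non‑degeneracy, and this is handled by the general‑position hypotheses of \S\ref{sec:triangle-notation}: because no vertex of $\TT'$ lies on $\Phi$ the involutions $\tau_{B'C'}$ and $\rho_{a'}$ are distinct, so $\tau_{B'C'}\circ\rho_{a'}$ really does have exactly two fixed points and the midpoint characterisation applies; and because moreover no side of $\TT'$ is tangent to $\Phi$ (and by the other distinctness hypotheses), the points $A_0$, $B_c$, $C_b$ and the line $\tia$ are all well defined and in the expected general position, so the displayed formulas make sense. With these observations the argument above is complete.
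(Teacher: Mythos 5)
Your proof is correct, but it takes a genuinely different route from the paper's. You reduce the claim to the single statement that $A_0$ lies on $\tia=B_cC_b$ if and only if $A_0$ is a midpoint of $\ov{B'C'}$ (using Lemma~\ref{lem:equivalences-DD'-coincide-1} and the standing convention $D'\neq A_0$ to identify ``isosceles at $A$'' with $D'_{a'}=A_0$), and you verify this by a direct computation with the matrix of $\Phi$ in a frame adapted to $\TT'$; I checked the coordinates of $A_0$, $B_c$, $C_b$, the coefficient vector of $B_cC_b$, and the fixed-point equation $m_{11}x^{2}=m_{22}y^{2}$, and both sides do collapse to the same condition $m_{11}m_{23}^{2}=m_{22}m_{13}^{2}$. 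The paper argues synthetically instead --- contrary to your guess that no synthetic argument is available for the reduced statement: from the concurrency it deduces $(ABB_cA_c)=(ACC_bA_b)$ by projecting from $A_0$, then invokes Remark~\ref{remark:midpoints-square-roots} (the two square roots of such a cross ratio are realized by the two midpoints) to conclude that the perspectivity $\pi_{A_0}\colon b\to c$ carries $\{E,E_b\}$ onto $\{F,F_c\}$, so that $EF$ and $E_bF_c$ pass through $A_0$ and hence $A_0=D_a$, which is the isosceles condition on the side $a$ rather than on $a'$; the argument then reverses. The paper's version stays inside the cross-ratio toolkit used throughout and exhibits the concurrency as a consequence of the midpoint-quadrilateral structure, while yours is shorter, leaves no configuration-dependent case analysis, and is easy to audit, at the price of being the one place where explicit coordinates enter a text that otherwise avoids them. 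Your handling of the non-degeneracy points --- that $\tau_{B'C'}\neq\rho_{a'}$ so their composition has exactly two fixed points, and that the general-position hypotheses make $A_0$, $B_c$, $C_b$ and the line $\tia$ well defined --- is adequate.
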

\begin{proof}[Proof of Claim~\ref{claim:T-isosceles-a-a'-tia-concurrent}]
If, $a,a',\tia$ are concurrent, it is
$$(ABB_cA_c)=(ACC_bA_b)\,.$$
Let $\pi_{A_0}$ be the perspectivity from $b$
onto $c$ with center at $A_0$. Because $A,C,C_b,A_b$ are four
different points, their cross ratio is a number different from $0,1$, and so it
has two different square roots which are also different from $0,1$. By Remark
\ref{remark:midpoints-square-roots}, the two square roots of $(ACC_bA_b)$
are given by $(ACC_bE)$ and $(ACC_bE_b)$, and so the two square
roots of $(ABB_cA_c)$ must be given by $(ABB_c \pi_{A_0}(E))$ and
$(ABB_c \pi_{A_0}(E_b))$. This implies that the set
$\{\pi_{A_0}(E),\pi_{A_0}(E_b)\}$
coincides with $\{F,F_c\}$. As we have assumed that $D,E,F$ are non-collinear
and that $D$ is different from $A_0$, this implies
that $EF$ and $E_bF_c$
pass also through $A_0$ and so $A_0=D_a$.

The previous argument can be reversed: if 
$D_a=A_0$, the line $EF$ passes through $A_0$.
By taking again the projection
$\pi_{A_0}$, we have that the cross-ratios
$(CAA_bE)$ and $(BAA_c F)$ must be equal, and so their
respective square powers $(CAA_bC_b)$ and $(BAA_c B_c)$ 
must be equal too. This
implies that
$\tia=B_cC_b$ passes also through $A_0$ and therefore $a,a',\tia$ are
concurrent.
\end{proof}

Therefore, if $\TT$ is not isosceles at $A$,
the triangle
$\TT_a=\wt{aa'\tia}$ is an actual triangle because its sides are not concurrent.
In this case, we will show that the midpoints of
$\TT_a$ coincide with the midpoints of the segments $\ov{BC}$,
$\ov{B'C'}$, $\ov{B_cC_b}$.

The vertices of the triangle $\TT_a$ are the points
$$A_0=a\cdot a', \quad J=a\cdot \tia ,\quad J'=a'\cdot \tia\,.$$
Let $D,D_a$ be the midpoints of $\ov{BC}$. 
We must prove that they are also the
midpoints of the segment $\ov{A_0J}$. 
The conjugate point of $A_0$ in $a$ is the
point $H_A$ where the altitude $h_a$ of $\TT$ intersects the side $a$. If we
consider the quadrangle $\QQ_2=\{A,A',B_c,C_b\}$, the quadrangular involution
$\sigma_{\QQ_2}$ induced by $\QQ_2$ on $a$ 
sends the points $B,C,J$ into the
points $C_a,B_a,H_A$ respectively and vice versa (Figure
\ref{Fig:magic-midpoints-03}). 
The composition $\rho_a\circ\sigma_{\QQ_2}$ sends
the points
$B,C,B_a,C_a$ into the points $C,B,C_a,B_a$, and so it coincides with the
symmetry $\tau_{DD_a}$ of $a$ with respect to $D$. Then, it is
$$\tau_{DD_a}(J)=\rho_a(H_A)=A_0\,,$$
and therefore, by Lemma~\ref{lem:midpoints-harmonic-UV-AB}, the points $D,D_a$
are also the midpoints of the segment $\ov{A_0J}$.

\begin{figure}
\centering
\includegraphics[width=\textwidth]{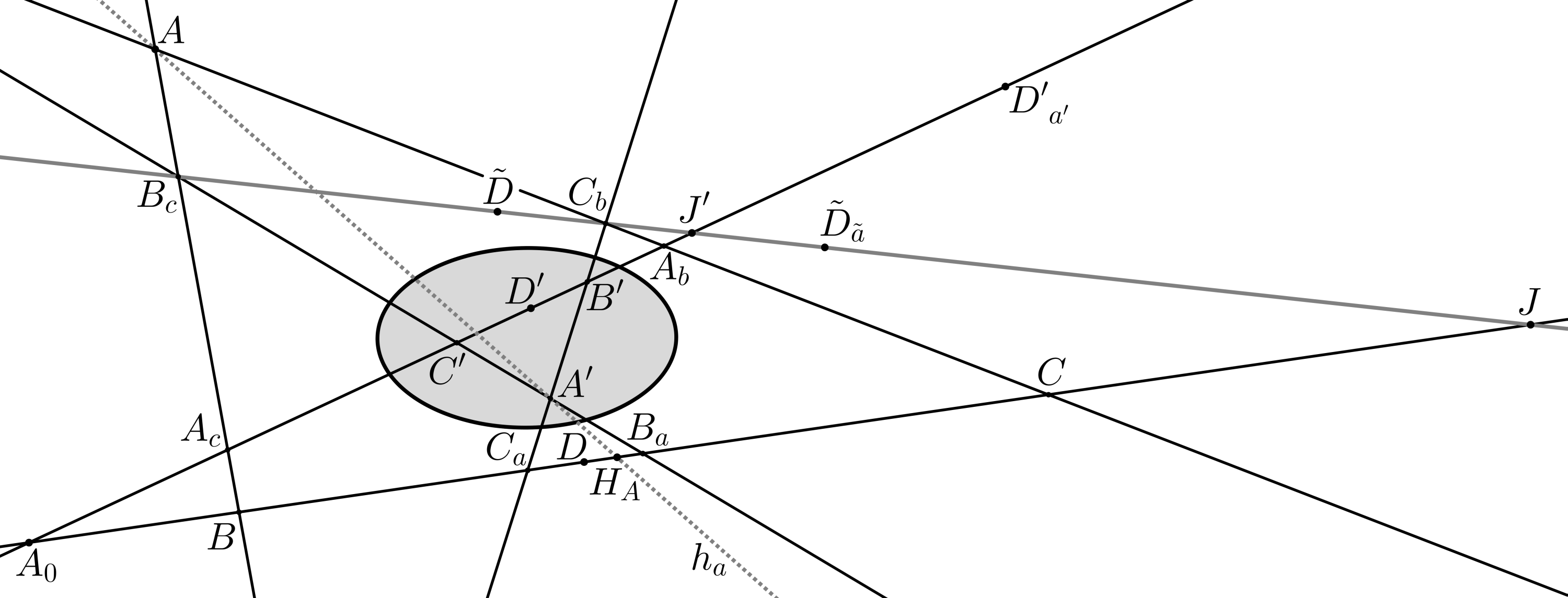}
\caption{Magic midpoints and ordinary midpoints of $\TT$ and
$\TT'$}\label{Fig:magic-midpoints-03}	
\end{figure}

In the same way, it can be proved that the midpoints $D',D'_{a'}$ 
of $\ov{B'C'}$ are also the midpoints of $\ov{A_0J'}$, 
and a similar argument can be used to conclude that 
the midpoints $\tiD,\tiD_{\tia}$ of $\ov{B_cC_b}$
are also the midpoints of $\ov{JJ'}$. 
The polar of $J$ is the line $A'A_1$ and the polar of $J'$ is the line $AA_1$.
If we consider the quadrangular involution $\sigma_{\QQ_3}$ on $\tia$ 
induced by the quadrangle $\QQ_3=\{A,B,C,A_1\}$, 
we have that $\sigma_{\QQ_3}$ sends $B_c,C_b,J$ 
into $\left(C_b\right)_{\tia}, \left(B_c\right)_{\tia}, J'_{\tia}$ 
respectively and vice versa. 
The composition $\rho_{\tia}\circ \sigma_{\QQ_3}$ 
coincides with the symmetry $\tau_{\tiD\tiD_{\tia}}$ of $\tia$ 
with respect to $\tiD$ and sends $J$ into $J'$. 
Thus, $\tiD,\tiD_{\tia}$ are the midpoints of $\ov{JJ'}$.

If $\TT$ is isosceles at $A$, the lines $a,a'$ and $\tia$ concur
at $A_0$, and the quadrangle $\QQ_4=\{A,A',B_0,C_0\}$
shows that $A_0$ and $AA'\cdot\tia=DD'\cdot \tia$ are the midpoints of
$\ov{B_cC_b}$.

\textbf{III}. Consider the quadrangle $\QQ_1=\{B,C,B',C'\}$. The quadrangular
involution
$\sigma_{\QQ_1}$ on $\tia$ sends
$$\left(B_c\right)_{\tia}=BC'\cdot\tia\,,\quad\tiB=BB'\cdot \tia\,,\quad J\,,$$
into
$$\left(C_b\right)_{\tia}=CB'\cdot \tia\,,\quad\tiC=CC'\cdot \tia\,,\quad
J'\,,$$
and vice versa. Thus, $\sigma_{\QQ_1}$ coincides with $\tau_{\tiD\tiD_{\tia}}$,
and we conclude that $\tiD,\tiD_{\tia}$ are the midpoints of $\ov{\tiB\tiC}$.
\end{proof}
\begin{remark}
By Lemma~\ref{lem:midpoints-quadrilateral-also-with-tangencies}, the statement
of the previous theorem remains true even if $\tia$ is tangent to $\Phi$.
\end{remark}

We will say that the triangle $\tiT$ is the \emph{magic triangle} of
$\TT$ and that the midpoints of the sides of $\tiT$ are the \emph{magic
midpoints} of $\TT$. By their construction, the magic triangle and the magic
midpoints of $\TT$ are also the magic triangle and the magic midpoints of
$\TT'$.

Theorem~\ref{thm:magic-midpoints} gives the key for defining a coherent
orientation on a
triangle in a purely projective way. The triangle $\TT$ is \emph{oriented} if we
have oriented each side of $\TT$ and each side of $\TT'$. Assume that $\TT$ is
oriented. Following our previous notation, let $D,E,F$ and $G,H,I$ be the
preferred midpoints and the preferred complementary midpoints of
$\ov{BC},\ov{CA},\ov{AB}$, respectively, and let $D',E',F'$ and $G',H',I'$ be
the preferred midpoints and the preferred complementary midpoints of
$\ov{B'C'},\ov{C'A'},\ov{A'B'}$, respectively.
\begin{definition}\label{def:coherently-oriented}
The triangle $\TT$ is \emph{coherently oriented} if (see Figure
\ref{Fig:magic_midpoints_spherical})
\begin{itemize}
\item $D,E,F$ are non-collinear and different from $A_0,B_0,C_0$ resp.;
\item $D',E',F'$ are non-collinear and different from $A_0,B_0,C_0$ resp.;
\end{itemize}
and there exist non-collinear magic midpoints $\tiD,\tiE,\tiF$ lying on
$\tia,\tib,\tic$ respectively such that
\begin{itemize}
\item $\tiD$ is the intersection point of $DD',HI$ and $H'I'$;
\item $\tiE$ is the intersection point of $EE',IG$ and $I'G'$;
\item $\tiF$ is the intersection point of $FF',GH$ and $G'H'$.
\end{itemize}
\end{definition}

In Figure~\ref{Fig:magic_midpoints_spherical} we have depicted $\TT$ as an
elliptic triangle. This is the only
situation where midpoints, complementary midpoints and magic midpoints of $\TT$
and $\TT'$  are all real points.

We will introduce some more properties about this construction. In particular,
we can complete a forgotten task of our to-do list: the proof of
Theorem~\ref{thm:DD'-EE'-FF'-concurrent}.

\begin{figure}
\centering
\includegraphics[width=0.75\textwidth]{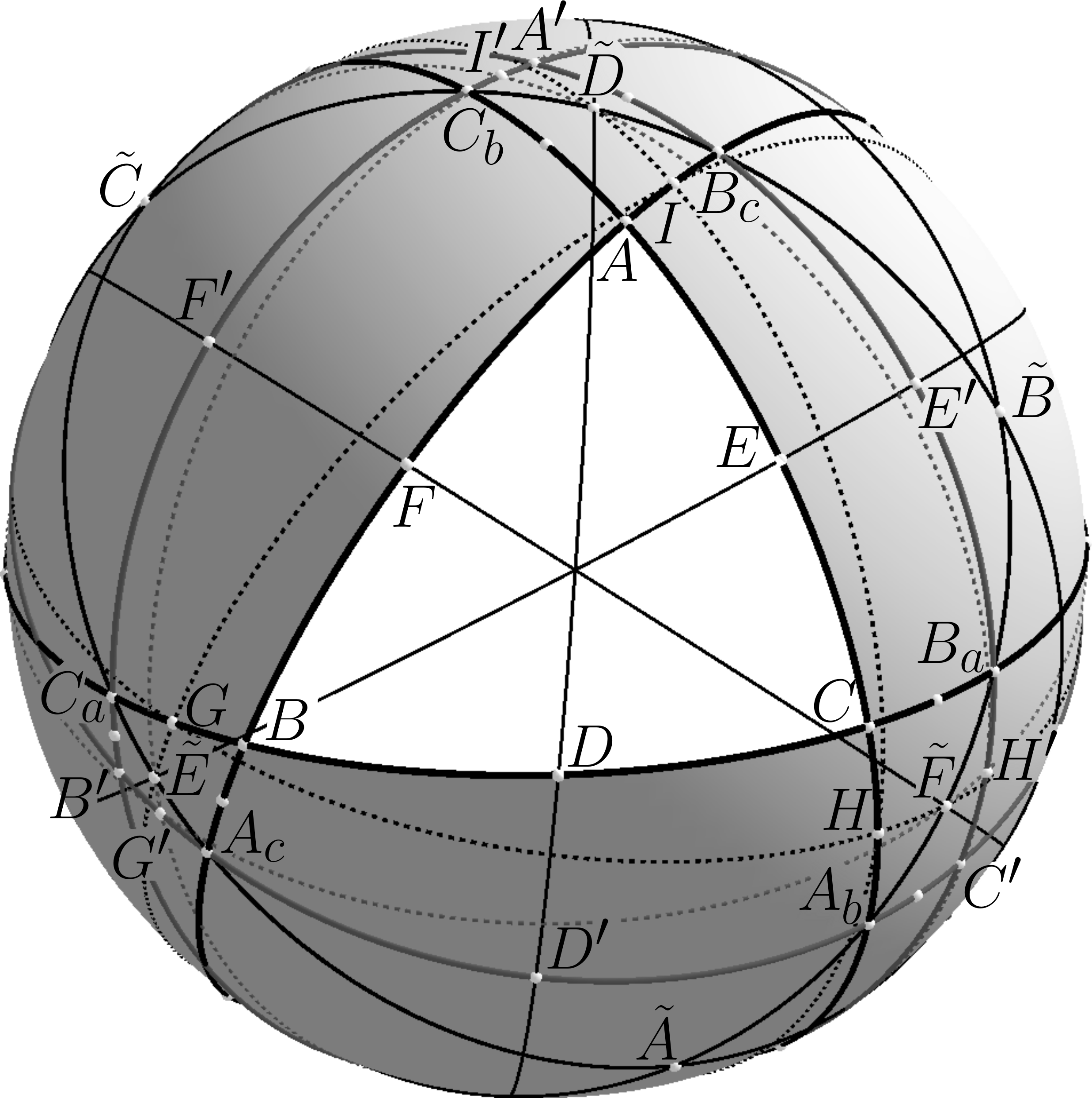}
\caption{coherently
oriented elliptic triangle}\label{Fig:magic_midpoints_spherical}
\end{figure}

\paragraph{Digression: the proof of Theorem~\ref{thm:DD'-EE'-FF'-concurrent}}
We need another classical
construction from projective geometry.
\begin{lemma}\label{lem:cross-ratio-coincides-Pappus-octogon}
Let $p,p'$ be two different projective lines, and let $X,Y,Z,T$ and
$X',Y',Z',T'$ be two tetrads of points on $p$ and on $p'$
respectively such that $(XYZT)=(X'Y'Z'T')$.
Then, the Pappus lines of the hexagons 
$XY'ZX'YZ'$, $XY'TX'YT'$, $XZ'TX'ZT'$ and
$YZ'TY'ZT'$ coincide.
\end{lemma}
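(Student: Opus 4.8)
The plan is to reduce the claim to the standard fact that the Pappus line of a hexagon inscribed in a degenerate conic (two lines) is determined by a projectivity between the two lines. Recall the classical statement behind Pappus' Theorem: if $P_1,P_3,P_5$ lie on $p$ and $P_2,P_4,P_6$ lie on $p'$, then the three points $P_1P_2\cdot P_4P_5$, $P_2P_3\cdot P_5P_6$, $P_3P_4\cdot P_6P_1$ are collinear, and the line they span (the Pappus line) depends only on the projectivity $\phi\colon p\to p'$ sending $P_1\mapsto P_4$, $P_3\mapsto P_6$, $P_5\mapsto P_2$ — indeed, the Pappus line is the axis of the projectivity $\phi$ in the sense of Steiner. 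So my first step is to make this precise: for a hexagon $X_1X_2'X_3X_1'X_2X_3'$ with $X_i\in p$, $X_i'\in p'$, the Pappus line is the Steiner axis of the unique projectivity $p\to p'$ carrying $X_1\mapsto X_1'$, $X_2\mapsto X_2'$, $X_3\mapsto X_3'$ (reading off the correspondence from the hexagon's alternation).

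**Main argument.** With that reformulation, each of the four hexagons $XY'ZX'YZ'$, $XY'TX'YT'$, $XZ'TX'ZT'$, $YZ'TY'ZT'$ gives rise to a projectivity $p\to p'$: namely $\{X\mapsto X', Z\mapsto Z', Y\mapsto Y'\}$, $\{X\mapsto X', T\mapsto T', Y\mapsto Y'\}$, $\{X\mapsto X', T\mapsto T', Z\mapsto Z'\}$, $\{Y\mapsto Y', T\mapsto T', Z\mapsto Z'\}$. The hypothesis $(XYZT)=(X'Y'Z'T')$ says exactly that the projectivity $p\to p'$ determined by $X\mapsto X'$, $Y\mapsto Y'$, $Z\mapsto Z'$ also sends $T\mapsto T'$ (a projectivity between lines is determined by three point-pairs, and it preserves cross ratios, so the fourth image is forced to be $T'$). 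Hence all four triples of point-pairs above are sub-triples of one and the same projectivity $\psi\colon p\to p'$. Therefore all four hexagons have the same associated projectivity $\psi$, so by the first step their Pappus lines all equal the Steiner axis of $\psi$ — in particular they coincide.

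**The delicate point.** The step that needs care is the bookkeeping that matches the hexagon's vertex-alternation to the correct correspondence $p\to p'$, and the verification that the Pappus line really is a projective invariant of that correspondence (independent of which three pairs one uses to name it). I would handle this by citing or briefly reproving the Steiner-axis description: given a projectivity $\psi\colon p\to p'$ and any two distinct points $U,V\in p$, the line $\psi(U)V\cdot U\psi(V)$ is independent of the choice of $U,V$ — this is the cross-axis (Steiner axis), and it coincides with the Pappus line of the hexagon $U\,\psi(V)\,W\,\psi(U)\,V\,\psi(W)$ for any third point $W$. Granting this lemma (it is classical; see e.g. the treatment of Pappus' Theorem via the Steiner axis in \cite{Cox Proj}), the proof above is complete. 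One should also note the mild genericity hypotheses implicit here — that $p\ne p'$, that the eight points are suitably distinct and none equals $p\cdot p'$ — so that all the intersection points in the hexagons are well-defined; these will hold in the application since the points in question are midpoints and complementary midpoints of a triangle in general position.
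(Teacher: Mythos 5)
Your proof is correct, and it follows exactly the route the paper intends: the paper leaves this lemma as an exercise with a pointer to Coxeter's treatment of the axis of a projectivity, which is precisely the Steiner cross-axis description of the Pappus line that you use. The key observations — that each hexagon's Pappus line is the cross-axis of the projectivity $p\to p'$ read off from its alternating vertices, and that the hypothesis $(XYZT)=(X'Y'Z'T')$ forces all four hexagons to determine one and the same projectivity — are exactly what is needed.
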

The proof of this Lemma is left as an exercise. A look at \cite[p. 41]{Cox
Proj} would help.
      \begin{remark}\label{rem:DD'-Pappus-line}
      Assume that $\TT$ is coherently oriented. If $E,E_b,H,H_b$ and
      $F,F_c,I,I_c$ are taken as the tetrads of the previous Lemma, the resulting
      Pappus line is $DD'$ (see 
      Figure~\ref{Fig:magic-midpoints-hexagon-Pappus}).
      Similar constructions hold for $EE'$ and $FF'$.
      \end{remark}
\begin{proof}
      By Lemma~\ref{lem:DDa-midpoints-of-GGa}, the two tetrads are harmonic
      sets, and so they fullfil the hypothesis
      of Lemma~\ref{lem:cross-ratio-coincides-Pappus-octogon}. It suffices to
      remember that by the chosen notation it is 
      $EF_c\cdot E_bF=D$ and $HI\cdot
      H_bI_c=\tiD$.
\end{proof}

In Figure~\ref{Fig:magic-midpoints-hexagon-Pappus} we can see $DD'$ as the
Pappus line of the hexagon $EIHFH_bI_c$.
      \begin{figure}
	  \centering
	      \includegraphics[width=0.9\textwidth]
	      {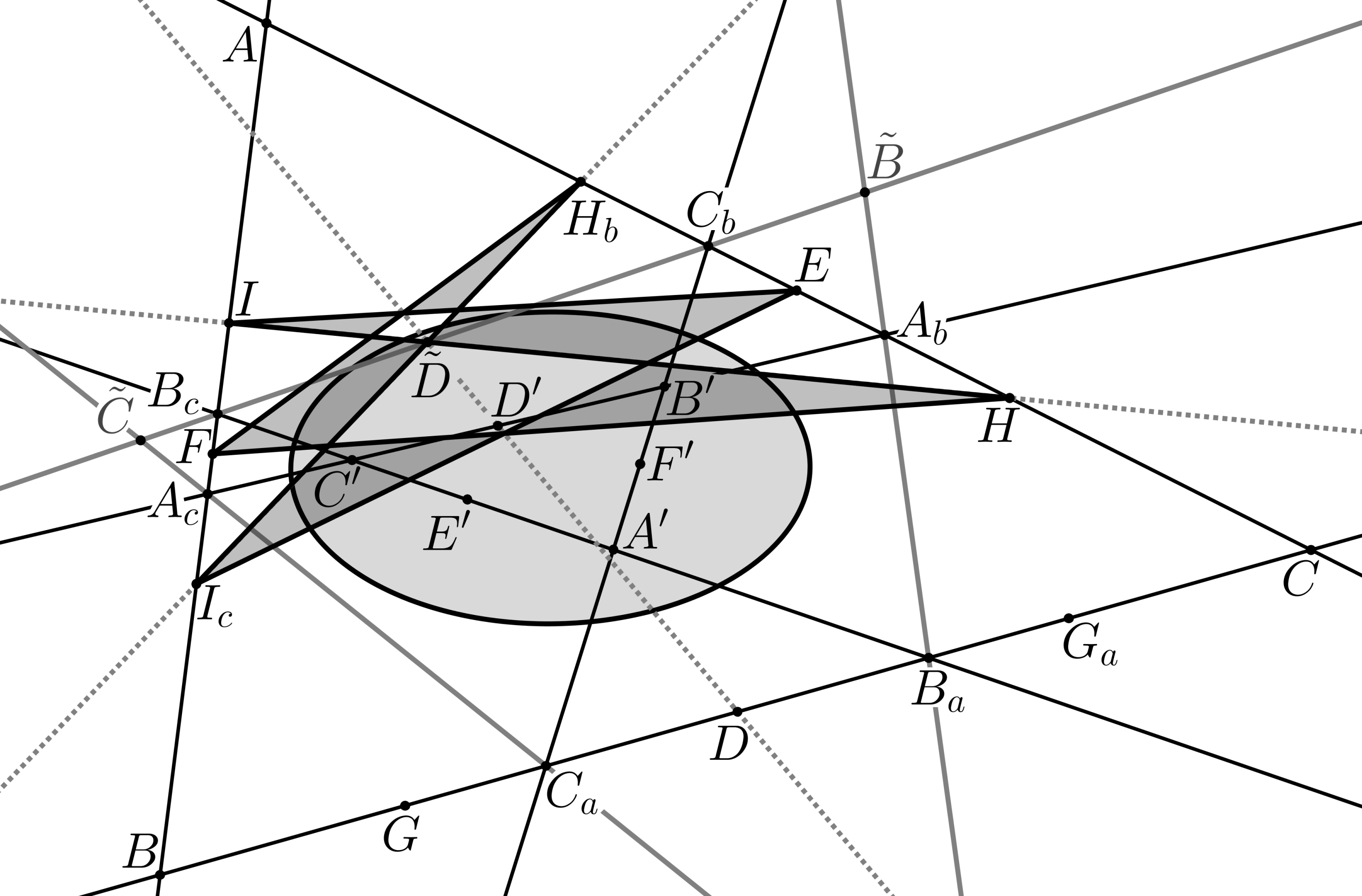}
	      \caption{$DD'$ as a Pappus line}
	  \label{Fig:magic-midpoints-hexagon-Pappus}
      \end{figure}

\noindent\begin{proof}[Proof of Theorem
\ref{thm:DD'-EE'-FF'-concurrent}]\label{proof:thm:DD'-EE'-FF'-concurrent}
We will divide the proof into some substeps, each of them presented as a
claim.

Let consider the lines $k=EI$ and $l=I_cD_a$ and the points
$K=a\cdot k$ and $L=b\cdot l$.
\begin{claim}\label{claim:DD'-EE'-FF'-concurrent-claim1}
The points $F,K,L$ are collinear  (see Figure
\ref{Fig:magic-midpoints-claim1-KL}).
\end{claim}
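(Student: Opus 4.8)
The plan is to reduce the claim to a single concurrency statement by one application of \hyperref[thm:Desargues]{Desargues' Theorem}, and then to settle that concurrency using the harmonic structure of the midpoints. First I set $R=k\cdot l=EI\cdot I_cD_a$. By Lemma~\ref{lem:midpoints-collinear} the points $D_a,E,F$ are collinear, so the line $D_aE$ passes through $F$; since $F\in c=AB$ this gives $AB\cdot D_aE=F$. Because $E$ and $R$ both lie on $k=EI$, we have $ER=EI$, so $BC\cdot ER=BC\cdot EI=K$; because $D_a$ and $R$ both lie on $l=I_cD_a$, we have $RD_a=I_cD_a$, so $CA\cdot RD_a=CA\cdot I_cD_a=L$. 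Under the standing general position hypotheses all the lines occurring here are pairwise distinct and $\wt{D_aER}$ is a genuine triangle.

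Now I apply \hyperref[thm:Desargues]{Desargues' Theorem} to the pair of triangles $\TT=\wt{ABC}$ and $\wt{D_aER}$, with the correspondence $A\leftrightarrow D_a$, $B\leftrightarrow E$, $C\leftrightarrow R$. By the previous paragraph the three intersection points of corresponding sides are exactly $AB\cdot D_aE=F$, $BC\cdot ER=K$ and $CA\cdot RD_a=L$. Hence $F,K,L$ are collinear if and only if the lines $AD_a$, $BE$ and $CR$ are concurrent.

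It remains to prove that $AD_a$, $BE$ and $CR$ concur. Put $M=AD_a\cdot BE$. Since $B\in a=BC$ and $A\in b=CA$, the six sides of the quadrangle $\QQ_0=\{A,B,D_a,E\}$ are $AB=c$, $BD_a=a$, $AE=b$, together with $AD_a$, $BE$ and $D_aE$; therefore its diagonal triangle has vertices $F=AB\cdot D_aE$, $C=a\cdot b=AE\cdot BD_a$ and $M=AD_a\cdot BE$. Thus $CR$ passes through $M$ precisely when $R$ lies on the side $CM$ of this diagonal triangle, and the claim becomes the assertion $R\in CM$. To obtain this I would use Lemma~\ref{lem:DDa-midpoints-of-GGa} applied to the side $c$: the pairs $\{F,F_c\}$ and $\{I,I_c\}$ are mutually harmonic, with $F_c$ the harmonic conjugate of $F$ with respect to $A,B$. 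This makes $F=ED_a\cdot II_c$ a diagonal point of the quadrangle $\QQ_1=\{E,I,I_c,D_a\}$ too, with $R=EI\cdot I_cD_a$ a second diagonal point, and (by the harmonic property of quadrangles from \S\ref{sec:Harmonic-sets-harmonic-conjugacy}) the remaining side $RR'$ of its diagonal triangle passes through $F_c$. Matching the two diagonal configurations $\QQ_0$ and $\QQ_1$ at their common diagonal vertex $F$ — or, equivalently, running a short cross ratio computation along $a$ and $c$ using the harmonic relations $(BCDD_a)=(CAEE_b)=(ABFF_c)=-1$ — pins down that $R$ lies on $CM$.

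The main obstacle is precisely this last step $R\in CM$: the Desargues reduction is purely mechanical, but extracting the concurrency of $AD_a$, $BE$ and $CR$ is not a matter of incidence alone; it forces one to exploit the specific harmonic ties among the midpoints, the complementary midpoints and the vertices on each side of $\TT$ (and in particular the double rôle of $F$ as a diagonal point of both $\QQ_0$ and $\QQ_1$). I expect the cleanest write-up to phrase the final identification either through the two quadrangles above or through a two line cross ratio chase along $a$ and $c$.
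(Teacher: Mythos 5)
Your Desargues reduction is correct: with $R=k\cdot l$, the triangles $\wt{ABC}$ and $\wt{D_aER}$ have corresponding sides meeting at $F$, $K$, $L$, so the claim is indeed equivalent to the concurrency of $AD_a$, $BE$ and $CR$, and your identification of the diagonal triangle of $\QQ_0=\{A,B,D_a,E\}$ as $\wt{FCM}$ is also right. This is a genuinely different route from the paper's, which does not pass through Desargues at all: the paper applies Pappus' Involution Theorem (Theorem~\ref{thm:Pappus-involution}) to the single quadrangle $\{E,D_a,K,L\}$ and the line $c$, observes that the induced involution swaps $A$ with $B$ and $I$ with $I_c$, hence coincides with the symmetry $\tau_{FF_c}$ (using $(ABFF_c)=-1$ and $(II_cFF_c)=-1$, the latter from Lemma~\ref{lem:DDa-midpoints-of-GGa}), and then notes that the remaining pair of opposite sides $ED_a$ and $KL$ must meet $c$ in a pair of this involution; since $ED_a\cdot c=F$ is a fixed point, $KL\cdot c=F$. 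That argument is shorter and never needs the auxiliary points $R$ and $M$.

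The weak point of your proposal is that the step you yourself call ``the main obstacle'', namely $R\in CM$, is only asserted (``matching the two diagonal configurations \dots pins down that $R$ lies on $CM$'') and never actually argued; as written this is a genuine gap. It can, however, be closed with exactly the ingredients you name. The side $CM$ of the diagonal triangle of $\QQ_0$ meets $c=AB$ at the harmonic conjugate of $F$ with respect to $A,B$, which is $F_c$, and meets $ED_a$ at the harmonic conjugate of $F$ with respect to $E,D_a$. The side $RR'$ of the diagonal triangle of $\QQ_1=\{E,I,I_c,D_a\}$, where $R'=EI_c\cdot ID_a$, meets $c=II_c$ at the harmonic conjugate of $F$ with respect to $I,I_c$ --- again $F_c$, by Lemma~\ref{lem:DDa-midpoints-of-GGa} applied to the side $c$ --- and meets $ED_a$ at the same harmonic conjugate of $F$ with respect to $E,D_a$. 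Having two distinct points in common, the lines $CM$ and $RR'$ coincide, so $R\in CM$ and the concurrency follows. Until a paragraph of this kind is written out your proof is incomplete; once it is, the argument is valid but noticeably longer than the paper's one-involution proof, whose only nontrivial inputs are the same two harmonic relations you would be using here.
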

\begin{proof}[Proof of Claim~\ref{claim:DD'-EE'-FF'-concurrent-claim1}]
Let consider the quadrangle $\QQ=\{E,D_a,K,L\}$, and the involution
$\sigma_{\QQ}$
that it induces on $c$. We have that
\begin{align*}
EK\cdot c=I\,,\quad D_aL\cdot c=I_c\quad&\Rightarrow\quad
\sigma_{\QQ}(I)=I_c\,;\\
EL\cdot c=A\,;\quad D_aK\cdot c=B\quad&\Rightarrow\quad \sigma_{\QQ}(A)=B\,.
\end{align*}
This implies that $\sigma_{\QQ}$ must coincide with the symmetry
 $\tau_{FF_c}$ on
$c$ with respect to $F,F_c$. As $ED_a\cdot c=F$, 
it must be also $KL\cdot c=F$.
\end{proof}

\begin{figure}
\centering
\includegraphics[width=0.98\textwidth]
{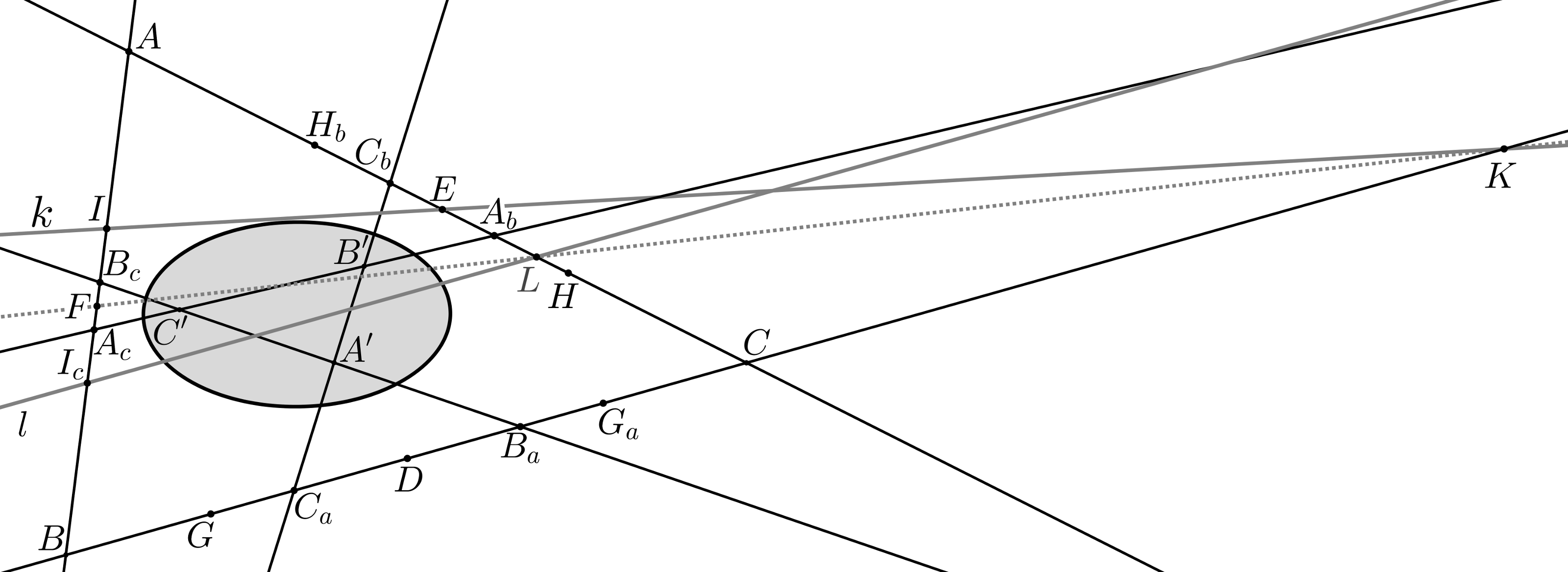}
\caption{The points $K,L$}\label{Fig:magic-midpoints-claim1-KL}
\end{figure}

Take the points $M=EI\cdot FH_b$ and $N=EF\cdot H_bI_c$.
\begin{claim}\label{claim:DD'-EE'-FF'-concurrent-claim2}
The points $B,M,N$ are collinear (see Figure
\ref{Fig:magic-midpoints-claim2-MN}).
\end{claim}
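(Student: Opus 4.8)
The plan is to run an involution chase on the single line $c = AB$, parallel to the proof of Claim~\ref{claim:DD'-EE'-FF'-concurrent-claim1}. First I would record two structural facts. Since the complementary midpoints of $\ov{CA}$ are the midpoints of a complementary segment of $\ov{CA}$, and both endpoints of that segment lie on the line $b = CA$, the six points $E,E_b,H,H_b$ all lie on $b$; likewise $F,F_c,I,I_c$ all lie on $c$. In particular $EH_b = b$, so $EH_b\cdot c = b\cdot c = A$. Second, by the analogue on the side $c$ of Lemma~\ref{lem:DDa-midpoints-of-GGa}, the points $F,F_c$ are the midpoints of $\ov{II_c}$, so $(II_cFF_c)=-1$, while by definition $F,F_c$ are the midpoints of $\ov{AB}$, so $(ABFF_c)=-1$; hence the harmonic conjugacy $\tau_{FF_c}$ of $c$ fixes $F$, swaps $I\leftrightarrow I_c$, and swaps $A\leftrightarrow B$.

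Next I would apply Pappus' Involution Theorem (Theorem~\ref{thm:Pappus-involution}) to the quadrangle $\QQ = \{E,H_b,M,N\}$ and the line $c$. Using $M\in EI\cap FH_b$ and $N\in EF\cap H_bI_c$, the three pairs of opposite sides of $\QQ$ are $(EH_b,MN)$, $(EM,H_bN)=(EI,H_bI_c)$ and $(EN,H_bM)=(EF,FH_b)$, so the quadrangular involution $\sigma_\QQ$ on $c$ pairs $A = EH_b\cdot c$ with $MN\cdot c$, pairs $I = EI\cdot c$ with $I_c = H_bI_c\cdot c$, and pairs $F = EF\cdot c$ with $F = FH_b\cdot c$, i.e.\ $F$ is a double point of $\sigma_\QQ$. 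An involution of a line is determined by two of its pairs, so $\sigma_\QQ$ — which agrees with $\tau_{FF_c}$ on the pair $\{I,I_c\}$ and on the fixed point $F$ — must equal $\tau_{FF_c}$. Feeding $A$ through the remaining pair then gives $MN\cdot c = \sigma_\QQ(A) = \tau_{FF_c}(A) = B$, which says exactly that $B$ lies on $MN$, i.e.\ $B,M,N$ are collinear (see Figure~\ref{Fig:magic-midpoints-claim2-MN}).

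The only delicate point is degeneracy bookkeeping: one must check that $\{E,H_b,M,N\}$ is a genuine quadrangle (no three of its four vertices collinear) and that $c$ passes through none of these vertices, so that Pappus' Involution Theorem applies, and that the intermediate lines used ($EF$, $FH_b$, $EI$, $H_bI_c$) are all distinct from $c$. All of this follows from the standing general-position hypotheses on $\TT$, $\TT'$ and the chosen midpoints, the isosceles-at-$A$ case being handled as in Theorem~\ref{thm:magic-midpoints} or by a limiting argument. I do not expect a real obstacle: the whole content is the identification $\sigma_\QQ = \tau_{FF_c}$, and once the correct quadrangle ($\{E,H_b,M,N\}$) and the correct transversal ($c$) are singled out the computation is immediate. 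Choosing them is the one creative step, and it is essentially dictated by symmetry with the proof of Claim~\ref{claim:DD'-EE'-FF'-concurrent-claim1}.
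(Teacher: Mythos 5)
Your proof is correct, but it takes a genuinely different route from the paper's. The paper proves this claim by applying Desargues' Theorem to the triangles $\wt{BD_aI_c}$ and $\wt{MEH_b}$: their corresponding sides meet at $K$, $L$ and $F$, which are collinear by Claim~\ref{claim:DD'-EE'-FF'-concurrent-claim1}, so the triangles are perspective from a point, and since $D_aE=EF$ and $I_cH_b\cdot EF=N$ the perspectivity center forces $B,M,N$ to be collinear. You instead run the involution argument of Claim~\ref{claim:DD'-EE'-FF'-concurrent-claim1} one level up: Pappus' Involution Theorem applied to the quadrangle $\QQ=\{E,H_b,M,N\}$ and the transversal $c$ yields an involution $\sigma_\QQ$ with pair $\{I,I_c\}$ and fixed point $F$, which pins it down as $\tau_{FF_c}$, whence $MN\cdot c=\tau_{FF_c}(A)=B$. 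The identifications of the opposite-side pairs ($EH_b=b$ meeting $c$ at $A$; $EM=EI$ and $H_bN=H_bI_c$ meeting $c$ at $I$ and $I_c$; $EN=EF$ and $H_bM=FH_b$ both meeting $c$ at $F$) all check out, as do the harmonic relations $(II_cFF_c)=(ABFF_c)=-1$ from Lemma~\ref{lem:DDa-midpoints-of-GGa} and Definition~\ref{def:midpoints}, and your nondegeneracy concerns are resolved by the standing general-position assumptions (e.g.\ $E\in FH_b$ would force $F=A$). What your route buys is self-containment: it makes no use of Claim~\ref{claim:DD'-EE'-FF'-concurrent-claim1} or of the auxiliary points $K,L$ at all, so that claim (which the paper introduces only to feed this one) could be dispensed with entirely; the paper's route instead reuses the already-established collinearity of $F,K,L$ and the collinearity of $D_a,E,F$ in a single application of Desargues.
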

\begin{proof}[Proof of Claim~\ref{claim:DD'-EE'-FF'-concurrent-claim2}]
Take the triangles $\wt{BD_aI_c}$ and $\wt{MEH_b}$. The intersection points of
corresponding sides are:
\begin{align*}
BD_a\cdot ME=a\cdot k&=K\,;\\
D_aI_c\cdot EH_b= l\cdot b&=L\,;\text{ and}\\
I_cB\cdot H_bM=c\cdot FH_b&=F\,;
\end{align*}
which are collinear by Claim~\ref{claim:DD'-EE'-FF'-concurrent-claim1}. By
\hyperref[thm:Desargues]{Desargues' Theorem}, both triangles are perspective: the lines $BM,D_aE,I_cH_b$
are concurrent. In other words, the line $BM$ passes through
$D_aE\cdot I_cH_b=EF\cdot I_cH_b=N$.
\end{proof}
\begin{figure}
\centering
\includegraphics[width=0.98\textwidth]
{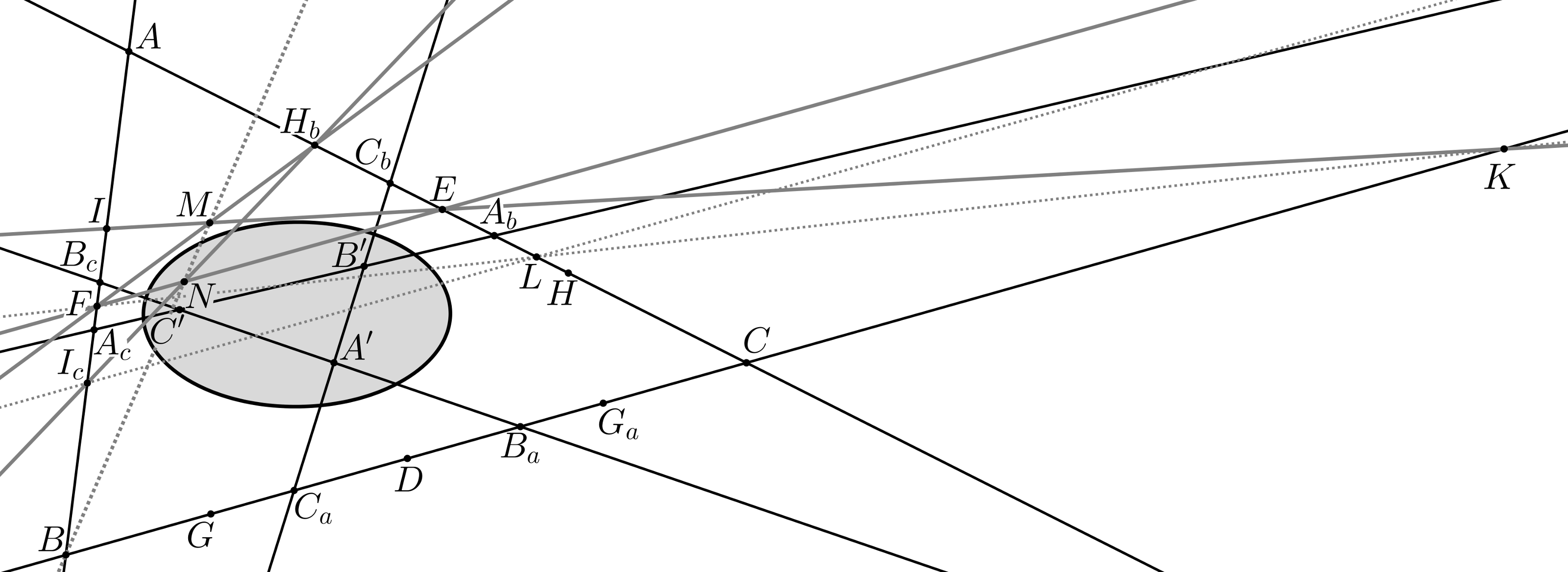}
\caption{The points $M,N$}\label{Fig:magic-midpoints-claim2-MN}
\end{figure}

Consider the point $S=DD'\cdot EE'$.
\begin{claim}\label{claim:DD'-EE'-FF'-concurrent-claim3}
The points $G_a,N,S$ are collinear.
\end{claim}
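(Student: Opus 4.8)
The plan is to establish Claim~\ref{claim:DD'-EE'-FF'-concurrent-claim3} by one more application of \hyperref[thm:Desargues]{Desargues' Theorem}, in the same style as Claims~\ref{claim:DD'-EE'-FF'-concurrent-claim1} and~\ref{claim:DD'-EE'-FF'-concurrent-claim2}, after first locating the point $M$ of Claim~\ref{claim:DD'-EE'-FF'-concurrent-claim2} on the line $DD'$. By Remark~\ref{rem:DD'-Pappus-line} (and Lemma~\ref{lem:cross-ratio-coincides-Pappus-octogon}), $DD'$ is the Pappus line of the hexagon $EIHFH_bI_c$, whose alternate vertices lie on the lines $b=CA$ and $c=AB$; one of the three intersection points of opposite sides of this hexagon is precisely $EI\cdot FH_b=M$, so $M\in DD'$. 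Combined with Claim~\ref{claim:DD'-EE'-FF'-concurrent-claim2} (the collinearity of $B,M,N$), and using also $D,G,G_a,D_a\in a$ from Lemma~\ref{lem:DDa-midpoints-of-GGa}, this yields the handy identity $G_aD\cdot NM=a\cdot BN=B$.

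By the cyclic analogue of the whole construction so far — cycling $A\to B\to C$, hence $a\to b\to c$, $D\to E\to F$, $G\to H\to I$, $D_a\to E_b\to F_c$, $\tiD\to\tiE$, $DD'\to EE'$ — one obtains in exactly the same way that $M'=FG\cdot DI_c$ lies on $EE'$ and that the points $C,M',N'$ are collinear, with $N'=FD\cdot I_cG_a$. I would then apply \hyperref[thm:Desargues]{Desargues' Theorem} to the triangles $\wt{DEG_a}$ and $\wt{MM'N}$, with vertices paired in this order: since $M\in DD'$ and $M'\in EE'$, the three lines joining corresponding vertices are $DM=DD'$, $EM'=EE'$ and $G_aN$. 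Hence, if these two triangles are \emph{axially} perspective — i.e. if the three points $DE\cdot MM'$, $EG_a\cdot M'N$ and $G_aD\cdot NM=B$ are collinear — Desargues forces $DD'$, $EE'$ and $G_aN$ to be concurrent, and as $DD'\cdot EE'=S$ this is exactly the assertion that $G_a,N,S$ are collinear. (A variant closer to Claim~\ref{claim:DD'-EE'-FF'-concurrent-claim1} would instead exhibit a quadrangle whose induced involution on $G_aN$ carries an already-named pair of the figure through $S$.)

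The step I expect to be the genuine obstacle is this axial perspectivity: showing that $B$, $DE\cdot MM'$ and $EG_a\cdot M'N$ are collinear (equivalently, producing the quadrangle that does the same job). This will be one more incidence chase — most likely an auxiliary triangle together with \hyperref[thm:Desargues]{Desargues'} or a quadrangular-involution argument, as in the earlier claims — and the real cost is keeping track of the accumulating web of auxiliary points ($K,L,M,N,M',N',\tiD,\tiE$, together with the midpoints and complementary midpoints of $\TT$ and $\TT'$). Everything preparatory — the two Pappus-line identifications and the cyclic transport of Claim~\ref{claim:DD'-EE'-FF'-concurrent-claim2} — is routine once $M\in DD'$ is in hand. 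Once Claim~\ref{claim:DD'-EE'-FF'-concurrent-claim3} and its two cyclic siblings are available, one concludes that $S$ lies on $FF'$ as well, which is the content of Theorem~\ref{thm:DD'-EE'-FF'-concurrent}.
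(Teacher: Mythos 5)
There is a genuine gap. Your reduction is sound as far as it goes: $M=EI\cdot FH_b$ does lie on $DD'$ by Remark~\ref{rem:DD'-Pappus-line}, the cyclic transport putting $M'=FG\cdot DI_c$ on $EE'$ is legitimate, and $G_aD\cdot NM=B$ follows from Claim~\ref{claim:DD'-EE'-FF'-concurrent-claim2}. But the step you flag as ``the genuine obstacle'' --- the collinearity of $B$, $DE\cdot MM'$ and $EG_a\cdot M'N$ --- is not an auxiliary incidence you can expect to chase down independently: given the incidences you have already established ($M\in DD'$, $M'\in EE'$, $B\in NM$), Desargues' Theorem makes that axial collinearity \emph{equivalent} to the central perspectivity of $\wt{DEG_a}$ and $\wt{MM'N}$, i.e.\ to the concurrency of $DD'$, $EE'$ and $G_aN$, which is exactly the claim. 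So the proposal replaces the claim by an equally hard statement and proves neither; nothing has been gained.

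The paper escapes this circularity by a different choice of auxiliary point and triangles. It sets $M'=DI\cdot FG_a$, so that $M'\in DI$ holds \emph{by construction}, and the only nontrivial input is $M'\in EE'$ (again from the Pappus-line remark). It then applies Desargues to $\wt{BFG_a}$ and $\wt{MES}$, with $S$ itself as a vertex: the three intersection points of corresponding sides are $BF\cdot ME=I$, $FG_a\cdot ES=M'$ (using $ES=EE'$) and $G_aB\cdot SM=a\cdot DD'=D$, and these are collinear precisely because $M'$ was defined to lie on $DI$. Desargues then yields the concurrency of $BM$, $FE$ and $G_aS$, and Claim~\ref{claim:DD'-EE'-FF'-concurrent-claim2} identifies $BM\cdot FE$ as $N$, so $N\in G_aS$. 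If you want to salvage your version, you would have to supply an independent proof of the axial collinearity, which amounts to redoing the paper's argument in disguise; the cleaner fix is to build the ``free'' collinearity into the definition of the auxiliary point, as the paper does.
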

\begin{proof}[Proof of Claim~\ref{claim:DD'-EE'-FF'-concurrent-claim3}]
Consider the triangles $\wt{BFG_a}$ and $\wt{MES}$. By Remark
\ref{rem:DD'-Pappus-line}, the point $M$ lies on $\tid=DD'$ and the point
$M'=DI\cdot FG_a$ lies on $EE'$. This implies that the intersection points of
corresponding sides of both triangles are:
\begin{align*}
BF\cdot ME=c\cdot ME&=I\,;\\
FG_a\cdot ES&=M'\,;\text{ and}\\
G_aB\cdot SM=a\cdot \tid&=D\,.
\end{align*}
As we have just said, $M'\in DI$: by \hyperref[thm:Desargues]{Desargues' Theorem}, 
both triangles are
perspective. The lines $BM, FE,G_aS$ are concurrent, but by Claim
\ref{claim:DD'-EE'-FF'-concurrent-claim2} the intersection point $BM\cdot FE$
is $N$. This completes the proof.
\end{proof}

We can repeat step by step the proof of the previous claim but taking as
starting
point $S'=DD'\cdot FF'$ and considering the triangles $\wt{BEG_a}$ and
$\wt{MFS'}$ in order to conclude that the points $G_a,N,S'$ are collinear, and
this implies that $S$ and $S'$ must coincide. This completes the proof of
Theorem 
\ref{thm:DD'-EE'-FF'-concurrent}
\end{proof}%of the Theorem

\begin{flushright}
\textbf{End of the digression}
\end{flushright}

We will use some of the notation from the proof of
Theorem 
\ref{thm:DD'-EE'-FF'-concurrent} in the proof of this small
lemma.

\begin{lemma}\label{lem:D-Da-midpoints}
The points $D,D_a$ are the midpoints of the segment whose endpoints are
$J_1:=a\cdot HI$ and $J_2=a\cdot H_bI_c$.
\end{lemma}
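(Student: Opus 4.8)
The statement claims that $D,D_a$ — the midpoints of $\ov{BC}$ — are also the midpoints of the segment $\ov{J_1J_2}$ on $a$, where $J_1=a\cdot HI$ and $J_2=a\cdot H_bI_c$, and where $H,H_b$ (resp. $I,I_c$) are the complementary midpoints of $\ov{CA}$ (resp. $\ov{AB}$). By Lemma~\ref{lem:midpoints-harmonic-UV-AB}, since $D$ and $D_a$ are conjugate points on $a$ with respect to $\Phi$, it suffices to prove the single harmonic identity $(J_1J_2DD_a)=-1$, or equivalently to show that $D,D_a$ are fixed by the symmetry of $a$ interchanging $J_1$ and $J_2$. The natural tool, exactly as in the proof of Theorem~\ref{thm:magic-midpoints} and of Claim~\ref{claim:DD'-EE'-FF'-concurrent-claim1}, is a quadrangular (Pappus) involution on the line $a$.

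First I would recall from Theorem~\ref{thm:magic-midpoints} that $\{\tiD,\tiD_{\tia}\}=\{HI\cdot H_bI_c,\ HI_c\cdot H_bI\}$ are the magic midpoints on $\tia=B_cC_b$, and that the midpoints $\tiD,\tiD_{\tia}$ of $\ov{B_cC_b}$ are also the midpoints of $\ov{\tiB\tiC}$ and of $\ov{A_0J}$-type segments. The key point is that the four lines $HI,HI_c,H_bI,H_bI_c$ are the sides of the quadrilateral dual to the quadrangle $\QQ=\{H,H_b,I,I_c\}$; cutting this quadrangle with the line $a$ produces, by Theorem~\ref{thm:Pappus-involution}, a quadrangular involution $\sigma_\QQ$ on $a$ whose pairs include $(J_1,J_2)$ together with $(a\cdot HH_b,\ a\cdot II_c)$ and $(a\cdot HI,\ a\cdot H_bI_c)$ — wait, more carefully: the three pairs of opposite sides of $\QQ$ are $\{HH_b,II_c\}$, $\{HI,H_bI_c\}$, $\{HI_c,H_bI\}$, so $\sigma_\QQ$ pairs $a\cdot HI$ with $a\cdot H_bI_c$, i.e. $\sigma_\QQ(J_1)=J_2$. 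Then I would identify the other two pairs of $\sigma_\QQ$: the segment $\ov{CA}$ has complementary midpoints $H,H_b$, so by Lemma~\ref{lem:midpoints-AB-midpoints-ApBp} these are the midpoints of $\ov{C_aA_c}$ (the complementary midpoints of $\ov{CA}$ are the midpoints of $\ov{C_{a}A_{c}}$ along $b$ — I need to track which line); similarly $I,I_c$ are midpoints of the appropriate complementary segment on $c$. The upshot is that the line $HH_b$ and the line $II_c$ meet $a$ at points related to $B,C$ and their conjugates, so that $\sigma_\QQ$ sends $B$ to $C$ (or to $C_a$), and combining with the conjugacy $\rho_a$ one gets that $\rho_a\circ\sigma_\QQ$, or $\sigma_\QQ$ itself, coincides with the symmetry $\tau_{DD_a}$ of $a$.

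Concretely, the cleanest route: show that $\sigma_\QQ$ fixes $D$ and $D_a$. Since $\tiD=HI\cdot H_bI_c$ lies on $\tia$ and $J_1=a\cdot HI$, $J_2=a\cdot H_bI_c$, project from $\tiD$ (or use a perspectivity) to transfer the harmonic structure on $\tia$ established in Theorem~\ref{thm:magic-midpoints} — namely that $\tiD,\tiD_{\tia}$ are the midpoints of $\ov{\tiB\tiC}$ and, in case \textbf{IVa}, of the quadrangle of midpoints $\{D,D_a,D',D'_{a'}\}$ — onto the line $a$. Alternatively, and perhaps more directly, apply Lemma~\ref{lem:harmonic-sets-on-the-sides} with triangle $\wt{ABC}$: the complementary midpoints satisfy $(AB\,I\,I_c)=(CA\,H\,H_b)=-1$ (since $I,I_c$ are midpoints of a segment with endpoints $A,B_c$ and $(AB\,I\,I_c)$... actually $I,I_c$ are the midpoints of $\ov{AB_c}$, hence $(AB_c I I_c)=-1$, and I must check this gives $(AB\,I\,I_c)=-1$ too — it does, because complementary midpoints $G,G_a$ of a segment $\ov{XY}$ satisfy $(XYGG_a)=-1$ by the definition of complementary midpoints via $\SS(\ov{AB})=\CC(\ov{AB_p})$ and Remark~\ref{remark:midpoints-square-roots}: wait, $(AB_pBG)$ is a square root of $\SS$, and $G,G_p$ midpoints of $\ov{AB_p}$ give $(AB_pGG_p)=-1$; I then need $(ABGG_p)$, which need not be $-1$).

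The main obstacle, then, is pinning down which harmonic relation the complementary midpoints $H,H_b,I,I_c$ actually satisfy with respect to $A,B,C$ — they are defined as midpoints of the complementary segments $\ov{CA_b}$-type (really $\ov{CA}$'s complementary segment $\ov{CB_{...}}$), not of the sides themselves, so Lemma~\ref{lem:harmonic-sets-on-the-sides} does not apply verbatim and one must instead route through Lemma~\ref{lem:midpoints-AB-midpoints-ApBp} and Lemma~\ref{lem:DDa-midpoints-of-GGa}. I expect the correct argument to be: by Lemma~\ref{lem:DDa-midpoints-of-GGa} (applied to each side), $G,G_a$ are the midpoints of $\ov{DD_a}$, and analogously $H,H_b$ are midpoints of $\ov{EE_b}$, $I,I_c$ of $\ov{FF_c}$; hence the quadrangle $\{H,H_b,I,I_c\}$ has its three diagonal points at $A$ (since $EF_c\cdot E_bF=\tiC=\ldots$) — no: by Lemma~\ref{lem:harmonic-sets-on-the-sides} applied to $\wt{ABC}$ with the harmonic sets $(AB\,F\,F_c)=-1$ on $c$ and $(CA\,E\,E_b)=-1$ on $b$, the diagonal points $\neq A$ of $\{E,E_b,F,F_c\}$ lie on $a$ and are $D,D_a$ (this is exactly Lemma~\ref{lem:midpoints-collinear}). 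Then applying Lemma~\ref{lem:harmonic-sets-on-the-sides} a second time, now to the triangle $\wt{A\,?\,?}$ built from the midpoints of $\ov{EE_b}$ and $\ov{FF_c}$, i.e. $\{H,H_b\}$ and $\{I,I_c\}$, with $(EE_b\,H\,H_b)=(FF_c\,I\,I_c)=-1$, yields that the diagonal points $\neq A$ of $\{H,H_b,I,I_c\}$ lie on $EF=$ the line $DD'$... no, they lie on the line $E_bF_c$-analogue. Ultimately the two diagonal points of $\{H,H_b,I,I_c\}$ other than $A$ are $\tiD,\tiD_{\tia}$ on $\tia$ (by Theorem~\ref{thm:magic-midpoints}\,\textbf{I}), the quadrangle $\{H,H_b,I,I_c\}$ cut by $a$ gives the involution $\sigma_\QQ$ with $\sigma_\QQ(J_1)=J_2$ and fixed pair $= \{$the two diagonal points of $\{H,H_b,I,I_c\}$ on $a\}$, but $A\notin a$, so the fixed points of $\sigma_\QQ$ restricted... hmm, $\sigma_\QQ$ on $a$ has for double points the intersections with $a$ of the two sides of the diagonal triangle of $\{H,H_b,I,I_c\}$ passing through... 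This needs care. I would therefore state: write $\sigma_\QQ$ for the involution induced on $a$ by $\QQ=\{H,H_b,I,I_c\}$; by the opposite-side structure $\sigma_\QQ(J_1)=J_2$; and by computing that $\sigma_\QQ$ sends $B\mapsto C$ and $B_a\mapsto C_a$ (using $HH_b$, $II_c$ and the facts $H,H_b$ mid $\ov{EE_b}$, etc.), conclude $\rho_a\circ\sigma_\QQ=\tau_{DD_a}$, whence $D,D_a$ are double points of $\sigma_\QQ$ together with... and finally $(J_1J_2DD_a)=-1$. The delicate bookkeeping of which complementary midpoint sits on which of $a,b,c$, and verifying $\sigma_\QQ(B)=C$, is where the real work lies; everything else is a mechanical application of Theorem~\ref{thm:Pappus-involution}, Lemma~\ref{lem:midpoints-harmonic-UV-AB}, and Lemma~\ref{lem:DDa-midpoints-of-GGa}.
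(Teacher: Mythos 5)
Your reduction is the right one: since $D,D_a$ are conjugate points of $a$, Lemma~\ref{lem:midpoints-harmonic-UV-AB} reduces the statement to the single identity $(J_1J_2DD_a)=-1$, i.e.\ to $\tau_{DD_a}(J_1)=J_2$; and the quadrangle $\QQ=\{H,H_b,I,I_c\}$ is a legitimate vehicle, because its opposite sides $HI$ and $H_bI_c$ cut $a$ in the pair $(J_1,J_2)$ of $\sigma_{\QQ}$, while the opposite sides $HH_b=b$ and $II_c=c$ cut $a$ in the pair $(C,B)$ --- so $\sigma_{\QQ}$ swaps $B$ and $C$ (not ``$B$ and $C_a$''; there is no ambiguity here). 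What is missing is the one step that actually proves anything: a \emph{second} pair, independent of $\{B,C\}$, on which $\sigma_{\QQ}$ and $\tau_{DD_a}$ provably agree. Two involutions of a line sharing a single pair need not coincide, and your candidate --- ``computing that $\sigma_{\QQ}$ sends $B_a\mapsto C_a$'' --- is asserted but never derived: the third pair of opposite sides of $\QQ$ is $\{HI_c,H_bI\}$, whose trace on $a$ is not $\{B_a,C_a\}$ but exactly the pair covered by the Remark \emph{after} the lemma, i.e.\ content equivalent to what you are trying to prove, so it cannot be used. The proposal trails off precisely at this point (``This needs care'', ``the delicate bookkeeping \dots\ is where the real work lies''), so as written there is no proof.

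For the record, the paper closes the argument with a different quadrangle: it takes $\{\tiD,S,N,N'\}$, where $N=EF\cdot H_bI_c$, $N'=EF\cdot HI$ and $S$ is the pseudo Spieker center, uses the collinearities of $N,G_a,S$ and of $N',G,S$ established inside the proof of Theorem~\ref{thm:DD'-EE'-FF'-concurrent} to see that the induced involution on $a$ sends $J_1\mapsto G_a$, $J_2\mapsto G$ and $D\mapsto D_a$, and concludes $(J_1J_2DD_a)=(G_aGD_aD)=-1$ via Lemma~\ref{lem:DDa-midpoints-of-GGa}. If you want to salvage your own quadrangle $\{H,H_b,I,I_c\}$, the natural second pair is $(G,G_a)$: Theorem~\ref{thm:complementary-midpoints-lie-on-conic} places $G,G_a,H,H_b,I,I_c$ on one conic, so the full Desargues involution theorem (of which Theorem~\ref{thm:Pappus-involution} is only the degenerate case) gives $\sigma_{\QQ}(G)=G_a$, while Lemma~\ref{lem:DDa-midpoints-of-GGa} gives $\tau_{DD_a}(G)=G_a$; agreement on the two pairs $\{B,C\}$ and $\{G,G_a\}$ then forces $\sigma_{\QQ}=\tau_{DD_a}$ and hence $(J_1J_2DD_a)=-1$. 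Note, however, that this completion needs the conic version of the involution theorem, which the paper cites but does not state or prove.
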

\begin{proof}
Consider the points $N=EF\cdot H_bI_c$ and $N'=EF\cdot HI$. Assume by
simplicity that $N$ and $N'$ are different points (the case $N=N'$ would follow
as a limit case). In the proof of Theorem~\ref{thm:DD'-EE'-FF'-concurrent} we
have seen that the point $N$ is collinear 
%with $B$ and $M$ and 
with $G_a$ and
$S$, where 
%$M$ is $EI\cdot FH_b$ and 
$S$ is the intersection point of the lines
$DD'$, $EE'$ and $FF'$. In an exactly similar way, it can be proved that $N'$
is collinear 
%with $A$ and $M$ and 
with $G$ and $S$.

Let recover the magic midpoint $\tiD=HI\cdot H_bI_c$, and consider the
quadrangle $\QQ=\{\tiD,Q,N,N'\}$ and the involution $\sigma_{\QQ}$ that it
induces
on $a$. We have that
$$\sigma_{\QQ}(D)=\sigma_{\QQ}(\tiA Q\cdot a)=NN'\cdot a=EF\cdot a=D_a\,,$$
and that $\sigma_{\QQ}(J_1)=G_a$ and that $\sigma_{\QQ}(J_2)=G$. Therefore,
$$(J_1 J_2 D D_a)=(G_a G D_a D)= -1\,.$$
The result now follows from Lemma~\ref{lem:midpoints-harmonic-UV-AB}.
\end{proof}
\begin{remark}
$D,D_a$ are also the midpoints of  the segment whose endpoints are $a\cdot
HI_c$ and $a\cdot H_bI$.
\end{remark}
\begin{remark}\label{rem:D-Da-non-collinear-I-Ic-H-Hb}
None of the points $D,D_a$ can be collinear with two of the
points $H,H_b,I,I_c$.
\end{remark}

\begin{figure}
\centering
\includegraphics[width=0.9\textwidth]
{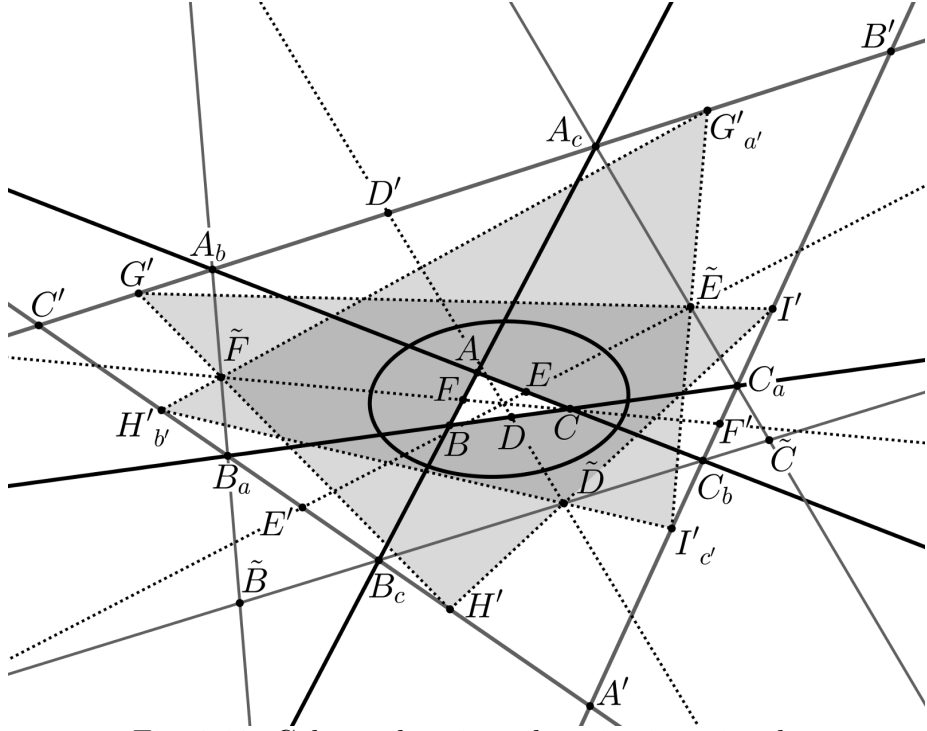}
\caption{Coherently oriented projective triangle}
\label{fig:cosine-rule-01-the-triangle}
\end{figure}

Definition~\ref{def:coherently-oriented} allows us to develop a complete
trigonometry for
generalized triangles. Along the next sections, we will assume that the
triangle $\TT$ is not right-angled and that it is coherently oriented, 
and we will use for the preferred
midpoints and complementary midpoints of $\TT$ and $\TT'$ the same notation as
in Definition~\ref{def:coherently-oriented}. A good exercise would be to
rewrite \S\ref{sec:Trigonometry-for-right-angled} using the functions
$\cc,\ss$ in these terms.

\section{The general law of sines}\label{sec:law-of-sines}

\begin{theorem}[Projective Law of sines]
If $\TT$ is coherently oriented, then
    \begin{equation}
	\dfrac{\ss(AB)}{\ss(A'B')}=\dfrac{\ss(BC)}{\ss(B'C')}=\dfrac{\ss(CA)}{\ss(C'A')}
	\label{eq:projective-law-of-sines}
    \end{equation}
\end{theorem}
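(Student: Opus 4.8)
The strategy is to reduce the projective law of sines to the squared version~\eqref{eq:general-law-of-sines} that was already established in \S\ref{sec:Trigonometry-for-generalized-triangles}, and then to fix the $\pm$ ambiguity in taking square roots by exploiting the \emph{coherence} of the orientation, i.e.\ by keeping track of which midpoints and complementary midpoints are chosen. Recall that by definition $\ss(\oV{AB})=(AB_cBG)$ where $G$ is the preferred complementary midpoint of $\ov{AB}$, and that $\ss(\oV{AB})^2=\SS(\ov{AB})=\mathbf S(c)$ by Remark~\ref{remark:midpoints-square-roots} and the definition of $\SS$. Hence each ratio in~\eqref{eq:projective-law-of-sines} squares to the corresponding ratio in~\eqref{eq:general-law-of-sines}, so the three squared ratios are equal; what remains is to show the three \emph{unsquared} ratios share a common sign pattern, which amounts to showing $\ss(AB)/\ss(A'B')=\ss(BC)/\ss(B'C')$ exactly (not just up to sign), and similarly for the third.

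First I would set up the computation along a single altitude, exactly as in \S\ref{sub:The-general-sine-rule}. Take $h_a=AA'$, the point $X=h_a\cdot a$, $X'=h_a\cdot a'$, and the right-angled sub-triangles $\wt{AXB}$ and $\wt{AXC}$ with the segments $p=\ov{AX}$, $a_1=\ov{BX}$, $a_2=\ov{CX}$, $a_1'=\ov{C'X'}$, $a_2'=\ov{B'X'}$ as in Figure~\ref{Fig:general_sine_rule}. The key point is that the chain of equalities $\mathbf S(p)=\mathbf S(c)\mathbf S(b')=\mathbf S(b)\mathbf S(c')$ that produced~\eqref{eq:general-law-of-sines} has an unsquared refinement once orientations are fixed: the relation~\eqref{eq:trigo2}, rewritten with the functions $\cc,\ss$ and with coherently chosen complementary midpoints, should read $\ss(\oV{AX})=\ss(\oV{AB})\,\ss(\oV{something on }c')$ on the nose (with a definite sign), and similarly on the other side. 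Concretely I would prove a lemma: \emph{if the preferred complementary midpoints along $c,b'$ and along $p$ are compatible (which is exactly what coherent orientation guarantees), then $\ss(\oV{AX})=\ss(\oV{AB})\,\ss(\oV{A'B'})/\text{(common factor)}$}, so that dividing the two expressions for $\ss(\oV{AX})$ gives $\ss(AB)/\ss(A'B')=\ss(AC)/\ss(A'C')$ with no residual sign.

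The heart of the argument — and the main obstacle — is the sign bookkeeping: one must verify that Definition~\ref{def:coherently-oriented}, via Theorem~\ref{thm:magic-midpoints}, really does align the complementary midpoints $G,H,I$ of $\TT$ with those $G',H',I'$ of $\TT'$ and with the complementary midpoints appearing on the sides $p,a_1,a_2,\ldots$ of the auxiliary right-angled triangles in such a way that the three cross-ratio identities are true \emph{equalities}, not equalities-up-to-$\pm1$. Here I would lean on the magic midpoints: the point $\tiD=HI\cdot H_bI_c$ (and its companions $\tiE,\tiF$) provides the projective link between the complementary midpoints on $\ov{CA}$ and $\ov{AB}$, and Theorem~\ref{thm:magic-midpoints} together with Lemma~\ref{lem:D-Da-midpoints} lets one transport a preferred complementary midpoint on one side to a preferred one on another via explicit quadrangular involutions, each of which is a genuine projectivity and hence preserves cross ratios exactly. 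So the plan is: (i) express each $\ss(\cdot)$ as a concrete cross ratio using the preferred complementary midpoint; (ii) use the quadrangular/harmonic involutions furnished by the magic-triangle construction to rewrite $\ss(\oV{AX})$ in two ways, one through the sub-triangle $\wt{AXB}$ and one through $\wt{AXC}$, as a product in which all cross ratios are literal (sign-correct) identities; (iii) cancel the common factor $\ss$ of the shared segment $p$ and conclude. Repeating with the altitudes $h_b$ and $h_c$ (or, more economically, permuting $A,B,C$ cyclically, which a coherent orientation respects) yields the full chain~\eqref{eq:projective-law-of-sines}. I expect step (ii) to be the delicate one, because it is precisely where an unexamined choice of complementary midpoint could flip a sign; everything else is a routine application of~\eqref{eq:cross_ratio_identities} and~\eqref{eq:CR3}.
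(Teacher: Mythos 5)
Your overall strategy --- take the squared law of sines \eqref{eq:general-law-of-sines} as given and then resolve the $\pm$ ambiguity by appealing to the coherent orientation --- is not the paper's route, and as written it has a genuine gap at exactly the step you yourself flag as delicate. The coherent orientation of Definition~\ref{def:coherently-oriented} assigns preferred midpoints and complementary midpoints only to the six sides of $\TT$ and $\TT'$; it says nothing about the auxiliary segments $p=\ov{AX}$, $a_1=\ov{BX}$, $a_2=\ov{CX}$ of the altitude decomposition. So the ``unsquared refinement'' of \eqref{eq:trigo2} that you want, $\ss(\oV{AX})=\ss(\oV{AB})\,\ss(\cdots)$ holding on the nose, is not something coherence can supply by itself: you would first have to choose complementary midpoints on $p$ and on the sub-segments, and then prove an exact cross-ratio identity relating those choices to $H,I,H',I'$. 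That identity is the entire content of the theorem. Asserting that the magic-midpoint machinery ``lets one transport'' the choices is not a proof, and \eqref{eq:trigo2} itself, being a statement about the squared ratios $\SS$, gives no purchase on it. Even if each of your two factorizations of $\ss(\oV{AX})$ held up to a sign determined by the orientation data, you would still have to show the two signs agree, which is again the same unproved identity.

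The paper bypasses the squared law of sines altogether. It applies \hyperref[cor:Menelaus]{Menelaus' Projective Formula} to the triangle $\wt{AB_cC_b}$ with transversals $a$ and the line $HI$ through the preferred complementary midpoints, obtaining
\[
(AB_cBI)\,(B_cC_bJ\tiD)\,(C_bACH)=1\,,\qquad J=a\cdot\tia\,,
\]
in which $(AB_cBI)=\ss(AB)$ and $(AC_bCH)=\ss(AC)$ are literally the defining cross ratios of Definition~\ref{def:oriented-segment}; this gives $\ss(AC)/\ss(AB)=(C_bB_cJ\tiD)$, and the analogous configuration for $\TT'$ gives $\ss(A'C')/\ss(A'B')=(C_bB_cJ'\tiD)$. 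The residual factors are then matched because $\tiD,\tiD_{\tia}$ are simultaneously the midpoints of $\ov{B_cC_b}$ and of $\ov{JJ'}$ (Theorem~\ref{thm:magic-midpoints}), so the symmetry $\tau_{\tiD\tiD_{\tia}}$ yields $(C_bB_cJ\tiD)=1/(C_bB_cJ'\tiD)$. Every step is an exact cross-ratio identity, so no sign analysis is ever needed. The missing lemma in your step (ii) is essentially equivalent to this Menelaus computation, so if you want to complete your plan you will end up reproducing it.
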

\begin{proof}
Assume for instance that $\TT$ is the triangle depicted in 
Figure~\ref{fig:cosine-rule-01-the-triangle}.
If we apply \hyperref[cor:Menelaus]{Menelaus' Projective Formula} 
to the Menelaus' configuration
with triangle $\wt{AB_cC_b}$ and transversals $a$ and $HI$, we obtain
$$(AB_c B I)(B_cC_b J \tiD)(C_bA C H)=1\,,$$
where $J=a\cdot\tia$ as in \S\ref{sec:magic-midpoints-oriented-triangles}. 
Note that $\ss(AB)=(AB_cBI)$
and that $\ss(AC)=(AC_bCH)$. 
Therefore, the previous equality is equivalent to
$$\dfrac{\ss(AC)}{\ss(AB)}=(C_bB_cJ\tiD)\,.$$
On the other hand if we apply 
\hyperref[cor:Menelaus]{Menelaus' Projective Formula} to 
the Menelaus' configuration
with triangle $\wt{A'B_cC_b}$ and transversals $a'$ and $H'I'$, we obtain
$$(A'B_c C' H')(B_cC_b J' \tiD)(C_bA' B' I')=1\,,$$
where $J'=a'\cdot \tia$, and this last formula is equivalent to
$$\dfrac{\ss(A'C')}{\ss(A'B')}=(C_bB_cJ'\tiD)\,.$$
By the proof of Theorem~\ref{thm:magic-midpoints}, if $\TT$ is not 
isosceles at $A$ we know that $\tiD,\tiD_{\tia}$
are the midpoints of $\ov{JJ'}$. In this case, as $\tiD,\tiD_{\tia}$ are 
also the midpoints of $\ov{B_cC_b}$,
the simmetry $\tau_{\tiD\tiD_{\tia}}$ on $\tia$ with respect to $\tiD$ gives
$$(C_bB_cJ\tiD)=(B_cC_bJ'\tiD)=\dfrac{1}{(C_bB_cJ'\tiD)}\,.$$
Thus
$$\dfrac{\ss(AC)}{\ss(AB)}=\dfrac{\ss(A'B')}{\ss(A'C')}\,.$$
This argument also works If $\TT$ is isosceles at $A$, 
because in this case it is $(C_bB_cJ\tiD)=(C_bB_cJ'\tiD)=-1$.

As, $\ss(AC)=-\ss(CA)$ and $\ss(A'C')=-\ss(C'A')$, we obtain
$$\dfrac{\ss(CA)}{\ss(C'A')}=\dfrac{\ss(AB)}{\ss(A'B')}\,.$$
The rest of equalities if~\eqref{eq:projective-law-of-sines} are proved 
in the same way.
\end{proof}

\section{The general law of cosines}\label{sec:unsquared-cosine-rule}
For proving our main result, we will need an extra help from
some
other classic theorems not mentioned before. 
After the previous chapters, a reader familiar with 
the classical theorems of affine and projective geometry would 
have the feeling that ``someone's missing''.
\begin{theorem}[Ceva's Theorem\footnote{Although this theorem is historically
attributed to the seventeenth-century italian
matematician Giovanni Ceva, it is known that it was proved before by
the arab mathematician
Yusuf Al-Mu'taman ibn H\H{u}d, king of Zaragoza in the eleventh
century.}]
Let $\TT=\wt{XYZ}$ be a projective triangle, and let $X_1,Y_1,Z_1$ be three
points on the lines $YZ,ZX,XY$ respectively. Let $r$ be a line not incident with
$X$, $Y$, or $Z$, and consider the points $X_0=r\cdot YZ$, $Y_0=r\cdot ZX$, and
$Z_0=r\cdot XY$ (Figure~\ref{fig:Ceva-Van-Aubel}). The lines $XX_1,YY_1,ZZ_1$
are concurrent if and only if
(compare~\eqref{eq:Menelaus_Projective})
\begin{equation}\label{eq:ceva-projective-formula}
(XYZ_1Z_0)(YZX_1X_0)(ZXY_1Y_0)=-1
\end{equation}
\end{theorem}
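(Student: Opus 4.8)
The plan is to imitate the treatment of \hyperref[thm:Menelaus-affine]{Menelaus' Theorem} in \S\ref{sec:Menelaus-Theorem}: first prove the equivalence in the affine case, where $r$ is the line at infinity, and then transfer it to an arbitrary $r$ by a cross-ratio manipulation (or, equivalently, by a projective collineation). Let $X_\infty,Y_\infty,Z_\infty$ be the points at infinity of $YZ,ZX,XY$ respectively. By \eqref{eq:harmonic ratio equals cross ratio} one has $(XYZ_1Z_\infty)=\brs{XZ_1}/\brs{YZ_1}$, and similarly for the other two factors, so when $r=\ell_\infty$ the asserted equivalence is exactly the classical affine Ceva Theorem
\[
XX_{1},\,YY_{1},\,ZZ_{1}\ \text{concurrent}\iff \frac{\brs{XZ_{1}}}{\brs{YZ_{1}}}\cdot\frac{\brs{YX_{1}}}{\brs{ZX_{1}}}\cdot\frac{\brs{ZY_{1}}}{\brs{XY_{1}}}=-1 ,
\]
written in the convention of \eqref{eq:Menelaus_Affine}. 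The right-hand side is $-1$, not $+1$: relative to Menelaus the number of order reversals hidden in the $\brs{\cdot}$-notation changes by one. I would prove (or simply cite as classical) this affine statement by two applications of \hyperref[thm:Menelaus-affine]{Menelaus' Theorem}: if $P$ is the common point of the three cevians, apply Menelaus to the triangle $\widetriangle{XX_{1}Y}$ cut by the transversal $ZPZ_{1}$ and to the triangle $\widetriangle{XX_{1}Z}$ cut by the transversal $YPY_{1}$; the factor $\brs{XP}/\brs{X_{1}P}$ common to both relations cancels upon dividing them, and the displayed identity follows after the elementary sign rearrangements in the $\brs{\cdot}$-ratios.

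To pass from $\ell_\infty$ to a general line $r$ missing $X,Y,Z$, with $X_{0}=r\cdot YZ$, $Y_{0}=r\cdot ZX$, $Z_{0}=r\cdot XY$, assume first that the three cevians are concurrent. Applying the affine case to the collinear triple $X_{0},Y_{0},Z_{0}$ (in the rôle of a Menelaus transversal) gives $(XYZ_{0}Z_\infty)(YZX_{0}X_\infty)(ZXY_{0}Y_\infty)=1$, while the affine case applied to $X_{1},Y_{1},Z_{1}$ gives $(XYZ_{1}Z_\infty)(YZX_{1}X_\infty)(ZXY_{1}Y_\infty)=-1$. Dividing the second relation by the first and using \eqref{eq:CR1} together with \eqref{eq:CR3} in the form $(XYZ_{1}Z_{0})=(XYZ_{1}Z_\infty)/(XYZ_{0}Z_\infty)$ (and likewise on the two remaining sides) yields \eqref{eq:ceva-projective-formula}; this is the exact analogue of the step that gave \hyperref[cor:Menelaus]{Menelaus' Projective Formula}, and alternatively one may just carry $r$ to $\ell_\infty$ by a projective collineation, since these preserve cross ratios, incidence and concurrency. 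For the converse, suppose \eqref{eq:ceva-projective-formula} holds, set $P=XX_{1}\cdot YY_{1}$ and $Z_{1}'=ZP\cdot XY$; then $XX_{1},YY_{1},ZZ_{1}'$ are concurrent, so \eqref{eq:ceva-projective-formula} also holds with $Z_{1}'$ in place of $Z_{1}$, which forces $(XYZ_{1}Z_{0})=(XYZ_{1}'Z_{0})$ and hence $Z_{1}=Z_{1}'$; thus $ZZ_{1}$ passes through $P$ and the three cevians concur.

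I expect no serious difficulty: the argument is a short cross-ratio computation running parallel to the Menelaus case. The only points that demand attention are the sign bookkeeping in the affine step (the $-1$ here versus the $+1$ of Menelaus) and the usual non-degeneracy provisos --- $r$ must avoid $X,Y,Z$, and $X_{1},Y_{1},Z_{1}$ must be distinct from the vertices and from $X_{0},Y_{0},Z_{0}$ --- so that all the cross ratios written above are defined; these are guaranteed by the general-position conventions already in force.
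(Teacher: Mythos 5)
Your proposal is correct and follows essentially the same route as the paper: there the statement is presented as the projective interpretation of classical affine Ceva, obtained ``exactly as we did with Menelaus' Theorem,'' i.e.\ by the same division of cross-ratio identities that produced Corollary~\ref{cor:Menelaus}, with the affine case cited as classical where you supply the double-Menelaus derivation and with the converse handled by the standard uniqueness argument you give. The only extra ingredient in the paper is the lemma immediately following the statement: replacing each of $X_1,Y_1,Z_1$ by its harmonic conjugate with respect to the adjacent vertices turns concurrency of the cevians into collinearity of the conjugates, and since $(XYZ_1Z_0)=-(XYZ_2Z_0)$ whenever $(XYZ_1Z_2)=-1$, the Ceva product is $(-1)^3$ times the Menelaus product, which gives a purely projective one-line passage between the two formulas without the affine detour.
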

We make a projective interpretation of Ceva's Theorem exactly as we did with
\hyperref[thm:Menelaus-affine]{Menelaus' Theorem} 
in \S\ref{sec:Menelaus-Theorem}. It is usually said
that Menelaus' and Ceva's
theorems are \emph{dual} to each other, but perhaps it should be said
that
they are \emph{harmonic} to each other because harmonic conjugacy 
provides the equivalence between
both theorems:
\begin{lemma}
Let $\TT=\wt{XYZ}$ be a projective triangle, let $X_1,Y_1$ and $Z_1$ be three
points on the lines $YZ,ZX$ and $XY$ respectively, and let $X_2,Y_2$ 
and $Z_2$ be their harmonic conjugates with respect to 
$Y$ and $Z$, $Z$ and $X$ and $X$ and $Y$, respectively. 
The lines $XX_1,YY_1$ and $ZZ_1$ are concurrent if and only if
the points $X_2,Y_2$ and $Z_2$ are collinear.
\end{lemma}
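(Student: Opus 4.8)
The statement asserts the equivalence of Ceva's criterion (concurrency of $XX_1, YY_1, ZZ_1$) and Menelaus' criterion (collinearity of the harmonic conjugates $X_2, Y_2, Z_2$). The plan is to reduce both sides to the already-stated projective formulae (\ref{eq:ceva-projective-formula}) for Ceva and (\ref{eq:Menelaus_Projective}) for Menelaus, and then to connect the two formulae by a single cross-ratio identity coming from harmonic conjugacy.

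First I would fix an auxiliary line $r$ not through $X, Y, Z$ — any such line works, since both the Ceva and Menelaus projective formulae are independent of the choice of transversal $r$ (this independence is exactly what was argued for Menelaus in \S\ref{sec:Menelaus-Theorem}, and is analogous for Ceva). Set $X_0 = r\cdot YZ$, $Y_0 = r\cdot ZX$, $Z_0 = r\cdot XY$. By Ceva's Theorem, $XX_1, YY_1, ZZ_1$ are concurrent if and only if
\[
(XYZ_1Z_0)(YZX_1X_0)(ZXY_1Y_0) = -1.
\]
By the projective form of Menelaus' Theorem (Corollary~\ref{cor:Menelaus}, in the version (\ref{eq:Menelaus_Projective})), the points $X_2, Y_2, Z_2$ are collinear if and only if
\[
(XYZ_2Z_0)(YZX_2X_0)(ZXY_2Y_0) = 1.
\]
So it suffices to prove the single identity: for three collinear points $X, Y, X_1$ on a line and $X_2$ the harmonic conjugate of $X_1$ with respect to $Y, Z$ — wait, more precisely, on the line $YZ$ with $X_2$ the harmonic conjugate of $X_1$ with respect to $Y$ and $Z$, and any fourth point $X_0$ on that line — one has
\[
(YZX_1X_0) = -(YZX_2X_0).
\]
This is immediate from the cross-ratio identities (\ref{eq:cross_ratio_identities}): since $(YZX_1X_2) = -1$, applying (\ref{eq:CR3}) gives $(YZX_1X_0) = (YZX_1X_2)(YZX_2X_0) = -(YZX_2X_0)$. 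Applying the analogous identity on the lines $ZX$ and $XY$ and multiplying the three relations together converts the product on the Menelaus side (equal to $1$) into the product on the Ceva side multiplied by $(-1)^3 = -1$, i.e. into $-1$; hence the Ceva product equals $-1$ if and only if the Menelaus product equals $1$, which is exactly the claimed equivalence.

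The only point requiring a little care — and the step I would present most explicitly — is the legitimacy of using a common transversal line $r$ for both the Ceva and Menelaus projective formulae simultaneously. One must check that a line $r$ avoiding all three vertices $X, Y, Z$ exists (trivially true) and that both projective identities genuinely do not depend on which such $r$ is chosen; for Menelaus this was established earlier via (\ref{eq:CR3}), and the identical argument (splitting each cross ratio through the intersection points with a second transversal and cancelling) applies verbatim to the Ceva identity (\ref{eq:ceva-projective-formula}), the sign $-1$ being untouched by the cancellation. Once that is in place, the proof is just the three-fold application of $(YZX_1X_2)=-1 \Rightarrow (YZX_1X_0) = -(YZX_2X_0)$ together with the two cited theorems, so there is no real obstacle — the content is entirely bookkeeping with (\ref{eq:cross_ratio_identities}).
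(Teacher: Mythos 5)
Your argument is correct, but it is not the route the paper intends. The paper gives no written proof: it says only that the lemma ``follows from Lemma~\ref{lem:harmonic-sets-on-the-sides}'' and leaves it as an exercise, i.e.\ the intended proof is synthetic. One takes the quadrangle $\{Y_1,Y_2,Z_1,Z_2\}$ built from the two harmonic pairs on the sides $ZX$ and $XY$; Lemma~\ref{lem:harmonic-sets-on-the-sides} identifies its two diagonal points other than $X$ as a harmonic pair on $YZ$, and the incidences established in the proof of that lemma (each diagonal point is collinear with one point of each pair, and lies on the line joining $X$ to the intersection of $YY_i$ and $ZZ_j$) tie the concurrency of the cevians directly to the collinearity of the conjugates, with no appeal to either Ceva's or Menelaus' formula. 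Your proof instead takes both formulae \eqref{eq:ceva-projective-formula} and \eqref{eq:Menelaus_Projective} as given and links them by the identity $(YZX_1X_0)=-(YZX_2X_0)$, which follows correctly from \eqref{eq:CR3} and $(YZX_1X_2)=-1$; the sign count $(-1)^3=-1$ then converts one criterion into the other. This is a clean piece of bookkeeping and your care about using a single transversal $r$ for both formulae is well placed. What each approach buys: yours is shorter and makes the slogan ``Menelaus and Ceva are harmonic to each other'' literally a one-line cross-ratio computation, but it presupposes both classical theorems, so it verifies their compatibility rather than transporting one into the other; the paper's quadrangle argument is self-contained at the incidence level and is the mechanism by which harmonic conjugacy genuinely \emph{converts} one theorem into the other. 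Two small points to tidy in your write-up: Corollary~\ref{cor:Menelaus} as stated in the paper gives only the forward implication (collinear $\Rightarrow$ product $=1$), so the converse direction you invoke needs the standard one-line argument (replace $Z_2$ by $XY\cdot X_2Y_2$ and use injectivity of the cross ratio in its third argument); and the harmonic conjugates are undefined when some $X_1$ coincides with a vertex, so the usual general-position caveat should be recorded.
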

The proof of this lemma follows from 
Lemma \ref{lem:harmonic-sets-on-the-sides}, and it is left as an exercise.

\begin{figure}
\centering
\includegraphics[width=0.95\textwidth]
{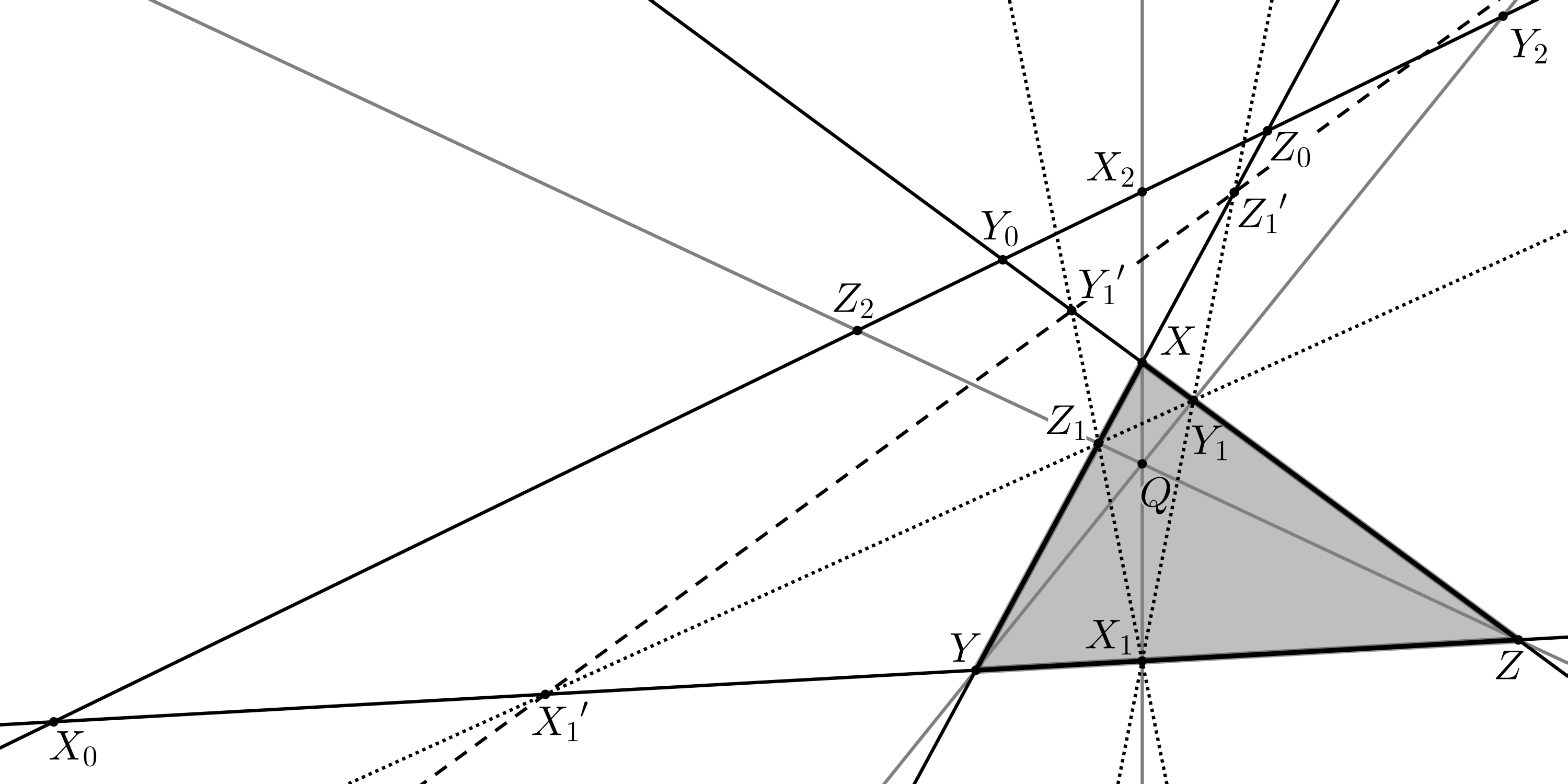}
\caption{Ceva's and Van Aubel's Theorems}
\label{fig:Ceva-Van-Aubel} 
\end{figure}

We will use Ceva's Theorem for proving another theorem, closely related to
Menelaus' and Ceva's ones. As before,
we propose a projective interpretation of the classical affine theorem.
    \begin{theorem}[Van Aubel's Theorem on cevians]\label{thm:Van-Aubel}
	Let $\TT=\wt{XYZ}$ be a projective triangle, and let $X_1,Y_1,Z_1$ be three
	points on the lines $YZ$, $ZX$, $XY$ respectively. Let $r$ be a line not
	incident with
	$X$, $Y$, or $Z$, and consider the points $X_2=r\cdot XX_1$, $Y_0=r\cdot XZ$,
	and
	$Z_0=r\cdot XY$  (Figure~\ref{fig:Ceva-Van-Aubel}). If the lines
	$XX_1,YY_1,ZZ_1$ are concurrent and $Q$ is their
	concurrence point, then
	    \begin{equation}\label{eq:Van-Aubel-Projective-Fromula}
		(XX_1QX_2)=(X Y Z_1 Z_0)+(X Z Y_1 Y_0)\,.
	    \end{equation}
    \end{theorem}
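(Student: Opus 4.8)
The statement is the projective form of the classical affine Van Aubel relation $\frac{AP}{PD}=\frac{AF}{FB}+\frac{AE}{EC}$, the line $r$ playing the role of the line at infinity; accordingly the plan is to transport every cross ratio occurring in the claim onto the single line $\ell=XX_1$ by perspectivities and then to collapse the resulting one–dimensional identity using the cross–ratio relations \eqref{eq:cross_ratio_identities}, with Ceva's Theorem providing the one nontrivial input. Throughout, write $u=(XYZ_1Z_0)$ and $w=(XZY_1Y_0)$; the goal is $(XX_1QX_2)=u+w$, and as usual we work under the tacit genericity assumptions that make all four–point cross ratios nondegenerate.

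First I would move the two summands onto $\ell$, projecting through the ``third'' vertex in each case. Projecting the line $XY$ onto $\ell$ from $Z$ sends $X\mapsto X$, $Y\mapsto YZ\cdot\ell=X_1$, $Z_1\mapsto ZZ_1\cdot\ell=Q$ (since $Q$ lies on the cevian $ZZ_1$), and $Z_0\mapsto ZZ_0\cdot\ell=:W$; hence $u=(XX_1QW)$. Symmetrically, projecting $XZ$ onto $\ell$ from $Y$ gives $w=(XX_1QW')$ with $W'=YY_0\cdot\ell$. Now all of $X,X_1,Q,X_2,W,W'$ lie on $\ell$, and what remains is to prove $(XX_1QX_2)=(XX_1QW)+(XX_1QW')$.

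This is where Ceva's Theorem is used. Setting $X_0:=r\cdot YZ$, concurrency of $XX_1,YY_1,ZZ_1$ at $Q$ gives $(XYZ_1Z_0)(YZX_1X_0)(ZXY_1Y_0)=-1$, and rewriting $(ZXY_1Y_0)=w^{-1}$ by \eqref{eq:CR1} this reads $v:=(YZX_1X_0)=-w/u$. A further pair of perspectivities locates $v$ on $\ell$: projecting the line $r$ onto $\ell$ from $Z$ yields $(Y_0Z_0X_0X_2)=(XWX_1X_2)$, while projecting $r$ onto $YZ$ from $X$ yields $(Y_0Z_0X_0X_2)=(ZYX_0X_1)$, and two applications of \eqref{eq:CR1} give $(ZYX_0X_1)=(YZX_1X_0)=v$; hence $(XWX_1X_2)=v=-w/u$. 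Then \eqref{eq:CR2} gives $(XX_1WX_2)=1-(XWX_1X_2)=1+w/u$, and finally \eqref{eq:CR3} gives $(XX_1QX_2)=(XX_1WX_2)(XX_1QW)=(1+w/u)\,u=u+w$, which is the desired identity.

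Each step is a one–line computation, so the main obstacle is bookkeeping rather than depth: one must track exactly which ordering of the four points each perspectivity outputs, and pick the perspectivity centres so that the chain \eqref{eq:CR1}--\eqref{eq:CR3} lands precisely on $u+w$ rather than on some permuted expression; a secondary point is to record the genericity hypotheses (the centres $X,Y,Z$ miss the lines they project because $r$ avoids the vertices and $X_1\neq Y,Z$, etc.) so that none of the cross ratios in play degenerates.
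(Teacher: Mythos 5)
Your proof is correct, and it takes a genuinely different (though related) route from the paper's. Both arguments share the same skeleton: transport every cross ratio onto one reference line via perspectivities whose centres exploit the concurrency at $Q$, insert Ceva's identity, and finish with the relations \eqref{eq:cross_ratio_identities}. The difference is the choice of reference line. The paper projects everything onto the transversal $r$: first from $Y$, which sends $(X,X_1,Q,X_2)$ to $(Z_0,X_0,Y_2,X_2)$ with $Y_2=r\cdot YY_1$ and $Z_2=r\cdot ZZ_1$, and then from $Q$, which carries the three Ceva cross ratios onto $r$; the identity is then ground out by a fairly long chain of manipulations on $r$. You instead project everything onto the cevian $XX_1$ (from $Z$, from $Y$, and from $X$), where the concurrency makes $Z_1\mapsto Q$ automatically, and the closing algebra collapses to a single application of \eqref{eq:CR2} followed by one of \eqref{eq:CR3}; this is noticeably more economical. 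All of your perspectivities check out (the genericity you invoke --- $r$ missing the vertices, $X_1\neq Y,Z$ --- is exactly what is needed for the centres to avoid the lines being projected), and the sign bookkeeping through $(ZYX_0X_1)=(YZX_1X_0)$ and $(ZXY_1Y_0)=w^{-1}$ is right. One cosmetic remark: the point $W'$ and the intermediate goal $(XX_1QX_2)=(XX_1QW)+(XX_1QW')$ are never actually used; once you have $(XWX_1X_2)=-w/u$ and $(XX_1QW)=u$, the computation $(XX_1QX_2)=(1+w/u)\,u=u+w$ finishes the proof directly, so that sentence could simply be deleted.
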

    \begin{proof}
Consider the points
$$X_0=r \cdot YZ\,,\quad Y_2=r\cdot YY_1\,,\quad\text{and}\quad Z_2=r\cdot
ZZ_1\,.$$
By Ceva's Theorem, identity~\eqref{eq:ceva-projective-formula} holds.
By projecting from $Y$, we have
$$(XX_1QX_2)=(Z_0X_0Y_2X_2)\,,$$
and by projecting from $Q$ we have
\begin{align*}
(XYZ_1Z_0)&=(X_2Y_2Z_2Z_0)\,,\\
(YZX_1X_0)&=(Y_2Z_2X_2X_0)\,,\\
(ZXY_1Y_0)&=(Z_2X_2Y_2Y_0)\,.
\end{align*}
Ceva's identity~\eqref{eq:ceva-projective-formula} implies that
$$(Y_2Z_2X_2X_0)=\dfrac{-1}{(X_2Y_2Z_2Z_0)(Z_2X_2Y_2Y_0)}$$
By applying cross-ratio identities~\eqref{eq:cross_ratio_identities}, we
obtain
\begin{align*}
(Z_0X_0Y_2X_2)&=(X_2Y_2X_0Z_0)=(X_2Y_2Z_2Z_0)(X_2Y_2X_0Z_2)=\\
&=(X_2Y_2Z_2Z_0)[1-(Z_2Y_2X_0X_2)]=\\
&=(X_2Y_2Z_2Z_0)[1-(Y_2Z_2X_2X_0)]=\\
&=(X_2Y_2Z_2Z_0)[1-\dfrac{-1}{(X_2Y_2Z_2Z_0)(Z_2X_2Y_2Y_0)}]=\\
&=(X_2Y_2Z_2Z_0)+\dfrac{1}{(Z_2X_2Y_2Y_0)}=(X_2Y_2Z_2Z_0)+(X_2Z_2Y_2Y_0)=\\
&=(XYZ_1Z_0)+(XZY_1Y_0)\,.
\end{align*}
    \end{proof}

Now we are ready to state and prove our main result:

\begin{theorem}[Projective law of cosines]\label{thm:cosine-rule}
    If the triangle $\TT$ is coherently oriented, then:
	\begin{equation}\label{eq:projective-law-of-cosines}
	    \cc(BC)=-\ss(AB)\ss(CA)\cc(B'C')-\cc(AB)\cc(CA)\,.
	\end{equation}
\end{theorem}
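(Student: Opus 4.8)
The plan is to re-run the derivation of the general (squared) law of cosines~\eqref{eq:general-law-of-cosines}, but now keeping track of orientations so that every squared projective trigonometric ratio is replaced by the corresponding (unsquared) ratio $\cc$ or $\ss$. Since~\eqref{eq:general-law-of-cosines} already exhibits $\CC(BC)$ as a perfect square, we automatically get
\[
\cc(BC)=\pm\bigl(\varepsilon_1\,\cc(AB)\,\cc(CA)+\varepsilon_2\,\ss(AB)\,\ss(CA)\,\cc(B'C')\bigr)
\]
for some signs $\varepsilon_1,\varepsilon_2\in\{+1,-1\}$ coming from the choice of square roots; the whole content of Theorem~\ref{thm:cosine-rule} is that the coherent orientation of Definition~\ref{def:coherently-oriented} forces $\varepsilon_1=\varepsilon_2$ together with an overall sign opposite to their common value. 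So the real work is the sign bookkeeping.

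First I would fix the altitude split used in the proof of~\eqref{eq:general-law-of-cosines}: put $X=H_A=h_a\cdot a$, and work with the segments $p=\ov{AX}$, $\ov{BX}$, $\ov{CX}$ on $a$ and the matching segments on $a'$. The key preliminary observation is that, by Theorem~\ref{thm:magic-midpoints} and Lemma~\ref{lem:D-Da-midpoints}, the magic midpoint $\tiD$ is simultaneously a midpoint of $\ov{BC}$, of $\ov{B'C'}$, of $\ov{B_cC_b}$ and (when $\TT$ is not isosceles at $A$) of $\ov{JJ'}$, while Lemma~\ref{lem:D-Da-midpoints} locates midpoints and complementary midpoints of the auxiliary segments $p,\ov{BX},\ov{CX}$; this is exactly what is needed to orient those segments compatibly with the coherent orientation of $\TT$. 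With these orientations in place, each of the three ingredients of the squared proof acquires an unsquared counterpart: an unsquared Pythagorean identity $\cc(AB)=\cc(p)\,\cc(\ov{BX})$ and $\cc(CA)=\cc(p)\,\cc(\ov{CX})$, obtained from the Menelaus' configuration that produced~\eqref{eq:trigo1} by replacing the terminal conjugate points with the preferred midpoints; an unsquared angle--addition identity $\cc(\ov{BX})=\cc(BC)\,\cc(\ov{CX})+\ss(BC)\,\ss(\ov{CX})$, the unsquared form of~\eqref{eq:cosine-sum}; and the unsquared form of~\eqref{eq:trigo4}, relating $\cc(\ov{CX})/\cc(CA)$ to $\cc(B'C')$. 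Substituting exactly as in the squared proof then yields~\eqref{eq:projective-law-of-cosines}, with the minus sign produced by the orientations.

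The role of the ``missing'' classical theorem is precisely in the additive step: formula~\eqref{eq:cosine-sum} only constrains squares, whereas Van Aubel's identity~\eqref{eq:Van-Aubel-Projective-Fromula} is genuinely additive and is the projective shadow of the angle--sum formulae for $\cosh$ and $\cos$. I would apply Theorem~\ref{thm:Van-Aubel} to the triangle $\wt{AB_cC_b}$, whose midpoints are the magic midpoints, with cevians through the concurrence point supplied by the magic constructions and transversal along a suitable line through $J$ and $J'$, arranged so that the three cross ratios in~\eqref{eq:Van-Aubel-Projective-Fromula} become, up to the sign conventions just fixed, $\cc(\ov{BX})$ and the two products $\cc(BC)\,\cc(\ov{CX})$ and $\ss(BC)\,\ss(\ov{CX})$. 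The concurrences needed to invoke Van Aubel (and the underlying Ceva configuration) are read off from \hyperref[thm:Desargues]{Desargues' Theorem} exactly as in the proof of Theorem~\ref{thm:DD'-EE'-FF'-concurrent}, using the harmonic/Ceva duality lemma to pass between collinearity and concurrency.

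The hard part will be the sign bookkeeping rather than any single algebraic identity. At every use of a Menelaus, Ceva or Van Aubel configuration one obtains a cross ratio that equals a $\cc$ or an $\ss$ only after checking that the midpoint appearing in it is the preferred one dictated by the coherent orientation; the whole point of Theorem~\ref{thm:magic-midpoints}, with its four coincident descriptions of $\tiD$, is to guarantee that the choices made at the separate sub-steps are mutually compatible, so that the two sign ambiguities collapse to the single relation $\varepsilon_1=\varepsilon_2$ together with the announced overall sign. Finally, since in the generic real case some of the midpoints involved are imaginary, I would first prove the identity for the coherently oriented elliptic triangle of Figure~\ref{fig:cosine-rule-01-the-triangle}, where every midpoint, complementary midpoint and magic midpoint is real, and then extend to all admissible configurations by a projective change of coordinates together with the invariance of the cross ratio, so that no imaginary point ever has to be constructed explicitly.
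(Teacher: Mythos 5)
Your overall architecture --- unsquare the derivation of \eqref{eq:general-law-of-cosines}, let Van Aubel's theorem supply the additive step, and let the magic midpoints control the signs --- correctly identifies the two key ingredients of the paper's proof. But the route you take through them has two genuine gaps. The first is that the Van Aubel step does not typecheck as described. Formula \eqref{eq:Van-Aubel-Projective-Fromula} equates one cross ratio to a sum of two \emph{single} cross ratios of four collinear points, whereas the terms you want it to produce, $\cc(BC)\,\cc(\ov{CX})$ and $\ss(BC)\,\ss(\ov{CX})$, are \emph{products} of two cross ratios; identity \eqref{eq:CR3} only merges cross ratios sharing their first two entries, which $(BCC_aD)$ and a ratio based at $C,X$ do not. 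The paper's proof (which never uses the altitude decomposition: it works with the dual formula and the midpoints of $\TT'$ directly) spends three separate Menelaus configurations, with triangles $\wt{C'A'F'}$, $\wt{C'B_aC_a}$ and $\wt{C'C_aI'}$, precisely to rewrite each product of unsquared ratios as a single cross ratio on a line through $C'$, and then needs several nontrivial incidence claims (that the relevant cevian point $R$ lies on $b$, that $D',W,I'_{c'}$ are collinear, that $S$ lies on $C_bD'$) before Van Aubel can be applied to $\wt{C'F'I'}$. Your proposal omits the analogous conversions and incidence checks for the configuration $\wt{AB_cC_b}$ you chose, and together with the unproved unsquared versions of \eqref{eq:trigo1}, \eqref{eq:cosine-sum} and \eqref{eq:trigo4}, that is the bulk of the work.

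The second gap is the closing reduction: proving the identity for the elliptic triangle and then ``extending to all admissible configurations by a projective change of coordinates'' does not go through. A pair consisting of a nondegenerate conic and a triangle in general position has continuous moduli, so the coherently oriented configurations are not all projectively equivalent to a single elliptic one; moreover a real collineation cannot carry an imaginary absolute conic to a real one, and a complex one destroys the reality/separation structure you use to fix signs. An analytic-continuation argument might be salvageable, but since $\cc$ and $\ss$ depend on a monodromy-ambiguous choice of midpoint it would need its own careful justification. The paper avoids the issue entirely: its proof is purely incidence-theoretic (Menelaus, Desargues, quadrangular involutions, Van Aubel) and therefore valid verbatim in $\CP$ whether the auxiliary midpoints are real or imaginary.
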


\begin{proof}

Instead of~\eqref{eq:projective-law-of-cosines}, we will prove the equivalent dual formula
    \begin{equation}\label{eq:projective-law-of-cosines-dual}
	\cc(B'C')=-\ss(A'B')\ss(C'A')\cc(BC)-\cc(A'B')\cc(C'A')\,.
    \end{equation}

We can assume that $\TT$ is the triangle depicted in Figure
\ref{fig:cosine-rule-01-the-triangle}. In that figure, $\TT$ is depicted as a
hyperbolic triangle, and we have depicted also the polar triangle $\TT'$, the
magic triangle $\tiT$, the preferred midpoints of $\TT$ and $\TT'$, the
complementary midpoints of $\TT'$ and the magic midpoints $\tiD,\tiE,\tiF$ of
$\TT$. The figure misses the
complementary midpoints of $\TT$ because they are imaginary.

We have that
\begin{align*}
\cc(A'B')&=\cc(B'A')=(B'A'C_aF')\overset{\rho_{c'}}{=}(C_bC_aA'F'_{c'})=\\
&=-(C_bC_aA'F')=-(A'F'C_bC_a)\,,
\end{align*}
and so
$$\cc(A'B')\cc(A'C')=\cc(A'B')\cc(C'A')=-(A'F'C_bC_a)(C'A'B_aE')\,.$$
By applying 
\hyperref[cor:Menelaus]{Menelaus' Projective Formula}~\eqref{eq:Menelaus_Projective}
to
the Menelaus configuration with triangle $\wt{C'A'F'}$ and
transversals $B_aC_b$ and $E'C_a$, we get
$$(C'A'B_aE')(A'F'C_bC_a)(F'C'ZY)=1\,,$$
where $Z=B_aC_b\cdot F'C'$ and $Y=E'C_a\cdot F'C'$ (Figure
\ref{fig:cosine-rule-02-1st-Menelaus}), and therefore
$$\cc(A'B')\cc(C'A')=-(C'F'ZY)\,.$$

\begin{figure}
\centering
\includegraphics[width=0.9\textwidth]
{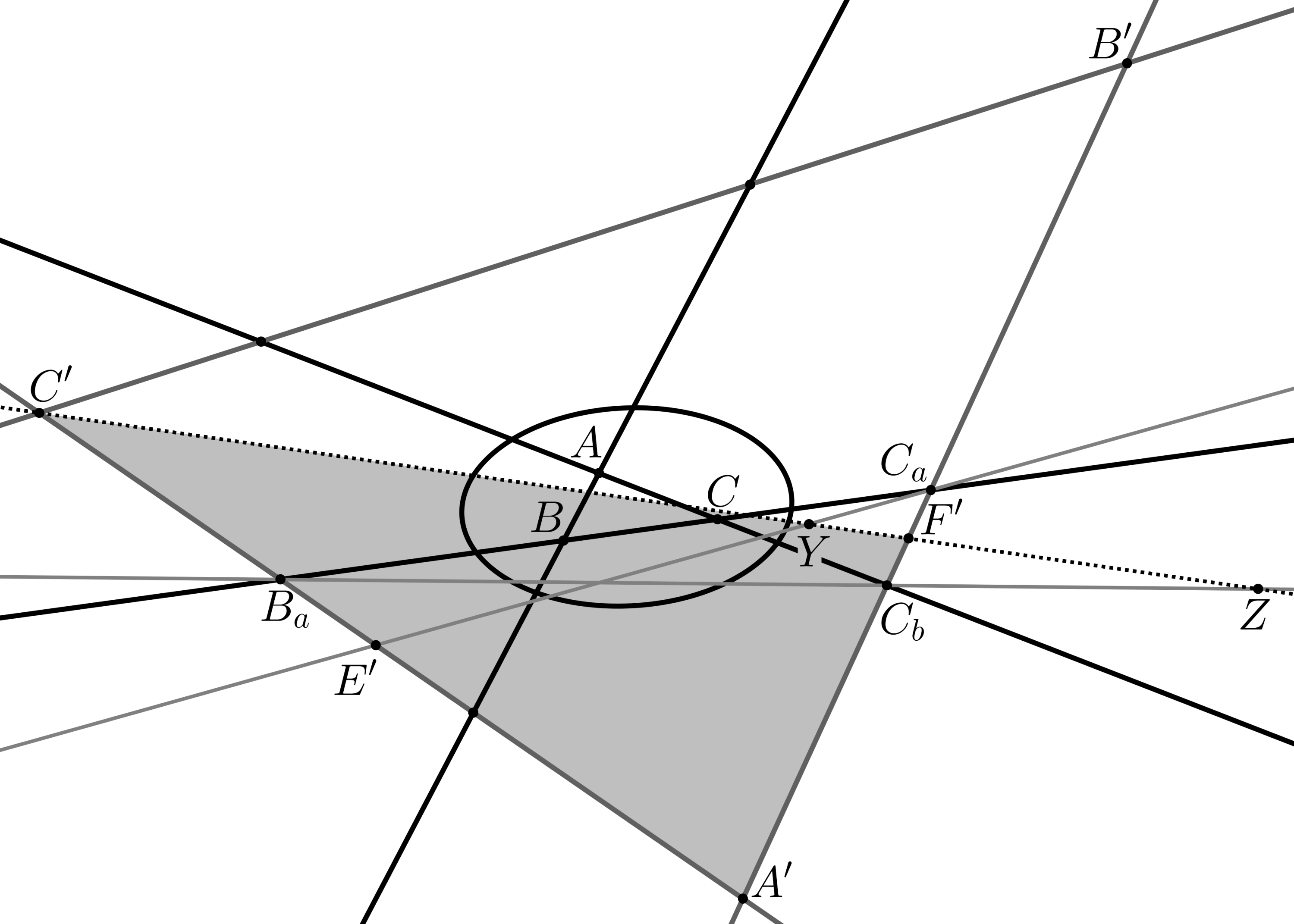}
\caption{\hyperref[cor:Menelaus]{Menelaus' Projective Formula} 1, the points $Y,Z$}
\label{fig:cosine-rule-02-1st-Menelaus}
\end{figure}

On the other hand, we have that
$$\cc(BC)=(BCC_aD)=(B_aC_aCD_a)=-(B_aC_aCD)\,,$$
and so
$$-\ss(C'A')\cc(BC)=(C'B_aA'H')(B_aC_aCD)\,.$$
We apply \hyperref[cor:Menelaus]{Menelaus' Projective Formula} to
the Menelaus configuration with triangle  $\wt{C'B_aC_a}$ and
transversals $A'C$ and $H'D$ in order to obtain
$$(C'B_aA'H')(B_aC_aCD)(C_aC'XW)=1\,,$$
where $X=A'C\cdot C_aC'$ and $W=H'D\cdot C_aC'$ 
(Figure~\ref{fig:cosine-rule-03-2nd-Menelaus}). Therefore, it is
$$-\ss(C'A')\cc(BC)=(C'C_aXW)\,.$$

\begin{figure}
\centering
\includegraphics[width=0.9\textwidth]
{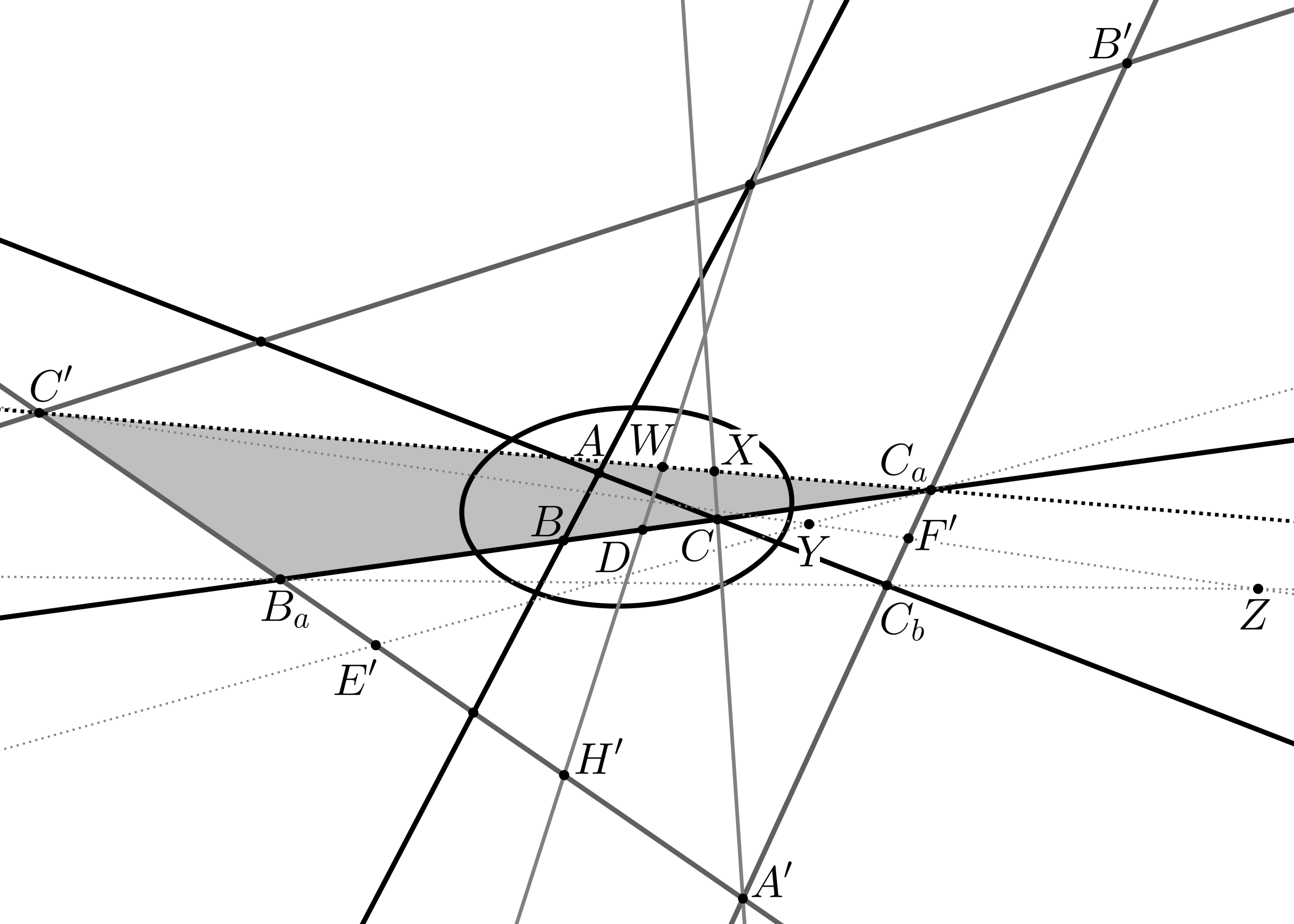}
\caption{\hyperref[cor:Menelaus]{Menelaus' Projective Formula} 2, the points $W,X$}
\label{fig:cosine-rule-03-2nd-Menelaus}
\end{figure}

Finally, we have that
$$
\ss(A'B')=(A'C_bB'I')\overset{\tau_{I'I'_{c'}}}{=}(C_bA'C_aI')=(C_aI'C_bA')\,,
$$
and then
$$-\ss(A'B')\ss(C'A')\cc(BC)=(C_aI'C_bA')(C'C_aXW)\,.$$
By applying \hyperref[cor:Menelaus]{Menelaus' Projective Formula} to
the Menelaus configuration with triangle $\wt{C'C_aI'}$ and
transversals $XC_b$ and $WA'$ we obtain
$$(C'C_aXW) (C_aI'C_bA')(I'C'TS)=1\,,$$
where $T=XC_b\cdot I'C'$ and $S=WA'\cdot I'C'$ (Figure
\ref{fig:cosine-rule-04-3rd-Menelaus}). Therefore, it is
$$-\ss(A'B')\ss(C'A')\cc(BC)=(C'I'TS)\,.$$

\begin{figure}
\centering
\includegraphics[width=0.9\textwidth]
{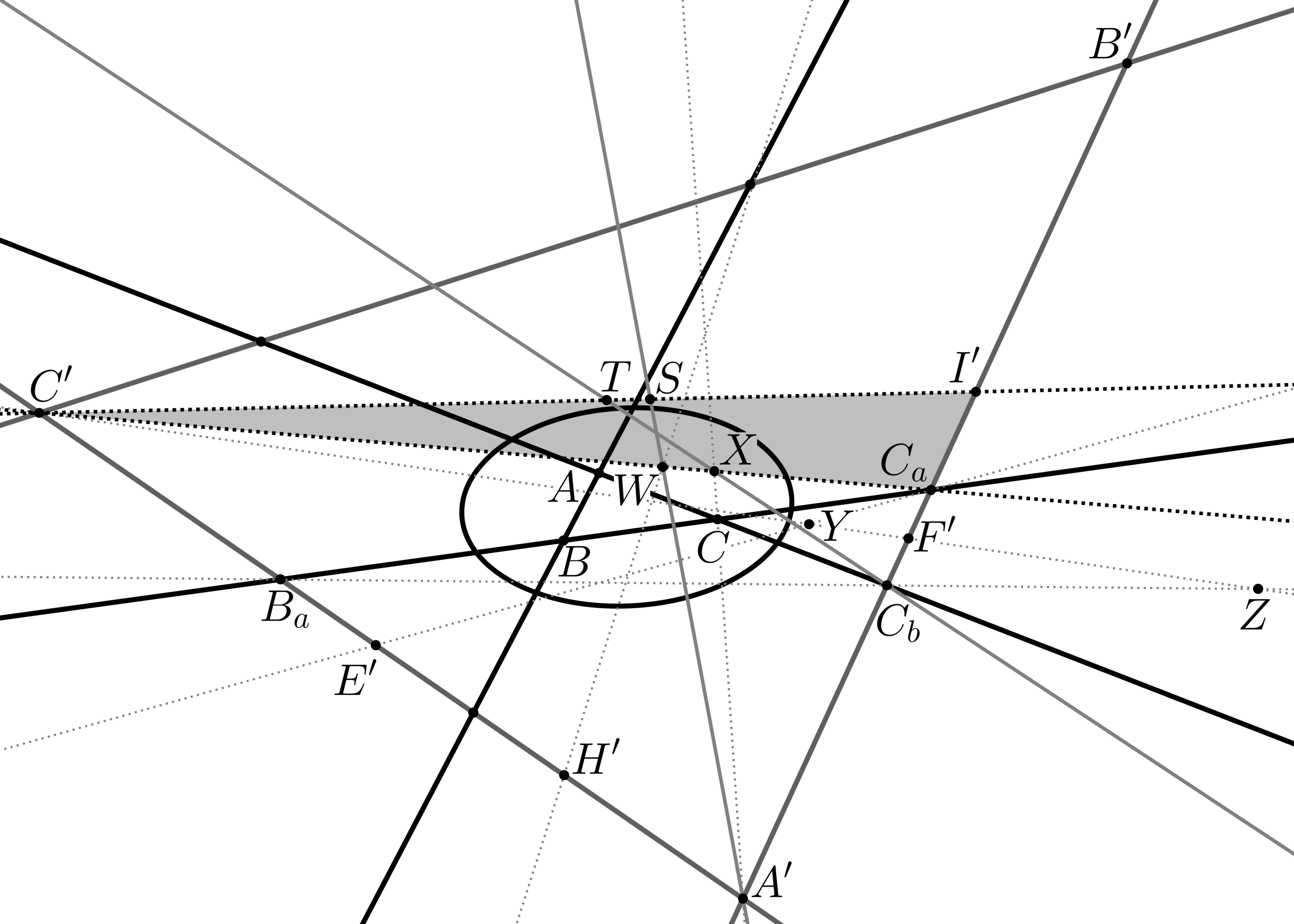}
\caption{\hyperref[cor:Menelaus]{Menelaus' Projective Formula} 3, the points $S,T$}
\label{fig:cosine-rule-04-3rd-Menelaus}
\end{figure}

The proof of Theorem \ref{thm:cosine-rule} relies now in the proof of the
identity
$$(C'B'A_bD')=(C'I'TS)+(C'F'ZY)\,.$$
We will need to prove some small claims before doing so.

\begin{figure}
\centering
\includegraphics[width=0.98\textwidth]
{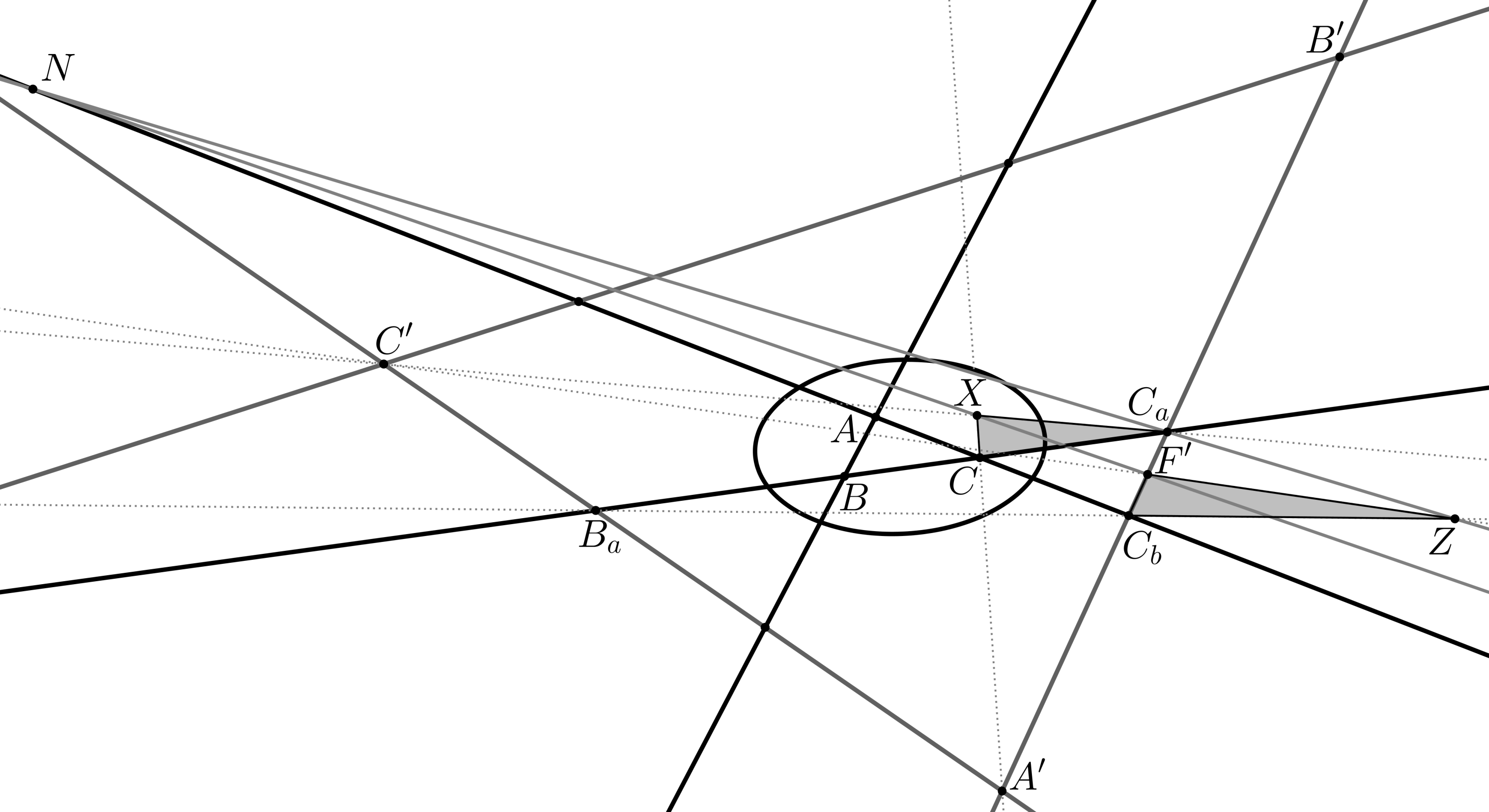}
\caption{The point $N$ lying at $b$}
\label{fig:cosine-rule-05-Claim-1-N}
\end{figure}

\begin{claim}\label{claim:concurrency-b-ZI'-F'T}
The intersection point $R$ of the lines $ZI'$ and $F'T$ lies on $b$.
\end{claim}
\begin{proof}[Proof of Claim~\ref{claim:concurrency-b-ZI'-F'T}]
Consider the triangles $\wt{CC_aX}$ and $\wt{C_bZF'}$. The intersection points
of the sides of both triangles are
$$CC_a\cdot C_bZ=B_a\,,\quad C_aX\cdot ZF'=C'\,,\quad XC\cdot F'C_b=A'\,,$$
which are collinear. By \hyperref[thm:Desargues]{Desargues' Theorem}, 
the lines $CC_b=b$,
$C_aZ$ and $XF'$ are concurrent: let $N$ be their intersection point (Figure
\ref{fig:cosine-rule-05-Claim-1-N}).

\begin{figure}
\centering
\includegraphics[width=0.98\textwidth]
{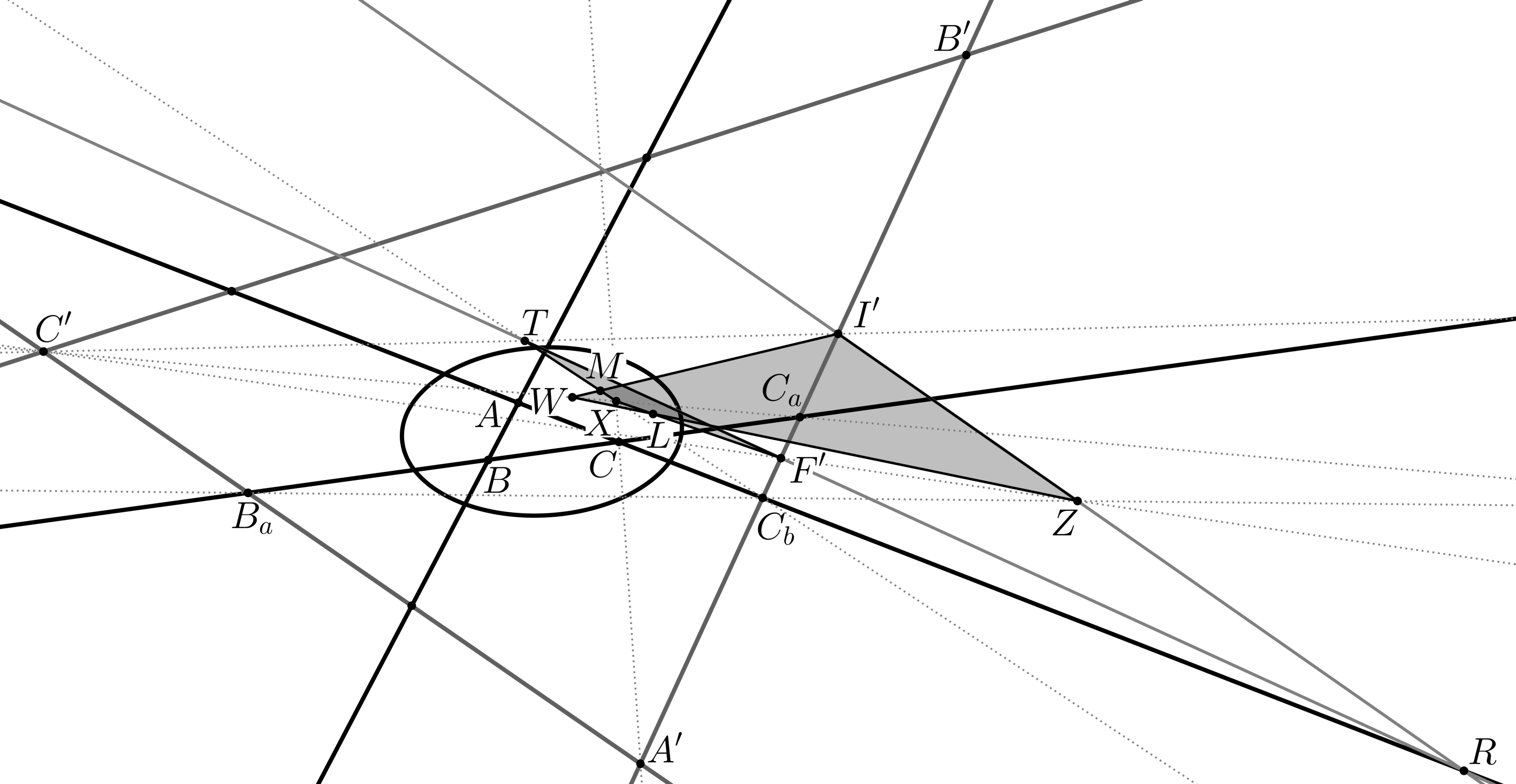}
\caption{The points $M, L$ and $R$}
\label{fig:cosine-rule-05-Claim-1-R}
\end{figure}

The triangles $\wt{WI'Z}$ and $\wt{XTF'}$ are perspective from $C'$ (Figure
\ref{fig:cosine-rule-05-Claim-1-R}), therefore the points $M=I'W\cdot TX$,
$L=ZW\cdot XF'$ and $R$ are
collinear.

Finally, the triangles $\wt{MLX}$ and $\wt{I'ZC_a}$ are perspective from $W$
(Figure~\ref{fig:cosine-rule-05-Claim-1-R}) and so the points
$$R=ML\cdot I'Z\,,\quad N=LX\cdot ZC_a \,,\quad C_b=XM\cdot C_aI'$$
are collinear.
\end{proof}

\begin{claim}\label{claim:concurrency-C'F'-CaD'-CbE'}
The lines $C'F'$, $C_aD'$ and $C_bE'$ are concurrent. The lines $C'F'$, $C_bD'$
and $C_aE'$ are also concurrent. In other words, the line $C'F'$ is a diagonal
line of the quadrangle $\{C_a,C_b,E',D'\}$.
\end{claim}
\begin{proof}[Proof of Claim~\ref{claim:concurrency-C'F'-CaD'-CbE'}]
Let $Y^*$ be the intersection point of the lines $C_aE'$ and $C_bD'$, and
consider
the quadrilateral $\QQ=\{C',D',Y^*,E'\}$. The quadrangular involution
$\sigma_{\QQ}$ on $c'$ sends the points $A'$ and $B'$ into the
points $C_b$ and $C_a$ respectively and vice versa, and so it coincides with
$\tau_{I'I'_{c'}}$. Then,
$$C'Y^*\cdot c'=\tau_{I'I'_{c'}}(D'E'\cdot c')=\tau_{I'I'_{c'}}(F'_{c'})=F'\,.$$
Therefore, $Y^*$ must coincide with $Y$ (Figure
\ref{fig:cosine-rule-05-Claim-2}).

The rest of the Claim can be proved in a similar way.
\end{proof}

\begin{figure}
\centering
\includegraphics[width=0.8\textwidth]
{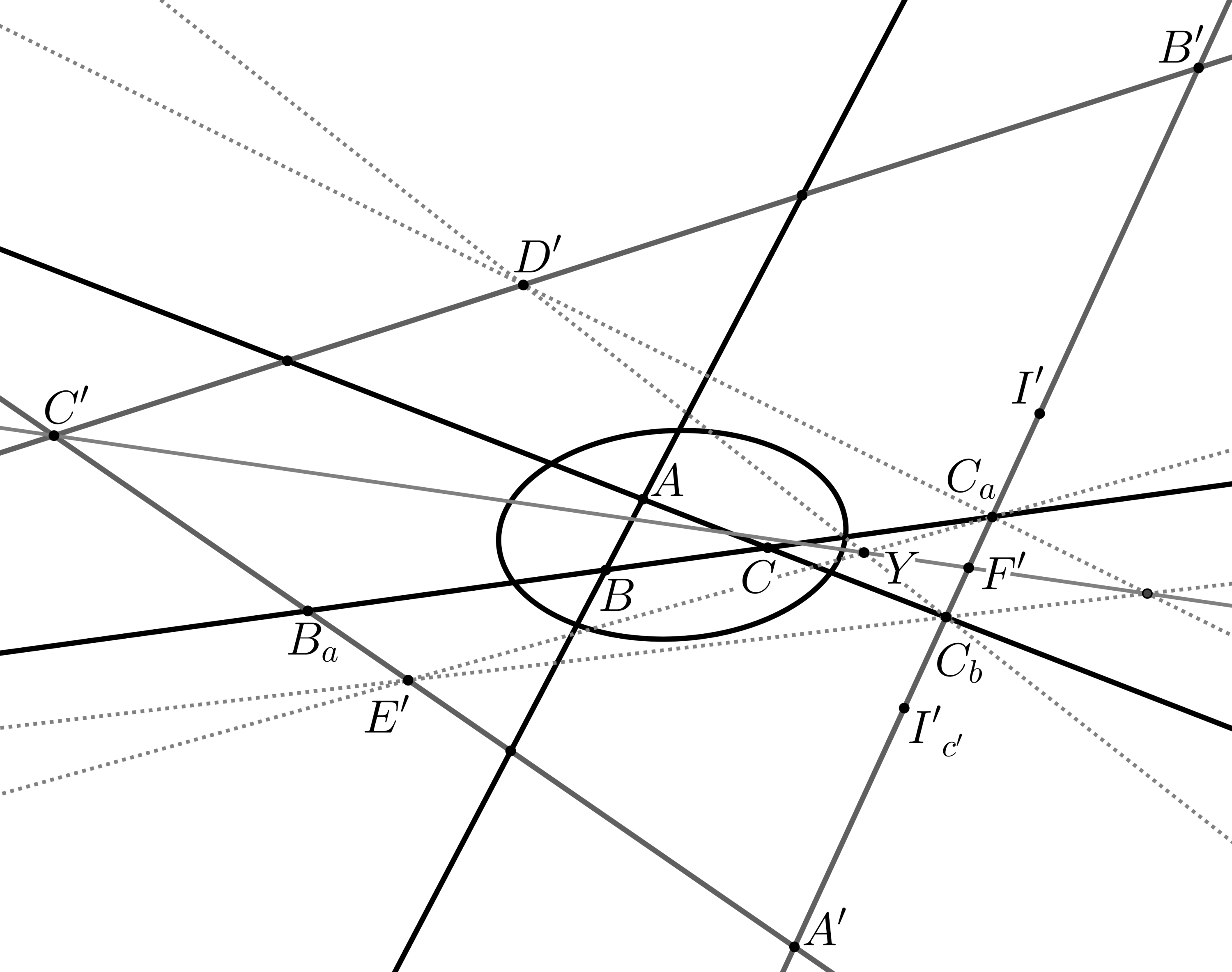}
\caption{Concurrency of lines in Claim
\ref{claim:concurrency-C'F'-CaD'-CbE'}}
\label{fig:cosine-rule-05-Claim-2}
\end{figure}

It is interesting to remark that, because $H',D$ are midpoints of the segments
$\ov{C'B_a},\ov{B_aC_a}$ respectively, the point $W$ is a midpoint of
$\ov{C'C_a}$. By considering the triangle $\wt{B'C'C_a}$, the line $D'W$ must
intersect $c'$ at a midpoint of $\ov{B'C_a}$. Thus, it must be $D'W\cdot c'=I'$
or $D'W\cdot c'=I'_{c'}$.
\begin{claim}\label{claim:D'W-pass-through-I'c'}
The points $D',W,I'_{c'}$ are collinear.
\end{claim}

\begin{proof}
Let $d$ be the line joining the points $D,D'$ and $\tiD$, and take the
points:
$$D_1^*=d\cdot c',\quad D_2^*=d\cdot b',\quad D_3^*=d\cdot C_aC'.$$
Let $\pi_1$ be the perspectivity from $c'$ onto $b'$ through the point $D_1'$,
let $\pi_2$ be the perspectivity from $b'$ onto $C_aC'$ through the point $D$,
and let $\pi_3$ be the perspectivity from $C_aC'$ onto $c'$ through the point
$D'$. Let consider also the projectivity $\pi$ of $c'$ onto itself given by the
composition
$\pi_3\circ\pi_2\circ\pi_1$.

Because the line $d$ joins the points $D,D'$ and $\tiD$, it is
$$D_1^*\overset{\pi_{1}}{\longmapsto}D_2^*\overset{\pi_{2}}{\longmapsto}
D_3^*\overset{\pi_{1}}{\longmapsto}D_1^*\,,$$
and so $D_1^*$ is a fixed point of $\pi$.

On the other hand, looking at the midpoints of the triangle $\wt{C'B_aC_a}$, it
is
$$I'\overset{\pi_{1}}{\longmapsto}
H'\overset{\pi_{2}}{\longmapsto}W\,,$$
and
$$I'_{c'}\overset{\pi_{1}}{\longmapsto}
H'_{b'}\overset{\pi_{2}}{\longmapsto}W'\,,$$
where $W'$ is the conjugate point of $W$ in $C_aC'$.
Looking at the midpoints of the triangle $\wt{B'C'C_a}$, it can be
$$\pi_3(W)=I'_{c'}\quad\text{and}\quad\pi_3(W')=I'$$
or
$$\pi_3(W)=I'\quad\text{and}\quad\pi_3(W')=I'_{c'}\,.$$
In the latter case, it turns out that $I',I'_{c'}$ are also fixed points of
$\pi$. If the point $D_1^*$ coincides with $I'$ or $I'_{c'}$, because
$\tiD\in\tid$ it would also be $H'_{b'}\in \tid$ or $H'\in\tid$, and this is
impossible by Remark~\ref{rem:D-Da-non-collinear-I-Ic-H-Hb}. Therefore,
$D_1^*$ is different from $I',I'_{c'}$, and so
the map $\pi$ must be the identity map on $c'$.

\begin{figure}
\centering
\includegraphics[width=0.8\textwidth]
{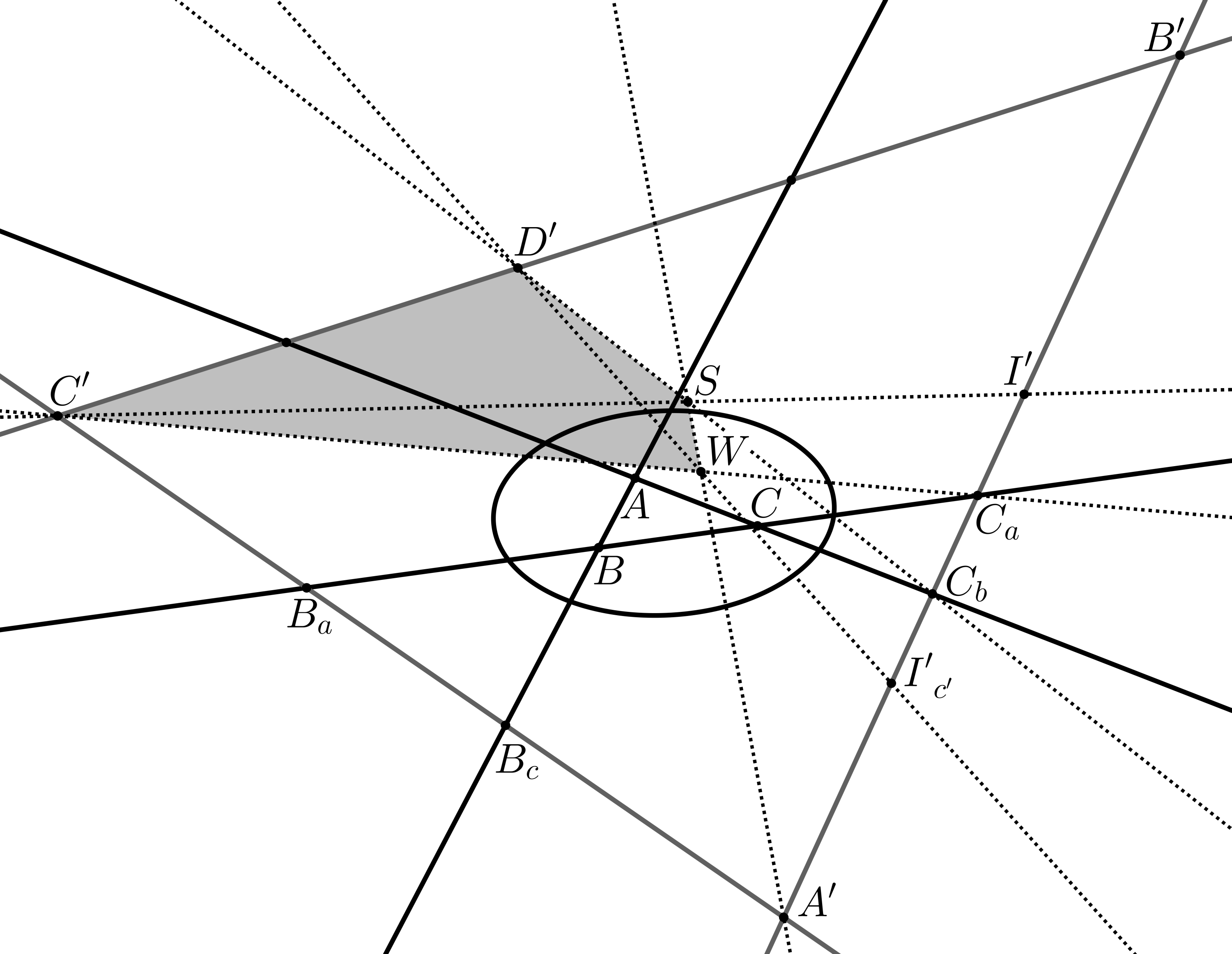}
\caption{The point $S$ lies in $C_bD'$}
\label{fig:cosine-rule-06-Claim-4}
\end{figure}

If we consider the point $K=C'\tiD\cdot c'$, we have
$$K\overset{\pi_{1}}{\longmapsto} C' \overset{\pi_{2}}{\longmapsto} C'
\overset{\pi_{3}}{\longmapsto} B'\,.$$
If $K=B'$, it should be $\tiD\in a'$ and also $D\in a'$, and this is not
possible because $\TT$ is coherently oriented. Therefore, the points $K$ and
$B'$ are different
and
$\pi$ cannot be the identity map. Thus, it must be $\pi_3(W)=I'_{c'}$ and
$\pi_3(W')=I'$.
\end{proof}

\begin{sidewaysfigure}
\centering
\includegraphics[width=0.9\textwidth]
{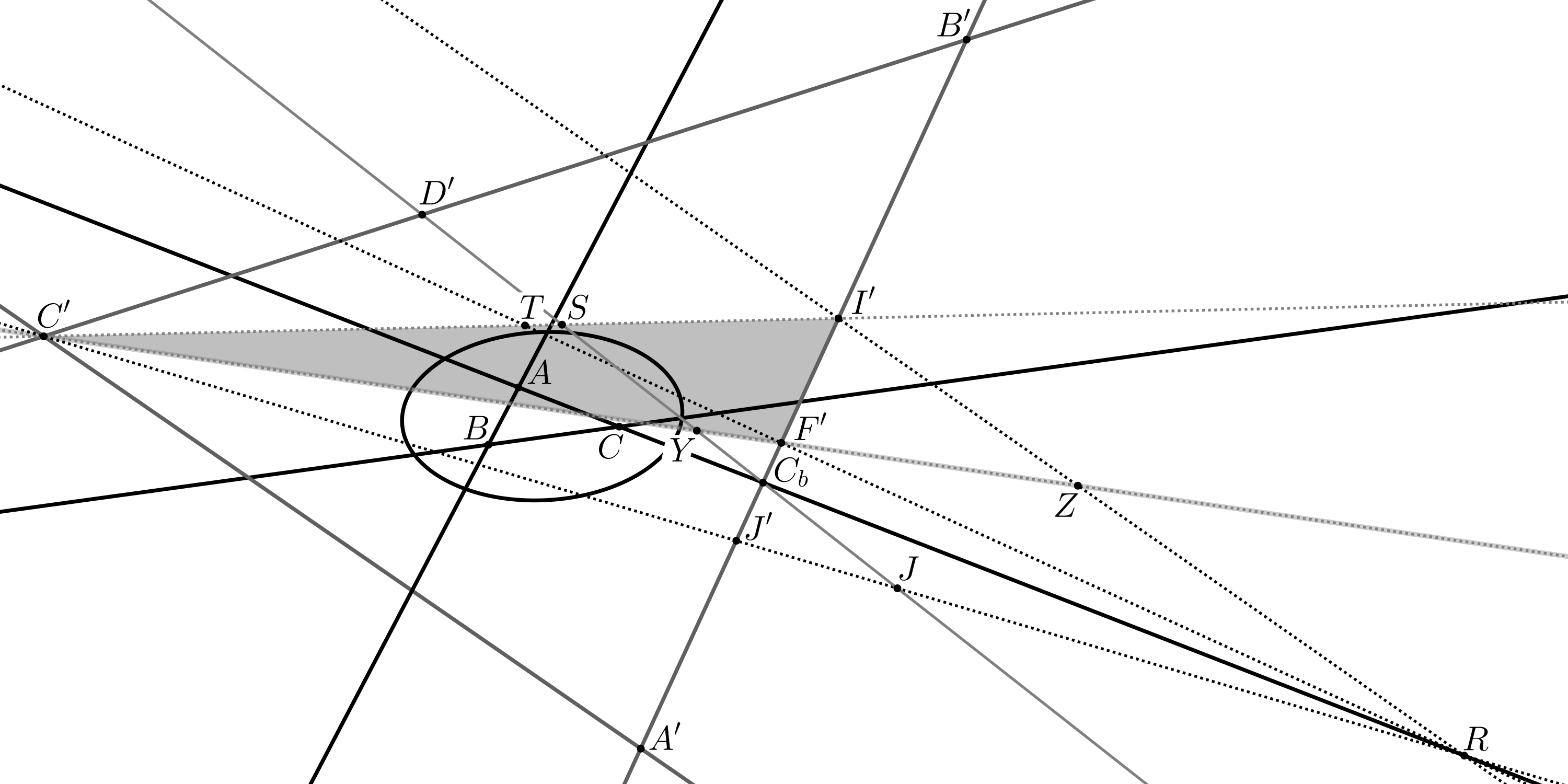}
\caption{Application of Van Aubel's Theorem}
\label{fig:cosine-rule-06-Claim-4}
\end{sidewaysfigure}

\begin{claim}\label{claim:S-Y-lie-in-CbD'}
The point $S$ lies in $C_bD'$.
\end{claim}
\begin{proof}[Proof of the Claim]
By Claim~\ref{claim:D'W-pass-through-I'c'}, the line $D'W$ passes through
$I'_{c'}$. If we consider the quadrangle $\QQ=\{C',D',S,W\}$, the quadrilateral
involution
$\sigma_{\QQ}$ on $c'$ sends $A',I'$ into $B',I'_{c'}$
respectively and vice versa. The involution $\sigma_{\QQ}$
coincides with the
symmetry $\tau_{F'F'_{c'}}$ of $c'$ with respect to $F'$ and it must send $C_a$
into
$C_b$. Therefore, it is $D'S\cdot c'=C_b$.
\end{proof}

Consider the line $j_0=C'R$ and the points $J_0,J'_0\in j$ given by
$$J_0=C_bD'\cdot j_0\quad \text{and} \quad J'_0=c'\cdot j_0\,.$$
By projecting since $C_b$ the line $a'$ onto $j_0$, we have
$$(C'B'A_bD')=(C'J'_0RJ_0)\,.$$
Let consider the triangle $\wt{C'F'I'}$ and its cevians that intersect at $R$.
Thus, $J'_0$ is the basepoint of the cevian through $C'$, $T$ is the
basepoint of the cevian through $F'$, and $Z$ is the basepoint of the
cevian through $I'$. Because $Y,S$ are collinear with $C_b$ and $D'$ (Claims
\ref{claim:S-Y-lie-in-CbD'} and~\ref{claim:concurrency-C'F'-CaD'-CbE'}), the
points $Y,S,J_0$ are collinear, and therefore, by 
\hyperref[thm:Van-Aubel]{Van Aubel's Theorem} it is
$$(C'J'_0RJ_0)=(C'F'ZY)+(C'I'TS)\,.$$
This completes the proof of Theorem~\ref{thm:cosine-rule}
\end{proof}

\section{Some examples}

Finally, we will see that the projective laws of sines and cosines \emph{work}
for obtaining elliptic and hyperbolic trigonometric formulae.
We show how they can be used
to deduce the corresponding geometric laws for any generalized triangle.
The first thing that we must do for any of of those figures
is to draw the points among the
midpoints, complementary midpoints and magic midpoints 
of $\TT$ and $\TT'$ that are real points, and to label them following 
the requirements of Definition~\ref{def:coherently-oriented}.
We will assume always that the projective triangles depicted 
are coherently oriented.
Then, we have to translate the projective 
trigonometric ratios of the projective law of cosines~\eqref{eq:projective-law-of-cosines} 
into circular or hyperbolic trigonometric ratios 
concerning the different magnitudes of the figure. Each figure
will be characterized by six geometric magnitudes that we will
call $a,b,c,\alpha,\beta,\gamma$. 
The magnitudes $a,b,c$ denote always a segment length, while
$\alpha,\beta$ or $\gamma$ could denote an angular measure
or a segment length.
After that, we must deduce the sign to be put in the different terms of the
geometric laws of cosines so obtained. 
When all the points on a cross-ratio are real 
points, its sign is deduced using 
Lemma~\ref{lemma:ABCD-negative-iff-AB-separate-CD}.
The main problem is to deduce the sign of a pure imaginary cross ratio
(see the footnote on page \pageref{footnote:positive-imaginary}), which appears
when one of the midpoints or complementary midpoints is imaginary. 
We cannot deduce
the sign of an imaginary without doing explicit computations. We didn't
any explicit computation in the whole book (just some tricky manipulations
with cross-ratios, indeed), and we will not start now. As we will see, 
using only the ``visible'' (real) points of each figure, we can deduce 
the relative sign of an imaginary cross 
ratio of~\eqref{eq:projective-law-of-cosines}
with respect to another one. This will be done using
the Projective Law of sines or using 
\hyperref[thm:Menelaus-affine]{Menelaus' Theorem}
together with Lemma~\ref{lemma:ABCD-negative-iff-AB-separate-CD},
and it will be enough for our purposes. 
% Note that if in Menelaus' Projective Formula
% \eqref{eq:Menelaus_Projective} the three cross-ratios must be either real or 
% pure imaginary, at least one of the three cross-ratios is real.

\subsection{Elliptic triangles}

\begin{figure}
\centering
\includegraphics[width=0.6\textwidth]
{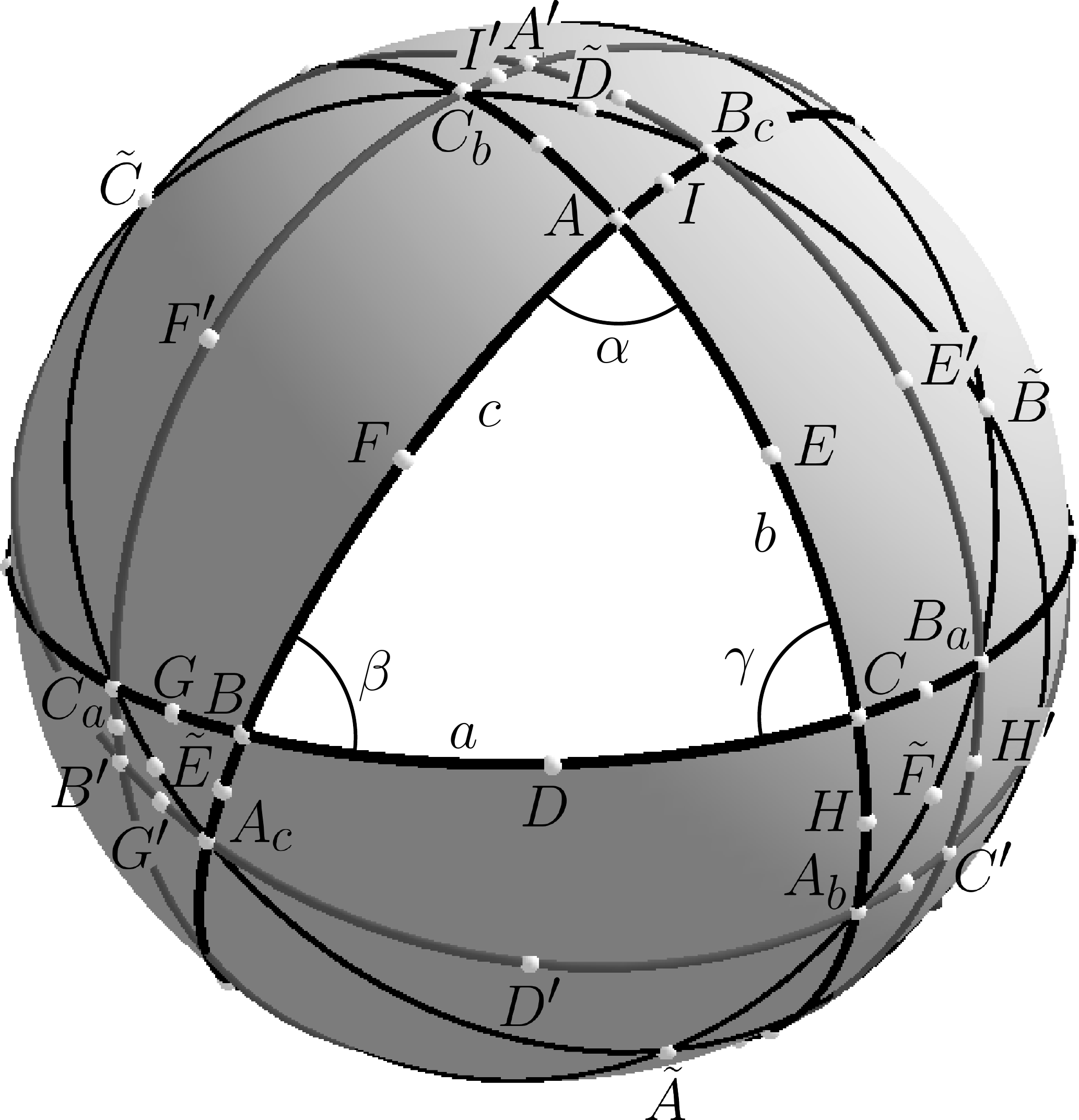}
\caption{Elliptic triangle}
\label{fig:cosine-rules-sph-triangle}
\end{figure}

Let $\TT$ be the elliptic triangle depicted in
Figure~\ref{fig:cosine-rules-sph-triangle}.

By applying Lemma~\ref{lemma:ABCD-negative-iff-AB-separate-CD}
to the cross ratios involving the Projective Law of sines we have:
    \[
    \begin{array}{rll}
	\ss(AB)&=(AB_cBI)<0&\Longrightarrow \ss(AB)=-\sin c\\
	\ss(BC)&=(BC_aCG)<0&\Longrightarrow \ss(BC)=-\sin a\\
	\ss(CA)&=(CA_bAH)<0&\Longrightarrow \ss(CA)=-\sin b\\
	\ss(A'B')&=(A'C_bB'I')<0&\Longrightarrow \ss(A'B')=-\sin \gamma\\
	\ss(B'C')&=(B'A_cC'G')<0&\Longrightarrow \ss(B'C')=-\sin \alpha\\
	\ss(C'A')&=(C'B_aA'H')<0&\Longrightarrow \ss(C'A')=-\sin \beta\,.\\
     \end{array}
    \]
The law of sines for an elliptic triangle is:
$$\dfrac{\sin a}{\sin \alpha}=\dfrac{\sin b}{\sin \beta}=\dfrac{\sin c}{\sin \gamma}\,.$$

  We have also:
    \[
    \begin{array}{rll}
	  \cc(AB)&=(ABB_cF)<0&\Longrightarrow \cc(AB)=-\cos c\\
	  \cc(BC)&=(BCC_aD)<0&\Longrightarrow \cc(BC)=-\cos a\\
	  \cc(CA)&=(CAA_bE)<0&\Longrightarrow \cc(CA)=-\cos b\\
	  \cc(A'B')&=(A'B'C_bF')>0&\Longrightarrow \cc(A'B')=\cos \gamma\\
	  \cc(B'C')&=(B'C'A_cD')>0&\Longrightarrow \cc(B'C')=\cos \alpha\\
	  \cc(C'A')&=(C'A'B_aE')>0&\Longrightarrow \cc(C'A')=\cos \beta\,.\\
     \end{array}
    \]
    
Therefore, from the Projective Law of cosines~\eqref{eq:projective-law-of-cosines}, we get
$$\cos a=\sin c \sin b \cos \alpha + \cos c \cos b\,,$$
and from its dual~\eqref{eq:projective-law-of-cosines-dual}:
      $$\cos \alpha =\sin \gamma \sin \beta \cos a -\cos \gamma \cos \beta\,.$$

\subsection{Hyperbolic triangles}

\begin{figure}
\centering
\includegraphics[width=0.9\textwidth]
{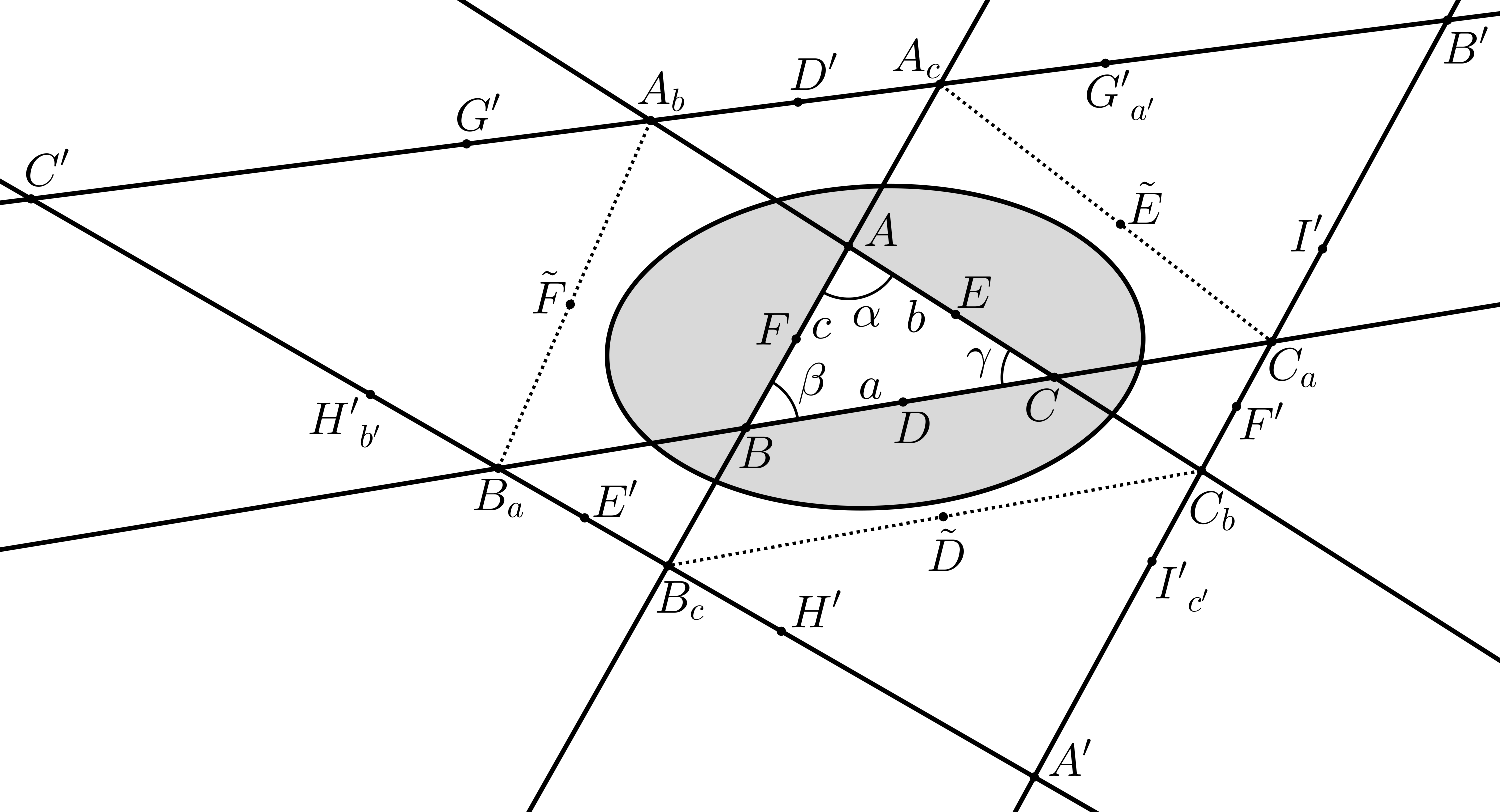}
\caption{Hyperbolic triangle}
\label{fig:cosine-rules-hyp-triangle}
\end{figure}

Consider now the hyperbolic triangle given by $\TT$ in 
Figure~\ref{fig:cosine-rules-hyp-triangle}. The sides and angles
of $\TT$ have the same names as in the elliptic case. We have
    \[
    \begin{array}{rll}
	\ss(AB)&=(AB_cBI)=\text{imaginary}&\Longrightarrow \ss(AB)=\pm i\sinh c\\
	\ss(BC)&=(BC_aCG)=\text{imaginary}&\Longrightarrow \ss(BC)=\pm i\sinh a\\
	\ss(CA)&=(CA_bAH)=\text{imaginary}&\Longrightarrow \ss(CA)=\pm i\sinh b\\
	\ss(A'B')&=(A'C_bB'I')>0&\Longrightarrow \ss(A'B')=\sin \gamma\\
	\ss(B'C')&=(B'A_cC'G')>0&\Longrightarrow \ss(B'C')=\sin \alpha\\
	\ss(C'A')&=(C'B_aA'H')>0&\Longrightarrow \ss(C'A')=\sin \beta\,.\\
     \end{array}
    \]
From the Projective Law of sines~\eqref{eq:projective-law-of-sines}
we deduce
$$\dfrac{\sinh a}{\sin \alpha}=\dfrac{\sinh b}{\sin \beta}
=\dfrac{\sinh c}{\sin \gamma}\,.$$
In particular, $\ss(AB)$, $\ss(BC)$ and $\ss(CA)$ have all the same 
imaginary sign: all of them are  positive 
imaginary or all of them are negative imaginary; and  
the product of any two of these ratios is a negative real number.

We have, also
    \[
    \begin{array}{rll}
	\cc(AB)&=(ABB_cF)<0&\Longrightarrow \cc(AB)=-\cosh c\\
	\cc(BC)&=(BCC_aD)<0&\Longrightarrow \cc(BC)=-\cosh a\\
	\cc(CA)&=(CAA_bE)<0&\Longrightarrow \cc(CA)=-\cosh b\\
	\cc(A'B')&=(A'B'C_bF')>0&\Longrightarrow \cc(A'B')=\cos \gamma\\
	\cc(B'C')&=(B'C'A_cD')>0&\Longrightarrow \cc(B'C')=\cos \alpha\\
	\cc(C'A')&=(C'A'B_aE')>0&\Longrightarrow \cc(C'A')=\cos \beta\,.\\
     \end{array}
    \]

Thus, the Projective Law of 
cosines~\eqref{eq:projective-law-of-cosines} gives
      $$\cosh a=-\sinh c \sinh b \cos \alpha + \cosh c \cosh b\,,$$
while its dual~\eqref{eq:projective-law-of-cosines-dual}
      $$\cos \alpha =\sin \gamma \sin \beta \cosh a -\cos \gamma \cos \beta\,.$$

\subsection{Right-angled hexagons}

\begin{figure}
\centering
\includegraphics[width=0.7\textwidth]
{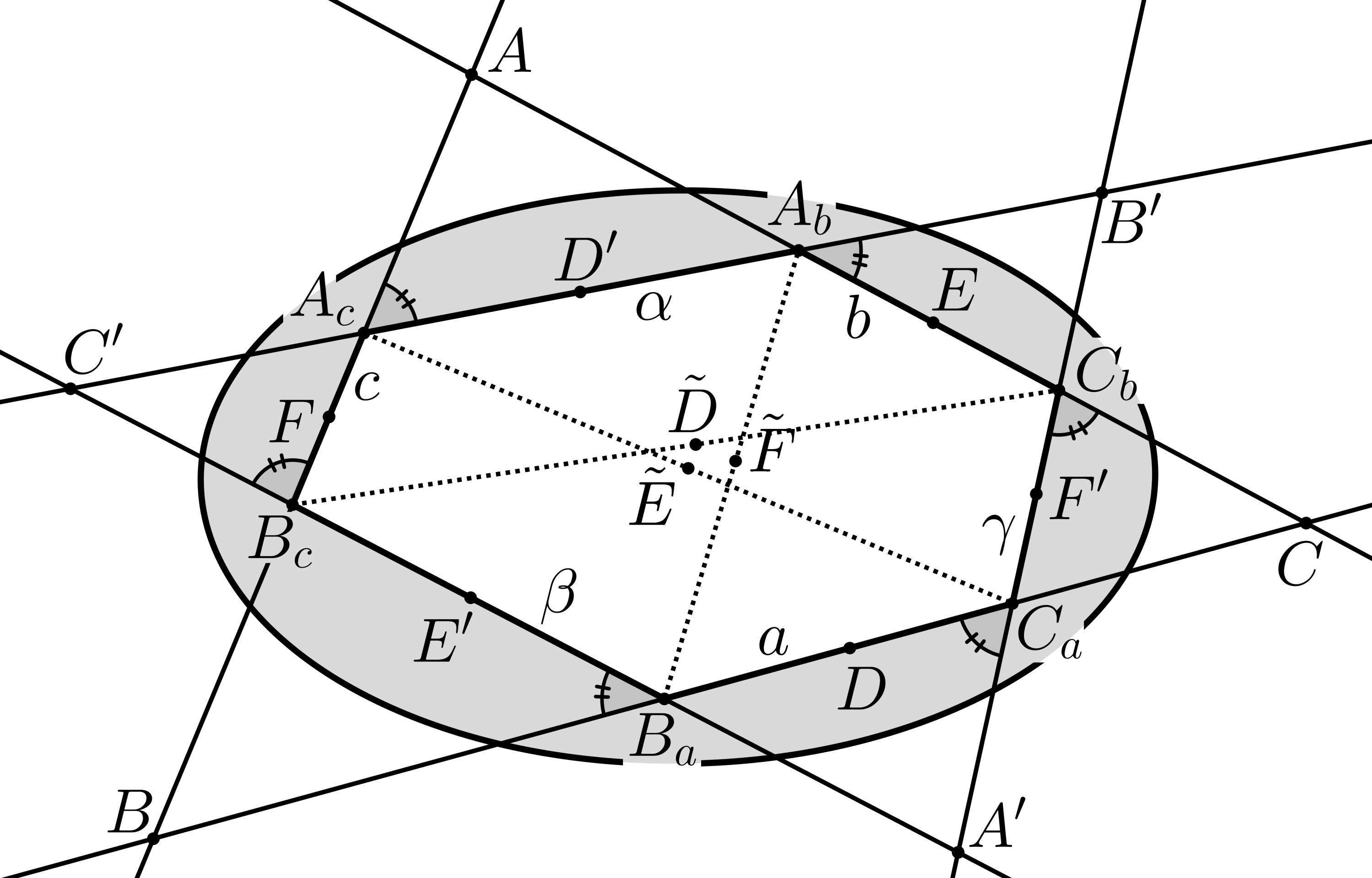}
\caption{Right-angled hexagon}
\label{fig:cosine-rules-Right-angled-hexagon}
\end{figure}

Consider now a right-angled hexagon with sides
$a,\gamma,b,\alpha,c,\beta$ generated by $\TT$ and
$\TT'$ in the hyperbolic plane as in 
Figure~\ref{fig:cosine-rules-Right-angled-hexagon}. In this case,
all the complementary midpoints of $\TT$ and $\TT'$ are
imaginary.

As the hyperbolic sines are always positive, the law of sines for this figure
must be
    $$\dfrac{\sinh a}{\sinh \alpha}=\dfrac{\sinh b}{\sinh \beta}=
    \dfrac{\sinh c}{\sinh \gamma}\,.$$

Let study now the Projective Law of 
cosines~\eqref{eq:projective-law-of-cosines}.
By Lemma~\ref{lemma:ABCD-negative-iff-AB-separate-CD}, we know
that the four real cross ratios of this formula: $\cc(BC)$, $\cc(B'C')$, 
$\cc(AB)$ and $\cc(CA)$ are positive. This implies that
the product $\ss(AB)\ss(CA)$ must be negative. Otherwise,
the two members of~\eqref{eq:projective-law-of-cosines} would have
different sign, which is impossible. Therefore, the law of cosines
for a right-angled hexagon is
      $$\cosh a=\sinh c \sinh b \cosh \alpha - \cosh c \cosh b\,.$$

\subsection{Quadrilateral with consecutive right angles}

\begin{figure}
\centering
\includegraphics[width=0.9\textwidth]
{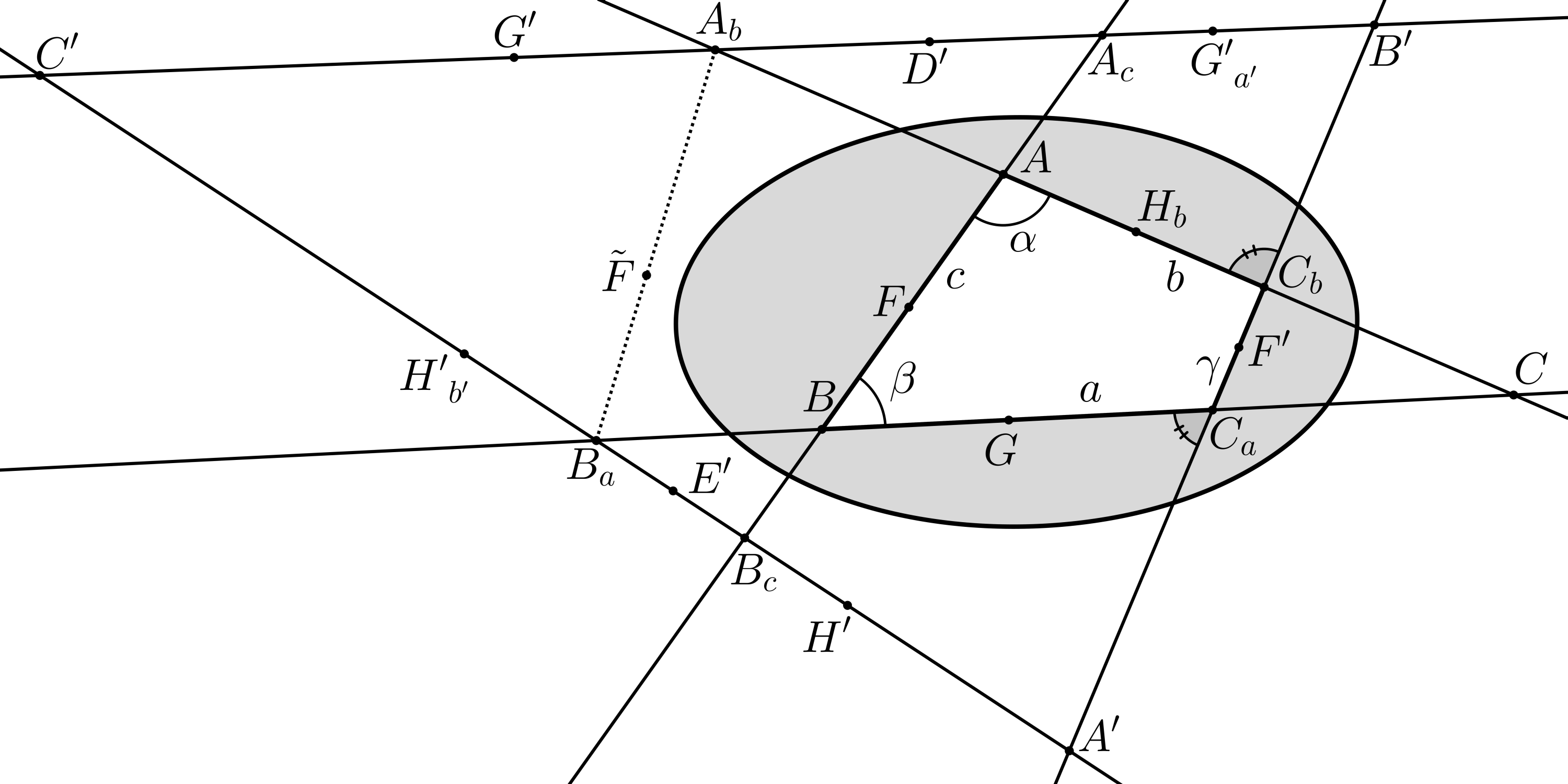}
\caption{Quadrilateral with two consecutive right angles}
\label{fig:cosine-rules-quadrilateral-01}
\end{figure}

We study now the quadrilateral with consecutive right angles
depicted in Figure~\ref{fig:cosine-rules-quadrilateral-01}.
We know that
    \[
    \begin{array}{rll}
	\ss(AB)&=(AB_cBI)=\text{imaginary}&\Longrightarrow \ss(AB)=\pm i\sinh c\\
	\ss(BC)&=(BC_aCG)<0&\Longrightarrow \ss(BC)=-\cosh a\\
	\ss(CA)&=(CA_bAH)<0&\Longrightarrow \ss(CA)=-\cosh b\\
	\ss(A'B')&=(A'C_bB'I')=\text{imaginary}&\Longrightarrow \ss(A'B')=\pm i\sinh \gamma\\
	\ss(B'C')&=(B'A_cC'G')>0&\Longrightarrow \ss(B'C')=\sin \alpha\\
	\ss(C'A')&=(C'B_aA'H')>0&\Longrightarrow \ss(C'A')=\sin \beta\,.\\
     \end{array}
    \]
The Projective Law of sines~\eqref{eq:projective-law-of-sines}
tells to us that $\ss(AB)$ and $\ss(A'B')$ have different imaginary sign,
as their quotient is a negative real number. The law of sines for this figure is
    $$\dfrac{\cosh a}{\sin \alpha}=\dfrac{\cosh b}{\sinh \beta}=
    \dfrac{\sinh c}{\sinh \gamma}\,.$$

For the projective cosines, we have:
    \[
    \begin{array}{rll}
 	\cc(AB)&=(ABB_cF)<0&\Longrightarrow \cc(AB)=-\cosh c\\
	\cc(BC)&=(BCC_aD)=\text{imaginary}&\Longrightarrow \cc(BC)=\pm\sinh a\\
	\cc(CA)&=(CAA_bE)=\text{imaginary}&\Longrightarrow \cc(CA)=\pm\sinh b\\
	\cc(A'B')&=(A'B'C_bF')>0&\Longrightarrow \cc(A'B')=\cosh \gamma\\
	\cc(B'C')&=(B'C'A_cD')>0&\Longrightarrow \cc(B'C')=\cos \alpha\\
	\cc(C'A')&=(C'A'B_aE')>0&\Longrightarrow \cc(C'A')=\cos \beta\,.\\
    \end{array}
    \]

Let us start now with the dual Projective Law of 
cosines~\eqref{eq:projective-law-of-cosines-dual}. The four real cross ratios
of this formula: $\cc(B'C')$, $\ss(C'A')$, $\cc(A'B')$ and $\cc(C'A')$ are positive.
This implies that the product $\ss(A'B')\cc(BC)$ is a negative real number, and then
$\ss(A'B')$ and $\cc(BC)$ have the same imaginary sign. The formula
that we obtain from~\eqref{eq:projective-law-of-cosines-dual} is
    $$\cos \alpha = \sinh \gamma \sin \beta \sinh a - \cosh \gamma \cos \beta\,.$$

We return now to the Projective Law of 
cosines~\eqref{eq:projective-law-of-cosines}. 
In this formula we have three unknown signs: those
of $\cc(BC)$, $\ss(AB)$ and $\cc(CA)$.

We have seen by
the Projective law of sines that $\ss(AB)$ and $\ss(A'B')$ 
have different imaginary sign, and now we know also that 
$\ss(A'B')$ and $\cc(BC)$ have the same imaginary sign. Thus,
$\cc(BC)$ and $\ss(AB)$ have different imaginary sign.

On the other hand, if we apply Menelaus Projective formula
to the triangle $\wt{ABC}$ with transversals $c'$ and $DE$,
we get
    $$(ABC_0F_c)(BCC_aD)(CAC_bE)=1\,.$$
As $(ABC_0F_c)>0$ by 
Lemma~\ref{lemma:ABCD-negative-iff-AB-separate-CD},
we conclude that 
    $$\cc(BC)=(BCC_aD)\quad\text{and}\quad 
    \cc(CA)=(CAA_bE)=(ACC_bE)$$
have the same imaginary sign. Therefore~\eqref{eq:projective-law-of-cosines}
gives
    $$\sinh a=-\sinh c \cosh b \cos \alpha + \cosh c \sinh b\,.$$

There are other two laws of cosines for this figure which are not 
equivalent to those given above. These are the laws of cosines
given by the segments $\ov{AB}$ and $\ov{A'B'}$.
The first of them is
    $$\cc(AB)=-\ss(BC)\ss(CA)\cc(A'B')-\cc(BC)\cc(CA)\,.$$
As $\cc(BC)$ and $\cc(CA)$ have the same imaginary sign, their product is
a negative real number. So,
    $$\cosh c=\cosh a \cosh b\cosh \gamma-\sinh a\sinh b\,.$$
    
The last law of cosines we are interested in is the dual of the previous one:
    $$\cc(A'B')=-\ss(B'C')\ss(C'A')\cc(AB)-\cc(B'C')\cc(C'A')\,.$$
All the cross-ratios involved here are real, and they give
    $$\cosh \gamma=\sin \alpha \sin \beta \cosh c -\cos \alpha\cos \beta\,.$$

\appendix
\setcounter{secnumdepth}{0}
\chapter{Laguerre's formula for rays}\label{sec:Laguerre}

As it is mentioned in \S\ref{sec:Cayley-Klein-models-for}, 
Laguerre's formula~\eqref{eq:Laguerre-formula}
for computing angles in Cayley-Klein models is valid for angles between
lines, not for angles between rays, and it cannot distinguish an angle
from its supplement. 
We propose a modification of Laguerre's formula that allows us
to compute angles between rays. Although our formula fits perfectly
in the projective model for the hyperbolic plane, it can be adapted
also to the elliptic and euclidean models.

A projective conic as a 1-dimensional object is
equivalent to a projective line or a pencil of lines through a point.
The key for this similarity is given by Steiner's theorem.
\begin{theorem}[Steiner's Theorem]
\label{thm:Steiner}Let $X,Y$ be two points on the nondegenerate
conic $\Theta$. The map from the pencil of lines through $X$ to
the pencil of lines through $Y$ that maps the line $XP$ into the
line $YP$ for all $P\in\Theta$ is a projectivity.\end{theorem}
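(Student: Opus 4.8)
The plan is to reduce the statement to a short coordinate computation by using the rational parametrization of a nondegenerate conic. Since we work in $\mathbb{CP}^2$, every nondegenerate conic is projectively equivalent to the standard conic $\Theta_0=\{x_0x_2=x_1^2\}$, so we may assume $\Theta=\Theta_0$; it carries the usual parametrization $P(t)=[t_0^2:t_0t_1:t_1^2]$ for $t=[t_0:t_1]\in\mathbb{CP}^1$, which (by projection from a point of $\Theta$) is a bijection $\mathbb{CP}^1\to\Theta$. The case $X=Y$ is trivial, so assume $X\neq Y$.

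First I would expand the $3\times 3$ determinant expressing collinearity of $[x_0:x_1:x_2]$, $P(u)$ and $P(v)$ to obtain that the secant line $P(u)P(v)$ has line coordinates $[\,u_1v_1:-(u_0v_1+u_1v_0):u_0v_0\,]$ (and, on setting $v=u$, that the tangent at $P(u)$ is $[\,u_1^2:-2u_0u_1:u_0^2\,]$). Fixing $X=P(a)$ with $a=[a_0:a_1]$, the assignment $\phi_X:\mathbb{CP}^1\to(\mathbb{CP}^2)^*$ sending $t$ to the line $XP(t)$ (read as the tangent at $X$ when $t=a$) is then given in coordinates by $t\mapsto[\,a_1t_1:-(a_0t_1+a_1t_0):a_0t_0\,]$, that is, by a linear map $\mathbb{CP}^1\to(\mathbb{CP}^2)^*$ of rank $2$. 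Hence $\phi_X$ is a projective isomorphism onto its image; since every line in this family passes through $X$, the image is contained in, and — being a line — equals the pencil $L_X$ of all lines through $X$. Thus $\phi_X:\mathbb{CP}^1\xrightarrow{\sim}L_X$ is a projectivity, and likewise $\phi_Y:\mathbb{CP}^1\xrightarrow{\sim}L_Y$. The Steiner map, which sends $XP$ to $YP$ for every $P\in\Theta$, is then precisely the composite $\phi_Y\circ\phi_X^{-1}:L_X\to L_Y$, a composition of projectivities, hence a projectivity.

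I expect the only delicate points to be bookkeeping rather than substance: justifying the reduction to the standard conic (the projective classification of nondegenerate conics over $\mathbb{C}$, quotable from the references already cited), and checking that the parameter value $t=a$ — where $XP(t)$ degenerates to the tangent at $X$ — is covered uniformly by the same linear formula, so no case is lost; the endpoints $P=X$ and $P=Y$ must be matched correctly against the convention that $XX$ and $YY$ mean the respective tangents.

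Alternatively, and in a more synthetic spirit, one could define the cross ratio of four points of $\Theta$ as the cross ratio of the four lines joining them to a fifth point of $\Theta$, prove by Pascal's Theorem (Theorem~\ref{thm:Pascal}) that this value is independent of the fifth point, and then observe that the Steiner map preserves this cross ratio and is therefore a projectivity; in that route the main work shifts to the Pascal argument establishing well-definedness of the conic cross ratio.
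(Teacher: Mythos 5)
The paper states Steiner's Theorem without proof, treating it as a classical result (as it does with Desargues', Pascal's and Chasles' theorems), so there is no in-paper argument to compare against; your proposal must stand on its own, and it does. The coordinate computation is correct: the line coordinates $[\,u_1v_1:-(u_0v_1+u_1v_0):u_0v_0\,]$ of the secant $P(u)P(v)$ are what one gets from the cross product of $P(u)$ and $P(v)$ (up to the factor $u_0v_1-u_1v_0$), the specialization $v=u$ does give the tangent $[\,u_1^2:-2u_0u_1:u_0^2\,]$, and with $u=a$ fixed the resulting map $t\mapsto[\,a_1t_1:-(a_0t_1+a_1t_0):a_0t_0\,]$ is induced by a rank-$2$ linear map $\mathbb{C}^2\to\mathbb{C}^3$, hence is a projective isomorphism onto a line of the dual plane, which must be the pencil $L_X$ since every image line passes through $X$. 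You correctly flag and resolve the only real subtlety, namely that the tangent case $P=X$ is absorbed by the same formula. Composing $\phi_Y\circ\phi_X^{-1}$ finishes the argument. Two small remarks: the reduction to the standard conic is harmless here because the paper itself works inside $\mathbb{CP}^2$ (and for a real conic with real points a real change of coordinates to $x_0x_2=x_1^2$ is also available, so the real form of the statement is not lost); and your sketched synthetic alternative should be handled with care, since in the paper the well-definedness of the cross ratio on a conic (Corollary~\ref{cor:Steiner-cross-ratio-over-conics-coincide}) is itself deduced \emph{from} Steiner's Theorem, so that route is only non-circular if the Pascal-based proof of well-definedness is actually carried out independently.
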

As a corollary of Steiner's Theorem, we obtain the following result (which is also known as ``Chasles' Theorem''):
\begin{corollary}
\label{cor:Steiner-cross-ratio-over-conics-coincide}Let $A,B,C,D$
be four points on the conic $\Theta$, and let $X,Y$ be another two points on the same conic. If $a,b,c,d$ are the four lines joining the point $X$ with $A,B,C,D$ respectively, and $a',b',c',d'$ are the four lines joining $Y$  with $A,B,C,D$ respectively, then
\begin{equation*}
(a\, b\, c\, d)=(a'\, b'\, c'\, d').
\end{equation*}
\end{corollary}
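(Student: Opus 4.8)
The plan is to derive this corollary directly from Steiner's Theorem (Theorem~\ref{thm:Steiner}) together with the fact that a projectivity between two pencils of lines preserves cross ratios. First I would invoke Steiner's Theorem for the two points $X,Y$ on $\Theta$: the map $\varphi$ sending the line $XP$ to the line $YP$, for every $P\in\Theta$, is a projectivity from the pencil of lines through $X$ onto the pencil of lines through $Y$. Since $A,B,C,D$ all lie on $\Theta$, under $\varphi$ we have $a=XA\mapsto YA=a'$, and likewise $b\mapsto b'$, $c\mapsto c'$, $d\mapsto d'$.

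Next I would recall that cross ratio of four concurrent lines was \emph{defined} in \S\ref{sub:Cross-ratios.} via a section by an auxiliary transversal line, and that property~\eqref{eq:invariance_of_Cross-ratio} guarantees this definition is independent of the transversal; consequently any projectivity between two pencils of lines carries four concurrent lines to four concurrent lines with the same cross ratio. (Concretely: pick a transversal $r$ not through $X$ meeting $a,b,c,d$ in four points whose cross ratio is $(a\,b\,c\,d)$, pick a transversal $s$ not through $Y$ meeting $a',b',c',d'$ similarly; the projectivity $\varphi$ between the pencils induces, via these sections, a projectivity between the point ranges on $r$ and on $s$, and projectivities between ranges preserve cross ratio.) Applying this to $\varphi$ yields
\[
(a\,b\,c\,d)=(a'\,b'\,c'\,d'),
\]
which is the assertion.

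The only point that needs a little care — and the step I expect to be the mild obstacle — is making precise that ``a projectivity of pencils preserves the cross ratio of four lines.'' This is essentially immediate from the way $(a\,b\,c\,d)$ is defined here (by section and the invariance~\eqref{eq:invariance_of_Cross-ratio}), but one should be slightly attentive to degenerate configurations: if some of $A,B,C,D$ coincide with $X$ or $Y$ the relevant ``line'' $XA$ or $YA$ degenerates to the tangent to $\Theta$ at that point, and Steiner's Theorem still applies with that convention; since the statement as given takes $A,B,C,D$ and $X,Y$ to be distinct points on $\Theta$, this subtlety does not even arise. No further ingredients are needed beyond Theorem~\ref{thm:Steiner} and the basic cross-ratio invariance already established.
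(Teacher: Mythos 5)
Your argument is correct and is exactly the one the paper intends: the paper gives no explicit proof, presenting the statement as an immediate consequence of Steiner's Theorem, and your filling-in (the Steiner projectivity carries $a,b,c,d$ to $a',b',c',d'$, and projectivities between pencils preserve cross ratio by the definition via section and the invariance~\eqref{eq:invariance_of_Cross-ratio}) is the standard and intended route. Nothing is missing.
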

This corollary allows us to define the cross ratio of four points on
a conic as the cross ratio of the lines joining them with any other point on the conic.
\begin{definition}
Given four points $A,B,C,D$ on the nondegenerate conic $\Theta$,
we define their cross ratio \emph{over} $\Theta$, and we denote it
by $(ABCD)_{\Theta}$, by the equality
\[
(ABCD)_{\Theta}:=(a\, b\, c\, d),
\]
where $a,b,c,d$ are the lines joining $A,B,C,D$, respectively, with any other point $X$ of $\Theta$.
\end{definition}

Assume now that $\mathbb{P}$ is the hyperbolic plane and
take a point $P\in\mathbb{P}$. Consider two lines $a,b$ through
$P$, and take the intersection points $A_{1},A_{2}$ and $B_{1},B_{2}$
of $a$ and $b$ with $\Phi$ respectively. The hyperbolic line, that
we also denote by $a$, determined by the projective line $a$ is
divided by $P$ into two rays $a_{1},a_{2}$ starting from $P$ and
ending at $A_{1},A_{2}$ respectively. In the same way, the line $b$
is divided by $P$ into the two rays $b_{1},b_{2}$ starting at $P$
and ending at $B_{1},B_{2}$ respectively. Let $u,v$ be the two tangent
lines to $\Phi$ through $P$, and let $U,V$ be their contact points
with $\Phi$, respectively. The line $p=UV$ is the polar line of $P$. If $A,B$ are the intersection points with $p$ of the lines $a,b$ respectively, the two diagonal points $Q,Q_p$ of the quadrangle $\QQ=\{A_1,A_2,B_1,B_2\}$ different from $P$ are the midpoints of the segment $\ov{AB}$. If, for example, it is $Q=p\cdot A_1B_2$, by projecting from $B_2$ we have
$$(UVQB)=(UVA_1B_1)_{\Phi},$$
and so by~\eqref{eq:midpoints-02} it is
$$(u\,v\,a\,b)=(UVAB)=(UVQB)^2=(UVA_1B_1)^2_\Phi.$$
This suggest the following definition:
\begin{definition}
The angle between the rays $a_{1},b_{1}$ is given by the formula
\begin{equation}
\widehat{a_{1}b_{1}}=\dfrac{1}{i}\log(UVA_{1}B_{1})_{\Phi}.
\label{eq:Laguerre-formula-for-Rays}
\end{equation}
\end{definition}
With this definition, angles between rays take values 
between $0$ and $2\pi$ as expected.
If we denote by $\alpha$ the angle $\widehat{a_{1}b_{1}}$, by applying 
\eqref{eq:lemma-4ABBpAp-UVAB}, it can be seen that
$$\cos \alpha=2(A_1B_1B_2A_2)_\Phi-1.$$
Moreover, if we draw the conjugate lines 
$a_P,b_P$ of $a,b$ at $P$ and we label their intersection 
points with $\Phi$ as in Figure~\ref{Fig:Laguerre-02}
(\footnote{We label the points $A'_1,A'_2,B'_1,B'_2$ in such a way that the lines 
$A_1B_2,B_1A_2,A'_1B'_1,A'_2B'_2$ are concurrent.}), it can be proved that
\begin{equation}\label{eq:angle-between-rays-cosine}
\cos \alpha=(A_1B_1B'_{1}A'_{1})_\Phi.
\end{equation}
Formula~\eqref{eq:angle-between-rays-cosine} shows that 
\eqref{eq:Laguerre-formula-for-Rays} works, because it gives 
a positive cosine for acute angles and a negative cosine 
for obtuse angles
(cf. Lemma~\ref{lemma:ABCD-negative-iff-AB-separate-CD} for conics).
If the angle $\alpha$ between the rays $a_1,b_1$ is acute as 
in Figure~\ref{Fig:Laguerre-02}, the points $A_1,B_1$ do not 
separate the points $A'_1,B'_1$. In the same figure, the angle 
between the rays $a_1$ and $b_2$ is obtuse because 
$A'_1,A'_2$ separate $A_1$ from $B_2$, and in this case 
\eqref{eq:angle-between-rays-cosine} will give a 
negative cosine for $\widehat{a_1b_2}$.

Formula~\eqref{eq:Laguerre-formula-for-Rays} is valid for euclidean, hyperbolic
and elliptic geometries if we replace the absolute conic $\Phi$ with any circle
centered at $P$. In euclidean geometry, $U,V$ are Poncelet's circular points at infinity.
\begin{figure}
\centering
\includegraphics[width=0.55\textwidth]{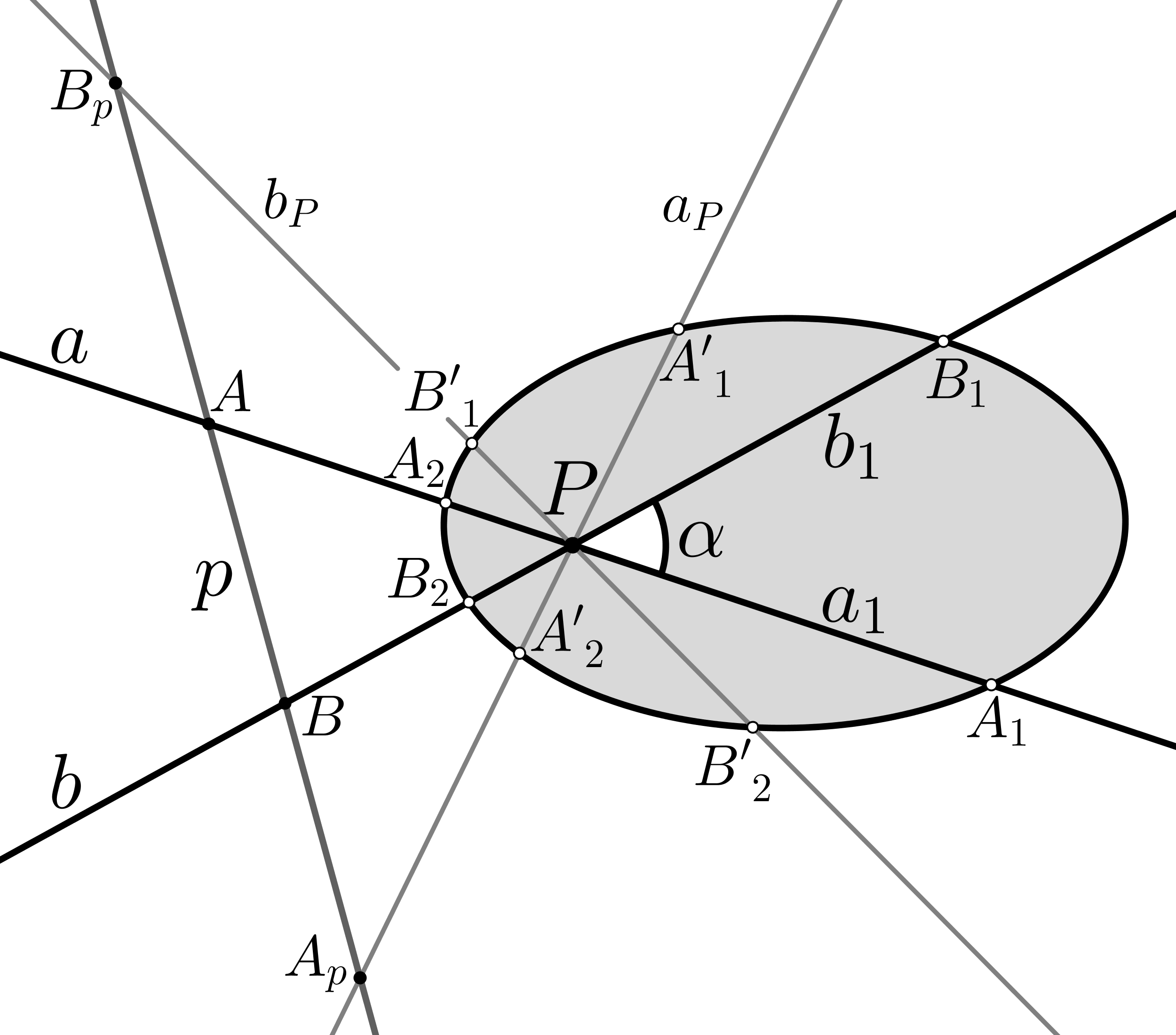}
\caption{angle between rays}\label{Fig:Laguerre-02}
\end{figure}

\end{document}